\newtheorem{theorem}{Theorem}[section]
\newtheorem*{theorem*}{Theorem}
\newtheorem{lemma}[theorem]{Lemma}
\newtheorem*{lemma*}{Lemma}
\newtheorem{proposition}[theorem]{Proposition}
\newtheorem*{proposition*}{Proposition}
\newtheorem{corollary}[theorem]{Corollary}
\newtheorem*{corollary*}{Corollary}
\newtheorem{claim}[theorem]{Claim}
\newtheorem*{claim*}{Claim}
\newtheorem*{fact*}{Fact}
\newtheorem*{conjecture*}{Conjecture}
\theoremstyle{definition}
\newtheorem{definition}[theorem]{Definition}
\newtheorem*{definition*}{Definition}
\newtheorem*{example*}{Example}
\newtheorem{remark}[theorem]{Remark}
\newtheorem*{remark*}{Remark}
\newtheorem*{question*}{Question}
\newtheorem{assumption}[theorem]{Assumption}
\newtheorem*{assumption*}{Assumption}
\numberwithin{equation}{section}
\newcommand{\abs}[1]{\left\lvert#1\right\rvert}
\newcommand{\norm}[1]{\left\lVert#1\right\rVert}
\DeclareMathOperator{\End}{End}
\DeclareMathOperator{\Ad}{Ad}
\DeclareMathOperator{\ind}{ind}
\DeclareMathOperator{\Ind}{Ind}
\DeclareMathOperator{\vol}{vol}
\DeclareMathOperator{\Hom}{Hom}
\DeclareMathOperator{\red}{red}
\DeclareMathOperator{\Inf}{Inf}
\DeclareMathOperator{\cpt}{c}
\DeclareMathOperator{\id}{id}
\DeclareMathOperator{\im}{im}
\DeclareMathOperator{\unr}{unr}
\DeclareMathOperator{\ord}{ord}
\DeclareMathOperator{\supp}{supp}
\DeclareMathOperator{\ext}{ext}
\DeclareMathOperator{\aff}{aff}
\DeclareMathOperator{\Mor}{Mor}
\DeclareMathOperator{\Sol}{Sol}
\DeclareMathOperator{\otw}{otherwise}
\DeclareMathOperator{\reg}{reg}
\DeclareMathOperator{\terms}{terms}
\address{graduate school of mathematical science, the university of tokyo, 3-8-1 komaba, meguroku, tokyo 153-8914, japan.}
\email{kohara@ms.u-tokyo.ac.jp}
\subjclass[2020]{22E50}
\begin{document}
\title{A comparison of endomorphism algebras}
\author{Kazuma Ohara}
\date{}
\maketitle
\begin{abstract}
Let $F$ be a non-archimedean local field and $G$ be a connected reductive group over $F$. 
For a Bernstein block in the category of smooth complex representations of $G(F)$, we have two kinds of progenerators:
the compactly induced representation $\ind_{K}^{G(F)} (\rho)$ of a type $(K, \rho)$, and the parabolically induced representation $I_{P}^{G}(\Pi^{M})$ of a progenerator $\Pi^{M}$ of a Bernstein block for a Levi subgroup $M$ of $G$.
In this paper, we construct an explicit isomorphism of these two progenerators.
Moreover, we compare the description of the endomorphism algebra $\End_{G(F)}\left(\ind_{K}^{G(F)} (\rho)\right)$ for a depth-zero type $(K, \rho)$ in \cite{MR1235019} with the description of the endomorphism algebra $\End_{G(F)}\left(I_{P}^{G}(\Pi^{M})\right)$ in \cite{MR4432237}, that are described in terms of affine Hecke algebras. 
\end{abstract}
\tableofcontents
\section{Introduction}
Let $F$ be a non-archimedean local field and $G$ be a connected reductive group over $F$. As explained in \cite{MR771671}, the category $\mathcal{R}(G(F))$ of smooth complex representations of $G(F)$ is decomposed into a product $\prod_{[M, \sigma]_{G}} \mathcal{R}^{[M, \sigma]_G}(G(F))$ of full subcategories $\mathcal{R}^{[M, \sigma]_G}(G(F))$, called Bernstein blocks. Bernstein blocks are parametrized by inertial equivalence classes $[M, \sigma]_{G}$ of cuspidal pairs, that consist of Levi subgroups $M$ of $G$ and irreducible supercuspidal representations $\sigma$ of $M(F)$.

One of the ways to study the block $\mathcal{R}^{\mathfrak{s}}(G(F))$ associated with an inertial equivalence class $\mathfrak{s}$ of cuspidal pairs is using the theory of types.
Let $\mathfrak{S}$ be a finite set of inertial equivalence classes of cuspidal pairs.
A pair $(K, \rho)$ of a compact open subgroup $K$ of $G(F)$ and an irreducible smooth representation $(\rho, V_{\rho})$ of $K$ is called an $\mathfrak{S}$-type if the full subcategory
\[
\mathcal{R}^{\mathfrak{S}}(G(F)) = \prod_{\mathfrak{s} \in \mathfrak{S}} \mathcal{R}^{\mathfrak{s}}(G(F))
\]
is precisely the full subcategory of $\mathcal{R}(G(F))$ consisting of smooth representations that are generated by their $\rho$-isotypic components.
In this case, $\mathcal{R}^{\mathfrak{S}}(G(F))$ is equivalent to the category of right modules over the endomorphism algebra $\End_{G(F)} \left(
\ind_{K}^{G(F)} (\rho)
\right)$ of the compactly induced representation $\ind_{K}^{G(F)} (\rho)$ of $\rho$ \cite[Theorem~4.3]{MR1643417}. 
In particular, if we obtain an $\{\mathfrak{s}\}$-type $(K, \rho)$ for an inertial equivalence class $\mathfrak{s}$ of cuspidal pairs, we can study the block $\mathcal{R}^{\mathfrak{s}}(G(F))$ by using the endomorphism algebra $\End_{G(F)} \left(
\ind_{K}^{G(F)} (\rho)
\right)$.
If a pair $(K, \rho)$ is an $\mathfrak{S}$-type for some finite set $\mathfrak{S}$ of inertial equivalence classes of cuspidal pairs, we say that $(K, \rho)$ is a type in $G$.
Many kinds of types have been constructed, for instance, \cite{MR1204652} for $\text{GL}_{n}$, \cite{MR2390287, MR3157998} for classical groups, and \cite{MR1621409} for the principal series of split groups.

In \cite{MR1253198} and \cite{MR1371680}, Moy and Prasad defined the notion of depth of types and constructed types called \emph{depth-zero types}.
For a depth-zero type $(K, \rho)$, Morris proved that the endomorphism algebra $\End_{G(F)} \left(
\ind_{K}^{G(F)} (\rho)
\right)$ is isomorphic to an extension of an affine Hecke algebra $\mathcal{H}^{\Mor}$ by a twisted group algebra \cite[Theorem~7.12]{MR1235019}.
We note that a type $(K, \rho)$ considered in \cite{MR1235019} is not necessarily an $\mathfrak{S}$-type for a singleton $\mathfrak{S} = \{\mathfrak{s}\}$.


On the other hand, there is another approach to study a block.
Let $M$ be a Levi subgroup of $G$ and $\sigma$ be an irreducible supercuspidal representation of $M(F)$.
We fix a parabolic subgroup $P$ of $G$ with Levi factor $M$ and unipotent radical $U$.
Then, the Bernstein block $\mathcal{R}^{\mathfrak{s}}(G(F))$ associated with the inertial equivalence class $\mathfrak{s}$ of the pair $(M, \sigma)$ in $G$ can be studied as follows.
Let $\sigma_{1}$ be an irreducible subrepresentation of $\sigma\restriction_{M^1}$, where $M^1$ denotes the intersection of the kernels of unramified characters of $M(F)$.
Then, according to \cite[Theorem~1.8.1.1]{MR2508719}, the parabolically induced representation $I_{P}^{G}\left(\ind_{M^1}^{M(F)} (\sigma_{1})\right)$ of the compactly induced representation $\ind_{M^1}^{M(F)} (\sigma_{1})$ of $\sigma_{1}$ is a progenerator of $\mathcal{R}^{\mathfrak{s}}(G(F))$.
Hence, according to \cite[Theorem~1.8.2.1]{MR2508719}, $\mathcal{R}^{\mathfrak{s}}(G(F))$ is equivalent to the category of right modules over the endomorphism algebra $\End_{G(F)} \left(
I_{P}^{G}\left(\ind_{M^1}^{M(F)} (\sigma_{1})\right)
\right)$.
The structure of the endomorphism algebra $\End_{G(F)} \left(
I_{P}^{G}\left(\ind_{M^1}^{M(F)} (\sigma_{1})\right)
\right)$ is studied in \cite{MR4432237}.
Under some assumptions, Solleveld proved that the endomorphism algebra $\End_{G(F)} \left(
I_{P}^{G}\left(\ind_{M^1}^{M(F)} (\sigma_{1})\right)
\right)$ is isomorphic to an extension of an affine Hecke algebra $\mathcal{H}^{\Sol}$ by a twisted group algebra \cite[Theorem~10.9]{MR4432237}.

In this paper, we compare the endomorphism algebra $\End_{G(F)} \left(
\ind_{K}^{G(F)} (\rho)
\right)$ associated with a type $(K, \rho)$ with the endomorphism algebra $\End_{G(F)} \left(
I_{P}^{G}\left(\ind_{M^1}^{M(F)} (\sigma_{1})\right)
\right)$ obtained from the progenerator $I_{P}^{G}\left(\ind_{M^1}^{M(F)} (\sigma_{1})\right)$.
In particular, we compare the description of $\End_{G(F)} \left(
\ind_{K}^{G(F)} (\rho)
\right)$ in \cite[Theorem~7.12]{MR1235019} with the description of $\End_{G(F)} \left(
I_{P}^{G}\left(\ind_{M^1}^{M(F)} (\sigma_{1})\right)
\right)$ in \cite[Theorem~10.9]{MR4432237}.

We explain the main results of this paper briefly.
Let $(K, \rho)$ be a depth-zero type considered in \cite{MR1235019}.
We suppose Assumption~\ref{parahoric=stabilizer} and Assumption~\ref{parahoricvermultiplicity1} on $(K, \rho)$.
Assumption~\ref{parahoric=stabilizer} is necessary for the type $(K, \rho)$ to be an $\mathfrak{S}$-type for a singleton $\mathfrak{S} = \{\mathfrak{s}\}$.
Assumption~\ref{parahoricvermultiplicity1} is essentially the same as \cite[Working hypothesis~10.2]{MR4432237}, that is supposed in \cite[Theorem~10.9]{MR4432237}.
In many cases, these assumptions are satisfied (see Remark~\ref{remarkaboutassumptionmultiplicity1} and the paragraph following Assumption~\ref{parahoric=stabilizer}).
From the type $(K, \rho)$, we can define a Levi subgroup $M$.
We write $K_{M} = K \cap M(F)$ and $\rho_{M} = \rho\restriction_{K_{M}}$.
Then, we can prove that $(K, \rho)$ is a $G$-cover of $(K_{M}, \rho_{M})$ in the sense of \cite[Definition~8.1]{MR1643417}.
We construct an explicit isomorphism
\[
I_{U} \colon \ind_{K}^{G(F)} (\rho) \rightarrow I^{G}_{P} \left( \ind_{K_{M}}^{M(F)} (\rho_{M}) \right).
\]
Thus, we also have an isomorphism of endomorphism algebras
\[
I_{U} \colon \End_{G(F)}\left(\ind_{K}^{G(F)} (\rho)\right) \rightarrow \End_{G(F)}\left(I^{G}_{P} \left( \ind_{K_{M}}^{M(F)} (\rho_{M}) \right)\right).
\]
In this part, we need not to suppose Assumption~\ref{parahoric=stabilizer}, Assumption~\ref{parahoricvermultiplicity1}, or even that $(K, \rho)$ is a depth-zero type.
Hence, we can apply the result to any $G$-cover $(K, \rho)$ of a pair $(K_{M}, \rho_{M})$.
We also prove that the isomorphism $I_{U}$ is compatible with the injections
\[
I_{P}^{G} \colon \End_{M(F)}\left( \ind_{K_{M}}^{M(F)} (\rho_{M}) \right) \rightarrow \End_{G(F)}\left(I^{G}_{P} \left( \ind_{K_{M}}^{M(F)} (\rho_{M}) \right)\right)
\]
and
\[
t_{P} \colon \End_{M(F)}\left( \ind_{K_{M}}^{M(F)} (\rho_{M}) \right) \rightarrow \End_{G(F)}\left(\ind_{K}^{G(F)} (\rho)\right)
\]
defined in \cite[Corollary~7.12]{MR1643417}.

We define an irreducible supercuspidal representation $\sigma$ of $M(F)$ such that the compactly induced representation
$
\ind_{K_{M}}^{M^1} (\rho_{M})
$
is an irreducible subrepresentation of $\sigma\restriction_{M^1}$.
We write $\sigma_{1} =~ \ind_{K_{M}}^{M^1} (\rho_{M})$.
Then, the transitivity of the compact induction implies
\[
\ind_{K_{M}}^{M(F)} (\rho_{M}) \simeq \ind_{M^1}^{M(F)} (\sigma_{1}).
\]
Hence, we have an isomorphism
\[
T_{\rho_{M}} \colon \End_{G(F)}\left(I^{G}_{P} \left( \ind_{K_{M}}^{M(F)} (\rho_{M}) \right)\right) \rightarrow \End_{G(F)}\left(I^{G}_{P} \left( \ind_{M^1}^{M(F)} (\sigma_{1}) \right)\right).
\]
Composing it with $I_{U}$, we have an isomorphism
\[
T_{\rho_{M}} \circ I_{U} \colon \End_{G(F)}\left(\ind_{K}^{G(F)} (\rho)\right) \rightarrow \End_{G(F)}\left(I^{G}_{P} \left( \ind_{M^1}^{M(F)} (\sigma_{1}) \right)\right).
\]

The main purpose of this paper is to compare the description of the left hand side of $T_{\rho_{M}} \circ I_{U}$ in \cite{MR1235019} with the description  of the right hand side of $T_{\rho_{M}} \circ I_{U}$ in \cite{MR4432237}.
The left hand side of $T_{\rho_{M}} \circ I_{U}$ is described in terms of an affine Hecke algebra $\mathcal{H}^{\Mor}$ associated with a based root datum 
\[
\mathcal{R}^{\Mor} = \left(
\Hom_{\mathbb{Z}} \left(
\mathbb{Z} (R^{\Mor})^{\vee}, \mathbb{Z}
\right), R^{\Mor},
\mathbb{Z} (R^{\Mor})^{\vee},  (R^{\Mor})^{\vee}, \Delta^{\Mor}
\right).
\]
More precisely, there is a subalgebra $\mathcal{H}(R(J, \rho))$ of $\End_{G(F)}\left(\ind_{K}^{G(F)} (\rho)\right)$ and an isomorphism
\[
I^{\Mor} \colon \mathcal{H}(R(J, \rho)) \rightarrow \mathcal{H}^{\Mor}.
\]
On the other hand, the right hand side of $T_{\rho_{M}} \circ I_{U}$ is described in terms of an affine Hecke algebra $\mathcal{H}^{\Sol}$ associated with a based root datum
\[
\mathcal{R}^{\Sol}
=
\left(
\left( M_{\sigma}/M^{1} \right)^{\vee}, R^{\Sol}, M_{\sigma}/M^{1}, (R^{\Sol})^{\vee}, \Delta^{\Sol}
\right).
\]
We have a subalgebra $\mathcal{H}\left(W(\Sigma_{\mathfrak{s}_{M}, \mu})\right)$ of $\End_{G(F)}\left(I^{G}_{P} \left( \ind_{M^1}^{M(F)} (\sigma_{1}) \right)\right)$ and an isomorphism
\[
I^{\Sol} \colon \mathcal{H}\left(W(\Sigma_{\mathfrak{s}_{M}, \mu})\right) \rightarrow \mathcal{H}^{\Sol}.
\]

The first main result of this paper is as follows:
\begin{theorem}[Theorem~\ref{maintheoremrootsystem}]
There is a canonical identification 
\[
\begin{cases}
R^{\Mor} &= R^{\Sol}, \\
\Delta^{\Mor} &= - \Delta^{\Sol}.
\end{cases}
\]
\end{theorem}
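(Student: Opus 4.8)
The plan is to transport both based root data into a single ambient real vector space and then to identify the two root systems reflection by reflection, the global sign on the simple systems coming from a clash of positivity conventions.

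First I would fix the common ambient space. In both \cite{MR1235019} and \cite{MR4432237} the roots ultimately live among the restricted roots of the maximal $F$-split central torus $A_{M}$ of $M$ in $G$: the coweight lattice $\mathbb{Z}(R^{\Mor})^{\vee}$ and the group $M_{\sigma}/M^{1}$ both map, modulo torsion, onto full-rank lattices inside $X_{*}(A_{M})\otimes_{\mathbb{Z}}\mathbb{R}$, so that $\Hom_{\mathbb{Z}}(\mathbb{Z}(R^{\Mor})^{\vee},\mathbb{R})$ and $(M_{\sigma}/M^{1})^{\vee}\otimes_{\mathbb{Z}}\mathbb{R}$ are canonically identified with $X^{*}(A_{M})\otimes_{\mathbb{Z}}\mathbb{R}$. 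This identification is the canonical one asserted in the statement, and from now on $R^{\Mor}$ and $R^{\Sol}$ are regarded as subsets of this one space.

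Next I would match the reflections, hence the Weyl groups. Transporting $\mathcal{H}(R(J,\rho))$ and $\mathcal{H}(W(\Sigma_{\mathfrak{s}_{M},\mu}))$ into a single endomorphism algebra via $T_{\rho_{M}}\circ I_{U}$, $I^{\Mor}$ and $I^{\Sol}$, I would check that a given reflection of the relative Weyl group $N_{G(F)}(M)_{[\sigma]}/M(F)$, acting on $X^{*}(A_{M})\otimes_{\mathbb{Z}}\mathbb{R}$, is a reflection of $R^{\Mor}$ exactly when it is a reflection of $R^{\Sol}$ --- in both cases this happens precisely when the associated rank-one Hecke parameter is $\ne 1$, a property of the intrinsic algebra. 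Hence $W(R^{\Mor})=W(R^{\Sol})$ as subgroups of $\mathrm{GL}(X^{*}(A_{M})\otimes_{\mathbb{Z}}\mathbb{R})$, so the two root systems have the same reflecting hyperplanes, hence the same lines $\mathbb{R}\alpha$.

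Then I would pin down the lengths, showing $\alpha_{s}^{\Mor}=\pm\,\alpha_{s}^{\Sol}$ for each reflection $s$, not merely that the two are positive multiples of one another. This is a genuine point: a reduced root system is not determined by its Weyl group, and Solleveld himself must distinguish $\Sigma_{\mathfrak{s}_{M}}$ from the possibly-rescaled $\Sigma_{\mathfrak{s}_{M},\mu}$. I would localize to the rank-one Levi $L=L_{s}\supseteq M$ of $G$ with $s\in W^{L}(M)$: on the Morris side the contribution of $s$ is read off from a depth-zero type of $L$ and its finite reductive quotient, where the relevant vector is the positive root of a rank-one group of type $\mathrm{SL}_{2}$ or $\mathrm{PGL}_{2}$ expressed in $X^{*}(A_{M})$; on the Solleveld side $\alpha_{s}^{\Sol}$ is the (possibly doubled) root governing the zero of the rank-one Harish-Chandra $\mu$-function $\mu^{L}$. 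Comparing the known description of $\mu^{L}$ for depth-zero supercuspidals with the Hecke parameters of \cite{MR1235019} (equivalently, matching the $q$-parameters on the two sides) forces the two vectors to agree up to sign; done compatibly over a set of simple reflections this gives $R^{\Mor}=R^{\Sol}$, and then the coroots $(R^{\Mor})^{\vee}$ and $(R^{\Sol})^{\vee}$ coincide as subsets of $X_{*}(A_{M})\otimes_{\mathbb{Z}}\mathbb{R}$ as well, being determined by the root system even though they sit in different ambient lattices.

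Finally I would account for the sign on the simple systems. The basis $\Delta^{\Mor}$ is the one cut out by the alcove of the Bruhat--Tits building underlying $(K,\rho)$ and $R(J,\rho)$, transported through $I_{U}$, which is built from the unipotent radical $U$ of the fixed parabolic $P$; the basis $\Delta^{\Sol}$ is the one cut out by $P$ itself in Solleveld's normalisation, through the intertwining operators and the $\mu$-function attached to $P$ versus $\overline{P}$. Tracking $I_{U}$ --- concretely, that compact induction along $K$ turns the directions pointing into $U$ into the length-decreasing ones while Solleveld's convention declares them positive --- shows that $\Delta^{\Mor}$ and $\Delta^{\Sol}$ are opposite bases of the common root system $R^{\Mor}=R^{\Sol}$, i.e.\ $\Delta^{\Mor}=-\Delta^{\Sol}$. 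I expect the main obstacle to be the length-matching step: ruling out a rescaling discrepancy between Morris's combinatorially normalised roots and Solleveld's $\mu$-normalised roots, which is exactly where the two theories could a priori differ and which requires the explicit rank-one computation of the $\mu$-function (or Plancherel measure) for depth-zero data; the sign bookkeeping in the last step is delicate but becomes routine once the conventions of the two sources are laid side by side.
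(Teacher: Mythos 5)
Your high-level architecture matches the paper's: embed both root systems in $a_{M}^{*}$ via $D_J(a+A'_J) = (Da)\restriction_{A_M}$ on the Morris side and $H_M^\vee$ on the Solleveld side, reduce the hard comparison to the rank-one Levi subgroups $M_\alpha$ (this is exactly what Section~\ref{Comparison of Morris and Solleveld's endomorphism algebras: general case} does, using Lemma~\ref{Gammareduction} to identify $\Gamma^{M_\alpha}(J,\rho_\alpha)$ with $\Gamma(J,\rho)\cap\Phi_{\aff}^{M_\alpha}$), and deduce the sign $\Delta^{\Mor}=-\Delta^{\Sol}$ from the choice of $P$ satisfying $D_J(\Gamma'(J,\rho)^+_e) = D_J(\Gamma'(J,\rho)_e) \cap (-\Sigma(P,A_M))$.

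However, at the one step you yourself flag as the main obstacle — ruling out a rescaling between Morris's and Solleveld's normalizations — your plan and the paper's proof diverge, and your plan is not carried through. You propose to compare an explicit description of the Harish-Chandra $\mu$-function for depth-zero supercuspidals with Morris's parameters $p_a$. The paper does not compute or invoke any explicit $\mu$-function formula. Instead, it uses the explicit isomorphism $T_{\rho_M}\circ I_U$ constructed in Sections~\ref{An explicit isomorphism}--\ref{Statements of main results} to transport $\Phi_{v[a,J]}$ into Solleveld's endomorphism algebra, proves (Lemma~\ref{lemmaforcomparisonofmorrisandsolleveldkeypropositionrank1}, by evaluating both sides of $(T_{\rho_M}\circ I_U)(\Phi_s)=b_0 T'_s+b'$ at the point $s$) that $b_0=c'(\theta_{h_\alpha^\vee})^{-\epsilon}$, and then feeds this into a purely algebraic rigidity statement for homomorphisms of rank-one affine Hecke algebras (Corollary~\ref{generalizationkeypropositionappendixinteger}), which forces $n=1$, i.e.\ $v(t_0)=(h_\alpha^\vee)'$, and simultaneously identifies the label functions. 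The rigidity lemma is proved by substituting roots of unity into Laurent-polynomial identities coming from the Bernstein relation~\eqref{bernsteinrel}, using only $q_1,q_0,q_1',q_0'>1$. No Plancherel-theoretic input is needed, and indeed the parameter identification of Corollary~\ref{comparisonofparameterqpower} is a \emph{consequence} of this argument, not an ingredient.

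This is a genuine gap in your proposal rather than merely a different route, for two reasons. First, you do not actually supply or cite the explicit rank-one $\mu$-function computation for depth-zero data that your argument requires; such a computation is essentially equivalent in difficulty to the thing being proved, and the paper's whole point is to avoid it. Second, your assertion that "the coroots $(R^{\Mor})^{\vee}$ and $(R^{\Sol})^{\vee}$ coincide\ldots being determined by the root system even though they sit in different ambient lattices" is too quick: the coroot identification is not a formal consequence of the root identification, because the lattices $\mathbb{Z}(R^{\Mor})^\vee$ and $M_\sigma/M^1$ are not a priori equal and Solleveld's rescaling $(h_\alpha^\vee)'=2h_\alpha^\vee$ when $q_\alpha=q_{\alpha*}$ is precisely what must be ruled out or matched; the paper handles this via the same rank-one rigidity lemma (Remark~\ref{dualofmaintheoremrootsystem} is an observation \emph{after} the theorem, not a shortcut to it). Your Weyl-group-matching criterion ("a reflection exists iff the parameter is $\neq 1$") is also coarser than Proposition~\ref{comparisonaffineweylandfiniteweyl}, whose proof has to work inside the endomorphism algebra and show that relation~\eqref{bernsteinrel} would otherwise force a pole in $\mathbb{C}[M_\sigma/M^1]$; it cannot be read off the parameters alone because on the $R(\mathfrak{s}_M)$ side there are non-reflection elements contributing basis elements $J_r$ to the endomorphism algebra.
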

Hence, we can identify the Weyl group $W_{0}(R^{\Mor})$ of $R^{\Mor}$ with the Weyl group $W_{0}(R^{\Sol})$ of $R^{\Sol}$.
We can also identify the set of simple reflections in $W_{0}(R^{\Mor})$ corresponding to the basis $\Delta^{\Mor}$ of $R^{\Mor}$ with the set of simple reflections in $W_{0}(R^{\Sol})$ corresponding to the basis $\Delta^{\Sol}$ of $R^{\Sol}$. 
For a simple reflection
\[
s \in W_{0}(R^{\Mor}) = W_{0}(R^{\Sol}),
\]
let $T_{s}^{\Mor}$ denote the element of the standard basis of $\mathcal{H}^{\Mor}$ corresponding to $s$, and let $T_{s}^{\Sol}$ denote the element of the standard basis of $\mathcal{H}^{\Sol}$ corresponding to $s$.
The second main result of this paper is as follows:
\begin{theorem}[Theorem~\ref{maintheoremisomofaffinehecke}]
There is an explicitly defined involution
\[
\iota \colon \mathcal{H}^{\Sol} \rightarrow \mathcal{H}^{\Sol}
\]
such that
\[
\left(
\iota \circ I^{\Sol} \circ T_{\rho_{M}} \circ I_{U} \circ (I^{\Mor})^{-1}
\right)
(T_{s}^{\Mor}) = T_{s}^{\Sol}
\]
for any simple reflection $s$ associated with 
\[
\alpha \in \Delta^{\Sol} = - \Delta^{\Mor}
\]
that is not the unique simple root in a type $A_{1}$ irreducible component of $R^{\Sol}$ or a long root in a type $C_{n} \ (n \ge 2)$ irreducible component of $R^{\Sol}$.
\end{theorem}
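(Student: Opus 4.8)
Write
\[
\Phi := I^{\Sol} \circ T_{\rho_{M}} \circ I_{U} \circ (I^{\Mor})^{-1}.
\]
The first step is to show that $\Phi$ is a well-defined isomorphism of affine Hecke algebras $\mathcal{H}^{\Mor} \to \mathcal{H}^{\Sol}$; equivalently, that the isomorphism $T_{\rho_{M}} \circ I_{U}$ of the full endomorphism algebras carries the subalgebra $\mathcal{H}(R(J, \rho))$ onto $\mathcal{H}(W(\Sigma_{\mathfrak{s}_{M}, \mu}))$. I would deduce this from Theorem~\ref{maintheoremrootsystem}, which identifies the two root systems, together with the compatibility of $I_{U}$ with the maps $I_{P}^{G}$ and $t_{P}$ recalled in the introduction: the latter pins down the image of the abelian (Bernstein-lattice) part, and the former forces the ``finite part'' of one algebra onto the ``finite part'' of the other. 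Granting this, the theorem reduces to computing $\Phi(T_{s}^{\Mor})$ for each simple reflection $s$ in $W_{0}(R^{\Mor}) = W_{0}(R^{\Sol})$.

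Fix such an $s$, corresponding to $\alpha \in \Delta^{\Sol} = -\Delta^{\Mor}$, and assume $\alpha$ is not one of the excluded roots. The heart of the proof is an explicit rank-one computation: I would take the concrete element $T_{s}^{\Mor}$ of $\mathcal{H}(R(J, \rho)) \subseteq \End_{G(F)}(\ind_{K}^{G(F)}(\rho))$ --- a normalized Hecke operator supported on the double coset attached to $s$, in the description of \cite[\S7]{MR1235019} --- push it through the explicit isomorphism $I_{U}$ and then through $T_{\rho_{M}}$, and compare the outcome with Solleveld's generator $T_{s}^{\Sol}$, tracking carefully the normalizations on both sides (that of $I_{U}$, and that of Solleveld's intertwining operators, the latter fixed through the $\mu$-function). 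For a non-excluded $s$ this should show both that the parameter $q_{s}$ attached to $s$ agrees in $\mathcal{H}^{\Mor}$ and in $\mathcal{H}^{\Sol}$, and that $\Phi(T_{s}^{\Mor})$ lies in the rank-one parabolic subalgebra $\langle T_{s}^{\Sol} \rangle$. Since $\Phi$ is an algebra isomorphism, $\Phi(T_{s}^{\Mor})$ is then a non-scalar solution of the quadratic relation $(X - q_{s})(X + 1) = 0$ in $\langle T_{s}^{\Sol} \rangle$, and there are exactly two such solutions, namely
\[
T_{s}^{\Sol} \quad \text{and} \quad (q_{s} - 1) - T_{s}^{\Sol} = -q_{s}\,(T_{s}^{\Sol})^{-1}.
\]
Hence $\Phi(T_{s}^{\Mor}) \in \{\, T_{s}^{\Sol},\ -q_{s}\,(T_{s}^{\Sol})^{-1} \,\}$, and the computation records which alternative occurs for each $s$.

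To finish, let $\mathcal{S}$ be the set of non-excluded simple reflections $s$ with $\Phi(T_{s}^{\Mor}) = -q_{s}(T_{s}^{\Sol})^{-1}$. An algebra isomorphism must treat the two generators attached to a simply-laced bond of the Dynkin diagram of $R^{\Sol}$ in the same way --- a mixed assignment contradicts the braid relation, as one sees by passing to the sign representation of the rank-two Hecke subalgebra of type $A_{2}$ --- so $\mathcal{S}$ is a union of simply-laced-connected components of the diagram, hence the $(-1)$-set of a linear character $\epsilon$ of $W_{0}(R^{\Sol})$ (with $\epsilon$ taken trivial on the excluded roots). I then let $\iota \colon \mathcal{H}^{\Sol} \to \mathcal{H}^{\Sol}$ be the sign-twist involution attached to $\epsilon$, so that $\iota(T_{s}^{\Sol}) = -q_{s}(T_{s}^{\Sol})^{-1}$ for $s \in \mathcal{S}$ and $\iota(T_{s}^{\Sol}) = T_{s}^{\Sol}$ for $s \notin \mathcal{S}$. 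Since $\iota(-q_{s}(T_{s}^{\Sol})^{-1}) = T_{s}^{\Sol}$, the composite $\iota \circ \Phi$ sends $T_{s}^{\Mor}$ to $T_{s}^{\Sol}$ for every non-excluded $s$, as desired.

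The real work lies in the rank-one computation of the second paragraph: one has to unwind the definition of $I_{U}$ on the double coset attached to $s$ and reconcile Morris's combinatorial normalization of $T_{s}^{\Mor}$, coming from the structure constants of the Hecke algebra of the reductive quotient of a parahoric, with Solleveld's analytic normalization of $T_{s}^{\Sol}$, coming from the Plancherel measure. Matching both the parameters and the normalizations, and seeing precisely why the comparison degenerates for type $A_{1}$ roots and for long roots in type $C_{n}$ ($n \ge 2$) components --- where, roughly, the affine root of $s$ sits in an affine subsystem of type $\widetilde{A}_{1}$ whose two parameters are attached by the two constructions to its two affine reflections in the opposite order, so that $\Phi(T_{s}^{\Mor})$ ceases to lie in $\langle T_{s}^{\Sol} \rangle$ and cannot be corrected by any involution of $\mathcal{H}^{\Sol}$ --- is where essentially all of the difficulty is concentrated; the remaining steps are formal manipulations with affine Hecke algebras.
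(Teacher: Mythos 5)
Your high-level picture is correct: the identification comes down to a rank-one computation of the image of the Morris generator, and the involution is essentially the sign-twist that sends each $T_{s}^{\Sol}$ to $-q_{s}(T_{s}^{\Sol})^{-1}$. However, as a proof the proposal has three substantive gaps.

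First, and most seriously, the rank-one computation is not carried out; you explicitly defer it and call it ``where essentially all of the difficulty is concentrated.'' The quadratic-relation argument you use in its place is circular: to conclude that $\Phi(T_s^{\Mor})$ is one of the two roots of $(X-q_s)(X+1)=0$ in $\mathbb{C}\langle 1, T_s^{\Sol}\rangle$, you must first know that $\Phi(T_s^{\Mor})$ lies in that two-dimensional span (it is \emph{not} automatic from the algebra-isomorphism property alone, and indeed for the excluded roots it fails --- the image carries a nontrivial $\theta$-factor) and that the Morris parameter $q_s$ equals the Solleveld parameter; but both of these facts are exactly the output of the computation being deferred. The paper's Lemma~\ref{lemmaforcomparisonofmorrisandsolleveldkeypropositionrank1} (evaluating the Hecke element against a test function $f^{G}_{v}$ at the double-coset representative $s$ and comparing the resulting values of $T_{\rho_M}\circ I_U(\Phi_s)$ and $T'_s$) is precisely this missing step, and the parameter comparison then drops out of an abstract classification of rank-one affine Hecke algebra homomorphisms (Appendix~\ref{Homomorphism between Affine Hecke algebras of type}) using the full Bernstein relation, not just the quadratic one.

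Second, your proposal does not address the non-maximal Levi case at all. The paper's rank-one computation is valid only when $M$ is maximal in $G$; a separate reduction (Section~\ref{Comparison of Morris and Solleveld's endomorphism algebras: general case}) is needed to pass from a simple root $\alpha \in \Delta_{\mathfrak{s}_M,\mu}(P)$ to the subgroup $M_\alpha$ in which $M$ is maximal. This requires (i) choosing a basis $B'$ of $\Phi_{\aff}$ with respect to which $a$ is simple and $M_\alpha$ is standard, (ii) relating the generators $\Phi_s$ and $T'_s$ for $G$ to those for $M_\alpha$ via transitivity of $t_P$ and of $I_P^G$, and (iii) controlling the change of basis by intertwining operators $\theta_{w^{-1}B\mid B}$ and $J_{P'\mid P}(\sigma\otimes\cdot)$ whose effect has to be tracked up to $\mathbb{C}[M_\sigma/M^1]^\times$-conjugation. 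Without this, the argument only covers maximal $M$.

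Third, the involution is underspecified. For $\iota$ to be an algebra automorphism of $\mathcal{H}^{\Sol}$, compatibility with the Bernstein relation forces its action on the lattice part: one must have $\iota(\theta_y) = \theta_{-y}$. Your description only fixes $\iota$ on the generators $T_s^{\Sol}$. The paper's $\iota = \iota_Y \otimes \iota_0$ makes this explicit, and the $\iota_Y$ piece is not optional: Corollary~\ref{corollaryvectorpart} already pins down the image of $\theta_{v(t)}$ as a multiple of $\theta_{t^{-1}}$, and the sign flip must be absorbed by $\iota$ for the final identity to hold. Related to this, once the computation is actually performed, the answer is the $\iota_0$-twisted generator for \emph{every} non-excluded simple reflection simultaneously (since $\epsilon_\alpha = 0$ for each such $\alpha$), so the braid-relation / linear-character machinery for determining which alternative occurs on each connected component is unnecessary.
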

We also describe the image of $T^{\Mor}_{s}$ for a simple reflection $s$ associated with
\[
\alpha \in \Delta^{\Sol} = - \Delta^{\Mor}
\]
that is the unique simple root in a type $A_{1}$ irreducible component of $R^{\Sol}$ or a long root in a type $C_{n} \ (n \ge 2)$ irreducible component of $R^{\Sol}$ (see Theorem~\ref{maintheoremisomofaffinehecke}).

For
\[
\alpha \in R^{\Mor} = R^{\Sol},
\]
let $\theta_{- \alpha^{\vee}}$ denote the element of the group algebra $\mathbb{C}[\mathbb{Z} (R^{\Mor})^{\vee}]$ that corresponds to $- \alpha^{\vee} \in (R^{\Mor})^{\vee}$.
We note that $\mathbb{C}[\mathbb{Z} (R^{\Mor})^{\vee}]$ is a subalgebra of the affine Hecke algebra $\mathcal{H}^{\Mor}$. 
We also write $\theta_{\alpha^{\vee}}$ for the element of the group algebra
\[
\mathbb{C}[\mathbb{Z} (R^{\Sol})^{\vee}] \subset \mathcal{H}^{\Sol}
\]
corresponding to $\alpha^{\vee} \in (R^{\Sol})^{\vee}$.
Then, we also prove:
\begin{theorem}[Corollary~\ref{maintheoremc=1}]
For
\[
\alpha \in R^{\Mor} = R^{\Sol},
\]
we have
\[
\left(
\iota \circ I^{\Sol} \circ T_{\rho_{M}} \circ I_{U} \circ (I^{\Mor})^{-1}
\right)(\theta_{- \alpha^{\vee}}) = \theta_{\alpha^{\vee}}.
\]
\end{theorem}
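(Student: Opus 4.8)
The plan is to set $\Phi:=\iota\circ I^{\Sol}\circ T_{\rho_{M}}\circ I_{U}\circ(I^{\Mor})^{-1}$, an algebra isomorphism $\mathcal{H}^{\Mor}\to\mathcal{H}^{\Sol}$ for which Theorem~\ref{maintheoremisomofaffinehecke} already records $\Phi(T_{s}^{\Mor})$ for every simple reflection $s$, and to compute $\Phi$ on the commutative subalgebra $\mathbb{C}[\mathbb{Z}(R^{\Mor})^{\vee}]$, using Theorem~\ref{maintheoremrootsystem} throughout to identify $R^{\Mor}$ with $R^{\Sol}$. The first step is a reduction to simple coroots. Since $\theta_{-\alpha^{\vee}}=\theta_{(-\alpha)^{\vee}}$ and $\Phi$ is multiplicative, it suffices to treat positive roots $\alpha\in R^{\Mor}$; and for such $\alpha$ one has $\alpha^{\vee}=\sum_{\beta\in\Delta^{\Mor}}n_{\beta}\beta^{\vee}$ with all $n_{\beta}\in\mathbb{Z}_{\ge 0}$, so in the group algebra $\mathbb{C}[\mathbb{Z}(R^{\Mor})^{\vee}]$ one has $\theta_{-\alpha^{\vee}}=\prod_{\beta}\theta_{-\beta^{\vee}}^{\,n_{\beta}}$. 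Applying the algebra homomorphism $\Phi$, together with the analogous factorization $\theta_{\alpha^{\vee}}=\prod_{\beta}\theta_{\beta^{\vee}}^{\,n_{\beta}}$ in $\mathbb{C}[\mathbb{Z}(R^{\Sol})^{\vee}]$, reduces the claim for $\alpha$ to the claim for the simple roots $\beta$, and one is left with the case $\alpha\in\Delta^{\Mor}$.

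For $\alpha\in\Delta^{\Mor}$ the second step is to show that $\Phi(\theta_{-\alpha^{\vee}})=c_{\alpha}\theta_{\alpha^{\vee}}$ for some scalar $c_{\alpha}\in\mathbb{C}^{\times}$; the content of the corollary is then exactly that $c_{\alpha}=1$. This a priori shape I would extract from the compatibility of $I_{U}$ with $I_{P}^{G}$ and $t_{P}$ established earlier: via $(I^{\Mor})^{-1}$ the element $\theta_{-\alpha^{\vee}}$ lies in the image of $t_{P}$, via $(I^{\Sol})^{-1}$ (and the explicit action of $\iota$ on $\mathbb{C}[\mathbb{Z}(R^{\Sol})^{\vee}]$) the element $\theta_{\alpha^{\vee}}$ lies in the image of $I_{P}^{G}$, and $T_{\rho_{M}}\circ I_{U}\circ t_{P}=I_{P}^{G}\circ\psi$, where $\psi\colon\End_{M(F)}(\ind_{K_{M}}^{M(F)}(\rho_{M}))\to\End_{M(F)}(\ind_{M^{1}}^{M(F)}(\sigma_{1}))$ is the isomorphism induced by transitivity of compact induction. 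Since on the $M(F)$-side both $I^{\Mor}$ and $I^{\Sol}$ realize $\theta_{\pm\alpha^{\vee}}$ by translation endomorphisms attached to a representative in $M(F)$ of $\pm\alpha^{\vee}\in M_{\sigma}/M^{1}$, and $\psi$ matches such translations, the two sides agree up to a scalar, the sign flip being dictated by $\Delta^{\Mor}=-\Delta^{\Sol}$.

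The scalar $c_{\alpha}$ I would then pin down from the Bernstein--Lusztig relation. In $\mathcal{H}^{\Mor}$ the element $T_{s_{\alpha}}^{\Mor}\theta_{\alpha^{\vee}}-\theta_{-\alpha^{\vee}}T_{s_{\alpha}}^{\Mor}$ equals an explicit element of $\mathbb{C}[\mathbb{Z}(R^{\Mor})^{\vee}]$ involving only $\theta_{\pm\alpha^{\vee}}$ and the parameter $q_{s_{\alpha}}$. Applying $\Phi$, inserting $\Phi(T_{s_{\alpha}}^{\Mor})=T_{s_{\alpha}}^{\Sol}$ from Theorem~\ref{maintheoremisomofaffinehecke} and $\Phi(\theta_{\pm\alpha^{\vee}})=c_{\alpha}^{\pm1}\theta_{\pm\alpha^{\vee}}$, and comparing with the genuine Bernstein--Lusztig relation for the simple reflection $s_{\alpha}$ of $\mathcal{R}^{\Sol}$ (whose simple root is $-\alpha$), one obtains an identity in $\mathcal{H}^{\Sol}$ of the shape $(c_{\alpha}^{2}-1)\,\theta_{\alpha^{\vee}}T_{s_{\alpha}}^{\Sol}=(q_{s_{\alpha}}-1)(1-c_{\alpha})$; since $\theta_{\alpha^{\vee}}T_{s_{\alpha}}^{\Sol}$ and $1$ are linearly independent in $\mathcal{H}^{\Sol}$ and $q_{s_{\alpha}}\neq 1$, this forces $c_{\alpha}=1$, whence $\Phi(\theta_{-\alpha^{\vee}})=\theta_{\alpha^{\vee}}$. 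For the $\alpha\in\Delta^{\Mor}$ such that $-\alpha$ is the unique simple root of a type $A_{1}$ irreducible component of $R^{\Sol}$ or a long root of a type $C_{n}\ (n\ge 2)$ irreducible component, I would repeat the computation with the modified value of $\Phi(T_{s_{\alpha}}^{\Mor})$ given in Theorem~\ref{maintheoremisomofaffinehecke} and with the corresponding (unequal-parameter) Bernstein--Lusztig relation, solving again for $c_{\alpha}$.

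I expect the main obstacle to be the sign and normalization bookkeeping of the second step: verifying that the translation endomorphisms attached to $\pm\alpha^{\vee}$ genuinely correspond under $\psi$, and disentangling which sign is contributed by $\Delta^{\Mor}=-\Delta^{\Sol}$, which by the differing conventions of \cite{MR1235019} and \cite{MR4432237} for the direction of the translations, and which by $\iota$ — together with checking, in the exceptional $A_{1}$ and $C_{n}$ cases, that the more complicated Bernstein--Lusztig relation still forces the scalar to be trivial.
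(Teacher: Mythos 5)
Your proposal follows essentially the same route as the paper's own proof. The paper also reduces via the basis $(\Delta^{\Sol})^{\vee}$ to simple coroots, has already established (Corollary~\ref{corollaryvectorpart}, which comes precisely from the compatibility of $I_{U}$ with $t_{P}$ and $I_{P}^{G}$ that you invoke) that the image of $\theta_{-\alpha^{\vee}}$ is a scalar multiple $c^{-1}\theta_{\alpha^{\vee}}$, and then applies the map to the Bernstein--Lusztig relation $\theta_{-(\alpha')^{\vee}}T^{\Mor}_{s}-T^{\Mor}_{s}\theta_{(\alpha')^{\vee}}=\cdots$, compares with the genuine relation for $T^{\Sol}_{s}$, and reads off $c=1$ from the constant term, treating $\epsilon_{\alpha}=0$ and $\epsilon_{\alpha}=1$ separately exactly as you propose to do for the type $A_{1}$ and type $C_{n}$ long-root simple roots.
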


We sketch the outline of this paper.
In Section~\ref{An explicit isomorphism}, we construct an isomorphism
\begin{align}
\label{isomorphismIUinintroduction}
I_{U} \colon \End_{G(F)}\left(\ind_{K}^{G(F)} (\rho)\right) \rightarrow \End_{G(F)}\left(I^{G}_{P} \left( \ind_{K_{M}}^{M(F)} (\rho_{M}) \right)\right)
\end{align}
for a $G$-cover $(K, \rho)$ of $(K_{M}, \rho_{M})$.
In Section~\ref{Hecke algebra injections}, we prove that isomorphism~\eqref{isomorphismIUinintroduction} is compatible with the injections
\[
I_{P}^{G} \colon \End_{M(F)}\left( \ind_{K_{M}}^{M(F)} (\rho_{M}) \right) \rightarrow \End_{G(F)}\left(I^{G}_{P} \left( \ind_{K_{M}}^{M(F)} (\rho_{M}) \right)\right)
\]
and
\[
t_{P} \colon \End_{M(F)}\left( \ind_{K_{M}}^{M(F)} (\rho_{M}) \right) \rightarrow \End_{G(F)}\left(\ind_{K}^{G(F)} (\rho)\right).
\]
In Section~\ref{The case of depth-zero types}, we review the description of the endomorphism algebra $\End_{G(F)}\left(\ind_{K}^{G(F)} (\rho)\right)$ for a depth-zero type $(K, \rho)$ in \cite{MR1235019}.
We also rewrite the description in terms of an affine Hecke algebra.
In Section~\ref{A review of Solleveld's results}, we review the description of the endomorphism algebra $\End_{G(F)}\left(I_{P}^{G}\left(\ind_{M^1}^{M(F)}(\sigma_{1})\right)\right)$ in \cite{MR4432237}.
In Section~\ref{Statements of main results}, we explain how to connect the right hand side of isomorphism~\eqref{isomorphismIUinintroduction} with an object of Section~\ref{A review of Solleveld's results} and state the main results of this paper.
In Section~\ref{Some lemmas about elements of}, we prepare some lemmas to prove the main results.
In Section~\ref{Comparison of Morris and Solleveld's endomorphism algebras : maximal case}, we prove the main results in case that $M$ is a maximal proper Levi subgroup of $G$.
Finally, in Section~\ref{Comparison of Morris and Solleveld's endomorphism algebras: general case}, we prove the main results for general cases.
\subsection*{Acknowledgment}
I am deeply grateful to my supervisor Noriyuki Abe for his enormous support and helpful advice. He checked the draft and gave me useful comments. I am supported by the FMSP program at Graduate School of Mathematical Sciences, the University of Tokyo and JSPS KAKENHI Grant number JP22J22712.
\section{Notation and assumptions}
Let $F$ be a non-archimedean local field of residue characteristic $p$, and let $k_F$ denote its residue field.
We write $q_{F} = \abs{k_{F}}$.
Let $\ord_{F}$ denote the unique discrete valuation on $F^{\times}$ such that the image of $\ord_{F}$ is $\mathbb{Z}$.

Let $G$ be a connected reductive group defined over $F$.
For a connected reductive group $H$, especially for a Levi subgroup of $G$, let $X_{\unr}(H)$ denote the set of unramified characters of $H(F)$, and let 
\[
H^1 = \bigcap_{\chi \in X_{\unr}(H)} \ker(\chi).
\]
For a parabolic subgroup $P$ of $G$ with Levi factor $M$ and unipotent radical $U$, let $\overline{P}$ denote the opposite parabolic subgroup of $P$ and $\overline{U}$ denote the unipotent radical of $\overline{P}$.
We define the modular function
\[
\delta_{P} \colon M(F) \rightarrow \mathbb{R}_{>0}
\]
as \cite[II.3.7]{MR2567785}.
Hence, for any compactly supported smooth function $f$ on $U(F)$, $m \in M(F)$, and Haar measure $du$ on $U(F)$, we have
\[
\int_{U(F)} f(mum^{-1}) \, du = \delta_{P}(m) \int_{U(F)} f(u) du.
\]
For a smooth representation $(\pi, V)$ of $G(F)$, let $(\pi_{U}, V_{U})$ denote the (un-normalized) Jacquet module of $(\pi, V)$ with respect to $P$, and let
\[
j_{U}(\pi) \colon V \to V_{U}
\] 
denote the canonical quotient map.
For a smooth representation $(\tau, W)$ of $M(F)$, let 
\[
\left(\Ind_{P}^{G}(\tau), \Ind_{P}^{G}(W)\right)
\]
denote the (un-normalized) parabolically induced representation of $(\tau, W)$ with respect to $P$.
Here, we realize $\Ind_{P}^{G}(\tau)$ as the right regular representation on
\[
\Ind_{P}^{G}(W) = \left\{
f \colon G(F) \rightarrow W \colon \text{smooth} \mid 
f(umg) = \tau(m) \cdot f(g) \, (u \in U(F), m \in M(F), g \in G(F))
\right\}.
\]
We write the normalized Jacquet functor and the normalized parabolic induction functor as $r_{P}^{G}$ and $I_{P}^{G}$, respectively.
Hence, for a smooth representation $(\pi, V)$ of $G(F)$,
\[
r_{P}^{G}(\pi) = \pi_{U} \otimes \delta_{P}^{1/2},
\]
and for a smooth representation $(\tau, W)$ of $M(F)$, 
\[
I_{P}^{G}(\tau) = \Ind_{P}^{G}(\tau \otimes \delta_{P}^{-1/2}).
\]

Let $K$ be an open subgroup of a locally profinite group $H$.
For a smooth representation $(\rho, V_{\rho})$ of $K$, let 
\[
\left(
\ind_{K}^{H} (\rho), \ind_{K}^{H} (V_{\rho})
\right)
\]
denote the compactly induced representation of $(\rho, V_{\rho})$.
Here, we realize $\ind_{K}^{H}(\rho)$ as the right regular representation on
\[
\ind_{K}^{H}(V_{\rho}) = \left\{
f \colon H \rightarrow V_{\rho} \colon \text{compactly supported modulo $K$} \mid 
f(kg) = \rho(k) \cdot f(g) \, (k \in K, g \in G(F))
\right\}.
\]

Let $K$ be a compact open subgroup of a locally profinite group $H$.
For a smooth representation $(\pi, V)$ of $H$ and an irreducible smooth representation $(\rho, V_{\rho})$ of $K$, let $V^{(K, \rho)}$ denote the $(K, \rho)$-isotypic subspace of $V$.
If $\rho$ is the trivial representation of $K$, we simply write $V^{K}$ for $V^{(K, \rho)}$. 

For any smooth representation $(\rho, V_{\rho})$ of a locally profinite group, let $(\rho^{\vee}, V_{\rho}^{\vee})$ denote the contragredient representation of $(\rho, V_{\rho})$.

Suppose that $K$ is a subgroup of a group $H$ and $h \in H$. 
Let $^hK$ denote the subgroup $hKh^{-1}$ of $H$.
If $\rho$ is a representation of $K$, $^h\!\rho$ denotes the representation $x\mapsto \rho(h^{-1}xh)$ of $^hK$. 
We sometimes write $h \rho$ for $^h\!\rho$.
If $\text{Hom}_{K\cap ^h\!K}(^h\!\rho, \rho)$ is non-zero, we say $h$ \emph{intertwines} $\rho$.
We write
\[
I_{H}(\rho) = \{
h \in H \mid \text{$h$ intertwines $\rho$}
\}.
\]

For a group $H$, let $\mathbb{C}[H]$ denote the group algebra of $H$ over $\mathbb{C}$ and
\[
\{
\theta_{h} \mid h \in H
\}
\]
denote the standard basis of $\mathbb{C}[H]$.

For a vector space $V$ over a field $\mathbb{K}$, let $V^{*}$ denote the dual vector space
\[
V^{*} = \Hom_{\mathbb{K}}(V, \mathbb{K}).
\]
\section{An explicit isomorphism}
\label{An explicit isomorphism}
We recall the definition of $G$-covers.
Let $M$ be a Levi subgroup of $G$ and $K$ be a compact open subgroup of $G(F)$.
We write $K_{M} = K \cap M(F)$.
For a parabolic subgroup $P$ with Levi factor $M$ and unipotent radical $U$, we also write $K_{U} = K \cap U(F)$ and $K_{\overline{U}} = K \cap \overline{U}(F)$.
We say that $K$ decomposes with respect to $U, M, \overline{U}$ if
\[
K= K_{U} \cdot K_{M} \cdot K_{\overline{U}}.
\]
Let $(\rho, V_{\rho})$ be an irreducible smooth representation of $K$ and $(\rho_{M}, V_{\rho_{M}})$ be an irreducible smooth representation of $K_{M}$.
The pair $(K, \rho)$ is called a $G$-cover of $(K_{M}, \rho_{M})$ if for any parabolic subgroup $P=MU$ with Levi factor $M$, we have
\begin{enumerate}
\item $K$ decomposes with respect to $U, M, \overline{U}$.
\item $K_{U}$ and $K_{\overline{U}}$ are contained in the kernel of $\rho$, and $\rho\restriction_{K_{M}} = \rho_{M}$.
\item For any irreducible smooth representation $(\pi, V)$ of $G(F)$, the restriction of $j_{U}(\pi)$ to $V^{(K, \rho)}$ is an injection.
\end{enumerate}
The notion of $G$-covers is originally introduced in \cite[Definition~8.1]{MR1643417}.
Here, we use a reformation given in \cite[Th{\'e}or{\`e}me 1]{MR1490115} (see also \cite[Section~4.1]{MR1901371} and \cite[Section~4.2]{MR3753917}).
In the presence of (1) and (2), the third condition is equivalent to the condition below (see \cite[Proposition~7.14]{MR1643417}):
\begin{description}
\item[(3')] For any smooth representation $(\pi, V)$ of $G(F)$, $j_{U}(\pi)$ induces an isomorphism
\[
V^{(K, \rho)} \rightarrow V_{U}^{(K_{M}, \rho_{M})}.
\]
\end{description}
We note that if $(K, \rho)$ is a $G$-cover of $(K_{M}, \rho_{M})$, the representation space $V_{\rho}$ of $\rho$ is equal to the representation space $V_{\rho_{M}}$ of $\rho_{M}$.

The following Lemma will be used below:
\begin{lemma}
\label{contraofcover}
Let $(K, \rho)$ be a $G$-cover of $(K_{M}, \rho_{M})$. Then, $(K, \rho^{\vee})$ is a $G$-cover of $(K_{M}, (\rho_{M})^{\vee})$.
\end{lemma}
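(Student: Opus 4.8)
The plan is to verify the three defining conditions of a $G$-cover for the pair $(K, \rho^{\vee})$ relative to $(K_{M}, (\rho_{M})^{\vee})$, for an arbitrary parabolic subgroup $P = MU$ with Levi factor $M$. Conditions (1) and (2) are immediate: condition (1), that $K$ decomposes with respect to $U, M, \overline{U}$, depends only on the group $K$ and not on the representation, so it is inherited verbatim from the hypothesis that $(K, \rho)$ is a $G$-cover. For condition (2), since $K_{U}$ and $K_{\overline{U}}$ lie in $\ker(\rho)$, they also lie in $\ker(\rho^{\vee})$ (the contragredient of a representation trivial on a subgroup is again trivial on that subgroup), and $\rho\restriction_{K_{M}} = \rho_{M}$ immediately gives $\rho^{\vee}\restriction_{K_{M}} = (\rho_{M})^{\vee}$ because restriction commutes with taking contragredients of smooth representations.

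The substantive point is condition (3), or equivalently (3'). First I would recall that since $(K, \rho)$ satisfies (1) and (2), the same holds for $(K, \rho^{\vee})$, so by \cite[Proposition~7.14]{MR1643417} it suffices to check (3): for every irreducible smooth representation $(\pi, V)$ of $G(F)$, the restriction of $j_{U}(\pi)$ to $V^{(K, \rho^{\vee})}$ is injective. The natural strategy is to pass to the contragredient representation. Given an irreducible smooth $(\pi, V)$, apply the hypothesis that $(K, \rho)$ is a $G$-cover to the irreducible smooth representation $(\pi^{\vee}, V^{\vee})$ (the smooth contragredient), together with the opposite parabolic $\overline{P} = M\overline{U}$: this tells us that $j_{\overline{U}}(\pi^{\vee})$ restricted to $(V^{\vee})^{(K, \rho)}$ is injective. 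One then uses the standard duality between Jacquet modules and parabolic induction along opposite parabolics — concretely, the pairing identifying $(V_{\overline{U}})^{\vee}$ (smooth dual of the Jacquet module along $\overline{U}$) with $(V^{\vee})_{U}$, i.e. $\pi_{U}$ and $(\pi^{\vee})_{\overline{U}}$ are naturally contragredient as $M(F)$-representations (this is the second adjointness / Casselman's pairing). Under this duality the map $j_{U}(\pi)$ is, up to the canonical identifications, the transpose of $j_{\overline{U}}(\pi^{\vee})$, so injectivity of the latter on the relevant isotypic space translates into the required statement for the former. One also needs that the isotypic-space functor $V \mapsto V^{(K, \rho)}$ behaves well under this duality, namely that $(V^{\vee})^{(K, \rho)}$ is naturally the dual of $V^{(K, \rho^{\vee})}$ — which holds because $K$ is compact open, so $V^{(K,\rho^{\vee})}$ is a direct summand cut out by an idempotent in the Hecke algebra and finite-dimensional-over-each-vector issues do not arise.

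I expect the main obstacle to be bookkeeping the compatibility of the various canonical identifications: matching Casselman's duality $(\pi_{U})^{\vee} \simeq (\pi^{\vee})_{\overline{U}}$ with the canonical projections $j_{U}$ and $j_{\overline{U}}$, and checking that the transpose of the restriction of $j_{\overline{U}}(\pi^{\vee})$ to $(V^{\vee})^{(K,\rho)}$ really is the restriction of $j_{U}(\pi)$ to $V^{(K,\rho^{\vee})}$ under these identifications. There is also a modular-character subtlety — the un-normalized Jacquet functors $\pi \mapsto \pi_{U}$ are not self-dual, and the duality involves a twist by $\delta_{P}$ — but since a twist by a character of $M(F)$ is invisible to the finite group $K_{M}$ and does not affect injectivity, this twist is harmless here; I would simply note it rather than track it. Once the identifications are pinned down, injectivity transfers formally because the transpose of an injection between (the relevant) smooth representations, restricted to isotypic components for a compact open subgroup, is a surjection, and dualizing once more recovers an injection; alternatively one argues directly that a surjection's transpose is injective. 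This completes the verification of (3) and hence the lemma.
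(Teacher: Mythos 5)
Your proposal takes essentially the same route as the paper's proof: dispose of conditions (1) and (2) trivially, then verify (3) by passing to the contragredient and the opposite parabolic and invoking the duality between $\pi_U$ and $(\pi^{\vee})_{\overline{U}}$. You also correctly flag the modular twist as harmless on $K_M$-isotypic spaces. The one place where you wave your hand is exactly where the paper spends its effort: pinning down the precise compatibility between the canonical pairing on $V^{(K,\rho)}\times(V^{\vee})^{(K,\rho^{\vee})}$, Casselman's pairing on the Jacquet-module isotypic spaces, and the projections $j_U(\pi), j_{\overline{U}}(\pi^{\vee})$. The paper does this concretely by building the pairing $\langle\,,\,\rangle_U$ from the canonical sections $s_P^{K^+}, s_{\overline{P}}^{K^+}$ of the Jacquet quotients on $K^+$-fixed vectors (Renard's Proposition~VI.6.1) and then checking that the chain
\[
(V^{\vee})^{(K,\rho^{\vee})}\;\simeq\;\bigl(V^{(K,\rho)}\bigr)^{*}\;\simeq\;\bigl(V_U^{(K_M,\rho_M)}\bigr)^{*}\;\simeq\;(V^{\vee})_{\overline{U}}^{(K_M,(\rho_M)^{\vee})}
\]
literally equals $j_{\overline{U}}(\pi^{\vee})$; your appeal to ``$j_U(\pi)$ is, up to identifications, the transpose of $j_{\overline{U}}(\pi^{\vee})$'' is this same fact stated more loosely, and the sentence about ``transpose of an injection is a surjection, dualize once more'' is logically more convoluted than needed — it is cleaner to note, as the paper does, that $j_U(\pi)$ on $V^{(K,\rho)}$ is already an isomorphism by condition (3') and to read off that the displayed composition is an isomorphism. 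So you have the right idea and the right key references; what you are missing is the explicit verification that the identifications are compatible, which is precisely the content of the paper's construction of $\langle\,,\,\rangle_U$.
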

\begin{proof}
It is obvious that $(K, \rho^{\vee})$ satisfies the first two conditions of $G$-covers.
We will prove that $(K, \rho^{\vee})$ satisfies the third condition.
Let $(\pi, V)$ be an irreducible (hence admissible) smooth representation of $G(F)$. 
We write $\langle , \rangle$ for the canonical $G(F)$-invariant pairing on $V \times V^{\vee}$.
Then, $\langle , \rangle$ restricts to a perfect pairing on 
\[
V^{(K, \rho)} \times (V^{\vee})^{(K, \rho^{\vee})}.
\]
On the other hand, for any parabolic subgroup $P$ with Levi factor $M$ and unipotent radical $U$, we can define a canonical perfect pairing $\langle, \rangle_{U}$ on 
\[V_{U}^{(K_{M}, \rho_{M})} \times \left(V^{\vee}\right)_{\overline{U}}^{(K_{M}, (\rho_{M})^{\vee})}\]
as follows.

Let $K^{+}$ denote the kernel of $\rho$ and $K_{M}^{+}$ denote the kernel of $\rho_{M}$. Since $(K, \rho)$ is a $G$-cover of $(K_{M}, \rho_{M})$, we obtain
\[
K^{+} = K_{U} \cdot K_{M}^{+} \cdot K_{\overline{U}}.
\]
According to \cite[Th{\'e}or{\`e}me~VI.6.1]{MR2567785}, $j_{U}(\pi)$ induces a surjection
\[
V^{K^{+}} \rightarrow V_{U}^{K_{M}^{+}}.
\]
Moreover, according to \cite[Proposition~~VI.6.1]{MR2567785}, this surjection has a canonical section
\[
s_{P}^{K^{+}} \colon V_{U}^{K_{M}^{+}} \rightarrow V^{K^{+}},
\]
hence we obtain a decomposition
\[
V^{K^{+}} = \im(s_{P}^{K^{+}}) \oplus \ker(j_{U}(\pi)).
\]
Similarly, there exists a canonical section
\[
s_{\overline{P}}^{K^{+}} \colon \left(V^{\vee}\right)_{\overline{U}}^{K_{M}^{+}} \rightarrow (V^{\vee})^{K^{+}}
\]
of the surjection
\[
j_{\overline{U}}(\pi^{\vee}) \colon (V^{\vee})^{K^{+}} \rightarrow \left(V^{\vee}\right)_{\overline{U}}^{K_{M}^{+}},
\]
and we obtain a decomposition
\[
(V^{\vee})^{K^{+}} = \im(s_{\overline{P}}^{K^{+}}) \oplus \ker(j_{\overline{U}}(\pi^{\vee})).
\]
Moreover, $\im(s_{P}^{K^{+}})$ is orthogonal to $\ker(j_{\overline{U}}(\pi^{\vee}))$, and $\im(s_{\overline{P}}^{K^{+}})$ is orthogonal to $\ker(j_{U}(\pi))$ with respect to $\langle , \rangle$ (see the proof of \cite[Proposition~VI.9.6]{MR2567785}).
Hence, the pairing $\langle , \rangle$ restricts to a perfect pairing on 
\[
\im(s_{P}^{K^{+}}) \times \im(s_{\overline{P}}^{K^{+}}),
\]
and
\begin{align}
\label{pairing}
([v], [\check{v}]) \mapsto \langle 
s_{P}^{K^{+}}([v]) , s_{\overline{P}}^{K^{+}}([\check{v}])
\rangle
\end{align}
defines a perfect pairing on
\[
\left(V_{U}\right)^{K_{M}^{+}} \times \left(V^{\vee}\right)_{\overline{U}}^{K_{M}^{+}}.
\]
We define the perfect pairing $\langle, \rangle_{U}$ on 
\[\left(V_{U}\right)^{(K_{M}, \rho_{M})} \times \left(V^{\vee}\right)_{\overline{U}}^{(K_{M}, (\rho_{M})^{\vee})}\]
as the restriction of \eqref{pairing}.

Now, we obtain isomorphisms
\begin{align*}
(V^{\vee})^{(K, \rho^{\vee})} & \simeq \left( V^{(K, \rho)} \right)^{*}\\
& \simeq \left(V_{U}^{(K_{M}, \rho_{M})} \right)^{*}\\
& \simeq \left(V^{\vee}\right)_{\overline{U}}^{(K_{M}, (\rho_{M})^{\vee})}.
\end{align*}
Here, the first isomorphism is given by the perfect pairing $\langle , \rangle$ on
\[
V^{(K, \rho)} \times (V^{\vee})^{(K, \rho^{\vee})},
\] 
the second isomorphism is given by the isomorphism
\[
j_{U}(\pi) \colon V^{(K, \rho)} \rightarrow V_{U}^{(K_{M}, \rho_{M})}
\]
of condition (3') of $G$-covers, and the third isomorphism is given by the perfect pairing $\langle, \rangle_{U}$ on
\[V_{U}^{(K_{M}, \rho_{M})} \times \left(V^{\vee}\right)_{\overline{U}}^{(K_{M}, (\rho_{M})^{\vee})}.\]
The construction of $\langle, \rangle_{U}$ implies that the composition of these isomorphisms coincides with the map
\[
j_{\overline{U}}(\pi^{\vee}) \colon (V^{\vee})^{(K, \rho^{\vee})} \rightarrow \left(V^{\vee}\right)_{\overline{U}}^{(K_{M}, (\rho_{M})^{\vee})}.
\]
Thus, we have proved that for any irreducible smooth representation $(\pi, V)$ of $G(F)$ and parabolic subgroup $P$ with Levi factor $M$ and unipotent radical $U$, $j_{\overline{U}}(\pi^{\vee})$ induces an isomorphism
\[
(V^{\vee})^{(K, \rho^{\vee})} \rightarrow \left(V^{\vee}\right)_{\overline{U}}^{(K_{M}, (\rho_{M})^{\vee})}.
\]
In particular, the restriction of $j_{\overline{U}}(\pi^{\vee})$ to $(V^{\vee})^{(K, \rho^{\vee})}$ is an injection.
Since 
\[
(\pi, V) \leftrightarrow (\pi^{\vee}, V^{\vee})
\]
gives a bijection of the set of irreducible smooth representations of $G(F)$, and 
\[
P \leftrightarrow \overline{P}
\]
gives a bijection of the set of parabolic subgroups with Levi factor $M$, we conclude that $(K, \rho^{\vee})$ satisfies condition (3), hence it is a $G$-cover of $(K_{M}, (\rho_{M})^{\vee})$.
\end{proof}

We fix a parabolic subgroup $P$ with Levi factor $M$ and unipotent radical $U$.
From a $G$-cover $(K, \rho)$ of $(K_{M}, \rho_{M})$, we obtain two kinds of representations:
\begin{enumerate}
\item The compactly induced representation $\ind_{K}^{G(F)} (\rho)$.
\item The parabolically induced representation $I^{G}_{P} \left( \ind_{K_{M}}^{M(F)} (\rho_{M}) \right)$.
\end{enumerate}
According to \cite[Lemma~B.3]{2020arXiv201102456B}, these two representations are isomorphic.
However, \cite[Lemma~B.3]{2020arXiv201102456B} is proved by using the uniqueness of adjoints, and the isomorphism is not described explicitly.
We will give an explicit isomorphism between these representations following the arguments in the proof of \cite[Corollary~3.6]{MR2485794}.
\begin{lemma}
\label{isomU1}
The map
\[
I_{U, 1} \colon f \mapsto [g \mapsto
[m \mapsto \delta_{P}(m)^{1/2} \cdot f(mg)]
]\index{$I_{U, 1}$}
\]
gives an isomorphism
\[
\ind_{U(F)K_{M}}^{G(F)} \left(\Inf(\rho_{M})\right) \rightarrow I^{G}_{P} \left( \ind_{K_{M}}^{M(F)} (\rho_{M}) \right).
\]
Here, 
\[
\left(\Inf(\rho_{M}), V_{\rho_{M}} \right)
\]
denotes the inflation of $\rho_{M}$ to $U(F)K_{M}$ via the canonical map
\[
U(F)K_{M} \to U(F)K_{M}/U(F) \simeq K_{M}.
\]
\end{lemma}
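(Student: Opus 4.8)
The plan is to prove the isomorphism concretely, by writing down an explicit two-sided inverse of $I_{U, 1}$ and verifying that both maps intertwine the right regular actions; the factor $\delta_{P}(m)^{1/2}$ is exactly what is needed to reconcile the unnormalized induction that appears naturally (via induction in stages through $P(F)$) with the normalized functor $I_{P}^{G}(-) = \Ind_{P}^{G}\!\bigl(- \otimes \delta_{P}^{-1/2}\bigr)$.

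The shape of the inverse is forced by the transformation law. For $F \in I_{P}^{G}\!\bigl(\ind_{K_{M}}^{M(F)}(\rho_{M})\bigr)$, evaluating the identity $F(mg) = \delta_{P}(m)^{-1/2}\,\ind_{K_{M}}^{M(F)}(\rho_{M})(m)\,F(g)$ in $\ind_{K_{M}}^{M(F)}(V_{\rho_{M}})$ at the point $1 \in M(F)$ gives $F(mg)(1) = \delta_{P}(m)^{-1/2} F(g)(m)$, i.e.\ $F(g)(m) = \delta_{P}(m)^{1/2} F(mg)(1)$, for all $m \in M(F)$ and $g \in G(F)$. Read appropriately, this single identity shows both that $I_{U, 1}$ applied to $g \mapsto F(g)(1)$ returns $F$, and (at $m = 1$) that $I_{U, 1}(f)(g)(1) = f(g)$. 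So, once well-definedness is in place, the map $J \colon F \mapsto \bigl[\, g \mapsto F(g)(1)\,\bigr]$ is a two-sided inverse of $I_{U, 1}$.

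It remains to verify well-definedness and equivariance. For $I_{U, 1}$: for fixed $g$, the function $m \mapsto \delta_{P}(m)^{1/2} f(mg)$ transforms correctly under left translation by $K_{M}$ since $\delta_{P}|_{K_{M}} = 1$ and $\Inf(\rho_{M})$ restricts to $\rho_{M}$; it lies in $\ind_{K_{M}}^{M(F)}(V_{\rho_{M}})$ because it is smooth and has compact support modulo $K_{M}$ (for the support one uses the decomposition $P(F) = U(F) M(F)$ and $U(F) \cap M(F) = 1$ to see that each coset $U(F) K_{M} y$ meets $M(F)$ in at most one $K_{M}$-coset, together with finite-dimensionality of $\rho_{M}$); and $g \mapsto I_{U, 1}(f)(g)$ is then smooth and satisfies the defining transformation law of $I_{P}^{G}\!\bigl(\ind_{K_{M}}^{M(F)}(\rho_{M})\bigr)$, which is the one-line computation $\delta_{P}(m)^{1/2} f(m m_{0} g) = \delta_{P}(m_{0})^{-1/2}\bigl(\ind_{K_{M}}^{M(F)}(\rho_{M})(m_{0})\,I_{U, 1}(f)(g)\bigr)(m)$, using that $\Inf(\rho_{M})$ kills $U(F)$. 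For $J$: the transformation law of $J(F)$ under $U(F) K_{M}$ follows from $\delta_{P}|_{K_{M}} = 1$ and the transformation law inside $\ind_{K_{M}}^{M(F)}(V_{\rho_{M}})$, smoothness of $J(F)$ is inherited from that of $F$, and compact support of $J(F)$ modulo $U(F) K_{M}$ follows from the compactness of $P(F) \backslash G(F)$: picking a compact open $J_{0}$ with $G(F) = P(F) J_{0}$, the restriction $F|_{J_{0}}$ takes finitely many values, each supported on finitely many $K_{M}$-cosets in $M(F)$, and this bounds the support. Finally, $G(F)$-equivariance of $I_{U, 1}$ (hence of $J$) is immediate, since $I_{U, 1}$ involves only right translation in the $G(F)$-variable.

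The argument is essentially formal — it follows the pattern of the proof of \cite[Corollary~3.6]{MR2485794}, now carrying the coefficient representation $\rho_{M}$ along. The only steps needing genuine care are the two compact-support verifications; I expect the harder of the two to be the one for $J(F)$, where the compactness of $P(F) \backslash G(F)$ is essential.
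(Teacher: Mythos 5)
Your proposal is correct and takes essentially the same approach as the paper, which simply states that the map $F \mapsto [g \mapsto (F(g))(1)]$ is the inverse and calls the verification a straightforward calculation. You have written out that calculation in full, including the support and equivariance checks the paper leaves implicit.
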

\begin{proof}
A straightforward calculation shows that the map
\[
F \mapsto [g \mapsto \left(F(g)\right)(1)]
\]
gives the inverse map.
\end{proof}
Next, we consider the map
\[
I_{U, 2} \colon \ind_{K}^{G(F)} (\rho) \rightarrow \ind_{U(F)K_{M}}^{G(F)} \left(\Inf(\rho_{M})\right)
\]
defined as
\[
I_{U, 2} \colon f \mapsto [
g \mapsto \int_{U(F)} f(ug) du
].\index{$I_{U, 2}$}
\]
Here, we use the Haar measure $du$ on $U(F)$ such that the volume of $K_{U}$ is equal to $1$.
\begin{proposition}
\label{isomintegral}
The map $I_{U,2}$ gives an isomorphism
\[
\ind_{K}^{G(F)} (\rho) \rightarrow \ind_{U(F)K_{M}}^{G(F)} \left(\Inf(\rho_{M})\right).
\]
\end{proposition}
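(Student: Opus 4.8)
The plan is to show that $I_{U,2}$ is a well-defined $G(F)$-equivariant map, then exhibit an explicit inverse, or alternatively argue injectivity and surjectivity directly. Since the target space $\ind_{U(F)K_M}^{G(F)}(\Inf(\rho_M))$ consists of functions transforming on the left under $U(F)K_M$, the first thing I would check is that $I_{U,2}(f)$ actually lies in this space: for $u_0 \in U(F)$ we have $(I_{U,2}f)(u_0 g) = \int_{U(F)} f(uu_0 g)\,du = \int_{U(F)} f(ug)\,du$ by translation-invariance of $du$, and for $k \in K_M$ we have $(I_{U,2}f)(kg) = \int_{U(F)} f(ukg)\,du = \int_{U(F)} f(k(k^{-1}uk)g)\,du = \rho(k)\int_{U(F)} f(ug)\,du$, using that $K_M$ normalizes $U(F)$, that the modulus of this conjugation action is trivial on $U(F)$ because $K_M$ is compact (so $\delta$ takes the value $1$), and that $\rho(k) = \rho_M(k)$ acts by the inflated action. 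I would also note that the integral is really a finite sum: $f$ is supported on finitely many cosets $Kh_i$, and since $K$ decomposes as $K_U K_M K_{\overline U}$ with $K_U \subseteq \ker\rho$, the integrand $u \mapsto f(ug)$ is locally constant, right $K_U$-invariant, and compactly supported modulo $K_U$, so the integral converges and $I_{U,2}f$ is smooth and compactly supported modulo $U(F)K_M$. Equivariance under right translation is immediate from the definition.

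**Next I would construct the inverse.** The natural candidate is the map
\[
J \colon \ind_{U(F)K_M}^{G(F)}\left(\Inf(\rho_M)\right) \to \ind_{K}^{G(F)}(\rho), \qquad J \colon F \mapsto \left[g \mapsto \int_{K_{\overline U}} \left(\Inf(\rho_M)(k_{\overline U})\right)^{-1} F(k_{\overline U} g)\, dk_{\overline U}\right],
\]
with $K_{\overline U}$-volume normalized to $1$; but since $K_{\overline U} \subseteq \ker \rho_M$ this collapses to $F \mapsto [g \mapsto \int_{K_{\overline U}} F(k_{\overline U} g)\, dk_{\overline U}]$, i.e. averaging over $K_{\overline U}$ (which is just the value $F(g)$ up to the projector onto $K_{\overline U}$-invariants). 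One checks $J(F)$ transforms correctly on the left under $K = K_U K_M K_{\overline U}$: under $K_{\overline U}$ by the averaging, under $K_M$ by $\rho_M = \rho$, and under $K_U$ trivially because $K_U \subseteq \ker\rho$ — here one must verify $F$ restricted to $U(F)$-cosets already handles the $K_U$ part since $K_U \subseteq U(F)$. The heart of the matter is then the two composition identities $J \circ I_{U,2} = \mathrm{id}$ and $I_{U,2} \circ J = \mathrm{id}$. For $J \circ I_{U,2}$, one computes $(J I_{U,2} f)(g) = \int_{K_{\overline U}} \int_{U(F)} f(u k_{\overline U} g)\, du\, dk_{\overline U}$; changing variables $u \mapsto u k_{\overline U}^{-1}$ (legitimate since $K_{\overline U}$ normalizes nothing here — actually one substitutes inside and uses that $k_{\overline U} \in K \subseteq K_U K_M K_{\overline U}$ moves through) and using Iwahori-type factorization of the support, this should reduce to $\int_{U(F)} f(ug)\,du$ restricted appropriately, and then the fact that $f$ is genuinely $\rho$-typical forces the integral over $U(F)$ to recover $f(g)$ on the nose after the $K_{\overline U}$-average. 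The reverse composition is similar.

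**I expect the main obstacle to be the composition identity**, specifically verifying that $I_{U,2}$ is injective — equivalently that the $U(F)$-integral does not kill any nonzero $f \in \ind_K^{G(F)}(\rho)$ — and that it is surjective. The clean way around grinding through Fubini-style manipulations is to invoke the structural input already available: by Lemma~\ref{isomU1} the target is isomorphic to $I_P^G(\ind_{K_M}^{M(F)}(\rho_M))$, and by \cite[Lemma~B.3]{2020arXiv201102456B} this is abstractly isomorphic to $\ind_K^{G(F)}(\rho)$, so both source and target are progenerators of the same Bernstein block; hence it suffices to show $I_{U,2}$ is either injective or surjective and that it is nonzero on each block constituent. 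Surjectivity can be checked by a support/locality argument: given $F$ supported on a single coset $U(F)K_M h$, one writes down a preimage supported on $Kh$ using the decomposition $K = K_U K_M K_{\overline U}$ and the normalization $\vol(K_U) = 1$, so that $\int_{U(F)} (\text{preimage})(ug)\,du$ picks out exactly the $K_U$-orbit contribution and reproduces $F$. Alternatively, following the cited proof of \cite[Corollary~3.6]{MR2485794}, one identifies $I_{U,2}$ with the canonical map coming from Frobenius reciprocity $\Hom_K(\rho, \Res_K \ind_{U(F)K_M}^{G(F)}\Inf(\rho_M)) \ni (\text{evaluation at }1 \text{ composed with projection})$ and checks it corresponds to a generator, so the induced map of compact inductions is an isomorphism by general nonsense. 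I would present the explicit inverse $J$ as the cleanest route, reducing the verification to the two bookkeeping identities above and handling convergence via the decomposition of $K$.
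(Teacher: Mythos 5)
The preliminary checks — that $I_{U,2}(f)$ is well-defined, smooth, compactly supported modulo $U(F)K_M$, left-equivariant under $U(F)K_M$ (using translation invariance, $K_M$-normalization of $U(F)$ with trivial modulus, and $K_U \subseteq \ker\rho$), and $G(F)$-equivariant on the right — are all correct and match the paper implicitly. The gap is in the bijectivity argument, and it is a serious one: nowhere do you use the hypothesis that $(K,\rho)$ is a $G$-cover, which is the indispensable input. Without it, the statement is simply false.

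Concretely, the proposed inverse
\[
J \colon F \mapsto \Bigl[\,g \mapsto \int_{K_{\overline U}} F(k_{\overline U}\, g)\, dk_{\overline U}\,\Bigr]
\]
fails for two separate reasons. First, $J(F)$ is in general not an element of $\ind_K^{G(F)}(\rho)$: checking left $K_U$-equivariance requires commuting $k_{\overline U}$ past $k \in K_U$, i.e.\ writing $k_{\overline U}k = k_1 m_1 \bar k_1$ with $k_1 \in K_U$, $m_1 \in K_M$, $\bar k_1 \in K_{\overline U}$; this produces a factor $\rho_M(m_1)$ depending on $k_{\overline U}$ and $k$, and the resulting integral does not collapse to $J(F)(g)$. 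Second, even setting that aside, $I_{U,2}\circ J$ is not the identity on the target. Take the depth-zero Iwahori case in $\mathrm{SL}_2$ and let $F$ be the element of the target supported on the single coset $U(F)K_M g_0$. A direct Iwahori-decomposition computation shows $F(k_{\overline U}\, u\, g_0) = 0$ for every $u\in U(F)$ unless $k_{\overline U}=1$, so the double integral $\int_{U}\int_{K_{\overline U}}F(k_{\overline U}\,u\,g_0)$ vanishes, and $(I_{U,2}JF)(g_0)=0\neq F(g_0)$. The root cause is that $I_{U,2}$ is a genuine noncompact integral over $U(F)$, whose inverse cannot be any local averaging over a compact subgroup: a source function supported on a single coset $Kh$ maps to a target function supported on the larger set $U(F)K_M K_{\overline U}\,h$, so your support-by-support surjectivity sketch also does not produce a preimage of a target function supported on one $U(F)K_M$-coset. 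Your fallback — that a nonzero $G(F)$-map between two progenerators of the same block is automatically an isomorphism — is not a theorem: a nonzero endomorphism of a progenerator need not be invertible (the endomorphism algebra is large), so knowing abstractly that the two objects are isomorphic does not upgrade a nonzero map to an isomorphism.

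What the paper actually does: it rewrites both sides as $K$- (resp.\ $K_M$-) invariants of $C_c^\infty(G(F))\otimes V_\rho$ (resp.\ $C_c^\infty(U(F)\backslash G(F))\otimes V_{\rho_M}$), restricts attention to the $(K,\rho^\vee)$- and $(K_M,\rho_M^\vee)$-isotypic pieces, and then recognizes the integral $f \mapsto \int_{U(F)}f(u\cdot)\,du$ as the Jacquet quotient map $j_U$ on $C_c^\infty(G(F))$ composed with the isomorphism $C_c^\infty(G(F))_U \simeq C_c^\infty(U(F)\backslash G(F))$. At that point the isomorphism is precisely condition (3$'$) of a $G$-cover applied to $(K,\rho^\vee)$ — and Lemma~\ref{contraofcover} is needed to know that $(K,\rho^\vee)$ is a $G$-cover of $(K_M,(\rho_M)^\vee)$, which itself uses a duality argument on Jacquet modules. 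That condition (3$'$) is exactly the nontrivial representation-theoretic fact your proof must invoke somewhere; without it, $I_{U,2}$ would still be a well-defined equivariant map but need not be injective or surjective.
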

\begin{proof}
We prepare some spaces of functions on $G(F)$:
\begin{itemize}
\item
Let $C_{\cpt}^{\infty} (G(F), \rho)$ denote the space of  compactly supported smooth functions 
\[
f \colon G(F) \to V_{\rho}.
\]
We define a representation $l_{\rho}$ of $K$ on $C_{\cpt}^{\infty} (G(F), \rho)$ as
\[
\left(
l_{\rho}(k) \cdot f
\right)(g) = \rho(k) \cdot f(k^{-1}g)
\]
for $k \in K$, $g \in G(F)$ and $f \in C_{\cpt}^{\infty} (G(F), \rho)$.
\item
Let $C_{\cpt}^{\infty} (U(F) \backslash G(F), \rho_{M})$ denote the space of smooth functions 
\[
f \colon G(F) \to V_{\rho_{M}}
\]
that are left $U(F)$-invariant and compactly supported modulo $U(F)$.
We define a representation $l_{\rho_{M}}$ of $K_{M}$ on $C_{\cpt}^{\infty} (U(F) \backslash G(F), \rho_{M})$ as 
\[
\left(
l_{\rho_{M}}(k) \cdot f
\right)(g)
= \rho_{M}(k) \cdot f(k^{-1}g)
\]
for $k \in K_{M}$, $g \in G(F)$ and $f \in C_{\cpt}^{\infty} (U(F) \backslash G(F), \rho_{M})$.
\item
Let $C_{\cpt}^{\infty}(G(F))$ denote the space of  compactly supported smooth functions 
\[
f \colon G(F) \to \mathbb{C}.
\]
We define a representation $l_{\text{reg}}$ of $G(F)$ on $C_{\cpt}^{\infty}(G(F))$ as
\[
(l_{\text{reg}}(g) \cdot f)(h) = f(g^{-1}h)
\]
for $g, h \in G(F)$ and $f \in C_{\cpt}^{\infty}(G(F))$.
\item
Let $C_{\cpt}^{\infty}(U(F) \backslash G(F))$ denote the space of smooth functions 
\[
f \colon G(F) \to \mathbb{C}
\]
that are left $U(F)$-invariant and compactly supported modulo $U(F)$.
We define a representation $l_{\reg, M}$ of $M(F)$ on $C_{\cpt}^{\infty}(U(F) \backslash G(F))$ as
\[
(l_{\reg, M}(m) \cdot f)(g) = f(m^{-1}g)
\]
for $m \in M(F)$, $g \in G(F)$, and $f \in C_{\cpt}^{\infty} (U(F) \backslash G(F))$.
\end{itemize}
We also define a representation of $K$ on $C_{\cpt}^{\infty}(G(F)) \otimes V_{\rho}$ as 
$
l_{\reg}\restriction_{K} \otimes \rho
$,
and a representation of $K_{M}$ on $C_{\cpt}^{\infty}(U(F) \backslash G(F)) \otimes V_{\rho_{M}}$ as 
$
l_{\reg, M}\restriction_{K_{M}} \otimes \rho_{M}
$.
The definition of representations $l_{\rho}$ and $l_{\reg}$ implies that the map
\[
f \otimes v \mapsto [g \mapsto f(g)\cdot v]
\]
gives a $K$-equivariant isomorphism
\[
\left(
l_{\reg}\restriction_{K} \otimes \rho, \, C_{\cpt}^{\infty}(G(F)) \otimes V_{\rho}
\right)
 \rightarrow 
 \left(
 l_{\rho}, \,
 C_{\cpt}^{\infty} (G(F), \rho)
 \right).
 \]
 On the other hand, the definition of the compact induction implies that as vector spaces, we have
\[
\ind_{K}^{G(F)} (V_{\rho}) = C_{\cpt}^{\infty}(G(F), \rho)^{K}.
\]
Thus, we obtain that 
\[
\ind_{K}^{G(F)} (V_{\rho}) \simeq \left(C_{\cpt}^{\infty}(G(F)) \otimes V_{\rho}\right)^{K}
\]
as vector spaces.
Similarly, the definition of representations $l_{\rho_{M}}$ and $l_{\reg, M}$ implies that the map
\[
f \otimes v \mapsto [g \mapsto f(g)\cdot v]
\]
gives a $K_{M}$-equivariant isomorphism
\[
\left(
l_{\reg, M}\restriction_{K_{M}} \otimes \rho_{M}, \, C_{\cpt}^{\infty}(U(F) \backslash G(F)) \otimes V_{\rho_{M}}
\right)
 \rightarrow 
 \left(
 l_{\rho_{M}}, \,
C_{\cpt}^{\infty} (U(F) \backslash G(F), \rho_{M})
 \right).
 \]
Hence, we obtain an isomorphism of vector spaces
\begin{align*}
\ind_{U(F)K_{M}}^{G(F)} (V_{\rho_{M}}) &=
C_{\cpt}^{\infty} (U(F) \backslash G(F), \rho_{M})^{K_{M}} \\
&\simeq 
\left(C_{\cpt}^{\infty}(U(F) \backslash G(F)) \otimes V_{\rho_{M}}\right)^{K_{M}}.
\end{align*}
Under these isomorphisms, the map
\[
I_{U, 2} \colon \ind_{K}^{G(F)} (\rho) \rightarrow \ind_{U(F)K_{M}}^{G(F)} \left(\Inf(\rho_{M})\right)
\]
is translated into the map
\[
I_{U, 2}' \colon \left(C_{\cpt}^{\infty}(G(F)) \otimes V_{\rho}\right)^{K} \rightarrow \left(C_{\cpt}^{\infty}(U(F) \backslash G(F)) \otimes V_{\rho_{M}}\right)^{K_{M}} 
\]
defined as
\[
I_{U, 2}'(f \otimes v) = [g \mapsto \int_{U(F)} f(ug) du] \otimes v.
\]
We will prove that $I_{U, 2}'$ is an isomorphism.
Since $K$ and $K_{M}$ are compact subgroups, the representation $l_{\reg}\restriction_{K}$ of $K$ on $C_{\cpt}^{\infty}(G(F))$ and the representation $l_{\reg, M}\restriction_{K_{M}}$ of $K_{M}$ on $C_{\cpt}^{\infty}(U(F) \backslash G(F))$ are semisimple.
We write
\[
C_{\cpt}^{\infty}(G(F)) = \bigoplus_{\rho'} C_{\cpt}^{\infty}(G(F))^{(K, \rho')}
\]
and
\[
C_{\cpt}^{\infty}(U(F) \backslash G(F)) = \bigoplus_{\rho'_{M}} C_{\cpt}^{\infty}(U(F) \backslash G(F))^{(K_{M}, \rho'_{M})},
\]
where $\rho'$ and $\rho'_{M}$ run thorough irreducible smooth representations of $K$ and $K_{M}$, respectively.
For $\rho' \not\simeq \rho^{\vee}$ and $\rho'_{M} \not \simeq (\rho_{M})^{\vee}$, we have
\[
\left(C_{\cpt}^{\infty}(G(F))^{(K, \rho')} \otimes V_{\rho} \right)^{K} = \{0\},
\] 
and
\[
\left(C_{\cpt}^{\infty}(U(F) \backslash G(F))^{(K_{M}, \rho'_{M})} \otimes V_{\rho_{M}} \right)^{K_{M}} = \{0\},
\]
respectively.
Thus, we obtain that
\[
\left(C_{\cpt}^{\infty}(G(F)) \otimes V_{\rho}\right)^{K} = \left(C_{\cpt}^{\infty}(G(F))^{(K, \rho^{\vee})} \otimes V_{\rho}\right)^{K}
\]
and
\[
\left(C_{\cpt}^{\infty}(U(F) \backslash G(F)) \otimes V_{\rho_{M}}\right)^{K_{M}}  = \left(C_{\cpt}^{\infty}(U(F) \backslash G(F))^{(K_{M}, (\rho_{M})^{\vee})} \otimes V_{\rho_{M}}\right)^{K_{M}}.
\]
Moreover, since $K$ decomposes with respect to $U, M, \overline{U}$, the groups $K_{U}$ and $K_{\overline{U}}$ are contained in the kernel of $\rho$ and $\rho^{\vee}$, and $\rho\restriction_{K_{M}} = \rho_{M}$, we have
\[
\left(C_{\cpt}^{\infty}(G(F))^{(K, \rho^{\vee})} \otimes V_{\rho}\right)^{K} = \left(C_{\cpt}^{\infty}(G(F))^{(K, \rho^{\vee})} \otimes V_{\rho_{M}}\right)^{K_{M},}.
\]
Then, the claim follows from Lemma~\ref{integralisomlemma} below.
\end{proof}
\begin{lemma}[cf.\,{\cite[Remarque~3.4]{MR2485794}}]
\label{integralisomlemma}
The map
\[
f \mapsto [g \mapsto \int_{U(F)} f(ug) du]
\]
induces an isomorphism
\[
C_{\cpt}^{\infty}(G(F))^{(K, \rho^{\vee})} \to C_{\cpt}^{\infty}(U(F) \backslash G(F))^{(K_{M}, (\rho_{M})^{\vee})}.
\]
\end{lemma}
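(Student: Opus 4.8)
The plan is to deduce the lemma from condition (3') in the characterisation of $G$-covers, applied to the left regular representation of $G(F)$ on $C_{\cpt}^{\infty}(G(F))$. The key point is that, after identifying the Jacquet module $C_{\cpt}^{\infty}(G(F))_{U}$ with $C_{\cpt}^{\infty}(U(F)\backslash G(F))$, the canonical projection onto it becomes precisely the map $\Phi\colon f \mapsto [g \mapsto \int_{U(F)} f(ug)\,du]$ of the statement.

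First I would apply Lemma~\ref{contraofcover}: since $(K, \rho)$ is a $G$-cover of $(K_{M}, \rho_{M})$, the pair $(K, \rho^{\vee})$ is a $G$-cover of $(K_{M}, (\rho_{M})^{\vee})$. Therefore condition (3') holds for $(K, \rho^{\vee})$: for every smooth representation $(\pi, V)$ of $G(F)$ the canonical quotient map $j_{U}(\pi)\colon V \to V_{U}$ restricts to an isomorphism
\[
V^{(K, \rho^{\vee})} \;\xrightarrow{\ \sim\ }\; V_{U}^{(K_{M}, (\rho_{M})^{\vee})}.
\]
I would use this for $V = C_{\cpt}^{\infty}(G(F))$ with the left regular action $l_{\reg}$, which is a smooth representation of $G(F)$ because every compactly supported locally constant function is left-invariant under a suitable compact open subgroup. (There is no circularity: (3') is part of the hypotheses defining a $G$-cover, whereas the present lemma feeds into the construction of an explicit isomorphism between two particular progenerators.)

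Second I would carry out the identification $V_{U} \cong C_{\cpt}^{\infty}(U(F)\backslash G(F))$. A change of variables using the paper's normalisation of $\delta_{P}$ shows $\Phi \circ l_{\reg}(m) = \delta_{P}(m)\, l_{\reg, M}(m) \circ \Phi$ for $m \in M(F)$; since $\delta_{P}$ is trivial on the compact group $K_{M}$, the map $\Phi$ is $K_{M}$-equivariant, and using $\rho^{\vee}\restriction_{K_{M}} = (\rho_{M})^{\vee}$ it carries $C_{\cpt}^{\infty}(G(F))^{(K, \rho^{\vee})}$ into $C_{\cpt}^{\infty}(U(F)\backslash G(F))^{(K_{M}, (\rho_{M})^{\vee})}$. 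Left-invariance of Haar measure on $U(F)$ gives $\Phi(l_{\reg}(u)f - f) = 0$ for $u \in U(F)$, so $\Phi$ factors through $V_{U}$; moreover $\Phi$ is surjective and its kernel is exactly the span of the elements $l_{\reg}(u)f - f$, which is the standard identification of the $C_{\cpt}^{\infty}$-coinvariants of a closed unimodular subgroup with $C_{\cpt}^{\infty}$ of the quotient. Hence $\Phi$ induces a $K_{M}$-equivariant isomorphism $\overline{\Phi}\colon V_{U} \xrightarrow{\sim} C_{\cpt}^{\infty}(U(F)\backslash G(F))$ with $\overline{\Phi} \circ j_{U} = \Phi$, so $\overline{\Phi}$ maps $V_{U}^{(K_{M}, (\rho_{M})^{\vee})}$ onto $C_{\cpt}^{\infty}(U(F)\backslash G(F))^{(K_{M}, (\rho_{M})^{\vee})}$. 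Composing with the isomorphism from (3'), the map $\Phi = \overline{\Phi} \circ j_{U}$ restricts to the asserted isomorphism $C_{\cpt}^{\infty}(G(F))^{(K, \rho^{\vee})} \xrightarrow{\sim} C_{\cpt}^{\infty}(U(F)\backslash G(F))^{(K_{M}, (\rho_{M})^{\vee})}$.

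The only non-formal ingredient is the equality $\ker \Phi = \langle\, l_{\reg}(u)f - f \,\rangle$, equivalently the injectivity of $C_{\cpt}^{\infty}(G(F))_{U} \to C_{\cpt}^{\infty}(U(F)\backslash G(F))$; I expect this standard fact (a partition-of-unity argument on the fibres of $G(F) \to U(F)\backslash G(F)$) to be the only step requiring genuine work, and with it in hand the proof is essentially bookkeeping. If one wished to avoid invoking (3'), one could instead argue directly by decomposing both sides over a set of representatives for $K\backslash G(F)/U(F)K_{M}$ in the spirit of \cite[Remarque~3.4]{MR2485794}; then the real obstacle would be injectivity of $\Phi$ on the $(K, \rho^{\vee})$-isotypic part, and that is precisely where the $G$-cover hypotheses — the factorisation $K = K_{U} K_{M} K_{\overline{U}}$ and the triviality of $\rho^{\vee}$ on $K_{U}$ and $K_{\overline{U}}$ — would enter.
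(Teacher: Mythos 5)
Your proposal is correct and follows essentially the same route as the paper: interpret the map as the Jacquet functor applied to the left regular representation on $C_{\cpt}^{\infty}(G(F))$, identify $C_{\cpt}^{\infty}(G(F))_{U}$ with $C_{\cpt}^{\infty}(U(F)\backslash G(F))$ $K_{M}$-equivariantly, and conclude via Lemma~\ref{contraofcover} together with condition~(3'). The extra detail you supply (the $\delta_{P}$-twisted equivariance, surjectivity, and the kernel identification) is correct and is implicitly used but not spelled out in the paper.
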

\begin{proof}
Recall that we defined a representation $l_{\text{reg}}$ of $G(F)$ on $C_{\cpt}^{\infty}(G(F))$ as
\[
(l_{\text{reg}}(g) \cdot f)(h) = f(g^{-1}h)
\]
for $g, h \in G(F)$ and $f \in C_{\cpt}^{\infty}(G(F))$.
The map
\[
C_{\cpt}^{\infty}(G(F)) \rightarrow C_{\cpt}^{\infty}(U(F) \backslash G(F))
\]
defined as
\[
f \mapsto [g \mapsto \int_{U(F)} f(ug) du]
\]
factors through the map 
\[
j_{U}(l_{\text{reg}}) \colon C_{\cpt}^{\infty}(G(F)) \rightarrow C_{\cpt}^{\infty}(G(F))_{U}
\]
and induces a $K_{M}$-equivariant isomorphism
\[
C_{\cpt}^{\infty}(G(F))_{U} \rightarrow C_{\cpt}^{\infty}(U(F) \backslash G(F)).
\]
Thus, the claim follows from Lemma~\ref{contraofcover} and condition~(3') of $G$-covers.
\end{proof}
We write
\[
I_{U}\index{$I_{U}$} := I_{U, 1} \circ I_{U, 2} \colon \ind_{K}^{G(F)} (\rho) \rightarrow I^{G}_{P} \left( \ind_{K_{M}}^{M(F)} (\rho_{M}) \right).
\]
According to Lemma~\ref{isomU1} and Proposition~\ref{isomintegral}, $I_{U}$ is an isomorphism.
We use the same symbols $I_{U, 1}$, $I_{U, 2}$, and $I_{U}$ for the isomorphisms of endomorphism algebras
\begin{align*}
I_{U, 1} \colon \End_{G(F)}\left(\ind_{U(F)K_{M}}^{G(F)} \left(\Inf(\rho_{M})\right)\right) &\rightarrow \End_{G(F)}\left(I^{G}_{P} \left( \ind_{K_{M}}^{M(F)} (\rho_{M}) \right)\right), \\
I_{U, 2} \colon \End_{G(F)}\left(\ind_{K}^{G(F)} (\rho)\right) &\rightarrow \End_{G(F)}\left(\ind_{U(F)K_{M}}^{G(F)} \left(\Inf(\rho_{M})\right)\right), \\
I_{U} \colon \End_{G(F)}\left(\ind_{K}^{G(F)} (\rho)\right) &\rightarrow \End_{G(F)}\left(I^{G}_{P} \left( \ind_{K_{M}}^{M(F)} (\rho_{M}) \right)\right)
\end{align*}
induced by $I_{U, 1}$, $I_{U, 2}$, and $I_{U}$, respectively.
\section{Hecke algebra injections}
\label{Hecke algebra injections}
We use the same notation as Section~\ref{An explicit isomorphism}.
In particular, let $(K, \rho)$ be a $G$-cover of $(K_{M}, \rho_{M})$.
In Section~\ref{An explicit isomorphism}, we constructed an isomorphism
\[
I_{U} \colon \End_{G(F)}\left(\ind_{K}^{G(F)} (\rho)\right) \rightarrow \End_{G(F)}\left(I^{G}_{P} \left( \ind_{K_{M}}^{M(F)} (\rho_{M}) \right)\right).
\]
Since $I_{P}^{G}$ is a faithful functor,  it provides a natural injection
\[
I_{P}^{G} \colon \End_{M(F)}\left( \ind_{K_{M}}^{M(F)} (\rho_{M}) \right) \rightarrow \End_{G(F)}\left(I^{G}_{P} \left( \ind_{K_{M}}^{M(F)} (\rho_{M}) \right)\right).
\]
On the other hand, there exists a natural injection
\[
t_{P} \colon \End_{M(F)}\left( \ind_{K_{M}}^{M(F)} (\rho_{M}) \right) \rightarrow \End_{G(F)}\left(\ind_{K}^{G(F)} (\rho)\right)
\]
defined in \cite[Corollary~7.12]{MR1643417}.

We will explain the definition of $t_{P}$.
First, we recall the definition of the Hecke algebra associated with $(K, \rho)$.
Let $\mathcal{H}(G(F), \rho)\index{$\mathcal{H}(G(F), \rho)$}$ denote the space of compactly supported functions 
\[
\phi \colon  G(F)\to \text{End}_{\mathbb{C}}(V_{\rho})
\]
satisfying
\[
\phi(k_1 g k_2)=\rho(k_1)\circ \phi(g) \circ \rho(k_2),
\]
for all $k_1, k_2 \in K$ and $g \in G(F)$. 
The standard convolution product
\[
\left(\phi_1*\phi_2\right)(x)=\int_{G(F)} \phi_1(y) \circ \phi_2(y^{-1}x) dy
\]
with $\phi_1, \phi_2 \in \mathcal{H}(G(F), \rho)$ and $x \in G(F)$ gives $\mathcal{H}(G(F), \rho)$ a structure of a $\mathbb{C}$-algebra.
We call $\mathcal{H}(G(F), \rho)$ the Hecke algebra associated with the pair $(K, \rho)$.
Here, we normalize the Haar measure $dy$ on $G(F)$ such that the volume of $K$ is equal to $1$.
We note that the isomorphism class of $\mathcal{H}(G(F), \rho)$ does not depend on the choice of the Haar measure on $G(F)$ used to define the convolution product.
For $g \in G(F)$ and $\phi \in \mathcal{H}(G(F), \rho)$, we have
\[
\phi(g) \in \Hom_{K\cap ^g\!K}(^g\!\rho, \rho).
\]
Thus, the support of $\phi$ is contained in $I_{G(F)}(\rho)$.
\begin{remark}
\label{rmkrl}
The definition of $\mathcal{H}(G(F), \rho)$ above is different from the definition of $\mathcal{H}(G(F), \rho)$ in \cite[Section~2]{MR1643417}.
More precisely, our $\mathcal{H}(G(F), \rho)$ denotes $\mathcal{H}(G(F), \rho^{\vee})$ in \cite[Section~2]{MR1643417}. 
According to \cite[(2.3)]{MR1643417}, there exists a canonical anti-isomorphism
\[\mathcal{H}(G(F), \rho) \simeq \mathcal{H}(G(F), \rho^{\vee})\]
that inverts the supports of functions.
Thus, we may apply the results of \cite{MR1643417} to our cases with suitable modifications.
\end{remark}
According to \cite[(2.6)]{MR1643417} and Remark~\ref{rmkrl}, there exists an isomorphism
\begin{align}
\label{heckevsend}
\mathcal{H}(G(F), \rho) \simeq \End_{G(F)}\left(\ind_{K}^{G(F)} (\rho)\right).
\end{align}
We write the isomorphism above explicitly.
For $v \in V_{\rho}$, we define $f_{v} \in \ind_{K}^{G(F)} (V_{\rho})$ as
\[
f_v(g) =
\begin{cases}
\rho(g) \cdot v & (g \in K ), \\
0 & (\text{otherwise}).
\end{cases} 
\]
Then, for $\Phi \in \End_{G(F)}\left(\ind_{K}^{G(F)} (\rho)\right)$, the corresponding element $\phi \in \mathcal{H}(G(F), \rho)$ is defined as
\[\phi(g) \cdot v = \left(\Phi(f_{v})\right)(g)\]
for $g \in G(F)$ and $v \in V_{\rho}$.
Conversely, for $\phi \in \mathcal{H}(G(F), \rho)$, the corresponding element $\Phi \in~ \End_{G(F)}\left(\ind_{K}^{G(F)} (\rho)\right)$ is defined as
\[
\left(\Phi(f)\right)(x) = \int_{G(F)} \phi(y) \cdot f(y^{-1}x) dy
\]
for $f \in \ind_{K}^{G(F)} (\rho)$ and $x \in G(F)$.

Similarly, we define the Hecke algebra $\mathcal{H}(M(F), \rho_{M})\index{$\mathcal{H}(M(F), \rho_{M})$}$ associated with $(K_{M}, \rho_{M})$.
We also have an isomorphism
\begin{align}
\label{Mverofheckevsend}
\mathcal{H}(M(F), \rho_{M}) \simeq \End_{M(F)}\left( \ind_{K_{M}}^{M(F)} (\rho_{M}) \right)
\end{align}
corresponding to \eqref{heckevsend}.

Next, we recall the definition of positive elements \cite[Definition~6.5]{MR1643417}.
An element $z \in M(F)$ is called positive relative to $K$ and $U$, if it satisfies the conditions
\[
zK_{U}z^{-1} \subset K_{U}, \ z^{-1}K_{\overline{U}}z \subset K_{\overline{U}}.
\]
\begin{lemma}
\label{lemmacalculationofdelta}
Suppose that $z \in M(F)$ is positive relative to $K$ and $U$.
Then, we have
\[
\delta_{P}(z) = \abs{K_{U}/zK_{U}z^{-1}}.
\]
\end{lemma}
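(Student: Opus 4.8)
The identity $\delta_P(z) = \abs{K_U/zK_Uz^{-1}}$ relates the modular character to an index, so the natural strategy is to compute the effect of conjugation by $z$ on a Haar measure on $U(F)$ in two ways. I would fix the Haar measure $du$ on $U(F)$ normalized so that $\vol(K_U) = 1$, exactly as in the statement of Proposition~\ref{isomintegral}. By the defining property of $\delta_P$ recalled in Section~2, for any $\phi \in C_c^\infty(U(F))$ we have $\int_{U(F)} \phi(zuz^{-1})\,du = \delta_P(z)\int_{U(F)} \phi(u)\,du$; equivalently, pushing forward $du$ under the automorphism $u \mapsto zuz^{-1}$ multiplies it by $\delta_P(z)^{-1}$, i.e. $d(zuz^{-1}) = \delta_P(z)\,du$ in the sense that the map $u\mapsto zuz^{-1}$ scales volumes by the factor $\delta_P(z)$.

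**Key computation.** Now I would evaluate $\vol(z K_U z^{-1})$ in two ways. On one hand, since $z$ is positive relative to $K$ and $U$, we have $zK_Uz^{-1}\subset K_U$, and $K_U$ is the disjoint union of $\abs{K_U/zK_Uz^{-1}}$ cosets of $zK_Uz^{-1}$; because $du$ is left-invariant (it is bi-invariant, $U(F)$ being unimodular), each coset has the same volume as $zK_Uz^{-1}$, so
\[
\vol(K_U) = \abs{K_U/zK_Uz^{-1}}\cdot \vol(zK_Uz^{-1}).
\]
Since $\vol(K_U)=1$, this gives $\vol(zK_Uz^{-1}) = \abs{K_U/zK_Uz^{-1}}^{-1}$. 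On the other hand, applying the scaling property of conjugation by $z$ to the set $K_U$ itself, $\vol(zK_Uz^{-1}) = \delta_P(z)\cdot\vol(K_U) = \delta_P(z)$. Wait — I must be careful about the direction of the scaling factor here; I would pin it down by testing against $\phi = \mathbf{1}_{K_U}$: since $zK_Uz^{-1}\subset K_U$, the function $u\mapsto \mathbf 1_{K_U}(zuz^{-1})$ is $\mathbf 1_{z^{-1}K_Uz}$, and $z^{-1}K_Uz \supset K_U$, so $\int_{U(F)}\mathbf 1_{K_U}(zuz^{-1})\,du = \vol(z^{-1}K_Uz) \ge \vol(K_U)$; matching this with $\delta_P(z)\vol(K_U)$ forces the correct normalization, and one reads off $\delta_P(z) = \vol(z^{-1}K_Uz)$, hence by the coset count $\delta_P(z) = \abs{z^{-1}K_Uz / K_U}$. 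Replacing $z$ by the consistent orientation used in the statement (the definition of positivity makes $zK_Uz^{-1}\subseteq K_U$), one gets $\delta_P(z) = \abs{K_U / zK_Uz^{-1}}$ after checking which convention for $\delta_P$ the paper's formula in Section~2 dictates.

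**Main obstacle.** The only real subtlety is bookkeeping of conventions: whether $\delta_P(z)$ or $\delta_P(z)^{-1}$ appears, which depends on the sign convention in the displayed formula $\int_{U(F)} f(mum^{-1})\,du = \delta_P(m)\int_{U(F)} f(u)\,du$ from Section~2 combined with the direction of the containment $zK_Uz^{-1}\subset K_U$ built into the definition of "positive relative to $K$ and $U$." I would resolve this once and for all by the indicator-function test above rather than by trying to track signs abstractly; everything else is the elementary fact that a left-invariant measure assigns equal mass to the cosets of an open subgroup of finite index. No deep input is needed beyond the definition of $\delta_P$ and the positivity hypothesis, so the proof is short.
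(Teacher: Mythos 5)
Your proof is correct and, once you resolve the sign by the indicator-function test, it is essentially the paper's argument. The paper substitutes $f = \mathbf{1}_{zK_Uz^{-1}}$ into the defining formula for $\delta_P$, while your final test uses $\phi = \mathbf{1}_{K_U}$; these give the same coset count $\delta_P(z) = \abs{z^{-1}K_Uz/K_U} = \abs{K_U/zK_Uz^{-1}}$ (the last equality by conjugation, not by ``replacing $z$ by the consistent orientation'' as you phrase it --- that step is worth stating more precisely).
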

\begin{proof}
Since $z$ is positive relative to $K$ and $U$, we have $z K_{U} z^{-1} \subset K_{U}$.
Then, substituting the characteristic function of $zK_{U}z^{-1}$ to $f$ in the equation
\[
\int_{U(F)} f(zuz^{-1}) \, du = \delta_{P}(z) \int_{U(F)} f(u) du,
\]
we obtain that
\[
\abs{K_{U}/zK_{U}z^{-1}} = \delta_{P}(z).
\]
\end{proof}
Let $I^{+}\index{$I^{+}$}$ denote the set of positive elements $z \in I_{M(F)}(\rho_{M})$, and let $\mathcal{H}^{+}(M(F), \rho_{M})$ denote the space of functions in $\mathcal{H}(M(F), \rho_{M})$ whose support is contained in $I^{+}$.
According to \cite[Proposition~6.3 (iii)]{MR1643417}, for $\phi \in \mathcal{H}(M(F), \rho_{M})$ with support contained in $K_{M} z K_{M}$ for some $z \in~ I_{M(F)}(\rho_{M})$, there exists a unique function $T(\phi) \in \mathcal{H}(G(F), \rho)$ with support contained in $K z K$, and such that $\left(T(\phi)\right)(z) = \phi(z)$.
According to \cite[Corollary~6.12]{MR1643417},  $\mathcal{H}^{+}(M(F), \rho_{M})$ is a subalgebra of $\mathcal{H}(M(F), \rho_{M})$, and $T$ induces an injective homomorphism
\[
T \colon \mathcal{H}^{+}(M(F), \rho_{M}) \rightarrow \mathcal{H}(G(F), \rho).
\]
Moreover, according to \cite[Theorem~7.2 (i)]{MR1643417} (see also \cite[Proposition~7.14]{MR1643417}), $T$ extends uniquely to an injective homomorphism
\[
t \colon \mathcal{H}(M(F), \rho_{M}) \rightarrow \mathcal{H}(G(F), \rho).
\]
We define
\[
t_{P}\index{$t_{P}$} \colon \mathcal{H}(M(F), \rho_{M}) \rightarrow \mathcal{H}(G(F), \rho)
\]
as
\[
t_{P}(\phi) = t(\phi \cdot \delta_{P}^{-1/2}),
\]
where $\phi \cdot \delta_{P}^{-1/2}$ denotes the function
\[
m \mapsto \phi(m)\delta_{P}(m)^{-1/2}
\]
in $\mathcal{H}(M(F), \rho_{M})$.
\begin{remark}
Our definition of $t_{P}$ is different from that of \cite{MR1643417}.
The difference is due to the fact that we use the normalized parabolic induction, while the un-normalized parabolic induction is used in \cite{MR1643417}.
\end{remark}
The following characterization of $t_{P}$ is a trivial consequence of \cite[Theorem~7.2 (i)]{MR1643417}.
\begin{lemma}
\label{characterizationoftp}
Let 
\[
t' \colon \mathcal{H}(M(F), \rho_{M}) \rightarrow \mathcal{H}(G(F), \rho)
\]
be a homomorphism such that
\[
t'(\phi) = t_{P}(\phi) \ \left(= T(\phi \cdot \delta_{P}^{-1/2})\right)
\]
for any $\phi \in \mathcal{H}^{+}(M(F), \rho_{M})$.
Then, we obtain $t' = t_{P}$.
\end{lemma}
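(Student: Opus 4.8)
The plan is to show that the hypothesis on $t'$ determines it completely, using the structural description of $t$ from \cite[Theorem~7.2~(i)]{MR1643417}. First I would recall precisely what that theorem provides: the injective homomorphism $t \colon \mathcal{H}(M(F), \rho_{M}) \rightarrow \mathcal{H}(G(F), \rho)$ is characterized as the \emph{unique} algebra homomorphism extending $T \colon \mathcal{H}^{+}(M(F), \rho_{M}) \rightarrow \mathcal{H}(G(F), \rho)$. Equivalently, for any $z \in I^{+}$ that is ``strongly positive'' (i.e. such that conjugation by $z$ contracts $K_U$ and expands $K_{\overline U}$ strictly), the element $t(\phi)$ for $\phi$ supported on $K_M z K_M$ is forced; and the subalgebra $\mathcal{H}^{+}(M(F), \rho_{M})$ together with inverses of invertible elements $\theta_z$ attached to such $z$ generates $\mathcal{H}(M(F), \rho_{M})$ as an algebra. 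This last generation statement is the engine of the proof.

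Next I would translate the statement about $t'$ into a statement about $t$. By the definition $t_P(\phi) = T(\phi \cdot \delta_P^{-1/2}) = t(\phi\cdot\delta_P^{-1/2})$ for $\phi \in \mathcal{H}^{+}(M(F),\rho_M)$ — note that multiplication by $\delta_P^{-1/2}$ preserves $\mathcal{H}^{+}(M(F),\rho_M)$ since $\delta_P$ is a positive character, and that $\phi \mapsto \phi\cdot\delta_P^{-1/2}$ is an algebra automorphism of $\mathcal{H}(M(F),\rho_M)$ restricting to an automorphism of $\mathcal{H}^{+}(M(F),\rho_M)$. So define $s = t' \circ (\phi \mapsto \phi\cdot\delta_P^{1/2})$; the hypothesis says $s$ agrees with $T$ on $\mathcal{H}^{+}(M(F),\rho_M)$, and $s$ is a homomorphism. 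By the uniqueness clause of \cite[Theorem~7.2~(i)]{MR1643417}, $s = t$, hence $t' = t \circ (\phi \mapsto \phi\cdot\delta_P^{-1/2}) = t_P$.

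The one point requiring care — and the main obstacle — is verifying that ``extends $T$ and is a homomorphism'' really does pin down the extension uniquely, i.e. that I am invoking \cite[Theorem~7.2~(i)]{MR1643417} with exactly its hypotheses rather than a slightly stronger version. Concretely, one needs that every element of $\mathcal{H}(M(F),\rho_M)$ is a finite sum of terms $\phi_1 * \phi_2^{-1} * \phi_3$ (or similar) with $\phi_i \in \mathcal{H}^{+}(M(F),\rho_M)$ and $\phi_2$ invertible in $\mathcal{H}(M(F),\rho_M)$; then $s(\phi_1 * \phi_2^{-1} * \phi_3) = T(\phi_1) * T(\phi_2)^{-1} * T(\phi_3)$ is forced, provided $T(\phi_2)$ is invertible in $\mathcal{H}(G(F),\rho)$ — which is part of what \cite[Theorem~7.2~(i)]{MR1643417} asserts. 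Since the lemma explicitly says the characterization is ``a trivial consequence'' of that theorem, I would simply cite the uniqueness statement there and spell out only the $\delta_P^{-1/2}$-twist bookkeeping above, which is the genuinely new (if minor) content: the normalization discrepancy between our $t_P$ and Bushnell--Kutzko's $t$, noted in the Remark preceding the lemma, is absorbed exactly by conjugating by the automorphism $\phi \mapsto \phi \cdot \delta_P^{\pm 1/2}$.
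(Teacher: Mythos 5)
Your proposal is correct and takes the same route the paper intends: the paper offers no written proof, remarking only that the lemma is a trivial consequence of \cite[Theorem~7.2~(i)]{MR1643417}, and your argument simply makes that explicit by conjugating by the algebra automorphism $\phi \mapsto \phi\cdot\delta_P^{1/2}$ so that the hypothesis on $t'$ becomes the hypothesis in the uniqueness clause of that theorem. The $\delta_P^{\pm1/2}$-twist bookkeeping is exactly the right (small) thing to check, and your verification that pointwise multiplication by a character is an algebra automorphism preserving $\mathcal{H}^{+}(M(F),\rho_M)$ is sound.
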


The following Lemma will be used later:
\begin{lemma}
\label{tPwhenmissubalgebra}
Suppose that the subspace
\[
\mathcal{H}(G(F), \rho)_{M} = \{
\phi \in \mathcal{H}(G(F), \rho) \mid \supp(\phi) \subset K \cdot M(F) \cdot K
\}
\]
is a subalgebra of $\mathcal{H}(G(F), \rho)$.
Let $\phi$ be an element of $\mathcal{H}(M(F), \rho_{M})$ whose support is contained in $K_{M} z K_{M}$ for some $z \in I_{M(F)} (\rho_{M})$.
Then, we obtain
\[
t_{P}(\phi) = \frac{
\abs{K_{M} / \left(
K_{M} \cap zK_{M}z^{-1}
\right)}^{1/2}
}{
\abs{K / \left(
K \cap zKz^{-1}
\right)}^{1/2}
}T(\phi).
\]
In particular, the injection $t_{P}$ does not depend on the choice of $P$. 
\end{lemma}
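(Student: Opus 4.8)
The plan is to use the characterization of $t_{P}$ in Lemma~\ref{characterizationoftp}: it suffices to produce a homomorphism $t'$ agreeing with $t_{P}$ (equivalently, with $T(\,\cdot\,\delta_{P}^{-1/2})$) on the subalgebra $\mathcal{H}^{+}(M(F), \rho_{M})$, and then argue that the proposed formula defines such a $t'$. So the first step is to check that the right-hand side
\[
\phi \longmapsto \frac{
\abs{K_{M} / (K_{M} \cap zK_{M}z^{-1})}^{1/2}
}{
\abs{K / (K \cap zKz^{-1})}^{1/2}
}\, T(\phi),
\qquad \supp(\phi) \subset K_{M} z K_{M},
\]
is well-defined on all of $\mathcal{H}(M(F), \rho_{M})$, i.e.\ that for a fixed double coset $K_{M} z K_{M}$ the scalar depends only on the coset, not on the chosen representative $z$; this is immediate since $K_{M} \cap zK_{M}z^{-1}$ and $K \cap zKz^{-1}$ are replaced by conjugate subgroups when $z$ is replaced by $k_{1} z k_{2}$. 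Since $\{\phi : \supp(\phi) \subset K_{M} z K_{M}\}$ runs over a basis of $\mathcal{H}(M(F), \rho_{M})$ as $z$ runs over representatives of $K_{M}\backslash I_{M(F)}(\rho_{M})/K_{M}$, and $T$ is already known to be linear with $\supp(T(\phi)) \subset KzK$, this gives a well-defined linear map $t' \colon \mathcal{H}(M(F), \rho_{M}) \to \mathcal{H}(G(F), \rho)$.

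The second step is to show $t'$ is an algebra homomorphism, using the standing hypothesis that $\mathcal{H}(G(F), \rho)_{M}$ is a subalgebra. For $z, z' \in I_{M(F)}(\rho_{M})$ and $\phi, \phi'$ supported on $K_{M} z K_{M}$, $K_{M} z' K_{M}$ respectively, the product $\phi * \phi'$ in $\mathcal{H}(M(F), \rho_{M})$ is a sum of terms supported on double cosets $K_{M} w K_{M}$ with $w \in K_{M} z K_{M} z' K_{M}$; the key point is that $T$ is multiplicative on the restriction of $\mathcal{H}(G(F),\rho)_M$ to functions supported in $M(F)$-double cosets — this is exactly what the subalgebra hypothesis buys, since the map "$\phi \mapsto$ unique extension supported on $KzK$" is then an algebra isomorphism from $\mathcal{H}(M(F),\rho_M)$ onto $\mathcal{H}(G(F),\rho)_M$ (one checks the convolution integrals match by comparing $K$- and $K_M$-orbit counts, using the Iwahori-type decomposition $K = K_{U} K_{M} K_{\overline U}$). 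Then the scalar factors must be checked to multiply correctly: one needs
\[
\frac{\abs{K_{M}/(K_{M}\cap zK_{M}z^{-1})}^{1/2}}{\abs{K/(K\cap zKz^{-1})}^{1/2}}\cdot
\frac{\abs{K_{M}/(K_{M}\cap z'K_{M}z'^{-1})}^{1/2}}{\abs{K/(K\cap z'Kz'^{-1})}^{1/2}}
=
\frac{\abs{K_{M}/(K_{M}\cap wK_{M}w^{-1})}^{1/2}}{\abs{K/(K\cap wKw^{-1})}^{1/2}}
\]
for the relevant $w$; this follows from the fact that both numerator and denominator are (up to the $1/2$ power) multiplicative index functions on the monoid generated by positive and negative parts, which can be reduced to the positive case via the decomposition of each $z$ as a ratio of positive elements and the transitivity of indices.

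The third step is to verify $t'$ agrees with $t_{P}$ on $\mathcal{H}^{+}(M(F), \rho_{M})$, which by Lemma~\ref{characterizationoftp} finishes the proof. For $\phi$ supported on $K_{M} z K_{M}$ with $z \in I^{+}$ positive relative to $K$ and $U$, positivity gives $z K_{U} z^{-1} \subset K_{U}$ and $z^{-1} K_{\overline U} z \subset K_{\overline U}$, hence $K_{M} \cap zK_{M}z^{-1} = (K \cap zKz^{-1})_{M}$ with the $U$- and $\overline U$-parts contributing the full $zK_Uz^{-1}$ and $K_{\overline U}$; a short computation with $K = K_U K_M K_{\overline U}$ then gives $\abs{K/(K\cap zKz^{-1})} = \abs{K_U/zK_Uz^{-1}}\cdot\abs{K_M/(K_M\cap zK_Mz^{-1})}$, so by Lemma~\ref{lemmacalculationofdelta} the scalar factor is exactly $\abs{K_U/zK_Uz^{-1}}^{-1/2} = \delta_{P}(z)^{-1/2} = \delta_{P}(m)^{-1/2}$ for any $m$ in the support. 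Thus $t'(\phi) = T(\phi \cdot \delta_{P}^{-1/2})$ on $\mathcal{H}^{+}$, as required; and since neither $T$ restricted to $\mathcal{H}^+$ nor the index scalars depend on the choice of $P$ (only on $U$ through $\delta_P$, and the final formula eliminates even that), the last assertion follows. I expect the main obstacle to be the clean bookkeeping in step two — verifying that $T$ is genuinely multiplicative under the subalgebra hypothesis and that the square-root index factors are themselves multiplicative along the relevant products of double cosets — since one must handle both positive and non-positive elements and the product of two double cosets is generally a union rather than a single coset.
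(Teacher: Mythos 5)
Your proposal takes a genuinely different route from the paper's proof, but it has a real gap at its core.

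The paper does not try to build a homomorphism from scratch and compare it with $t_P$ via Lemma~\ref{characterizationoftp}. Instead, it starts from the fact (established in the proof of \cite[Theorem~7.2 (ii)]{MR1643417}) that on each double coset there is \emph{a priori} some positive real constant $c$ with $t_{P}(\phi) = c\cdot T(\phi)$, and then spends all of its effort computing $c$. It does this by recasting the desired identity as the norm identity $\norm{t_{P}(\phi)}_{G}=\norm{\phi}_{M}$ for a carefully normalized operator $L^2$-norm, handling the positive case directly via Lemma~\ref{lemmacalculationofdelta} and the Iwahori decomposition, and then bootstrapping to general $z\in I_{M(F)}(\rho_M)$ by translating with a strongly positive central element $\zeta$, using submultiplicativity of the norm and explicitly computing $\norm{t_P(\phi_{\zeta})}_G = \norm{t_P(\phi_{\zeta^{-1}})}_G = 1$ to squeeze the equality. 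The subalgebra hypothesis is not what drives the argument; it enters only implicitly through the normalization.

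The gap in your step 2 is the claim that the subalgebra hypothesis makes the "unique extension" map $T$ an algebra isomorphism $\mathcal{H}(M(F),\rho_{M})\to\mathcal{H}(G(F),\rho)_{M}$. That is not what the hypothesis gives you. Closure of $\mathcal{H}(G(F),\rho)_{M}$ under $G(F)$-convolution tells you the support of $T(\phi_{1})*T(\phi_{2})$ stays inside $K\cdot M(F)\cdot K$; it does \emph{not} tell you the values match $T(\phi_{1}*\phi_{2})$. In fact $T$ is \emph{not} multiplicative beyond $\mathcal{H}^{+}$ in general — the index ratio $c(z)$ in the statement is precisely the correction term for $T$'s failure to be a homomorphism, so asserting $T$ is a homomorphism is essentially equivalent to the lemma itself (modulo the claimed multiplicativity of $c$). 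Your subsequent claim that the index ratio is "a multiplicative index function on the monoid generated by positive and negative parts" is likewise asserted but not proved, and is exactly where the hard work lives: the index $[K:K\cap zKz^{-1}]$ is not multiplicative along arbitrary products (it is only well-behaved when mixing a positive element with another positive one, or its inverse, etc.). You would need an argument along the lines of the norm-preservation step in the paper to establish it, at which point your route becomes no shorter than the paper's. Separately, your formula for the scalar $w$ appearing in $\phi*\phi'$ is written as if the product of two double cosets were a single double coset, but as you yourself note a few lines earlier the product is generally a sum over several, and you give no reconciliation of those two statements.
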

\begin{proof}
The lemma follows from \cite[Proposition~5.1]{MR1621409} and \cite[Remark~5.2]{MR1621409}.
We note that some stronger conditions are supposed in \cite{MR1621409}. However, the results of \cite{MR1621409} may also be extended without difficulty to our case.
We explain the proof briefly.

Let $\phi$ be an element of $\mathcal{H}(M(F), \rho_{M})$ whose support is contained in $K_{M} z K_{M}$ for some $z \in ~ I_{M(F)} (\rho_{M})$.
According to the proof of \cite[Theorem~7.2 (ii)]{MR1643417}, there exists $c \in \mathbb{R}_{>0}$ such that
\begin{align}
\label{uptoconst}
t_{P}(\phi) = c \cdot T(\phi).
\end{align}
We will prove that
\begin{align}
\label{calculationofc}
c= \frac{
\abs{K_{M} / \left(
K_{M} \cap zK_{M}z^{-1}
\right)}^{1/2}
}{
\abs{K / \left(
K \cap zKz^{-1}
\right)}^{1/2}
}.
\end{align}
First, we consider the positive case, that is, we suppose that $z \in I^{+}$.
Then, the definition of $t_{P}$ implies that
\[
t_{P}(\phi) = T\left(
\phi \cdot \delta_{P}^{-1/2}
\right) = \delta_{P}(z)^{-1/2} T(\phi).
\] 
According to Lemma~\ref{lemmacalculationofdelta}, we obtain
\[
c = \delta_{P}(z)^{-1/2} = \abs{K_{U}/zK_{U}z^{-1}}^{-1/2}.
\]
On the other hand, since $K$ decomposes with respect to $U, M, \overline{U}$, and $z$ normalizes $U, M, \overline{U}$, we have
\[
K= K_{U} \cdot K_{M} \cdot K_{\overline{U}},
\]
\[
zKz^{-1} = (zK_{U}z^{-1}) \cdot (zK_{M}z^{-1}) \cdot (zK_{\overline{U}}z^{-1}),
\]
and
\[
K \cap zKz^{-1} = (K_{U} \cap zK_{U}z^{-1}) \cdot (K_{M} \cap zK_{M}z^{-1}) \cdot (K_{\overline{U}} \cap zK_{\overline{U}}z^{-1}).
\]
Moreover, since $z \in I^{+}$, we obatin
\[
K_{U} \cap zK_{U}z^{-1} = zK_{U}z^{-1}
\]
and
\[
K_{\overline{U}} \cap zK_{\overline{U}}z^{-1} = K_{\overline{U}}.
\]
Hence,
\begin{align*}
\frac{
\abs{K_{M} / \left(
K_{M} \cap zK_{M}z^{-1}
\right)}^{1/2}
}{
\abs{K / \left(
K \cap zKz^{-1}
\right)}^{1/2}
} & =
\frac{
\abs{K_{M} / \left(
K_{M} \cap zK_{M}z^{-1}
\right)}^{1/2}
}{
\abs{K_{U} / \left(
K_{U} \cap zK_{U}z^{-1}
\right)}^{1/2}\abs{K_{M} / \left(
K_{M} \cap zK_{M}z^{-1}
\right)}^{1/2}\abs{K_{\overline{U}} / \left(
K_{\overline{U}} \cap zK_{\overline{U}}z^{-1}
\right)}^{1/2}
}\\
&=
\frac{
\abs{K_{M} / \left(
K_{M} \cap zK_{M}z^{-1}
\right)}^{1/2}
}{
\abs{K_{U} / zK_{U}z^{-1}
}^{1/2}\abs{K_{M} / \left(
K_{M} \cap zK_{M}z^{-1}
\right)}^{1/2}\abs{K_{\overline{U}} /K_{\overline{U}}}^{1/2}
}\\
&= \abs{K_{U}/zK_{U}z^{-1}}^{-1/2} \\
&= c.
\end{align*} 
Thus, we obtain equation~\eqref{calculationofc}.

To prove equation~\eqref{calculationofc} for general $z \in I_{M(F)}(\rho_{M})$, we define norms on $\mathcal{H}(G(F), \rho)$ and $\mathcal{H}(M(F), \rho_{M})$.
Fix a $K$-invariant norm on $V_{\rho} = V_{\rho_{M}}$, and let $\norm{\cdot}$ denote the operator norm on $\End_{\mathbb{C}}(V_{\rho})$. 
For $\phi \in ~\mathcal{H}(G(F), \rho)$, we define $\norm{\phi}_{G}$ as
\[
\norm{\phi}_{G} = \left(
\int_{G(F)} \norm{\phi(g)}^{2} dg
\right)^{1/2}.
\]
Here, we normalize the Haar measure $dg$ on $G(F)$ such that the volume of $K$ is equal to $1$.
A straightforward calculation shows
\[
\norm{\phi_{1} * \phi_{2}}_{G} \le \norm{\phi_{1}}_{G} \norm{\phi_{2}}_{G}
\]
for $\phi_{1}, \phi_{2} \in \mathcal{H}(G(F), \rho)$.
We also define $\norm{\phi}_{M}$ for $\phi \in \mathcal{H}(M(F), \rho_{M})$, similarly.
Then, equation~\eqref{calculationofc} can be rephrased in terms of the norms: 
\begin{claim}
Equation~\eqref{calculationofc} is equivalent to 
\begin{align}
\label{normpreserving}
\norm{t_{P}(\phi)}_{G} = \norm{\phi}_{M}.
\end{align}
\end{claim}
\begin{proof}
The definition of $\norm{\cdot}_{M}$ implies that
\begin{align}
\label{normphiM}
\norm{\phi}_{M} = 
\abs{K_{M} z K_{M} / K_{M}}^{1/2} \cdot \norm{\phi(z)}.
\end{align}
On the other hand, comparing the norms of both sides of equation~\eqref{uptoconst}, we obtain
\begin{align}
\label{normcomparison}
\norm{t_{P}(\phi)}_{G} = c \cdot \norm{T(\phi)}_{G}.
\end{align}
Moreover, the definition of $T$ implies that
\begin{align}
\label{normofTphiG}
\norm{T(\phi)}_{G} &= 
\abs{KzK/K}^{1/2} \cdot \norm{\left(T(\phi)\right)(z)}\\
&= \abs{KzK/K}^{1/2} \cdot \norm{\phi(z)}.\notag
\end{align}
Comparing \eqref{normphiM}, \eqref{normcomparison} with \eqref{normofTphiG}, we obtain the claim.
\end{proof}
We prove equation~\eqref{normpreserving} for general case.
Let $\zeta$ be a strongly $(U, K)$-positive element in the center of $M$ (see \cite[Definition~6.16]{MR1643417}).
Hence, $\zeta$ is a positive element, and there exists a positive integer $n$ such that $\zeta^{n} z \in I^{+}$. 
Replacing $\zeta$ with $\zeta^{n}$, we may assume that $n=1$.
Let $\phi_{\zeta}$ denote the unique element of $\mathcal{H}(M(F), \rho_{M})$ with support $\zeta K_{M}$ such that
\[
\phi_{\zeta}(\zeta) = \id_{V_{\rho_{M}}}.
\]
A straightforward calculation shows that
\begin{align}
\label{lefttranslationphim}
\left(\phi_{\zeta} *\phi\right)(m) = \phi(\zeta^{-1} m)
\end{align}
for $m \in M(F)$.
In particular, $\phi_{\zeta} * \phi$ is supported on $K_{M} \zeta z K_{M} \subset I^{+}$.
Hence, the result for positive case implies that
\[
\norm{t_{P}\left(\phi_{\zeta} *\phi\right)}_{G} = \norm{\phi_{\zeta} *\phi}_{M}.
\]
On the other hand, the definition of $\norm{\cdot}_{M}$ and equation~\eqref{lefttranslationphim} imply that
\[
\norm{\phi_{\zeta} *\phi}_{M} = \norm{\phi}_{M}.
\]
Thus, to prove \eqref{normpreserving}, it suffices to show
\[
\norm{t_{P}\left(\phi_{\zeta} *\phi\right)}_{G} = \norm{t_{P}(\phi)}_{G}.
\]
Since $\zeta \in I^{+}$, the result for positive case implies that
\[
\norm{t_{P}(\phi_{\zeta})}_{G} = \norm{\phi_{\zeta}}_{M} = 1.
\]
Hence,
\begin{align*}
\norm{t_{P}\left(\phi_{\zeta} *\phi\right)}_{G}  &= \norm{t_{P}(\phi_{\zeta}) * t_{P}(\phi)}_{G} \\
&\le \norm{t_{P}(\phi_{\zeta})}_{G} \norm{t_{P}(\phi)}_{G}\\
&= \norm{t_{P}(\phi)}_{G}.
\end{align*}
On the other hand, we can prove 
\[
\norm{t_{P}(\phi)}_{G} \le \norm{t_{P}\left(\phi_{\zeta} *\phi\right)}_{G}
\]
as follows.
Let $\phi_{\zeta^{-1}}$ denote the unique element of $\mathcal{H}(M(F), \rho_{M})$ with support $\zeta^{-1} K_{M}$, and such that
\[
\phi_{\zeta^{-1}}(\zeta^{-1}) = \id_{V_{\rho_{M}}}.
\]
Then, $\phi_{\zeta^{-1}}$ is the inverse of $\phi_{\zeta}$ in $\mathcal{H}(M(F), \rho_{M})$, hence $t_{P}(\phi_{\zeta^{-1}})$ is the inverse of $t_{P}(\phi_{\zeta})$ in $\mathcal{H}(G(F), \rho)$.
According to the proof of \cite[Theorem~7.2 (ii)]{MR1643417}, there exists $c' \in \mathbb{R}_{>0}$ such that
\begin{align*}
t_{P}(\phi_{\zeta^{-1}}) = c' \cdot T(\phi_{\zeta^{-1}}).
\end{align*}
Since $\zeta$ is a positive element in the center of $M$, the result for positive case implies that
\begin{align*}
t_{P}(\phi_{\zeta}) &= \frac{
\abs{K_{M} / \left(
K_{M} \cap \zeta K_{M} \zeta^{-1}
\right)}^{1/2}
}{
\abs{K / \left(
K \cap \zeta K \zeta^{-1}
\right)}^{1/2}
}T(\phi_{\zeta})\\
&= \abs{K / \left(
K \cap \zeta K \zeta^{-1}
\right)}^{-1/2} T(\phi_{\zeta}).
\end{align*}
Hence, we obtain
\begin{align*}
1 &= t_{P}(\phi_{\zeta^{-1}}) * t_{P} (\phi_{\zeta}) \\
&=c' \cdot \abs{K / \left(
K \cap \zeta K \zeta^{-1}
\right)}^{-1/2}  T(\phi_{\zeta^{-1}}) * T(\phi_{\zeta}).
\end{align*}
Comparing the values at $1$, we obtain
\begin{align*}
\id_{V_{\rho}} &= c' \cdot \abs{K / \left(
K \cap \zeta K \zeta^{-1}
\right)}^{-1/2}  \int_{G(F)} \left(
T(\phi_{\zeta^{-1}})
\right)(g) \circ\left(
T(\phi_{\zeta})
\right)(g^{-1}) dg\\
&= c' \cdot \abs{K / \left(
K \cap \zeta K \zeta^{-1}
\right)}^{-1/2}  \abs{K \zeta^{-1} K/K} \id_{V_{\rho}}\\
&= c' \cdot \abs{K \zeta K/K}^{-1/2} \abs{K \zeta^{-1} K/K} \id_{V_{\rho}}.
\end{align*}
Hence, we obtain
\begin{align}
\label{valueofc'notcalculated}
c' = \abs{K \zeta K/K}^{1/2} \abs{K \zeta^{-1} K/K}^{-1} 
\end{align}
Recall that we normalize the Haar measure on $G(F)$ such that the volume of $K$ is equal to $1$.
For an open subset $U$ of $G(F)$, let $\vol(U)$ denote the volume of $U$ with respect to this Haar measure.
Then, we have
\begin{align*}
\abs{K \zeta K/K} &= \vol\left(
K \zeta K
\right) \\
&= \vol\left(
\left(
K \zeta K
\right)^{-1}
\right) \\
&= \vol\left(
K \zeta^{-1} K
\right) \\
&= \abs{K \zeta^{-1} K/K}.
\end{align*}
Substituting it to equation~\eqref{valueofc'notcalculated}, we have
\[
c' = \abs{K \zeta^{-1} K/K}^{-1/2}.
\]
Thus, we have
\begin{align*}
\norm{t_{P}(\phi_{\zeta^{-1}})}_{G} &= \norm{c' \cdot T(\phi_{\zeta^{-1}})}_{G}\\
&= c' \norm{T(\phi_{\zeta^{-1}})}_{G}\\
&= c' \abs{K \zeta^{-1} K/K}^{1/2} \norm{
\left(
T(\phi_{\zeta^{-1}})(\zeta^{-1})
\right)
}\\
&= c' \abs{K \zeta^{-1} K/K}^{1/2} \norm{
\phi_{\zeta^{-1}}(\zeta^{-1})
}\\
&= c' \abs{K \zeta^{-1} K/K}^{1/2}\\
&= 1.
\end{align*}
Now, we obtain
\begin{align*}
\norm{t_{P}(\phi)}_{G} &= \norm{t_{P}(\phi_{\zeta^{-1}}) * t_{P}\left(\phi_{\zeta} *\phi\right)}_{G}\\
&\le \norm{t_{P}(\phi_{\zeta^{-1}})}_{G} \norm{t_{P}\left(\phi_{\zeta} *\phi\right)}_{G} \\
&= \norm{t_{P}\left(\phi_{\zeta} *\phi\right)}_{G} .
\end{align*}
\end{proof}

We transport $t_{P}\index{$t_{P}$}$ to an injective homomorphism
\[
\End_{M(F)}\left( \ind_{K_{M}}^{M(F)} (\rho_{M}) \right) \rightarrow \End_{G(F)}\left(\ind_{K}^{G(F)} (\rho)\right)
\]
via isomorphism~\eqref{heckevsend} and isomorphism~\eqref{Mverofheckevsend}, and use the same symbol $t_{P}$ for it.

Now, we have two injections
\[
I_{P}^{G} \colon \End_{M(F)}\left( \ind_{K_{M}}^{M(F)} (\rho_{M}) \right) \rightarrow \End_{G(F)}\left(I^{G}_{P} \left( \ind_{K_{M}}^{M(F)} (\rho_{M}) \right)\right)
\]
and
\[
t_{P} \colon \End_{M(F)}\left( \ind_{K_{M}}^{M(F)} (\rho_{M}) \right) \rightarrow \End_{G(F)}\left(\ind_{K}^{G(F)} (\rho)\right).
\]
The following proposition claims that the isomorphism
\[
I_{U} \colon \End_{G(F)}\left(\ind_{K}^{G(F)} (\rho)\right) \rightarrow \End_{G(F)}\left(I^{G}_{P} \left( \ind_{K_{M}}^{M(F)} (\rho_{M}) \right)\right)
\]
is compatible with these injections.
\begin{proposition}
\label{compatibility}
The following diagram commutes:
\[
\xymatrix{
\End_{M(F)}\left( \ind_{K_{M}}^{M(F)} (\rho_{M}) \right) \ar[d]_-{t_{P}} \ar[r]^-{\id} \ar@{}[dr]|\circlearrowleft & \End_{M(F)}\left( \ind_{K_{M}}^{M(F)} (\rho_{M}) \right) \ar[d]^-{I_{P}^{G}} \\
\End_{G(F)}\left(\ind_{K}^{G(F)} (\rho)\right) \ar[r]^-{I_{U}} & \End_{G(F)}\left(I^{G}_{P} \left( \ind_{K_{M}}^{M(F)} (\rho_{M}) \right)\right). 
}
\]
\end{proposition}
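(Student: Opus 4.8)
The claim is that $I_U \circ t_P = I_P^G$ as maps $\End_{M(F)}\bigl(\ind_{K_M}^{M(F)}(\rho_M)\bigr) \to \End_{G(F)}\bigl(I_P^G(\ind_{K_M}^{M(F)}(\rho_M))\bigr)$. My plan is to verify this by an explicit computation on the level of functions, using the concrete formulas for $I_{U,2}$, $I_{U,1}$, the identification \eqref{heckevsend}, and the definition of $t_P$. The strategy is to reduce to checking the identity on a generating set of $\End_{M(F)}\bigl(\ind_{K_M}^{M(F)}(\rho_M)\bigr)$, and then use the characterization of $t_P$ in Lemma~\ref{characterizationoftp}: it suffices to check that $I_U \circ t_P$ and $I_P^G$ agree on the subalgebra $\mathcal{H}^{+}(M(F),\rho_M)$ of positive elements, since $t_P$ is the unique extension of $T(\,\cdot\,\delta_P^{-1/2})$ from that subalgebra. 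Concretely, I would take $\phi \in \mathcal{H}^+(M(F),\rho_M)$ supported on $K_M z K_M$ with $z \in I^+$, let $\Phi_M \in \End_{M(F)}(\ind_{K_M}^{M(F)}(\rho_M))$ be the corresponding endomorphism, and compute both $(I_U \circ t_P)(\Phi_M)$ and $I_P^G(\Phi_M)$ applied to a test function, showing they coincide.

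\textbf{Key steps.} First I would unwind the identification \eqref{heckevsend} for $G$: the endomorphism $t_P(\Phi_M)$ corresponds to convolution by $t_P(\phi) = T(\phi \cdot \delta_P^{-1/2})$, a function supported on $KzK$ with value $\delta_P(z)^{-1/2}\phi(z)$ at $z$. Then I would apply $I_{U,2}$, i.e. integrate over $U(F)$ with $\vol(K_U)=1$: using $z$ positive relative to $K$ and $U$ and the Iwahori-type decomposition $K = K_U K_M K_{\overline U}$, the integral $\int_{U(F)}(t_P(\phi)*f)(ug)\,du$ collapses, and the factor $\delta_P(z)^{-1/2}$ together with $|K_U/zK_Uz^{-1}| = \delta_P(z)$ (Lemma~\ref{lemmacalculationofdelta}) accounts for the volume of the $U(F)$-orbit swept out. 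Next I apply $I_{U,1}$, namely $F \mapsto [g \mapsto [m \mapsto \delta_P(m)^{1/2}F(mg)]]$, which is exactly the $\delta_P^{1/2}$-twist built into the normalized induction $I_P^G = \Ind_P^G(\,\cdot\,\otimes\delta_P^{-1/2})$. On the other side, $I_P^G(\Phi_M)$ acts on $I_P^G(\ind_{K_M}^{M(F)}(\rho_M))$ by applying $\Phi_M$ (convolution by $\phi$ on $M(F)$) pointwise in the $M(F)$-argument; I would write this out against the same test function and match. The three $\delta_P^{1/2}$ / $\delta_P^{-1/2}$ twists and the volume normalization should cancel exactly, leaving the two sides equal. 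Having verified agreement on $\mathcal{H}^+(M(F),\rho_M)$, Lemma~\ref{characterizationoftp} (applied to $t' := I_U^{-1}\circ I_P^G$, which is a homomorphism since both $I_U$ and $I_P^G$ are) forces $t' = t_P$, i.e. $I_U \circ t_P = I_P^G$ on all of $\End_{M(F)}(\ind_{K_M}^{M(F)}(\rho_M))$.

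\textbf{Main obstacle.} The bookkeeping of the normalization factors is the delicate point: one must be careful that the Haar measure $du$ on $U(F)$ is normalized so $\vol(K_U)=1$, that $dg$ on $G(F)$ is normalized so $\vol(K)=1$, and that these are compatible with the measure on $M(F)$ normalized so $\vol(K_M)=1$ under $K = K_U K_M K_{\overline U}$ — together with the $\delta_P^{\pm 1/2}$ appearing in $I_{U,1}$, in the definition of $t_P$, and in the definition of the normalized functor $I_P^G$. Getting all of these to cancel cleanly, rather than leaving a spurious power of $\delta_P(z)$ or of $|K_U/zK_Uz^{-1}|$, is where the real work lies; once the positive case is pinned down the reduction via Lemma~\ref{characterizationoftp} is formal. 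An alternative route, avoiding the explicit integral, would be to invoke the uniqueness of the embedding $\End_{M(F)}(\ind_{K_M}^{M(F)}(\rho_M)) \hookrightarrow \End_{G(F)}(I_P^G(\cdots))$ characterized by compatibility with the natural transformations $r_P^G \circ I_P^G \Rightarrow \id$ and $\id \Rightarrow I_P^G \circ r_P^G$, but checking that $I_U \circ t_P$ satisfies that characterization ultimately requires the same computation, so I would carry out the direct verification.
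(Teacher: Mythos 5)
Your proposal is correct and follows essentially the same route as the paper: reduce via Lemma~\ref{characterizationoftp} (applied to $t' = I_U^{-1}\circ I_P^G$) to checking equality on $\phi \in \mathcal{H}^+(M(F),\rho_M)$ supported on a single double coset $K_M z K_M$ with $z \in I^+$, then carry out the direct integral computation using the Iwahori decomposition $K = K_U K_M K_{\overline U}$, positivity of $z$, and Lemma~\ref{lemmacalculationofdelta} to match the $\delta_P^{\pm 1/2}$ and volume factors. The paper's proof is exactly this computation written out in full.
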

\begin{proof}
According to Lemma~\ref{characterizationoftp} for $t' = I_{U}^{-1} \circ I_{P}^{G}$, it suffices to show that
\[
I_{U}(t_{P}(\Phi)) = I_{P}^{G}(\Phi)
\]
for any $\Phi \in \End_{M(F)}\left( \ind_{K_{M}}^{M(F)} (\rho_{M}) \right)$ that corresponds to an element $\phi \in \mathcal{H}^{+}(M(F), \rho_{M})$ via isomorphism~\eqref{Mverofheckevsend}.
Moreover, we may suppose that the support of $\phi$ is contained in $K_{M} z K_{M}$ for some $z \in I^{+}$.
Let $\phi \in \mathcal{H}^{+}(M(F), \rho_{M})$ be such an element.
Then, the element $\Phi \in \End_{M(F)}\left( \ind_{K_{M}}^{M(F)} (\rho_{M}) \right)$ corresponding to $\phi$ via isomorphism~\eqref{Mverofheckevsend} is defined as 
\[
\left(\Phi(f)\right)(x) = \int_{M(F)} \phi(y) \cdot f(y^{-1}x) dy
\]
for $f \in \ind_{K_{M}}^{M(F)} (\rho_{M})$ and $x \in M(F)$.

Let $F_{U}$ be an element of $I^{G}_{P} \left( \ind_{K_{M}}^{M(F)} (\rho_{M}) \right)$.
We write 
\[
f_{U} = I_{U, 1}^{-1}(F_{U})
\]
and
\[
f = I_{U}^{-1}(F_{U}).
\]
To prove the proposition, it suffices to show that
\[
\left(I_{U}(t_{P}(\Phi))\right)(F_{U})
= \left(I_{P}^{G}(\Phi)\right)(F_{U}),
\]
equivalently, 
\begin{align}
\label{compatibilityenoughtoshow}
I_{U,2}\left(\left(t_{P}(\Phi)\right)(f)\right) = I_{U,1}^{-1}\left(\left(I_{P}^{G}(\Phi)\right)(F_{U})\right).
\end{align}
for any $F_{U} \in I^{G}_{P} \left( \ind_{K_{M}}^{M(F)} (\rho_{M}) \right)$.

First, we calculate the left hand side of \eqref{compatibilityenoughtoshow}.
For $x \in G(F)$, we have
\begin{align*}
\left(\left(t_{P}(\Phi)\right)(f)\right)(x) &= \int_{G(F)} \left(t_{P}(\phi)\right)(y) \cdot f(y^{-1}x) dy\\
&= \int_{G(F)} \left(T(\phi \cdot \delta_{P}^{-1/2})\right)(y) \cdot f(y^{-1}x) dy\\
&= \int_{K z K} \left(T(\phi \cdot \delta_{P}^{-1/2})\right)(y) \cdot f(y^{-1}x) dy\\
&= \sum_{y \in KzK/K} \left(T(\phi \cdot \delta_{P}^{-1/2})\right)(y) \cdot f(y^{-1}x) \\
&= \sum_{k \in K/(K \cap zKz^{-1})} \left(T(\phi \cdot \delta_{P}^{-1/2})\right)(kz) \cdot f(z^{-1}k^{-1}x).
\end{align*}
Since $K$ decomposes with respect to $U, M, \overline{U}$, and $z$ normalizes $U, M, \overline{U}$, we have
\[
K= K_{U} \cdot K_{M} \cdot K_{\overline{U}},
\]
\[
zKz^{-1} = (zK_{U}z^{-1}) \cdot (zK_{M}z^{-1}) \cdot (zK_{\overline{U}}z^{-1}),
\]
and
\[
K \cap zKz^{-1} = (K_{U} \cap zK_{U}z^{-1}) \cdot (K_{M} \cap zK_{M}z^{-1}) \cdot (K_{\overline{U}} \cap zK_{\overline{U}}z^{-1}).
\]
Moreover, since $z \in I^{+}$, we obatin
\[
K_{U} \cap zK_{U}z^{-1} = zK_{U}z^{-1}
\]
and
\[
K_{\overline{U}} \cap zK_{\overline{U}}z^{-1} = K_{\overline{U}}.
\]
Hence, we may rewrite the index of the summation as 
\[
\sum_{k \in K/(K \cap zKz^{-1})} (*)(k) = \sum_{k_{M} \in K_{M}/ (K_{M} \cap zK_{M}z^{-1})} \sum_{k_{U} \in K_{U}/zK_{U}z^{-1}} (*)(k_{U}k_{M}).
\]
Thus, for $g \in G(F)$, we obtain
\begin{align*}
& \quad \left(
I_{U,2}\left(\left(t_{P}(\Phi)\right)(f)\right)
\right)(g)\\ &= \int_{U(F)} \left(\left(t_{P}(\Phi)\right) (f)\right)(ug) du\\
&= \int_{U(F)} \sum_{k_{M} \in K_{M}/ (K_{M} \cap zK_{M}z^{-1})} \sum_{k_{U} \in K_{U}/zK_{U}z^{-1}} \left(T(\phi \cdot \delta_{P}^{-1/2})\right)(k_{U}k_{M}z) \cdot f(z^{-1}k_{M}^{-1}k_{U}^{-1}ug) du\\
&= \delta_{P}(z)^{-1/2} \int_{U(F)} \sum_{k_{M} \in K_{M}/ (K_{M} \cap zK_{M}z^{-1})} \sum_{k_{U} \in K_{U}/zK_{U}z^{-1}} \left(T(\phi)\right)(k_{U}k_{M}z) \cdot f(z^{-1}k_{M}^{-1}k_{U}^{-1}ug) du\\
&= \delta_{P}(z)^{-1/2} \int_{U(F)} \sum_{k_{M} \in K_{M}/ (K_{M} \cap zK_{M}z^{-1})} \sum_{k_{U} \in K_{U}/zK_{U}z^{-1}} \left(\rho(k_{U}k_{M}) \circ \left(T(\phi)\right)(z)\right) \cdot f(z^{-1}k_{M}^{-1}k_{U}^{-1}ug) du\\
&= \delta_{P}(z)^{-1/2} \int_{U(F)} \sum_{k_{M} \in K_{M}/ (K_{M} \cap zK_{M}z^{-1})} \sum_{k_{U} \in K_{U}/zK_{U}z^{-1}} \left(\rho_{M}(k_{M}) \circ \phi(z)\right) \cdot f(z^{-1}k_{M}^{-1}k_{U}^{-1}ug) du\\
&= \delta_{P}(z)^{-1/2} \sum_{k_{M} \in K_{M}/ (K_{M} \cap zK_{M}z^{-1})} \sum_{k_{U} \in K_{U}/zK_{U}z^{-1}} \left(\rho_{M}(k_{M}) \circ \phi(z)\right) \cdot \left(\int_{U(F)} f(z^{-1}k_{M}^{-1}k_{U}^{-1}ug) du\right)\\
&= \delta_{P}(z)^{-1/2} \sum_{k_{M} \in K_{M}/ (K_{M} \cap zK_{M}z^{-1})} \sum_{k_{U} \in K_{U}/zK_{U}z^{-1}} \left(\rho_{M}(k_{M}) \circ \phi(z)\right) \cdot \left(\int_{U(F)} f(z^{-1}k_{M}^{-1}ug) du\right)\\
&= \delta_{P}(z)^{-1/2} \abs{K_{U}/zK_{U}z^{-1}} \sum_{k_{M} \in K_{M}/ (K_{M} \cap zK_{M}z^{-1})} \left(\rho_{M}(k_{M}) \circ \phi(z)\right) \cdot \left(\int_{U(F)} f(z^{-1}k_{M}^{-1}ug) du\right)\\
&= \delta_{P}(z)^{-1/2} \cdot \delta_{P}(z) \sum_{k_{M} \in K_{M}/ (K_{M} \cap zK_{M}z^{-1})} \left(\rho_{M}(k_{M}) \circ \phi(z)\right) \cdot \left(\int_{U(F)} f(z^{-1}k_{M}^{-1}ug) du\right)\\
&= \delta_{P}(z)^{1/2} \sum_{k_{M} \in K_{M}/ (K_{M} \cap zK_{M}z^{-1})} \left(\rho_{M}(k_{M}) \circ \phi(z)\right) \cdot \left(\int_{U(F)} f(z^{-1}k_{M}^{-1}ug) du\right).
\end{align*}
We used Lemma~\ref{lemmacalculationofdelta} for the second equality from the last.

Next, we calculate the right hand side of \eqref{compatibilityenoughtoshow}.
For $g \in G(F)$, we have
\begin{align*}
& \quad \left(
I_{U,1}^{-1}\left(\left(I_{P}^{G}(\Phi)\right)(F_{U})\right)
\right)(g)\\
&= 
\left(
\left(
\left(I_{P}^{G}(\Phi)\right)(F_{U})
\right)(g)
\right)(1)\\
&=\left(\Phi \left( F_{U}(g) \right)\right) (1)\\
&= \int_{M(F)} \phi(y) \cdot \left(F_{U}(g)\right)(y^{-1}) dy\\
&= \int_{K_M z K_{M} } \phi(y) \cdot \left(F_{U}(g)\right)(y^{-1}) dy\\
&= \sum_{y \in K_M z K_M/K_{M}} \phi(y) \cdot \left(F_{U}(g)\right)(y^{-1})\\
&=  \sum_{k_{M} \in K_{M}/ (K_{M} \cap zK_{M}z^{-1})} \phi(k_{M}z) \cdot \left(F_{U}(g)\right)(z^{-1}k_{M}^{-1})\\
&= \sum_{k_{M} \in K_{M}/ (K_{M} \cap zK_{M}z^{-1})} \left(\rho_{M}(k_{M}) \circ \phi(z) \right) \cdot \left(F_{U}(g)\right)(z^{-1}k_{M}^{-1})\\
&= \sum_{k_{M} \in K_{M}/ (K_{M} \cap zK_{M}z^{-1})} \left(\rho_{M}(k_{M}) \circ \phi(z) \right) \cdot \left( \delta_{P}(z^{-1}k_{M}^{-1})^{1/2} \cdot f_{U}(z^{-1}k_{M}^{-1}g)\right)\\
&= \delta_{P}(z)^{-1/2} \sum_{k_{M} \in K_{M}/ (K_{M} \cap zK_{M}z^{-1})} \left(\rho_{M}(k_{M}) \circ \phi(z) \right) \cdot \left(f_{U}(z^{-1}k_{M}^{-1}g)\right)\\
&= \delta_{P}(z)^{-1/2} \sum_{k_{M} \in K_{M}/ (K_{M} \cap zK_{M}z^{-1})} \left(\rho_{M}(k_{M}) \circ \phi(z) \right) \cdot \left(\int_{U(F)} f(uz^{-1}k_{M}^{-1}g) du\right)\\
&= \delta_{P}(z)^{-1/2} \sum_{k_{M} \in K_{M}/ (K_{M} \cap zK_{M}z^{-1})} \left(\rho_{M}(k_{M}) \circ \phi(z) \right) \cdot \left(\int_{U(F)} f(z^{-1}k_{M}^{-1}(k_{M}zuz^{-1}k_{M}^{-1})g) du\right)\\
&= \delta_{P}(z)^{-1/2} \cdot \delta_{P}(k_{M}z) \sum_{k_{M} \in K_{M}/ (K_{M} \cap zK_{M}z^{-1})} \left(\rho_{M}(k_{M}) \circ \phi(z) \right) \cdot \left(\int_{U(F)} f(z^{-1}k_{M}^{-1}ug) du\right)\\
&= \delta_{P}(z)^{-1/2} \cdot \delta_{P}(z) \sum_{k_{M} \in K_{M}/ (K_{M} \cap zK_{M}z^{-1})} \left(\rho_{M}(k_{M}) \circ \phi(z) \right) \cdot \left(\int_{U(F)} f(z^{-1}k_{M}^{-1}ug) du\right)\\
&= \delta_{P}(z)^{1/2} \sum_{k_{M} \in K_{M}/ (K_{M} \cap zK_{M}z^{-1})} \left(\rho_{M}(k_{M}) \circ \phi(z) \right) \cdot \left(\int_{U(F)} f(z^{-1}k_{M}^{-1}ug) du\right),
\end{align*}
that is equal to $\left(
I_{U,2}\left(\left(t_{P}(\Phi)\right)(f)\right)
\right)(g)$. 
\end{proof}
\section{The case of depth-zero types}
\label{The case of depth-zero types}
In this section, we recall the description of the endomorphism algebra $\End_{G(F)}\left(\ind_{K}^{G(F)} (\rho)\right)$ for a depth-zero type $(K, \rho)$ in \cite{MR1235019}.
Let $S\index{$S$}$ be a maximal split torus of $G$, and let $\Phi = \Phi(G, S)\index{$\Phi$}$ and $\Phi^{\vee} = \Phi^{\vee}(G, S)\index{$\Phi^{\vee}$}$ denote the set of relative roots and the set of relative coroots with respect to $S$, respectively.
Let $V\index{$V$}$ denote the $\mathbb{R}$-span of $\Phi^{\vee}(G, S)$.
Let $\mathcal{A} = \mathcal{A}(G, S)\index{$\mathcal{A}$}$ denote the reduced apartment of $S$.
Hence, $\mathcal{A}$ is an affine space whose vector space of translations is $V$.
The work of Bruhat and Tits \cite{MR327923} associates to $(G, S)$ an affine root system $\Phi_{\aff} = \Phi_{\aff}(G, S)\index{$\Phi_{\aff}$}$ on $\mathcal{A}$ (see \cite[Section~1]{MR546588}).
For $a \in \Phi_{\aff}$, let $Da$ denote the gradient of $a$.
For a subset $\Psi \subset \Phi_{\aff}$, we write
\[
D\Psi = \{
Da \mid a \in \Psi
\}.
\]
We note that
\[
D\Phi_{\aff} = \Phi.
\]
We write $A'\index{$A'$}$ for the space of affine-linear functions on $\mathcal{A}$, that is spanned by $\Phi_{\aff}$.
For $\alpha \in \Phi$, let $s_{\alpha}$ denote the corresponding reflection on $V$, and for $a \in \Phi_{\aff}$, let $s_{a}$ denote the corresponding reflection on $\mathcal{A}$. 
Let $W_{0}\index{$W_{0}$}$ denote the Weyl group of the root system $\Phi$ and $W_{\aff}\index{$W_{\aff}$}$ denote the affine Weyl group of the affine root system $\Phi_{\aff}$.
Hence, $W_{0}$ is generated by $s_{\alpha} \ (\alpha \in \Phi)$, and $W_{\aff}$ is generated by $s_{a} \ (a \in \Phi_{\aff})$.
We define the derivative $Dw \in W_{0}$ of an element $w \in W_{\aff}$ as
\[
w(x+v) = w(x) + (Dw)(v)
\]
for all $x \in \mathcal{A}$ and $v \in V$.
We also write 
\[
W\index{$W$} = N_{G}(S)(F)/Z_{G}(S)(F)_{0},
\]
where $N_{G}(S)(F)\index{$N_{G}(S)(F)$}$ denotes the normalizer of $S$ in $G(F)$, and $Z_{G}(S)(F)_{0}\index{$Z_{G}(S)(F)_{0}$}$ denotes the unique parahoric subgroup of the minimal semi-standard Levi subgroup $Z_{G}(S)\index{$Z_{G}(S)$}$ of $G$ with respect to $S$.
We fix lifts of elements of $W$ in $N_{G}(S)(F)$ as \cite[Proposition~5.2]{MR1235019} and write $\dot{w} \in N_{G}(S)(F)$ for the lift of $w \in W$.
For a subset $H$ of $G(F)$ containing $Z_{G}(S)(F)_{0}$, let $W_{H}$ denote the subset
\[
\left(
N_{G}(S)(F) \cap H
\right) / Z_{G}(S)(F)_{0}
\]
of $W$.
According to \cite[1.2]{MR546588}, $W$ acts on the affine space $\mathcal{A}$.
Let $\nu$ denote this action.
Let $G'$ denote the open subgroup of $G(F)$ generated by all parahoric subgroups of $G(F)$.
According to \cite[5.2.12]{MR756316}, the restriction of $\nu$ to $W_{G'}$ induces an isomorphism
\[
W_{G'} \simeq W_{\aff}
\]
 (see also \cite[3.2, 3.12]{MR1235019}).
 We identify $W_{G'}$ with $W_{\aff}$ and regard $W_{\aff}$ as a subgroup of $W$.
 According to \cite[1.7]{MR546588}, $W_{\aff}$ is a normal subgroup of $W$.
We fix a chamber $C\index{$C$}$ of the affine root system $\Phi_{\aff}$, and let $B \subset \Phi_{\aff}\index{$B$}$ denote the corresponding basis of $\Phi_{\aff}$. 
The chamber $C$ determines a set of positive affine roots $\Phi_{\aff}^{+}\index{$\Phi_{\aff}^{+}$}$ as
\[
\Phi_{\aff}^{+} = \{
a \in \Phi_{\aff} \mid a(x) >0 \ (x \in C)
\}.
\]
We assume that the affine root system $\Phi_{\aff}$ is irreducible.
\begin{remark}
We assume that $\Phi_{\aff}$ is irreducible since it is supposed in \cite{MR1235019} (see \cite[3.14 (a)]{MR1235019}).
However, the modifications of \cite{MR1235019} and our results in case that $\Phi_{\aff}$ is not irreducible can be obtained by arguing component by component.
\end{remark}

We fix a proper subset $J \subset B\index{$J$}$. Then, we can associate $J$ with a parahoric subgroup $P_{J}\index{$P_{J}$}$ of $G(F)$ as \cite[3.7]{MR1235019}.
We also have an open normal subgroup $U_{J}\index{$U_{J}$}$ of $P_{J}$ called the radical of $P_{J}$ such that the quotient $P_{J}/U_{J}$ is isomorphic to the group of $k_{F}$-valued points of a connected reductive group $\mathbf{M}_{J}\index{$\mathbf{M}_{J}$}$ defined over $k_{F}$ \cite[3.13]{MR1235019}. 
We note that $P_{J}$ and $U_{J}$ depend not only on $J$ but also on $B$.
When we emphasis the dependence on $B$, we write $P_{J} = P_{J, B}$ and $U_{J} = U_{J, B}$.
Moreover, for another basis $B'$ of $\Phi_{\aff}$ containing $J$, we write $P_{J, B'}$ and $U_{J, B'}$ for the corresponding subgroups of $G(F)$.
In \cite[3.15]{MR1235019}, Morris defined a reductive subgroup $\mathfrak{M}$ of $G$ and its parahoric subgroup 
\[
\mathcal{M}_{J} = P_{J} \cap \mathfrak{M}(F)
\]
that only depend on $S$ and $J$.
More precisely, $\mathcal{M}_{J}\index{$\mathcal{M}_{J}$}$ is the group generated by 
\[
\left\{
Z_{G}(S)(F)_{0}, U_{a} \mid a \in \Phi_{\aff} \cap A'_{J}
\right\},
\]
where $A'_{J}\index{$A'_{J}$}$ denotes the subspace of  $A'$ spanned by $J$, and $U_{a}\index{$U_{a}$}$ denotes the group defined in \cite[3.12]{MR1235019}, that is a compact open subgroup of the root subgroup $U_{Da}$ associated with $Da$.
We write $\mathcal{U}_{J}\index{$\mathcal{U}_{J}$}$ for the radical of $\mathcal{M}_{J}$.
Then, according to \cite[3.15]{MR1235019}, the inclusion map
\[
\mathcal{M}_{J} \rightarrow P_{J}
\]
induces an isomorphism
\[
\mathcal{M}_{J}/\mathcal{U}_{J} \rightarrow P_{J}/U_{J} \simeq \mathbf{M}_{J}(k_{F}).
\]
We identify $\mathbf{M}_{J}(k_{F})$ with $\mathcal{M}_{J}/\mathcal{U}_{J}$, that does not depend on $B$.
In particular, we may identify $P_{J, B}/U_{J, B}$ with $P_{J, B'}/U_{J, B'}$ for another basis $B'$ canonically.

Let $(\rho, V_{\rho})\index{$\rho$}$ be an irreducible cuspidal representation of $\mathbf{M}_{J}(k_{F})$. We also regard $\rho$ as an irreducible smooth representation of $P_{J}$ via inflation.
We will explain the description of the endomorphism algebra $\End_{G(F)}\left(\ind_{P_{J}}^{G(F)} (\rho)\right)$ in \cite{MR1235019}.

Morris defined a subgroup $W(J, \rho)$\index{$W(J, \rho)$} of $W$ in \cite[4.16]{MR1235019} as
\[
W(J, \rho) = \{
w \in W \mid w J = J, \ ^{\dot{w}}\!\rho \simeq \rho
\}.
\]
Here, we regard $\rho$ as an irreducible representation of $\mathcal{M}_{J}$ via inflation.
The group $W(J, \rho)$ has a subgroup $R(J, \rho)$ that is isomorphic to the affine Weyl group of an affine root system $\Gamma'(J, \rho)$.
We will explain the definition of $\Gamma'(J, \rho)$ and $R(J, \rho)$.

When $\abs{B \backslash J} = 1$, we set $\Gamma'(J, \rho) = \emptyset$ and $R(J, \rho) = \{1\}$.
In this case, all of our results become trivial.
Hence, in the rest of paper, we assume that $\abs{B \backslash J} > 1$.
Let $a \in \Phi_{\aff} \backslash A'_{J}$ such that $J \cup \{a\}$ is contained in a basis $B'$ of $\Phi_{\aff}$.
For $* = J$ or $* = J \cup \{a\}$, let $W_{*}\index{$W_{J}$}$ denote the subgroup of $W$ generated by $s_{b} \ (b \in *)$.
Let $u$ denote the unique element of $W_{J \cup \{a\}}$ satisfying
\[
u(J \cup \{a\}) = -(J \cup \{a\}).
\]
We also define $t \in W_{J}$ as the element satisfying
\[
tJ = -J.
\]
We define
\[
v[a, J] = ut.\index{$v[a, J]$}
\]
For an element $a \in \Phi_{\aff} \backslash A'_{J}$ such that $v[a, J] \in W(J, \rho)$, we define a number $p_{a} \ge 1$ as follows (see \cite[Subsection~7.1]{MR1235019}).
We have the parahoric subgroup $P_{J \cup \{a\}, B'}$ with radical $U_{J \cup \{a\}, B'}$ associated with $J \cup \{a\} \subset B'$.
We also have a connected reductive group $\mathbf{M}_{J \cup \{a\}}$ defined over $k_{F}$ such that $P_{J \cup \{a\}, B'}/U_{J \cup \{a\}, B'}$ is isomorphic to $\mathbf{M}_{J \cup \{a\}}(k_{F})$.
Then, we have 
\[
U_{J \cup \{a\}, B'} \subset U_{J, B'} \subset P_{J, B'} \subset P_{J \cup \{a\}, B'}.
\]
Moreover, according to \cite[Th\'eor\`eme~4.6.33]{MR756316}, the quotient $P_{J, B'} / U_{J \cup \{a\}, B'}$ can be identified with the group of $k_{F}$-valued points of a parabolic subgroup of $\mathbf{M}_{J \cup \{a\}}$ with Levi factor $\mathbf{M}_{J}$.
We consider the parabolically induced representation
\[
\ind_{P_{J, B'} / U_{J \cup \{a\}, B'}} ^{\mathbf{M}_{J \cup \{a\}}(k_{F})} (\rho)
\]
of $\mathbf{M}_{J \cup \{a\}}(k_{F})$.
The assumption $v[a, J] \in W(J, \rho)$ implies that this representation splits into two inequivalent irreducible representations $\rho_{1}$ and $\rho_{2}$.
We may assume that $\dim(\rho_{1}) \le \dim(\rho_{2})$, and we define $p_{a}$ as
\[
p_{a} = \frac{\dim(\rho_{2})}{\dim(\rho_{1})}.\index{$p_{a}$}\index{$p_{a}$}
\]
According to \cite[Subsection~7.1]{MR1235019}, $p_{a}$ does not depend on the choice of $B'$.
We define
\[
\Gamma(J, \rho) = \{
a \in \Phi_{\aff} \backslash A'_{J} \mid v[a, J] \in W(J, \rho), \ p_{a} >1
\},\index{$\Gamma(J, \rho)$}
\]
and let $R(J, \rho)\index{$R(J, \rho)$}$ be the subgroup of $W(J, \rho)$ generated by $v[a, J]$ for all $a \in \Gamma(J, \rho)$. 
The definition of $v[a, J]$ implies that any lift of $v[a, J]$ is contained in a parahoric subgroup of $G(F)$.
Hence, we have
\[
R(J, \rho) \subset W_{G'} = W_{\aff}. 
\]
In particular, we have
\begin{align}
\label{RjrhocontainedinG1}
R(J, \rho) \subset W_{G^{1}}.
\end{align}
According to \cite[Lemma~7.2]{MR1235019}, $\Gamma(J, \rho)$ is $W(J, \rho)$-invariant.
Hence, $R(J, \rho)$ is a normal subgroup of $W(J, \rho)$.
We define $\Gamma'(J, \rho)\index{$\Gamma'(J, \rho)$}$ as the image of $\Gamma(J, \rho)$ on $A'/A'_{J}$ via the natural projection.
We also define 
\[
\Gamma(J, \rho)^{+} = \Gamma(J, \rho) \cap \Phi_{\aff}^{+}\index{$\Gamma(J, \rho)^{+}$}
\]
and $\Gamma'(J, \rho)^{+}\index{$\Gamma'(J, \rho)^{+}$}$ as the projection of $\Gamma(J, \rho)^{+}$ on $A'/A'_{J}$.

We may regard $\Gamma'(J, \rho)$ as a set of affine-linear functions on a Euclidean space $\mathcal{A}^{J}_{\Gamma}$ as follows.
Let
\[
\mathcal{A}^{J} = \{
x \in \mathcal{A} \mid a(x) = 0 \ (a \in J)
\}.\index{$\mathcal{A}^{J}$}
\]
Then, $\mathcal{A}^{J}$ is an affine space with the vector space of translations
\[
V^{J} = \{
y \in V \mid \alpha(y) = 0 \ (\alpha \in DJ)
\}.\index{$V^{J}$}
\]
We also define $V^{\Gamma}$ as
\[
V^{\Gamma} = \{
y \in V \mid \alpha(y) = 0 \ (\alpha \in D\Gamma(J, \rho))
\},\index{$V^{\Gamma}$}
\]
and 
\[
V^{J, \Gamma} = V^{J} \cap V^ {\Gamma}.\index{$V^{J, \Gamma}$}
\]
Finally, we define
\[
\mathcal{A}^{J}_{\Gamma} = \mathcal{A}^{J}/V^{J, \Gamma},\index{$\mathcal{A}^{J}_{\Gamma}$}
\]
that is an affine space with the vector space of translations
\[
V^{J}_{\Gamma} = V^{J}/V^{J, \Gamma}.\index{$V^{J}_{\Gamma}$}
\]
Let $( , )_{0}$ be a $W_{0}$-invariant inner product on $V$.
We also use the same notion $( , )_{0}$ for the restriction of it to a subspace of $V$.
Let $(V^{J, \Gamma})^{\perp}\index{$(V^{J, \Gamma})^{\perp}$}$ denote the orthogonal complement of $V^{J, \Gamma}$ in $V^{J}$ with respect to $( , )_{0}$.
Then, the natural projection $V^{J} \to V^{J}_{\Gamma}$ restricts to an isomorphism
\begin{align}
\label{orthogonalcomplementmorris}
(V^{J, \Gamma})^{\perp} \rightarrow V^{J}_{\Gamma}.
\end{align}
We define an inner product on $V^{J}_{\Gamma}$ by transporting the inner product $(,)_{0}$ on $(V^{J, \Gamma})^{\perp}$ via \eqref{orthogonalcomplementmorris}.
Thus, the affine space $\mathcal{A}^{J}_{\Gamma}$ is a Euclidean space, and we can canonically regard $\Gamma'(J, \rho^{M}_{0})$ as a set of affine-linear functions on $\mathcal{A}^{J}_{\Gamma}$.
Moreover, we obtain the following:
\begin{proposition}[{\cite[Proposition~7.3 (a)]{MR1235019}}]
\label{proposition7.3ofmorris}
The set $\Gamma'(J, \rho)$ is an affine root system on $\mathcal{A}^{J}_{\Gamma}$, and $\Gamma'(J, \rho)^{+}$ is a set of positive affine roots of $\Gamma'(J, \rho)$.
For $a \in \Gamma(J, \rho)$, let $s_{a+A'_{J}}$ denote the reflection on $\mathcal{A}^{J}_{\Gamma}$ corresponding to $a + A'_{J} \in \Gamma'(J, \rho)$, and let $W_{\aff}\left(\Gamma'(J, \rho)\right)\index{$W_{\aff}\left(\Gamma'(J, \rho)\right)$}$ denote the affine Weyl group of the affine root system $\Gamma'(J, \rho)$.
Then, the action of $v[a, J] \in R(J, \rho)$ on $\mathcal{A}$ preserves $\mathcal{A}^{J}$ and induces a well-defined action on $\mathcal{A}^{J}_{\Gamma}$ that coincides with $s_{a + A'_{J}}$.
Moreover, the map
\[
v[a, J] \mapsto s_{a + A'_{J}}
\]
defines an isomorphism
\[
R(J, \rho) \rightarrow W_{\aff}\left(\Gamma'(J, \rho)\right).
\]
\end{proposition}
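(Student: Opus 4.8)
The plan is to reduce the assertions of the proposition to one computation with the generators $v[a,J]$ $(a\in\Gamma(J,\rho))$ of $R(J,\rho)$: each stabilises $\mathcal{A}^{J}$ and $V^{J,\Gamma}$, and induces on $\mathcal{A}^{J}_{\Gamma}$ the orthogonal reflection in the hyperplane $\{a+A'_{J}=0\}$. Granting this, the remaining statements fall out of recognising $R(J,\rho)$, through its action on $\mathcal{A}^{J}_{\Gamma}$, as a discrete Euclidean reflection group.

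So the main step is to analyse a single $v[a,J]=ut$ with $a\in\Gamma(J,\rho)$, hence $v[a,J]\in W(J,\rho)$ and in particular $v[a,J]J=J$. Then $v[a,J]$ permutes the hyperplanes $\{b=0\}$ $(b\in J)$ and stabilises their intersection $\mathcal{A}^{J}$, and $Dv[a,J]$ permutes $DJ$ and stabilises $V^{J}$. Since $t\in W_{J}$ fixes $\mathcal{A}^{J}$ pointwise, $v[a,J]$ and $u$ agree on $\mathcal{A}^{J}$; and since $u\in W_{J\cup\{a\}}$ fixes $\mathcal{A}^{J}\cap\{a=0\}$ pointwise, so does $v[a,J]$. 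On linear parts, $Dt$ lies in the subgroup of $W_{0}$ generated by the $s_{\alpha}$ $(\alpha\in DJ)$, which fixes $V^{J}$ pointwise, while $Du$ lies in the subgroup generated by the $s_{\alpha}$ $(\alpha\in D(J\cup\{a\}))$, which fixes $V^{J}\cap\ker(Da)$ pointwise; hence $Dv[a,J]$ fixes $V^{J}\cap\ker(Da)$ pointwise. Because $\Phi_{\aff}$ is irreducible and $J\cup\{a\}$ is a proper subset of a basis, $D(J\cup\{a\})$ is linearly independent, so $Da$ does not vanish on $V^{J}$ and $Dv[a,J]\restriction_{V^{J}}\ (=Du\restriction_{V^{J}})$ is not the identity; being orthogonal for $(\,,\,)_{0}$ and fixing the hyperplane $V^{J}\cap\ker(Da)$ pointwise, it is therefore the orthogonal reflection of $V^{J}$ across that hyperplane, so $v[a,J]\restriction_{\mathcal{A}^{J}}$ is the affine reflection in $\mathcal{A}^{J}\cap\{a=0\}$. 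Finally $V^{J,\Gamma}=V^{J}\cap V^{\Gamma}\subseteq V^{J}\cap\ker(Da)$ is fixed pointwise by this reflection, and $v[a,J]$ permutes $\Gamma(J,\rho)$ — here I use that $\Gamma(J,\rho)$ is $W(J,\rho)$-stable, \cite[Lemma~7.2]{MR1235019}; hence $v[a,J]$ descends to an affine automorphism of $\mathcal{A}^{J}_{\Gamma}=\mathcal{A}^{J}/V^{J,\Gamma}$, namely the orthogonal reflection in the image of $\mathcal{A}^{J}\cap\{a=0\}$, i.e.\ $s_{a+A'_{J}}$. This settles the reflection assertion of the proposition, and the $W(J,\rho)$-stability of $\Gamma(J,\rho)$ makes all the ensuing stability statements legitimate.

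For the remaining assertions, I would argue as follows. The arrangement $\{\{a+A'_{J}=0\}:a\in\Gamma(J,\rho)\}$ of affine hyperplanes of $\mathcal{A}^{J}_{\Gamma}$ is locally finite (it is the image of a subset of $\Phi_{\aff}$), stable under $R(J,\rho)$, and essential (the common kernel of the $Da$ in $V^{J}$ is $V^{J,\Gamma}$, whose image in $V^{J}_{\Gamma}$ is $0$), while $R(J,\rho)\subset W_{G'}=W_{\aff}$ acts properly discontinuously on $\mathcal{A}\supseteq\mathcal{A}^{J}$. From this I would conclude that $R(J,\rho)$ acts on $\mathcal{A}^{J}_{\Gamma}$ as a discrete essential reflection group, hence, by the structure theory of such groups, as the affine Weyl group of an affine root system; this root system is $\Gamma'(J,\rho)$, with $\Gamma'(J,\rho)^{+}$ a positive system, by matching chambers — $\mathcal{A}^{J}$ is a face of $\overline{C}$, and on the relative interior of $\mathcal{A}^{J}\cap\overline{C}$, which projects onto a chamber of the arrangement, a root $a\in\Gamma(J,\rho)$ is positive precisely when $a\in\Phi_{\aff}^{+}$ — and the identification of groups is $v[a,J]\mapsto s_{a+A'_{J}}$, surjective because the $s_{a+A'_{J}}$ generate and injective because the $R(J,\rho)$-action on $\mathcal{A}^{J}_{\Gamma}$ is faithful.

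The two delicate points here are, I expect: the crystallographic (integrality) condition for $\Gamma'(J,\rho)$, which does not follow formally from integrality of $\Phi_{\aff}$ after restriction to $\mathcal{A}^{J}$ and has to be checked using the inner-product identification $(V^{J,\Gamma})^{\perp}\xrightarrow{\ \sim\ }V^{J}_{\Gamma}$ of \eqref{orthogonalcomplementmorris}; and the faithfulness of the $R(J,\rho)$-action on $\mathcal{A}^{J}_{\Gamma}$, for which one shows that the translation subgroup of $R(J,\rho)$ meets $V^{J,\Gamma}$ only in $0$ (an element acting trivially there has, by orthogonality, trivial gradient on $V^{J}$, hence is a translation by a vector of $V^{J,\Gamma}$; and the translations in $R(J,\rho)$ lie in the span of the coroots, which sit in $(V^{J,\Gamma})^{\perp}$). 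By contrast, the reflection identification of the second paragraph is the one place where the hypothesis $v[a,J]\in W(J,\rho)$ is truly indispensable — without $v[a,J]J=J$ the element need not even stabilise $V^{J}$, and its linear part can have order larger than $2$ — yet it goes through cleanly once the pointwise-fixing properties of $t$ and $u$ are combined with the orthogonality of $W_{0}$ and the irreducibility of $\Phi_{\aff}$.
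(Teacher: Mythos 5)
Note first that the paper itself does not prove this statement: it cites \cite[Proposition~7.3~(a)]{MR1235019} and relies on Morris's argument, so there is no internal proof to compare against, and what follows is an assessment of your proposal on its own terms.

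Your reduction to the generators $v[a,J]$ is the right strategy and the geometry is mostly sound, but there is a real jump in the key step. From the linear independence of $D(J\cup\{a\})$ you conclude both that $Da$ does not vanish on $V^{J}$ \emph{and} that $Dv[a,J]\restriction_{V^{J}}$ is not the identity. Only the first is immediate. An orthogonal transformation of $V^{J}$ that fixes the hyperplane $V^{J}\cap\ker(Da)$ pointwise is a priori either the identity or the reflection, and nothing you have said rules out the former. It can be ruled out, but only using $v[a,J]J=J$ in an essential way: writing $Du(Db)=-\tau(Db)$ on $D(J\cup\{a\})$ and $Dt(Db)=-\sigma(Db)$ on $DJ$ (with $\tau,\sigma$ the relevant diagram permutations), $v[a,J]J=J$ forces $\tau\sigma$ to permute $DJ$, hence $\tau$ to permute $DJ$, hence $\tau(Da)=Da$, hence $Da\circ Du=-Da$; together with $Da\not\equiv 0$ on $V^{J}$ this shows $Du\restriction_{V^{J}}\neq\id$. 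As written, that inference is missing.

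The more serious gap is the one you flag yourself: the crystallographic condition for $\Gamma'(J,\rho)$ is the substance of the first assertion, and the proposal defers rather than proves it. Appealing to the structure theory of discrete reflection groups would make crystallography automatic, but then you owe a proof that $R(J,\rho)$ acts properly discontinuously on $\mathcal{A}^{J}_{\Gamma}$, and you only assert it (``acts properly discontinuously on $\mathcal{A}\supseteq\mathcal{A}^{J}$''). Proper discontinuity does not automatically pass to a quotient by $V^{J,\Gamma}$; the relevant translation subgroup could a priori project densely. Your faithfulness sketch has the right germ (the reflection directions lie in $(V^{J,\Gamma})^{\perp}$), but it conflates ``$w$ restricts to a translation of $\mathcal{A}^{J}$'' with ``$w$ is a translation element of $W_{\aff}$''; these differ, since $Dw$ may be nontrivial on the orthogonal complement of $V^{J}$, and even after repair you still need the resulting translation set in $(V^{J,\Gamma})^{\perp}$ to be a lattice, not merely a subgroup. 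Those are precisely the points where Morris's argument earns its keep, and the proposal leaves them open.
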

We also note the following:
\begin{lemma}[cf. \!{\cite[Theorem~6]{MR576184}}]
\label{reducibiityofgamma}
The affine root system $\Gamma'(J, \rho)$ is reduced.
\end{lemma}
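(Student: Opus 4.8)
The plan is to argue by contradiction: a failure of reducedness would force two affine roots of $\Gamma(J, \rho)$ to be attached to one and the same rank-one reducibility problem over the residue field $k_{F}$, and that is impossible because the generalised Hecke rings of finite reductive groups are governed by \emph{reduced} root systems (the finite-group input being Howlett--Lehrer \cite[Theorem~6]{MR576184}).

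By Proposition~\ref{proposition7.3ofmorris}, $\Gamma'(J, \rho)$ is an affine root system on $\mathcal{A}^{J}_{\Gamma}$, $\Gamma'(J, \rho)^{+}$ is a positive system, and $v[a, J] \mapsto s_{a + A'_{J}}$ is an \emph{isomorphism} $R(J, \rho) \rightarrow W_{\aff}\!\left(\Gamma'(J, \rho)\right)$. An affine root system is non-reduced exactly when some irreducible component is not reduced; inspecting such a component in rank one would produce $a, b \in \Gamma(J, \rho)$ whose images $\bar{a} = a + A'_{J}$ and $\bar{b} = b + A'_{J}$ in $\mathcal{A}^{J}_{\Gamma}$ satisfy $\bar{b} = 2\bar{a}$. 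In particular $\{\bar{a} = 0\} = \{\bar{b} = 0\}$ as affine hyperplanes of $\mathcal{A}^{J}_{\Gamma}$, so $s_{\bar{a}} = s_{\bar{b}}$, and the injectivity in Proposition~\ref{proposition7.3ofmorris} gives $v[a, J] = v[b, J]$ in $W$.

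Next I would unwind the definitions of $v[\cdot, J]$ and of $p_{\cdot}$ to see that $a$ and $b$ carry literally the same residual data. Writing $v[a, J] = u_{a} t$ and $v[b, J] = u_{b} t$, where $t \in W_{J}$ negates $J$ and $u_{a} \in W_{J \cup \{a\}}$, $u_{b} \in W_{J \cup \{b\}}$ are the longest elements negating $J \cup \{a\}$, $J \cup \{b\}$, the equality $v[a, J] = v[b, J]$ yields $u_{a} = u_{b}$. Since $\abs{B \backslash J} > 1$, the groups $W_{J \cup \{a\}}$, $W_{J \cup \{b\}}$ are finite, so a finite reflection subgroup is determined by its longest element and hence $W_{J \cup \{a\}} = W_{J \cup \{b\}}$ and $A'_{J \cup \{a\}} = A'_{J \cup \{b\}}$. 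Consequently the parahoric subgroups $P_{J \cup \{a\}, B'}$ and $P_{J \cup \{b\}, B''}$ have canonically the same reductive quotient $\mathbf{M}_{J \cup \{a\}} = \mathbf{M}_{J \cup \{b\}}$, containing $\mathbf{M}_{J}$ as a Levi factor through the \emph{same} parabolic $P_{J, B'}/U_{J \cup \{a\}, B'}$ that enters the definition of $p_{a}$ (and likewise for $p_{b}$). Hence $p_{a}$ and $p_{b}$ are computed from one and the same induced representation $\ind_{P_{J, B'}/U_{J \cup \{a\}, B'}}^{\mathbf{M}_{J \cup \{a\}}(k_{F})}(\rho) = \ind_{\mathbf{M}_{J}}^{\mathbf{M}_{J \cup \{a\}}}(\rho)$; its two-piece decomposition $\rho_{1} \oplus \rho_{2}$ contributes to $\Gamma'(J, \rho)$ a single Weyl orbit, namely that of the reflection $s_{\bar{a}} = v[a, J]$, and so the single affine root $\pm\bar{a}$ — not the pair $\{\bar{a}, 2\bar{a}\}$. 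The impossibility of both $\bar{a}$ and $2\bar{a}$ arising is precisely the finite-group statement of \cite[Theorem~6]{MR576184}: reductive groups over the finite field $k_{F}$ have reduced relative root systems, so the root system governing $\End_{\mathbf{M}_{J \cup \{a\}}(k_{F})}\!\big(\ind_{\mathbf{M}_{J}}^{\mathbf{M}_{J \cup \{a\}}}(\rho)\big)$ is reduced. This contradiction shows that $\Gamma'(J, \rho)$ has no non-reduced component, i.e.\ it is reduced.

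The main obstacle is the bookkeeping in the previous paragraph: upgrading the abstract equality $v[a, J] = v[b, J]$ to a genuine identification of the finite reductive groups $\mathbf{M}_{J \cup \{a\}}$, $\mathbf{M}_{J \cup \{b\}}$, of their parabolic subgroups with Levi factor $\mathbf{M}_{J}$, and of the cuspidal data used to define $p_{a}$ and $p_{b}$ in \cite[Subsection~7.1]{MR1235019}. Once this matching is in place, reducedness of $\Gamma'(J, \rho)$ follows from the finite-field input, where — in contrast with the $p$-adic relative root system $\Phi$, which may be of type $BC_{n}$ — no doubled root ever appears.
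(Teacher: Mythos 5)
Your approach is genuinely different from the paper's, and I think it has a real gap.

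The paper's proof is a short, self-contained piece of linear algebra over $\mathbb{Z}$: if $a_1 + A'_J = \lambda(a_2 + A'_J)$, then the one-dimensional extensions $A'_{J\cup\{a_1\}}$ and $A'_{J\cup\{a_2\}}$ coincide, and since each $J \cup \{a_i\}$ sits inside a basis of $\Phi_{\aff}$, one may write $a_2 = m_1 a_1 + \sum_{b\in J} m_b b$ and $a_1 = m_2 a_2 + \sum_{b\in J} m'_b b$ with \emph{rational integer} coefficients; composing gives $m_1 m_2 = 1$, so $m_1, m_2 = \pm 1$ and $\lambda = \pm 1$. The ``cf.\ Howlett'' in the statement is just an acknowledgement of an analogous result in that paper; the proof does not go through finite reductive groups at all.

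Your route is far heavier, and the place where it breaks is precisely the point you yourself flag as ``the main obstacle.'' You correctly get from $\bar{b} = 2\bar{a}$ to $v[a,J] = v[b,J]$ via the injectivity in Proposition~\ref{proposition7.3ofmorris}, and (modulo the somewhat delicate claim that a finite reflection subgroup of $W_{\aff}$ generated by part of a basis is determined by its longest element, which needs more care when the two bases $B'$, $B''$ can differ) you arrive at $A'_{J\cup\{a\}} = A'_{J\cup\{b\}}$. But at that point the appeal to Howlett--Lehrer is a non-sequitur: reducedness of the relative root system governing $\End_{\mathbf{M}_{J\cup\{a\}}(k_F)}\bigl(\ind_{\mathbf{M}_J}^{\mathbf{M}_{J\cup\{a\}}}(\rho)\bigr)$ is a statement about a rank-one finite Hecke algebra and does not, on its own, preclude two distinct affine roots $a, b \in \Phi_{\aff}$ from both lying in $\Gamma(J,\rho)$ with $\bar{b} = 2\bar{a}$. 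You would need to explain, and you do not, why the occurrence of both $a$ and $b$ in $\Gamma(J,\rho)$ would force a doubled root into the finite-group Hecke algebra rather than simply producing the same Hecke-algebra data twice (indeed, by your own setup $p_a = p_b$ and the two roots give the \emph{same} reflection and the \emph{same} orbit, so no ``doubled root'' is visible on the finite side). The clean way to close your argument, once you have $b \in A'_{J\cup\{a\}}$ and $a \in A'_{J\cup\{b\}}$, is exactly the paper's integrality computation: $b = 2a + (\text{integer combination of }J)$ and $a = \tfrac{1}{2}b + (\text{integer combination of }J)$ with $\tfrac{1}{2}\notin\mathbb{Z}$ is the contradiction, and Howlett's theorem plays no role. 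In short, the representation-theoretic detour is unnecessary and, as written, does not actually supply the contradiction.
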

\begin{proof}
Let $a_1, a_2 \in \Gamma(J, \rho)$ and $\lambda \in \mathbb{R}$ such that
\begin{align}
\label{reducibilityofaffineroot}
a_1 + A'_{J} = \lambda \left(
a_2 + A'_{J}
\right).
\end{align}
For $i=1,2$, let $A'_{i}$ denote the subspace of $A'$ spanned by $J \cup \{a_i\}$.
Then, assumption~\eqref{reducibilityofaffineroot} implies $A'_{1} = A'_{2}$.
Hence, we obtain
\[
a_{2} \in A'_{2} = A'_{1}.
\]
Since $J \cup \{a_1\}$ is contained in a basis of $\Phi_{\aff}$, we can write
\[
a_{2} = m_{1}a_{1} + \sum_{b \in J}m_{b}b
\]
with rational integer coefficients $m_{1}, m_{b}$.
Similarly, we obtain that
\[
a_{1} \in A'_{1} = A'_{2},
\]
and we can write
\[
a_{1} = m_{2}a_{2} + \sum_{b \in J}m'_{b}b
\]
with rational integer coefficients $m_{2}, m'_{b}$.
Now, we have
\[
a_{1} = m_{1}m_{2}a_{1} + \sum_{b \in J}(m_{2}m_{b} + m'_{b})b.
\]
Using the assumption that $J \cup \{a_1\}$ is contained in a basis of $\Phi_{\aff}$ again, we conclude $m_{1}m_{2}=1$, hence $m_1, m_2 \in \{\pm 1\}$.
Thus, we conclude that
\[
a_{1} = m_{2}a_{2} + \sum_{b \in J}m'_{b}b \in \pm (a_{2} + A'_{J}),
\]
hence $\lambda = \pm 1$.
Thus, $\Gamma'(J, \rho)$ is a reduced affine root system.
\end{proof}
\begin{corollary}
\label{corollaryofreducibiityofgamma}
Let $a_1, a_2 \in \Gamma(J, \rho)^{+}$ such that
\[
a_1 + A'_{J} = \lambda \left(
a_2 + A'_{J}
\right)
\]
for some $\lambda \in \mathbb{R}$.
Then, we obtain $a_{1} = a_{2}$.
In particular, the map
\[
\Gamma(J, \rho)^{+} \rightarrow \Gamma'(J, \rho)^{+}
\]
defined as 
\[
a \mapsto a + A'_{J}
\]
is injective.
\end{corollary}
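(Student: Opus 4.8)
The plan is to deduce Corollary~\ref{corollaryofreducibiityofgamma} from the computation already carried out inside the proof of Lemma~\ref{reducibiityofgamma}, upgrading it by exploiting positivity. Recall that in that proof the hypothesis $a_{1} + A'_{J} = \lambda(a_{2} + A'_{J})$ was shown to force $\lambda \in \{1, -1\}$, together with an integral identity of the shape $a_{1} = \lambda a_{2} + \sum_{b \in J} c_{b} b$ with $c_{b} \in \mathbb{Z}$. I would begin by recording this, and also the two structural facts I will use: by the very definition of $\Gamma(J, \rho)$, each of $J \cup \{a_{1}\}$ and $J \cup \{a_{2}\}$ is contained in some basis, say $B_{1}$ and $B_{2}$, of $\Phi_{\aff}$; and since $J \subset B \subset \Phi_{\aff}^{+}$, every $b \in J$ is a positive affine root, as are $a_{1}$ and $a_{2}$ because they lie in $\Gamma(J, \rho)^{+}$.

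Next I would rule out the case $\lambda = -1$. Rewriting the identity as $a_{2} = -a_{1} + \sum_{b \in J} c_{b} b$ and using $\{a_{1}\} \cup J \subset B_{1}$, this expresses the affine root $a_{2}$ in the basis $B_{1}$ with coefficient $-1$ on $a_{1}$; since the coordinates of a root with respect to a basis of an (affine) root system all have the same sign, these coordinates are all $\le 0$, so $c_{b} \le 0$ for every $b \in J$. Evaluating the resulting identity $a_{1} + a_{2} = \sum_{b \in J} c_{b} b$ at a point $x$ of the fixed chamber $C$ then yields a contradiction: the left-hand side is $> 0$ because $a_{1}, a_{2} \in \Phi_{\aff}^{+}$, while the right-hand side is $\le 0$ because $b(x) > 0$ and $c_{b} \le 0$ for each $b$. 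Hence $\lambda = 1$.

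It then remains to show, with $\lambda = 1$, that all $c_{b}$ vanish. Writing $a_{2} = a_{1} - \sum_{b \in J} c_{b} b$ with $\{a_{1}\} \cup J \subset B_{1}$, sign-coherence of the coordinates (the coefficient of $a_{1}$ being $+1 > 0$) forces $-c_{b} \ge 0$; writing instead $a_{1} = a_{2} + \sum_{b \in J} c_{b} b$ with $\{a_{2}\} \cup J \subset B_{2}$, the same principle (now with $+1$ on $a_{2}$) forces $c_{b} \ge 0$. Thus $c_{b} = 0$ for all $b \in J$ and $a_{1} = a_{2}$. The displayed ``in particular'' statement is just the special case $\lambda = 1$ of what was proved, so it is immediate. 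I do not anticipate a real obstacle here: the argument is a short refinement of Lemma~\ref{reducibiityofgamma}, and the only ingredients beyond it — positivity of $a_{1}, a_{2}$ and the existence of the bases $B_{1}, B_{2}$ — are built into the definition of $\Gamma(J, \rho)^{+}$, while sign-coherence of the coordinates of a root in a basis is standard. The one spot to be careful with is the bookkeeping of signs, i.e.\ keeping straight which of the two bases $B_{1}, B_{2}$ one expands in at each step.
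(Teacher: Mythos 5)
Your proposal is correct and follows essentially the same route as the paper's proof: both reuse the integral expansions $a_{2} = m_{1}a_{1} + \sum m_{b}b$ and $a_{1} = m_{2}a_{2} + \sum m'_{b}b$ from Lemma~\ref{reducibiityofgamma}'s proof and exploit sign-coherence of coefficients together with positivity of $a_1, a_2$. The paper phrases this more tersely, asserting directly that positivity forces all of $m_1, m_2, m_b, m'_b$ to be non-negative and then reading off $m_1 = m_2 = 1$, $m_b = m'_b = 0$ from $m_1 m_2 = 1$ and $m_2 m_b + m'_b = 0$; your version simply reorganizes the same ingredients into an explicit case split $\lambda = \pm 1$.
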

\begin{proof}
Since $a_1$ and $a_2$ are positive, the coefficients $m_1, m_2, m_{b}, m'_b$ in the proof of Lemma~\ref{reducibiityofgamma} are all non-negative.
Then, the equation
\[
a_{1} = m_{1}m_{2}a_{1} + \sum_{b \in J}(m_{2}m_{b} + m'_{b})b.
\]
implies that
\[
m_1 = m_2 =1, \ m_{b} = m'_{b} =0 \ (b \in J).
\]
Hence, we have $a_1 = a_2$.
\end{proof}

For $a' \in \Gamma'(J, \rho)$, let $D_{J}(a')\index{$D_{J}(a')$}$ denote the gradient of $a'$, that is a linear function on $V^{J}_{\Gamma}$.
Hence, for $a \in \Gamma(J, \rho)$, we obtain
\[
D_{J}(a + A'_{J}) = (Da)\restriction_{V^{J}}.
\]
Here, we identify a linear function $D_{J}(a + A'_{J})$ on $V^{J}_{\Gamma}$ with a linear function on $V^{J}$ that vanishes on $V^{J, \Gamma}$.

Let $B(J, \rho)\index{$B(J, \rho)$}$ denote the basis of $\Gamma'(J, \rho)$ with respect to the positive system $\Gamma'(J, \rho)^{+}$, and we define a subset $S(J, \rho)\index{$S(J, \rho)$}$ of $W_{\aff}\left(\Gamma'(J, \rho)\right)$ as 
\[
S(J, \rho) = \{
s_{a'} \mid a' \in B(J, \rho)
\}.
\]

Let
\[
C(J, \rho) = \{
w \in W(J, \rho) \mid w(\Gamma(J, \rho)^{+}) \subset \Gamma(J, \rho)^{+}
\}.\index{$C(J, \rho)$}
\]
According to \cite[Proposition~7.3 (b)]{MR1235019}, we have
\[
W(J, \rho) = R(J, \rho) \rtimes C(J, \rho).
\]

For $w \in W(J, \rho)$, Morris defined an element $\Phi_{w} \in \End_{G(F)}\left(\ind_{P_{J}}^{G(F)} (\rho)\right)\index{$\Phi_{w}$}$ such that the corresponding element $\phi_{w} \in \mathcal{H}(G(F), \rho)$ via isomorphism~\eqref{heckevsend} is supported on $P_{J} \dot{w} P_{J}$. 
We note that the element $\Phi_{w}$ here is written as $T_{w}$ in \cite[Section~7]{MR1235019}.
The following theorem is the main theorem of \cite{MR1235019}: 
\begin{theorem}[{\cite[Theorem~7.12]{MR1235019}}]
\label{theorem7.12ofmorris}
The endomorphism algebra $\End_{G(F)}\left(\ind_{P_{J}}^{G(F)} (\rho)\right)$ has a basis 
\[
\{
\Phi_{w} \mid w \in W(J, \rho)
\}
\]
as a vector space.
Moreover, the multiplication for this algebra can be described as follows:
Let $w \in W(J, \rho)$, $t \in C(J, \rho)$, and $v=v[a, J]$ for an element $a \in \Gamma(J, \rho)^{+}$ such that $a + A'_{J} \in B(J, \rho)$.
Then,
\begin{enumerate}
\item \[
\Phi_{w}\Phi_{t} = \chi(w, t)\Phi_{wt},
\]
\item \[
\Phi_{t}\Phi_{w} = \chi(t, w)\Phi_{tw},
\]
\item \[
\Phi_{v}\Phi_{w} =
\begin{cases}
\Phi_{vw} \ &(w^{-1} (a) \in \Gamma(J, \rho)^{+}), \\
p_{a}\Phi_{vw} + (p_{a}-1)\Phi_{w}  \ &(w^{-1} (a) \in - \Gamma(J, \rho)^{+}),
\end{cases}
\]
\item \[
\Phi_{w}\Phi_{v} =
\begin{cases}
\Phi_{wv} \ &(wa \in \Gamma(J, \rho)^{+}), \\
p_{a}\Phi_{wv} + (p_{a}-1)\Phi_{w}  \ &(wa \in - \Gamma(J, \rho)^{+}).
\end{cases}
\]
\end{enumerate}
Here, $\chi\index{$\chi$}$ denotes the $2$-cocycle on $W(J, \rho) \times W(J, \rho)$ defined in \cite[7.11]{MR1235019} (denoted as $\mu$ there).
\end{theorem}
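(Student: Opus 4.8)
The plan is to pass to the Hecke algebra $\mathcal{H}(G(F), \rho)$ via the isomorphism~\eqref{heckevsend} and analyze its structure through the geometry of the Bruhat--Tits building. First I would pin down the support of a function $\phi \in \mathcal{H}(G(F),\rho)$: it is contained in $I_{G(F)}(\rho)$, and the core combinatorial--geometric claim is that
\[
I_{G(F)}(\rho) = \bigsqcup_{w \in W(J,\rho)} P_{J} \dot{w} P_{J},
\]
and moreover that $\Hom_{P_{J} \cap {}^{\dot{w}}P_{J}}({}^{\dot{w}}\rho, \rho)$ is one-dimensional for each such $w$. To prove this I would use the Bruhat decomposition of $G(F)$ relative to the parahoric $P_{J}$ together with the Iwahori factorization of $P_{J}$ inherited from $\mathcal{M}_{J}$ and its radical $\mathcal{U}_{J}$: an element $g$ intertwining $\rho$ must first carry the facet attached to $J$ to another facet of the same type, which forces $g \in P_{J}\dot{w}P_{J}$ with $wJ = J$; the cuspidality of $\rho$ on $\mathbf{M}_{J}(k_{F})$ then kills the contribution of every intermediate parahoric that would correspond to a proper parabolic of the relevant finite reductive quotient, leaving exactly the condition ${}^{\dot{w}}\rho \simeq \rho$ and a one-dimensional intertwining space. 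This yields the asserted basis $\{\Phi_{w} : w \in W(J,\rho)\}$, with $\phi_{w}$ supported on $P_{J}\dot{w}P_{J}$.

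Next I would fix a normalization of $\phi_{w}(\dot{w})$ compatible with the chosen lifts $\dot{w}$ and compute convolution products. For $t \in C(J,\rho)$ the lift $\dot{t}$ essentially normalizes the whole configuration, so $P_{J}\dot{w}P_{J} \cdot P_{J}\dot{t}P_{J} = P_{J}\dot{w}\dot{t}P_{J}$ is a single double coset, and evaluating at $\dot{w}\dot{t}$ gives $\Phi_{w}\Phi_{t} = \chi(w,t)\Phi_{wt}$ for a scalar $\chi(w,t)$ measuring the discrepancy between $\dot{w}\dot{t}$ and the chosen lift of $wt$; associativity forces $\chi$ to be the $2$-cocycle of \cite[7.11]{MR1235019}, and relation~(2) is identical. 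Under the decomposition $W(J,\rho) = R(J,\rho) \rtimes C(J,\rho)$ of Proposition~\ref{proposition7.3ofmorris}, this identifies the ``group-algebra part'' as $\mathbb{C}[C(J,\rho)]$ twisted by $\chi$, and reduces everything else to the behaviour of the reflection generators $v = v[a,J]$ with $a + A'_{J} \in B(J,\rho)$ against an arbitrary $\Phi_{w}$.

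For the reflection relations~(3) and~(4) the point is to localize the computation to the finite reductive quotient. Since any lift of $v[a,J]$ lies in the parahoric $P_{J \cup \{a\}, B'}$, the double cosets $P_{J}\dot{v}P_{J}$ and $P_{J}\dot{w}P_{J}$ multiply either to the single coset $P_{J}\dot{v}\dot{w}P_{J}$ or to $P_{J}\dot{v}\dot{w}P_{J} \sqcup P_{J}\dot{w}P_{J}$, according to whether $w^{-1}(a)$ lies in $\Gamma(J,\rho)^{+}$ or in $-\Gamma(J,\rho)^{+}$ — a length/positivity dichotomy read off inside the affine root system $\Gamma'(J,\rho)$, which is reduced by Lemma~\ref{reducibiityofgamma}. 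In the second case the structure constants are governed by the rank-one Hecke algebra of the pair $\bigl(\mathbf{M}_{J \cup \{a\}}(k_{F}),\, \ind_{P_{J,B'}/U_{J\cup\{a\},B'}}^{\mathbf{M}_{J\cup\{a\}}(k_{F})}(\rho)\bigr)$, whose single quadratic relation has parameter exactly $p_{a} = \dim(\rho_{2})/\dim(\rho_{1})$; transporting this relation up to $\mathcal{H}(G(F),\rho)$ gives $\Phi_{v}\Phi_{w} = p_{a}\Phi_{vw} + (p_{a}-1)\Phi_{w}$, and~(4) follows symmetrically, or by applying the anti-isomorphism of Remark~\ref{rmkrl}.

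I expect the main obstacle to be the first step: showing that the intertwining set is precisely $\bigsqcup_{w \in W(J,\rho)} P_{J}\dot{w}P_{J}$ with one-dimensional intertwining spaces. This is where cuspidality of $\rho$ must be used decisively, via Mackey theory on the finite reductive quotients of intermediate parahorics, and it demands careful bookkeeping of Bruhat--Tits combinatorics (which facets can be carried to which, and the exact shape of the Iwahori factorization of $P_{J}$). A secondary difficulty is identifying the Hecke parameter with $p_{a}$ and checking that $\chi$ is a genuine, generally non-trivial, $2$-cocycle; both are finite-group computations in the spirit of Howlett--Lehrer, but keeping the normalizations of the $\phi_{w}$ consistent across all $w$ simultaneously is delicate.
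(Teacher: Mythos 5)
The paper does not prove this theorem itself — it is imported wholesale from Morris \cite[Theorem~7.12]{MR1235019}, and the text around it only sets up the notation ($W(J,\rho)$, $\Gamma(J,\rho)$, $p_a$, the cocycle $\chi$) needed to state it. So there is no ``paper's own proof'' to compare against; one must compare against Morris's original argument.

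Your sketch does in fact track Morris's strategy quite closely: Morris's Theorem~4.15 establishes that $I_{G(F)}(\rho)$ is the union of the double cosets $P_J \dot{w} P_J$ over $w \in W(J,\rho)$ with one-dimensional intertwining spaces, using cuspidality of $\rho$ on $\mathbf{M}_J(k_F)$ to kill contributions from proper parabolics of the intermediate finite quotients; Sections 5--6 of Morris construct the operators $\theta_{\rho,\dot w}$ and pin down normalizations; and Section 7 reduces the quadratic relation for $v[a,J]$ to a rank-one computation inside the finite reductive group $\mathbf{M}_{J\cup\{a\}}(k_F)$, where the parameter $p_a$ is defined. The decomposition into a twisted group algebra part $\mathbb{C}[C(J,\rho),\chi]$ and an affine part generated by the $v[a,J]$ is exactly Morris's Proposition~7.3(b) together with Theorem~7.12. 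The one step in your sketch I would flag as imprecise is the claim that relation~(4) can be deduced from~(3) ``by applying the anti-isomorphism of Remark~\ref{rmkrl}'': that anti-isomorphism is between $\mathcal{H}(G(F),\rho)$ and $\mathcal{H}(G(F),\rho^\vee)$, two \emph{different} algebras, so it does not directly give you the product in the opposite order within a single algebra — one has to track through how $p_a$ and the elements $\Phi_w$ transform under passage to $\rho^\vee$, which Morris handles by a direct computation symmetrical to~(3) rather than by duality. Also note that your appeal to ``$w$ carrying the facet to a facet of the same type'' is really encoding the requirement $wJ = J$, but making this precise requires the Bruhat--Tits combinatorics of \cite[3.11, 3.22]{MR1235019} (the bijection $W_J\backslash W/W_J \simeq P_J\backslash G(F)/P_J$) plus a Mackey restriction argument to intermediate parahorics; the cuspidality input appears exactly there. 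Morris does supply this; your outline correctly identifies it as the hard step.
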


We define a parameter function $q$ on $S(J, \rho)$ as
\begin{align}
\label{definitionofqmorris}
q_{s_{a+A'_{J}}} = p_{a}
\end{align}
for $a \in \Gamma(J, \rho)^{+}$ such that $a + A'_{J} \in B(J, \rho)$.
According to Corollary~\ref{corollaryofreducibiityofgamma}, any element $b \in \Gamma(J, \rho)^{+}$ with
\[
s_{a+ A'_{J}} = s_{b + A'_{J}}
\]
is equal to $a$. Hence, the parameter $q_{s_{a+A'_{J}}}$ is well-defined.
Moreover, according to \cite[Lemma~7.2 (b)]{MR1235019}, the function $q$ satisfies condition~\eqref{winvarianceofqparameter} in Appendix~\ref{Iwahori-Hecke algebras and affine Hecke algebras}.
Let $\mathcal{H}(W_{\aff}\left(\Gamma'(J, \rho)\right), q)\index{$\mathcal{H}(W_{\aff}\left(\Gamma'(J, \rho)\right), q)$}$ denote the Iwahori-Hecke algebra associated with the Coexter system $(W_{\aff}\left(\Gamma'(J, \rho)\right), S(J, \rho))$ and the parameter function $q$.
We write the standard basis of $\mathcal{H}(W_{\aff}\left(\Gamma'(J, \rho)\right), q)$ as
\[
\{
T^{\Mor}_{w} \mid w \in W_{\aff}\left(\Gamma'(J, \rho)\right)
\}.\index{$T^{\Mor}_{w}$}
\]
Then, we obtain:
\begin{corollary}
\label{corollaryoftheorem7.12ofmorris}
Let $\mathcal{H}(R(J, \rho))\index{$\mathcal{H}(R(J, \rho))$}$ denote the subspace of $\End_{G(F)}\left(\ind_{P_{J}}^{G(F)} (\rho)\right)$ spanned by 
\[
\left\{
\Phi_{w} \mid w \in R(J, \rho)
\right\}.
\]
Then, $\mathcal{H}(R(J, \rho))$ is a subslgebra of $\End_{G(F)}\left(\ind_{P_{J}}^{G(F)} (\rho)\right)$.
Moreover, there exists an isomorphism
\[
I^{\Mor} \colon \mathcal{H}(R(J, \rho)) \rightarrow \mathcal{H}(W_{\aff}\left(\Gamma'(J, \rho)\right), q)\index{$I^{\Mor}$}
\]
such that
\[
I^{\Mor} \left(
\Phi_{v[a, J]} 
\right) = T^{\Mor}_{s_{a + A'_{J}}}
\]
for all $a \in \Gamma(J, \rho)$.
\end{corollary}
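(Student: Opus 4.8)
The plan is to read off both assertions from Theorem~\ref{theorem7.12ofmorris} together with the standard presentation of the Iwahori--Hecke algebra, so that the corollary is essentially a repackaging of Morris's result in Hecke-algebra language. Throughout I write $\bar w\in W_{\aff}\!\left(\Gamma'(J,\rho)\right)$ for the image of $w\in R(J,\rho)$ under the isomorphism of Proposition~\ref{proposition7.3ofmorris}, and set $\ell(w)=\ell(\bar w)$ for the length in the Coxeter system $\left(W_{\aff}\!\left(\Gamma'(J,\rho)\right),S(J,\rho)\right)$; by a \emph{generator} of $R(J,\rho)$ I mean an element $v[a,J]$ with $a\in\Gamma(J,\rho)^{+}$ and $a+A'_{J}\in B(J,\rho)$. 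Since $v[a,J]\mapsto s_{a+A'_{J}}$ is an isomorphism $R(J,\rho)\xrightarrow{\sim}W_{\aff}\!\left(\Gamma'(J,\rho)\right)$, these generators generate $R(J,\rho)$, and $\ell(u)$ is the word length with respect to them.

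For the subalgebra claim I would argue as follows. By \cite[Lemma~7.2]{MR1235019} the set $\Gamma(J,\rho)$ is $W(J,\rho)$-stable, and $R(J,\rho)\subset W(J,\rho)$, so for $w\in R(J,\rho)$ and a generator $v=v[a,J]$ one has $w^{-1}(a)\in\Gamma(J,\rho)$; hence Theorem~\ref{theorem7.12ofmorris}(3) gives $\Phi_{v}\Phi_{w}\in\{\Phi_{vw},\,p_{a}\Phi_{vw}+(p_{a}-1)\Phi_{w}\}$, which lies in $\mathcal{H}(R(J,\rho))$ because $vw,w\in R(J,\rho)$. Writing an arbitrary element of $R(J,\rho)$ as a product of generators, an induction on word length shows that $\mathcal{H}(R(J,\rho))$ is closed under left multiplication by the $\Phi_{v}$, hence under all multiplication; as it contains $\Phi_{1}=\id$, it is a unital subalgebra.

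For the isomorphism: since $\{\Phi_{w}:w\in R(J,\rho)\}$ and $\{T^{\Mor}_{w'}:w'\in W_{\aff}\!\left(\Gamma'(J,\rho)\right)\}$ are bases (the former a subset of the basis in Theorem~\ref{theorem7.12ofmorris}), the rule $\Phi_{w}\mapsto T^{\Mor}_{\bar w}$ defines a linear isomorphism $I^{\Mor}$ with $I^{\Mor}(\Phi_{v[a,J]})=T^{\Mor}_{s_{a+A'_{J}}}$, and it remains to check multiplicativity. It suffices to prove $I^{\Mor}(\Phi_{v}\Phi_{w})=T^{\Mor}_{\bar v}\,I^{\Mor}(\Phi_{w})$ for every generator $v$ and every $w\in R(J,\rho)$: an induction on $\ell(u)$, using that $\Phi_{u}=\Phi_{v}\Phi_{u'}$ whenever $u=vu'$ with $\ell(u)=\ell(u')+1$ (the first case of Theorem~\ref{theorem7.12ofmorris}(3) applied with $w=u'$) together with the linearity of $I^{\Mor}$, then upgrades this to $I^{\Mor}(\Phi_{u}x)=I^{\Mor}(\Phi_{u})I^{\Mor}(x)$ for all $x$, while $I^{\Mor}(\Phi_{1})=T^{\Mor}_{1}=1$. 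The product $\Phi_{v}\Phi_{w}$ is governed by whether $w^{-1}(a)\in\Gamma(J,\rho)^{+}$ or $w^{-1}(a)\in-\Gamma(J,\rho)^{+}$, and I would match this with the length condition via
\[
w^{-1}(a)\in\Gamma(J,\rho)^{+}\iff\bar w^{-1}(a+A'_{J})\in\Gamma'(J,\rho)^{+}\iff\ell\!\left(s_{a+A'_{J}}\bar w\right)>\ell(\bar w),
\]
where the first equivalence uses that $\Gamma'(J,\rho)^{+}$ is by definition the image of $\Gamma(J,\rho)^{+}$ under the projection $b\mapsto b+A'_{J}$, that this projection intertwines the $R(J,\rho)$-action on $\mathcal{A}$ with the $W_{\aff}\!\left(\Gamma'(J,\rho)\right)$-action on $\mathcal{A}^{J}_{\Gamma}$ (Proposition~\ref{proposition7.3ofmorris}), and that for $b\in\Gamma(J,\rho)$ one has $b\in\Gamma(J,\rho)^{+}$ iff $b+A'_{J}\in\Gamma'(J,\rho)^{+}$ (here Lemma~\ref{reducibiityofgamma}, Corollary~\ref{corollaryofreducibiityofgamma}, and $\Gamma(J,\rho)=\Gamma(J,\rho)^{+}\sqcup(-\Gamma(J,\rho)^{+})$ are used), while the second equivalence is the standard root/length criterion in a Coxeter group. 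Feeding this into Theorem~\ref{theorem7.12ofmorris}(3), using $q_{s_{a+A'_{J}}}=p_{a}$ from \eqref{definitionofqmorris}, and comparing with the defining relation $T^{\Mor}_{s}T^{\Mor}_{w'}=T^{\Mor}_{sw'}$ (if $\ell(sw')>\ell(w')$) respectively $T^{\Mor}_{s}T^{\Mor}_{w'}=q_{s}T^{\Mor}_{sw'}+(q_{s}-1)T^{\Mor}_{w'}$ (otherwise) of the Iwahori--Hecke algebra yields the displayed identity, so $I^{\Mor}$ is an algebra isomorphism.

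The only non-formal step is this dictionary between the sign of $w^{-1}(a)$ and the Bruhat order on $W_{\aff}\!\left(\Gamma'(J,\rho)\right)$, and its crux is the compatibility of the positive systems $\Gamma(J,\rho)^{+}$ and $\Gamma'(J,\rho)^{+}$ under the projection $A'\to A'/A'_{J}$; Lemma~\ref{reducibiityofgamma} and Corollary~\ref{corollaryofreducibiityofgamma} are precisely what make this work. Everything else is formal bookkeeping with Theorem~\ref{theorem7.12ofmorris} and the standard Iwahori--Hecke presentation.
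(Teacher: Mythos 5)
The paper states this corollary without proof, as an immediate consequence of Theorem~\ref{theorem7.12ofmorris}, so there is no proof in the source to compare against; I will simply assess the proposal on its own terms. Your argument is correct and follows the route one would expect: it unpacks Morris's multiplication rules (Theorem~\ref{theorem7.12ofmorris}(3)), matches them against the defining relations of the Iwahori--Hecke algebra, and inducts on word length to bootstrap multiplicativity from the generator case. The crux --- the dictionary $w^{-1}(a)\in\Gamma(J,\rho)^{+}\iff \bar w^{-1}(a+A'_J)\in\Gamma'(J,\rho)^{+}\iff \ell(s_{a+A'_J}\bar w)>\ell(\bar w)$ --- is identified and justified correctly via the intertwining property coming from Proposition~\ref{proposition7.3ofmorris} and the standard Coxeter length criterion, and the coefficient match uses the definition of the parameter $q$ in~\eqref{definitionofqmorris}. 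The subalgebra claim and the passage to the full group by induction on word length are handled cleanly.

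One small attribution quibble: you invoke Lemma~\ref{reducibiityofgamma} and Corollary~\ref{corollaryofreducibiityofgamma} to justify ``$b\in\Gamma(J,\rho)^{+}$ iff $b+A'_J\in\Gamma'(J,\rho)^{+}$,'' but what this equivalence actually rests on is (i) the exhaustiveness of the two cases in Theorem~\ref{theorem7.12ofmorris}(3), i.e.\ the decomposition $\Gamma(J,\rho)=\Gamma(J,\rho)^{+}\sqcup(-\Gamma(J,\rho)^{+})$ for the relevant elements, which is built into Morris's theorem as the paper cites it, and (ii) the fact that $\Gamma'(J,\rho)^{+}\cap\bigl(-\Gamma'(J,\rho)^{+}\bigr)=\emptyset$, which is part of $\Gamma'(J,\rho)^{+}$ being a positive system (Proposition~\ref{proposition7.3ofmorris}). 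The reducedness/injectivity statements of Lemma~\ref{reducibiityofgamma} and Corollary~\ref{corollaryofreducibiityofgamma} serve a different purpose in the paper (well-definedness of the parameter function $q$). This is a citation imprecision, not a gap, and the argument as written goes through.
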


We rewrite Corollary~\ref{corollaryoftheorem7.12ofmorris} in terms of an affine Hecke algebra.
We use the same notation as Appendix~\ref{Iwahori-Hecke algebras and affine Hecke algebras}.
The affine root system $\Gamma'(J, \rho)$ is not necessarily irreducible. However, we can apply the results of Appendix~\ref{Iwahori-Hecke algebras and affine Hecke algebras} to this case by arguing component by component. 
Fix a spacial point $e$ for $\Gamma'(J, \rho)$ in the closure of the chamber corresponding to the basis $B(J, \rho)$.
According to Theorem~\ref{theorem1.8ofsol21}, the Iwahori-Hecke algebra $\mathcal{H}(W_{\aff}\left(\Gamma'(J, \rho)\right), q)$ is isomorphic to the affine Hecke algebra 
\[
\mathcal{H}^{\Mor} = \mathcal{H} \left(
\mathcal{R}^{\Mor}, \lambda^{\Mor}, (\lambda^{*})^{\Mor}, q_{F}
\right)\index{$\mathcal{H}^{\Mor}$}
\]
associated with a based root datum
\[
\mathcal{R}^{\Mor} = \left(
\Hom_{\mathbb{Z}} \left(
\mathbb{Z} (R^{\Mor})^{\vee}, \mathbb{Z}
\right), R^{\Mor},
\mathbb{Z} (R^{\Mor})^{\vee},  (R^{\Mor})^{\vee}, \Delta^{\Mor}
\right),
\]
label functions
\[
\lambda^{\Mor}, (\lambda^{*})^{\Mor} \colon \Delta^{\Mor} \rightarrow \mathbb{R}_{>0},
\]
and the parameter $q_{F}$.
We explain the definitions of the based root datum $\mathcal{R}^{\Mor}$ and the label functions $\lambda^{\Mor}, (\lambda^{*})^{\Mor}$ (for more details, see the last part of Appendix~\ref{Iwahori-Hecke algebras and affine Hecke algebras}).
Let $\Gamma'(J, \rho)_{e}\index{$\Gamma'(J, \rho)_{e}$}$ denote the set of affine roots in $\Gamma'(J, \rho)$ that vanish at $e$, and we write
\[
\Gamma'(J, \rho)^{+}_{e} =
\Gamma'(J, \rho)_{e} \cap \Gamma'(J, \rho)^{+}\index{$\Gamma'(J, \rho)^{+}_{e}$}
\]
and
\[
B(J, \rho)_{e} = \Gamma'(J, \rho)_{e} \cap B(J, \rho).\index{$B(J, \rho)_{e}$}
\] 
We define
\[
R^{\Mor} = \left\{
D_{J}(a')/k_{a'} \mid a' \in \Gamma'(J, \rho)_{e}
\right\}\index{$R^{\Mor}$}
\]
and
\[
(R^{\Mor})^{\vee} = \left\{
k_{a'} (D_{J} (a'))^{\vee} \mid a' \in \Gamma'(J, \rho)_{e}
\right\},\index{$(R^{\Mor})^{\vee}$}
\]
where $k_{a'}\index{$k_{a'}$}$ is the smallest positive real number such that $a' + k_{a'} \in \Gamma'(J, \rho)$, and $(D_{J} (a'))^{\vee}$ denotes the coroot in the dual root system $\left(D_{J}\left(\Gamma'(J, \rho)_{e}\right)\right)^{\vee}\index{$\left(D_{J}\left(\Gamma'(J, \rho)_{e}\right)\right)^{\vee}$}$ of the root system 
\[
D_{J}\left(\Gamma'(J, \rho)_{e}\right) = \left\{
D_{J}(a') \mid a' \in \Gamma'(J, \rho)_{e}
\right\}\index{$D_{J}\left(\Gamma'(J, \rho)_{e}\right) $}
\]
corresponding to the root $D_{J}(a') \in D_{J}\left(\Gamma'(J, \rho)_{e}\right)$. 
We write $W_{0}\left(R^{\Mor}\right)\index{$W_{0}\left(R^{\Mor}\right)$}$ for the Weyl group of $R^{\Mor}$, and $W_{\aff}\left(R^{\Mor}\right)\index{$W_{\aff}\left(R^{\Mor}\right)$}$ for the affine Weyl group of $R^{\Mor}$.
We also define
\[
\Delta^{\Mor} = \left\{
D_{J}(a')/k_{a'} \mid a' \in B(J, \rho)_{e}
\right\}.\index{$\Delta^{\Mor}$}
\]
Finally, we define label functions 
\[
\lambda^{\Mor}, (\lambda^{*})^{\Mor} \colon \Delta^{\Mor} \rightarrow \mathbb{C},\index{$\lambda^{\Mor}$}\index{$(\lambda^{*})^{\Mor}$}
\]
as
\[
\lambda^{\Mor} \left(
D_{J}(a')/k_{a'}
\right) = \log(q_{s_{a'}})/\log(q_{F})
\]
and
\[
(\lambda^{*})^{\Mor} \left(
D_{J}(a')/k_{a'}
\right) =
\begin{cases}
\log(q_{s_{a'}})/\log(q_{F}) & \left( D_{J}(a')/k_{a'} \not \in 2 \Hom_{\mathbb{Z}}(\mathbb{Z}(R^{\Mor})^{\vee}, \mathbb{Z}) \right), \\
\log(q_{s_{b'}})/\log(q_{F}) & \left( D_{J}(a')/k_{a'} \in 2 \Hom_{\mathbb{Z}}(\mathbb{Z}(R^{\Mor})^{\vee}, \mathbb{Z}) \right)
\end{cases}
\]
for $a' \in B(J, \rho)_{e}$. 
Here, $b'$ is the unique element of $B(J, \rho) \backslash B(J, \rho)_{e}$ that is contained in the same irreducible component of $\Gamma'(J, \rho)$ as $a'$.
For $a \in \Gamma(J, \rho)^{+}$ such that $a' := a + A'_{J} \in B(J, \rho)_{e}$, we write
\[
r(a) = D_{J}(a')/k_{a'}.\index{$r(a)$}
\]
Then, the definition of the parameter function $q$ \eqref{definitionofqmorris} implies that
\begin{align}
\label{lambdaofmorris}
\lambda^{\Mor} \left(
r(a)
\right) = \log(p_{a})/\log(q_{F})
\end{align}
and
\begin{align}
\label{lambdastarofmorris}
(\lambda^{*})^{\Mor} \left(
r(a)
\right) =
\begin{cases}
\log(p_{a})/\log(q_{F}) & \left( r(a) \not \in 2 \Hom_{\mathbb{Z}}(\mathbb{Z}(R^{\Mor})^{\vee}, \mathbb{Z}) \right), \\
\log(p_{b})/\log(q_{F}) & \left( r(a) \in 2 \Hom_{\mathbb{Z}}(\mathbb{Z}(R^{\Mor})^{\vee}, \mathbb{Z}) \right),
\end{cases}
\end{align}
where $b \in \Gamma(J, \rho)^{+}$ denotes the unique element such that $b + A'_{J} = b'$.
In particular, $\lambda^{\Mor}, (\lambda^{*})^{\Mor}$ are $\mathbb{R}_{>0}$-valued.

We rewrite \eqref{lambdastarofmorris} as follows.
Let $a \in \Gamma(J, \rho)^{+}$ such that $a' := a + A'_{J} \in B(J, \rho)_{e}$.
Suppose that 
\[
D_{J}(a')/k_{a'} \in 2 \Hom_{\mathbb{Z}}(\mathbb{Z}(R^{\Mor})^{\vee}, \mathbb{Z}).
\]
In this case, the irreducible component of $R^{\Mor}$ containing $D_{J}(a')/k_{a'}$ has type $C_{n} \ (n \ge 1)$, and if $n \ge 2$, $D_{J}(a')/k_{a'}$ is a long root.
Hence, $D_{J}(a')/k_{a'}$ is $W_{0}\left(R^{\Mor}\right)$-associate with the highest root $\phi$ of the irreducible component of $R^{\Mor}$ containing $D_{J}(a')/k_{a'}$ with respect to the basis $\Delta^{\Mor}$.
In particular, the reflection $s_{1 - (D_{J}(a')/k_{a'})}$ is $W_{0}\left(R^{\Mor}\right)$-conjugate to the reflection $s_{1 - \phi}$.
On the other hand, the reflection $s_{1 - (D_{J}(a')/k_{a'})}$ corresponds to the reflection 
\[
s_{k_{a'} - a'} \in W_{\aff}\left(\Gamma'(J, \rho)\right),
\] 
and the reflection $s_{1 - \phi}$ corresponds to the reflection 
\[
s_{b'} \in W_{\aff}\left(\Gamma'(J, \rho)\right)
\]
via isomorphism~\eqref{affinesplitting} (see the paragraph following isomorphism~\eqref{affinesplitting}).
Hence, we obtain that $s_{k_{a'} - a'}$ and $s_{b'}$ are $W_{\aff}\left(\Gamma'(J, \rho)\right)$-conjugate.
Thus, we obtain that $k_{a'} - a'$ and $b'$ are $W_{\aff}\left(\Gamma'(J, \rho)\right)$-associate.
Let $\widetilde{(k_{a'} - a')} \in \Gamma(J, \rho)^{+}$ denote the unique element such that
\[
\widetilde{(k_{a'} - a')} + A'_{J} = k_{a'} - a',
\]
and we write $p'_{a} = p_{\widetilde{(k_{a'} - a')}}\index{$p'_{a}$}$.
Then, according to Proposition~\ref{proposition7.3ofmorris} and Corollary~\ref{corollaryofreducibiityofgamma}, we obtain that $\widetilde{(k_{a'} - a')}$ and $b$ are $R(J, \rho)$-associate, hence
\cite[Lemma~7.2 (b)]{MR1235019} implies that $p_{b} = p'_{a}$.
On the other hand, for $a \in \Gamma(J, \rho)^{+}$ such that $a' = a + A'_{J} \in B(J, \rho)_{e}$, and
\[
D_{J}(a')/k_{a'} \not \in 2 \Hom_{\mathbb{Z}}(\mathbb{Z}(R^{\Mor})^{\vee}, \mathbb{Z}),
\]
we can define $\widetilde{(k_{a'}- a')}$ and $p'_{a}$ in the same way as the case of
\[
D_{J}(a')/k_{a'} \in 2 \Hom_{\mathbb{Z}}(\mathbb{Z}(R^{\Mor})^{\vee}, \mathbb{Z}).
\]
In this case, there exists $t \in \mathbb{Z}(R^{\Mor})^{\vee} \subset W_{\aff}\left(R^{\Mor}\right)$ such that 
\[
D_{J}(a')(t)/k_{a'} = 1.
\]
Then, we obtain that 
\[
\left(t \cdot s_{D_{J}(a')/k_{a'}}\right)\left(D_{J}(a')/k_{a'}\right) = 1 - D_{J}(a')/k_{a'},
\]
hence $D_{J}(a')/k_{a'}$ and $1 - (D_{J}(a')/k_{a'})$ are $W_{\aff}\left(R^{\Mor}\right)$-associate.
Thus, $a'$ and $k_{a'} - a'$ are $W_{\aff}\left(\Gamma'(J, \rho)\right)$-associate.
Therefore, in this case, $a$ and $\widetilde{(k_{a'} - a')}$ are $R(J, \rho)$-associate, hence $p_{a} = p'_{a}$.
Now, we can rewrite \eqref{lambdastarofmorris} as
\begin{align}
\label{lambdastarofmorrisrefined}
(\lambda^{*})^{\Mor} \left(
r(a)
\right) = \log(p'_{a})/\log(q_{F}).
\end{align}

We define a subgroup $T(J, \rho)$ of $R(J, \rho)$ as
\[
T(J, \rho) = \{
t \in R(J, \rho) \mid (Dt)\restriction_{V^{J}} = \id
\}.\index{$T(J, \rho)$}
\]
The definition of $T(J, \rho)$ implies that for any $t \in T(J, \rho)$, there exists $\widetilde{v(t)} \in (V^{J, \Gamma})^{\perp}\index{$\widetilde{v(t)}$}$ such that
\[
t(x) = x+\widetilde{v(t)}
\]
for all $x \in \mathcal{A}^{J}$.
Let $v(t)\index{$v(t)$}$ denote the projection of $\widetilde{v(t)}$ on $V^{J}_{\Gamma}$.
Hence, 
\[
t(x) = x+v(t)
\]
for all $x \in \mathcal{A}^{J}_{\Gamma}$.
An element $t \in T(J, \rho)$ maps to $v(t) \in \mathbb{Z}(R^{\Mor})^{\vee} \subset V^{J}_{\Gamma}$ via isomorphism
\[
R(J, \rho) \rightarrow W_{\aff}\left(\Gamma'(J, \rho)\right)
\]
of Proposition~\ref{proposition7.3ofmorris}, and the map
\[
t \mapsto v(t)
\]
defines an isomorphism
\[
T(J, \rho) \rightarrow \mathbb{Z}(R^{\Mor})^{\vee}.
\]

Combining Corollary~\ref{corollaryoftheorem7.12ofmorris} with Theorem~\ref{theorem1.8ofsol21}, we obtain:
\begin{corollary}
\label{corollaryoftheorem7.12ofmorrisaffinever}
There exists an isomorphism
\[
I^{\Mor} \colon \mathcal{H}(R(J, \rho)) \rightarrow \mathcal{H}^{\Mor}\index{$I^{\Mor}$}
\]
such that
\[
I^{\Mor} \left(
\Phi_{v[a, J]} 
\right) = T^{\Mor}_{s_{r(a)}}
\]
for $a \in \Gamma(J, \rho)^{+}$ such that $a + A'_{J} \in B(J, \rho)_{e}$, and
\[
I^{\Mor} \left(
\Phi_{t}
\right) = q_{v(t)}^{1/2} \cdot \theta_{v(t)}
\]
for $t \in T(J, \rho)$ such that $\left(D_{J}(a')\right)(v(t)) \ge 0$ for all $a' \in B(J, \rho)_{e}$.
\end{corollary}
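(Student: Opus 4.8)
The plan is to obtain $I^{\Mor}$ as the composite of two isomorphisms and then to chase the elements $\Phi_{v[a,J]}$ and $\Phi_t$ through it. By Corollary~\ref{corollaryoftheorem7.12ofmorris}, $\mathcal{H}(R(J,\rho))$ is a subalgebra of $\End_{G(F)}\left(\ind_{P_{J}}^{G(F)}(\rho)\right)$ and there is an algebra isomorphism onto the Iwahori--Hecke algebra $\mathcal{H}\left(W_{\aff}\left(\Gamma'(J,\rho)\right),q\right)$ sending $\Phi_{v[a,J]}$ to $T^{\Mor}_{s_{a+A'_J}}$. Since $R(J,\rho)\simeq W_{\aff}\left(\Gamma'(J,\rho)\right)$ is a Coxeter group whose simple reflections are the $s_{a+A'_J}$ with $a+A'_J\in B(J,\rho)$, and since the multiplication rules of Theorem~\ref{theorem7.12ofmorris} match those of the Iwahori--Hecke algebra, this isomorphism is pinned down by these values and in fact carries $\Phi_w$ to $T^{\Mor}_w$ for every $w\in R(J,\rho)$ (argue by induction along reduced expressions, using part~(3) of Theorem~\ref{theorem7.12ofmorris} in its length-additive case). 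It then suffices to compose with the isomorphism of Theorem~\ref{theorem1.8ofsol21} and to read off its effect on the $\Phi_{v[a,J]}$ and on the translation subgroup $T(J,\rho)$.

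Secondly, I would invoke Theorem~\ref{theorem1.8ofsol21} with the special point $e$ fixed in the closure of the chamber determined by $B(J,\rho)$. This yields an isomorphism $\mathcal{H}\left(W_{\aff}\left(\Gamma'(J,\rho)\right),q\right)\xrightarrow{\sim}\mathcal{H}^{\Mor}$ onto the affine Hecke algebra $\mathcal{H}\left(\mathcal{R}^{\Mor},\lambda^{\Mor},(\lambda^{*})^{\Mor},q_F\right)$, under which, for $a'\in B(J,\rho)_e$, the finite simple reflection $s_{a'}$ is sent to $T^{\Mor}_{s_{D_J(a')/k_{a'}}}$, while the Bernstein presentation identifies the translation by a dominant $\mu\in\mathbb{Z}(R^{\Mor})^{\vee}$, regarded as an element of $W_{\aff}\left(\Gamma'(J,\rho)\right)$, with $q_{\mu}^{1/2}\theta_{\mu}$. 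The hypotheses of Theorem~\ref{theorem1.8ofsol21} are exactly what has been assembled just before the statement: the parameter function $q$ of~\eqref{definitionofqmorris} is $W_{\aff}$-invariant by \cite[Lemma~7.2~(b)]{MR1235019}, and it translates into the label functions via $\lambda^{\Mor}(r(a))=\log(p_a)/\log(q_F)$ by~\eqref{lambdaofmorris} and $(\lambda^{*})^{\Mor}(r(a))=\log(p'_a)/\log(q_F)$ by~\eqref{lambdastarofmorrisrefined}.

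Composing the two isomorphisms then gives the corollary by inspection. For $a\in\Gamma(J,\rho)^{+}$ with $a':=a+A'_J\in B(J,\rho)_e$ we get $\Phi_{v[a,J]}\mapsto T^{\Mor}_{s_{a+A'_J}}\mapsto T^{\Mor}_{s_{r(a)}}$, since $r(a)=D_J(a')/k_{a'}$ by definition. For $t\in T(J,\rho)$, Proposition~\ref{proposition7.3ofmorris} together with the definition of $v(t)$ identifies $t$ with the translation by $v(t)\in\mathbb{Z}(R^{\Mor})^{\vee}$ inside $W_{\aff}\left(\Gamma'(J,\rho)\right)$, and the condition $(D_J(a'))(v(t))\ge 0$ for all $a'\in B(J,\rho)_e$ says precisely that $v(t)$ is dominant for $\Delta^{\Mor}=\{D_J(a')/k_{a'}\mid a'\in B(J,\rho)_e\}$, because each $k_{a'}>0$; hence the Bernstein formula applies and $\Phi_t\mapsto T^{\Mor}_t\mapsto q_{v(t)}^{1/2}\theta_{v(t)}$, which is the asserted identity. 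The main obstacle is not conceptual but is the verification feeding into Theorem~\ref{theorem1.8ofsol21}: one must check that within each irreducible component of $\Gamma'(J,\rho)$ the partition $B(J,\rho)=B(J,\rho)_e\sqcup\left(B(J,\rho)\setminus B(J,\rho)_e\right)$ separates the finite simple roots from the single affine simple root, and that the parameter of that affine simple root equals $p'_a$ rather than $p_a$ precisely in the type $C_n$ $(n\ge 1)$ case. This is exactly the $A_1$/$C_n$ and $W_{\aff}$-conjugacy analysis carried out in the paragraph preceding the statement; once it is granted, Theorem~\ref{theorem1.8ofsol21} applies directly and the composition of isomorphisms produces the stated description of $I^{\Mor}$.
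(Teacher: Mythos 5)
Your proposal is correct and follows essentially the same route as the paper: compose the isomorphism of Corollary~\ref{corollaryoftheorem7.12ofmorris} (which sends $\Phi_w\mapsto T^{\Mor}_w$) with the Bernstein-presentation isomorphism of Theorem~\ref{theorem1.8ofsol21}, and read off the effect on $\Phi_{v[a,J]}$ and on dominant translations in $T(J,\rho)$. The paper simply states ``Combining Corollary~\ref{corollaryoftheorem7.12ofmorris} with Theorem~\ref{theorem1.8ofsol21}, we obtain\ldots'' without giving further detail; you fill in the chase explicitly, which is the intended argument.
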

\section{A review of Solleveld's results}
\label{A review of Solleveld's results}
In this section, we review the results in \cite{MR4432237}.
Let $P=MU$ be a parabolic subgroup of $G$ with Levi factor $M$ and unipotent radical $U$.
Let $N_{G}(M)(F)\index{$N_{G}(M)(F)$}$ denote the normalizer of $M$ in $G(F)$.
Let $(\sigma, E)$ be an irreducible supercusidal representation of $M(F)$, and let $\mathfrak{s}_{M}$ denote the inertial equivalence class of the pair $(M, \sigma)$ in $M$.
We take an irreducible subrepresentation $\sigma_{1}$ of $\sigma\restriction_{M^1}$.
We define
\[
M_{\sigma} = 
I_{M(F)}(\sigma_{1}) =
\{
m \in M(F) \mid ^m\!\sigma_{1} \simeq \sigma_{1}
\}.\index{$M_{\sigma}$}
\]
Since $M^1$ is a normal subgroup of $M(F)$, and the quotient group $M(F)/M^1$ is abelian, $M_{\sigma}$ is independent of the choice of $\sigma_{1}$.
We assume:
\begin{assumption}
\label{multiplicity1}
The restriction of $\sigma$ to $M^1$ is multiplicity free (see \cite[Working hypothesis~10.2]{MR4432237}).
\end{assumption}
\begin{remark}
\label{remarkaboutassumptionmultiplicity1}
According to \cite[Remark~1.6.1.3]{MR2508719}, assumption~\ref{multiplicity1} holds in many cases (see also the paragraph following \cite[Working hypothesis~10.2]{MR4432237}):
\begin{itemize}
\item when the maximal split central torus of $M$ has dimension $\le 1$,
\item when $M$ is quasi-split and $(\sigma, E)$ is generic,
\item when $M$ is a direct product of groups as in the previous two bullets. 
\end{itemize}
Moreover, according to \cite[Proposition~1.6.1.2]{MR2508719}, assumption~\ref{multiplicity1} holds if and only if the endomorphism algebra 
\[
\End_{M(F)}\left(\ind_{M^1}^{M(F)} (\sigma_{1})\right)
\]
is commutative.
The latter condition holds if $\sigma$ is a regular supercuspidal representation defined in \cite{MR4013740}, for instance (see \cite[Corollary~5.5]{2021arXiv210101873O}).
\end{remark}

Let $A_{M}\index{$A_{M}$}$ denote the maximal split central torus of $M$, and let $X^{*}(A_{M})\index{$X^{*}(A_{M})$}$ (resp.\,$X_{*}(A_{M})\index{$X_{*}(A_{M})$}$) denote the character lattice (resp.\,cocharacter lattice) of $A_M$.
We write
\[
a_{M}
=
X_{*}(A_{M}) \otimes_{\mathbb{Z}} \mathbb{R}\index{$a_{M}$}
\]
and
\[
a_{M}^{*} = X^{*}(A_{M}) \otimes_{\mathbb{Z}} \mathbb{R}.\index{$a_{M}^{*}$}
\]
Let $\langle, \rangle$ denote the canonical perfect pairing on
\[
a_{M}^{*} \times a_{M}.
\]
We define an injective map
\[
H_{M} \colon M(F)/M^{1} \rightarrow a_{M}\index{$H_{M}$}
\]
as
\[
\langle \gamma, H_{M}(m) \rangle = \ord_{F}(\gamma(m))
\]
for $m \in M(F)$ and a rational character $\gamma$ of $M$.
We note that $H_{M}(M_{\sigma}/M^{1})$ is a lattice of full rank in $a_{M}$.
We regard $M_{\sigma}/M^1$ as a subset of $a_{M}$ via $H_{M}$.
We also write
\[
\left(
M_{\sigma}/M^1
\right)^{\vee}
=
\Hom_{\mathbb{Z}}(
M_{\sigma}/M^1, \mathbb{Z}).\index{$\left(
M_{\sigma}/M^1
\right)^{\vee}$}
\]
We define an embedding 
\[
H_{M}^{\vee} \colon \left(
M_{\sigma}/M^1
\right)^{\vee} \rightarrow a_{M}^{*},\index{$H_{M}^{\vee}$}
\]
as
\[
\langle
H_{M}^{\vee}(x), H_{M}(m)
\rangle
= x(m)
\]
for $x \in \left(
M_{\sigma}/M^1
\right)^{\vee}$ and $m \in M_{\sigma}/M^{1}$.
Then, the image of $H_{M}^{\vee}$ is a lattice of full rank in $a_{M}^{*}$.
We also regard $\left(
M_{\sigma}/M^1
\right)^{\vee}$ as a subset of $a_{M}^{*}$ via $H_{M}^{\vee}$.

Let $\Sigma(G, A_M)\index{$\Sigma(G, A_M)$}$ denote the set of nonzero weights occurring in the adjoint representation of $A_{M}$ on the Lie algebra of $G$, and let $\Sigma_{\red}(A_M)\index{$\Sigma_{\red}(A_M)$}$ denote the set of indivisible elements of $\Sigma(G, A_M)$. 
For $\alpha \in \Sigma_{\red}(A_M)$, let $M_{\alpha}$ denote the Levi subgroup of $G$ that contains $M$ and the root subgroup $U_{\alpha}$ associated with $\alpha$, and whose semisimple rank is one greater than that of $M$.
Let $\alpha^{\vee} \in a_{M}$ denote the unique element that is orthogonal to the characters of the maximal split central torus $A_{M_{\alpha}}$ of $M_{\alpha}$, and satisfies
\[
\langle \alpha, \alpha^{\vee} \rangle =2.
\]
We define a subset $\Sigma_{\mathfrak{s}_{M}, \mu}$ of $\Sigma_{\red}(A_M)$ as follows.
Let $\mu^{M_{\alpha}}\index{$\mu^{M_{\alpha}}$}$ denote the Harish-Chandra's $\mu$-function defined in \cite[V.2]{MR1989693}, that is a rational function on $\mathfrak{s}_{M}$.
\begin{remark}
In \cite[V.2]{MR1989693}, the function $\mu^{M_{\alpha}}$ is only defined on a subset 
\[
\mathfrak{s}_{M, 0} =
\{
\sigma \otimes \chi
\mid \chi \in X_{\unr}(M), \, \text{$\chi$ is unitary}
\}
\]
of $\mathfrak{s}_{M}$.
However, we can define $\mu^{M_{\alpha}}$ on $\mathfrak{s}_{M}$ exactly in the same way as on $\mathfrak{s}_{M, 0}$.
\end{remark}
We define $\Sigma_{\mathfrak{s}_{M}, \mu}$ as
\[
\Sigma_{\mathfrak{s}_{M}, \mu} = \{
\alpha \in \Sigma_{\red}(A_M) \mid \text{$\mu^{M_{\alpha}}$ has a zero on $\mathfrak{s}_{M}$}
\}.\index{$\Sigma_{\mathfrak{s}_{M}, \mu}$}
\]
For $\alpha \in \Sigma_{\mathfrak{s}_{M}, \mu}$, let $s_{\alpha}$ denote the unique nontrivial element of 
\[
W(M_{\alpha}, M) =
\left(
N_{G}(M)(F) \cap M_{\alpha}(F)
\right)/M(F).
\]
Since $\mu^{M_{\alpha}}$ has a zero on $\mathfrak{s}_{M}$, \cite[5.4.2]{MR544991} implies that $s_{\alpha}$ normalizes $\sigma \otimes \chi$ for some $\chi \in X_{\unr}(M)$ (see also \cite[(3.4)]{MR4432237}).
Let $W(\Sigma_{\mathfrak{s}_{M}, \mu})\index{$W(\Sigma_{\mathfrak{s}_{M}, \mu})$}$ denote the subgroup of
\[
W(G, M, \mathfrak{s}_{M}) = \{
g \in N_{G}(M)(F) \mid \text{${^g\!\sigma} \simeq \sigma \otimes \chi$ for some $\chi \in X_{\unr}(M)$}
\}/M(F)\index{$W(G, M, \mathfrak{s}_{M}) $}
\]
generated by $s_{\alpha} \ (\alpha \in \Sigma_{\mathfrak{s}_{M}, \mu})$.
Then, according to \cite[Proposition~1.3]{MR2827179}, $\Sigma_{\mathfrak{s}_{M}, \mu}$ is a reduced root system with Weyl group $W(\Sigma_{\mathfrak{s}_{M}, \mu})$.
Let $\Sigma(P, A_M)\index{$\Sigma(P, A_M)$}$ denote the set of nonzero weights occurring in the adjoint representation of $A_{M}$ on the Lie algebra of $P$.
Then, 
\[
\Sigma_{\mathfrak{s}_{M}, \mu}(P) := \Sigma_{\mathfrak{s}_{M}, \mu} \cap \Sigma(P, A_M)\index{$\Sigma_{\mathfrak{s}_{M}, \mu}(P)$}
\]
is a set of positive roots of $\Sigma_{\mathfrak{s}_{M}, \mu}$.
We write $\Delta_{\mathfrak{s}_{M}, \mu}(P)\index{$\Delta_{\mathfrak{s}_{M}, \mu}(P)$}$ for the basis of $\Sigma_{\mathfrak{s}_{M}, \mu}$ corresponding to $\Sigma_{\mathfrak{s}_{M}, \mu}(P)$.
We also define
\[
\Sigma_{\red}(P, A_{M}) = \Sigma_{\red}(A_{M}) \cap \Sigma(P, A_M).\index{$\Sigma_{\red}(P, A_{M})$}
\]
We write
\[
R(\mathfrak{s}_{M}) = \{
w \in W(G, M, \mathfrak{s}_{M}) \mid w \left(
\Sigma_{\mathfrak{s}_{M}, \mu}(P)
\right) \subset
\Sigma_{\mathfrak{s}_{M}, \mu}(P)
\}.\index{$R(\mathfrak{s}_{M})$}
\]
According to \cite[(3.2)]{MR4432237}, we obtain
\[
W(G, M, \mathfrak{s}_{M}) = W(\Sigma_{\mathfrak{s}_{M}, \mu}) \rtimes R(\mathfrak{s}_{M}).
\]
For $\alpha \in \Sigma_{\mathfrak{s}_{M}, \mu}$, we define an element $h_{\alpha}^{\vee} \in M_{\sigma}/M^1\index{$h_{\alpha}^{\vee}$}$ as the unique generator of
\[
(M_{\sigma} \cap {M_{\alpha}^1})/M^{1} \simeq \mathbb{Z}
\]
such that $H_{M}(h_{\alpha}^{\vee}) \in \mathbb{R}_{>0} \cdot \alpha^{\vee}$.
\begin{proposition}[{\cite[Proposition~3.1]{MR4432237}}]
For any $\alpha \in \Sigma_{\mathfrak{s}_{M}, \mu}$, there exists a unique 
\[
\alpha^{\#} \in (M_{\sigma}/M^{1})^{\vee}\index{$\alpha^{\#}$}
\]
such that $H_{M}^{\vee}(\alpha^{\#}) \in \mathbb{R} \cdot \alpha$ and
\[
\langle H_{M}^{\vee}(\alpha^{\#}), H_{M}(h_{\alpha}^{\vee}) \rangle = 2.
\]
Moreover, if we write
\begin{align*}
\Sigma_{\mathfrak{s}_{M}}^{\vee} &= \{
h_{\alpha}^{\vee} \mid \alpha \in \Sigma_{\mathfrak{s}_{M}, \mu}
\},\\
\Sigma_{\mathfrak{s}_{M}} &= \{
\alpha^{\#} \mid \alpha \in \Sigma_{\mathfrak{s}_{M}, \mu}
\}, \\
\left(
\alpha^{\#} 
\right)^{\vee} &= h_{\alpha}^{\vee}\index{$\Sigma_{\mathfrak{s}_{M}}^{\vee}$}\index{$\Sigma_{\mathfrak{s}_{M}}$},
\end{align*}
then, 
\[
\left(
\left( M_{\sigma}/M^{1} \right)^{\vee}, \Sigma_{\mathfrak{s}_{M}}, M_{\sigma}/M^{1}, \Sigma_{\mathfrak{s}_{M}}^{\vee}
\right)
\]
is a reduced root datum with Weyl group $W(\Sigma_{\mathfrak{s}_{M}, \mu})$.
\end{proposition}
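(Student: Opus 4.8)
The plan is to regard this statement as one purely about the lattice $M_{\sigma}/M^{1}$, its $\mathbb{Z}$-dual, and the reflections $s_{\alpha}$. Write $L = H_{M}(M_{\sigma}/M^{1}) \subset a_{M}$ for the full-rank lattice and $L^{\vee} = H_{M}^{\vee}\left((M_{\sigma}/M^{1})^{\vee}\right) \subset a_{M}^{*}$; by the defining property $\langle H_{M}^{\vee}(x), H_{M}(m)\rangle = x(m)$ these are exactly dual lattices under $\langle\,,\,\rangle$, and $H_{M}, H_{M}^{\vee}$ are injective. So it suffices, for each $\alpha \in \Sigma_{\mathfrak{s}_{M}, \mu}$, to produce a unique $\xi_{\alpha} \in L^{\vee} \cap \mathbb{R}\alpha$ with $\langle \xi_{\alpha}, H_{M}(h_{\alpha}^{\vee})\rangle = 2$ and then set $\alpha^{\#} = (H_{M}^{\vee})^{-1}(\xi_{\alpha})$. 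Let $c_{\alpha} \in \mathbb{R}_{>0}$ be defined by $H_{M}(h_{\alpha}^{\vee}) = c_{\alpha}\alpha^{\vee}$. Uniqueness is immediate, since for $\xi \in \mathbb{R}\alpha$ the equation $\langle \xi, c_{\alpha}\alpha^{\vee}\rangle = 2$ pins down $\xi = \alpha/c_{\alpha}$ (using $\langle\alpha,\alpha^{\vee}\rangle = 2$). Thus the entire content is the assertion that $\alpha/c_{\alpha}$ lies in $L^{\vee}$.

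To prove $\alpha/c_{\alpha} \in L^{\vee}$ I would use the reflection $s_{\alpha}$. Since $M(F)/M^{1}$ is abelian, $M_{\sigma}$ is normal in $M(F)$, and a short Clifford-theory argument (every irreducible constituent of ${}^{g}\sigma|_{M^{1}}$ is already a constituent of $\sigma|_{M^{1}}$, hence of the form ${}^{m}\sigma_{1}$) shows that each element of $W(G, M, \mathfrak{s}_{M})$, in particular $s_{\alpha}$, normalizes $M_{\sigma}$ and therefore stabilizes $L$. On the other hand $s_{\alpha}$ is the nontrivial element of $W(M_{\alpha}, M)$; any representative centralizes $A_{M_{\alpha}}$ and has order two, so its action on $a_{M}$ fixes $a_{M_{\alpha}} = \ker(\alpha \colon a_{M} \to \mathbb{R})$ pointwise and is $-1$ on $\mathbb{R}\alpha^{\vee}$, i.e.\ $s_{\alpha}(v) = v - \langle\alpha, v\rangle\alpha^{\vee}$. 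Hence for every $v \in L$ we get $\langle\alpha, v\rangle\alpha^{\vee} = v - s_{\alpha}(v) \in L \cap \mathbb{R}\alpha^{\vee}$. A direct check from the definitions (using that $h_{\alpha}^{\vee}$ generates $(M_{\sigma} \cap M_{\alpha}^{1})/M^{1}$ and that $M_{\alpha}^{1} \cap M(F) = \{m : \ord_{F}(\gamma(m)) = 0 \ \forall \gamma \in X^{*}(M_{\alpha})\}$, whose $H_{M}$-image is $L \cap \mathbb{R}\alpha^{\vee}$) gives $L \cap \mathbb{R}\alpha^{\vee} = \mathbb{Z}\,c_{\alpha}\alpha^{\vee}$. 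Therefore $\langle\alpha, v\rangle \in c_{\alpha}\mathbb{Z}$ for all $v \in L$, so $\langle\alpha/c_{\alpha}, v\rangle \in \mathbb{Z}$ for all $v \in L$, i.e.\ $\alpha/c_{\alpha} \in L^{\vee}$. This gives existence and uniqueness of $\alpha^{\#}$, and $\langle H_{M}^{\vee}(\alpha^{\#}), H_{M}(h_{\alpha}^{\vee})\rangle = \langle\alpha/c_{\alpha}, c_{\alpha}\alpha^{\vee}\rangle = 2$.

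For the root-datum claim, the key point is that under the identifications $H_{M}^{\vee} \colon (M_{\sigma}/M^{1})^{\vee}\otimes\mathbb{R} \xrightarrow{\sim} a_{M}^{*}$ and $H_{M} \colon (M_{\sigma}/M^{1})\otimes\mathbb{R} \xrightarrow{\sim} a_{M}$, the reflection $s_{\alpha^{\#}}$ attached to the candidate root datum becomes exactly $s_{\alpha}$: indeed $s_{\alpha^{\#}}(x^{*}) = x^{*} - \langle x^{*}, H_{M}(h_{\alpha}^{\vee})\rangle H_{M}^{\vee}(\alpha^{\#}) = x^{*} - \langle x^{*}, \alpha^{\vee}\rangle\alpha$, and symmetrically on $a_{M}$. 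From this the axioms follow formally: $\langle\alpha^{\#}, (\alpha^{\#})^{\vee}\rangle = 2$ is done; since $(-\alpha)^{\#} = -\alpha^{\#}$ and $h_{-\alpha}^{\vee} = -h_{\alpha}^{\vee}$ and $\Sigma_{\mathfrak{s}_{M}, \mu}$ is reduced, the map $\alpha \mapsto \alpha^{\#}$ is a bijection onto $\Sigma_{\mathfrak{s}_{M}}$ preserving lines, so $\alpha^{\#} \mapsto h_{\alpha}^{\vee}$ is a well-defined bijection and $\Sigma_{\mathfrak{s}_{M}}$ is reduced; because $s_{\alpha}$ permutes $\Sigma_{\mathfrak{s}_{M}, \mu}$ and stabilizes $L$, it sends $h_{\beta}^{\vee}$ to $\pm h_{s_{\alpha}\beta}^{\vee}$, whence $c_{\beta} = c_{s_{\alpha}\beta}$ and therefore $s_{\alpha^{\#}}(\beta^{\#}) = (s_{\alpha}\beta)^{\#} \in \Sigma_{\mathfrak{s}_{M}}$ and dually $s_{\alpha^{\#}}^{\vee}(h_{\beta}^{\vee}) = h_{s_{\alpha}\beta}^{\vee} \in \Sigma_{\mathfrak{s}_{M}}^{\vee}$. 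Finally the Weyl group of the root datum is generated by the $s_{\alpha^{\#}} = s_{\alpha}$; it acts faithfully on the full-rank lattice $M_{\sigma}/M^{1}$, so it is identified with $\langle s_{\alpha} : \alpha \in \Sigma_{\mathfrak{s}_{M}, \mu}\rangle = W(\Sigma_{\mathfrak{s}_{M}, \mu})$, using that $\Sigma_{\mathfrak{s}_{M}, \mu}$ is a reduced root system with that Weyl group (\cite[Proposition~1.3]{MR2827179}).

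The main obstacle is the second paragraph: establishing that the abstract Weyl element $s_{\alpha}$ simultaneously stabilizes $M_{\sigma}/M^{1}$ and acts on $a_{M}$ by the literal reflection formula $v \mapsto v - \langle\alpha, v\rangle\alpha^{\vee}$, together with the identification $L \cap \mathbb{R}\alpha^{\vee} = \mathbb{Z}\,c_{\alpha}\alpha^{\vee}$. Once these structural facts are in hand, the rest is bookkeeping with $H_{M}$, $H_{M}^{\vee}$, and the cited properties of $\Sigma_{\mathfrak{s}_{M}, \mu}$.
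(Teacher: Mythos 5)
The statement you are asked to prove is cited in the paper directly from \cite[Proposition~3.1]{MR4432237}; the paper supplies no proof of its own, so there is nothing internal to compare against. Your reconstruction is mathematically sound and follows what is essentially the standard (and, as far as I can tell, Solleveld's) argument. The two load-bearing facts you identified are exactly the right ones: first, that each $s_{\alpha}$ (being in $W(G, M, \mathfrak{s}_{M})$, since $\mu^{M_{\alpha}}$ has a zero on $\mathfrak{s}_{M}$) stabilizes $M_{\sigma}/M^{1}$ --- your Clifford-theoretic argument using abelianness of $M(F)/M^{1}$ is correct --- and acts on $a_{M}$ by the orthogonal reflection $v \mapsto v - \langle\alpha, v\rangle\alpha^{\vee}$ because any representative lies in $M_{\alpha}(F)$, hence centralizes $A_{M_{\alpha}}$; second, the identification $H_{M}(M_{\sigma}/M^{1}) \cap \mathbb{R}\alpha^{\vee} = \mathbb{Z}\,c_{\alpha}\alpha^{\vee}$, which you correctly deduce from the description of $M_{\alpha}^{1} \cap M(F)$ via rational characters of $M_{\alpha}$ and the definition of $h_{\alpha}^{\vee}$. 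The integrality $\alpha/c_{\alpha} \in L^{\vee}$ and the verification of the root-datum axioms then follow exactly as you wrote, with $s_{\alpha^{\#}}$ matching $s_{\alpha}$ under $H_{M}^{\vee}$ and $H_{M}$, and the Weyl-group identification coming from the fact that $\Sigma_{\mathfrak{s}_{M}, \mu}$ is already known to be a root system with Weyl group $W(\Sigma_{\mathfrak{s}_{M}, \mu})$ by \cite[Proposition~1.3]{MR2827179}. I see no gap; this is a complete proof of the cited result.
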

The parabolic subgroup $P$ also determines a set of positive roots $\Sigma_{\mathfrak{s}_{M}}(P)$ and a basis $\Delta_{\mathfrak{s}_{M}}(P)$ of $\Sigma_{\mathfrak{s}_{M}}$ as
\[
\Sigma_{\mathfrak{s}_{M}}(P) = \{
\alpha^{\#} \mid \alpha \in \Sigma_{\mathfrak{s}_{M}, \mu}(P)
\}\index{$\Sigma_{\mathfrak{s}_{M}}(P)$}
\]
and
\[
\Delta_{\mathfrak{s}_{M}}(P) = \{
\alpha^{\#} \mid \alpha \in \Delta_{\mathfrak{s}_{M}, \mu}(P)
\}.\index{$\Delta_{\mathfrak{s}_{M}}(P)$}
\]

From now on, we assume that the representation $(\sigma, E)$ satisfies \cite[Condition~3.2]{MR4432237}, that is, 
$(\sigma, E)$ is a unitary supercuspidal representation, and
\[
\mu^{M_{\alpha}}(\sigma) = 0
\]
for all $\alpha \in \Delta_{\mathfrak{s}, \mu}(P)$.
We also assume that
\[
q_{\alpha} \ge q_{\alpha *},
\]
for all $\alpha \in \Sigma_{\mathfrak{s}_{M}, \mu}$, where
$q_{\alpha}, q_{\alpha *} \ge 1$ are real numbers appearing in \cite[(3.7)]{MR4432237} (see also \cite[(3.8)]{MR4432237} and \cite[(3.11)]{MR4432237}).
We note that we can always take such a representation $\sigma$ in $\mathfrak{s}_{M}$.  

We define label functions
\[
\lambda, \lambda^{*} \colon \Sigma_{\mathfrak{s}_{M}} \rightarrow \mathbb{C}
\]
as
\begin{align}
\label{lambdaofsolleveld}
\lambda (\alpha^{\#}) = \log(q_{\alpha}q_{\alpha *})/\log(q_{F}), \ \lambda^{*}(\alpha^{\#}) = \log(q_{\alpha}q_{\alpha *}^{-1})/\log(q_{F}).
\end{align}
According to \cite[Lemma~3.3]{MR4432237} and \cite[Lemma~3.4]{MR4432237}, the restrictions of the functions $\lambda, \lambda^{*}$ to $\Delta_{\mathfrak{s}_{M}}(P)$ satisfy conditions~\eqref{w-equiv} and \eqref{lambda=lambda*} in Appendix~\ref{Iwahori-Hecke algebras and affine Hecke algebras}.
We write the affine Hecke algebra associated with the based root datum
\[
\mathcal{R}(G, \mathfrak{s}_{M}) = 
\left(
\left( M_{\sigma}/M^{1} \right)^{\vee}, \Sigma_{\mathfrak{s}_{M}}, M_{\sigma}/M^{1}, \Sigma_{\mathfrak{s}_{M}}^{\vee}, \Delta_{\mathfrak{s}_{M}}(P)
\right),\index{$\mathcal{R}(G, \mathfrak{s}_{M})$}
\]
the parameter $q_{F}$, and the label functions $\lambda, \lambda^{*}$ as
\[
\mathcal{H}(G, \mathfrak{s}_{M}) = 
\mathcal{H}\left(
\mathcal{R}(G, \mathfrak{s}_{M}), \lambda, \lambda^{*}, q_{F}
\right).\index{$\mathcal{H}(G, \mathfrak{s}_{M})$}
\]
For the definition of affine Hecke algebras, see Appendix~\ref{Iwahori-Hecke algebras and affine Hecke algebras}.
We will explain the description of the endomorphism algebra $\End_{G(F)}\left(I_{P}^{G}\left(\ind_{M^1}^{M(F)}(\sigma_{1})\right)\right)$ in terms of the affine Hecke algebra $\mathcal{H}(G, \mathfrak{s}_{M})$ \cite[Section~10]{MR4432237}.

First, we define an injection
\[
\mathbb{C}[M_{\sigma}/M^{1}] \rightarrow \End_{G(F)}\left(I_{P}^{G}\left(\ind_{M^1}^{M(F)}(\sigma_{1})\right)\right)
\]
as follows.
We consider the left regular representation of $M(F)$ on $\mathbb{C}[M(F)/M^1]$.
Then, according to \cite[(2.3)]{MR4432237}, there exists an $M(F)$-equivariant isomorphism
\begin{align}
\label{groupalgebraisomind}
\ind_{M^1}^{M(F)} (\sigma) \rightarrow \sigma \otimes \mathbb{C}[M(F)/M^1].
\end{align}
We regard 
\[\sigma \otimes \mathbb{C}[M(F)/M^1]
\]
as a $\mathbb{C}[M(F)/M^1]$-module via the left multiplication on the second factor.
Since $M(F)/M^1$ is commutative, this action commutes with the $M(F)$-action on 
\[\sigma \otimes \mathbb{C}[M(F)/M^1].
\]
We transport the $\mathbb{C}[M(F)/M^1]$-module structure to $\ind_{M^1}^{M(F)} (\sigma)$ via isomorphism~\eqref{groupalgebraisomind}.
The explicit structure is as follows \cite[(2.6)]{MR4432237}:

For $\theta_{m} \in \mathbb{C}[M(F)/M^1]$ and $f \in \ind_{M^1}^{M(F)} (\sigma)$, the element $\theta_{m} \cdot f \in \ind_{M^1}^{M(F)} (\sigma)$ is defined as
\[
(\theta_{m} \cdot f)(m') = \sigma(m^{-1}) \cdot f(mm')
\]
for $m' \in M(F)$.
We also define an action of $\mathbb{C}[M(F)/M^1]$ on $I_{P}^{G}\left(\ind_{M^1}^{M(F)} (\sigma)\right)$ by using the functoriality of $I_{P}^{G}$.
The action of $\mathbb{C}[M(F)/M^1]$ does not preserve the subspace $\ind_{M^1}^{M(F)} (\sigma_{1})$ of $\ind_{M^1}^{M(F)} (\sigma)$.
However, according to \cite[Subsection~10.1]{MR4432237} and Assumption~\ref{multiplicity1}, the restriction of the action to $\mathbb{C}[M_{\sigma}/M^1]$ preserves $\ind_{M^1}^{M(F)} (\sigma_{1})$.
We consider $\ind_{M^1}^{M(F)} (\sigma_{1})$ as a $\mathbb{C}[M_{\sigma}/M^1]$-module via this action.
Then, we obtain a map
\begin{align}
\label{isomgroupalgebraheckealgebra}
\mathbb{C}[M_{\sigma}/M^1] \rightarrow \End_{M(F)}\left(\ind_{M^1}^{M(F)} (\sigma_{1})\right).
\end{align}
According to \cite[Lemma~10.1]{MR4432237} and Assumption~\ref{multiplicity1}, this map is an isomorphism (see also the last paragraph of \cite[Subsection~10.1]{MR4432237}).
Combining \eqref{isomgroupalgebraheckealgebra} with the injection
\[
I_{P}^{G} \colon \End_{M(F)}\left(\ind_{M^1}^{M(F)} (\sigma_{1})\right) \rightarrow \End_{G(F)}\left(I_{P}^{G}\left(\ind_{M^1}^{M(F)}(\sigma_{1})\right)\right)
\]
provided by the faithful functor $I_{P}^{G}$, we obtain an injection
\begin{align}
\label{isomgroupalgebraheckealgebraparabolicinduction}
\mathbb{C}[M_{\sigma}/M^{1}] \rightarrow \End_{G(F)}\left(I_{P}^{G}\left(\ind_{M^1}^{M(F)}(\sigma_{1})\right)\right).
\end{align}
We regard $\mathbb{C}[M_{\sigma}/M^{1}]$ as a subalgebra of $\End_{G(F)}\left(I_{P}^{G}\left(\ind_{M^1}^{M(F)}(\sigma_{1})\right)\right)$ via \eqref{isomgroupalgebraheckealgebraparabolicinduction}.

We prepare a variant of the results above for later use.
Let $\mathbb{C}(M(F)/M^1)$ denote the quotient field of $\mathbb{C}[M(F)/M^1]$ and $\mathbb{C}(M_{\sigma}/M^1)$ denote the quotient field of $\mathbb{C}[M_{\sigma}/M^1]$.
We consider the left regular representation of $M(F)$ on $\mathbb{C}(M(F)/M^1)$.
Then, the left multiplication action on the second factor of
\[
\sigma \otimes \mathbb{C}(M(F)/M^1)
\]
induces an injection
\[
\mathbb{C}(M(F)/M^1) \rightarrow \End_{M(F)}\left(
\sigma \otimes \mathbb{C}(M(F)/M^1)
\right).
\]
Isomorphism~\eqref{groupalgebraisomind} induces an isomorphism of $\mathbb{C}(M(F)/M^1)$-vector spaces
\begin{align}
\label{quotofgroupalgebraisomind}
\ind_{M^1}^{M(F)} (\sigma) \otimes_{\mathbb{C}[M(F)/M^1]} \mathbb{C}(M(F)/M^1) \rightarrow \sigma \otimes \mathbb{C}(M(F)/M^1).
\end{align}
Hence, we also obtain an injection
\[
\mathbb{C}(M(F)/M^1) \rightarrow \End_{M(F)}\left(
\ind_{M^1}^{M(F)} (\sigma) \otimes_{\mathbb{C}[M(F)/M^1]} \mathbb{C}(M(F)/M^1)
\right).
\]
Here, we regard
\[
\ind_{M^1}^{M(F)} (\sigma) \otimes_{\mathbb{C}[M(F)/M^1]} \mathbb{C}(M(F)/M^1)
\]
as an $M(F)$-representation by transporting the $M(F)$-action on $\sigma \otimes \mathbb{C}(M(F)/M^1)$ via isomorphism~\eqref{quotofgroupalgebraisomind}.
According to \cite[(10.11)]{MR4432237}, the subspace
\[
\ind_{M^1}^{M(F)} (\sigma_{1}) \otimes_{\mathbb{C}[M_{\sigma}/M^1]} \mathbb{C}(M_{\sigma}/M^1)
\]
of 
\[
\ind_{M^1}^{M(F)} (\sigma) \otimes_{\mathbb{C}[M(F)/M^1]} \mathbb{C}(M(F)/M^1)
\]
is an $M(F)$-subspace that is preserved by the action of $\mathbb{C}(M_{\sigma}/M^1)$.
Thus,
we have an injection
\begin{align}
\label{quotofisomgroupalgebraheckealgebra}
\mathbb{C}(M_{\sigma}/M^1) \rightarrow \End_{M(F)}\left(\ind_{M^1}^{M(F)} (\sigma_{1}) \otimes_{\mathbb{C}[M_{\sigma}/M^1]} \mathbb{C}(M_{\sigma}/M^1)\right).
\end{align}
extending \eqref{isomgroupalgebraheckealgebra} and an injection
\begin{align}
\label{quotofisomgroupalgebraheckealgebraparabolicinduction}
\mathbb{C}(M_{\sigma}/M^{1}) \rightarrow \End_{G(F)}\left(I_{P}^{G}\left(\ind_{M^1}^{M(F)}(\sigma_{1}) \otimes_{\mathbb{C}[M_{\sigma}/M^1]} \mathbb{C}(M_{\sigma}/M^1) \right)\right)
\end{align}
by using the functoriality of $I_{P}^{G}$.
We also have an injection
\begin{align}
\label{quotofisomgroupalgebraheckealgebraparabolicinductionrestriction}
\mathbb{C}(M_{\sigma}/M^{1}) \rightarrow \Hom_{G(F)}\left(
I_{P}^{G}\left(
\ind_{M^1}^{M(F)} (\sigma_{1})
\right), 
I_{P}^{G}\left(
\ind_{M^1}^{M(F)} (\sigma_{1}) \otimes_{\mathbb{C}[M_{\sigma}/M^1]} \mathbb{C}(M_{\sigma}/M^1)
\right)
\right)
\end{align}
by restricting the image of \eqref{quotofisomgroupalgebraheckealgebraparabolicinduction} to
$
I_{P}^{G}\left(
\ind_{M^1}^{M(F)} (\sigma_{1})
\right)$.

Now, we state the main result of \cite[Section~10]{MR4432237}:
\begin{theorem}[{\cite[Theorem~10.9]{MR4432237}}]
\label{theorem10.9ofsolleveld}
The endomorphism algebra $\End_{G(F)}\left(I_{P}^{G}\left(\ind_{M^1}^{M(F)}(\sigma_{1})\right)\right)$ has a $\mathbb{C}[M_{\sigma}/M^{1}]$-basis
\[
\{
J_{r}T'_{w} \mid r \in R(\mathfrak{s}_{M}), w \in W(\Sigma_{\mathfrak{s}_{M}, \mu})
\},
\]
where $J_{r}$ and $T'_{w}$ are elements of $\End_{G(F)}\left(I_{P}^{G}\left(\ind_{M^1}^{M(F)}(\sigma_{1})\right)\right)$ defined in \cite[Subsection~10.2]{MR4432237}.\index{$J_{r}$}
Let $\mathcal{H}\left(W(\Sigma_{\mathfrak{s}_{M}, \mu})\right)\index{$\mathcal{H}\left(W(\Sigma_{\mathfrak{s}_{M}, \mu})\right)$}$ denote the subspace
\[
\bigoplus_{w \in W(\Sigma_{\mathfrak{s}_{M}, \mu})} \mathbb{C}[M_{\sigma}/M^{1}] T'_{w}
\]
of $\End_{G(F)}\left(I_{P}^{G}\left(\ind_{M^1}^{M(F)}(\sigma_{1})\right)\right)$.
Then, $\mathcal{H}\left(W(\Sigma_{\mathfrak{s}_{M}, \mu})\right)$ is a subalgebra of $\End_{G(F)}\left(I_{P}^{G}\left(\ind_{M^1}^{M(F)}(\sigma_{1})\right)\right)$, and there exists an isomorphism
\[
\mathcal{H}\left(W(\Sigma_{\mathfrak{s}_{M}, \mu})\right) \rightarrow \mathcal{H}(G, \mathfrak{s}_{M})
\]
that is identity on $\mathbb{C}[M_{\sigma}/M^{1}]$ and sends an element $T'_{w}$ of $\mathcal{H}\left(W(\Sigma_{\mathfrak{s}_{M}, \mu})\right)$ to the element $T_{w}$ of $\mathcal{H}(G, \mathfrak{s}_{M})$ for all $w \in W(\Sigma_{\mathfrak{s}_{M}, \mu})$.
\end{theorem}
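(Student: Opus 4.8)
The plan is to follow the analytic route through intertwining operators rather than the combinatorial coset analysis of \cite{MR1235019}. First I would base change to the ``generic point'' of the block: tensoring over $\mathbb{C}[M_\sigma/M^1]$ with its fraction field produces the module $\ind_{M^1}^{M(F)}(\sigma_1) \otimes_{\mathbb{C}[M_\sigma/M^1]} \mathbb{C}(M_\sigma/M^1)$, which by \eqref{quotofgroupalgebraisomind} is $\sigma$ twisted by a ``generic'' unramified character, and I would analyze $\End_{G(F)}$ of its parabolic induction over $\mathbb{C}(M_\sigma/M^1)$. For each $w \in W(G, M, \mathfrak{s}_M)$, Harish-Chandra's theory of the constant term gives a rational family of standard intertwining operators $J_w \colon I_P^G(\sigma \otimes \chi) \to I_P^G({}^w(\sigma \otimes \chi))$; identifying target with source via the canonical isomorphism $I_P^G(\sigma \otimes \chi) \simeq I_P^G({}^w(\sigma \otimes \chi))$ attached to $w$, these become elements of the base-changed endomorphism algebra, with $J_{w_1} J_{w_2} = J_{w_1 w_2}$ up to a scalar in $\mathbb{C}(M_\sigma/M^1)^\times$ and with $J_w$ ``graded'' in the $w$-component.

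The first main step is to show that the $J_w$, together with $\mathbb{C}(M_\sigma/M^1)$, form a $\mathbb{C}(M_\sigma/M^1)$-basis of the base-changed endomorphism algebra. Linear independence follows from the grading property; spanning follows from Bernstein's theory, which forces the $\mathbb{C}(M_\sigma/M^1)$-dimension of the endomorphism algebra of this progenerator to equal $\abs{W(G, M, \mathfrak{s}_M)}$ — to bound it from above I would use the geometric lemma together with second adjunction. The second step is the normalization. The poles and zeros of $J_w$ are governed by Harish-Chandra's $\mu$-function, and for a simple reflection $s_\alpha$ with $\alpha \in \Delta_{\mathfrak{s}_M, \mu}(P)$ the composition $J_{s_\alpha} \, {}^{s_\alpha}\!J_{s_\alpha}$ is controlled by the rank-one Levi $M_\alpha$. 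One computes this composition explicitly as a rational function on $\mathfrak{s}_M$ — invoking the rank-one reducibility results of Silberger and Harish-Chandra, refined in \cite{MR4432237} so as to extract the parameters $q_\alpha, q_{\alpha *}$ from the reducibility points — reads off its zero locus, and rescales $J_{s_\alpha}$ to an operator $T'_{s_\alpha}$ that is defined over $\mathbb{C}[M_\sigma/M^1]$ (not merely its fraction field) and satisfies the quadratic relation prescribed by the label functions $\lambda, \lambda^*$ of \eqref{lambdaofsolleveld}. Braid relations among the $T'_{s_\alpha}$ then follow from a rank-two reduction plus uniqueness of the normalization up to scalars, giving the subalgebra $\mathcal{H}\left(W(\Sigma_{\mathfrak{s}_M, \mu})\right)$.

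For the $R$-group part I would normalize the remaining operators $J_r$, $r \in R(\mathfrak{s}_M)$, to a projective representation of $R(\mathfrak{s}_M)$ on each fibre; the resulting twisted group algebra normalizes $\mathcal{H}\left(W(\Sigma_{\mathfrak{s}_M, \mu})\right)$, and the decomposition $W(G, M, \mathfrak{s}_M) = W(\Sigma_{\mathfrak{s}_M, \mu}) \rtimes R(\mathfrak{s}_M)$ then yields the $\mathbb{C}[M_\sigma/M^1]$-basis $\{J_r T'_w\}$ of the full, un-base-changed endomorphism algebra (using that all the $T'_w$ and $J_r$ are already integral over $\mathbb{C}[M_\sigma/M^1]$). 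Finally I would match generators and relations with the abstract affine Hecke algebra $\mathcal{H}(G, \mathfrak{s}_M)$: the $\mathbb{C}[M_\sigma/M^1]$-piece is the identity on both sides by construction; the quadratic and braid relations match by the choice of $\lambda, \lambda^*$; and the Bernstein cross-relations of $\mathcal{H}(G, \mathfrak{s}_M)$ are checked against the explicit action of $T'_{s_\alpha}$ on $\sigma \otimes \mathbb{C}(M_\sigma/M^1)$, which forces the root datum $\mathcal{R}(G, \mathfrak{s}_M)$ and no other.

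The hardest part will be the rank-one computation of $J_{s_\alpha} \, {}^{s_\alpha}\!J_{s_\alpha}$, together with the claim that exactly the parameters $q_\alpha, q_{\alpha *}$ (hence $\lambda, \lambda^*$) occur: this rests on delicate harmonic-analytic input about Plancherel measures and reducibility of rank-one parabolic induction. Bounding the endomorphism algebra from above via the geometric lemma and second adjunction is the second substantial point; everything else amounts to careful bookkeeping with root data and twisted group algebras.
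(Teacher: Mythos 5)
This theorem is stated as a citation of \cite[Theorem~10.9]{MR4432237} and the paper does not prove it; it only reviews Solleveld's constructions in Section~\ref{A review of Solleveld's results} (the operators $J_{s_\alpha}$, $\tau_{s_\alpha}$, $\lambda(s_\alpha)$, $\rho_{\sigma, s_\alpha}$, the rational functions $f_\alpha$, and the resulting $T'_{s_\alpha}$). Your proposal correctly reconstructs the strategy of the cited reference: base change to $\mathbb{C}(M_\sigma/M^1)$, Harish-Chandra intertwining operators $J_w$ graded by $w \in W(G,M,\mathfrak{s}_M)$, linear independence from the grading and dimension count via the geometric lemma and second adjunction, rank-one normalization through the $\mu$-function and reducibility parameters $q_\alpha, q_{\alpha *}$ to get integral operators $T'_{s_\alpha}$ satisfying the quadratic relation, braid relations by reduction to rank two, the twisted $R(\mathfrak{s}_M)$-group algebra normalizing $\mathcal{H}(W(\Sigma_{\mathfrak{s}_M,\mu}))$ via $W(G,M,\mathfrak{s}_M) = W(\Sigma_{\mathfrak{s}_M,\mu}) \rtimes R(\mathfrak{s}_M)$, and a final generators-and-relations match against $\mathcal{H}(G,\mathfrak{s}_M)$, checked through the Bernstein relation. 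This is indeed the route Solleveld takes; you also correctly identify the two technical load-bearing points (the rank-one $J_{s_\alpha}\,{}^{s_\alpha}\!J_{s_\alpha}$ computation via Plancherel measures, and the upper bound on the endomorphism algebra). The description ``$\sigma$ twisted by a generic unramified character'' for the base-changed module is slightly informal — the fraction field is a geometric generic point rather than a single twist — but this does not affect the argument, and Assumption~\ref{multiplicity1} is implicitly used where you restrict the $\mathbb{C}(M(F)/M^1)$-action to $\mathbb{C}(M_\sigma/M^1)$ on $\sigma_1$, as the paper notes after \cite[Lemma~10.1]{MR4432237}.
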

We explain the definition of $T'_{s_{\alpha}}$ for $\alpha \in \Delta_{\mathfrak{s}_{M}, \mu}(P)$.
\begin{lemma}
There exists a lift $\widetilde{s_{\alpha}}$ of $s_{\alpha}$ in $I_{M_{\alpha}^{1}}(\sigma_{1})$.
\end{lemma}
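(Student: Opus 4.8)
The plan is to build $\widetilde{s_\alpha}$ in three stages, starting from an arbitrary representative of $s_\alpha$ in $N_{G}(M)(F)\cap M_\alpha(F)$ and correcting it successively. The first input is \cite[Condition~3.2]{MR4432237}, which forces $\mu^{M_\alpha}(\sigma)=0$: since $\sigma$ is unitary supercuspidal and $M$ is a maximal Levi subgroup of $M_\alpha$, vanishing of the Harish-Chandra $\mu$-function at $\sigma$ makes the induced representation of $\sigma$ from $M(F)$ to $M_\alpha(F)$ reducible, hence (by \cite[5.4.2]{MR544991}, applied at the point $\sigma$ itself) yields ${}^{s_\alpha}\sigma\simeq\sigma$. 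Consequently \emph{every} representative $\dot{s}_\alpha\in N_{G}(M)(F)\cap M_\alpha(F)$ of $s_\alpha$ satisfies ${}^{\dot{s}_\alpha}\sigma\simeq\sigma$.

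I would then pass to $\sigma_1$. By Assumption~\ref{multiplicity1} the decomposition $\sigma|_{M^1}\simeq\bigoplus_{m\in M(F)/M_\sigma}{}^{m}\sigma_1$ is multiplicity free, so conjugation by $\dot{s}_\alpha$ permutes its isotypic components; pick $m_0\in M(F)$ with ${}^{\dot{s}_\alpha}\sigma_1\simeq{}^{m_0}\sigma_1$ and replace $\dot{s}_\alpha$ by $m_0^{-1}\dot{s}_\alpha$ (still a representative of $s_\alpha$, as $M(F)$ is the kernel of $N_{G}(M)(F)\cap M_\alpha(F)\to W(M_\alpha,M)$). This produces a lift $n$ of $s_\alpha$ with ${}^{n}\sigma_1\simeq\sigma_1$, i.e. $n\in I_{M_\alpha(F)}(\sigma_1)$.

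Finally I would correct $n$ into $M_\alpha^1$ while keeping it in $I_{M_\alpha(F)}(\sigma_1)$, and this is the step I expect to be the main obstacle. One has a well-defined image $H_{M_\alpha}(n)\in a_{M_\alpha}$, and $n^2\in M(F)\cap I_{M_\alpha(F)}(\sigma_1)=M_\sigma$, so $2H_{M_\alpha}(n)\in H_{M_\alpha}(M_\sigma)$; conjugation by $n$ also acts on the finite abelian group $M(F)/M_\sigma$ as an affine transformation with linear part the reflection $s_\alpha$ and translation part forced to be trivial by ${}^{n}\sigma_1\simeq\sigma_1$. Using that $s_\alpha$ acts trivially on $a_{M_\alpha}$ — together with the identities $Z(M_\alpha)^{\circ}\subseteq M$ and triviality of rational characters of $M_\alpha$ on $[M_\alpha,M_\alpha]$, which pin down the lattice $M_\alpha(F)/M_\alpha^1$ in terms of $M$ — one should be able to produce $m_1\in M_\sigma$ with $H_{M_\alpha}(m_1)=H_{M_\alpha}(n)$; then $\widetilde{s_\alpha}:=m_1^{-1}n$ is a representative of $s_\alpha$ with $H_{M_\alpha}(\widetilde{s_\alpha})=0$, hence $\widetilde{s_\alpha}\in M_\alpha^1$, and ${}^{\widetilde{s_\alpha}}\sigma_1={}^{m_1^{-1}}({}^{n}\sigma_1)\simeq{}^{m_1^{-1}}\sigma_1\simeq\sigma_1$ since $m_1\in M_\sigma$, so $\widetilde{s_\alpha}\in I_{M_\alpha^1}(\sigma_1)$. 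The subtle point is matching the class of $H_{M_\alpha}(n)$ in $M_\alpha(F)/M_\alpha^1$ with the coset of $n$ modulo $M_\sigma$ (where a possible $2$-torsion discrepancy must be ruled out); it is here that \cite[Condition~3.2]{MR4432237} is genuinely used, and in fact this bookkeeping is essentially carried out in the discussion of $\Sigma_{\mathfrak{s}_M,\mu}$, $h_\alpha^\vee$ and the parameters $q_\alpha,q_{\alpha *}$ in \cite[Section~3]{MR4432237}, from which the statement may alternatively be quoted.
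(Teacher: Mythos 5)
Your strategy is a genuine alternative to the paper's, but it leaves a real gap at exactly the point you flag. The paper sidesteps the whole $H_{M_\alpha}$-bookkeeping by choosing the initial lift $\widetilde{s_\alpha}'$ \emph{inside a maximal compact subgroup of $G(F)$}: such a lift lies in $M_\alpha(F)$ by construction and, being in a compact group, automatically has $H_{M_\alpha}(\widetilde{s_\alpha}')=0$, i.e.\ $\widetilde{s_\alpha}'\in M_\alpha^1$. After that the only remaining task is to adjust $\widetilde{s_\alpha}'$ by some $m_\alpha\in M(F)$ to normalize $\sigma_1$ (not just $\sigma$), while keeping the product in $M_\alpha^1$, and the paper settles this by appealing to the proof of Heiermann's Lemme~4.5, which shows one may take $m_\alpha\in M(F)\cap M_\alpha^1$. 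Your route instead starts from an arbitrary representative, corrects it to $n\in I_{M_\alpha(F)}(\sigma_1)$ (these first two steps are fine and match the paper up to reordering), and then must correct the $H_{M_\alpha}$-coordinate \emph{after} the fact.

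The problem is that your last step is not actually carried out. You correctly observe that $n^2\in M_\sigma$, so $2H_{M_\alpha}(n)\in H_{M_\alpha}(M_\sigma)$, and you correctly identify that a $2$-torsion discrepancy must be excluded before you can find $m_1\in M_\sigma$ with $H_{M_\alpha}(m_1)=H_{M_\alpha}(n)$. But "one should be able to produce $m_1$" is a claim, not an argument, and the structural remarks about $Z(M_\alpha)^\circ$ and $[M_\alpha,M_\alpha]$ do not by themselves rule out the half-lattice obstruction. Pointing at the discussion of $h_\alpha^\vee$, $q_\alpha$, $q_{\alpha*}$ in \cite[Section~3]{MR4432237} is not a substitute: as far as I can tell, that discussion \emph{presupposes} that a lift in $M_\alpha^1$ has been fixed (using precisely the maximal-compact device), so citing it here would be circular. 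To repair your proof you should either import the compact-subgroup trick from the start (which collapses your third stage entirely), or supply the lattice argument you are waving at, e.g.\ along the lines of Heiermann's Lemme~4.5 as the paper does.
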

\begin{proof}
Take a lift $\widetilde{s_{\alpha}}'$ of $s_{\alpha}$ contained in a maximal compact subgroup of $G(F)$ (that is possible, see \cite[Subsection~4.1]{MR4432237}).
The definition of $s_{\alpha}$ implies $\widetilde{s_{\alpha}}' \in M_{\alpha}$.
Since $\widetilde{s_{\alpha}}'$ is contained in a compact subgroup of $G(F)$, $\widetilde{s_{\alpha}}'$ is also contained in $M_{\alpha}^{1}$.
Moreover, since we assume that
\[
\mu^{M_{\alpha}}(\sigma) = 0,
\]
\cite[5.4.2]{MR544991} implies $\widetilde{s_{\alpha}}'$ normalizes the representation $\sigma$ (see also \cite[(3.4)]{MR4432237}).
Since $\sigma_{1}$ is an irreducible subrepresentation of $\sigma\restriction_{M^1}$, we can take $m_{\alpha} \in M(F)$ such that $m_{\alpha} \widetilde{s_{\alpha}}'$ normalizes the representation $\sigma_{1}$.
The proof of \cite[Lemme~4.5]{MR2827179} implies that we can take $m_{\alpha}$ in $M(F) \cap M_{\alpha}^{1}$.
Then, $\widetilde{s_{\alpha}} := m_{\alpha} \widetilde{s_{\alpha}}'$ is a lift of $s_{\alpha}$ in $I_{M_{\alpha}^{1}}(\sigma_{1})$.
\end{proof}
Fix a lift $\widetilde{s_{\alpha}}$ of $s_{\alpha}$ in $I_{M_{\alpha}^{1}}(\sigma_{1})$.
To define $T'_{s_{\alpha}}$, we prepare some operators:
\begin{itemize}
\item 
For another parabolic subgroup $P'$ of $G$ with Levi factor $M$, let 
\[
J_{P' \mid P}(\sigma \otimes \cdot) \colon I_{P}^{G}\left(\sigma \otimes \mathbb{C}[M(F)/M^1]\right) \rightarrow I_{P'}^{G}\left(\sigma \otimes \mathbb{C}(M(F)/M^1)\right)\index{$J_{P' \mid P}(\sigma \otimes \cdot)$}
\]
denote the Harish-Chandra's intertwining operator \cite[Subsection~4.1]{MR4432237}, \cite[IV.1]{MR1989693}.
In particular, we consider the map
\[
J_{s_{\alpha}^{-1}(P) \mid P}(\sigma \otimes \cdot) \colon I_{P}^{G}\left(\sigma \otimes \mathbb{C}[M(F)/M^1]\right) \rightarrow I_{s_{\alpha}^{-1}(P)}^{G}\left(\sigma \otimes \mathbb{C}(M(F)/M^1)\right),
\]
where $s_{\alpha}^{-1}(P)$ denotes the parabolic subgroup $s_{\alpha}^{-1} P s_{\alpha}$.
\item
We define
\[
\lambda(s_{\alpha}) \colon I_{s_{\alpha}^{-1}(P)}^{G}\left(\sigma \otimes \mathbb{C}(M(F)/M^1)\right) \rightarrow I_{P}^{G}\left(\widetilde{s_{\alpha}}\left(\sigma \otimes \mathbb{C}(M(F)/M^1)\right)\right)\index{$\lambda(s_{\alpha})$}
\]
as
\[
f \mapsto [g \mapsto f((\widetilde{s_{\alpha}})^{-1} g)].
\]
\item
We define
\[
\tau_{s_{\alpha}} \colon \widetilde{s_{\alpha}} \left(\sigma \otimes \mathbb{C}(M(F)/M^1)\right) \rightarrow \widetilde{s_{\alpha}} \sigma \otimes \mathbb{C}(M(F)/M^1)\index{$\tau_{s_{\alpha}}$}
\]
as
\[
e \otimes \theta_{m} \mapsto e \otimes \theta_{\widetilde{s_{\alpha}}m{\widetilde{s_{\alpha}}^{-1}}}.
\]
\item 
Since we assume that
\[
\mu^{M_{\alpha}}(\sigma) = 0,
\]
\cite[5.4.2]{MR544991} implies $\widetilde{s_{\alpha}}$ normalizes $\sigma$ (see also \cite[(3.4)]{MR4432237}).
Hence, there exists an isomorphism
\[
\rho_{\sigma, s_{\alpha}} \colon \widetilde{s_{\alpha}} \sigma \simeq \sigma,\index{$\rho_{\sigma, s_{\alpha}}$}
\]
that is unique up to a scaler multiple.
We can choose the isomorphism canonically as \cite[Lemma~4.3]{MR4432237}.
\end{itemize}
We define an element
\[
J_{s_{\alpha}} \in \Hom_{G(F)}\left(
I_{P}^{G}\left(
\sigma \otimes \mathbb{C}[M(F)/M^1]
\right), 
I_{P}^{G}\left(
\sigma \otimes \mathbb{C}(M(F)/M^1)
\right)
\right)
\]
as
\[
J_{s_{\alpha}} = I_{P}^{G} \left(
\rho_{\sigma, s_{\alpha}} \otimes \id
\right) \circ I_{P}^{G} (\tau_{s_{\alpha}}) \circ \lambda(s_{\alpha}) \circ J_{s_{\alpha}^{-1}(P) \mid P}(\sigma \otimes \cdot).\index{$J_{s_{\alpha}} $}
\]
According to isomorphism~\eqref{groupalgebraisomind} and isomorphism~\eqref{quotofgroupalgebraisomind}, we can regard $J_{s_{\alpha}}$ as an element of
\[
\Hom_{G(F)}\left(
I_{P}^{G}\left(
\ind_{M^1}^{M(F)} (\sigma)
\right), 
I_{P}^{G}\left(
\ind_{M^1}^{M(F)} (\sigma) \otimes_{\mathbb{C}[M(F)/M^1]} \mathbb{C}(M(F)/M^1)
\right)
\right).
\]
Moreover, since we take the lift $\widetilde{s_{\alpha}}$ of $s_{\alpha}$ in $I_{M_{\alpha}^{1}}(\sigma_{1})$, Assumption~\ref{multiplicity1} and the proof of \cite[Lemme~4.5]{MR2827179} imply that $J_{s_{\alpha}}$ sends the subspace
\[
I_{P}^{G}\left(
\ind_{M^1}^{M(F)} (\sigma_{1})
\right)
\]
of
\[
I_{P}^{G}\left(
\ind_{M^1}^{M(F)} (\sigma)
\right),
\]
to the subspace
\[
I_{P}^{G}\left(
\ind_{M^1}^{M(F)} (\sigma_{1}) \otimes_{\mathbb{C}[M_{\sigma}/M^1]} \mathbb{C}(M_{\sigma}/M^1)
\right)
\]
of
\[
I_{P}^{G}\left(
\ind_{M^1}^{M(F)} (\sigma) \otimes_{\mathbb{C}[M(F)/M^1]} \mathbb{C}(M(F)/M^1)
\right)
\]
(see also \cite[Lemma~10.3]{MR4432237}).
Thus, we can regard $J_{s_{\alpha}}$ as an element of
\[
\Hom_{G(F)}\left(
I_{P}^{G}\left(
\ind_{M^1}^{M(F)} (\sigma_{1})
\right),
I_{P}^{G}\left(
\ind_{M^1}^{M(F)} (\sigma_{1}) \otimes_{\mathbb{C}[M_{\sigma}/M^1]} \mathbb{C}(M_{\sigma}/M^1)
\right)
\right).
\]
Although the definition of $J_{s_{\alpha}}$ here looks different from the definition of $J_{s_{\alpha}}$ in \cite[Subsection~10.2]{MR4432237}, according to \cite[3.1]{MR2827179}, these two definitions coincide.

We define an element $f_{\alpha} \in \mathbb{C}(M_{\sigma}/M^1)$ as 
\[
f_{\alpha} = \frac{
(\theta_{h_{\alpha}^{\vee}})^2(q_{\alpha} q_{\alpha*} - 1) + \theta_{h_{\alpha}^{\vee}}(q_{\alpha} - q_{\alpha*})
}{
(\theta_{h_{\alpha}^{\vee}})^2 -1
}.\index{$f_{\alpha}$}
\]
We consider $f_{\alpha}$ as an element of
\[
\Hom_{G(F)}\left(
I_{P}^{G}\left(
\ind_{M^1}^{M(F)} (\sigma_{1})
\right), 
I_{P}^{G}\left(
\ind_{M^1}^{M(F)} (\sigma_{1}) \otimes_{\mathbb{C}[M_{\sigma}/M^1]} \mathbb{C}(M_{\sigma}/M^1)
\right)
\right)
\]
via injection~\eqref{quotofisomgroupalgebraheckealgebraparabolicinductionrestriction}.

We define an element $T'_{s_{\alpha}}$ of
\[
\Hom_{G(F)}\left(
I_{P}^{G}\left(\ind_{M^1}^{M(F)} (\sigma_{1})\right), 
I_{P}^{G}\left(\ind_{M^1}^{M(F)} (\sigma_{1}) \otimes_{\mathbb{C}[M_{\sigma}/M^1]} \mathbb{C}(M_{\sigma}/M^1) \right)
\right)
\]
as
\[
T'_{s_{\alpha}} = \frac{
(q_{\alpha}-1)(q_{\alpha*}+1)
}{
2
}
(\theta_{h_{\alpha}^{\vee}})^{\epsilon_{\alpha}} \circ J_{s_{\alpha}}
+ f_{\alpha},\index{$T'_{s_{\alpha}}$}
\]
where $\epsilon_{\alpha} \in \{0,1\}\index{$\epsilon_{\alpha}$}$ denotes the number defined in \cite[Lemma~10.7 (b)]{MR4432237}.
We note that $\epsilon_{\alpha} =0$ unless $q_{\alpha*} > 1$.
In particular, $\epsilon_{\alpha} =0$ unless $\alpha^{\#}$ is the unique simple root in a type $A_{1}$ irreducible component of $\Sigma_{\mathfrak{s}_{M}}$ or a long root in a type $C_{n} \ (n \ge 2)$ irreducible component of $\Sigma_{\mathfrak{s}_{M}}$ (see \cite[Lemma~3.3]{MR4432237}). 
Here, $(\theta_{h_{\alpha}^{\vee}})^{\epsilon_{\alpha}} \circ J_{s_{\alpha}}$ denotes the element of
\[
\Hom_{G(F)}\left(
I_{P}^{G}\left(
\ind_{M^1}^{M(F)} (\sigma_{1})
\right),
I_{P}^{G}\left(
\ind_{M^1}^{M(F)} (\sigma_{1}) \otimes_{\mathbb{C}[M_{\sigma}/M^1]} \mathbb{C}(M_{\sigma}/M^1)
\right)
\right)
\]
obtained by composing $J_{s_{\alpha}}$ with 
\[
(\theta_{h_{\alpha}^{\vee}})^{\epsilon_{\alpha}} \in \mathbb{C}[M_{\sigma}/M^1] \subset \mathbb{C}(M_{\sigma}/M^1)
\]
considered as an element of
\[
\End_{G(F)}\left(I_{P}^{G}\left(\ind_{M^1}^{M(F)}(\sigma_{1}) \otimes_{\mathbb{C}[M_{\sigma}/M^1]} \mathbb{C}(M_{\sigma}/M^1) \right)\right)
\]
via injection~\eqref{quotofisomgroupalgebraheckealgebraparabolicinduction}.
Solleveld proved the following:
\begin{lemma}[{\cite[Lemma~10.8]{MR4432237}}]
\label{lemmasolleveld10.8}
The element $T'_{s_{\alpha}}$ lies in $\End_{G(F)}\left(I_{P}^{G}\left(\ind_{M^1}^{M(F)}(\sigma_{1})\right)\right)$.
\end{lemma}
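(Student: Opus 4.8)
The plan is to show that $T'_{s_{\alpha}}$ has no poles, i.e. that its image is already contained in the unlocalised module $I_{P}^{G}(\ind_{M^1}^{M(F)}(\sigma_{1}))$ rather than only in $I_{P}^{G}(\ind_{M^1}^{M(F)}(\sigma_{1}) \otimes_{\mathbb{C}[M_{\sigma}/M^1]} \mathbb{C}(M_{\sigma}/M^1))$. Since $I_{P}^{G}$ is exact, $\ind_{M^1}^{M(F)}(\sigma_{1})$ is a torsion-free $\mathbb{C}[M_{\sigma}/M^1]$-module, and $\mathbb{C}[M_{\sigma}/M^1]$ is a regular integral domain (a Laurent polynomial ring), the unlocalised module is the intersection, inside its $\mathbb{C}(M_{\sigma}/M^1)$-localisation, of its localisations at all height-one primes; so it suffices to check that $T'_{s_{\alpha}}$ is regular along every prime divisor of $\operatorname{Spec} \mathbb{C}[M_{\sigma}/M^1]$. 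First I would reduce to the rank-one situation via induction in stages: $I_{P}^{G} = I_{P_{\alpha}}^{G} \circ I_{P \cap M_{\alpha}}^{M_{\alpha}}$, where $P_{\alpha}$ is the parabolic subgroup of $G$ with Levi factor $M_{\alpha}$ and $P \subset P_{\alpha}$; each datum entering the definition of $T'_{s_{\alpha}}$ — the operator $J_{s_{\alpha}}$ (note $\widetilde{s_{\alpha}} \in M_{\alpha}^{1}$), the rational function $f_{\alpha}$, the element $\theta_{h_{\alpha}^{\vee}}$, and the constants $q_{\alpha}, q_{\alpha*}, \epsilon_{\alpha}$ — is the image under $I_{P_{\alpha}}^{G}$ of, or depends only on, the corresponding object inside $M_{\alpha}$, and $I_{P_{\alpha}}^{G}$ is exact and faithful, so it is enough to treat $G = M_{\alpha}$. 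Then $\Sigma_{\mathfrak{s}_{M}, \mu} = \{\pm\alpha\}$ has type $A_{1}$, $h_{\alpha}^{\vee}$ generates the $\mathbb{Z}$-summand $(M_{\sigma} \cap M_{\alpha}^{1})/M^1$ of $M_{\sigma}/M^1$, $f_{\alpha}$ has poles only along the two components $D_{+} = \{\theta_{h_{\alpha}^{\vee}} = 1\}$ and $D_{-} = \{\theta_{h_{\alpha}^{\vee}} = -1\}$ of the zero locus of $\theta_{h_{\alpha}^{\vee}}^{2} - 1$, and it remains to control $J_{s_{\alpha}}$.

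The second step is to pin down the pole of $J_{s_{\alpha}}$ along $D_{\pm}$. Of the four maps composing $J_{s_{\alpha}}$, the three maps $\lambda(s_{\alpha})$, $I_{P}^{G}(\tau_{s_{\alpha}})$ and $I_{P}^{G}(\rho_{\sigma, s_{\alpha}} \otimes \id)$ are isomorphisms defined over $\mathbb{C}[M_{\sigma}/M^1]$: they are a translation of functions, a relabelling of the group-algebra basis (which preserves $M_{\sigma}/M^1$, since $\widetilde{s_{\alpha}} \in I_{M_{\alpha}^{1}}(\sigma_{1})$ normalises $M_{\sigma}$), and a fixed intertwiner $\widetilde{s_{\alpha}}\sigma \simeq \sigma$. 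Hence the poles of $J_{s_{\alpha}}$ are those of the Harish-Chandra operator $J_{s_{\alpha}^{-1}(P) \mid P}(\sigma \otimes \cdot)$, which is rational in the unramified twist and whose composite with the intertwining operator in the reverse direction is multiplication by the inverse of the Harish-Chandra $\mu$-function of $M_{\alpha}$ — an explicit rational function of $\theta_{h_{\alpha}^{\vee}}$ fixed, via \cite[(3.7)]{MR4432237} and \eqref{lambdaofsolleveld}, by $q_{\alpha}, q_{\alpha*}$ (cf.\ \cite[V.2]{MR1989693} and \cite[5.4.2]{MR544991}). Carrying out the rank-one computation — equivalently, working in the rank-one affine Hecke algebra with parameters $q_{\alpha}, q_{\alpha*}$, in which $J_{s_{\alpha}}$ is a rescaled Bernstein--Lusztig intertwiner — one finds that, after the normalisation by the scalar $c_{\alpha} := (q_{\alpha}-1)(q_{\alpha*}+1)/2$ and the twist $(\theta_{h_{\alpha}^{\vee}})^{\epsilon_{\alpha}}$, the operator $c_{\alpha}(\theta_{h_{\alpha}^{\vee}})^{\epsilon_{\alpha}} \circ J_{s_{\alpha}}$ differs from a regular $T$-type operator by a scalar rational function whose only poles lie along $D_{\pm}$; consequently its residue along each $D_{\pm}$ is a scalar operator whose value lies in $q_{\alpha}, q_{\alpha*}$, and one computes it to be $-c_{\alpha} \cdot \id$ along $D_{+}$ (where $(\theta_{h_{\alpha}^{\vee}})^{\epsilon_{\alpha}}$ evaluates to $1$) and $+\tfrac{(q_{\alpha}+1)(q_{\alpha*}-1)}{2} \cdot \id$ along $D_{-}$ — the factor $(-1)^{\epsilon_{\alpha}}$ coming from $(\theta_{h_{\alpha}^{\vee}})^{\epsilon_{\alpha}}$ being exactly what makes the sign work, where one uses the definition of $\epsilon_{\alpha}$ in \cite[Lemma~10.7~(b)]{MR4432237}, namely $\epsilon_{\alpha} = 0$ unless $q_{\alpha*} > 1$, i.e. unless $\alpha^{\#}$ is the unique simple root of a type $A_{1}$ component or a long root of a type $C_{n}$ $(n \ge 2)$ component of $\Sigma_{\mathfrak{s}_{M}}$.

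The final step is the residue matching, which is then essentially formal. Writing $\theta = \theta_{h_{\alpha}^{\vee}}$, an elementary computation gives
\[
f_{\alpha} = \frac{\theta^{2}(q_{\alpha}q_{\alpha*}-1) + \theta(q_{\alpha}-q_{\alpha*})}{\theta^{2}-1}, \qquad \operatorname{Res}_{D_{+}} f_{\alpha} = c_{\alpha}, \qquad \operatorname{Res}_{D_{-}} f_{\alpha} = -\tfrac{(q_{\alpha}+1)(q_{\alpha*}-1)}{2},
\]
since the numerator factors as $(q_{\alpha}-1)(q_{\alpha*}+1)$ at $\theta = 1$ and as $(q_{\alpha}+1)(q_{\alpha*}-1)$ at $\theta = -1$, while $\theta^{2}-1$ has derivative $\pm 2$ at $\theta = \pm 1$. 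Combining with the previous step, the principal part of $c_{\alpha}(\theta_{h_{\alpha}^{\vee}})^{\epsilon_{\alpha}} \circ J_{s_{\alpha}}$ cancels that of $f_{\alpha}$ along both $D_{+}$ and $D_{-}$, and there are no further divisors to inspect; hence $T'_{s_{\alpha}} = c_{\alpha}(\theta_{h_{\alpha}^{\vee}})^{\epsilon_{\alpha}} \circ J_{s_{\alpha}} + f_{\alpha}$ is regular and therefore maps $I_{P}^{G}(\ind_{M^1}^{M(F)}(\sigma_{1}))$ into itself, defining an element of $\End_{G(F)}(I_{P}^{G}(\ind_{M^1}^{M(F)}(\sigma_{1})))$. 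Applying $I_{P_{\alpha}}^{G}$ recovers the statement for general $G$.

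The main obstacle is the determination, in the second step, of the residues of the normalised operator $c_{\alpha}(\theta_{h_{\alpha}^{\vee}})^{\epsilon_{\alpha}} \circ J_{s_{\alpha}}$ along $D_{\pm}$: one must compute the residue of the rank-one Harish-Chandra intertwining operator and keep careful track of how the normalising factors $\lambda(s_{\alpha})$, $\tau_{s_{\alpha}}$, $\rho_{\sigma, s_{\alpha}}$, $c_{\alpha}$ and $(\theta_{h_{\alpha}^{\vee}})^{\epsilon_{\alpha}}$ interact with the definitions of $q_{\alpha}$, $q_{\alpha*}$ and $h_{\alpha}^{\vee}$. This is an explicit but delicate rank-one computation, carried out in \cite[Lemmas~10.7--10.8]{MR4432237}, and the efficient course here is to invoke that analysis.
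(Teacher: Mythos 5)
The paper itself gives no proof of this lemma: it is stated as a quoted result and attributed directly to \cite[Lemma~10.8]{MR4432237}, so there is no in-paper argument to compare against. Your proposal is therefore a reconstruction of Solleveld's proof, and as such it is structurally sound. The algebraic framing (checking regularity along height-one primes suffices because $\mathbb{C}[M_{\sigma}/M^{1}]$ is a Laurent polynomial ring and the module is torsion-free), the reduction to $G = M_{\alpha}$ via induction in stages (which the paper independently corroborates in Lemma~\ref{transitivityofts}), and the identification of the pole locus $\{\theta_{h_{\alpha}^{\vee}}^{2} = 1\}$ as the only place anything can go wrong are all correct. Your residue computation for $f_{\alpha}$ checks out: the numerator evaluates to $(q_{\alpha}-1)(q_{\alpha*}+1)$ at $\theta = 1$ and to $(q_{\alpha}+1)(q_{\alpha*}-1)$ at $\theta = -1$, and dividing by $\pm 2$ gives the residues you state.

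The one place where the argument is thin is exactly the one you flag: the assertion that the residues of $c_{\alpha}(\theta_{h_{\alpha}^{\vee}})^{\epsilon_{\alpha}} \circ J_{s_{\alpha}}$ along $D_{\pm}$ are $\mp$ those of $f_{\alpha}$ is the entire content of \cite[Lemma~10.7]{MR4432237}, and you do not actually derive it — you describe what the computation should produce and then say the efficient course is to cite it. That is a legitimate stopping point for a proof that the paper itself handles by citation, and it is honest, but you should be aware that the claim that the residue ``is a scalar operator'' requires Assumption~\ref{multiplicity1} (otherwise $\End_{M(F)}(\ind_{M^{1}}^{M(F)}(\sigma_{1}))$ need not be commutative and the residue would land in a larger algebra), and the sign bookkeeping with $(\theta_{h_{\alpha}^{\vee}})^{\epsilon_{\alpha}}$ is genuinely delicate: $\epsilon_{\alpha}$ is defined precisely so that the sign at $\theta = -1$ comes out right when $q_{\alpha*} > 1$, and if you tried to carry out the computation from scratch you would have to be careful that the evaluation of $\rho_{\sigma, s_{\alpha}}$ and $\lambda(s_{\alpha})$ at $\theta = \pm 1$ interacts with the normalisation of the Harish-Chandra $J$-operator in the way Solleveld's \cite[(3.7)]{MR4432237} prescribes. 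None of this is a gap so much as a correctly-labelled deferral; the proposal is consistent with what the reference proves and with how the paper uses the lemma.
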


Recall that $M_{\alpha}$ denotes the Levi subgroup of $G$ that contains $M$ and the root subgroup $U_{\alpha}$ associated with $\alpha$, and whose semisimple rank is one greater than that of $M$. 
We say that $M_{\alpha}$ is a standard Levi subgroup of $G$ with respect to $P$ if there exists a parabolic subgroup $P_{\alpha}$ with Levi factor $M_{\alpha}$ such that $P_{\alpha}$ contains $P$.
We note that $P_{\alpha} = PM_{\alpha}$ in this case. 
Replacing $G$ with $M_{\alpha}$ in the construction of $T'_{s_{\alpha}}$ above, we obtain the corresponding element
\[
(T'_{s_{\alpha}})^{M_{\alpha}}
 \in \End_{M_{\alpha}(F)}\left(I_{P \cap M_{\alpha}}^{M_{\alpha}}\left(\ind_{M^1}^{M(F)}(\sigma_{1})\right)\right).\index{$(T'_{s_{\alpha}})^{M_{\alpha}}$}
\]
\begin{lemma}
\label{transitivityofts}
Suppose that $M_{\alpha}$ is a standard Levi subgroup of $G$ with respect to $P$.
Then, we obtain
\[
T'_{s_{\alpha}} = I_{PM_{\alpha}}^{G} \left(
(T'_{s_{\alpha}})^{M_{\alpha}}
\right).
\]
\end{lemma}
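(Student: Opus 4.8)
The plan is to observe that every ingredient entering the definition of $T'_{s_{\alpha}}$ is built from objects that live inside $M_{\alpha}$, and then to propagate this fact through the defining formula using transitivity of parabolic induction. Write $P_{\alpha} = PM_{\alpha}$, so that induction in stages gives $I_{P}^{G} = I_{P_{\alpha}}^{G} \circ I_{P \cap M_{\alpha}}^{M_{\alpha}}$. Since $s_{\alpha} \in W(M_{\alpha}, M)$ admits a lift $\widetilde{s_{\alpha}} \in M_{\alpha}(F)$, conjugation by $\widetilde{s_{\alpha}}$ preserves $P_{\alpha}$, so $s_{\alpha}^{-1}(P)$ is again a parabolic subgroup of $G$ contained in $P_{\alpha}$ with Levi factor $M$, and likewise $I_{s_{\alpha}^{-1}(P)}^{G} = I_{P_{\alpha}}^{G} \circ I_{s_{\alpha}^{-1}(P) \cap M_{\alpha}}^{M_{\alpha}}$. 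One also notes that $M_{\sigma} = I_{M(F)}(\sigma_{1})$, the root-datum data $h_{\alpha}^{\vee}$ and $\alpha^{\#}$, the parameters $q_{\alpha}, q_{\alpha *}$, and the integer $\epsilon_{\alpha}$ all depend only on $M$ and $M_{\alpha}$, not on the ambient group.

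First I would treat the Harish-Chandra intertwining operator. Because both $P$ and $s_{\alpha}^{-1}(P)$ are contained in $P_{\alpha}$ with common Levi factor $M$, the subgroup $U_{s_{\alpha}^{-1}(P)} \cap \overline{U_{P}}$ over which $J_{s_{\alpha}^{-1}(P) \mid P}(\sigma \otimes \cdot)$ is defined (as a meromorphic continuation of an integral) is contained in $M_{\alpha}(F)$, and the integral coincides with the one defining the analogous operator for $M_{\alpha}$; hence $J_{s_{\alpha}^{-1}(P) \mid P}(\sigma \otimes \cdot) = I_{P_{\alpha}}^{G}\left( J_{(s_{\alpha}^{-1}(P) \cap M_{\alpha}) \mid (P \cap M_{\alpha})}(\sigma \otimes \cdot) \right)$, which is the standard compatibility of Harish-Chandra intertwining operators with parabolic induction (cf.\ \cite[IV.3]{MR1989693}). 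Next, the left translation $\lambda(s_{\alpha})$ is translation by $\widetilde{s_{\alpha}} \in M_{\alpha}(F)$, and $I_{P}^{G}(\tau_{s_{\alpha}})$ and $I_{P}^{G}(\rho_{\sigma, s_{\alpha}} \otimes \id)$ are applications of $I_{P}^{G}$ to $M(F)$-equivariant maps; in all cases induction in stages identifies them with $I_{P_{\alpha}}^{G}$ of their $M_{\alpha}$-counterparts. The $M(F)$-equivariant identifications \eqref{groupalgebraisomind} and \eqref{quotofgroupalgebraisomind}, used to regard $J_{s_{\alpha}}$ as acting on the (localized) compactly induced representations, likewise commute with $I_{P_{\alpha}}^{G}$. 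Composing, one gets $J_{s_{\alpha}} = I_{P_{\alpha}}^{G}\left( (J_{s_{\alpha}})^{M_{\alpha}} \right)$.

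It then remains to handle the scalars and the elements of $\mathbb{C}(M_{\sigma}/M^{1})$. The rational function $f_{\alpha}$ and the element $(\theta_{h_{\alpha}^{\vee}})^{\epsilon_{\alpha}}$ lie in $\mathbb{C}(M_{\sigma}/M^{1})$, which depends only on $M$; they act on $I_{P}^{G}\left( \ind_{M^1}^{M(F)}(\sigma_{1}) \otimes_{\mathbb{C}[M_{\sigma}/M^1]} \mathbb{C}(M_{\sigma}/M^1) \right)$ through the injections \eqref{quotofisomgroupalgebraheckealgebraparabolicinduction} and \eqref{quotofisomgroupalgebraheckealgebraparabolicinductionrestriction}, which are obtained by applying $I_{P}^{G}$ to the corresponding $M(F)$-level injection, hence by induction in stages factor through $I_{P_{\alpha}}^{G}$ of the $M_{\alpha}$-level injections. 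The scalar $\frac{(q_{\alpha}-1)(q_{\alpha *}+1)}{2}$ depends only on $\alpha$. Since $I_{P_{\alpha}}^{G}$ is a functor, it respects composition, scalar multiplication, and addition, so assembling the defining formula $T'_{s_{\alpha}} = \frac{(q_{\alpha}-1)(q_{\alpha *}+1)}{2} (\theta_{h_{\alpha}^{\vee}})^{\epsilon_{\alpha}} \circ J_{s_{\alpha}} + f_{\alpha}$ yields $T'_{s_{\alpha}} = I_{P_{\alpha}}^{G}\left( (T'_{s_{\alpha}})^{M_{\alpha}} \right) = I_{PM_{\alpha}}^{G}\left( (T'_{s_{\alpha}})^{M_{\alpha}} \right)$.

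I expect the main obstacle to be the first step: verifying cleanly that the Harish-Chandra intertwining operator on $G$ is the parabolic induction of the one on $M_{\alpha}$, keeping careful track of the normalizations, the region of convergence, and the meromorphic continuation in $\mathbb{C}[M(F)/M^1]$. Alternatively, one can largely bypass this by recalling that in \cite[Subsection~10.2]{MR4432237} (following \cite[3.1]{MR2827179}) the operators $J_{s_{\alpha}}$, and hence $T'_{s_{\alpha}}$, are constructed precisely via the rank-one Levi $M_{\alpha}$ and then induced, so the statement is essentially built into that construction; I would nonetheless prefer to give the direct verification above so that the normalizations used in the present paper are made explicit.
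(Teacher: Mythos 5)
Your proposal is correct and follows essentially the same route as the paper: reduce via transitivity of parabolic induction, observe that $f_{\alpha}$, $\theta_{h_{\alpha}^{\vee}}$, and the scalar factor depend only on $M$ and $M_{\alpha}$, and then establish the key identity $J_{s_{\alpha}} = I_{PM_{\alpha}}^{G}\left(J^{M_{\alpha}}_{s_{\alpha}}\right)$ by checking that the Harish-Chandra operator, $\lambda(s_{\alpha})$, $\tau_{s_{\alpha}}$, and $\rho_{\sigma,s_{\alpha}}$ all come from $M_{\alpha}$ and are compatible with induction in stages. The only small discrepancy is the citation for compatibility of the intertwining operator with parabolic induction: the paper uses \cite[IV.1.(14)]{MR1989693} rather than IV.3.
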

\begin{proof}
Let $J^{M_{\alpha}}_{s_{\alpha}}$ denote the element of
\[
\Hom_{M_{\alpha}(F)}\left(
I_{P \cap M_{\alpha}}^{M_{\alpha}}\left(
\ind_{M^1}^{M(F)} (\sigma_{1})
\right),
I_{P \cap M_{\alpha}}^{M_{\alpha}}\left(
\ind_{M^1}^{M(F)} (\sigma_{1}) \otimes_{\mathbb{C}[M_{\sigma}/M^1]} \mathbb{C}(M_{\sigma}/M^1)
\right)
\right)
\]
obtained by replacing $G$ with $M_{\alpha}$ in the construction of $J_{s_{\alpha}}$.
Then, we have 
\[
(T'_{s_{\alpha}})^{M_{\alpha}} = \frac{
(q_{\alpha}-1)(q_{\alpha*}+1)
}{
2
}
(\theta_{h_{\alpha}^{\vee}})^{\epsilon_{\alpha}} \circ J^{M_{\alpha}}_{s_{\alpha}}
+ f_{\alpha}.
\]
Here, 
\[
\theta_{h_{\alpha}^{\vee}}, \ f_{\alpha} \in \mathbb{C}(M_{\sigma}/M^1)
\]
are
considered as elements of
\[
\End_{M_{\alpha}(F)}\left(I_{P \cap M_{\alpha}}^{M_{\alpha}}\left(\ind_{M^1}^{M(F)}(\sigma_{1}) \otimes_{\mathbb{C}[M_{\sigma}/M^1]} \mathbb{C}(M_{\sigma}/M^1) \right)\right)
\]
or
\[
\Hom_{M_{\alpha}(F)}\left(
I_{P \cap M_{\alpha}}^{M_{\alpha}}\left(
\ind_{M^1}^{M(F)} (\sigma_{1})
\right),
I_{P \cap M_{\alpha}}^{M_{\alpha}}\left(
\ind_{M^1}^{M(F)} (\sigma_{1}) \otimes_{\mathbb{C}[M_{\sigma}/M^1]} \mathbb{C}(M_{\sigma}/M^1)
\right)
\right)
\]
via $M_{\alpha}$-versions of injections~\eqref{quotofisomgroupalgebraheckealgebraparabolicinduction} and \eqref{quotofisomgroupalgebraheckealgebraparabolicinductionrestriction}.
Since these injections are obtained by composing injection~\eqref{quotofisomgroupalgebraheckealgebra} with $I_{P \cap M_{\alpha}}^{M_{\alpha}}$, the transitivity of the parabolic induction
\[
I_{P}^{G} \simeq I_{P M_{\alpha}}^{G} \circ I_{P \cap M_{\alpha}}^{M_{\alpha}}
\]
implies
\[
I_{PM_{\alpha}}^{G} \left(
(T'_{s_{\alpha}})^{M_{\alpha}}
\right) =  \frac{
(q_{\alpha}-1)(q_{\alpha*}+1)
}{
2
}
(\theta_{h_{\alpha}^{\vee}})^{\epsilon_{\alpha}} \circ I_{PM_{\alpha}}^{G} \left(
J_{s_{\alpha}}
\right)
+ f_{\alpha}.
\]
Note that $\theta_{h_{\alpha}^{\vee}}$ and $f_{\alpha}$ here are considered as elements of
\[
\End_{G(F)}\left(I_{P}^{G}\left(\ind_{M^1}^{M(F)}(\sigma_{1}) \otimes_{\mathbb{C}[M_{\sigma}/M^1]} \mathbb{C}(M_{\sigma}/M^1) \right)\right)
\]
or
\[
\Hom_{G(F)}\left(
I_{P}^{G}\left(
\ind_{M^1}^{M(F)} (\sigma_{1})
\right),
I_{P}^{G}\left(
\ind_{M^1}^{M(F)} (\sigma_{1}) \otimes_{\mathbb{C}[M_{\sigma}/M^1]} \mathbb{C}(M_{\sigma}/M^1)
\right)
\right).
\]
Thus, it suffices to show that
\[
J_{s_{\alpha}} =  I_{PM_{\alpha}}^{G} \left(
J^{M_{\alpha}}_{s_{\alpha}}
\right).
\]
Recall that $J_{s_{\alpha}}$ is defined as
\[
J_{s_{\alpha}} = I_{P}^{G} \left(
\rho_{\sigma, s_{\alpha}} \otimes \id
\right) \circ I_{P}^{G} (\tau_{s_{\alpha}}) \circ \lambda(s_{\alpha}) \circ J_{s_{\alpha}^{-1}(P) \mid P}(\sigma \otimes \cdot).
\]
According to \cite[IV.1.(14)]{MR1989693}, we obtain
\[
J_{s_{\alpha}^{-1}(P) \mid P}(\sigma \otimes \cdot) = I_{PM_{\alpha}}^{G} \left(
J_{s_{\alpha}^{-1}(P \cap M_{\alpha}) \mid P \cap M_{\alpha}}(\sigma \otimes \cdot)
\right).
\]
Moreover, since $\widetilde{s_{\alpha}}$ is contained in $M_{\alpha}$, the definition of $\lambda(s_{\alpha})$ implies that it is parabolically induced from the morphism
\[
\lambda^{M_{\alpha}}(s_{\alpha}) \colon I_{s_{\alpha}^{-1}(P \cap M_{\alpha})}^{M_{\alpha}}\left(\sigma \otimes \mathbb{C}(M(F)/M^1)\right) \rightarrow I_{P \cap M_{\alpha}}^{M_{\alpha}}\left(\widetilde{s_{\alpha}}\left(\sigma \otimes \mathbb{C}(M(F)/M^1)\right)\right)
\]
corresponding to $\lambda(s_{\alpha})$.
Thus, we obtain
\begin{align*}
I_{PM_{\alpha}}^{G} \left(
J^{M_{\alpha}}_{s_{\alpha}}
\right)
&=
I_{PM_{\alpha}}^{G} \left(
I_{P \cap M_{\alpha}}^{M_{\alpha}} \left(
\rho_{\sigma, s_{\alpha}} \otimes \id
\right). \circ I_{P \cap M_{\alpha}}^{M_{\alpha}} (\tau_{s_{\alpha}}) \circ \lambda^{M_{\alpha}}(s_{\alpha}) \circ J_{s_{\alpha}^{-1}(P \cap M_{\alpha}) \mid P \cap M_{\alpha}}(\sigma \otimes \cdot)
\right) \\
&= I_{P}^{G} \left(
\rho_{\sigma, s_{\alpha}} \otimes \id
\right) \circ I_{P}^{G} (\tau_{s_{\alpha}}) \circ \lambda(s_{\alpha}) \circ J_{s_{\alpha}^{-1}(P) \mid P}(\sigma \otimes \cdot)\\
&= J_{s_{\alpha}}.
\end{align*}
\end{proof}

At the end of this section, we modify the root datum and the label functions used to define the affine Hecke algebra $\mathcal{H}(G, \mathfrak{s}_{M})$.
Since 
$q_{\alpha}, q_{\alpha *} \ge 1$,
and we are assuming that  $q_{\alpha} \ge q_{\alpha *}$, the label functions $\lambda$ and $\lambda^{*}$ are $\mathbb{R}_{\ge 0}$-valued.
Moreover, as explained in the sentence following \cite[(3.8)]{MR4432237}, $q_{\alpha}$ is greater than $1$, hence we obtain $\lambda(\alpha^{\#}) > 0$ for any $\alpha \in \Sigma_{\mathfrak{s}_{M}, \mu}$.
However, $\lambda^{*}(\alpha^{\#}) = 0$ may occur, that is inconvenient for our purpose.
We define another based root datum $\mathcal{R}'(G, \mathfrak{s}_{M})$ and label functions $\lambda', (\lambda^{*})'$ as follows.
For $\alpha \in \Sigma_{\mathfrak{s}_{M}, \mu}$ with $\lambda^{*}(\alpha^{\#}) > 0$, we define
\begin{align}
\label{lambdaofsolleveldrefinedpositive}
\begin{cases}
(h_{\alpha}^{\vee})' &= h_{\alpha}^{\vee}, \\
(\alpha^{\#})' &= \alpha^{\#}, \\
\lambda'((\alpha^{\#})') &= \lambda(\alpha^{\#}), \\
(\lambda^{*})'((\alpha^{\#})') &= \lambda^{*}(\alpha^{\#}).
\end{cases}
\end{align}
Let $\alpha \in \Sigma_{\mathfrak{s}_{M}, \mu}$ such that $\lambda^{*}(\alpha^{\#}) = 0$, that is, $q_{\alpha} = q_{\alpha *}$.
According to \cite[Lemma~3.3]{MR4432237}, it occurs only when 
\[
\alpha^{\#} \in 2 \left( M_{\sigma}/M^{1} \right)^{\vee}.
\]
We define
\[
(h_{\alpha}^{\vee})' = 2 h_{\alpha}^{\vee}, \index{$(h_{\alpha}^{\vee})'$}
\]
and
\[
(\alpha^{\#})' = \frac{\alpha^{\#}}{2}. \index{$(\alpha^{\#})'$}
\]
We note that 
\[
(\alpha^{\#})' \in \left( M_{\sigma}/M^{1} \right)^{\vee}.
\]
We define
\begin{align*}
\left(\Sigma'_{\mathfrak{s}_{M}}\right)^{\vee} &= 
\{
(h_{\alpha}^{\vee})' \mid \alpha \in \Sigma_{\mathfrak{s}_{M}, \mu}
\},\\
\Sigma'_{\mathfrak{s}_{M}} &= \{
(\alpha^{\#})' \mid \alpha \in \Sigma_{\mathfrak{s}_{M}, \mu}
\},\\
\Delta'_{\mathfrak{s}_{M}}(P) &=
\{
(\alpha^{\#})' \mid \alpha \in \Delta_{\mathfrak{s}_{M}, \mu}(P)
\}, \\
\left(
(\alpha^{\#})' 
\right)^{\vee} &= (h_{\alpha}^{\vee})', \index{$\left(\Sigma'_{\mathfrak{s}_{M}}\right)^{\vee}$} \index{$\Sigma'_{\mathfrak{s}_{M}}$} \index{$\Delta'_{\mathfrak{s}_{M}}(P)$}
\end{align*}
and
\[
\mathcal{R}'(G, \mathfrak{s}_{M})
=
\left(
\left( M_{\sigma}/M^{1} \right)^{\vee}, \Sigma'_{\mathfrak{s}_{M}}, M_{\sigma}/M^{1}, \left(\Sigma'_{\mathfrak{s}_{M}}\right)^{\vee}, \Delta'_{\mathfrak{s}_{M}}(P)
\right). \index{$\mathcal{R}'(G, \mathfrak{s}_{M})$}
\]
\begin{lemma}
The tuple $\mathcal{R}'(G, \mathfrak{s}_{M})$ is a based root datum with Weyl group $W(\Sigma_{\mathfrak{s}_{M}, \mu})$.
\end{lemma}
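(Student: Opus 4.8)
The plan is to read off the statement from the fact, recorded above (\cite[Proposition~3.1]{MR4432237}), that $\left(\left( M_{\sigma}/M^{1} \right)^{\vee}, \Sigma_{\mathfrak{s}_{M}}, M_{\sigma}/M^{1}, \Sigma_{\mathfrak{s}_{M}}^{\vee}\right)$ is a root datum with Weyl group $W(\Sigma_{\mathfrak{s}_{M}, \mu})$, by observing that the passage to $\mathcal{R}'(G, \mathfrak{s}_{M})$ is nothing but the standard operation of replacing each pair $(\alpha^{\#}, h_{\alpha}^{\vee})$ by $\left(\alpha^{\#}/2,\, 2 h_{\alpha}^{\vee}\right)$ for $\alpha$ ranging over a $W(\Sigma_{\mathfrak{s}_{M}, \mu})$-stable set, and that this operation preserves all of the root-datum structure. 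Concretely, set $\Sigma^{0} = \{\alpha \in \Sigma_{\mathfrak{s}_{M}, \mu} \mid \lambda^{*}(\alpha^{\#}) = 0\}$, so that by \eqref{lambdaofsolleveldrefinedpositive} one has $(\alpha^{\#})' = \alpha^{\#}$, $(h_{\alpha}^{\vee})' = h_{\alpha}^{\vee}$ for $\alpha \notin \Sigma^{0}$, and $(\alpha^{\#})' = \alpha^{\#}/2$, $(h_{\alpha}^{\vee})' = 2 h_{\alpha}^{\vee}$ for $\alpha \in \Sigma^{0}$; in the latter case $\alpha^{\#} \in 2\left( M_{\sigma}/M^{1} \right)^{\vee}$ by \cite[Lemma~3.3]{MR4432237}, which is what makes $(\alpha^{\#})'$ still lie in $\left( M_{\sigma}/M^{1} \right)^{\vee}$.

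First I would check the two elementary facts that make the axioms transparent. For every $\alpha$ the rescalings cancel in the pairing, so $\langle (\alpha^{\#})', (h_{\alpha}^{\vee})' \rangle = \langle \alpha^{\#}, h_{\alpha}^{\vee} \rangle = 2$; and the reflection of $\left( M_{\sigma}/M^{1} \right)^{\vee}$ attached to the pair $\left((\alpha^{\#})', (h_{\alpha}^{\vee})'\right)$ sends $x$ to $x - \langle x, (h_{\alpha}^{\vee})' \rangle (\alpha^{\#})' = x - \langle x, h_{\alpha}^{\vee} \rangle \alpha^{\#}$, hence coincides with the reflection attached to $(\alpha^{\#}, h_{\alpha}^{\vee})$, and dually on $M_{\sigma}/M^{1}$. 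Thus the group these reflections generate is literally $W(\Sigma_{\mathfrak{s}_{M}, \mu})$, acting on both lattices exactly as before. Next, since $\lambda^{*}$ is constant on the $W(\Sigma_{\mathfrak{s}_{M}, \mu})$-orbits in $\Sigma_{\mathfrak{s}_{M}}$ (condition~\eqref{w-equiv}, together with the $W$-invariance of the parameters entering \eqref{lambdaofsolleveld}), the set $\Sigma^{0}$ is $W(\Sigma_{\mathfrak{s}_{M}, \mu})$-stable; as any $w \in W(\Sigma_{\mathfrak{s}_{M}, \mu})$ preserves $\left( M_{\sigma}/M^{1} \right)^{\vee}$ and hence $2\left( M_{\sigma}/M^{1} \right)^{\vee}$, the assignment $\alpha^{\#} \mapsto (\alpha^{\#})'$ is $W(\Sigma_{\mathfrak{s}_{M}, \mu})$-equivariant. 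Combined with $s_{\alpha^{\#}}(\Sigma_{\mathfrak{s}_{M}}) = \Sigma_{\mathfrak{s}_{M}}$ from \cite[Proposition~3.1]{MR4432237}, this gives that the reflections above permute $\Sigma'_{\mathfrak{s}_{M}}$ and $\left(\Sigma'_{\mathfrak{s}_{M}}\right)^{\vee}$; and $\alpha \mapsto (\alpha^{\#})'$ is injective --- if $(\alpha^{\#})' = (\beta^{\#})'$ with $\alpha \in \Sigma^{0}$, $\beta \notin \Sigma^{0}$ then $\alpha^{\#} = 2\beta^{\#}$, contradicting that $\Sigma_{\mathfrak{s}_{M}}$ is reduced --- so $(\alpha^{\#})' \mapsto (h_{\alpha}^{\vee})'$ is a well-defined bijection. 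This proves $\mathcal{R}'(G, \mathfrak{s}_{M})$ underlies a root datum with Weyl group $W(\Sigma_{\mathfrak{s}_{M}, \mu})$. For the based structure, $\{(\alpha^{\#})' \mid \alpha \in \Sigma_{\mathfrak{s}_{M}, \mu}(P)\}$ is a positive system (any functional positive on $\Sigma_{\mathfrak{s}_{M}}(P)$ is positive on it, since $(\alpha^{\#})' \in \mathbb{R}_{>0}\,\alpha^{\#}$) with basis $\Delta'_{\mathfrak{s}_{M}}(P)$: by \cite[Lemma~3.3]{MR4432237} the operation is the identity on every irreducible component of $\Sigma_{\mathfrak{s}_{M}}$ except that it halves the (single $W$-orbit of) long roots of a type $C_{n}$ $(n \ge 2)$ component, resp.\ the root of a type $A_{1}$ component, lying in $\Sigma^{0}$, turning such a component into one of type $B_{n}$, resp.\ $A_{1}$, with simple system the image of $\Delta_{\mathfrak{s}_{M}}(P)$.

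The only point needing real care is this last bookkeeping: seeing that $\Sigma^{0}$ is $W$-stable and that halving it transforms each affected irreducible component $C_{n}$ (resp.\ $A_{1}$) of $\Sigma_{\mathfrak{s}_{M}}$ into one of type $B_{n}$ (resp.\ $A_{1}$) and carries $\Delta_{\mathfrak{s}_{M}}(P)$ to $\Delta'_{\mathfrak{s}_{M}}(P)$. Everything else is an immediate verification of the root-datum axioms, using only that the reflections and the $W(\Sigma_{\mathfrak{s}_{M}, \mu})$-action are unchanged by the modification.
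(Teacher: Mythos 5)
Your proof is correct and takes essentially the same route as the paper's: both rest on the observations that the rescaling $(\alpha^{\#}, h_{\alpha}^{\vee}) \mapsto ((\alpha^{\#})', (h_{\alpha}^{\vee})')$ leaves the reflections unchanged, and that the set $\{\alpha \in \Sigma_{\mathfrak{s}_{M}, \mu} \mid q_{\alpha} = q_{\alpha*}\}$ is $W(\Sigma_{\mathfrak{s}_{M}, \mu})$-invariant (the paper cites \cite[Lemma~3.4]{MR4432237} for this; your appeal to the $W$-invariance of the parameters in \eqref{lambdaofsolleveld} is the same fact). You additionally spell out the injectivity of $\alpha^{\#} \mapsto (\alpha^{\#})'$ and the basedness of $\Delta'_{\mathfrak{s}_{M}}(P)$, which the paper leaves implicit; this is a harmless extra layer of detail.
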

\begin{proof}
Let $\alpha \in \Sigma_{\mathfrak{s}_{M}, \mu}$.
We define
\[
s'_{\alpha} \colon M_{\sigma}/M^{1} \rightarrow M_{\sigma}/M^{1}
\]
as
\[
s'_{\alpha}(m) = m-\langle (\alpha^{\#})', m \rangle (h_{\alpha}^{\vee})'.
\]
Then, our definition of $(h_{\alpha}^{\vee})'$ and $(\alpha^{\#})'$ implies that $s'_{\alpha}$ coincides with the reflection
\[
s_{\alpha} \colon M_{\sigma}/M^{1} \rightarrow M_{\sigma}/M^{1}
\]
defined as
\[
s_{\alpha}(m) = m-\langle \alpha^{\#}, m \rangle h_{\alpha}^{\vee}.
\]
We will prove that
\[
s'_{\alpha} \left(
\left(\Sigma'_{\mathfrak{s}_{M}}\right)^{\vee}
\right)
\subset \left(\Sigma'_{\mathfrak{s}_{M}}\right)^{\vee}.
\]
Let $\beta \in \Sigma_{\mathfrak{s}_{M}, \mu}$.
We write $(h_{\beta}^{\vee})' = c \cdot h_{\beta}^{\vee}$ for $c \in \{1,2\}$.
Then, we obtain
\begin{align*}
s'_{\alpha}((h_{\beta}^{\vee})') &= s_{\alpha}((h_{\beta}^{\vee})')\\
&= s_{\alpha}(c \cdot h_{\beta}^{\vee})\\
&= c \cdot s_{\alpha}(h_{\beta}^{\vee}).
\end{align*}
Since $\mathcal{R}(G, \mathfrak{s}_{M})$ is a root datum, $s_{\alpha}$ preserves $\Sigma_{\mathfrak{s}_{M}}^{\vee}$, hence
$s_{\alpha}(h_{\beta}^{\vee}) \in \Sigma_{\mathfrak{s}_{M}}^{\vee}$.
Moreover, according to \cite[Lemma~3.4]{MR4432237}, the set
\[
\{
\alpha \in \Sigma_{\mathfrak{s}_{M}, \mu} \mid q_{\alpha} = q_{\alpha*}
\}
\]
is $W(G, M, \mathfrak{s}_{M})$-invariant.
Thus, 
\[
(h_{\beta}^{\vee})' = 2 h_{\beta}^{\vee}
\]
if and only if
\[
(s_{\alpha}(h_{\beta}^{\vee}))' = 2  s_{\alpha}(h_{\beta}^{\vee}).
\]
Hence, we obtain that
\[
(s_{\alpha}(h_{\beta}^{\vee}))' = c  \cdot s_{\alpha}(h_{\beta}^{\vee}).
\]
Therefore, we obtain that 
\[
s'_{\alpha}((h_{\beta}^{\vee})') = (s_{\alpha}(h_{\beta}^{\vee}))' \in \left(\Sigma'_{\mathfrak{s}_{M}}\right)^{\vee}.
\]
Similarly, we can prove that the action
\[
s'_{\alpha} \colon \left( M_{\sigma}/M^{1} \right)^{\vee} \rightarrow \left( M_{\sigma}/M^{1} \right)^{\vee}
\]
defined as
\[
s'_{\alpha}(x) = x - \langle x, (h_{\alpha}^{\vee})' \rangle (\alpha^{\#})'
\]
preserves $\Sigma'_{\mathfrak{s}_{M}}$.
Thus, $\mathcal{R}'(G, \mathfrak{s}_{M})$ is a root datum.
The last claim follows from the fact $s'_{\alpha} = s_{\alpha}$.
\end{proof}
For $\alpha \in \Sigma_{\mathfrak{s}_{M}, \mu}$ with $\lambda^{*}(\alpha^{\#}) = 0$, we define
\begin{align}
\label{lambdaofsolleveldrefinedzero}
\lambda'((\alpha^{\#})') = (\lambda^{*})'((\alpha^{\#})') = \lambda(\alpha^{\#}).
\end{align}
Then, the label functions $\lambda', (\lambda^{*})'$ satisfy conditions~\eqref{w-equiv} and \eqref{lambda=lambda*} in Appendix~\ref{Iwahori-Hecke algebras and affine Hecke algebras}.
We also note that $\lambda'((\alpha^{\#})'), (\lambda^{*})'((\alpha^{\#})') > 0$ for any $\alpha \in \Sigma_{\mathfrak{s}_{M}, \mu}$.

Let
\[
\mathcal{H}'(G, \mathfrak{s}_{M}) = 
\mathcal{H}\left(
\mathcal{R}'(G, \mathfrak{s}_{M}), \lambda', (\lambda^{*})', q_{F}
\right)\index{$\mathcal{H}'(G, \mathfrak{s}_{M})$}
\]
be the affine Hecke algebra associated with the based root datum $\mathcal{R}'(G, \mathfrak{s}_{M})$, the parameter $q_{F}$, and the label functions $\lambda', (\lambda^{*})'$.
Since the reflection $s'_{\alpha}$ corresponding to $(\alpha^{\#})'$ is same as the reflection $s_{\alpha}$ corresponding to $\alpha^{\#}$, we obtain
\[
\mathcal{H}(G, \mathfrak{s}_{M}) = \mathcal{H}'(G, \mathfrak{s}_{M})
\]
as vector spaces.
\begin{proposition}
The identity map as vector spaces
\[
\mathcal{H}(G, \mathfrak{s}_{M}) \rightarrow \mathcal{H}'(G, \mathfrak{s}_{M})
\]
is an isomorphism of $\mathbb{C}$-algebras.
\end{proposition}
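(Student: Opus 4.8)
The plan is to prove the statement by comparing the two algebras through the Bernstein--Lusztig presentation of affine Hecke algebras recalled in Appendix~\ref{Iwahori-Hecke algebras and affine Hecke algebras} (the one underlying Theorem~\ref{theorem1.8ofsol21}). Recall that an affine Hecke algebra $\mathcal{H}(\mathcal{R}, \lambda, \lambda^{*}, q_{F})$ attached to a based root datum $\mathcal{R} = (X, \Sigma, Y, \Sigma^{\vee}, \Delta)$ is generated by the group algebra $\mathbb{C}[Y]$ together with elements $N_{s}$ indexed by the simple reflections $s$ coming from $\Delta$, subject to the relations of $\mathbb{C}[Y]$, the braid relations of the Coxeter system $(W_{0}, \{N_{s}\})$, the quadratic relations with parameter $q_{F}^{\lambda(\alpha)}$ for the simple root $\alpha$ attached to $s$, and the Bernstein--Lusztig cross relations expressing $\theta_{y}N_{s} - N_{s}\theta_{s(y)}$ as a $\mathbb{C}[Y]$-multiple of $\theta_{y} - \theta_{s(y)}$ whose coefficient is, up to a shift by an element of $Y$, the rational function $f_{\alpha}$ built from $q_{\alpha}, q_{\alpha*}$ and $\theta_{h_{\alpha}^{\vee}}$. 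Now $\mathcal{H}(G, \mathfrak{s}_{M})$ and $\mathcal{H}'(G, \mathfrak{s}_{M})$ are built on the same lattice $Y = M_{\sigma}/M^{1}$, so the map in question is the identity on $\mathbb{C}[M_{\sigma}/M^{1}]$; and since $s'_{\alpha} = s_{\alpha}$ for all $\alpha$ (proved just above), the two algebras have the same Weyl group $W(\Sigma_{\mathfrak{s}_{M}, \mu})$, the same simple reflections, and hence the same braid relations, so the identity sends $N_{s_{\alpha}}$ to $N_{s_{\alpha}}$ and already matches the braid relations. Because the identity map is a bijection on standard bases, it suffices to check that it carries the remaining defining relations of $\mathcal{H}(G, \mathfrak{s}_{M})$ for each simple reflection $s_{\alpha}$, $\alpha \in \Delta_{\mathfrak{s}_{M},\mu}(P)$, to relations valid in $\mathcal{H}'(G, \mathfrak{s}_{M})$; then it is an algebra homomorphism, and being identity on a basis it is an isomorphism.

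For $\alpha$ with $\lambda^{*}(\alpha^{\#}) > 0$, there is nothing to do: by~\eqref{lambdaofsolleveldrefinedpositive} the root $\alpha^{\#}$, the coroot $h_{\alpha}^{\vee}$, and both label values are unchanged, so the quadratic and cross relations for $s_{\alpha}$ are literally identical. The substance is the case $\lambda^{*}(\alpha^{\#}) = 0$, i.e. $q_{\alpha} = q_{\alpha*} =: t$ and $\alpha^{\#} \in 2\left(M_{\sigma}/M^{1}\right)^{\vee}$, where we pass to $(\alpha^{\#})' = \alpha^{\#}/2$, $(h_{\alpha}^{\vee})' = 2h_{\alpha}^{\vee}$, and, by~\eqref{lambdaofsolleveldrefinedzero}, $\lambda'((\alpha^{\#})') = (\lambda^{*})'((\alpha^{\#})') = \lambda(\alpha^{\#})$. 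The quadratic relation matches because its parameter is $q_{F}^{\lambda(\alpha^{\#})}$ on the one side and $q_{F}^{\lambda'((\alpha^{\#})')}$ on the other, and these coincide by~\eqref{lambdaofsolleveldrefinedzero} (equivalently both equal $t^{2} = q_{\alpha}q_{\alpha*} = q'_{(\alpha^{\#})'}q'_{(\alpha^{\#})'*}$). For the cross relation one substitutes $q'_{(\alpha^{\#})'} = t^{2}$, $q'_{(\alpha^{\#})'*} = 1$ and $\theta_{(h_{\alpha}^{\vee})'} = \theta_{2h_{\alpha}^{\vee}} = (\theta_{h_{\alpha}^{\vee}})^{2}$ into the formula for $f_{(\alpha^{\#})'}$ and checks it returns $f_{\alpha}$; writing $u = (\theta_{h_{\alpha}^{\vee}})^{2}$ this is the identity
\[
\frac{u\,(t^{2}-1)}{u-1}
\;=\;
\frac{u^{2}\,(t^{2}\cdot 1-1)+u\,(t^{2}-1)}{u^{2}-1},
\]
whose right-hand side is $f_{(\alpha^{\#})'}$ (since the first term carries $q'q'_{*}-1 = t^{2}-1$ and the second carries $q'-q'_{*} = t^{2}-1$) and whose left-hand side is $f_{\alpha}$ (since with $q_{\alpha}=q_{\alpha*}=t$ the middle term $\theta_{h_{\alpha}^{\vee}}(q_{\alpha}-q_{\alpha*})$ vanishes). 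Applying the same reasoning with the roles of the two algebras exchanged (the inverse rescaling $\alpha^{\#} = 2(\alpha^{\#})'$, $h_{\alpha}^{\vee} = \tfrac{1}{2}(h_{\alpha}^{\vee})'$ recovers $\mathcal{R}(G,\mathfrak{s}_{M})$) provides the inverse homomorphism, whence the claim.

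I expect the only real obstacle to be bookkeeping: one must pin down, in the conventions of Appendix~\ref{Iwahori-Hecke algebras and affine Hecke algebras}, exactly which power of $q_{F}$ enters the quadratic relation versus the cross relation, confirm that the modified data indeed satisfy conditions~\eqref{w-equiv} and~\eqref{lambda=lambda*} (already recorded when $\lambda'$, $(\lambda^{*})'$ were introduced), and track the single arithmetic identity above across the two relevant shapes of irreducible component --- the type $A_{1}$ case and the long root of a type $C_{n}$ component --- which are the only ones in which $\lambda^{*}$ can vanish. No new geometric or representation-theoretic input is needed beyond what has already been set up.
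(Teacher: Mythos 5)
Your proof is correct and takes essentially the same route as the paper's: both reduce the claim to compatibility with the Bernstein cross relation of Definition~\ref{affinehecke} and, in the only nontrivial case $\lambda^{*}(\alpha^{\#})=0$, verify the same cancellation of the factor $1+\theta_{-2h_{\alpha}^{\vee}}$. One small remark: the cross-relation coefficient is exactly the element $f_{\alpha}$ of Section~\ref{A review of Solleveld's results}, not merely $f_{\alpha}$ up to a shift by $Y$, so the hedge in your statement was unnecessary.
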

\begin{proof}
It suffices to show that the map is compatible with relation \eqref{bernsteinrel} of Definition~\ref{affinehecke}.
Let $\alpha \in \Delta_{\mathfrak{s}_{M}, \mu}(P)$ and $m \in M_{\sigma}/M^{1}$.
Relation~\eqref{bernsteinrel} for $\mathcal{H}(G, \mathfrak{s}_{M})$ is
\begin{align}
\label{havetoshow}
\theta_{m}T_{s_{\alpha}} - T_{s_{\alpha}}\theta_{s_{\alpha}(m)} =
\left(
({q_{F}}^{\lambda(\alpha^{\#})} -1) + \theta_{-h_{\alpha}^{\vee}}(
{q_{F}}^{(\lambda(\alpha^{\#}) + \lambda^{*}(\alpha^{\#}))/2} - {q_{F}}^{(\lambda(\alpha^{\#}) - \lambda^{*}(\alpha^{\#}))/2}
)
\right)
\frac{
\theta_{m} - \theta_{s_{\alpha}(m)}
}{
\theta_{0} - \theta_{-2h_{\alpha}^{\vee}}
},
\end{align}
and relation~\eqref{bernsteinrel} for $\mathcal{H}'(G, \mathfrak{s}_{M})$ is
\begin{multline}
\label{bernsteinrelforh'}
\theta_{m}T_{s_{\alpha}} - T_{s_{\alpha}}\theta_{s_{\alpha}(m)}\\
 =
\left(
({q_{F}}^{\lambda'((\alpha^{\#})')} -1) + \theta_{-(h_{\alpha}^{\vee})'}(
{q_{F}}^{(\lambda'((\alpha^{\#})') + (\lambda^{*})'((\alpha^{\#})'))/2} - {q_{F}}^{(\lambda'((\alpha^{\#})') - (\lambda^{*})'((\alpha^{\#})'))/2}
)
\right)
\frac{
\theta_{m} - \theta_{s_{\alpha}(m)}
}{
\theta_{0} - \theta_{-2(h_{\alpha}^{\vee})'}
}.
\end{multline}
If $\lambda^{*}(\alpha^{\#}) >0$, 
\[
\begin{cases}
(h_{\alpha}^{\vee})' &= h_{\alpha}^{\vee}, \\
\lambda'((\alpha^{\#})') &= \lambda(\alpha^{\#}), \\
(\lambda^{*})'((\alpha^{\#})') &= \lambda^{*}(\alpha^{\#}).
\end{cases}
\]
Hence, equation~\eqref{havetoshow} is the same as equation~\eqref{bernsteinrelforh'}.

We consider the case $\lambda^{*}(\alpha^{\#}) = 0$. 
Then, the right hand side of equation~\eqref{havetoshow} is equal to
\[
({q_{F}}^{\lambda(\alpha^{\#})} -1) 
\frac{
\theta_{m} - \theta_{s_{\alpha}(m)}
}{
\theta_{0} - \theta_{-2h_{\alpha}^{\vee}}
}.
\]
On the other hand, in this case, we have
\[
\begin{cases}
(h_{\alpha}^{\vee})' &= 2 h_{\alpha}^{\vee}, \\
\lambda'((\alpha^{\#})') &= \lambda(\alpha^{\#}), \\
(\lambda^{*})'((\alpha^{\#})') &= \lambda(\alpha^{\#}).
\end{cases}
\]
Hence, the right hand side of equation~\eqref{bernsteinrelforh'} is equal to
\begin{align*}
\left(
({q_{F}}^{\lambda(\alpha^{\#})} -1) + \theta_{-2h_{\alpha}^{\vee}}(
{q_{F}}^{\lambda(\alpha^{\#})} -1
)
\right)
\frac{
\theta_{m} - \theta_{s_{\alpha}(m)}
}{
\theta_{0} - \theta_{-4(h_{\alpha}^{\vee})}
}
&=
({q_{F}}^{\lambda(\alpha^{\#})} -1) (
1 + \theta_{-2h_{\alpha}^{\vee}}
)
\frac{
\theta_{m} - \theta_{s_{\alpha}(m)}
}{
\theta_{0} - \theta_{-4(h_{\alpha}^{\vee})}
}\\
&=
({q_{F}}^{\lambda(\alpha^{\#})} -1) 
\frac{
\theta_{m} - \theta_{s_{\alpha}(m)}
}{
\theta_{0} - \theta_{-2h_{\alpha}^{\vee}}
}.
\end{align*}
Thus, equation~\eqref{havetoshow} is the same as equation~\eqref{bernsteinrelforh'} in this case too.
\end{proof}
Now, we obtain a modification of Theorem~\ref{theorem10.9ofsolleveld}:
\begin{theorem}
\label{modificationoftheorem10.9ofsolleveld}
The endomorphism algebra $\End_{G(F)}\left(I_{P}^{G}\left(\ind_{M^1}^{M(F)}(\sigma_{1})\right)\right)$ has a $\mathbb{C}[M_{\sigma}/M^{1}]$-basis
\[
\{
J_{r}T'_{w} \mid r \in R(\mathfrak{s}_{M}), w \in W(\Sigma_{\mathfrak{s}_{M}, \mu})
\},
\]
and there exists an isomorphism
\[
I^{\Sol} \colon \mathcal{H}\left(W(\Sigma_{\mathfrak{s}_{M}, \mu})\right) \rightarrow \mathcal{H}'(G, \mathfrak{s}_{M})\index{$I^{\Sol}$}
\]
that is identity on $\mathbb{C}[M_{\sigma}/M^{1}]$ and sends an element $T'_{w}$ of $\mathcal{H}\left(W(\Sigma_{\mathfrak{s}_{M}, \mu})\right)$ to the element $T_{w}$ of $\mathcal{H}'(G, \mathfrak{s}_{M})$.
\end{theorem}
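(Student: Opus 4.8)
The plan is to reduce Theorem~\ref{modificationoftheorem10.9ofsolleveld} to Theorem~\ref{theorem10.9ofsolleveld} together with the preceding proposition identifying $\mathcal{H}(G, \mathfrak{s}_{M})$ with $\mathcal{H}'(G, \mathfrak{s}_{M})$ as $\mathbb{C}$-algebras. The first assertion of the theorem — that $\End_{G(F)}\left(I_{P}^{G}\left(\ind_{M^1}^{M(F)}(\sigma_{1})\right)\right)$ has the displayed $\mathbb{C}[M_{\sigma}/M^{1}]$-basis $\{J_{r}T'_{w}\}$ — is \emph{literally} the first assertion of Theorem~\ref{theorem10.9ofsolleveld}, so nothing new is needed there; I would simply cite it. The content is the second assertion: the existence of the isomorphism $I^{\Sol}$ onto $\mathcal{H}'(G, \mathfrak{s}_{M})$ with the stated normalization (identity on $\mathbb{C}[M_{\sigma}/M^{1}]$, sending $T'_{w} \mapsto T_{w}$ for $w \in W(\Sigma_{\mathfrak{s}_{M}, \mu})$).

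The key steps, in order. First, invoke Theorem~\ref{theorem10.9ofsolleveld} to obtain the isomorphism
\[
\Psi \colon \mathcal{H}\left(W(\Sigma_{\mathfrak{s}_{M}, \mu})\right) \rightarrow \mathcal{H}(G, \mathfrak{s}_{M})
\]
that is the identity on $\mathbb{C}[M_{\sigma}/M^{1}]$ and satisfies $\Psi(T'_{w}) = T_{w}$ for all $w \in W(\Sigma_{\mathfrak{s}_{M}, \mu})$. Second, invoke the proposition immediately preceding the statement, which gives that the identity map on underlying vector spaces
\[
\mathrm{id} \colon \mathcal{H}(G, \mathfrak{s}_{M}) \rightarrow \mathcal{H}'(G, \mathfrak{s}_{M})
\]
is an isomorphism of $\mathbb{C}$-algebras; recall that the equality $\mathcal{H}(G, \mathfrak{s}_{M}) = \mathcal{H}'(G, \mathfrak{s}_{M})$ as vector spaces holds because the reflections $s'_{\alpha}$ defining $\mathcal{R}'(G, \mathfrak{s}_{M})$ coincide with the $s_{\alpha}$ defining $\mathcal{R}(G, \mathfrak{s}_{M})$, so the two affine Hecke algebras have the same Iwahori–Matsumoto-type basis indexed by $W(\Sigma_{\mathfrak{s}_{M}, \mu}) \ltimes (M_{\sigma}/M^{1})$. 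Third, set $I^{\Sol} = \mathrm{id} \circ \Psi$. This is a composite of two $\mathbb{C}$-algebra isomorphisms, hence a $\mathbb{C}$-algebra isomorphism. Fourth, check the normalization: the identity map $\mathrm{id}$ fixes $\mathbb{C}[M_{\sigma}/M^{1}]$ pointwise and fixes each standard basis element $T_{w}$ (both live in the common vector space), and $\Psi$ is the identity on $\mathbb{C}[M_{\sigma}/M^{1}]$ and sends $T'_{w}$ to $T_{w}$; composing, $I^{\Sol}$ is the identity on $\mathbb{C}[M_{\sigma}/M^{1}]$ and sends $T'_{w}$ to $T_{w} \in \mathcal{H}'(G, \mathfrak{s}_{M})$, as required.

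There is essentially no obstacle here — the theorem is a bookkeeping repackaging, designed to replace the inconvenient label function $\lambda^{*}$ (which can vanish) by $(\lambda^{*})'$ (strictly positive) while keeping the same endomorphism algebra and the same standard basis. If I wanted to be careful about one point, it would be verifying that $\Psi(T'_{w}) = T_{w}$ makes sense after passing through $\mathrm{id}$, i.e.\ that the element called $T_{w}$ in $\mathcal{H}(G, \mathfrak{s}_{M})$ and the element called $T_{w}$ in $\mathcal{H}'(G, \mathfrak{s}_{M})$ are the same vector; this is exactly the assertion (established in the proof of the preceding proposition) that the two algebras agree not just as vector spaces but with identified standard bases, since the Coxeter generators and their braid relations depend only on $W(\Sigma_{\mathfrak{s}_{M}, \mu})$ and the common $q$-parameters on the simple reflections, which are unchanged by the substitution $\alpha^{\#} \rightsquigarrow (\alpha^{\#})'$, $h_{\alpha}^{\vee} \rightsquigarrow (h_{\alpha}^{\vee})'$. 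I would remark on this in one sentence and then conclude.
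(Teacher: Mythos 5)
Your proposal is correct and matches the paper's approach: the paper presents this theorem as an immediate corollary of Theorem~\ref{theorem10.9ofsolleveld} composed with the preceding proposition (that the vector-space identity $\mathcal{H}(G, \mathfrak{s}_{M}) \to \mathcal{H}'(G, \mathfrak{s}_{M})$ is a $\mathbb{C}$-algebra isomorphism), offering no separate proof. Your step-by-step unpacking — defining $I^{\Sol} = \mathrm{id} \circ \Psi$ and verifying the normalization on $\mathbb{C}[M_{\sigma}/M^1]$ and on the $T'_w$ — is exactly the intended reasoning.
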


We write
\[
\begin{cases}
R^{\Sol} &= \Sigma'_{\mathfrak{s}_{M}}, \\
\left(
R^{\Sol}
\right)^{\vee} &= \left(\Sigma'_{\mathfrak{s}_{M}}\right)^{\vee}, \\
\Delta^{\Sol} &= \Delta'_{\mathfrak{s}_{M}}(P), \\
\mathcal{R}^{\Sol} &= \mathcal{R}'(G, \mathfrak{s}_{M}), \\
\lambda^{\Sol} &= \lambda', \\
(\lambda^{*})^{\Sol} &= (\lambda^{*})', \\
\mathcal{H}^{\Sol} &= \mathcal{H}'(G, \mathfrak{s}_{M}).\index{$R^{\Sol} $}\index{$\left(
R^{\Sol}
\right)^{\vee}$}\index{$\Delta^{\Sol}$}\index{$\mathcal{R}^{\Sol}$}\index{$\lambda^{\Sol} $}\index{$(\lambda^{*})^{\Sol}$}\index{$\mathcal{H}^{\Sol} $}
\end{cases}
\]
We also write an element $T_{w} \in \mathcal{H}^{\Sol}$ as $T_{w} = T^{\Sol}_{w}\index{$T^{\Sol}_{w}$}$ for $w \in W(\Sigma_{\mathfrak{s}_{M}, \mu})$.
\section{Statements of main results}
\label{Statements of main results}
In this section, we state the main results of this paper.
Let $S$ be a maximal split torus of $G$.
We use the same notation and assumptions as Section~\ref{The case of depth-zero types}.
Recall that $J$ is a subset of a fixed basis $B$ of the irreducible affine root system $\Phi_{\aff}$ associated with $(G, S)$ such that $\abs{B \backslash J} > 1$. 
Let $\mathcal{F}_{J}\index{$\mathcal{F}_{J}$}$ denote the facet of the reduced Bruhat-Tits building of $G$ over $F$ contained in the closure of $C$ that corresponds to $J$ in the sense of \cite[1.8]{MR1235019}.
We assume the following:
\begin{assumption}
\label{parahoric=stabilizer}
The parahoric subgroup $P_{J}$ coincides with the stabilizer of $\mathcal{F}_{J}$ in $G^1$.
\end{assumption}
The assumption holds when $G$ is semisimple and simply connected, for instance (see \cite[3.1]{MR546588}).
In general, $P_{J}$ is a subgroup of the stabilizer of $\mathcal{F}_{J}$ in $G^1$ of finite index. We write $K = P_{J}\index{$K$}$.

We define a semi-standard Levi subgroup $M\index{$M$}$ of $G$ as the centralizer of the subtorus
\[
\left(
\bigcap_{\alpha \in DJ} \ker(\alpha)
\right)^{\circ}
\]
of $S$.
According to \cite[3.5.1]{MR546588}, the Levi subgroup $M$ above is the same as the Levi subgroup attached with the parahoric subgroup $P_{J}$ as in \cite[6.3]{MR1371680}.
Since we are assuming that $\abs{B \backslash J} > 1$, $M$ is a proper Levi subgroup of $G$.
Let $K_{M} = K \cap M(F)\index{$K_{M}$}$.
According to \cite[2.1 Theorem~(i)]{MR1713308}, $K_{M}$ is a maximal parahoric subgroup of $M(F)$ associated with a vertex $x_{J}$ of the reduced Bruhat-Tits building $\mathcal{B}^{\red}(M, F)$ of $M$ over $F$.
More precisely, $x_{J}$ is the vertex such that
\[
\left\{
x \in \mathcal{A}(M, S) \mid a(x) = 0 \ (a \in J)
\right\} = \{x_{J}\},\index{$x_{J}$}
\]
where $\mathcal{A}(M, S)$ denotes the reduced apartment of $S$ in $\mathcal{B}^{\red}(M, F)$.
Moreover, Assumption~\ref{parahoric=stabilizer} and \cite[2.1 Theorem~(i)]{MR1713308} imply that $K_{M}$ coincides with the stabilizer of $x_{J}$ in $M^{1}$.
Let $\widetilde{K_{M}}\index{$\widetilde{K_{M}}$}$ denote the stabilizer of $x_{J}$ in $M(F)$, that is a compact-mod-center open subgroup of $M(F)$.
Since $K_{M}$ coincides with the stabilizer of $x_{J}$ in $M^1$, we obtain that
\[
K_{M} = \widetilde{K_{M}} \cap M^1.
\]
We define $\rho_{M}\index{$\rho_{M}$}$ as the restriction of $\rho$ to $K_{M}$. 
\begin{remark}
\label{remarkmp2prop6.6}
The proof of \cite[Proposition~6.6]{MR1371680} implies that
\[
I_{M(F)}(\rho_{M}) \subset \widetilde{K_{M}}.
\]
Hence, any element of $I_{M(F)}(\rho_{M})$ normalizes $K_{M}$ and $\rho_{M}$.
In particular, $I_{M(F)}(\rho_{M})$ is a group.
\end{remark}
According to \cite[Proposition~6.4]{MR1371680} and \cite[Proposition~6.7]{MR1371680}, $(K, \rho)$ is a $G$-cover of $(K_{M}, \rho_{M})$.
Thus, there exists an isomorphism
\[
I_{U} \colon \End_{G(F)}\left(\ind_{K}^{G(F)} (\rho)\right) \rightarrow \End_{G(F)}\left(I^{G}_{P} \left( \ind_{K_{M}}^{M(F)} (\rho_{M}) \right)\right)
\]
constructed in Section~\ref{An explicit isomorphism} for any parabolic subgroup $P$ with Levi factor $M$.
We will fix a suitable parabolic subgroup $P$ later.
The left hand side of the isomorphism above is studied in Section~\ref{The case of depth-zero types}.
On the other hand, the right hand side of the isomorphism can be connected with an object studied in Section~ \ref{A review of Solleveld's results} as follows.
Take an irreducible smooth representation $\widetilde{\rho_{M}}$ of $\widetilde{K_{M}}\index{$\widetilde{K_{M}}$}$ such that $\widetilde{\rho_{M}}\restriction_{K_M}$ contains $\rho_{M}$.
Corresponding to Assumption~\ref{multiplicity1}, we suppose:
\begin{assumption}
\label{parahoricvermultiplicity1}
The multiplicity of $\rho_{M}$ in $\widetilde{\rho_{M}}\restriction_{K_M}$ is equal to $1$.
\end{assumption}
We fix an injection
\begin{align}
\label{injectionrhoM}
\rho_{M} \rightarrow \widetilde{\rho_{M}}\restriction_{K_M}.
\end{align}
According to \cite[Proposition~6.6]{MR1371680}, the representation
\[
\sigma := \ind_{\widetilde{K_{M}}}^{M(F)} (\widetilde{\rho_{M}})\index{$\sigma$}
\]
is an irreducible supercuspidal representation of $M(F)$.
Let $\mathfrak{s}_{M}$ denote the inertial equivalence class of the pair $(M, \sigma)$ in $M$.
Replacing $\widetilde{\rho_{M}}$ with $\widetilde{\rho_{M}} \otimes \chi\restriction_{\widetilde{K_{M}}}$ for suitable $\chi \in X_{\unr}(M)$ if necessary, we may assume that $\sigma$ satisfies \cite[Condition~3.2]{MR4432237}, and
\[
q_{\alpha} \ge q_{\alpha *},
\]
for all $\alpha \in \Sigma_{\mathfrak{s}_{M}, \mu}$.
Moreover, according to Remark~\ref{remarkmp2prop6.6}, we obtain that
\[
I_{M^1}(\rho_{M}) = I_{M(F)}(\rho_{M}) \cap M^1 \subset \widetilde{K_{M}} \cap M^1 = K_{M}.
\]
Hence, the representation
\[
\sigma_{1} := \ind_{K_{M}}^{M^1} (\rho_{M})\index{$\sigma_{1}$}
\]
is also irreducible.
For $v \in \widetilde{\rho_{M}}$, we define an element $\widetilde{f_{v}} \in \ind_{\widetilde{K_{M}}}^{M(F)} (\widetilde{\rho_{M}})\index{$\widetilde{f_{v}} $}$ as
\[
\widetilde{f_v}(m) =
\begin{cases}
\widetilde{\rho_{M}}(m) \cdot v & (m \in \widetilde{K_{M}} ), \\
0 & (\text{otherwise}).
\end{cases} 
\]
Identifying $v \in \widetilde{\rho_{M}}$ with $\widetilde{f_{v}}$, we regard $\widetilde{\rho_{M}}$ as a $\widetilde{K_{M}}$-subrepresentation of $\ind_{\widetilde{K_{M}}}^{M(F)} (\widetilde{\rho_{M}})\restriction_{\widetilde{K_M}}$.
For $v \in \rho_{M}$, we also define an element $f_{v, 1} \in \ind_{K_M}^{M^1} (\rho_{M})\index{$f_{v, 1}$}$ as
\[
f_{v, 1}(m) =
\begin{cases}
\rho_{M}(m) \cdot v & (m \in K_{M} ), \\
0 & (\text{otherwise}).
\end{cases} 
\]
\begin{lemma}
\label{rhovssigmamultiplicity}
We have
\[
\Hom_{K_{M}}(\rho_{M}, \widetilde{\rho_{M}}) = \Hom_{K_{M}}\left(
\rho_{M}, \ind_{\widetilde{K_{M}}}^{M(F)} (\widetilde{\rho_{M}})
\right).
\]
\end{lemma}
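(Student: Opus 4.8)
The plan is to prove the inclusion $\Hom_{K_{M}}(\rho_{M}, \widetilde{\rho_{M}}) \subset \Hom_{K_{M}}\left(\rho_{M}, \ind_{\widetilde{K_{M}}}^{M(F)} (\widetilde{\rho_{M}})\right)$ directly from the chosen realization of $\widetilde{\rho_{M}}$ inside $\ind_{\widetilde{K_{M}}}^{M(F)}(\widetilde{\rho_{M}})$, and to prove the reverse inclusion using Frobenius reciprocity together with the fact that $\widetilde{K_{M}}$ is compact modulo center, so that $\ind = \Ind$ here and the Jacquet/Mackey-type analysis of the restriction to $K_{M}$ is available. The first inclusion is essentially immediate: we have fixed above an embedding of $\widetilde{\rho_{M}}$ as the $\widetilde{K_{M}}$-subrepresentation of $\ind_{\widetilde{K_{M}}}^{M(F)}(\widetilde{\rho_{M}})\restriction_{\widetilde{K_M}}$ spanned by the functions $\widetilde{f_{v}}$, and composing a $K_{M}$-map $\rho_{M} \to \widetilde{\rho_{M}}$ with this embedding produces a $K_{M}$-map $\rho_{M} \to \ind_{\widetilde{K_{M}}}^{M(F)}(\widetilde{\rho_{M}})$; this assignment is clearly injective, so $\Hom_{K_{M}}(\rho_{M}, \widetilde{\rho_{M}})$ sits inside the right-hand side.

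For the reverse inclusion, the key point is to understand $\ind_{\widetilde{K_{M}}}^{M(F)}(\widetilde{\rho_{M}})\restriction_{K_M}$ as a $K_{M}$-representation. Since $K_{M} = \widetilde{K_{M}} \cap M^{1}$ and $\widetilde{K_{M}}$ normalizes $K_{M}$ (being a group containing it with $K_{M}$ of the form $\widetilde{K_{M}} \cap M^1$), one decomposes $M(F)$ into $(K_{M}, \widetilde{K_{M}})$-double cosets and applies the Mackey decomposition: a homomorphism $\rho_{M} \to \ind_{\widetilde{K_{M}}}^{M(F)}(\widetilde{\rho_{M}})$ decomposes as a sum over double coset representatives $g$ of contributions in $\Hom_{K_{M} \cap \, ^{g}\widetilde{K_{M}}}(\rho_{M}, \, ^{g}\widetilde{\rho_{M}})$. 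The crux is that any such $g$ contributing nonzero intertwining satisfies $g \in I_{M(F)}(\rho_{M})$ — indeed a nonzero element there forces $^{g}\rho_{M}$ and $\rho_{M}$ to share a constituent on $K_{M} \cap \, ^{g}K_{M}$ after restricting $^{g}\widetilde{\rho_{M}}$, and by Remark~\ref{remarkmp2prop6.6} this gives $g \in I_{M(F)}(\rho_{M}) \subset \widetilde{K_{M}}$. Hence the only double coset that contributes is the trivial one $\widetilde{K_{M}}$ itself, and its contribution is exactly $\Hom_{K_{M}}(\rho_{M}, \widetilde{\rho_{M}})$.

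I would carry this out in the following order: first record the double-coset/Mackey decomposition of the restriction and isolate the term indexed by the identity coset; second, invoke Remark~\ref{remarkmp2prop6.6} (together with the irreducibility of $\rho_{M}$ and of $\sigma_1 = \ind_{K_M}^{M^1}(\rho_M)$, which tells us $I_{M^1}(\rho_M) \subset K_M$) to kill all other terms; third, conclude that the natural inclusion of the identity-coset term realizes the asserted equality. The main obstacle I anticipate is the bookkeeping in the Mackey step — specifically, verifying carefully that a nonzero $\Hom$ over $K_{M} \cap \, ^{g}\widetilde{K_{M}}$ really does force $g$ to intertwine $\rho_{M}$ on a group on which the hypothesis of Remark~\ref{remarkmp2prop6.6} applies, since $^{g}\widetilde{K_{M}}$ may be strictly larger than $^{g}K_{M}$ and one must restrict back down correctly before quoting that the intertwining set lies in $\widetilde{K_{M}}$. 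Everything else is formal once that reduction is in place.
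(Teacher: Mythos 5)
Your proposal is correct and follows the same route as the paper: Mackey decomposition of $\ind_{\widetilde{K_{M}}}^{M(F)}(\widetilde{\rho_{M}})\restriction_{K_M}$ over the double cosets $K_{M} \backslash M(F) / \widetilde{K_{M}}$, followed by Remark~\ref{remarkmp2prop6.6} to kill every contribution except the identity coset. The bookkeeping concern you flag at the end is already resolved by the equality $K_{M} \cap {}^{m}\widetilde{K_{M}} = K_{M} \cap {}^{m}K_{M}$, which holds because $K_{M} = \widetilde{K_{M}} \cap M^{1}$ and $M^{1}$ is normal in $M(F)$; the paper records this explicitly so that the Mackey terms become $\Hom_{K_{M} \cap {}^{m}K_{M}}\left(\rho_{M}, {}^{m}\widetilde{\rho_{M}}\right)$ and the intertwining argument applies directly.
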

\begin{proof}
The right hand side is calculated as
\begin{align*}
\Hom_{K_{M}}\left(
\rho_{M}, \ind_{\widetilde{K_{M}}}^{M(F)} (\widetilde{\rho_{M}})
\right)
&= 
\Hom_{K_{M}} \left(
\rho_{M}, \bigoplus_{m \in K_{M} \backslash M(F) / \widetilde{K_{M}}} \ind_{K_{M} \cap ^m\widetilde{K_{M}}}^{K_{M}} (^m\!\widetilde{\rho_{M}})
\right)\\
&=
\Hom_{K_{M}} \left(
\rho_{M}, \bigoplus_{m \in K_{M} \backslash M(F) / \widetilde{K_{M}}} \ind_{K_{M} \cap ^mK_{M}}^{K_{M}} (^m\!\widetilde{\rho_{M}})
\right)\\
&=\bigoplus_{m \in K_{M} \backslash M(F) / \widetilde{K_{M}}}
\Hom_{K_{M}}\left(
\rho_{M}, \ind_{K_{M} \cap ^mK_{M}}^{K_{M}} (^m\!\widetilde{\rho_{M}})
\right)\\
&\simeq \bigoplus_{m \in K_{M} \backslash M(F) / \widetilde{K_{M}}}
\Hom_{K_{M} \cap ^mK_{M}}\left(
\rho_{M}, ^m\!\widetilde{\rho_{M}}
\right).
\end{align*}
Since any irreducible subrepresentation of $\widetilde{\rho_{M}}\restriction_{K_{M}}$ is isomorphic to some $\widetilde{K_{M}}$-conjugate of $\rho_{M}$, Remark~\ref{remarkmp2prop6.6} implies that
\[
\Hom_{K_{M} \cap ^mK_{M}}\left(
\rho_{M}, ^m\!\widetilde{\rho_{M}}
\right) = \{0\}
\]
unless $m \in \widetilde{K_{M}}$.
Thus, we obtain
\[
\Hom_{K_{M}}\left(
\rho_{M}, \ind_{\widetilde{K_{M}}}^{M(F)} (\widetilde{\rho_{M}})
\right)
=  \Hom_{K_{M}}(\rho_{M}, \widetilde{\rho_{M}}).
\]
\end{proof}
We regard $\sigma_{1}$ as an irreducible subrepresentation of $\sigma\restriction_{M^1}$ by using the injection 
\[
\sigma_{1} \rightarrow \sigma\restriction_{M^1}
\]
corresponding to injection~\eqref{injectionrhoM} via
\begin{align*}
\Hom_{K_{M}}(\rho_{M}, \widetilde{\rho_{M}}) &= \Hom_{K_{M}}\left(
\rho_{M}, \ind_{\widetilde{K_{M}}}^{M(F)} (\widetilde{\rho_{M}})
\right)\\
&\simeq \Hom_{M^1}\left(
\ind_{K_{M}}^{M^1} (\rho_{M}), \ind_{\widetilde{K_{M}}}^{M(F)} (\widetilde{\rho_{M}})
\right)\\
&= \Hom_{M^1}(\sigma_{1}, \sigma).
\end{align*}
Hence, for 
\[
v \in \rho_{M} \subset \widetilde{\rho_{M}}\restriction_{K_M},
\]
the element $f_{v, 1}$ of $\sigma_1$ is identified with the element $\widetilde{f_{v}}$ of $\sigma$.
Assumption~\ref{parahoricvermultiplicity1} implies that $\sigma$ satisfies Assumption~\ref{multiplicity1}.
We also note that the multiplicity of $\rho_{M}$ in $\sigma\restriction_{K_{M}}$ is equal to $1$. 

The transitivity of the compact induction implies that we have an isomorphism
\[
T_{\rho_{M}} \colon \ind_{K_{M}}^{M(F)} (\rho_{M}) \rightarrow \ind_{M^1}^{M(F)} (\sigma_{1})\index{$T_{\rho_{M}}$}
\]
defined as 
\[
\left(
\left(
T_{\rho_{M}}(f)
\right)
(m)
\right)(m') = f(m'm)
\]
for $f \in \ind_{K_{M}}^{M(F)} (\rho_{M})$, $m \in M(F)$, and $m' \in M^1$.
For $v \in \rho_{M}$, we define an element $f_v \in~ \ind_{K_{M}}^{M(F)} (\rho_{M})$ as
\[
f_{v}(m) =
\begin{cases}
\rho_{M}(m) \cdot v & (m \in K_{M} ), \\
0 & (\text{otherwise}).\index{$f_{v}$}
\end{cases} 
\]
Then, the definition of $T_{\rho_{M}}$ implies that $T_{\rho_{M}}(f_v) $ is supported on $M^1$, and 
\[
\left(
T_{\rho_{M}}(f_v) 
\right)(1) = f_{v, 1}.
\]
We use the same symbol $T_{\rho_{M}}$ for isomorphisms
\[
T_{\rho_{M}} \colon I^{G}_{P} \left( \ind_{K_{M}}^{M(F)} (\rho_{M}) \right) \rightarrow I^{G}_{P} \left( \ind_{M^1}^{M(F)} (\sigma_{1}) \right),
\]
\[
T_{\rho_{M}} \colon \End_{M(F)}\left( \ind_{K_{M}}^{M(F)} (\rho_{M}) \right) \rightarrow \End_{M(F)}\left(\ind_{M^1}^{M(F)} (\sigma_{1})\right),
\]
and
\[
T_{\rho_{M}} \colon \End_{G(F)}\left(I^{G}_{P} \left( \ind_{K_{M}}^{M(F)} (\rho_{M}) \right)\right) \rightarrow \End_{G(F)}\left(I^{G}_{P} \left( \ind_{M^1}^{M(F)} (\sigma_{1}) \right)\right)\index{$T_{\rho_{M}}$}
\]
induced by $T_{\rho_{M}}$.
Combining $I_{U}$ with $T_{\rho_{M}}$, we obtain an isomorphism
\[
T_{\rho_{M}} \circ I_{U} \colon \End_{G(F)}\left(\ind_{K}^{G(F)} (\rho)\right) \rightarrow \End_{G(F)}\left(I^{G}_{P} \left( \ind_{M^1}^{M(F)} (\sigma_{1}) \right)\right).
\]
We will compare the description of the left hand side of $T_{\rho_{M}} \circ I_{U}$ in Section~\ref{The case of depth-zero types} with the description of the right hand side of $T_{\rho_{M}} \circ I_{U}$ in Section~ \ref{A review of Solleveld's results}.

We take a parabolic subgroup $P$ that is compatible with the positive system $D_{J}\left(
\Gamma'(J, \rho)^{+}_{e}
\right)$ of the root system $D_{J}\left(
\Gamma'(J, \rho)_{e}
\right)$
as follows.
The definition of the Levi subgroup $M$ implies that the vector space
\[
V^{J} = \{
y \in V \mid \alpha(y) = 0 \ (\alpha \in DJ)
\}
\]
in Section~\ref{The case of depth-zero types} is equal to the subspace of
\[
a_{M}
=
X_{*}(A_{M}) \otimes_{\mathbb{Z}} \mathbb{R}
\]
spanned by $\alpha^{\vee} \, (\alpha \in \Sigma_{\red}(A_M))$.
Hence, we can identify a linear function on $V^{J}$ as an element of the subspace of 
\[
a_{M}^{*} = X^{*}(A_{M}) \otimes_{\mathbb{Z}} \mathbb{R}
\]
spanned by $\Sigma_{\red}(A_M)$.
In particular, we can consider the root system
\[
R^{\Mor}
= \{
D_{J}(a')/k_{a'} \mid a' \in \Gamma'(J, \rho)_{e}
\}
\]
as a subset of $a_{M}^{*}$.
Under this identification, we obtain that 
\[
\frac{D_J(a + A'_{J})}{k_{a+A'_{J}}} = \frac{(Da)\restriction_{A_{M}}} {k_{a+A'_{J}}}
\]
for $a \in \Gamma(J, \rho)$ such that $a +A'_{J} \in \Gamma'(J, \rho)_{e}$.
Since $D_{J}\left(
\Gamma'(J, \rho)^{+}_{e}
\right)$ is a positive system of the root system $D_{J}\left(
\Gamma'(J, \rho)_{e}
\right)$, we can take a parabolic subgroup $P$ of $G$ with Levi factor $M$ such that
\[
D_{J}\left(
\Gamma'(J, \rho)^{+}_{e} 
\right) = 
D_{J} \left(
\Gamma'(J, \rho)_{e}
\right) \cap \left(-\Sigma(P, A_{M})\right).
\]
There are several choices of parabolic subgroups $P$ that satisfy this property.
However, the injection
\[
t_{P} \colon \End_{M(F)}\left( \ind_{K_{M}}^{M(F)} (\rho_{M}) \right) \rightarrow \End_{G(F)}\left(\ind_{K}^{G(F)} (\rho)\right)
\]
in Section~\ref{Hecke algebra injections} does not depend on the choice of $P$:
\begin{lemma}
\label{independencyofP}
Let $P'$ be another parabolic subgroup of $G$ with Levi factor $M$ such that
\[
D_{J}\left(
\Gamma'(J, \rho)^{+}_{e} 
\right) = 
D_{J} \left(
\Gamma'(J, \rho)_{e}
\right) \cap \left(-\Sigma(P', A_{M})\right).
\]
Then, we obtain $t_{P} = t_{P'}$.
\end{lemma}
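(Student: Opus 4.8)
The plan is to reduce to a computation on a single double coset and then reuse the argument that proves Lemma~\ref{tPwhenmissubalgebra}. By Lemma~\ref{characterizationoftp} it suffices to show that $t_{P'}(\phi) = t_{P}(\phi)$ for every $\phi \in \mathcal{H}^{+}(M(F), \rho_{M})$ (the positive part taken with respect to $K$ and $U$), and, decomposing an element of $\mathcal{H}(M(F), \rho_{M})$ along the finitely many double cosets in its support, it is enough to treat $\phi$ supported on a single double coset $K_{M} z K_{M}$; here $z \in I_{M(F)}(\rho_{M})$, which by Remark~\ref{remarkmp2prop6.6} normalizes $K_{M}$, and for the reduction above one may take $z$ positive relative to $K$ and $U$.

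For such a $\phi$ I would first record that, by \cite[Theorem~7.2~(ii)]{MR1643417}, both $t_{P}(\phi)$ and $t_{P'}(\phi)$ are positive real multiples of the operator $T(\phi)$, which is supported on $KzK$ and determined by $\left(T(\phi)\right)(z) = \phi(z)$, hence does not depend on $P$ or $P'$. Write $t_{P}(\phi) = c_{z, P} T(\phi)$ and $t_{P'}(\phi) = c_{z, P'} T(\phi)$; the content is that $c_{z, P} = c_{z, P'}$. Exactly as in the proof of Lemma~\ref{tPwhenmissubalgebra}, I would establish the norm identity $\norm{t_{P}(\phi)}_{G} = \norm{\phi}_{M}$: in the positive case $t_{P}(\phi) = \delta_{P}(z)^{-1/2} T(\phi)$, and Lemma~\ref{lemmacalculationofdelta} together with the Iwahori decomposition $K = K_{U} K_{M} K_{\overline{U}}$ identifies $\delta_{P}(z)^{-1/2}$ with the manifestly $P$-independent scalar $\abs{K_{M}/(K_{M} \cap z K_{M} z^{-1})}^{1/2} \abs{K/(K \cap zKz^{-1})}^{-1/2}$; the general case then follows by multiplying by a strongly $(U, K)$-positive central element $\zeta$ of $M$ and using submultiplicativity of $\norm{\cdot}_{G}$ together with $\norm{t_{P}(\phi_{\zeta})}_{G} = \norm{t_{P}(\phi_{\zeta^{-1}})}_{G} = 1$. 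Since $\norm{\phi}_{M} = \abs{K_{M} z K_{M}/K_{M}}^{1/2} \norm{\phi(z)}$ and $\norm{T(\phi)}_{G} = \abs{KzK/K}^{1/2} \norm{\phi(z)}$ make no reference to $P$, the constant $c_{z, P} = \norm{\phi}_{M}/\norm{T(\phi)}_{G}$ is $P$-independent, and running the same computation for $P'$ gives the same scalar for $c_{z, P'}$; hence $t_{P}(\phi) = t_{P'}(\phi)$.

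The only step that is not purely formal — and the one I expect to be the real obstacle — is the appeal to \cite[Theorem~7.2~(ii)]{MR1643417} guaranteeing that $t_{P'}$ carries a function supported on a single double coset $K_{M} z K_{M}$ to a scalar multiple of $T(\phi)$ even when $z$ is not positive with respect to $K$ and $U'$; controlling this requires knowing that $t_{P'}$ does not spill over onto larger double cosets, for which one wants the subspace $\mathcal{H}(G(F), \rho)_{M}$ of Lemma~\ref{tPwhenmissubalgebra} to behave like a subalgebra in Morris's setting. This is the point at which the compatibility of $P$ and $P'$ with the common positive system $D_{J}\left(\Gamma'(J, \rho)^{+}_{e}\right)$ of $R^{\Mor}$ should be used: it is under this hypothesis that one verifies the subalgebra-type property and thereby applies Lemma~\ref{tPwhenmissubalgebra} directly. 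In fact the argument shows a little more, namely that $t_{P}$ is independent of every parabolic subgroup $P$ with Levi factor $M$ for which the conclusion of Lemma~\ref{tPwhenmissubalgebra} is available; the restriction in the statement reflects only the generality that will be needed in the sequel.
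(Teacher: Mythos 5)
You correctly identify the one substantive step, but the resolution you propose for it does not work, and it is exactly there that the paper's proof goes a different (and necessary) way.

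Your reduction via Lemma~\ref{characterizationoftp} and the norm bookkeeping are fine as far as they go. The problem is the step where you want to apply Lemma~\ref{tPwhenmissubalgebra} "directly" to $G$. That lemma's hypothesis is that the subspace $\mathcal{H}(G(F),\rho)_M$ is a subalgebra of $\mathcal{H}(G(F),\rho)$, and in this paper the only sufficient condition offered for that hypothesis is triviality of $R(J,\rho)$ (Lemma~\ref{rtrivimpliessubalgebra}, which feeds Corollary~\ref{rtrivimpliessupportpreserving}). The hypothesis of Lemma~\ref{independencyofP} — that $P$ and $P'$ both induce the same positive system $D_J\bigl(\Gamma'(J,\rho)^{+}_{e}\bigr)$ — says nothing about $R(J,\rho)$ being trivial, and indeed in the intended applications $R(J,\rho)$ is emphatically nontrivial (it is the affine Weyl group piece that carries all the interesting Hecke-algebra structure). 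So the assertion in your last paragraph that "under this hypothesis one verifies the subalgebra-type property" is simply not available, and without it you cannot invoke \cite[Theorem~7.2~(ii)]{MR1643417} to conclude that $t_{P'}(\phi)$ stays supported on the single double coset $KzK$ when $z$ is not positive for $U'$; this is precisely the spill-over you flagged, and it is not controlled by the compatibility of positive systems alone.

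The paper's proof handles this by a rank-one reduction instead. It chooses a chain of parabolics $P=P_0,\dots,P_d=P'$, all compatible with the fixed positive system, so that consecutive terms differ by a single reduced root $\alpha$, and then uses the transitivity $t_P = t_{PM_\alpha}\circ t_{P\cap M_\alpha}$ from \cite[(8.7)]{MR1643417}. Since $PM_\alpha = P'M_\alpha$, only the inner factor $t_{P\cap M_\alpha}$ could depend on the choice, and the question is pushed down to the next-to-minimal Levi $M_\alpha$. The compatibility hypothesis is then used to show (the Claim in the paper's proof, together with Lemma~\ref{Gammareduction}) that the $M_\alpha$-version of the $R$-group, $R^{M_\alpha}(J,\rho_\alpha)$, is trivial — because a nontrivial element would produce a root $a\in\Gamma(J,\rho)$ whose restriction to $A_M$ lies simultaneously in $\Sigma(P,A_M)\cap\Sigma(P',A_M)$ and in $-\Sigma(P,A_M)\cap -\Sigma(P',A_M)$, a contradiction. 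With that triviality in hand, Corollary~\ref{rtrivimpliessupportpreserving} (i.e., Lemma~\ref{tPwhenmissubalgebra} applied to $M_\alpha$, not to $G$) gives the $P$-independence of $t_{P\cap M_\alpha}$. So the subalgebra property is secured only after passing to $M_\alpha$, and the compatibility hypothesis is used to make that passage legitimate. Your last sentence, that "the restriction in the statement reflects only the generality that will be needed in the sequel," has it backwards: the restriction is exactly what makes $R^{M_\alpha}(J,\rho_\alpha)$ vanish, and without it there is no reason to expect $t_P=t_{P'}$.
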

\begin{proof}
We define $d = d(P, P')$ as
\[
d(P, P') = \abs{
\Sigma_{\red}(P, A_{M}) \cap \left(-\Sigma_{\red}(P', A_{M})\right)
}.
\]
Then, we can take parabolic subgroups
\[
P=P_{0}, P_{1}, \cdots , P_{d}= P'
\]
with Levi factor $M$ such that
\[
D_{J}\left(
\Gamma'(J, \rho)^{+}_{e} 
\right) = 
D_{J} \left(
\Gamma'(J, \rho)_{e}
\right) \cap \left(-\Sigma(P_{i}, A_{M})\right),
\]
for all $0 \le i \le d$, and
\[
\abs{
\Sigma_{\red}(P_{i}, A_{M}) \cap \left(-\Sigma_{\red}(P_{i+1}, A_{M})\right)
} = 1
\]
for all $0 \le i \le d-1$.
It suffices to show that $t_{P_{i}} = t_{P_{i+1}}$ for all $0 \le i \le d-1$. Hence, we may assume that $d=1$.
We write
\[
\Sigma_{\red}(P, A_{M}) \cap \left(-\Sigma_{\red}(P', A_{M})\right) = \{
\alpha
\}.
\]
Recall that $M_{\alpha}$ denotes the semi-standard Levi subgroup of $G$ containing $M$ and the root subgroup $U_{\alpha}$ associated with $\alpha$, and whose semisimple rank is one greater than that of $M$.
We write $K_{\alpha} = K \cap M_{\alpha}(F)$ and $\rho_{\alpha} = \rho\restriction_{K_{\alpha}}$.
Replacing $G$ with $M_{\alpha}$ in the construction of $t_{P}$ and $t_{P'}$, we obtain injections
\[
t_{P \cap M_{\alpha}}, t_{P' \cap M_{\alpha}} \colon \mathcal{H}(M(F), \rho_{M}) \rightarrow \mathcal{H}(M_{\alpha}(F), \rho_{\alpha}).
\]
Replacing $M$ with $M_{\alpha}$ in the construction of $t_{P}$ and $t_{P'}$, we also obtain injections
\[
t_{PM_{\alpha}}, t_{P' M_{\alpha}} \colon \mathcal{H}(M_{\alpha}(F), \rho_{\alpha}) \rightarrow \mathcal{H}(G(F), \rho).
\]
According to \cite[(8.7)]{MR1643417}, we have
\[
t_{P} = t_{PM_{\alpha}} \circ t_{P \cap M_{\alpha}}
\]
and
\[
t_{P'} = t_{P' M_{\alpha}} \circ t_{P' \cap M_{\alpha}}.
\]
Since $P M_{\alpha} = P' M_{\alpha}$, it suffices to show that $t_{P \cap M_{\alpha}} = t_{P' \cap M_{\alpha}}$.
Let $\Phi^{M_{\alpha}}$ denote the set of relative roots with respect to $S$ in $M_{\alpha}$, and let $\Phi_{\aff}^{M_{\alpha}}$ denote the affine root system associated with $(M_{\alpha}, S)$ by the work of \cite{MR327923}.
Hence, 
\[
\Phi_{\aff}^{M_{\alpha}} = \{
a \in \Phi_{\aff} \mid Da \in \Phi^{M_{\alpha}}
\}.
\]
We also define $\Phi^{M}$ and $\Phi_{\aff}^{M}$, similarly.
Then, the definition of $M$ implies that
\[
\Phi^{M} = \Phi \cap \mathbb{R} \cdot (DJ),
\]
where $\mathbb{R} \cdot (DJ)$ denotes the $\mathbb{R}$-span of $DJ$.
Hence, we obtain that
\[
\Phi_{\aff}^{M} = \{
a \in \Phi_{\aff} \mid Da \in \mathbb{R} \cdot (DJ)
\},
\]
that is written as $(\Phi_{\aff})_{J}$ in Appendix~\ref{Subsets of a set of simple affine roots}.
Since $\Phi_{\aff}^{M_{\alpha}}$ contains $(\Phi_{\aff})_{J}$,
according to Corollary~\ref{corollarySubsets of a set of simple affine roots}, we can take a basis $B^{M_{\alpha}}$ of $\Phi_{\aff}^{M_{\alpha}}$ containing $J$.
Thus, we can define $W^{M_{\alpha}}(J, \rho_{\alpha})$, $\Gamma^{M_{\alpha}}(J, \rho_{\alpha})$, and $R^{M_{\alpha}}(J, \rho_{\alpha})$ by replacing $G$ with $M_{\alpha}$ and $\rho$ with $\rho_{\alpha}$ in the construction of $W(J, \rho)$, $\Gamma(J, \rho)$, and $R(J, \rho)$, respectively.
\begin{claim}
The group $R^{M_{\alpha}}(J, \rho_{\alpha})$ is trivial.
\end{claim}
\begin{proof}
It suffices to show that $\Gamma^{M_{\alpha}}(J, \rho_{\alpha})$ is empty.
Suppose that $\Gamma^{M_{\alpha}}(J, \rho_{\alpha})$ is non-empty, and take an element $a \in \Gamma^{M_{\alpha}}(J, \rho_{\alpha})$.
According to Lemma~\ref{Gammareduction} below, $a$ is also contained in $\Gamma(J, \rho)$.
Since any element of $D_{J}\left(\Gamma'(J, \rho)\right)$ is a scalar multiple of an element of 
$D_{J}
\left(
\Gamma'(J, \rho)^{+}_{e}
\right)$, there exists $c \in \mathbb{R}^{\times}$ such that
\[
c \cdot D_{J}(a + A'_{J}) \in D_{J}
\left(
\Gamma'(J, \rho)^{+}_{e}
\right).
\]
Then, our assumptions
\[
D_{J}\left(
\Gamma'(J, \rho)^{+}_{e}
\right)  = 
D_{J} \left(
\Gamma'(J, \rho)_{e}
\right) \cap \left( - \Sigma(P, A_{M}) \right)
= D_{J} \left(
\Gamma'(J, \rho)_{e}
\right) \cap \left( - \Sigma(P', A_{M}) \right)
\]
imply that
\[
c \cdot D_{J}(a + A'_{J}) \in \left( - \Sigma(P, A_{M}) \right) \cap \left( - \Sigma(P', A_{M}) \right).
\]
On the other hand, since $a \in \Phi_{\aff}^{M_{\alpha}}$, we have
\[
D_{J}(a + A'_{J}) = Da \restriction_{A_{M}} \in \mathbb{R} \cdot \alpha.
\]
Thus, there exists $c' \in \mathbb{R}^{\times}$ such that
\[
c' \cdot \alpha = -c \cdot D_{J}(a + A'_{J}) \in \Sigma(P, A_{M}) \cap \Sigma(P', A_{M}).
\]
However, since
\[
\alpha \in \Sigma(P, A_{M}) \cap \left( - \Sigma(P', A_{M}) \right),
\]
we obtain that
\[
c' \cdot \alpha \in
\begin{cases}
\mathbb{R}_{<0} \cdot \Sigma(P', A_{M}) \cap \Sigma(P', A_{M}) = \emptyset & (c'>0), \\
\mathbb{R}_{<0} \cdot \Sigma(P, A_{M}) \cap \Sigma(P, A_{M}) = \emptyset & (c' <0),
\end{cases}
\]
a contradiction.
\end{proof}
Now, the equation $t_{P \cap M_{\alpha}} = t_{P' \cap M_{\alpha}}$ follows from Corollary~\ref{rtrivimpliessupportpreserving} below.
\end{proof}
We fix a parabolic subgroup $P$ of $G$ with Levi factor $M$ such that
\[
D_{J}\left(
\Gamma'(J, \rho)^{+}_{e} 
\right) = 
D_{J} \left(
\Gamma'(J, \rho)_{e}
\right) \cap \left(-\Sigma(P, A_{M})\right).
\]

First, we study the upper row of the commutative diagram
\[
\xymatrix{
\End_{M(F)}\left( \ind_{K_{M}}^{M(F)} (\rho_{M}) \right) \ar[d]_-{t_{P}} \ar[r]^-{\id} \ar@{}[dr]|\circlearrowleft & \End_{M(F)}\left( \ind_{K_{M}}^{M(F)} (\rho_{M}) \right) \ar[d]^-{I_{P}^{G}} \ar[r]^-{T_{\rho_{M}}} \ar@{}[dr]|\circlearrowleft&
\End_{M(F)}\left(\ind_{M^1}^{M(F)} (\sigma_{1})\right) \ar[d]^-{I_{P}^{G}}
\\
\End_{G(F)}\left(\ind_{K}^{G(F)} (\rho)\right) \ar[r]^-{I_{U}} & \End_{G(F)}\left(I^{G}_{P} \left( \ind_{K_{M}}^{M(F)} (\rho_{M}) \right)\right) \ar[r]^-{T_{\rho_{M}}} & \End_{G(F)}\left(I_{P}^{G}\left(\ind_{M^1}^{M(F)}(\sigma_{1})\right)\right)
}
\]
obtained from Proposition~\ref{compatibility}.
Recall that we have isomorphism~\eqref{Mverofheckevsend}
\[
\mathcal{H}(M(F), \rho_{M}) \simeq \End_{M(F)}\left( \ind_{K_{M}}^{M(F)} (\rho_{M}) \right)
\]
and isomorphism~\eqref{isomgroupalgebraheckealgebra}
\[
\mathbb{C}[M_{\sigma}/M^1] \simeq \End_{M(F)}\left(\ind_{M^1}^{M(F)} (\sigma_{1})\right).
\]
We study the composition
\begin{align}
\label{compositionofMverofheckevsendandisomgroupalgebraheckealgebra}
\mathcal{H}(M(F), \rho_{M}) \xrightarrow{\eqref{Mverofheckevsend}} \End_{M(F)}\left( \ind_{K_{M}}^{M(F)} (\rho_{M}) \right) \xrightarrow{T_{\rho_{M}}} \End_{M(F)}\left(\ind_{M^1}^{M(F)} (\sigma_{1})\right) \xrightarrow{\eqref{isomgroupalgebraheckealgebra}} \mathbb{C}[M_{\sigma}/M^1].
\end{align}
Let $m \in M_{\sigma}$.
We use the same symbol $m$ for the image of $m$ in $M_{\sigma}/M^{1}$ by abuse of notation.
The element $\theta_{m} \in \mathbb{C}[M_{\sigma}/M^1]$ corresponds to the element 
\[
(\Phi^{M}_{m^{-1}})' \in \End_{M(F)}\left(\ind_{M^1}^{M(F)} (\sigma_{1})\right)
\]
defined as
\[
\left(
(\Phi^{M}_{m^{-1}})'(f)
\right) (m') = \sigma(m^{-1}) \cdot f(mm')
\]
for $f \in \ind_{M^1}^{M(F)} (\sigma)$ and $m' \in M(F)$ via isomorphism~\eqref{isomgroupalgebraheckealgebra}.
Since we defined $\sigma_{1}$ as
\[
\sigma_{1} = \ind_{K_{M}}^{M^1} (\rho_{M}),
\]
the natural inclusion
\[
I_{M(F)}(\rho_{M}) \subset M_{\sigma}
\]
induces an isomorphism
\[
I_{M(F)}(\rho_{M}) / K_{M} = I_{M(F)}(\rho_{M}) / I_{M^1}(\rho_{M}) \simeq M_{\sigma}/M^{1}.
\]
We identify the group algebra $\mathbb{C}[M_{\sigma}/M^1]$ with $\mathbb{C}[I_{M(F)}(\rho_{M}) / K_{M} ]$.
Let
\[
m \in I_{M(F)}(\rho_{M}) \subset \widetilde{K_{M}}.
\]
Since $m$ normalizes the representation $\rho_{M}$, and the representation $\rho_{M}$ appears in $\widetilde{\rho_{M}}\restriction_{K_M}$ with multiplicity $1$, $\widetilde{\rho_{M}}(m^{-1})$ preserves the subspace $\rho_{M}$ of $\widetilde{\rho_{M}}$.
We define
\[
\Phi^{M}_{m^{-1}} \in \End_{M(F)}\left( \ind_{K_{M}}^{M(F)} (\rho_{M}) \right)
\]
as
\[
\left(
\Phi^{M}_{m^{-1}}(f)
\right) (m') = \widetilde{\rho_{M}}(m^{-1}) \cdot f(mm')
\]
for $f \in \ind_{K_{M}}^{M(F)} (\rho_{M})$ and $m' \in M(F)$.
\begin{lemma}
For  $m \in I_{M(F)}(\rho_{M})$, we have
\[
T_{\rho_{M}} (\Phi^{M}_{m^{-1}}) = (\Phi^{M}_{m^{-1}})'.
\]
\end{lemma}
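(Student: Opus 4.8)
The plan is to unwind both sides explicitly on a spanning set and verify they agree. The endomorphism algebra $\End_{M(F)}\bigl(\ind_{K_{M}}^{M(F)}(\rho_{M})\bigr)$ is spanned by functions supported on single double cosets, so it suffices to check the identity on $\Phi^{M}_{m^{-1}}$ for $m \in I_{M(F)}(\rho_{M})$, which is exactly what is asserted. First I would recall that $T_{\rho_{M}}\colon \ind_{K_{M}}^{M(F)}(\rho_{M}) \to \ind_{M^1}^{M(F)}(\sigma_{1})$ is the isomorphism from transitivity of compact induction, sending $f$ to the function $m \mapsto [m' \mapsto f(m'm)]$, and that on endomorphism algebras $T_{\rho_{M}}(\Psi) = T_{\rho_{M}} \circ \Psi \circ T_{\rho_{M}}^{-1}$.

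The key computation is then to take an arbitrary $F \in \ind_{M^1}^{M(F)}(\sigma_{1})$, set $f = T_{\rho_{M}}^{-1}(F) \in \ind_{K_{M}}^{M(F)}(\rho_{M})$, apply $\Phi^{M}_{m^{-1}}$ to $f$, and then apply $T_{\rho_{M}}$ again, comparing the result to $(\Phi^{M}_{m^{-1}})'(F)$. Here $(\Phi^{M}_{m^{-1}})'$ acts on $\ind_{M^1}^{M(F)}(\sigma_{1}) \subset \ind_{M^1}^{M(F)}(\sigma)$ by $\bigl((\Phi^{M}_{m^{-1}})'(F)\bigr)(m') = \sigma(m^{-1}) \cdot F(mm')$, where $\sigma = \ind_{\widetilde{K_{M}}}^{M(F)}(\widetilde{\rho_{M}})$. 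The point is that under the identification of $\sigma_{1} = \ind_{K_{M}}^{M^1}(\rho_{M})$ with an irreducible subrepresentation of $\sigma\restriction_{M^1}$ via the chosen injection~\eqref{injectionrhoM}, the action of $\sigma(m^{-1})$ for $m \in I_{M(F)}(\rho_{M}) \subset \widetilde{K_{M}}$ restricts on the subspace corresponding to $\sigma_{1}$ to the operator induced by $\widetilde{\rho_{M}}(m^{-1})$; this is precisely where Assumption~\ref{parahoricvermultiplicity1} (multiplicity one) is used, guaranteeing $\widetilde{\rho_{M}}(m^{-1})$ preserves the subspace $\rho_{M}$ of $\widetilde{\rho_{M}}$ and hence the operator is well-defined. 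Chasing the value at $(m, m') \in M(F) \times M^1$ through both composites: the left side gives $m \mapsto [m' \mapsto \bigl(\Phi^{M}_{m^{-1}}(f)\bigr)(m'm)] = m \mapsto [m' \mapsto \widetilde{\rho_{M}}(m^{-1}) \cdot f(m m' m)]$, while unwinding the right side through $T_{\rho_{M}}$ one gets $m \mapsto [m' \mapsto \sigma(m^{-1}) \cdot \bigl(T_{\rho_{M}}(f)\bigr)(m' m^{-1} \cdot \text{(something)})]$; reconciling the two requires carefully tracking how $\sigma(m^{-1})$ translates into the $\ind_{K_{M}}^{M^1}$-picture.

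The main obstacle I expect is bookkeeping the identification $\sigma_{1} \hookrightarrow \sigma\restriction_{M^1}$ correctly: one must verify that the injection $\sigma_{1} \to \sigma\restriction_{M^1}$ intertwines the $\widetilde{\rho_{M}}$-twisted translation by $m$ on $\sigma_{1}$ (coming from $\Phi^{M}_{m^{-1}}$ composed with $T_{\rho_{M}}$) with the $\sigma$-translation by $m$ restricted to the image. Concretely, this follows from the compatibility worked out just before the lemma: for $v \in \rho_{M} \subset \widetilde{\rho_{M}}\restriction_{K_M}$, the element $f_{v,1}$ of $\sigma_{1}$ is identified with $\widetilde{f_{v}}$ of $\sigma$, and $T_{\rho_{M}}(f_v)$ is supported on $M^1$ with $\bigl(T_{\rho_{M}}(f_v)\bigr)(1) = f_{v,1}$. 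So the strategy is to reduce to checking the identity on the specific elements $f_v$ and invoke $M(F)$-equivariance: since both $T_{\rho_{M}}(\Phi^{M}_{m^{-1}})$ and $(\Phi^{M}_{m^{-1}})'$ are $M(F)$-endomorphisms of $\ind_{M^1}^{M(F)}(\sigma_{1})$, and $\ind_{M^1}^{M(F)}(\sigma_{1})$ is generated as an $M(F)$-representation by the $f_{v,1}$, it suffices to show $\bigl(T_{\rho_{M}}(\Phi^{M}_{m^{-1}})(f_{v,1})\bigr) = \bigl((\Phi^{M}_{m^{-1}})'(f_{v,1})\bigr)$, and then a short direct evaluation at $1 \in M^1$ using $\bigl(T_{\rho_{M}}(f_v)\bigr)(1) = f_{v,1}$, $\sigma(m^{-1}) \cdot \widetilde{f_v} = \widetilde{f_{\widetilde{\rho_{M}}(m^{-1})v}}$, and the identification $f_{\widetilde{\rho_{M}}(m^{-1})v, 1} \leftrightarrow \widetilde{f_{\widetilde{\rho_{M}}(m^{-1})v}}$ closes the argument. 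The remaining details are routine translation-of-function manipulations, so I would not spell them out beyond indicating this reduction.
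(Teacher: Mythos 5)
Your proposal takes essentially the same route as the paper: reduce via $M(F)$-equivariance to checking on the generators $f_v$, then unwind both sides using $\bigl(T_{\rho_M}(f_v)\bigr)(1)=f_{v,1}$, the identification $f_{v,1}\leftrightarrow\widetilde{f_v}$, and ultimately $K_M = \widetilde{K_M}\cap M^1$. The only slip is the phrase ``evaluation at $1\in M^1$'': both sides are functions on $M(F)$ supported on the coset $m^{-1}M^1$, so the single-point check should be at $m^{-1}\in M(F)$, where the argument $mm'$ of $T_{\rho_M}(f_v)$ inside the defining formula for $(\Phi^M_{m^{-1}})'$ reduces to $1$ and your listed identities apply; left $M^1$-equivariance of elements of $\ind_{M^1}^{M(F)}(\sigma_1)$ then propagates the equality over the whole coset.
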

\begin{proof}
It suffices to show that
\begin{align}
\label{phimvsphi'm}
\left(
T_{\rho_{M}} \circ \Phi^{M}_{m^{-1}}
\right)(f) =
\left(
(\Phi^{M}_{m^{-1}})' \circ T_{\rho_{M}}
\right)(f)
\end{align}
for all $f \in \ind_{K_{M}}^{M(F)} (\rho_{M})$.
Since $T_{\rho_{M}} \circ \Phi^{M}_{m^{-1}}$ and $(\Phi^{M}_{m^{-1}})' \circ T_{\rho_{m}}$ are $M(F)$-equivariant, and $\ind_{K_{M}}^{M(F)} (\rho_{M})$ is generated by 
\[
\left\{
f_{v} \mid v \in \rho_{M}
\right\}
\]
as an $M(F)$-representation, we may suppose that $f = f_{v}$ for some $v \in \rho_{M}$.
For $m' \in M(F)$ and $m'' \in M^1$, we have
\begin{align*}
\left(
\left(
\left(
T_{\rho_{M}} \circ \Phi^{M}_{m^{-1}}
\right)(f_v)
\right)
(m')
\right)
(m'') &= \left(
\Phi^{M}_{m^{-1}}(f_v)
\right)(m'' m')\\
&= \widetilde{\rho_{M}}(m^{-1}) \cdot f_{v}(m m'' m'),
\end{align*}
and
\begin{align*}
\left(
\left(
\left(
(\Phi^{M}_{m^{-1}})' \circ T_{\rho_{M}}
\right)(f_v)
\right)
(m')
\right)
(m'')
&=
\left(\sigma(m^{-1}) \cdot 
\left(
\left(
T_{\rho_{M}}(f_v)
\right)
(m m')
\right)
\right)(m'').
\end{align*}
Since $f_v$ is supported on $K_M$, and $T_{\rho_{M}}(f_v)$ is supported on $M^1$, both sides of \eqref{phimvsphi'm} vanish unless $m' \in m^{-1}M^1$. 
Let $m' = m^{-1}m_{1}$ for some $m_1 \in M^1$.
Then, we obtain
\begin{align*}
\left(
\left(
\left(
T_{\rho_{M}} \circ \Phi^{M}_{m^{-1}}
\right)(f_v)
\right)
(m^{-1}m_{1})
\right)
(m'')
&= \widetilde{\rho_{M}}(m^{-1}) \cdot f_{v}(m m'' m^{-1}m_{1})\\
&= 
\begin{cases}
\left(
\widetilde{\rho_{M}}(m^{-1}) \circ \rho_{M}(m m'' m^{-1} m_{1})
\right)
\cdot v & (m m'' m^{-1} m_{1} \in K_{M}),\\
0 & (\text{otherwise})
\end{cases}\\
&= 
\begin{cases}
\widetilde{\rho_{M}}(m'' m^{-1} m_{1}) \cdot v & (m m'' m^{-1} m_{1} \in K_{M}), \\
0 & (\text{otherwise}),
\end{cases}
\end{align*}
and
\begin{align*}
\left(
\left(
\left(
(\Phi^{M}_{m^{-1}})' \circ T_{\rho_{M}}
\right)(f_v)
\right)
(m^{-1} m_1)
\right)
(m'')
&=
\left(\sigma(m^{-1}) \cdot \left(
\left(
T_{\rho_{M}}(f_v)
\right)
(m_1)
\right)
\right)(m'')\\
&=
\left(
\left(
\sigma(m^{-1}) \circ \sigma_{1}(m_1)
\right) \cdot
\left(
\left(
T_{\rho_{M}}(f_v)
\right)
(1)
\right)
\right)(m'')\\
&=
\left(\sigma(m^{-1}m_1) \cdot f_{v, 1} \right)(m'')\\
&= \left(\sigma(m^{-1}m_1) \cdot \widetilde{f_{v}} \right)(m'')\\
&= \widetilde{f_{v}}(m''m^{-1}m_1)\\
&= 
\begin{cases}
\widetilde{\rho_{M}}(m'' m^{-1} m_{1}) \cdot v & (m'' m^{-1} m_{1} \in \widetilde{K_{M}}), \\
0 & (\text{otherwise}).
\end{cases}
\end{align*}
Since $m'', m_{1} \in M^1$, $K_{M} = \widetilde{K_{M}} \cap M^1$, and $m \in I_{M(F)}(\rho_{M}) \subset \widetilde{K_{M}}$, we have
\begin{align*}
m m'' m^{-1} m_{1} \in K_{M} &\iff m m'' m^{-1} m_{1} \in \widetilde{K_{M}}\\
&\iff m'' m^{-1} m_{1} \in \widetilde{K_{M}}.
\end{align*}
Thus, we obtain \eqref{phimvsphi'm}.
\end{proof}
Finally, for $m \in I_{M(F)}(\rho_{M})$, the element $\phi^{M}_{m^{-1}}$ of $\mathcal{H}(M(F), \rho_{M})$ corresponding to $\Phi^{M}_{m^{-1}}$ via isomorphism~~\eqref{Mverofheckevsend} is defined as
\begin{align*}
\left(
\phi^{M}_{m^{-1}}(m')
\right)
(v) &= 
\left(
\Phi^{M}_{m^{-1}}(f_{v})
\right)
(m')\\
&= \widetilde{\rho_{M}}(m^{-1}) \cdot f_{v}(mm')\\
&=
\begin{cases}
\widetilde{\rho_{M}}(m') \cdot v & (m' \in m^{-1}K_{M} ), \\
0 & (\text{otherwise})
\end{cases} 
\end{align*}
for $m' \in M(F)$ and $v \in \rho_{M}$.
Thus, we obtain:
\begin{lemma}
\label{lemmasupportofimageofm}
For $m \in I_{M(F)}(\rho_{M})$, let $\phi^{M}_{m^{-1}}$ denote the element of $\mathcal{H}(M(F), \rho_{M})$ corresponding to $\theta_{m} \in \mathbb{C}[M_{\sigma}/M^1]$ via isomorphism~\eqref{compositionofMverofheckevsendandisomgroupalgebraheckealgebra}.
Then, $\phi^{M}_{m^{-1}}$ is supported on $m^{-1}K_{M}$.
\end{lemma}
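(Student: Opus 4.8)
The plan is to trace $\theta_m$ backwards through the three isomorphisms whose composition is~\eqref{compositionofMverofheckevsendandisomgroupalgebraheckealgebra}. Before doing so I would note that, since $m \in I_{M(F)}(\rho_{M})$, Remark~\ref{remarkmp2prop6.6} gives $m \in \widetilde{K_{M}}$ and $m$ normalizes $\rho_{M}$; together with Assumption~\ref{parahoricvermultiplicity1} this shows that $\widetilde{\rho_{M}}(m^{-1})$ preserves the subspace $\rho_{M} \subset \widetilde{\rho_{M}}$, so that all of the operators $\Phi^{M}_{m^{-1}}$, $(\Phi^{M}_{m^{-1}})'$ and $\phi^{M}_{m^{-1}}$ appearing in the discussion above are well defined. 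This is the only point that requires any care; everything else is bookkeeping.

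The argument itself is a chain of three identifications, each already made explicit in the paragraphs preceding the statement. First, by the computation just before the statement, $\theta_m \in \mathbb{C}[M_{\sigma}/M^{1}]$ is the image of $(\Phi^{M}_{m^{-1}})' \in \End_{M(F)}\bigl(\ind_{M^1}^{M(F)}(\sigma_{1})\bigr)$ under isomorphism~\eqref{isomgroupalgebraheckealgebra}. Second, by the lemma immediately preceding the statement, $(\Phi^{M}_{m^{-1}})' = T_{\rho_{M}}(\Phi^{M}_{m^{-1}})$, so the preimage of $\theta_m$ under the middle map $T_{\rho_{M}}$ of~\eqref{compositionofMverofheckevsendandisomgroupalgebraheckealgebra} is $\Phi^{M}_{m^{-1}} \in \End_{M(F)}\bigl(\ind_{K_{M}}^{M(F)}(\rho_{M})\bigr)$. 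Third, I would unwind isomorphism~\eqref{Mverofheckevsend} by evaluating $\Phi^{M}_{m^{-1}}$ on the generating functions $f_{v}$ ($v \in \rho_{M}$, supported on $K_{M}$), obtaining
\[
\bigl(\phi^{M}_{m^{-1}}(m')\bigr)(v) = \bigl(\Phi^{M}_{m^{-1}}(f_{v})\bigr)(m') = \widetilde{\rho_{M}}(m^{-1}) \cdot f_{v}(mm')
\]
for $m' \in M(F)$, $v \in \rho_{M}$. Since $f_{v}$ is supported on $K_{M}$, the right-hand side vanishes unless $mm' \in K_{M}$, i.e.\ unless $m' \in m^{-1}K_{M}$; hence $\supp(\phi^{M}_{m^{-1}}) \subset m^{-1}K_{M}$, which is the assertion.

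I do not expect any genuine obstacle: the whole content of the lemma is the assembly of these three maps, and the explicit formula for $\phi^{M}_{m^{-1}}$ displayed just above the statement already exhibits a function in $\mathcal{H}(M(F),\rho_{M})$ supported on the single coset $m^{-1}K_{M}$. The only subtlety worth highlighting in the write-up is, as above, that $\widetilde{\rho_{M}}(m^{-1})$ restricts to $\rho_{M}$, which is exactly where Assumption~\ref{parahoricvermultiplicity1} is used.
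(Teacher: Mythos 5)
Your proof is correct and follows exactly the argument the paper gives in the prose paragraph immediately preceding the lemma: trace $\theta_m$ back through the three maps in~\eqref{compositionofMverofheckevsendandisomgroupalgebraheckealgebra}, using the preceding lemma that $T_{\rho_{M}}(\Phi^{M}_{m^{-1}}) = (\Phi^{M}_{m^{-1}})'$ and the explicit formula $\phi^{M}_{m^{-1}}(m') \cdot v = \widetilde{\rho_{M}}(m^{-1}) \cdot f_{v}(mm')$, whose support is visibly $m^{-1}K_{M}$. Your remark about why $\widetilde{\rho_{M}}(m^{-1})$ preserves the subspace $\rho_{M}$ (via Remark~\ref{remarkmp2prop6.6} and Assumption~\ref{parahoricvermultiplicity1}) is likewise exactly the justification the paper records when introducing $\Phi^{M}_{m^{-1}}$.
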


Recall that we defined a subgroup $T(J, \rho)$ of $R(J, \rho)$ as
\[
T(J, \rho) = \{
t \in R(J, \rho) \mid (Dt)\restriction_{V^{J}} = \id
\}.
\]
For $t \in T(J, \rho)$, $\widetilde{v(t)}$ denotes the element of $(V^{J, \Gamma})^{\perp} \subset V^{J} \subset a_{M}$ such that
\[
t(x) = x+\widetilde{v(t)}
\]
for all $x \in \mathcal{A}^{J}$, and $v(t)$ denotes the projection of $\widetilde{v(t)}$ on $V^{J}_{\Gamma}$.
The definition of the Levi subgroup $M$ implies that
\[
T(J, \rho) = R(J, \rho) \cap W_{M(F)}.
\]
\begin{lemma}
\label{WcapMsimeqImrho}
The canonical projection induces an isomorphism
\[
W(J, \rho) \cap W_{M(F)} \rightarrow I_{M(F)}(\rho_{M})/K_{M}.
\]
\end{lemma}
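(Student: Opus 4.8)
The plan is to exhibit the map explicitly, prove it is a well-defined injective homomorphism, and then prove surjectivity; the geometric input throughout is that $x_J$ is a special vertex of $\mathcal{B}^{\red}(M, F)$.

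\medskip
\noindent\emph{Set-up and construction of the map.} First I would record that, since the reductive group $\mathfrak{M}$ of \cite[3.15]{MR1235019} has the same relative root system with respect to $S$ as $M$ and both contain $Z_G(S)$, one has $\mathfrak{M}=M$, hence $\mathcal{M}_J=P_J\cap\mathfrak{M}(F)=P_J\cap M(F)=K_M$; moreover the isomorphism $\mathcal{M}_J/\mathcal{U}_J\xrightarrow{\sim}\mathbf{M}_J(k_F)$ identifies $\rho_M=\rho\restriction_{K_M}$ with $\rho$ regarded as a representation of $\mathcal{M}_J$ by inflation, i.e.\ with the $\rho$ entering the definition of $W(J,\rho)$. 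Also $Z_G(S)(F)_0\subset\mathcal{M}_J=K_M$, so the quotients below make sense. Now let $w\in W(J,\rho)\cap W_{M(F)}$ and let $\dot w\in N_M(S)(F)$ be any lift. From $wJ=J$ the element $\dot w$ permutes the affine roots in $J$, hence fixes the point $x_J$ of $\mathcal{A}(M,S)$ cut out by $J$, hence normalises $K_M$ (the connected stabiliser of $x_J$); and ${}^{\dot w}\rho\simeq\rho$ as representations of $\mathcal{M}_J$ is, by the identification above, exactly ${}^{\dot w}\rho_M\simeq\rho_M$, so $\dot w\in I_{M(F)}(\rho_M)$. Two lifts of $w$ differ by an element of $Z_G(S)(F)_0\subset K_M$, so $w\mapsto\dot wK_M$ is a well-defined group homomorphism $\Psi\colon W(J,\rho)\cap W_{M(F)}\to I_{M(F)}(\rho_M)/K_M$, which is the map induced by the canonical projections.

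\medskip
\noindent\emph{Injectivity.} If $\Psi(w)=1$ then $\dot w\in N_M(S)(F)\cap K_M$, so $w\in W_{K_M}$. Since $x_J$ is special in $\mathcal{B}^{\red}(M,F)$ and the walls through $x_J$ bounding $C$ are $\{a=0\}_{a\in J}$, one has $W_{K_M}=\langle s_a\mid a\in J\rangle$, a group acting on the affine roots vanishing at $x_J$ through its action on $DJ$, which is a basis of the root system $\Phi^M=\Phi\cap\mathbb{R}\!\cdot\!(DJ)$. As $w$ also satisfies $wJ=J$, its derivative $Dw\in W_0(\Phi^M)$ fixes this basis, so $Dw=1$; being a translation fixing $x_J$, $w=1$.

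\medskip
\noindent\emph{Surjectivity.} Let $m\in I_{M(F)}(\rho_M)$; by Remark~\ref{remarkmp2prop6.6}, $m\in\widetilde{K_M}$ and $m$ fixes $x_J$. Writing $m\in K_M\dot\lambda K_M$ with $\dot\lambda\in N_M(S)(F)$ (Bruhat--Tits decomposition of $M(F)$ relative to $K_M$) and using that $m$ and the two $K_M$-factors fix $x_J$, we get that $\dot\lambda$ fixes $x_J$, hence normalises $K_M$, hence $K_M\dot\lambda K_M=\dot\lambda K_M$; so $mK_M=\dot\lambda K_M$, and since $I_{M(F)}(\rho_M)$ is a group containing $K_M$ we also have $\dot\lambda\in I_{M(F)}(\rho_M)$. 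Let $\lambda\in W_{M(F)}$ be its class; then $D\lambda\in W_0(\Phi^M)$ and $\lambda$ fixes $x_J$, so $\lambda^{-1}(J)$ is again a basis of $\Phi^M$, and by simple transitivity of $W_0(\Phi^M)=W_{K_M}$ on bases there is $v\in W_{K_M}$ with $v(J)=\lambda^{-1}(J)$. Put $w=\lambda v$. Then $\dot w=\dot\lambda\dot v$ with $\dot v\in K_M$, so $\dot wK_M=mK_M$; $w(J)=J$, so $\dot w$ normalises $K_M$; and $w\in I_{M(F)}(\rho_M)$, so ${}^{\dot w}\rho_M\simeq\rho_M$. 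Hence $w\in W(J,\rho)\cap W_{M(F)}$ and $\Psi(w)=mK_M$.

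\medskip
\noindent I expect the main obstacle to be the surjectivity step: one must correctly invoke the Bruhat--Tits decomposition of $M(F)$ with respect to the maximal parahoric $K_M$ to replace $m$ by an $N_M(S)(F)$-representative modulo $K_M$ that fixes $x_J$, and then further correct by an element of $W_{K_M}=\langle s_a\mid a\in J\rangle$ to arrange $wJ=J$ while staying inside the group $I_{M(F)}(\rho_M)$. Both adjustments hinge on $x_J$ being special in $\mathcal{B}^{\red}(M,F)$, so that $DJ$ is a basis of $\Phi^M$ and $W_0(\Phi^M)$ acts simply transitively on such bases.
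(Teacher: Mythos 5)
Your proof is correct, but it takes a genuinely different route from the paper's, which is almost entirely citation-based: the paper observes that $W(J,\rho)\cap W_{M(F)}\subset W_{I_{M(F)}(\rho_M)}$, gets $W_{K_M}=W_K=W_J$ from BN-pair generalities, cites \cite[Lemma~2.2]{MR1235019} for $W(J,\rho)\cap W_J=\{1\}$, and cites \cite[Theorem~4.15]{MR1235019} for surjectivity. You instead unwind those two citations into explicit arguments. For injectivity you pass to the derivative: $w\in W_J$ permuting $J$ forces $Dw$ to permute $DJ$, hence $Dw=1$, hence $w$ is a translation fixing $x_J$, hence $w=1$. This is correct, though a small detour; the citation-free version of \cite[Lemma~2.2]{MR1235019} is simply that an element of the finite Coxeter group $W_J$ permuting its own set of simple roots $J$ has no descents, hence has length $0$, which avoids needing to know that $DJ$ is a basis of $\Phi^M$. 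For surjectivity your argument via the Bruhat--Tits decomposition of $M(F)$ relative to $K_M$, pulling the $N_M(S)(F)$-representative into the stabilizer of $x_J$ and then correcting by an element of $W_{K_M}=W_J$ so as to force $wJ=J$, is a nice self-contained replacement for the appeal to \cite[Theorem~4.15]{MR1235019}. The trade-off is that your version leans on several geometric claims that the paper's citations keep implicit: that $x_J$ is a \emph{special} vertex of $\mathcal{B}^{\red}(M,F)$ so that $DJ$ is a basis of $\Phi^M$ and $D\colon W_J\to W_0(\Phi^M)$ is an isomorphism, and at the start that $\mathcal M_J$ may be identified with $K_M$ so that Morris's notion of $^{\dot w}\!\rho\simeq\rho$ (as $\mathcal M_J$-representations) matches $^{\dot w}\!\rho_M\simeq\rho_M$. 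These are true in the present setting and are indeed the geometric content behind Morris's Lemma~2.2 and Theorem~4.15, but since neither the paper nor you establishes them explicitly, a cautious referee would ask you to either justify them or revert to the direct Coxeter-theoretic argument for the injectivity step, which works without them.
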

\begin{proof}
Since $\dot{w}$ intertwines $\rho$ for all $w \in W(J, \rho)$, we have
\[
W(J, \rho) \cap W_{M(F)}  \subset 
W_{I_{M(F)}(\rho_{M})}.
\]
According to the definition of $K = P_{J}$ \cite[3.7]{MR1235019} and general theory of BN-pair, we have
\[
W_{K_{M}}
= W_{K} = W_{J}.
\]
Since any element of $W(J, \rho)$ fixes $J$, \cite[Lemma~2.2]{MR1235019} implies that
\begin{align*}
W(J, \rho) \cap W_{K_{M}}
= W(J, \rho) \cap W_{J} = \{1\}.
\end{align*}
Hence, the natural projection induces an injection
\[
W(J, \rho) \cap W_{M(F)} \rightarrow I_{M(F)}(\rho_{M})/K_{M}.
\]
Moreover, according to \cite[Theorem~4.15]{MR1235019}, the map is surjective.
\end{proof}
We regard 
\[
T(J, \rho) = R(J, \rho) \cap W_{M(F)} \subset W(J, \rho) \cap W_{M(F)}
\]
as a subgroup of 
\[
I_{M(F)}(\rho_{M})/K_{M} \simeq M_{\sigma}/M^1
\]
via Lemma~\ref{WcapMsimeqImrho}.
\begin{lemma}
\label{vvshm}
Let $t \in T(J, \rho)$. Then, we have
\[
\widetilde{v(t)} = - H_{M}(t),
\]
where 
\[
H_{M} \colon M(F)/M^1 \rightarrow a_{M}
\]
denotes the map defined in Section~\ref{A review of Solleveld's results}.
\end{lemma}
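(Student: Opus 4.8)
The statement $\widetilde{v(t)} = -H_M(t)$ is an identity between two elements of $a_M$. The left side $\widetilde{v(t)}$ is defined via the action of $t$ on the affine space $\mathcal{A}^J$ (equivalently on $\mathcal{A}$), while the right side $H_M(t)$ is defined arithmetically through $\langle \gamma, H_M(t)\rangle = \ord_F(\gamma(t))$ for rational characters $\gamma$ of $M$. The plan is to compare both through the translation action of $t$, viewed as an element of $W_{M(F)} = (N_G(S)(F)\cap M(F))/Z_G(S)(F)_0$ acting on the apartment $\mathcal{A}(M,S)$ of $M$. The key point is that $t\in T(J,\rho)$ has trivial derivative on $V^J$ by definition of $T(J,\rho)$, so $t$ acts on $\mathcal{A}^J$ (which is an affine space with translations $V^J = a_M$, by the identification recorded just before the statement) as a pure translation by $\widetilde{v(t)}$.

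First I would recall the standard description of how $W_{M(F)}$ acts on $\mathcal{A}(M,S)$ by the Bruhat--Tits theory: a lift $\dot t \in N_G(S)(F)\cap M(F)$ acts on the apartment, and for an element of $Z_G(S)(F)$ (in particular for a suitable representative), the translation part of this action is exactly $-H_{Z_G(S)}$ or, after passing to the relevant quotient, $-H_M$ composed with the appropriate projection; the minus sign is the usual normalization convention in \cite{MR546588} (cf.\ \cite[1.2, 5.2.12]{MR546588} and \cite{MR756316}), consistent with the conventions fixed in Section~\ref{The case of depth-zero types} where $\nu$ denotes the action. Since $t$ lies in $W_{M(F)}$ and $M = Z_G(\text{(torus)})$, a lift of $t$ differs from an element of $Z_G(S)(F)$ by an element of the derived-group part, which contributes no translation on $a_M$; more precisely, the image of $t$ in $M(F)/M^1$ is exactly the element we are calling $t\in M_\sigma/M^1$ via Lemma~\ref{WcapMsimeqImrho}, and the translation it induces on $a_M = X_*(A_M)\otimes\mathbb{R}$ is $-H_M(t)$ by definition of $H_M$.

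The verification itself then reduces to unwinding definitions: for a rational character $\gamma$ of $M$, evaluate both sides against $\gamma$. On one side, $\langle \gamma, \widetilde{v(t)}\rangle$ is the amount by which the affine function on $\mathcal{A}^J$ attached to $\gamma$ is shifted under $t$, which by the Bruhat--Tits description equals $-\ord_F(\gamma(\dot t))$. On the other side, $\langle \gamma, H_M(t)\rangle = \ord_F(\gamma(t))$ by definition, where $\gamma(t)$ only depends on the image of $t$ in $M(F)/M^1$ since $\gamma$ is trivial on $M^1$. Matching these gives the claimed equality, including the sign. I would also note that $\widetilde{v(t)}$ genuinely lies in $(V^{J,\Gamma})^\perp\subset V^J = a_M$ as asserted earlier, so the comparison takes place inside $a_M$ without ambiguity.

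\textbf{Main obstacle.} The substantive point is pinning down the sign convention: one must carefully reconcile the normalization of the $W$-action $\nu$ on $\mathcal{A}$ from \cite{MR546588} (used to define $v[a,J]$, $R(J,\rho)$, and hence $\widetilde{v(t)}$) with the normalization of $H_M$ from \cite{MR4432237}, since these two references need not use the same orientation. The cleanest way around this is to test on a single explicit case — e.g.\ a strongly $(U,K)$-positive central element $\zeta$ of $M$ as in the proof of Lemma~\ref{tPwhenmissubalgebra} — where both $\delta_P(\zeta)$ (computed in Lemma~\ref{lemmacalculationofdelta}) and the translation $\nu(\zeta)$ are known, forcing the sign. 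Beyond that, the argument is a routine unwinding of the definitions of $H_M$, $\widetilde{v(t)}$, and the identification $T(J,\rho)\subset M_\sigma/M^1$ of Lemma~\ref{WcapMsimeqImrho}, together with the fact that elements of $W_{M(F)}$ act on $a_M$ through their image in $M(F)/M^1$.
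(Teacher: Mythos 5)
Your proposal identifies the right core fact --- the Bruhat--Tits formula \cite[1.2 (1)]{MR546588} relating the translation part of the apartment action to valuations of rational characters --- and both you and the paper reduce the general case to elements that lie in $W_{Z_G(S)(F)}$, where that formula applies directly. But the reduction you propose is different and has a gap. You argue that a lift $\dot t$ can be written as $z\cdot n_w$ with $z\in Z_G(S)(F)$ and $n_w$ in the ``derived-group part,'' and that $n_w$ contributes no translation on $a_M$. That last assertion is the whole content of the reduction, and it is not justified in your sketch: even though $n_w\in M^1$ forces $H_M(n_w)=0$, the translation part of $\nu(n_w)$ restricted to $\mathcal{A}^J$ (equivalently to $V^J$) needs a separate argument before you can conclude it vanishes. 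Relatedly, the phrase ``the translation it induces on $a_M$ is $-H_M(t)$ by definition of $H_M$'' is a misattribution; $H_M$ is defined arithmetically by $\langle\gamma,H_M(m)\rangle=\ord_F(\gamma(m))$, and its relation to the translation action is precisely the content of the cited Tits formula, not a definition.

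The paper sidesteps the decomposition entirely with a power argument: since the image of $Z_G(S)(F)$ in $M(F)/M^1$ has finite index, some $t^n$ lies in $T(J,\rho)\cap W_{Z_G(S)(F)}$, where the lift \emph{is} in $Z_G(S)(F)$ and the Tits formula applies verbatim; then $\widetilde{v(t^n)}=n\cdot\widetilde{v(t)}$ and $H_M(t^n)=n\cdot H_M(t)$ let you divide by $n$. This is cleaner because it never needs to analyze the non-toral factor of $\dot t$. Your proposed sign check on a strongly $(U,K)$-positive central element is a sensible sanity test, but it is superfluous once one quotes \cite[1.2 (1)]{MR546588} with its sign. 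If you want to keep your decomposition route, the missing ingredient is a proof that the translation part of $\nu$ carries $M^1$ into the span of the $M$-coroots, which is complementary to $V^J$; absent that, the power argument is the shorter path.
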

\begin{proof}
Note that equation~\eqref{RjrhocontainedinG1} implies that the image of an element of $T(J, \rho)$ via $H_{M}$ is contained in the subspace of $a_{M}$ spanned by $\alpha^{\vee} \, (\alpha \in \Sigma_{\red}(A_M))$.
Let $Z_{G}(S)$ denote the minimal semi-standard Levi subgroup of $G$ with respect to $S$.
For $t \in T(J, \rho) \cap 
W_{Z_{G}(S)(F)}$, the definition of $H_{M}$ and \cite[1.2 (1)]{MR546588} imply that
\[
\widetilde{v(t)} = - H_{M}(t).
\]
Let $t \in T(J, \rho)$.
Since the image of $Z_{G}(S)(F)$ on $M(F)/M^1$ via the natural projection
\[
Z_{G}(S)(F) \subset M(F) \rightarrow M(F)/M^1
\]
is of finite index, there exists $n \in \mathbb{Z}_{>0}$ such that 
\[
t^{n} \in T(J, \rho) \cap 
W_{Z_{G}(S)(F)}.
\]
Hence,
\begin{align*}
n \cdot \widetilde{v(t)} &= \widetilde{v(t^{n})} \\
&= -H_{M}(t^{n}) \\
&= - n \cdot H_{M}(t).
\end{align*}
Thus, we obtain $\widetilde{v(t)} = - H_{M}(t)$.
\end{proof}
We also recall that an element $z \in M(F)$ is called positive relative to $K$ and $U$, if it satisfies the conditions
\[
zK_{U}z^{-1} \subset K_{U}, \ z^{-1}K_{\overline{U}}z \subset K_{\overline{U}}.
\]
\begin{lemma}
\label{positivitycompativility}
Let $t \in T(J, \rho)$ such that $\left(D_{J}(a')\right)(v(t)) \ge 0$ for all $a' \in B(J, \rho)_{e}$.
Then, the lift $\dot{t}$ of $t$ is positive relative to $K$ and $U'$ for some parabolic subgroup $P'$ of $G$ with Levi factor $M$ and unipotent radical $U'$ such that
\[
D_{J}\left(
\Gamma'(J, \rho)^{+}_{e}
\right)  = 
D_{J} \left(
\Gamma'(J, \rho)_{e}
\right) \cap \left( - \Sigma(P', A_{M}) \right).
\]
\end{lemma}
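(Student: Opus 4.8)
The plan is to reduce the positivity of $\dot t$ to a single inequality on the translation vector $\widetilde{v(t)}$, and then to manufacture an admissible $P'$ out of a generic vector $\lambda \in a_{M}$ that is simultaneously strictly anti-dominant for the based root system $(D_{J}(\Gamma'(J, \rho)_{e}),\, D_{J}(B(J, \rho)_{e}))$ and anti-aligned with $\widetilde{v(t)}$. That such a $\lambda$ exists is precisely the content of the hypothesis $(D_{J}(a'))(v(t)) \ge 0$: by Lemma~\ref{vvshm} it forces $-\widetilde{v(t)} = H_{M}(t)$ to be (weakly) anti-dominant for that root system, and this is compatible with the anti-dominance that the identity defining $P'$ imposes on $\lambda$.

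The first step is to rephrase positivity in terms of affine roots. From the Bruhat--Tits structure of $K = P_{J}$ (see \cite[3.12, 3.13]{MR1235019}), $K$ is generated by $Z_{G}(S)(F)_{0}$ together with the affine root subgroups $U_{a}$ for which $a \ge 0$ on $\mathcal{F}_{J}$, and, for a parabolic subgroup $P' = MU'$ with Levi factor $M$, the subgroup $K_{U'} = K \cap U'(F)$ is generated by those $U_{a}$ for which in addition the restriction of $Da$ to $A_{M}$ lies in $\Sigma(P', A_{M})$. Since $\dot t \in M(F)$, conjugation by $\dot t$ preserves $U'(F)$ and $\overline{U'}(F)$, so $\dot t\, U_{a}\, \dot t^{-1} = U_{a'}$ with $Da'$ having the same restriction to $A_{M}$ as $Da$; and since $\dot t$ acts on $\mathcal{A}^{J} \supseteq \mathcal{F}_{J}$ as translation by $\widetilde{v(t)}$ (definition of $T(J, \rho)$), one has $a'(x) = a(x) - (Da)(\widetilde{v(t)})$ for $x \in \mathcal{F}_{J}$. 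Hence, if $\langle \alpha, \widetilde{v(t)} \rangle \le 0$ for every $\alpha \in \Sigma(P', A_{M})$, then $a' \ge 0$ on $\mathcal{F}_{J}$ for every generator $U_{a}$ of $K_{U'}$, so $\dot t\, K_{U'}\, \dot t^{-1} \subseteq K_{U'}$; applying the same computation to $\overline{P'}$ and to $\dot t^{-1}$, whose translation vector on $\mathcal{A}^{J}$ is $-\widetilde{v(t)}$, yields $\dot t^{-1} K_{\overline{U'}} \dot t \subseteq K_{\overline{U'}}$, i.e.\ $\dot t$ is positive relative to $K$ and $U'$. I expect this step --- the precise description of $K_{U'}$ through the valued root datum and the conjugation formula for affine root subgroups --- to be the main technical obstacle; the rest is elementary.

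It remains to choose $P'$ with $D_{J}(\Gamma'(J, \rho)^{+}_{e}) = D_{J}(\Gamma'(J, \rho)_{e}) \cap (-\Sigma(P', A_{M}))$ and $\langle \alpha, \widetilde{v(t)} \rangle \le 0$ for all $\alpha \in \Sigma(P', A_{M})$. By Lemma~\ref{vvshm}, $\widetilde{v(t)} = -H_{M}(t)$; and since $v(t)$ is the image of $\widetilde{v(t)}$ in $V^{J}_{\Gamma}$ while the elements of $D_{J}(\Gamma'(J, \rho)_{e})$ vanish on $V^{J,\Gamma}$, the hypothesis becomes $\langle \beta, \widetilde{v(t)} \rangle \ge 0$ for every $\beta \in D_{J}(\Gamma'(J, \rho)^{+}_{e})$. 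Choose $\lambda_{0}$ in the open anti-dominant cone of $(D_{J}(\Gamma'(J, \rho)_{e}),\, D_{J}(B(J, \rho)_{e}))$, subject moreover to $\langle \alpha, \lambda_{0} \rangle \ne 0$ for every $\alpha \in \Sigma(G, A_{M})$ with $\langle \alpha, \widetilde{v(t)} \rangle = 0$ (an open dense condition on that cone), and set $\lambda = -\widetilde{v(t)} + \varepsilon \lambda_{0} \in a_{M}$ for $\varepsilon > 0$ small. Then $\lambda$ is regular, so $\Sigma(P', A_{M}) := \{\alpha \in \Sigma(G, A_{M}) \mid \langle \alpha, \lambda \rangle > 0\}$ defines a parabolic subgroup $P' = MU'$ with Levi factor $M$. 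For $\beta \in D_{J}(\Gamma'(J, \rho)^{+}_{e})$ one has $\langle \beta, \lambda \rangle = -\langle \beta, \widetilde{v(t)} \rangle + \varepsilon \langle \beta, \lambda_{0} \rangle < 0$ --- strictly, using $\langle \beta, \widetilde{v(t)} \rangle \ge 0$ together with strict anti-dominance of $\lambda_{0}$ in the wall case $\langle \beta, \widetilde{v(t)} \rangle = 0$ --- whence, $D_{J}(\Gamma'(J, \rho)_{e})$ being symmetric, $D_{J}(\Gamma'(J, \rho)_{e}) \cap (-\Sigma(P', A_{M})) = D_{J}(\Gamma'(J, \rho)^{+}_{e})$; and whenever $\langle \alpha, \lambda \rangle > 0$ for $\alpha \in \Sigma(G, A_{M})$, distinguishing the sign of $\langle \alpha, \widetilde{v(t)} \rangle$ and taking $\varepsilon$ small forces $\langle \alpha, \widetilde{v(t)} \rangle \le 0$, which is the condition needed in the previous paragraph. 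Hence $\dot t$ is positive relative to $K$ and this $U'$, and $P'$ satisfies the asserted identity.
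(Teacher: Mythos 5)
Your proof is correct and follows essentially the same strategy as the paper's: both proofs manufacture $P'$ from a generic element of $a_M$ chosen close to $\pm\widetilde{v(t)}$, verify the equality of positive systems by a containment-plus-cardinality argument, and then verify positivity of $\dot t$ by translating the point defining $K$ by $\widetilde{v(t)}$ inside the apartment. The only cosmetic difference is that you parametrize $P'$ by $\{\alpha \mid \langle \alpha, \lambda\rangle > 0\}$ with $\lambda \approx -\widetilde{v(t)}$, whereas the paper uses $\{\alpha \mid \langle \alpha, \lambda\rangle < 0\}$ with $\lambda \approx \widetilde{v(t)}$, and that you unwind the parahoric structure explicitly via affine root subgroups $U_a$ where the paper simply writes $\dot t K_{U'}\dot t^{-1} = G(F)_{y+\widetilde{v(t)},0}\cap U'(F)$ and appeals to the Moy--Prasad filtration description directly.
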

\begin{proof}
Let $t \in T(J, \rho)$ such that $\left(D_{J}(a')\right)(v(t)) \ge 0$ for all $a' \in B(J, \rho)_{e}$.
Take an element $\lambda \in a_{M}$ such that
\[
\langle \alpha, \lambda \rangle \neq 0
\]
for all $\alpha \in \Sigma(G, A_{M})$, and
\begin{align}
\label{positivityforgamma}
\langle \alpha, \lambda \rangle > 0
\end{align}
for all $\alpha \in D_{J} \left(B(J, \rho)_{e}\right)$.
The assumption of $t$ implies that
$\widetilde{v(t)}$ lies in the closure of the set
\[
\{
x \in a_{M} \mid \langle \alpha, x \rangle > 0 \ (\alpha \in D_{J} \left(B(J, \rho)_{e}\right))
\},
\]
hence we can take $\lambda$ sufficiently close to $\widetilde{v(t)}$.
More precisely, we may assume
\begin{align}
\label{sufficientlysmall}
\langle \alpha, \widetilde{v(t)} \rangle \le 0
\end{align}
for all $\alpha \in \Sigma(G, A_{M})$ such that
\[
\langle \alpha, \lambda \rangle < 0.
\]
Let $P'$ be a parabolic subgroup of $G$ with Levi factor $M$ and unipotent radical $U'$ such that
\[
\Sigma(P', A_{M}) = \{
\alpha \in \Sigma(G, A_{M}) \mid \langle \alpha, \lambda \rangle < 0
\}.
\]
Condition~\eqref{positivityforgamma} implies that
\[
D_{J}\left(
\Gamma'(J, \rho)^{+}_{e}
\right)  \subset 
D_{J} \left(
\Gamma'(J, \rho)_{e}
\right) \cap \left( - \Sigma(P', A_{M}) \right).
\]
Since $D_{J}\left(
\Gamma'(J, \rho)^{+}_{e}
\right)$ and $D_{J} \left(
\Gamma'(J, \rho)_{e}
\right) \cap \left( - \Sigma(P', A_{M}) \right)$ are sets of positive roots in $D_{J} \left(
\Gamma'(J, \rho)_{e}
\right)$, we obtain that
\[
D_{J}\left(
\Gamma'(J, \rho)^{+}_{e}
\right)  = 
D_{J} \left(
\Gamma'(J, \rho)_{e}
\right) \cap \left( - \Sigma(P', A_{M}) \right).
\]

We will prove that $\dot{t}$ is positive relative to $K$ and $U'$.
According to \cite[3.1]{MR546588}, we can take $y \in \mathcal{F}_{J} \subset \mathcal{A}^{J}$ such that
\[
P_{J} = G(F)_{y, 0},
\]
where $G(F)_{y, 0}$ denotes the parahoric subgroup of $G(F)$ associated with $y$ \cite[3.1, 3.2]{MR1371680}.
Then, we obtain
\begin{align*}
\dot{t} K_{U'} \dot{t}^{-1} &= \dot{t}(G(F)_{y, 0} \cap U'(F)) \dot{t}^{-1} \\
&= G(F)_{t \cdot y, 0} \cap U'(F) \\
&= G(F)_{y + \widetilde{v(t)}, 0} \cap U'(F).
\end{align*}
Condition~\eqref{sufficientlysmall} implies that
\[
\langle \alpha, \widetilde{v(t)} \rangle \le 0
\]
for all $\alpha \in \Sigma(P', A_{M})$.
Thus, the definition of the parahoric subgroup \cite[3.1, 3.2]{MR1371680} implies that $\dot{t} K_{U'} \dot{t}^{-1} \subset K_{U'}$.
Similarly, we can prove that $\dot{t}^{-1} K_{\overline{U}'} \dot{t} \subset K_{\overline{U}'}$.
\end{proof}
\begin{remark}
\label{remarkoflemmapositivitycompativility}
Let $P'$ be a parabolic subgroup of $G$ with Levi factor $M$ and unipotent radical $U'$.
Let $m \in W(J, \rho) \cap W_{M(F)}$ such that
\[
\langle \alpha, H_{M}(m) \rangle \ge 0
\]
for all $\alpha \in \Sigma(P', A_{M})$.
Then, the proof of Lemma~\ref{vvshm} and Lemma~\ref{positivitycompativility} imply that the lift $\dot{m}$ of $m$ is positive relative to $K$ and $U'$.
\end{remark}


Combining Lemma~\ref{independencyofP} and Lemma~\ref{lemmasupportofimageofm}, with Lemma~\ref{positivitycompativility}, we obtain the following Corollaries.
\begin{corollary}
\label{corollaryimageofphiM}
Let $t \in T(J, \rho)$ such that $\left(D_{J}(a')\right)(v(t)) \ge 0$ for all $a' \in B(J, \rho)_{e}$.
Let $\phi^{M}_{t}$ denote the element of $\mathcal{H}(M(F), \rho_{M})$ corresponding to 
\[
\theta_{t^{-1}} \in \mathbb{C}[I_{M(F)}(\rho_{M})/K_{M}] = \mathbb{C}[M_{\sigma}/M^1]
\]
via isomorphism~\eqref{compositionofMverofheckevsendandisomgroupalgebraheckealgebra}.
We also write $\Phi^{M}_{t}$ for the element of $\End_{M(F)}\left( \ind_{K_{M}}^{M(F)} (\rho_{M}) \right)$ corresponding to $\phi^{M}_{t}$ via isomorphism~\eqref{Mverofheckevsend}.
Then, there exists $c(t) \in \mathbb{C}^{\times}$ such that
\[
(I^{\Mor} \circ t_{P})(\Phi^{M}_{t}) = c(t) \cdot \theta_{v(t)}.
\]
\end{corollary}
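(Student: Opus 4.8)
The plan is to prove the statement by showing that $t_{P}(\Phi^{M}_{t})$ is a nonzero scalar multiple of Morris's element $\Phi_{t}$, and then invoking Corollary~\ref{corollaryoftheorem7.12ofmorrisaffinever}. First I would translate everything to the Hecke-algebra side via isomorphisms~\eqref{heckevsend} and \eqref{Mverofheckevsend}: let $\phi^{M}_{t}\in\mathcal{H}(M(F),\rho_{M})$ and $\phi_{t}\in\mathcal{H}(G(F),\rho)$ be the functions corresponding to $\Phi^{M}_{t}$ and $\Phi_{t}$. By Lemma~\ref{lemmasupportofimageofm} applied with $m=t^{-1}$, the function $\phi^{M}_{t}$ is supported on $\dot{t}K_{M}$, and since $\dot{t}\in I_{M(F)}(\rho_{M})$ normalizes $K_{M}$ (Remark~\ref{remarkmp2prop6.6}) this support equals $K_{M}\dot{t}K_{M}$. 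Next, by Lemma~\ref{positivitycompativility} there is a parabolic subgroup $P'=MU'$ with $D_{J}(\Gamma'(J,\rho)^{+}_{e})=D_{J}(\Gamma'(J,\rho)_{e})\cap(-\Sigma(P',A_{M}))$ such that $\dot{t}$ is positive relative to $K$ and $U'$; hence $\dot{t}$ is a positive element of $I_{M(F)}(\rho_{M})$ in the sense attached to $P'$, so $\phi^{M}_{t}$ lies in the domain on which $t_{P'}$ is built directly from the support-extension map $T$. Because $P$ and $P'$ both satisfy the compatibility condition, Lemma~\ref{independencyofP} gives $t_{P}=t_{P'}$, so I may compute with $P'$.

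The second step is a direct computation of $t_{P'}(\phi^{M}_{t})$. By construction $t_{P'}(\phi^{M}_{t})=T(\phi^{M}_{t}\cdot\delta_{P'}^{-1/2})$, and since $\delta_{P'}$ is trivial on the compact group $K_{M}$ while $\phi^{M}_{t}$ is supported on $\dot{t}K_{M}$, one has $\phi^{M}_{t}\cdot\delta_{P'}^{-1/2}=\delta_{P'}(\dot{t})^{-1/2}\phi^{M}_{t}$; thus $t_{P}(\phi^{M}_{t})=\delta_{P'}(\dot{t})^{-1/2}T(\phi^{M}_{t})$. By \cite[Proposition~6.3~(iii)]{MR1643417}, $T(\phi^{M}_{t})$ is supported on $P_{J}\dot{t}P_{J}$, and it is nonzero because its value at $\dot{t}$ equals $\phi^{M}_{t}(\dot{t})=\widetilde{\rho_{M}}(\dot{t})\restriction_{V_{\rho_{M}}}$, which is an automorphism of $V_{\rho_{M}}$ (as $\dot{t}$ normalizes $\rho_{M}$ inside $\widetilde{\rho_{M}}$ with multiplicity one).

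The key step—which I expect to be the main obstacle—is to identify $T(\phi^{M}_{t})$ with a multiple of $\phi_{t}$, the function attached to Morris's operator $\Phi_{t}$, which is likewise supported on $P_{J}\dot{t}P_{J}$. For this it suffices to show that the subspace of $\mathcal{H}(G(F),\rho)$ of functions supported on the single double coset $P_{J}\dot{t}P_{J}$ is one-dimensional. I would argue that $w\mapsto P_{J}\dot{w}P_{J}$ is injective on $W(J,\rho)$: if $P_{J}\dot{w}P_{J}=P_{J}\dot{w'}P_{J}$ then $w'w^{-1}\in W_{J}$ (using $wW_{J}w^{-1}=W_{J}$, since $wJ=J$) and $w'w^{-1}\in W(J,\rho)$, hence $w'w^{-1}\in W(J,\rho)\cap W_{J}=\{1\}$ by the computation in the proof of Lemma~\ref{WcapMsimeqImrho}. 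Since $\mathcal{H}(G(F),\rho)$ is the direct sum of the subspaces of functions supported on single $(K,K)$-double cosets, and by Theorem~\ref{theorem7.12ofmorris} the functions $\phi_{w}$ $(w\in W(J,\rho))$ form a basis with $\phi_{w}$ supported on the pairwise distinct cosets $P_{J}\dot{w}P_{J}$, a dimension count forces each such subspace to be exactly $\mathbb{C}\phi_{w}$. Consequently $T(\phi^{M}_{t})=c_{1}\phi_{t}$ for some $c_{1}\in\mathbb{C}^{\times}$, so $t_{P}(\Phi^{M}_{t})=\delta_{P'}(\dot{t})^{-1/2}c_{1}\Phi_{t}$, which lies in $\mathcal{H}(R(J,\rho))$ because $t\in T(J,\rho)\subseteq R(J,\rho)$.

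Finally, the hypothesis $\left(D_{J}(a')\right)(v(t))\ge 0$ for all $a'\in B(J,\rho)_{e}$ is precisely the hypothesis of Corollary~\ref{corollaryoftheorem7.12ofmorrisaffinever}, which yields $I^{\Mor}(\Phi_{t})=q_{v(t)}^{1/2}\theta_{v(t)}$. Combining the displays gives $(I^{\Mor}\circ t_{P})(\Phi^{M}_{t})=\delta_{P'}(\dot{t})^{-1/2}c_{1}q_{v(t)}^{1/2}\,\theta_{v(t)}$, and one sets $c(t)=\delta_{P'}(\dot{t})^{-1/2}c_{1}q_{v(t)}^{1/2}\in\mathbb{C}^{\times}$. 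The remaining steps are essentially formal: the support computation via Lemma~\ref{lemmasupportofimageofm}, the triviality of $\delta_{P'}$ on $K_{M}$, and the positivity bookkeeping needed to justify the move from $P$ to $P'$; the genuine content is the multiplicity-one identification of $T(\phi^{M}_{t})$ with $\phi_{t}$.
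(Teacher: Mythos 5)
Your proposal is correct and follows essentially the same route as the paper's proof: fix a good $P'$ via Lemma~\ref{positivitycompativility}, invoke Lemma~\ref{independencyofP} to reduce to that $P'$, use Lemma~\ref{lemmasupportofimageofm} to pin down the support, conclude $t_{P}(\Phi^{M}_{t})$ is a scalar multiple of $\Phi_{t}$, and finish with Corollary~\ref{corollaryoftheorem7.12ofmorrisaffinever}. The only difference is that you make explicit the one-dimensionality of the span of functions in $\mathcal{H}(G(F),\rho)$ supported on a single double coset $P_{J}\dot{t}P_{J}$ (via the injectivity of $w\mapsto P_{J}\dot{w}P_{J}$ on $W(J,\rho)$ together with the basis statement of Theorem~\ref{theorem7.12ofmorris}), a step the paper leaves implicit; your argument for that point is valid.
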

\begin{proof}
According to Lemma~\ref{positivitycompativility}, $\dot{t}$ is positive relative to $K$ and $U'$ for some parabolic subgroup $P'$ of $G$ with Levi factor $M$ and unipotent radical $U'$ such that
\[
D_{J}\left(
\Gamma'(J, \rho)^{+}_{e}
\right)  = 
D_{J} \left(
\Gamma'(J, \rho)_{e}
\right) \cap \left( - \Sigma(P', A_{M}) \right).
\]
According to Lemma~\ref{independencyofP}, we have $t_{P} = t_{P'}$.
Hence, by replacing $P$ with $P'$, we may assume that $\dot{t}$ is positive relative to $K$ and $U$.
According to Lemma~\ref{lemmasupportofimageofm}, $\phi^{M}_{t}$ is supported on $\dot{t} K_{M}$.
Hence, the definition of $t_{P}$ implies that $t_{P}(\phi^{M}_{t})$ is supported on $K \dot{t} K$.
Thus, there exists $c'(t) \in \mathbb{C}^{\times}$ such that
\[
t_{P} \left(
\Phi^{M}_{t}
\right)
 = c'(t) \cdot \Phi_{t}.
\]
Here, $\Phi_{t}$ denotes the element of $\End_{G(F)}\left(\ind_{K}^{G(F)} (\rho)\right)$ appearing in Theorem~\ref{theorem7.12ofmorris}.
On the other hand, since $t$ satisfies $\left(D_{J}(a')\right)(v(t)) \ge 0$ for all $a' \in B(J, \rho)_{e}$, Corollary~\ref{corollaryoftheorem7.12ofmorrisaffinever} implies that
\[
I^{\Mor} \left(
\Phi_{t}
\right) = q_{v(t)}^{1/2} \cdot \theta_{v(t)}.
\]
Hence, we obtain
\[
\left(
I^{\Mor} \circ t_{P})(\Phi^{M}_{t}
\right) = c(t) \cdot \theta_{v(t)}
\]
for
\[
c(t)= c'(t) \cdot q_{v(t)}^{1/2}.
\]
\end{proof}
\begin{corollary}
\label{corollaryvectorpart}
The image of $\mathbb{C}[\mathbb{Z}(R^{\Mor})^{\vee}]$ via the map
\[
\mathcal{H}^{\Mor} \xrightarrow{(I^{\Mor})^{-1}} \mathcal{H}(R(J, \rho)) \xrightarrow{T_{\rho_{M}} \circ I_{U}} \End_{G(F)}\left(I_{P}^{G}\left(\ind_{M^1}^{M(F)}(\sigma_{1})\right)\right)
\]
is contained in the image of $\End_{M(F)}\left(\ind_{M^1}^{M(F)} (\sigma_{1})\right)$ via $I_{P}^{G}$.
Moreover, for $t \in T(J, \rho)$ there exists $c(t) \in \mathbb{C}^{\times}$ such that
\[
\left(
I^{\Sol} \circ T_{\rho_{M}} \circ I_{U} \circ (I^{\Mor})^{-1}
\right)\left(
\theta_{v(t)} 
\right)
= c(t)^{-1} \cdot \theta_{t^{-1}}.
\]
If $t$ satisfies $\left(D_{J}(a')\right)(v(t)) \ge 0$ for all $a' \in B(J, \rho)_{e}$, 
the number $c(t)$ coincides with the number appearing in Corollary~\ref{corollaryimageofphiM}.
\end{corollary}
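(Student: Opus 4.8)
The plan is to reduce everything to Corollary~\ref{corollaryimageofphiM} and the compatibility statement of Proposition~\ref{compatibility}. Write $\Psi_{0} \colon \mathcal{H}^{\Mor} \to \End_{G(F)}\left(I_{P}^{G}\left(\ind_{M^1}^{M(F)}(\sigma_{1})\right)\right)$ for the composite $(T_{\rho_{M}} \circ I_{U}) \circ (I^{\Mor})^{-1}$ occurring in the statement before $I^{\Sol}$ is applied. Since $(I^{\Mor})^{-1}$ is an algebra isomorphism onto the subalgebra $\mathcal{H}(R(J, \rho))$ of $\End_{G(F)}\left(\ind_{K}^{G(F)}(\rho)\right)$ and $T_{\rho_{M}} \circ I_{U}$ is an algebra isomorphism, $\Psi_{0}$ is an injective $\mathbb{C}$-algebra homomorphism. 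As $t \mapsto v(t)$ identifies $T(J, \rho)$ with $\mathbb{Z}(R^{\Mor})^{\vee}$, the elements $\theta_{v(t)}$ $(t \in T(J, \rho))$ form a $\mathbb{C}$-basis of $\mathbb{C}[\mathbb{Z}(R^{\Mor})^{\vee}]$, so it is enough to compute each $\Psi_{0}(\theta_{v(t)})$, check that it lies in the image of $I_{P}^{G}$, and then apply $I^{\Sol}$, which is the identity on $\mathbb{C}[M_{\sigma}/M^{1}]$ by Theorem~\ref{modificationoftheorem10.9ofsolleveld}.

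First I would handle $t \in T(J, \rho)$ with $\left(D_{J}(a')\right)(v(t)) \ge 0$ for all $a' \in B(J, \rho)_{e}$. For such $t$, Corollary~\ref{corollaryimageofphiM} gives $(I^{\Mor})^{-1}(\theta_{v(t)}) = c(t)^{-1} t_{P}(\Phi^{M}_{t})$ with $c(t) \in \mathbb{C}^{\times}$. Applying $T_{\rho_{M}} \circ I_{U}$ and using Proposition~\ref{compatibility} ($I_{U} \circ t_{P} = I_{P}^{G}$) together with functoriality of $I_{P}^{G}$ ($T_{\rho_{M}} \circ I_{P}^{G} = I_{P}^{G} \circ T_{\rho_{M}}$, since $T_{\rho_{M}} \colon \ind_{K_{M}}^{M(F)}(\rho_{M}) \to \ind_{M^1}^{M(F)}(\sigma_{1})$ is $M(F)$-equivariant) yields $\Psi_{0}(\theta_{v(t)}) = c(t)^{-1}\, I_{P}^{G}\left(T_{\rho_{M}}(\Phi^{M}_{t})\right)$. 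Now the computation preceding Lemma~\ref{lemmasupportofimageofm} shows that $T_{\rho_{M}}(\Phi^{M}_{t})$ is, under isomorphism~\eqref{isomgroupalgebraheckealgebra}, the element $\theta_{t^{-1}} \in \mathbb{C}[M_{\sigma}/M^{1}]$; hence $\Psi_{0}(\theta_{v(t)}) = c(t)^{-1} I_{P}^{G}(\theta_{t^{-1}})$, which lies in $I_{P}^{G}\left(\End_{M(F)}\left(\ind_{M^1}^{M(F)}(\sigma_{1})\right)\right)$. Applying $I^{\Sol}$ and using that it fixes $\mathbb{C}[M_{\sigma}/M^{1}]$ pointwise gives $\left(I^{\Sol} \circ \Psi_{0}\right)(\theta_{v(t)}) = c(t)^{-1} \theta_{t^{-1}}$ with $c(t)$ the constant of Corollary~\ref{corollaryimageofphiM}, which is the desired formula in this case.

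To remove the positivity hypothesis I would bootstrap. Since $v$ carries $T(J, \rho)$ isomorphically onto the full-rank lattice $\mathbb{Z}(R^{\Mor})^{\vee}$, which meets the open cone $\{x \mid \left(D_{J}(a')\right)(x) > 0 \ \text{for all}\ a' \in B(J, \rho)_{e}\}$, there exists $t_{0} \in T(J, \rho)$ with $\left(D_{J}(a')\right)(v(t_{0})) > 0$ for all $a' \in B(J, \rho)_{e}$. Given arbitrary $t \in T(J, \rho)$, pick $n$ so large that $t_{0}^{n}$ and $t_{0}^{n}t$ both satisfy the positivity hypothesis (possible since $B(J, \rho)_{e}$ is finite and $v(t_{0}^{n}t) = n\, v(t_{0}) + v(t)$). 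Because $v$ is a group homomorphism and $\theta_{x}\theta_{y} = \theta_{x+y}$, $\theta_{-x} = \theta_{x}^{-1}$, we have $\theta_{v(t)} = \theta_{v(t_{0}^{n}t)}\,\theta_{v(t_{0}^{n})}^{-1}$; applying the algebra homomorphism $\Psi_{0}$, the already-proved positive case, and the commutativity of $M_{\sigma}/M^{1}$ (so $\theta_{(t_{0}^{n}t)^{-1}}\theta_{t_{0}^{n}} = \theta_{t^{-1}}$) gives $\Psi_{0}(\theta_{v(t)}) = \bigl(c(t_{0}^{n})/c(t_{0}^{n}t)\bigr) I_{P}^{G}(\theta_{t^{-1}})$. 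Setting $c(t) := c(t_{0}^{n}t)/c(t_{0}^{n}) \in \mathbb{C}^{\times}$, which is forced and hence independent of the auxiliary choices by injectivity of $\Psi_{0}$ and $I_{P}^{G}(\theta_{t^{-1}}) \ne 0$, and applying $I^{\Sol}$ yields the stated formula for all $t$; the containment assertion is then immediate because $\{\theta_{v(t)}\}_{t}$ spans $\mathbb{C}[\mathbb{Z}(R^{\Mor})^{\vee}]$ and each image lies in $I_{P}^{G}\left(\mathbb{C}[M_{\sigma}/M^{1}]\right)$.

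The main obstacle is not any single computation but keeping the bookkeeping of identifications straight: one must verify that ``$T_{\rho_{M}}(\Phi^{M}_{t})$ equals $\theta_{t^{-1}}$'' and ``$I^{\Sol}$ restricts to the identity on $\mathbb{C}[M_{\sigma}/M^{1}]$'' refer to one and the same concrete copy of $\mathbb{C}[M_{\sigma}/M^{1}]$ inside $\End_{G(F)}\left(I_{P}^{G}\left(\ind_{M^1}^{M(F)}(\sigma_{1})\right)\right)$, i.e.\ that isomorphisms~\eqref{Mverofheckevsend}, \eqref{isomgroupalgebraheckealgebra}, \eqref{compositionofMverofheckevsendandisomgroupalgebraheckealgebra}, the two incarnations of $T_{\rho_{M}}$, and the injection $I_{P}^{G}$ are threaded together coherently. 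Once that is pinned down, the positive case is a diagram chase and the passage to general $t$ is the short formal argument above.
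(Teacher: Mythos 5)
Your proposal is correct and follows essentially the same route as the paper's own proof: establish the formula for $t$ satisfying the positivity condition by chaining Corollary~\ref{corollaryimageofphiM}, Proposition~\ref{compatibility}, the identification of $T_{\rho_{M}}(\Phi^{M}_{t})$ with $\theta_{t^{-1}}$, and the fact that $I^{\Sol}$ fixes $\mathbb{C}[M_{\sigma}/M^{1}]$ pointwise; then extend to all $t$ multiplicatively. The paper compresses the second step into the one-line remark that every element of $T(J,\rho)$ is a difference of positive ones, which your explicit bootstrap with $t_{0}^{n}$ and $t_{0}^{n}t$ simply unpacks.
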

\begin{proof}
Let $t \in T(J, \rho)$ such that
\begin{align}
\label{positivityoftjrho}
\left(D_{J}(a')\right)(v(t)) \ge 0
\end{align}
for all $a' \in B(J, \rho)_{e}$.
We use the same notation as Corollary~\ref{corollaryimageofphiM}.
Proposition~\ref{compatibility} and Corollary~\ref{corollaryimageofphiM} imply that
\begin{align*}
\left(
T_{\rho_{M}} \circ I_{U} \circ (I^{\Mor})^{-1}
\right) (\theta_{v(t)}) &= c(t)^{-1} \cdot \left(
T_{\rho_{M}} \circ I_{U} \circ t_{P}
\right)(\Phi^{M}_{t})\\
&= c(t)^{-1} \cdot \left(
T_{\rho_{M}} \circ I_{P}^{G}
\right)
(\Phi^{M}_{t})\\
&= c(t)^{-1} \cdot \left(
I_{P}^{G} \circ T_{\rho_{M}}
\right)
(\Phi^{M}_{t}).
\end{align*}
Moreover, since $\phi^{M}_{t} \in \mathcal{H}(M(F), \rho_{M})$ corresponds to $\theta_{t^{-1}} \in \mathbb{C}[M_{\sigma}/M^1]$ via isomorphism~\eqref{compositionofMverofheckevsendandisomgroupalgebraheckealgebra} and corresponds to $\Phi^{M}_{t}$ via isomorphism~\eqref{Mverofheckevsend}, $T_{\rho_{M}}(\Phi^{M}_{t})$ corresponds to $\theta_{t^{-1}}$ via \eqref{isomgroupalgebraheckealgebra}.
Hence, Theorem~\ref{modificationoftheorem10.9ofsolleveld} implies that
\begin{align*}
\left(
I^{\Sol} \circ I_{P}^{G} \circ T_{\rho_{M}}
\right)
(\Phi^{M}_{t}) &= \theta_{t^{-1}}.
\end{align*}
Thus, we obtain
\begin{align*}
\left(
I^{\Sol} \circ  T_{\rho_{M}} \circ I_{U} \circ (I^{\Mor})^{-1}
\right) (\theta_{v(t)}) &= c(t)^{-1} \cdot \left(
I^{\Sol} \circ  I_{P}^{G} \circ T_{\rho_{M}}
\right)
(\Phi^{M}_{t})\\
&= c(t)^{-1} \cdot \theta_{t^{-1}}.
\end{align*}
Since any element of $T(J, \rho)$ can be written as a difference of elements of $T(J, \rho)$ satisfying \eqref{positivityoftjrho}, and
\[
t \mapsto v(t)
\]
defines an isomorphism
\[
T(J, \rho) \rightarrow \mathbb{Z}(R^{\Mor})^{\vee},
\]
we obtain the claim.
\end{proof}
We will prove that $c(t) = 1$ for all $t \in T(J, \rho)$ in Corollary~\ref{maintheoremc=1}.

Next, we compare the ``finite parts'' of the both sides of $T_{\rho_{M}} \circ I_{U}$.
The following theorem is one of the main results of this paper.
\begin{theorem}
\label{maintheoremrootsystem}
As subsets of $a_{M}^{*}$, we have
\[
\begin{cases}
R^{\Mor} &= R^{\Sol}, \\
\Delta^{\Mor} &= - \Delta^{\Sol}.
\end{cases}
\]
Here, we regard $R^{\Sol}$ as a subset of $a_{M}^{*}$ via
\[
H_{M}^{\vee} \colon \left(
M_{\sigma}/M^1
\right)^{\vee} \rightarrow a_{M}^{*}.
\]
\end{theorem}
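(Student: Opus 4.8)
The plan is to identify both root systems $R^{\Mor}$ and $R^{\Sol}$ as living inside the common vector space $a_M^*$, and then match them root-by-root by relating each side to the Harish-Chandra $\mu$-functions of the rank-one Levi subgroups $M_\alpha$. Recall that $R^{\Mor}$ consists of the elements $D_J(a')/k_{a'}$ for $a' \in \Gamma'(J, \rho)_e$, which under the identifications made just before the statement equals $(Da)\restriction_{A_M}/k_{a + A'_J}$ for $a \in \Gamma(J, \rho)$ with $a + A'_J \in \Gamma'(J,\rho)_e$; and $R^{\Sol} = \Sigma'_{\mathfrak{s}_M}$, which sits in $a_M^*$ via $H_M^\vee$. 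The key point is that both of these are reduced root systems with the \emph{same} ambient Weyl group: on the Morris side, $R(J, \rho) \cong W_{\aff}(\Gamma'(J,\rho))$ has finite part $W_0(R^{\Mor})$, which by construction is the subgroup of $W$ generated by the reflections $v[a, J]$ for $a \in \Gamma(J, \rho)_e$; on the Solleveld side, $W(\Sigma_{\mathfrak{s}_M, \mu})$ is generated by the $s_\alpha$ for $\alpha \in \Sigma_{\mathfrak{s}_M, \mu}$. Since $W(J, \rho) \cap W_{M(F)}$ maps isomorphically to $M_\sigma/M^1$ (Lemma~\ref{WcapMsimeqImrho}), and $R(J, \rho) \subset W_{G^1}$, one should first check that under this isomorphism the finite Weyl group $W_0(R^{\Mor})$ goes to a subgroup of $W(G, M, \mathfrak{s}_M)$, and identify its generating reflections with the $s_\alpha$.

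The heart of the argument is to show that for each $a \in \Gamma(J, \rho)$, the reflection $v[a, J]$ acts on $a_M$ as the reflection $s_\alpha$ associated to the root $\alpha \in \Sigma_{\mathfrak{s}_M, \mu}$ with $\alpha^\# \in \mathbb{R} \cdot (Da)\restriction_{A_M}$, and conversely every $\alpha \in \Sigma_{\mathfrak{s}_M, \mu}$ arises this way. This is a statement about the rank-one Levi $M_\alpha$: the condition $v[a, J] \in W(J, \rho)$ with $p_a > 1$ should be matched with the condition that $\mu^{M_\alpha}$ has a zero on $\mathfrak{s}_M$. I would do this by passing to $M_\alpha$ — note the excerpt already sets up (in the proof of Lemma~\ref{independencyofP}) that there is a basis $B^{M_\alpha}$ of $\Phi_{\aff}^{M_\alpha}$ containing $J$, so one has a depth-zero type $(K_\alpha, \rho_\alpha)$ in $M_\alpha$, and its endomorphism algebra is at most rank two. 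In $M_\alpha$, Morris's recipe produces a single root whose parameter $p_a$ equals a ratio of dimensions of the two irreducible constituents of a parabolically induced representation of a finite reductive group, while Solleveld's recipe produces $\Sigma_{\mathfrak{s}_M, \mu} \cap (\text{rank one})$ precisely when the relevant $\mu$-function vanishes. The equality of the two normalizations — that $D_J(a+A'_J)/k_{a+A'_J}$ really equals $\alpha^\#$ as elements of $a_M^*$ — comes down to the fact that $k_{a+A'_J}(D_J(a+A'_J))^\vee$ is the generator of the relevant rank-one cocharacter lattice $(M_\sigma \cap M_\alpha^1)/M^1$, i.e. equals $h_\alpha^\vee$, combined with the defining pairing $\langle \alpha^\#, h_\alpha^\vee \rangle = 2$ of Proposition~\cite[Proposition~3.1]{MR4432237}; this should be checked by comparing both against the coroot $\alpha^\vee \in a_M$.

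Once the root systems are identified as sets, the basis comparison $\Delta^{\Mor} = -\Delta^{\Sol}$ follows from our choice of $P$: we fixed $P$ so that $D_J(\Gamma'(J,\rho)^+_e) = D_J(\Gamma'(J,\rho)_e) \cap (-\Sigma(P, A_M))$, i.e. the Morris positive system is the set of roots negative on $P$. On the other hand $\Delta^{\Sol} = \Delta'_{\mathfrak{s}_M}(P)$ is the basis for the positive system $\Sigma_{\mathfrak{s}_M, \mu}(P) = \Sigma_{\mathfrak{s}_M, \mu} \cap \Sigma(P, A_M)$, i.e. roots positive on $P$. Since both $\Gamma'(J,\rho)^+_e$ (transported to $a_M^*$) and $\Sigma_{\mathfrak{s}_M}(P)$ are positive systems of the same root system $R^{\Mor} = R^{\Sol}$, and they are related by a sign, their bases are related by the same sign; hence $\Delta^{\Mor} = -\Delta^{\Sol}$. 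I expect the main obstacle to be the rank-one normalization matching: one must be careful that the ``special point'' $e$ chosen for $\Gamma'(J,\rho)$, the integer $k_{a'}$, and Morris's $p_a$ all line up with Solleveld's $h_\alpha^\vee$ and the vanishing locus of $\mu^{M_\alpha}$ — in particular checking that $v[a, J] \in W(J,\rho)$ with $p_a > 1$ is equivalent to $\alpha \in \Sigma_{\mathfrak{s}_M, \mu}$ rather than just $\alpha \in \Sigma_{\red}(A_M)$. This equivalence is exactly where the two different constructions (reducibility of a finite-group parabolic induction versus a zero of the $\mu$-function) must be reconciled, presumably via the explicit formula for $\mu^{M_\alpha}$ in terms of the constituents of the Jacquet module, and I would isolate it as a separate lemma.
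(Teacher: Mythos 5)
Your outline agrees with the paper's at two of the three key junctures: reducing to the rank-one Levi $M_\alpha$ (the paper does this via Lemma~\ref{Gammareduction} and Corollary~\ref{corollarySubsets of a set of simple affine roots}), and deriving $\Delta^{\Mor} = -\Delta^{\Sol}$ from the choice of $P$ so that $D_J(\Gamma'(J,\rho)^+_e) = D_J(\Gamma'(J,\rho)_e) \cap (-\Sigma(P,A_M))$. The normalization matching you flag — that $k_{a'}(D_J a')^\vee$ equals $(h_\alpha^\vee)'$ rather than some other multiple of $\alpha^\vee$ — is also genuinely a step the paper must and does address, via Corollary~\ref{corollaryvectorpart} together with the abstract rank-one lemma in Appendix~\ref{Homomorphism between Affine Hecke algebras of type}.

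Where you diverge from the paper is the heart of the rank-one step: showing that $v[a,J] \in W(J,\rho)$ with $p_a > 1$ holds precisely when $\alpha \in \Sigma_{\mathfrak{s}_M,\mu}$, i.e.\ when $\mu^{M_\alpha}$ has a zero on $\mathfrak{s}_M$. You propose to prove this by an explicit formula for $\mu^{M_\alpha}$ in terms of constituents of the Jacquet module, isolating this as a lemma. The paper instead proves this equivalence (Proposition~\ref{comparisonaffineweylandfiniteweyl}) purely algebraically, without ever touching a $\mu$-function formula: it assumes one side is trivial and the other not, passes both endomorphism algebras through the already-constructed isomorphism $T_{\rho_M} \circ I_U$, and derives a contradiction from the Bernstein relation (the nontrivial side forces a rational function with a pole at $\theta = 1$ to lie in $\mathbb{C}[M_\sigma/M^1]$, which is impossible when the label functions are strictly positive). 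This is why the paper needs $I_U$ in hand \emph{before} proving the root system comparison; your route, by contrast, would make the root-system comparison logically prior to and independent of $I_U$. Your route is plausible and perhaps more conceptual — it would explain \emph{why} the two reducibility criteria agree rather than just that they do — but it is not a trivial substitute: you would need to identify $p_a$ with the ratio $q_\alpha q_{\alpha*}$ coming from the pole/zero order of $\mu^{M_\alpha}$, and that is precisely the content of Corollary~\ref{comparisonofparameterqpower}, which the paper derives \emph{from} Theorem~\ref{maintheoremisomofaffinehecke}, not the other way around. So as written your proposal has a potential circularity or at least a substantial unproved input; you should verify that the $\mu$-function computation for depth-zero types you would need is available independently.

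One small imprecision: you say $W_0(R^{\Mor})$ goes, under the isomorphism $W(J,\rho) \cap W_{M(F)} \simeq M_\sigma/M^1$, to a subgroup of $W(G,M,\mathfrak{s}_M)$. But $W_0(R^{\Mor})$ is the image of $R(J,\rho)$ under the derivative map $D$, not a subgroup of $W(J,\rho) \cap W_{M(F)}$; the reflections $v[a,J]$ lie in $W(J,\rho)$ but not in $W_{M(F)}$ (Lemma~\ref{snotinpk}). The correct statement is that the $v[a,J]$ normalize $M$ and project to nontrivial elements of $N_G(M)(F)/M(F)$, which is what lets one compare $D(v[a,J])$ with $s_\alpha$.
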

\begin{remark}
\label{dualofmaintheoremrootsystem}
We also have the dual of Theorem~\ref{maintheoremrootsystem}.
By using isomorphism~\eqref{orthogonalcomplementmorris},
we consider the root system 
\[
(R^{\Mor})^{\vee} = \left\{
k_{a'} (D_{J} (a'))^{\vee} \mid a' \in \Gamma'(J, \rho)_{e}
\right\}
\]
in $V_{\Gamma}^{J}$ as a root system in $(V^{J, \Gamma})^{\perp}$.
In particular, we consider $(R^{\Mor})^{\vee}$ as a subset of $a_{M}$.
We also regard $(R^{\Sol})^{\vee}$ as a subset of $a_{M}$ via
\[
H_{M} \colon M_{\sigma}/M^1 \rightarrow a_{M}.
\]
Then, according to Theorem~\ref{maintheoremrootsystem}, as subsets of $a_{M}$, we have 
\[
\begin{cases}
(R^{\Mor})^{\vee} &= (R^{\Sol})^{\vee}, \\
(\Delta^{\Mor})^{\vee} &= - (\Delta^{\Sol})^{\vee},
\end{cases}
\]
where $(\Delta^{\Mor})^{\vee}$ and $(\Delta^{\Sol})^{\vee}$ are defined as
\[
(\Delta^{\Mor})^{\vee} = \left\{
k_{a'} (D_{J} (a'))^{\vee} \mid a' \in B(J, \rho)_{e}
\right\}\index{$(\Delta^{\Mor})^{\vee}$}
\] 
and
\[
(\Delta^{\Sol})^{\vee} =
\{
(h_{\alpha}^{\vee})' \mid \alpha \in \Delta_{\mathfrak{s}_{M}, \mu}(P)
\}.\index{$(\Delta^{\Sol})^{\vee}$}
\]
\end{remark}

We identify the Weyl group $W_{0}(R^{\Mor})$ of $R^{\Mor}$ with the Weyl group $W_{0}(R^{\Sol})\index{$W_{0}(R^{\Sol})$}$ of $R^{\Sol}$.
Then, the second claim of Theorem~\ref{maintheoremrootsystem} implies that the set of simple reflections in $W_{0}(R^{\Mor})$ with respect to the basis $\Delta^{\Mor}$ coincides with the set of simple reflections in $W_{0}(R^{\Sol})$ with respect to the basis $\Delta^{\Sol}$. 

Let $\alpha \in \Delta_{\mathfrak{s}_{M}, \mu}(P)$.
For simplicity, we write 
\[
\alpha' = (\alpha^{\#})' \in \Delta^{\Sol} = - \Delta^{\Mor}\index{$\alpha' $}
\]
and
\[
(\alpha')^{\vee} = (h_{\alpha}^{\vee})' \in (\Delta^{\Sol})^{\vee} = -(\Delta^{\Mor})^{\vee}.\index{$(\alpha')^{\vee}$}
\]
Recall that we have to choose a lift $\widetilde{s_{\alpha}}$ in $I_{M_{\alpha}^{1}}(\sigma_{1})$ of the reflection $s_{\alpha} \in W_{0}(R^{\Sol})$ to define $T'_{s_{\alpha}} \in \End_{G(F)}\left(I^{G}_{P} \left( \ind_{M^1}^{M(F)} (\sigma_{1}) \right)\right)$.
We fix the lift $\widetilde{s_{\alpha}}$ as follows.
Let $a \in \Gamma(J, \rho)^{+}$ such that $a + A'_{J} \in B(J, \rho)_{e}$, and
\[
r(a) = - \alpha' \in \Delta^{\Mor}.
\]
We fix a  lift $\widetilde{s}$ of $v[a, J]$ in $N_{G}(S)(F)$.
Since $v[a, J] \in W(J, \rho)$ fixes $J$, the definition of $M$ implies that $\widetilde{s} \in N_{G}(M)(F)$. 
The definition of $W(J, \rho)$ also implies that $\widetilde{s}$ intertwines $\rho_{M} = \rho\restriction_{K_{M}}$, hence normalizes the representation
\[
\sigma_{1} = \ind_{K_{M}}^{M^1} (\rho_{M}).
\]
Since 
\begin{align*}
Da\restriction_{A_{M}} &= D_{J}(a + A'_{J}) \\
&= k_{a + A'_{J}} \cdot r(a) \\
&= - k_{a + A'_{J}} \cdot \alpha' \\
&\in \mathbb{R}^{\times} \cdot \alpha,
\end{align*}
we have
\[
W_{J \cup \{a\}} \subset W_{M_{\alpha}}(F) \backslash W_{M(F)}.
\]
Hence, the definition of $v[a, J]$ implies that $\widetilde{s}$ is contained in $M_{\alpha}(F) \backslash M(F)$.
Thus, we obtain that the image of $\widetilde{s}$ on $N_{G}(M)(F)/M(F)$ is equal to $s_{\alpha}$, that is
the unique nontrivial element of 
\[
W(M_{\alpha}, M) =
\left(
N_{G}(M)(F) \cap M_{\alpha}(F)
\right)/M(F).
\]
Moreover, the definition of $v[a, J]$ implies that the element $\widetilde{s}$ is contained in a parahoric subgroup of $M_{\alpha}(F)$.
In particular, we have $\widetilde{s} \in M_{\alpha}^{1}$.
Thus, we can take the lift $\widetilde{s_{\alpha}}$ of the reflection $s_{\alpha}$ in $I_{M_{\alpha}^{1}}(\sigma_{1})$ as $\widetilde{s_{\alpha}} = \widetilde{s}$.

For a simple reflection $s = s_{\alpha}$ associated with an element $\alpha \in \Delta_{\mathfrak{s}_{M}, \mu}(P)$,
we define $T^{\Sol}_{s, 0} \in \mathcal{H}^{\Sol}$ as
\[
T^{\Sol}_{s, 0} = q_{F}^{\left(-\lambda^{\Sol}(\alpha') + (\lambda^{*})^{\Sol}(\alpha')\right)/2} \cdot
\left(
\theta_{(\alpha')^{\vee}} T^{\Sol}_{s} - (q_{F}^{\lambda^{\Sol}(\alpha')} - 1) \theta_{(\alpha')^{\vee}}
\right)\index{$T^{\Sol}_{s, 0}$}
\]
(see Appendix~\ref{Homomorphism between Affine Hecke algebras of type}).

Now, we state the second main theorem.
Let $\epsilon = \epsilon_{\alpha} \in \{0, 1\}$ denote the number defined in \cite[Lemma~10.7 (b)]{MR4432237}.
\begin{theorem}
\label{maintheoremisomofaffinehecke}
The image of $\mathcal{H}(R(J, \rho))$ via isomorphism
\[
T_{\rho_{M}} \circ I_{U} \colon \End_{G(F)}\left(\ind_{K}^{G(F)} (\rho)\right) \rightarrow \End_{G(F)}\left(I^{G}_{P} \left( \ind_{M^1}^{M(F)} (\sigma_{1}) \right)\right)
\]
is contained in $\mathcal{H}\left(W(\Sigma_{\mathfrak{s}_{M}, \mu})\right)$.
Moreover, for a simple reflection 
\[
s = s_{\alpha} \in W_{0}(R^{\Mor}) = W_{0}(R^{\Sol})
\]
associated with an element $\alpha \in \Delta_{\mathfrak{s}_{M}, \mu}(P)$,
the image of $T^{\Mor}_{s}$ via the composition
\[
\mathcal{H}^{\Mor} \xrightarrow{(I^{\Mor})^{-1}} \mathcal{H}(R(J, \rho)) \xrightarrow{T_{\rho_{M}} \circ I_{U}} \mathcal{H}\left(W(\Sigma_{\mathfrak{s}_{M}, \mu})\right)
\xrightarrow{I^{\Sol}} \mathcal{H}^{\Sol}
\]
is equal to $\iota\left(T^{\Sol}_{s}\right)$ if $\epsilon_{\alpha} = 0$, and equal to
$\iota\left(
T^{\Sol}_{s, 0}
\right)$ if $\epsilon_{\alpha} =1$, where
\[
\iota \colon \mathcal{H}^{\Sol} \rightarrow \mathcal{H}^{\Sol}
\]
denotes the involution defined in Appendix~\ref{An involution of an affine Hecke algebra}.
We also obtain that
\[
\begin{cases}
\lambda^{\Mor}(- \alpha') &= \lambda^{\Sol}(\alpha'), \\
(\lambda^{*})^{\Mor}(- \alpha') &= (\lambda^{*})^{\Sol} (\alpha')
\end{cases}
\]
if $\epsilon_{\alpha} = 0$, and
\[
\begin{cases}
\lambda^{\Sol}(\alpha') &> (\lambda^{*})^{\Sol}(\alpha'), \\
\lambda^{\Mor}(- \alpha') &= (\lambda^{*})^{\Sol} (\alpha'), \\
(\lambda^{*})^{\Mor}(- \alpha') &= \lambda^{\Sol}(\alpha')
\end{cases}
\]
if $\epsilon_{\alpha} = 1$.
\end{theorem}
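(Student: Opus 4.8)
The strategy is to reduce everything to the rank-one situation and then compute explicitly. The first claim, that $T_{\rho_M}\circ I_U$ sends $\mathcal{H}(R(J,\rho))$ into $\mathcal{H}\!\left(W(\Sigma_{\mathfrak{s}_M,\mu})\right)$, follows once we know it maps each generator $\Phi_{v[a,J]}$ (for $a\in\Gamma(J,\rho)^+$ with $a+A'_J\in B(J,\rho)_e$) and each $\Phi_t$ ($t\in T(J,\rho)$) into that subalgebra. The elements $\Phi_t$ are handled by Corollary~\ref{corollaryvectorpart}, which already places their images in $I_P^G\!\left(\End_{M(F)}(\ind_{M^1}^{M(F)}(\sigma_1))\right)\subset\mathcal{H}\!\left(W(\Sigma_{\mathfrak{s}_M,\mu})\right)$. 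For $\Phi_{v[a,J]}$, since $R(J,\rho)$ is generated by the $v[a,J]$ and these are the simple reflections of the affine Weyl group $W_\text{aff}(\Gamma'(J,\rho))$ (Proposition~\ref{proposition7.3ofmorris}), it suffices to treat the image of $T^{\Mor}_s$ for the corresponding simple reflection $s=s_\alpha$. So the whole theorem comes down to identifying $\left(I^{\Sol}\circ T_{\rho_M}\circ I_U\circ (I^{\Mor})^{-1}\right)(T^{\Mor}_{s_\alpha})$.

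\textbf{Reduction to the maximal case.} First I would reduce to the situation where $M=M_\alpha$ is a maximal proper Levi subgroup of $G$, i.e. where $\Sigma_{\mathfrak{s}_M,\mu}$ has rank one. On Solleveld's side this is legitimate because of Lemma~\ref{transitivityofts}: when $M_\alpha$ is a standard Levi with respect to $P$, we have $T'_{s_\alpha}=I^G_{PM_\alpha}\!\left((T'_{s_\alpha})^{M_\alpha}\right)$, and the identifications of group algebras and the injection $\eqref{quotofisomgroupalgebraheckealgebraparabolicinduction}$ are all compatible with $I^{M_\alpha}_{P\cap M_\alpha}$ followed by $I^G_{PM_\alpha}$. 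On Morris's side one uses the analogous transitivity $t_P=t_{PM_\alpha}\circ t_{P\cap M_\alpha}$ from \cite[(8.7)]{MR1643417}, the fact (established in the proof of Lemma~\ref{independencyofP}) that $R^{M_\alpha}(J,\rho_\alpha)$ is generated by $v[a,J]$ alone, and the compatibility of $I^{\Mor}$ with passing to $M_\alpha$. One also needs to arrange that, after replacing $P$ by a suitable $P'$ with the same restriction to $\Gamma'(J,\rho)_e$ (allowed by Lemma~\ref{independencyofP}), $M_\alpha$ is indeed standard; this is a combinatorial point about choosing $\lambda$ as in the construction of $P$. After this reduction, $T(J,\rho)$ has rank one, generated by the element $t_\alpha$ with $v(t_\alpha)$ a positive multiple of $(D_J(a'))^\vee$, and $\mathcal{H}^{\Mor}$, $\mathcal{H}^{\Sol}$ are rank-one affine Hecke algebras.

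\textbf{The rank-one computation.} In the maximal case I would compute both sides by brute force. On Morris's side, Theorem~\ref{theorem7.12ofmorris} together with Corollary~\ref{corollaryoftheorem7.12ofmorrisaffinever} gives the multiplication rules for $\Phi_{v[a,J]}$ and $\Phi_t$, and hence presents $\mathcal{H}^{\Mor}$ via the standard generators $T^{\Mor}_{s_{r(a)}}$ and $\theta_{v(t)}$ with parameters $p_a$, $p'_a$ determined by \eqref{lambdaofmorris} and \eqref{lambdastarofmorrisrefined}. On Solleveld's side, the element $T'_{s_\alpha}$ is given by the explicit formula $T'_{s_\alpha}=\frac{(q_\alpha-1)(q_{\alpha*}+1)}{2}(\theta_{h_\alpha^\vee})^{\epsilon_\alpha}\circ J_{s_\alpha}+f_\alpha$, and the key geometric input is the choice of lift $\widetilde{s_\alpha}=\widetilde{s}$ made in the text: the same group element lifts both the Morris reflection $v[a,J]$ (as an operator $\theta_{\rho,\dot v}$ on $\ind^{G(F)}_{P_J}(\rho)$, transported via $I_U$) and the Solleveld reflection $s_\alpha$ (entering $\lambda(s_\alpha)$ in the definition of $J_{s_\alpha}$). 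Tracing $I_U$ and $T_{\rho_M}$ through the intertwining operator $J_{s_\alpha^{-1}(P)\mid P}(\sigma\otimes\,\cdot\,)$ — using that $I_U$ itself is built from an integral over $U(F)$, exactly the shape of a Harish-Chandra operator — should show that $T_{\rho_M}\circ I_U\circ(I^{\Mor})^{-1}$ sends $\theta_{v(t)}$ to a scalar times $\theta_{t^{-1}}$ (this is Corollary~\ref{corollaryvectorpart}) and $T^{\Mor}_{s}$ to an element of the form ``scalar $\cdot\,\theta_{(\alpha')^\vee}T^{\Sol}_s+$ scalar$\cdot\,\theta_{(\alpha')^\vee}$'', which after applying $\iota$ becomes $T^{\Sol}_s$ when $\epsilon_\alpha=0$ and $T^{\Sol}_{s,0}$ when $\epsilon_\alpha=1$. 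Matching the Hecke relations on both sides — comparing the quadratic relation $(T^{\Mor}_s+1)(T^{\Mor}_s-q_F^{\lambda^{\Mor}(-\alpha')})=0$ with what the formula produces — forces the parameter identities $\lambda^{\Mor}(-\alpha')=\lambda^{\Sol}(\alpha')$, $(\lambda^*)^{\Mor}(-\alpha')=(\lambda^*)^{\Sol}(\alpha')$ in the $\epsilon_\alpha=0$ case, and the swapped identities in the $\epsilon_\alpha=1$ case (where $q_{\alpha*}>1$, so $\lambda^{\Sol}(\alpha')>(\lambda^*)^{\Sol}(\alpha')$ and the substitution $\theta_{h_\alpha^\vee}\mapsto\theta_{2h_\alpha^\vee}$, $\alpha^\#\mapsto\alpha^\#/2$ from the construction of $\mathcal{R}'$ interacts with the $(\theta_{h_\alpha^\vee})^{\epsilon_\alpha}$ twist to produce $T^{\Sol}_{s,0}$).

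\textbf{Main obstacle.} The hard part will be the bookkeeping in the rank-one computation: keeping track of the normalization constants coming from $I_{U}$ (the volume of $K_U$, the factors $\delta_P(z)^{\pm1/2}$ appearing throughout Section~\ref{Hecke algebra injections}), the normalization of the Harish-Chandra operator $J_{s_\alpha^{-1}(P)\mid P}$, the scalar in $\rho_{\sigma,s_\alpha}$, and the $q_\alpha,q_{\alpha*}$-dependent coefficients in $T'_{s_\alpha}$ and $f_\alpha$, and checking that all these constants conspire so that the image is \emph{exactly} $\iota(T^{\Sol}_s)$ (or $\iota(T^{\Sol}_{s,0})$) with no residual scalar — in particular so that the constant $c(t)$ of Corollary~\ref{corollaryvectorpart} turns out to be $1$, which is then recorded as Corollary~\ref{maintheoremc=1}. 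Pinning down the sign/scalar ambiguity in the identification of $J_{s_\alpha}$ with the operator coming from Morris's $\theta_{\rho,\dot s}$ — i.e. verifying that the two a priori different lifts really give the same operator up to the explicit factor dictated by the quadratic relations — is where most of the genuine work lies; the identification of $p_a$ with $q_\alpha q_{\alpha*}$ (reducibility points of the parabolically induced representation on the finite reductive quotient versus zeros of the $\mu$-function) is the conceptual heart and will use Lemma~\ref{reducibiityofgamma} and Theorem~\ref{maintheoremrootsystem} together with the Springer–type description of both parameters in terms of the finite group $\mathbf{M}_{J\cup\{a\}}(k_F)$.
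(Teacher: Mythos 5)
Your overall strategy — prove the rank-one (maximal-Levi) case by explicit comparison, then bootstrap the general case from it — does coincide with the paper's two-step architecture (Sections~\ref{Comparison of Morris and Solleveld's endomorphism algebras : maximal case} and \ref{Comparison of Morris and Solleveld's endomorphism algebras: general case}). But there are three concrete gaps in the plan.

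First, the reduction to the maximal case is not achieved by changing $P$ alone. Your plan only invokes Lemma~\ref{independencyofP} to replace $P$ by a $P'$ making $M_\alpha$ standard. However, the element $a\in\Gamma(J,\rho)^+$ with $r(a)=-\alpha'$ need not lie in the fixed basis $B$ of $\Phi_{\aff}$, and the reduction formula $t_{PM_\alpha}(\Phi_s^{M_\alpha})=\Phi_s$ (Proposition~\ref{reductionmorrisphis}) genuinely requires $a\in B$, since it rests on Lemma~\ref{pJtopJcupa}, which compares $P_{J}$ with the parahoric $P_{J\cup\{a\},B}$. To arrange $a\in w^{-1}B$ one must also conjugate the basis by some $w\in W$ with $N(w)\cap\pm\Gamma(J,\rho)=\emptyset$ (Lemma~\ref{existenceofgoodw}), and then establish that the Morris-side intertwiner $\theta_{w^{-1}B\mid B}$ and the Harish-Chandra operator $J_{P'\mid P}(\sigma\otimes\cdot)$ are compatible under $T_{\rho_M}\circ I_U$, up to a unit $b(w,P')\in\mathbb{C}[M_\sigma/M^1]^\times$ whose conjugation action must be shown to fix $T'_s$ (Corollary~\ref{commutativediagramintertwiningreduction} and the last part of the proof of Theorem~\ref{maintheoremisomofaffineheckerewritewithoutassumption}). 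This double intertwining argument is absent from your plan and is where a substantial fraction of the work lies.

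Second, the rank-one step in the paper does not pin down the constants by a direct ``conspiring normalizations'' computation, and in particular does not use any Springer-type comparison of $p_a$ with $q_\alpha q_{\alpha*}$ as a proof ingredient. The paper's argument is: Lemma~\ref{lemmaforcomparisonofmorrisandsolleveldkeypropositionrank1} (a localization at $g=s$ on both sides, using the very precise choice of lift $\widetilde{s_\alpha}=\widetilde{s}$) shows $(T_{\rho_M}\circ I_U)(\Phi_s)=c'(\theta_{h_\alpha^\vee})^{-\epsilon}T'_s+b'$ with $c'\in\mathbb{C}^\times$ and $b'\in\mathbb{C}[M_\sigma/M^1]$; then the abstract classification theorem for homomorphisms between rank-one affine Hecke algebras (Proposition~\ref{keypropositionappendixinteger} and Corollary~\ref{generalizationkeypropositionappendixinteger}), applied via the Bernstein relations, forces $c'$, $b'$, and the label functions to be exactly as stated. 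The identity $p_a\cdot p'_a=q_\alpha q_{\alpha*}$ etc.\ is Corollary~\ref{comparisonofparameterqpower}, a consequence, not an input; no comparison of the two parameters on the finite reductive quotient $\mathbf{M}_{J\cup\{a\}}(k_F)$ is needed or performed. Your proposed route via a direct comparison of the two parameter theories would be a different (and considerably harder) argument; the appendix classification lemma is what makes the matching automatic.

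Third, the logical order concerning $c(t)=1$ is reversed. Corollary~\ref{maintheoremc=1} is deduced \emph{from} Theorem~\ref{maintheoremisomofaffinehecke} together with Corollary~\ref{corollaryvectorpart}, using the Bernstein relation; it is not an input to the theorem, and the theorem's proof never needs $c(t)=1$ (the abstract lemma only needs $I(\theta_{\alpha^\vee})=c\cdot\theta_{n(\alpha')^\vee}$ with $c$ a priori unknown, and outputs $c=1$ for the rank-one case; the statement for general $t$ is a separate deduction).

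The first point is a missing piece of the argument; the second and third are misapprehensions of the structure that, if followed, would lead you astray (in the second case, into a much harder problem).
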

By using Theorem~\ref{maintheoremrootsystem}, Remark~\ref{dualofmaintheoremrootsystem}, and Theorem~\ref{maintheoremisomofaffinehecke}, we may refine Corollary~\ref{corollaryvectorpart} as follows.
\begin{corollary}
\label{maintheoremc=1}
The number $c(t) \in \mathbb{C}^{\times}$ appearing in Corollary~\ref{corollaryvectorpart} is equal to $1$.
Hence, for any $t \in T(J, \rho)$, we have
\[
\left(
\iota \circ I^{\Sol} \circ T_{\rho_{M}} \circ I_{U} \circ (I^{\Mor})^{-1}
\right)\left(
\theta_{v(t)} 
\right)
= \theta_{t}.
\]
\end{corollary}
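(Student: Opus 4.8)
\emph{Proof proposal.} The strategy is to prove that the group homomorphism $t\mapsto c(t)$ of Corollary~\ref{corollaryvectorpart} is identically $1$; the displayed identity then follows immediately from Corollary~\ref{corollaryvectorpart}, since $\iota$ inverts the lattice subalgebra $\mathbb{C}[M_{\sigma}/M^{1}]$. Write $\Psi=\iota\circ I^{\Sol}\circ T_{\rho_{M}}\circ I_{U}\circ(I^{\Mor})^{-1}$; as $\iota$ is an involution and each of the other maps is an isomorphism, $\Psi\colon\mathcal{H}^{\Mor}\to\mathcal{H}^{\Sol}$ is an isomorphism of $\mathbb{C}$-algebras.

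First I would check that $c$ really is a homomorphism $T(J,\rho)\to\mathbb{C}^{\times}$. By Lemma~\ref{lemmasupportofimageofm}, the functions $\phi^{M}_{t^{-1}}$ ($t\in T(J,\rho)$) span a commutative subalgebra of $\mathcal{H}(M(F),\rho_{M})$ carried by isomorphism~\eqref{compositionofMverofheckevsendandisomgroupalgebraheckealgebra} onto $\mathbb{C}[T(J,\rho)]\subset\mathbb{C}[M_{\sigma}/M^{1}]$; since $t_{P}$ and $I^{\Mor}$ are algebra maps and $t\mapsto v(t)$ is an isomorphism onto $\mathbb{Z}(R^{\Mor})^{\vee}$, the identity $(I^{\Mor}\circ t_{P})(\Phi^{M}_{t})=c(t)\theta_{v(t)}$ of Corollary~\ref{corollaryimageofphiM} forces $c(tt')=c(t)c(t')$. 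Because $\mathbb{Z}(R^{\Mor})^{\vee}$ is generated by the simple coroots $(\Delta^{\Mor})^{\vee}$, it suffices to prove $c(t_{\alpha})=1$ for each $\alpha\in\Delta_{\mathfrak{s}_{M},\mu}(P)$, where $t_{\alpha}\in T(J,\rho)$ is the element with $v(t_{\alpha})=-(\alpha')^{\vee}\in(\Delta^{\Mor})^{\vee}$ (recall $\alpha'=(\alpha^{\#})'$, and $-\alpha'\in\Delta^{\Mor}$, $-(\alpha')^{\vee}\in(\Delta^{\Mor})^{\vee}$ by Theorem~\ref{maintheoremrootsystem} and Remark~\ref{dualofmaintheoremrootsystem}).

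Now fix such an $\alpha$, put $s=s_{\alpha}$, and record, using Corollary~\ref{corollaryvectorpart}, Lemma~\ref{vvshm}, and the action of $\iota$ on $\mathbb{C}[M_{\sigma}/M^{1}]$ from Appendix~\ref{An involution of an affine Hecke algebra}, that $\Psi(\theta_{-(\alpha')^{\vee}})=c(t_{\alpha})^{-1}\theta_{(\alpha')^{\vee}}$, $\Psi(\theta_{(\alpha')^{\vee}})=c(t_{\alpha})\theta_{-(\alpha')^{\vee}}$, $\Psi(\theta_{0})=\theta_{0}$, while Theorem~\ref{maintheoremisomofaffinehecke} gives $\Psi(T^{\Mor}_{s})=T^{\Sol}_{s}$ if $\epsilon_{\alpha}=0$ and $\Psi(T^{\Mor}_{s})=T^{\Sol}_{s,0}$ if $\epsilon_{\alpha}=1$, together with the stated equalities of label functions. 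Applying $\Psi$ to relation~\eqref{bernsteinrel} of $\mathcal{H}^{\Mor}$ for the simple root $-\alpha'$ and the cocharacter $m=-(\alpha')^{\vee}$, which after the elementary simplification $(\theta_{m}-\theta_{s(m)})/(\theta_{0}-\theta_{-2m})=\theta_{m}$ takes the form
\[
\theta_{-(\alpha')^{\vee}}T^{\Mor}_{s}-T^{\Mor}_{s}\theta_{(\alpha')^{\vee}}
=\bigl(q_{F}^{\lambda^{\Mor}(-\alpha')}-1\bigr)\theta_{-(\alpha')^{\vee}}+\bigl(q_{F}^{a}-q_{F}^{b}\bigr)\theta_{0},
\]
with $a=\tfrac12\bigl(\lambda^{\Mor}(-\alpha')+(\lambda^{*})^{\Mor}(-\alpha')\bigr)$ and $b=\tfrac12\bigl(\lambda^{\Mor}(-\alpha')-(\lambda^{*})^{\Mor}(-\alpha')\bigr)$, produces an identity in $\mathcal{H}^{\Sol}$ whose coefficients are monomials in $c(t_{\alpha})$ and powers of $q_{F}$. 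Subtracting $c(t_{\alpha})$ times the native Bernstein relation of $\mathcal{H}^{\Sol}$ for $\alpha'$ and $m=(\alpha')^{\vee}$ when $\epsilon_{\alpha}=0$ (resp., when $\epsilon_{\alpha}=1$, the corresponding relation satisfied by $\theta$ and $T^{\Sol}_{s,0}$ provided by Appendix~\ref{Homomorphism between Affine Hecke algebras of type}, which is the Bernstein relation for $\mathcal{R}^{\Sol}$ with $\lambda^{\Sol}$ and $(\lambda^{*})^{\Sol}$ interchanged and hence, by Theorem~\ref{maintheoremisomofaffinehecke}, has the same constant term $q_{F}^{a}-q_{F}^{b}$), cancelling the terms $T^{\Sol}_{s}\theta_{-(\alpha')^{\vee}}$ (resp.\ $T^{\Sol}_{s,0}\theta_{-(\alpha')^{\vee}}$), and comparing the coefficient of the Bernstein basis vector $\theta_{0}$, I obtain $(1-c(t_{\alpha}))\bigl(q_{F}^{a}-q_{F}^{b}\bigr)=0$. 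Here $a-b=(\lambda^{*})^{\Sol}(\alpha')$ if $\epsilon_{\alpha}=0$ and $a-b=\lambda^{\Sol}(\alpha')$ if $\epsilon_{\alpha}=1$, both strictly positive by the construction of $\mathcal{R}^{\Sol}$ and of $\lambda^{\Sol},(\lambda^{*})^{\Sol}$ at the end of Section~\ref{A review of Solleveld's results} (together with the inequality $\lambda^{\Sol}(\alpha')>(\lambda^{*})^{\Sol}(\alpha')$ of Theorem~\ref{maintheoremisomofaffinehecke} in the case $\epsilon_{\alpha}=1$). Since $q_{F}>1$, the factor $q_{F}^{a}-q_{F}^{b}$ is nonzero, so $c(t_{\alpha})=1$. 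Hence $c\equiv1$, which proves the corollary.

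I expect the main obstacle to be the case $\epsilon_{\alpha}=1$: one must extract from Appendix~\ref{Homomorphism between Affine Hecke algebras of type} the exact Bernstein-type relation satisfied by $\theta$ and the twisted generator $T^{\Sol}_{s,0}$ inside $\mathcal{H}^{\Sol}$, and keep careful track both of the normalizing power of $q_{F}$ in the definition of $T^{\Sol}_{s,0}$ and of the passage $\mathcal{R}^{\Sol}=\mathcal{R}'(G,\mathfrak{s}_{M})$. The remaining ingredients---multiplicativity of $c$, the reduction to simple coroots, and the behaviour of $\iota$ and $\Psi$ on the $\theta_{x}$---are routine.
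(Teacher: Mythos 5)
Your proof is correct and follows essentially the same route as the paper's: reduce via multiplicativity of $c$ to the simple coroots $(\alpha')^{\vee}$, apply $\Psi=\iota\circ I^{\Sol}\circ T_{\rho_M}\circ I_U\circ(I^{\Mor})^{-1}$ to the Bernstein relation for the simple root $-\alpha'\in\Delta^{\Mor}$, cancel against the native Bernstein-type relation in $\mathcal{H}^{\Sol}$, and read off $c=1$ from the $\theta_{0}$-coefficient using that $q_F^{a}-q_F^{b}\neq 0$. The only cosmetic difference is bookkeeping: you subtract $c(t_\alpha)$ times the native relation and compare the $\theta_0$-coefficient directly (which cleanly sidesteps the need to first establish $c^2=1$), whereas the paper multiplies the relation through by $c$ and then compares "constant terms"; both reduce to the same algebra, and in both cases the relation for $T^{\Sol}_{s,0}$ in the $\epsilon_\alpha=1$ case must be worked out explicitly (the paper derives it in the proof, while you correctly observe it is the Bernstein relation with $\lambda^{\Sol}$ and $(\lambda^{*})^{\Sol}$ interchanged but would still need the short computation rather than a citation to Appendix~\ref{Homomorphism between Affine Hecke algebras of type}, which does not state it verbatim).
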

\begin{proof}
Recall that we are regarding $(R^{\Mor})^{\vee}$ as a subset of $a_{M}$ by using isomorphism~\eqref{orthogonalcomplementmorris}.
Hence, for $t \in T(J, \rho)$, 
\[
v(t) \in \mathbb{Z}(R^{\Mor})^{\vee}
\]
is identified with
\[
-H_{M}(t) = \widetilde{v(t)} \in a_{M},
\]
and
\[
t \mapsto \widetilde{v(t)}
\]
defines an isomorphism
\[
T(J, \rho) \rightarrow (\mathbb{Z}R^{\Mor})^{\vee} = (\mathbb{Z}R^{\Sol})^{\vee}.
\]
Since
\[
(\Delta^{\Sol})^{\vee} = \{
(\alpha')^{\vee} \mid \alpha \in \Delta_{\mathfrak{s}_{M}, \mu}(P)
\}
\]
is a basis of $(R^{\Sol})^{\vee}$,
it suffices to show that
\[
\left(
\iota \circ I^{\Sol} \circ T_{\rho_{M}} \circ I_{U} \circ (I^{\Mor})^{-1}
\right)\left(
\theta_{- (\alpha')^{\vee}} 
\right)
= \theta_{(\alpha')^{\vee}}
\]
for all $\alpha \in \Delta_{\mathfrak{s}_{M}, \mu}(P)$.
We write $c= c((\alpha')^{\vee})$.
Let
\[
s = s_{\alpha} \in W_{0}(R^{\Mor}) = W_{0}(R^{\Sol})
\]
denote the simple reflection associated with $\alpha$.
Then, the element $T^{\Mor}_{s}$ satisfies
\begin{align}
\label{bernsteinreloftmorris}
\theta_{-(\alpha')^{\vee}}T^{\Mor}_{s} - T^{\Mor}_{s} \theta_{(\alpha')^{\vee}} = (q_{F}^{\lambda^{\Mor}(- \alpha')} - 1)\theta_{- (\alpha')^{\vee}} + q_{F}^{(\lambda^{\Mor}(- \alpha') + (\lambda^{*})^{\Mor}(- \alpha'))/2} - q_{F}^{(\lambda^{\Mor}(- \alpha') - (\lambda^{*})^{\Mor}(- \alpha'))/2}.
\end{align}
First, we consider the case that $\epsilon_{\alpha} = 0$.
According to Corollary~\ref{corollaryvectorpart} and Theorem~\ref{maintheoremisomofaffinehecke}, comparing the images of both sides of \eqref{bernsteinreloftmorris} via
\[
\iota \circ I^{\Sol} \circ T_{\rho_{M}} \circ I_{U} \circ (I^{\Mor})^{-1},
\]
we obtain
\begin{multline*}
c^{-1} \cdot \theta_{(\alpha')^{\vee}}T^{\Sol}_{s} - c \cdot T^{\Sol}_{s} \theta_{- (\alpha')^{\vee}}\\
 = c^{-1} \cdot (q_{F}^{\lambda^{\Mor}(- \alpha')} - 1)\theta_{(\alpha')^{\vee}} + q_{F}^{(\lambda^{\Mor}(- \alpha') + (\lambda^{*})^{\Mor}(- \alpha'))/2} - q_{F}^{(\lambda^{\Mor}(- \alpha') - (\lambda^{*})^{\Mor}(- \alpha'))/2}.
\end{multline*}
Hence, we have
\begin{align}
\label{iotaofbernsteinrelofmorris}
\theta_{(\alpha')^{\vee}}T^{\Sol}_{s}
= c^{2} \cdot T^{\Sol}_{s} \theta_{- (\alpha')^{\vee}} + (q_{F}^{\lambda^{\Mor}(- \alpha')} - 1)\theta_{(\alpha')^{\vee}} + c \left(
q_{F}^{(\lambda^{\Mor}(- \alpha') + (\lambda^{*})^{\Mor}(- \alpha'))/2} - q_{F}^{(\lambda^{\Mor}(- \alpha') - (\lambda^{*})^{\Mor}(- \alpha'))/2}
\right).
\end{align}
On the other hand, the element $T^{\Sol}_{s}$ satisfies 
\begin{align*}
\theta_{(\alpha')^{\vee}}T^{\Sol}_{s} - T^{\Sol}_{s} \theta_{- (\alpha')^{\vee}} = (q_{F}^{\lambda^{\Sol}(\alpha')} - 1)\theta_{(\alpha')^{\vee}} + q_{F}^{(\lambda^{\Sol}(\alpha') + (\lambda^{*})^{\Sol}(\alpha'))/2} - q_{F}^{(\lambda^{\Sol}(\alpha') - (\lambda^{*})^{\Sol}(\alpha'))/2}, 
\end{align*}
hence we have
\begin{align}
\label{bernsteinrelofsolleveld}
\theta_{(\alpha')^{\vee}}T^{\Sol}_{s} = T^{\Sol}_{s} \theta_{- (\alpha')^{\vee}} + (q_{F}^{\lambda^{\Sol}(\alpha')} - 1)\theta_{(\alpha')^{\vee}} + q_{F}^{(\lambda^{\Sol}(\alpha') + (\lambda^{*})^{\Sol}(\alpha'))/2} - q_{F}^{(\lambda^{\Sol}(\alpha') - (\lambda^{*})^{\Sol}(\alpha'))/2}.
\end{align}
According to Theorem~\ref{maintheoremisomofaffinehecke}, we have
\[
\begin{cases}
\lambda^{\Mor}(- \alpha') &= \lambda^{\Sol}(\alpha') >0, \\
(\lambda^{*})^{\Mor}(- \alpha') &= (\lambda^{*})^{\Sol} (\alpha') > 0.
\end{cases}
\]
Then, comparing the constant terms of the right hand sides of \eqref{iotaofbernsteinrelofmorris} and \eqref{bernsteinrelofsolleveld}, we have $c=1$.

Next, we consider the case that $\epsilon_{\alpha} = 1$.
According to Corollary~\ref{corollaryvectorpart} and Theorem~\ref{maintheoremisomofaffinehecke}, comparing the images of both sides of \eqref{bernsteinreloftmorris} via
\[
\iota \circ I^{\Sol} \circ T_{\rho_{M}} \circ I_{U} \circ (I^{\Mor})^{-1},
\]
we obtain
\begin{multline*}
c^{-1} \cdot \theta_{(\alpha')^{\vee}} T^{\Sol}_{s, 0} - c \cdot T^{\Sol}_{s, 0} \theta_{- (\alpha')^{\vee}} \\ = c^{-1} \cdot (q_{F}^{\lambda^{\Mor}(- \alpha')} - 1)\theta_{(\alpha')^{\vee}} + q_{F}^{(\lambda^{\Mor}(- \alpha') + (\lambda^{*})^{\Mor}(- \alpha'))/2} - q_{F}^{(\lambda^{\Mor}(- \alpha') - (\lambda^{*})^{\Mor}(- \alpha'))/2}
\end{multline*}
Hence, we have
\begin{align}
\label{iotaofbernsteinrelofmorrisepsilon=1}
\theta_{(\alpha')^{\vee}} T^{\Sol}_{s, 0} = c^{2} \cdot T^{\Sol}_{s, 0} \theta_{- (\alpha')^{\vee}} + (q_{F}^{\lambda^{\Mor}(- \alpha')} - 1) \theta_{(\alpha')^{\vee}} + c \left(
q_{F}^{(\lambda^{\Mor}(- \alpha') + (\lambda^{*})^{\Mor}(- \alpha'))/2} - q_{F}^{(\lambda^{\Mor}(- \alpha') - (\lambda^{*})^{\Mor}(- \alpha'))/2}
\right).
\end{align}
On the other hand, the definition of $T^{\Sol}_{s, 0}$ implies that
\begin{align*}
& \quad q_{F}^{\left(\lambda^{\Sol}(\alpha') - (\lambda^{*})^{\Sol}(\alpha')\right)/2} \cdot 
\left(
\theta_{(\alpha')^{\vee}}T^{\Sol}_{s, 0} - T^{\Sol}_{s, 0} \theta_{- (\alpha')^{\vee}}
\right)\\
&=
\theta_{(\alpha')^{\vee}}
\left(
\theta_{(\alpha')^{\vee}} T^{\Sol}_{s} - (q_{F}^{\lambda^{\Sol}(\alpha')} - 1) \theta_{(\alpha')^{\vee}}
\right) - 
\left(
\theta_{(\alpha')^{\vee}} T^{\Sol}_{s} - (q_{F}^{\lambda^{\Sol}(\alpha')} - 1) \theta_{(\alpha')^{\vee}}
\right) \theta_{- (\alpha')^{\vee}}\\
&= \theta_{(\alpha')^{\vee}}\left(
\theta_{(\alpha')^{\vee}} T^{\Sol}_{s} - T^{\Sol}_{s} \theta_{- (\alpha')^{\vee}}
\right) - (q_{F}^{\lambda^{\Sol}(\alpha')} - 1)(\theta_{2 (\alpha')^{\vee}} - 1)\\
&= \theta_{(\alpha')^{\vee}}\left(
(q_{F}^{\lambda^{\Sol}(\alpha')} - 1) \theta_{(\alpha')^{\vee}} + q_{F}^{(\lambda^{\Sol}(\alpha') + (\lambda^{*})^{\Sol}(\alpha'))/2} - q_{F}^{(\lambda^{\Sol}(\alpha') - (\lambda^{*})^{\Sol}(\alpha'))/2}
\right) - (q_{F}^{\lambda^{\Sol}(\alpha')} - 1)(\theta_{2 (\alpha')^{\vee}} - 1)\\
&= 
\left(
q_{F}^{(\lambda^{\Sol}(\alpha') + (\lambda^{*})^{\Sol}(\alpha'))/2} - q_{F}^{(\lambda^{\Sol}(\alpha') - (\lambda^{*})^{\Sol}(\alpha'))/2}
\right) \theta_{(\alpha')^{\vee}} + q_{F}^{\lambda^{\Sol}(\alpha')} - 1.
\end{align*}
Thus, we have
\begin{align}
\label{bernsteinrelofsolleveldepsilon=1}
\theta_{(\alpha')^{\vee}} T^{\Sol}_{s, 0}  = T^{\Sol}_{s, 0} \theta_{- (\alpha')^{\vee}} + (q_{F}^{(\lambda^{*})^{\Sol}(\alpha')} - 1) \theta_{(\alpha')^{\vee}} + q_{F}^{(\lambda^{\Sol}(\alpha') + (\lambda^{*})^{\Sol}(\alpha'))/2} - q_{F}^{(- \lambda^{\Sol}(\alpha') + (\lambda^{*})^{\Sol}(\alpha'))/2}.
\end{align}
According to Theorem~\ref{maintheoremisomofaffinehecke}, we have
\[
\begin{cases}
\lambda^{\Mor}(- \alpha') &= (\lambda^{*})^{\Sol} (\alpha') > 0, \\
(\lambda^{*})^{\Mor}(- \alpha') &= \lambda^{\Sol}(\alpha') > 0.
\end{cases}
\]
Then, comparing the constant terms of the right hand sides of \eqref{iotaofbernsteinrelofmorrisepsilon=1} and \eqref{bernsteinrelofsolleveldepsilon=1}, we have $c=1$.
\end{proof}

We also have the following Corollary from Theorem~\ref{maintheoremisomofaffinehecke}.
\begin{corollary}
\label{comparisonofparameterqpower}
Let $\alpha \in \Delta_{\mathfrak{s}_{M}, \mu}(P)$.
Let $a \in \Gamma(J, \rho)^{+}$ such that $r(a) = - \alpha'$.
Then, the number $\epsilon_{\alpha}$ and the parameters $q_{\alpha}$ and $q_{\alpha*}$ can be calculated as
\[
\begin{cases}
\epsilon_{\alpha} &= 0, \\
q_{\alpha} &= p_{a}^{1/2} \cdot (p'_{a})^{1/2}, \\
q_{\alpha*} &= p_{a}^{1/2} \cdot (p'_{a})^{-1/2}
\end{cases}
\]
if $p_{a} > p'_{a}$, and
\[
\begin{cases}
\epsilon_{\alpha} &= 1, \\
q_{\alpha} &= p_{a}^{1/2} \cdot (p'_{a})^{1/2}, \\
q_{\alpha*} &= p_{a}^{- 1/2} \cdot (p'_{a})^{1/2}
\end{cases}
\]
if $p_{a} < p'_{a}$.
If $p_{a} = p'_{a}$, we have $\epsilon_{\alpha} = 0$, and there are two possibilities:
\[
\begin{cases}
q_{\alpha} &= p_{a}, \\
q_{\alpha*} &= 1
\end{cases}
\]
and
\[
q_{\alpha} = q_{\alpha*} = p_{a}^{1/2}.
\]
\end{corollary}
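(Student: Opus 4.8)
The plan is to deduce everything from Theorem~\ref{maintheoremisomofaffinehecke} together with the explicit descriptions of the label functions on both sides. First I would record the input data. Since $r(a) = -\alpha'$ with $a + A'_{J} \in B(J,\rho)_{e}$, formulas~\eqref{lambdaofmorris} and~\eqref{lambdastarofmorrisrefined} give $\lambda^{\Mor}(-\alpha') = \log(p_{a})/\log(q_{F})$ and $(\lambda^{*})^{\Mor}(-\alpha') = \log(p'_{a})/\log(q_{F})$. On the Solleveld side, the definition~\eqref{lambdaofsolleveld} of $\lambda,\lambda^{*}$ together with the modification~\eqref{lambdaofsolleveldrefinedpositive}--\eqref{lambdaofsolleveldrefinedzero} shows that when $q_{\alpha} \ne q_{\alpha*}$ (equivalently $\lambda^{*}(\alpha^{\#}) \ne 0$, no passage from $\alpha^{\#}$ to $\alpha^{\#}/2$) one has $\lambda^{\Sol}(\alpha') = \log(q_{\alpha}q_{\alpha*})/\log(q_{F})$ and $(\lambda^{*})^{\Sol}(\alpha') = \log(q_{\alpha}q_{\alpha*}^{-1})/\log(q_{F})$, while when $q_{\alpha} = q_{\alpha*}$ one has $\lambda^{\Sol}(\alpha') = (\lambda^{*})^{\Sol}(\alpha') = \log(q_{\alpha}^{2})/\log(q_{F})$. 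In particular $\lambda^{\Sol}(\alpha') \ge (\lambda^{*})^{\Sol}(\alpha') > 0$, with equality exactly when $q_{\alpha} = q_{\alpha*}$; and $p_{a}, p'_{a} > 1$ because every element of $\Gamma(J,\rho)$ has $p$-value strictly greater than $1$.

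Next I would pin down $\epsilon_{\alpha}$ by a sign comparison. If $p_{a} > p'_{a}$ and one had $\epsilon_{\alpha} = 1$, then Theorem~\ref{maintheoremisomofaffinehecke} would force $(\lambda^{*})^{\Mor}(-\alpha') = \lambda^{\Sol}(\alpha') > (\lambda^{*})^{\Sol}(\alpha') = \lambda^{\Mor}(-\alpha')$, i.e. $p'_{a} > p_{a}$, a contradiction; hence $\epsilon_{\alpha} = 0$ and $\lambda^{\Sol}(\alpha') = \log(p_{a})/\log(q_{F}) > \log(p'_{a})/\log(q_{F}) = (\lambda^{*})^{\Sol}(\alpha')$. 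Dually, if $p_{a} < p'_{a}$ and one had $\epsilon_{\alpha} = 0$, then $\lambda^{\Sol}(\alpha') = \lambda^{\Mor}(-\alpha') < (\lambda^{*})^{\Mor}(-\alpha') = (\lambda^{*})^{\Sol}(\alpha')$, contradicting $\lambda^{\Sol} \ge (\lambda^{*})^{\Sol}$; hence $\epsilon_{\alpha} = 1$, and then $\lambda^{\Sol}(\alpha') = (\lambda^{*})^{\Mor}(-\alpha') = \log(p'_{a})/\log(q_{F})$, $(\lambda^{*})^{\Sol}(\alpha') = \lambda^{\Mor}(-\alpha') = \log(p_{a})/\log(q_{F})$. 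Finally, if $p_{a} = p'_{a}$ and one had $\epsilon_{\alpha} = 1$, Theorem~\ref{maintheoremisomofaffinehecke} would give $(\lambda^{*})^{\Sol}(\alpha') = \lambda^{\Mor}(-\alpha') = (\lambda^{*})^{\Mor}(-\alpha') = \lambda^{\Sol}(\alpha')$, contradicting the strict inequality $\lambda^{\Sol} > (\lambda^{*})^{\Sol}$ which holds in the $\epsilon_{\alpha}=1$ case; so $\epsilon_{\alpha} = 0$ and $\lambda^{\Sol}(\alpha') = (\lambda^{*})^{\Sol}(\alpha') = \log(p_{a})/\log(q_{F})$.

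It then remains to recover $q_{\alpha}$ and $q_{\alpha*}$ from $\lambda^{\Sol}(\alpha')$ and $(\lambda^{*})^{\Sol}(\alpha')$. When $p_{a} \ne p'_{a}$ we have $\lambda^{\Sol}(\alpha') \ne (\lambda^{*})^{\Sol}(\alpha')$, hence $q_{\alpha} \ne q_{\alpha*}$, and inverting the first pair of formulas gives $q_{\alpha} = q_{F}^{(\lambda^{\Sol}(\alpha') + (\lambda^{*})^{\Sol}(\alpha'))/2}$ and $q_{\alpha*} = q_{F}^{(\lambda^{\Sol}(\alpha') - (\lambda^{*})^{\Sol}(\alpha'))/2}$; substituting the values found above yields $q_{\alpha} = p_{a}^{1/2}(p'_{a})^{1/2}$ in both cases, $q_{\alpha*} = p_{a}^{1/2}(p'_{a})^{-1/2}$ when $p_{a} > p'_{a}$, and $q_{\alpha*} = p_{a}^{-1/2}(p'_{a})^{1/2}$ when $p_{a} < p'_{a}$. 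When $p_{a} = p'_{a}$ we only know $\lambda^{\Sol}(\alpha') = (\lambda^{*})^{\Sol}(\alpha') = \log(p_{a})/\log(q_{F})$, which is compatible both with $q_{\alpha} \ne q_{\alpha*}$ --- forcing $q_{\alpha*} = 1$ and $q_{\alpha} = p_{a}$ --- and with $q_{\alpha} = q_{\alpha*}$ --- forcing $q_{\alpha} = q_{\alpha*} = p_{a}^{1/2}$; these are exactly the two listed possibilities. I expect the only delicate point to be this last step: carefully tracking the passage from $(\lambda,\lambda^{*})$ to the modified pair $(\lambda^{\Sol},(\lambda^{*})^{\Sol}) = (\lambda',(\lambda^{*})')$, and recognizing that in the degenerate case $p_{a} = p'_{a}$ the affine Hecke algebra $\mathcal{H}^{\Sol}$ genuinely under-determines the pair $(q_{\alpha}, q_{\alpha*})$, so that the stated ambiguity is unavoidable.
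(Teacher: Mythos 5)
Your proposal is correct and follows essentially the same route as the paper: both arguments combine the label-function identifications of Theorem~\ref{maintheoremisomofaffinehecke} with formulas~\eqref{lambdaofmorris}, \eqref{lambdastarofmorrisrefined}, \eqref{lambdaofsolleveld}, \eqref{lambdaofsolleveldrefinedpositive}, \eqref{lambdaofsolleveldrefinedzero} and the positivity constraints $p_{a}, p'_{a} > 1$, $q_{\alpha} \ge q_{\alpha*} \ge 1$, differing only in that you pin down $\epsilon_{\alpha}$ first and then solve for $q_{\alpha}, q_{\alpha*}$, whereas the paper derives the three candidate $(q_{\alpha}, q_{\alpha*})$ formulas and then eliminates by consistency. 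One small slip worth noting: your parenthetical "with equality exactly when $q_{\alpha} = q_{\alpha*}$" is not quite right --- equality $\lambda^{\Sol}(\alpha') = (\lambda^{*})^{\Sol}(\alpha')$ also occurs when $q_{\alpha} > q_{\alpha*} = 1$, as your own treatment of the $p_{a} = p'_{a}$ case correctly allows --- but since you only invoke the inequality direction $\lambda^{\Sol} \ge (\lambda^{*})^{\Sol}$ and the implication $q_{\alpha} = q_{\alpha*} \Rightarrow \lambda^{\Sol} = (\lambda^{*})^{\Sol}$, this does not damage the argument.
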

\begin{remark}
According to \cite[Lemma~3.3]{MR4432237}, $q_{\alpha*} = 1$ unless $\alpha^{\#}$ is the unique simple root in a type $A_{1}$ irreducible component of $\Sigma_{\mathfrak{s}_{M}}$ or a long root in a type $C_{n} \ (n \ge 2)$ irreducible component of $\Sigma_{\mathfrak{s}_{M}}$
If $q_{\alpha*} = 1$, Corollary~\ref{comparisonofparameterqpower} implies that
\[
p_{a} = p'_{a} = q_{\alpha}.
\]
\end{remark}
\begin{proof}[Proof of Corollary~\ref{comparisonofparameterqpower}]
First, we assume that $\epsilon_{\alpha} = 0$.
Then, Theorem~\ref{maintheoremisomofaffinehecke} implies that
\begin{align}
\label{maintheoremisomofaffineheckeparameter}
\begin{cases}
\lambda^{\Mor}(- \alpha') &= \lambda^{\Sol}(\alpha'), \\
(\lambda^{*})^{\Mor}(- \alpha') &= (\lambda^{*})^{\Sol} (\alpha')
\end{cases}
\end{align}
If $q_{\alpha} > q_{\alpha*}$, \eqref{lambdaofsolleveld} and \eqref{lambdaofsolleveldrefinedpositive} imply that
\begin{align*}
q_{\alpha} &= q_{F}^{\left(\lambda^{\Sol}(\alpha') + (\lambda^{*})^{\Sol}(\alpha') \right)/2}\\
&= q_{F}^{\left(\lambda^{\Mor}(- \alpha') + (\lambda^{*})^{\Mor}(- \alpha') \right)/2}
\end{align*}
and 
\begin{align*}
q_{\alpha*} &= q_{F}^{\left(\lambda^{\Sol}(\alpha') - (\lambda^{*})^{\Sol}(\alpha') \right)/2}\\
&= q_{F}^{\left(\lambda^{\Mor}(- \alpha') - (\lambda^{*})^{\Mor}(- \alpha') \right)/2}.
\end{align*}
Then, according to \eqref{lambdaofmorris} and \eqref{lambdastarofmorrisrefined}, we have
\begin{align}
\label{epsilon=0q>q*}
\begin{cases}
q_{\alpha} &= p_{a}^{1/2} \cdot (p'_{a})^{1/2}, \\
q_{\alpha*} &= p_{a}^{1/2} \cdot (p'_{a})^{-1/2}.
\end{cases}
\end{align}
If $q_{\alpha} = q_{\alpha*}$, \eqref{lambdaofsolleveld} and \eqref{lambdaofsolleveldrefinedzero} imply that
\[
\lambda^{\Sol}(\alpha') = (\lambda^{*})^{\Sol}(\alpha')
\]
and
\[
q_{\alpha} = q_{\alpha*} = q_{F}^{\lambda^{\Sol}(\alpha')/2} = q_{F}^{\lambda^{\Mor}(- \alpha')/2}.
\]
We note that in this case, we also have
\[
\lambda^{\Mor}(- \alpha') = (\lambda^{*})^{\Mor}(- \alpha')
\]
Then, according to \eqref{lambdaofmorris} and \eqref{lambdastarofmorrisrefined}, we have
\begin{align}
\label{epsilon=0q=q*}
q_{\alpha} = q_{\alpha*} = p_{a}^{1/2} = (p'_{a})^{1/2}.
\end{align}

Next, we consider the case $\epsilon_{\alpha} = 1$.
According to Theorem~\ref{maintheoremisomofaffinehecke}, we have
\[
\begin{cases}
\lambda^{\Sol}(\alpha') &> (\lambda^{*})^{\Sol}(\alpha'), \\
\lambda^{\Mor}(- \alpha') &= (\lambda^{*})^{\Sol} (\alpha'), \\
(\lambda^{*})^{\Mor}(- \alpha') &= \lambda^{\Sol}(\alpha').
\end{cases}
\]
If  $q_{\alpha} = q_{\alpha*}$, \eqref{lambdaofsolleveldrefinedzero} implies that
\[
\lambda^{\Sol}(\alpha') = (\lambda^{*})^{\Sol}(\alpha'),
\]
a contradiction. Hence, we have $q_{\alpha} > q_{\alpha*}$.
Then, \eqref{lambdaofsolleveld} and \eqref{lambdaofsolleveldrefinedpositive} imply that
\begin{align*}
q_{\alpha} &= q_{F}^{\left(\lambda^{\Sol}(\alpha') + (\lambda^{*})^{\Sol}(\alpha') \right)/2}\\
&= q_{F}^{\left((\lambda^{*})^{\Mor}(- \alpha') + \lambda^{\Mor}(- \alpha') \right)/2}
\end{align*}
and 
\begin{align*}
q_{\alpha*} &= q_{F}^{\left(\lambda^{\Sol}(\alpha') - (\lambda^{*})^{\Sol}(\alpha') \right)/2}\\
&= q_{F}^{\left(\lambda^{*})^{\Mor}(- \alpha') - \lambda^{\Mor}(- \alpha')  \right)/2}.
\end{align*}
Hence, \eqref{lambdaofmorris} and \eqref{lambdastarofmorrisrefined} imply that
\begin{align}
\label{epsilon=1}
\begin{cases}
q_{\alpha} &= p_{a}^{1/2} \cdot (p'_{a})^{1/2}, \\
q_{\alpha*} &= p_{a}^{- 1/2} \cdot (p'_{a})^{1/2}.
\end{cases}
\end{align}

Now, we prove the corollary.
There are three possibilities \eqref{epsilon=0q>q*}, \eqref{epsilon=0q=q*}, and \eqref{epsilon=1}.
We note that $p_{a}, p'_{a} >1$ and $q_{\alpha} \ge q_{\alpha*} \ge 1$.
Hence, only \eqref{epsilon=0q>q*} can happen when $p_{a} > p'_{a}$, and only \eqref{epsilon=1} can happen when $p_{a} < p'_{a}$.
Suppose that $p_{a} = p'_{a}$.
If $\epsilon_{\alpha} = 1$, we have
\[
\lambda^{\Mor}(- \alpha') = (\lambda^{*})^{\Sol}(\alpha') < \lambda^{\Sol}(\alpha') = (\lambda^{*})^{\Mor}(- \alpha'),
\]
hence $p_{a} < p'_{a}$, a contradiction.
Thus, we obtain that $\epsilon_{\alpha} = 0$ and there are two possibilities \eqref{epsilon=0q>q*} and \eqref{epsilon=0q=q*}.
\end{proof}
The parameter $p_{a}$ are studied in \cite[Section~8]{MR742472}.
In particular, according to \cite[Theorem~8.6]{MR742472}, the parameter $p_{a}$ is a powers of $q_{F}$ if the center of $\mathbf{M}_{J \cup \{a\}}$ is connected.
In this case, Corollary~\ref{comparisonofparameterqpower} implies that $q_{\alpha}$ and $q_{\alpha*}$ are powers of $q_{F}^{1/2}$ (see \cite[1.(a)]{2020arXiv200608535L} and \cite[Conjecture~A]{2021arXiv210313113S}).
\section{Some lemmas for main theorems}
\label{Some lemmas about elements of}
In this section, we prepare some lemmas that will be used to prove Theorem~\ref{maintheoremrootsystem} and Theorem~\ref{maintheoremisomofaffinehecke} in the following sections.
We use the same notation as Section~\ref{Statements of main results}.
Let $a \in \Gamma(J, \rho)$, and we fix a lift $s$ of $v[a, J]$ in $N_{G}(S)(F)$.
We note that the definition of the Levi subgroup $M$ implies that $W_{J \cup \{a\}} \not \subset W_{M(F)}$.
Hence, we obtain $s \not \in M(F)$.
\begin{lemma}
\label{snormalizesKM}
The element $s$ normalizes $M$, $K_{M}$, and $\rho_{M}$.
\end{lemma}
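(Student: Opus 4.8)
The statement asserts that the lift $s$ of $v[a,J]\in W(J,\rho)$ normalizes $M$, $K_M$, and $\rho_M$. I would prove the three assertions in that order, using the fact that $v[a,J]$ fixes $J$ together with the intertwining condition built into the definition of $W(J,\rho)$.

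\emph{Normalizing $M$.} Recall that $M$ is the centralizer of the subtorus $\left(\bigcap_{\alpha\in DJ}\ker(\alpha)\right)^{\circ}$ of $S$. Since $s$ is a lift of $v[a,J]$, which is an element of $W(J,\rho)$, it satisfies $v[a,J]\cdot J = J$, hence $Dv[a,J]$ stabilizes $DJ$ as a subset of the character lattice. Because $s\in N_G(S)(F)$, conjugation by $s$ permutes the roots in $\Phi$ according to the action of $Dv[a,J]$; in particular it stabilizes $\mathbb{R}\cdot(DJ)$ and therefore stabilizes $\bigcap_{\alpha\in DJ}\ker(\alpha)$, hence its identity component. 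Conjugation by $s$ thus preserves this subtorus, and therefore preserves its centralizer $M$. So $s\in N_G(M)(F)$ and $s$ normalizes $M$; consequently it normalizes $M(F)$, $M^1$, and conjugation by $s$ sends $K_M = K\cap M(F)$ into ${}^sK\cap M(F)$.

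\emph{Normalizing $K_M$.} The element $s$ is a lift of $v[a,J]$, and $v[a,J]\in W_{J\cup\{a\}}\subset W_{G'}=W_{\aff}$, so any lift of $v[a,J]$ lies in a parahoric subgroup of $G(F)$; in fact, by the discussion preceding the statement (``the definition of $v[a,J]$ implies that $\widetilde{s}$ is contained in a parahoric subgroup''), I may take $s$ inside a parahoric subgroup, hence $s\in G^1$, and moreover, since $v[a,J]$ fixes $J$, the element $s$ normalizes the parahoric $P_J=K$ — here one uses that $P_J$ is determined by $J$ together with the chamber data, and $v[a,J]J=J$. Combined with $s\in N_G(M)(F)$, this gives that $s$ normalizes $K\cap M(F)=K_M$. (Here I would invoke the same line of reasoning as in Remark~\ref{remarkmp2prop6.6}/the proof of \cite[Proposition~6.6]{MR1371680}, noting that $v[a,J]\in W(J,\rho)$ implies $\dot{v}[a,J]=s$ intertwines $\rho$, hence its image in $M(F)$ lies in $I_{M(F)}(\rho_M)\subset\widetilde{K_M}$, and since $s$ also stabilizes $x_J$ it normalizes the parahoric $K_M$, the stabilizer of $x_J$ in $M^1$.)

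\emph{Normalizing $\rho_M$.} By the definition of $W(J,\rho)$, we have ${}^{\dot{w}}\rho\simeq\rho$ for $w=v[a,J]$, i.e. ${}^s\rho\simeq\rho$ as representations of $K=P_J$; restricting to $K_M$ and using that $s$ normalizes $K_M$ gives ${}^s\rho_M = ({}^s\rho)\restriction_{K_M}\simeq \rho\restriction_{K_M}=\rho_M$. So $s$ intertwines and in fact normalizes $\rho_M$ (the isomorphism ${}^s\rho_M\simeq\rho_M$ being what ``normalizes'' means here). The main obstacle, such as it is, is purely bookkeeping: making sure the identifications between the combinatorial statement $v[a,J]J=J$ and the group-theoretic statement ``$s$ normalizes $P_J$ and stabilizes $x_J$'' are carried out with the correct references (\cite[1.8, 3.1]{MR1235019}, \cite[3.5.1]{MR546588}, \cite[2.1]{MR1713308}), and that the passage from $\rho$ as a representation of $P_J$ to $\rho_M$ as a representation of $K_M$ is compatible with $s$-conjugation; no delicate estimate or new idea is needed beyond what is already set up in Section~\ref{Statements of main results}.
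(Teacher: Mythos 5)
Your argument for the first assertion (that $s$ normalizes $M$) is correct and matches the paper's. The real trouble is in your argument that $s$ normalizes $K_{M}$, and it is a genuine error, not a bookkeeping one.

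You claim that ``since $v[a,J]$ fixes $J$, the element $s$ normalizes the parahoric $P_J=K$.'' This is false. The parahoric $P_{J}=P_{J,B}$ depends on both $J$ and the chamber $C$ (equivalently the basis $B$), a point the paper stresses explicitly. The element $v[a,J]=ut$ has positive length (for a nontrivial $a\in\Gamma(J,\rho)$), so $v[a,J]\cdot C\neq C$, and hence ${}^{s}P_{J,B}=P_{J,\,v[a,J]B}\neq P_{J,B}$ in general. The deduction ``$s$ normalizes $K$ and $s$ normalizes $M(F)$, hence $s$ normalizes $K\cap M(F)=K_M$'' therefore collapses. Your parenthetical backup gestures at the correct idea (that $s$ stabilizes $x_J$), but it also mis-invokes Remark~\ref{remarkmp2prop6.6}: that remark is about $I_{M(F)}(\rho_M)\subset\widetilde{K_M}$, i.e., about elements \emph{of $M(F)$}, whereas the paper points out right before the lemma that $s\notin M(F)$, so talking about ``its image in $M(F)$'' does not parse. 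What the paper actually does is cleaner and sidesteps $P_J$ entirely: since $s$ fixes $J$ it stabilizes the affine subspace $E^{J}=\{x\in\mathcal{A}(G,S)\mid a(x)=0\ (a\in J)\}$ (as a set, not pointwise), and for \emph{every} $x\in E^{J}$ one has $K_M=G(F)_{x,0}\cap M(F)$; combined with $s$ normalizing $M(F)$ this gives ${}^{s}K_M=K_M$ directly.

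For the third assertion your phrasing ``${}^s\rho\simeq\rho$ as representations of $K=P_J$'' is imprecise for the same reason: ${}^s\rho$ is a representation of ${}^sK$, which is a different group from $K$. The clean statement is that $s$ intertwines $\rho$, i.e.\ $\Hom_{K\cap {}^sK}({}^s\rho,\rho)\neq 0$; since $s$ normalizes $K_M$ (once that is established correctly) and $K_M\subset K\cap{}^sK$, restricting gives $\Hom_{K_M}({}^s\rho_M,\rho_M)\neq 0$, which forces ${}^s\rho_M\simeq\rho_M$ by irreducibility. This is what the paper's terse ``Since $s$ intertwines $\rho$, we obtain the claim'' is compressing.
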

\begin{proof}
Since $v[a, J] \in R(J, \rho) \subset W(J, \rho)$, the element $s$ intertwines $\rho$ and fixes $J$.
Hence, it also fixes the Levi subgroup $M$ and the subset
\[
\left\{
x \in \mathcal{A}(G, S) \mid a(x) = 0 \ (a \in J)
\right\}
\]
of $\mathcal{A}(G, S)$.
The definition of $K_{M}$ implies that for any element $x$ of the set above, 
we have
\[
K_{M} = G(F)_{x, 0} \cap M(F),
\]
where $G(F)_{x, 0}$ denotes the parahoric subgroup of $G(F)$ associated with $x$ \cite[3.1, 3.2]{MR1371680}.
Hence, we obtain that $s$ normalizes $K_{M}$.
Since $s$ intertwines $\rho$, we obtain the claim.
\end{proof}
\begin{lemma}
\label{snotinpk}
The element $s$ is not contained in $P(F) \cdot K$.
\end{lemma}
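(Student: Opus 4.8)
The plan is to argue by contradiction using the Bruhat--Tits theory of the facet $\mathcal{F}_{J}$ together with the hypothesis $\lvert B\setminus J\rvert > 1$. Suppose, for contradiction, that $s\in P(F)\cdot K$. Since $s$ normalizes $M$ by Lemma~\ref{snormalizesKM}, and $K = P_{J}$ decomposes with respect to $U, M, \overline{U}$ (indeed $(K,\rho)$ is a $G$-cover of $(K_{M},\rho_{M})$), I would first reduce the containment $s\in P(F)\cdot K$ to a containment of the form $s\in U(F)\cdot M(F)\cdot K$, and then, writing $s = u m k$ with $u\in U(F)$, $m\in M(F)$, $k\in K$, use the Iwahori factorization $K = K_{U}\cdot K_{M}\cdot K_{\overline{U}}$ to absorb $K_{U}$ into $u$ and $K_{\overline{U}}$ into the reading of the coset, so that modulo $U(F)$ the element $s$ agrees with an element of $M(F)\cdot K_{M} = M(F)$. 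Consequently the image of $s$ in $W = N_{G}(S)(F)/Z_{G}(S)(F)_{0}$ would lie in $W_{M(F)}\cdot W_{K}$, hence (since $W_{K} = W_{J} \subset W_{M(F)}$) in $W_{M(F)}$ itself, contradicting the fact, noted just before the statement, that $v[a,J]$ has a representative $W_{J\cup\{a\}}$-component and $W_{J\cup\{a\}}\not\subset W_{M(F)}$, i.e. $s\notin M(F)$.

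To make the reduction step rigorous, the cleanest route is geometric rather than purely combinatorial. Recall from the discussion preceding Assumption~\ref{parahoric=stabilizer} and from Lemma~\ref{positivitycompativility} that $K = P_{J} = G(F)_{y,0}$ for a point $y$ in the facet $\mathcal{F}_{J}\subset\mathcal{A}^{J}$, and that $M$ is the centralizer of $\bigl(\bigcap_{\alpha\in DJ}\ker\alpha\bigr)^{\circ}$, so $\mathcal{A}^{J}$ is precisely the set of points of $\mathcal{A}$ fixed pointwise by the translation part of $W_{M(F)}$. An element of $P(F)\cdot K$ preserves the set of points of the building that lie "on the $P$-side" in an appropriate sense; more usefully, $P(F)\cap K = K_{M}K_{U}$ and the action of $K$ on $\mathcal{A}^{J}$ fixes $y$, so any $g\in P(F)\cdot K$ that also normalizes $M$ must send $\mathcal{A}^{M} := \mathcal{A}(M,S)$ to itself and act on it through $M(F)$. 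Since $s$ normalizes $M$ and $K_{M}$ by Lemma~\ref{snormalizesKM}, I would show that $s\in P(F)\cdot K$ forces $s$ to normalize the pair $(y, U)$ up to $K$, and then extract that the class of $s$ in $N_{G}(S)(F)/Z_{G}(S)(F)_{0}$ lies in the subgroup generated by $W_{M}$ and $W_{K}$. The key input here is that the parahoric $K$ has trivial "$U$-part modulo $M$" obstruction: precisely, $P(F)\cdot K \cap N_{G}(S)(F)$ maps into $W_{M(F)}\cdot W_{K}$ in $W$, which is exactly the statement that the retraction onto $\mathcal{A}^{M}$ centered at $\mathcal{F}_{J}$ is $W$-equivariant in the relevant sense.

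Alternatively, and perhaps more in the spirit of Morris's setup in \cite{MR1235019}, one can avoid the building and argue directly with affine roots. Since $J\cup\{a\}$ is contained in a basis $B'$ of $\Phi_{\aff}$ and $a\notin A'_{J}$, the gradient $Da$ is not in $\mathbb{R}\cdot(DJ) = \Phi^{M}$ spanned set; thus $v[a,J] = ut$ with $u\in W_{J\cup\{a\}}$, and $u$ genuinely involves the reflection $s_{a}$ whose affine root $a$ has gradient outside $\Phi^{M}$. If $s\in P(F)\cdot K$ one gets, via the Bruhat decomposition of $G(F)$ relative to the parabolic $P$ and the parahoric $K$ — concretely $G(F) = \bigsqcup_{w\in W_{M}\backslash W / W_{J}} P(F)\dot{w}K$ (a consequence of \cite[Th\'eor\`eme~4.6.33]{MR756316} or the mixed Bruhat decomposition) — that the double coset $P(F) s K$ equals $P(F)\cdot 1\cdot K$, forcing the class of $v[a,J]$ in $W_{M}\backslash W / W_{J}$ to be trivial, i.e. $v[a,J]\in W_{M}W_{J} = W_{M}$ (using $W_{J}\subset W_{M}$); this contradicts $s\notin M(F)$. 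The main obstacle I anticipate is justifying the precise mixed Bruhat decomposition $G(F) = \bigsqcup P(F)\dot w K$ with the index set $W_{M}\backslash W/W_{J}$ in the generality of a possibly non-simply-connected $G$ and a parahoric $K$ that is only the connected stabilizer — one must be careful that $W_{K} = W_{J}$ (established in the proof of Lemma~\ref{WcapMsimeqImrho}) and that $W_{M(F)}$ here means the full relative Weyl-type group of $M$, not just its affine part; handling the disconnectedness and the relation $P(F)\cap\dot w K\dot w^{-1}$ correctly is where the care is needed, but all of this is standard Bruhat--Tits bookkeeping and the rest of the argument is then immediate.
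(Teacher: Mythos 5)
Your proposal and the paper's proof take genuinely different routes. The paper argues directly, with no building theory and no Bruhat decomposition: write $s = muk$ with $m \in M(F)$, $u\in U(F)$, $k\in K$, set $s' := m^{-1}s = uk$, and conjugate an \emph{arbitrary} element $m_{1}\in M(F)$ by $s'$. Because $s'$ normalizes $M$ (by Lemma~\ref{snormalizesKM}), the element
\[
(s')^{-1}m_{1}s' = k^{-1}u^{-1}(m_{1}um_{1}^{-1})m_{1}k
\]
lies in $M(F)$; factoring $k = k_{U}\cdot k_{\overline{U}}\cdot k_{M}$ and comparing both sides against the unique $U(F)\cdot\overline{U}(F)\cdot M(F)$ decomposition forces $k_{M}\left((s')^{-1}m_{1}s'\right) = m_{1}k_{M}$, i.e.\ $s'k_{M}^{-1}$ centralizes $m_{1}$. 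Letting $m_{1}$ range over $A_{M}$ gives $s'k_{M}^{-1}\in Z_{G}(A_{M})(F) = M(F)$, whence $s\in M(F)$, a contradiction. This is short, elementary, and self-contained.

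Your first sketch has a concrete gap: after writing $s = umk$ you cannot ``absorb $K_{\overline U}$ into the reading of the coset,'' because $K_{\overline U}\not\subset P(F)$ — that is exactly the obstruction which makes the statement nontrivial. And a containment $s\in P(F)\cdot K$ by itself yields no constraint on the image of $s$ in $W$ without a decomposition of $G(F)$ into $P(F)$--$K$ double cosets. Your third route supplies that: a mixed Iwasawa--Bruhat decomposition $G(F)=\bigsqcup_{w} P(F)\dot w K$ would close the argument, and the Bruhat--Tits facts you cite are the right ones to invoke for it. But, as you yourself flag, justifying the index set and keeping $W_{K}=W_{J}\subset W_{M(F)}$ straight for a non-simply-connected $G$ and a non-maximal parahoric $K$ is real work that the paper's direct conjugation computation avoids entirely. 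That computation is the one to aim for: the only ingredients are the Iwahori factorization, the normalization statement already proved, and $M = Z_{G}(A_{M})$.
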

\begin{proof}
Suppose that $s \in P(F) \cdot K$.
We write
\[
s= muk \ (m \in M(F), u \in U(F), k \in K).
\]
Let $s' = m^{-1}s$.
Then, for any $m_{1} \in M(F)$, we have
\[
(s')^{-1} m_{1} s' = k^{-1}u^{-1}m_{1}uk = k^{-1} u^{-1} (m_{1} u m_{1}^{-1})m_{1} k.
\]
We write
\[
k = k_{U} \cdot k_{\overline{U}} \cdot k_{M} \ (k_{U} \in K_{U}, k_{M} \in K_{M}, k_{\overline{U}} \in K_{\overline{U}}),
\]
and let
\[
m' = (s')^{-1} m_{1} s' \in M(F)
\]
and
\[
u' = u^{-1} (m_{1} u m_{1}^{-1}) \in U(F).
\]
Then, we obtain
\[
k_{U} \cdot k_{\overline{U}} \cdot (k_{M} m') = \left(u' (m_{1} k_{U} m_{1}^{-1})\right) \cdot \left(m_{1} k_{\overline{U}} m_{1}^{-1}\right) \cdot m_{1} k_{M}.
\]
Hence,
\[
\begin{cases}
k_{U} &= u' (m_{1} k_{U} m_{1}^{-1}), \\
k_{\overline{U}} &= m_{1} k_{\overline{U}} m_{1}^{-1}, \\
k_{M} m' &= m_{1} k_{M}.
\end{cases}
\]
The last equation implies that $s' k_{M}^{-1}$ commutes with any element $m_{1}$ of $M(F)$.
In particular, $s' k_{M}^{-1}$ commutes with any element of $A_{M}$.
Hence, we obtain
$
s' k_{M}^{-1} \in M(F).
$
Thus, we conclude 
\[
s= m s' = m (s' k_{M}^{-1}) k_{M} \in M(F),
\]
a contradiction.
\end{proof}
We write $P_{s} = s^{-1} Ps$ and $U_{s} = s^{-1} U s$.
\begin{lemma}
\label{lemmaminkm}
Let $m \in M(F)$ such that
\[
U(F)ms \cap KsK \neq \emptyset.
\]
Then, we have $m \in K_{M}$.
Moreover, let $m \in K_{M}$, $u \in U(F)$, and $k, k' \in K$ such that
\[
ums = k^{-1} s k'.
\]
We write
\[
k = k_{\overline{U}} \cdot k_{M} \cdot k_{U} \ (k_{U} \in K_{U}, k_{M} \in K_{M}, k_{\overline{U}} \in K_{\overline{U}})
\]
and
\[
k' = k'_{\overline{U_{s}}} \cdot k'_{M} \cdot k'_{U_{s}} \ (k'_{U_{s}} \in K_{U_{s}}, k'_{M} \in K_{M}, k'_{\overline{U_{s}}} \in K_{\overline{U_{s}}}).
\]
Then, we have
\[
k_{M}m = s k'_{M} s^{-1}.
\]
\end{lemma}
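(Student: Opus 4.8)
The plan is to analyze the double coset decompositions on both sides of the equation $ums = k^{-1}sk'$ using the Iwahori decomposition of $K$ with respect to the two parabolic subgroups $P = MU$ and $P_s = s^{-1}Ps = MU_s$. First I would establish the first claim: if $U(F)ms \cap KsK \neq \emptyset$ then $m \in K_M$. Writing $ums = k_1 s k_2$ with $k_1, k_2 \in K$ and $u \in U(F)$, I would pass to the Bruhat-Tits building picture, using that $K = P_J$ is (by Assumption~\ref{parahoric=stabilizer}) the stabilizer of $\mathcal{F}_J$ in $G^1$ and that $s = \dot{v}[a,J]$ is a lift of an element of $W_{\mathrm{aff}}$ fixing $J$, hence $s$ fixes $\mathcal{F}_J$ and normalizes $K$, $K_M$, $M$, $\rho_M$ by Lemma~\ref{snormalizesKM}. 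Since $s$ normalizes $K$, the relation $ums = k_1 s k_2$ becomes $um \in K s k_2 s^{-1} = K(sk_2 s^{-1})$, i.e. $um \in K$; writing $um \in K = K_U K_M K_{\overline{U}}$ via the Iwahori decomposition and using that $um$ has trivial $\overline{U}$-component (as $um \in U(F)M(F)$, which meets $K$ only in $K_U K_M$ by uniqueness of the decomposition in $U(F)M(F)\overline{U}(F)$), I get $u \in K_U$ and $m \in K_M$.

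For the second, more refined claim, I would take $m \in K_M$, $u \in U(F)$, $k, k' \in K$ with $ums = k^{-1}sk'$, and expand $k$ via the Iwahori decomposition adapted to $P$ (in the order $\overline{U}, M, U$, so $k = k_{\overline{U}} k_M k_U$) and $k'$ via the decomposition adapted to $P_s$ (so $k' = k'_{\overline{U_s}} k'_M k'_{U_s}$, which is legitimate since $K$ decomposes with respect to $U_s, M, \overline{U_s}$ as well — $K = P_J$ being a parahoric, the Iwahori factorization holds for every parabolic with Levi $M$). Substituting and conjugating by $s^{-1}$ on the right — using $s^{-1}U s = U_s$, $s^{-1}\overline{U}s = \overline{U_s}$, $s^{-1}Ms = M$, and that $s$ normalizes $K_U \leftrightarrow K_{U_s}$ etc.\ — I would rewrite $ums = k^{-1}sk'$ as an identity $u \cdot m \cdot s = (k_{\overline{U}} k_M k_U)^{-1} \cdot s \cdot (k'_{\overline{U_s}} k'_M k'_{U_s})$ and move everything to isolate the factorization in $\overline{U}(F) \cdot M(F) \cdot U(F)$ (or the $P_s$-analogue). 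Matching components under the uniqueness of the triple factorization $g = g_{\overline{U}} g_M g_U$ in the big cell, the $M$-components must agree, which will yield exactly $k_M m = s k'_M s^{-1}$ after tracking how $s$ conjugates the $M$-part of $k'$.

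The main obstacle I anticipate is the bookkeeping of \emph{which} Iwahori decomposition (relative to $P$ versus relative to $P_s$) to apply to $k$ versus $k'$, and keeping the factors in the correct order so that the uniqueness of the $\overline{U}MU$ factorization in the open cell can be invoked cleanly: one must be careful that $s$ conjugates $U$ to $U_s$ (not to $\overline{U}$), so the natural home of $k'$ after conjugation is the $P_s$-cell, and the $M$-component is the one invariant piece that survives the comparison. A secondary subtlety is justifying that the products $um$ and the rearranged expressions actually land in the open Bruhat cell $\overline{U}(F)M(F)U(F)$ rather than merely in $G(F)$; this follows because $u \in U(F)$, $m \in M(F)$, and $s k' s^{-1} \in K$ with $K$ admitting the Iwahori factorization, so every relevant element lies in $K \subset \overline{U}(F)M(F)U(F)$. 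Once the factorizations are set up correctly, extracting the $M$-components and reading off $k_M m = s k'_M s^{-1}$ is a direct comparison.
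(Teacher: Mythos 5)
Your proof of the second (refined) claim is essentially the paper's argument: rewrite $ums = k^{-1}sk'$ as $kum = sk's^{-1}$, expand $k$ via the Iwahori factorization of $K$ relative to $P$ and $k'$ via the factorization relative to $P_s$, observe that $s$ conjugates $\overline{U_s}, M, U_s$ to $\overline{U}, M, U$ respectively, and then match components under the unique triple factorization in $\overline{U}(F)\cdot M(F)\cdot U(F)$. The bookkeeping you flag as the main obstacle is handled exactly as you outline.

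However, your proof of the \emph{first} claim has a genuine gap. You write ``Since $s$ normalizes $K$, the relation $ums=k_1sk_2$ becomes $um\in Ksk_2s^{-1}=K(sk_2s^{-1})$,'' but $s$ does \emph{not} normalize $K$ in general. Lemma~\ref{snormalizesKM} establishes only that $s$ normalizes $M$, $K_M$, and $\rho_M$; it says nothing about $K$. Indeed, $K=P_{J,B}$ is the parahoric attached to the facet $\mathcal{F}_J\subset\overline{C}$, and while $v[a,J]$ fixes $J$ (and hence the affine subspace $\mathcal{A}^J$), it does \emph{not} fix the chamber $C$ nor the facet $\mathcal{F}_J$: it sends $B$ to a different basis $B'$ containing $J$, and $P_{J,B}$ to $P_{J,B'}$, which is a different parahoric. (For instance, taking $J=\emptyset$, the element $v[a,\emptyset]=s_a$ is an affine reflection that moves the fundamental alcove.) This is precisely why the argument that $s$ normalizes $K_M$ in Lemma~\ref{snormalizesKM} goes through — $K_M = G(F)_{x,0}\cap M(F)$ is independent of the choice of $x\in\mathcal{A}^J$ — while the analogous claim for $K$ fails.

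The fix is short and is in fact how the paper proceeds: do not try to prove $m\in K_M$ separately. Instead run your second-paragraph argument for an \emph{arbitrary} $m\in M(F)$ (nothing in the component-matching uses $m\in K_M$), obtaining $k_M m = sk'_M s^{-1}$. Since $s$ does normalize $K_M$ (Lemma~\ref{snormalizesKM}), the right-hand side lies in $K_M$, hence $m = k_M^{-1}(sk'_M s^{-1})\in K_M$. This gives both claims simultaneously, and avoids any appeal to $s$-normalization of $K$.
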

\begin{proof}
Let $u \in U(F)$ and $k, k' \in K$ such that
\[
ums = k^{-1} s k' \in U(F)ms \cap KsK.
\]
Then, we have
\[
kum = s k' s^{-1}.
\]
We note that
\[
s k' s^{-1} = (s k'_{\overline{U_{s}}} s^{-1}) \cdot (s k'_{M} s^{-1}) \cdot (s k'_{U_{s}} s^{-1})
\]
and 
\[
s k'_{\overline{U_{s}}} s^{-1} \in \overline{U}(F), \ s k'_{U_{s}} s^{-1} \in U(F).
\]
Then, we have
\[
k_{\overline{U}} \cdot k_{M}m \cdot (m^{-1}(k_{U}u)m) = (s k'_{\overline{U_{s}}} s^{-1}) \cdot (s k'_{M} s^{-1}) \cdot (s k'_{U_{s}} s^{-1}),
\]
hence
\[
\begin{cases}
k_{\overline{U}}
 &= s k'_{\overline{U_{s}}} s^{-1}, \\
k_{M}m &= s k'_{M} s^{-1}, \\
m^{-1}(k_{U}u)m &= s k'_{U_{s}} s^{-1}.
\end{cases}
\]
The last claim follows from the second equation.
According to Lemma~\ref{snormalizesKM}, $s$ normalizes $K_{M}$.
Hence, we obtain that
\[
m= k_{M}^{-1} \cdot (s k'_{M} s^{-1}) \in K_{M}.
\]
\end{proof}

Now, we suppose that there exists $\alpha \in \Delta_{\mathfrak{s}_{M}, \mu}(P)$ such that $s$ is also a lift in $I_{M_{\alpha}^{1}}(\sigma_{1})$ of the simple reflection 
\[
s_{\alpha} \in N_{G}(M)(F) / M(F)
\]
associated with $\alpha$.
We identify $s_{\alpha}$ and $v[a, J]$ with $s$.
Let $\epsilon = \epsilon_{\alpha} \in \{0, 1\}$ denote the number defined in \cite[Lemma~10.7 (b)]{MR4432237}.
Let $\Phi_{s}$ denote the element of $\End_{G(F)}\left(\ind_{K}^{G(F)} (\rho)\right)$ appearing in Theorem~\ref{theorem7.12ofmorris} and $T'_{s}$ denote the element of $\End_{G(F)}\left(I_{P}^{G}\left(\ind_{M^1}^{M(F)}(\sigma_{1})\right)\right)$ appearing in Lemma~\ref{lemmasolleveld10.8}.
\begin{lemma}
\label{lemmaforcomparisonofmorrisandsolleveldkeypropositionrank1}
Suppose that there exist $b_{0}, b' \in \mathbb{C}[M_{\sigma}/M^1]$ such that
\begin{align}
\label{notsubstitutelemma}
\left(
T_{\rho_{M}} \circ I_{U}
\right)
(\Phi_{s}) =
b_{0} \cdot T'_{s} + b'.
\end{align}
Then, there exists $c' \in \mathbb{C}^{\times}$ such that
\[
b_{0} = c' \cdot (\theta_{h_{\alpha}^{\vee}})^{-\epsilon}.
\]
\end{lemma}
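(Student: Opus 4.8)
The goal is to pin down the $\mathbb{C}[M_\sigma/M^1]$-coefficient $b_0$ in the identity \eqref{notsubstitutelemma} up to a nonzero scalar. The strategy is to compute both sides of \eqref{notsubstitutelemma} explicitly on a well-chosen test vector, keeping track of where each function is supported, and to compare the resulting ``leading terms'' in the group algebra $\mathbb{C}[M_\sigma/M^1]$. First I would recall, via Corollary~\ref{corollaryoftheorem7.12ofmorrisaffinever} and the discussion of $t_P$ in Section~\ref{Hecke algebra injections}, that the Hecke operator $\phi_s \in \mathcal{H}(G(F),\rho)$ corresponding to $\Phi_s$ is supported on $P_J \dot{s} P_J = K s K$; translating through $I_U = I_{U,1}\circ I_{U,2}$ and then through $T_{\rho_M}$, the operator $(T_{\rho_M}\circ I_U)(\Phi_s)$, applied to a section supported near the identity in $I_P^G(\ind_{M^1}^{M(F)}(\sigma_1))$, produces a function whose support, by Lemma~\ref{lemmaminkm} together with Lemma~\ref{snotinpk}, is controlled: the only $m \in M(F)$ that can occur via the double coset $U(F)ms \cap KsK$ lie in $K_M$, and the $K_M$-part of the ``$s$-conjugated'' Iwahori decomposition is rigidly determined ($k_M m = s k'_M s^{-1}$).

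Next I would analyze the right-hand side. The operator $T'_s$ is built (Section~\ref{A review of Solleveld's results}) from the intertwining operator $J_{s_\alpha}$ composed with multiplication by $(\theta_{h_\alpha^\vee})^{\epsilon_\alpha}$ and addition of the scalar-type term $f_\alpha \in \mathbb{C}(M_\sigma/M^1)$; crucially $J_{s_\alpha}$ lands in the subspace $I_P^G(\ind_{M^1}^{M(F)}(\sigma_1)\otimes_{\mathbb{C}[M_\sigma/M^1]}\mathbb{C}(M_\sigma/M^1))$ and, when one tracks its effect on the standard generating section, it changes the ``$M_\sigma/M^1$-degree'' by exactly $-h_\alpha^\vee$ (this is the content of the normalizations in \cite[Subsection~10.2]{MR4432237} and of injection~\eqref{quotofisomgroupalgebraheckealgebraparabolicinductionrestriction}). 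So $b_0 \cdot T'_s + b'$, applied to the same test vector, is a sum of a term of degree shifted by $\epsilon_\alpha - 1$ relative to $h_\alpha^\vee$ coming from $b_0(\theta_{h_\alpha^\vee})^{\epsilon_\alpha}\circ J_{s_\alpha}$ and a term in $\mathbb{C}[M_\sigma/M^1]$ (from $b_0 f_\alpha + b'$, after clearing denominators) that does \emph{not} involve the intertwiner. Matching the ``intertwiner part'' — the part of the output that genuinely moves the section off $\ind_{M^1}^{M(F)}(\sigma_1)$ before being corrected — on both sides, the left side contributes a single $K_M\dot{s}K_M$-type double coset worth of data, i.e. a \emph{monomial} in $\mathbb{C}[M_\sigma/M^1]$ times a nonzero operator, while the right side contributes $b_0(\theta_{h_\alpha^\vee})^{\epsilon_\alpha}$ times that same nonzero operator. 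Hence $b_0(\theta_{h_\alpha^\vee})^{\epsilon_\alpha}$ is a nonzero scalar multiple of a single group element $\theta_m$; since the left-hand double coset is $K s K$ and $s$ normalizes $M$ with $\dot s$ mapping to the generator of $W(M_\alpha,M)$, that group element is forced to be $\theta_0 = 1$ (the $K_M$-part is trivial, again by $k_M m = s k'_M s^{-1}$ with $m\in K_M$), so $b_0 (\theta_{h_\alpha^\vee})^{\epsilon_\alpha} = c' \in \mathbb{C}^\times$, i.e. $b_0 = c'(\theta_{h_\alpha^\vee})^{-\epsilon_\alpha}$.

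More carefully, since I am allowed to assume \eqref{notsubstitutelemma} as a hypothesis, the cleanest route is: expand $T'_s$ as in its definition, substitute into \eqref{notsubstitutelemma}, and rearrange to
\[
\left(T_{\rho_M}\circ I_U\right)(\Phi_s) - b' = b_0\cdot\frac{(q_\alpha-1)(q_{\alpha*}+1)}{2}(\theta_{h_\alpha^\vee})^{\epsilon_\alpha}\circ J_{s_\alpha} + b_0 f_\alpha.
\]
Both $(T_{\rho_M}\circ I_U)(\Phi_s)$ and $b'$ lie in $\End_{G(F)}(I_P^G(\ind_{M^1}^{M(F)}(\sigma_1)))$, and $b_0 f_\alpha$ does too once one knows (Lemma~\ref{lemmasolleveld10.8} and its proof) that $f_\alpha$ plus the intertwiner piece is regular; so the left-hand side is a genuine endomorphism. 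Now evaluate at the canonical section $F_U$ corresponding to $f_{v,1}\in\sigma_1$ and read off the component supported on the ``$\dot s$-translate'' of the base point: on the left this component is, by Lemma~\ref{lemmaminkm}, a nonzero multiple of $\theta_{0}\cdot(\text{the }J_{s_\alpha}\text{-image})$ — here I would invoke that $I_U$ and $T_{\rho_M}$ are the explicit isomorphisms of Sections~\ref{An explicit isomorphism} and \ref{Statements of main results}, so the composite is computed by the integral-and-transitivity formulas already in the text — while on the right it is $b_0(\theta_{h_\alpha^\vee})^{\epsilon_\alpha}$ times the same $J_{s_\alpha}$-image (the $f_\alpha$ and $b'$ terms supported at other cosets drop out). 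Comparing, $b_0(\theta_{h_\alpha^\vee})^{\epsilon_\alpha}$ is a scalar in $\mathbb{C}^\times$, which is the claim. The main obstacle I anticipate is the bookkeeping in the second step: verifying that $J_{s_\alpha}$, restricted to the relevant section and then corrected, really does produce exactly the monomial shift $(\theta_{h_\alpha^\vee})^{-1}$ and no other $M_\sigma/M^1$-components — this requires carefully threading the normalizations $\rho_{\sigma,s_\alpha}$, $\tau_{s_\alpha}$, $\lambda(s_\alpha)$ from \cite[Subsection~10.2]{MR4432237} through the identification of $\sigma_1$ as $\ind_{K_M}^{M^1}(\rho_M)$, i.e. matching the lift $\widetilde{s_\alpha}$ chosen in Section~\ref{Statements of main results} with the lift $s$ of $v[a,J]$, so that the two a priori different double-coset supports ($KsK$ on the Morris side, the Harish-Chandra intertwiner support on the Solleveld side) are seen to coincide.
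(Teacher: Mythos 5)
Your proposal is correct and follows essentially the same route as the paper: apply both sides of \eqref{notsubstitutelemma} to the test section $F'_{v,U}=(T_{\rho_M}\circ I_U)(f^G_v)$, evaluate at the point $s$, use Lemma~\ref{snotinpk} (the support of $F'_{v,U}$ lies in $P(F)K$, so $F'_{v,U}(s)=0$) to kill the $b'$ term and the $f_\alpha$ part of $T'_s$, use Lemma~\ref{lemmaminkm} to compute the left side as a $\mathbb{C}^\times$-multiple of $T_{\rho_M}(f^M_{v_s})$, and compare with $T'_s(F'_{v,U})(s)=c_2(\theta_{h_\alpha^\vee})^{\epsilon_\alpha}T_{\rho_M}(f^M_{v_s})$. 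One small correction to your intuition that is worth noting when you carry out the details: $J_{s_\alpha}(F'_{v,U})(s)$ itself is already a $\mathbb{C}^\times$-multiple of $T_{\rho_M}(f^M_{v_s})$ with no $M_\sigma/M^1$-degree shift; the factor $(\theta_{h_\alpha^\vee})^{\epsilon_\alpha}$ enters purely from the definition of $T'_s$, so the comparison yields $b_0(\theta_{h_\alpha^\vee})^{\epsilon_\alpha}\in\mathbb{C}^\times$ directly, exactly as you conclude.
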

We prove Lemma~\ref{lemmaforcomparisonofmorrisandsolleveldkeypropositionrank1}.
Let $v \in V_{\rho} = V_{\rho_{M}}$.
We define $f^{G}_{v} \in \ind_{K}^{G(F)}(\rho)$ and $f^{M}_{v} \in \ind_{K_{M}}^{M(F)} (\rho_{M})$ as
\[
f^{G}_{v}(g) =
\begin{cases}
\rho(g) \cdot v & (g \in K), \\
0 & (\text{otherwise}),
\end{cases}
\]
and
\[
f^{M}_{v}(m) =
\begin{cases}
\rho_{M}(m) \cdot v & (m \in K_{M}), \\
0 & (\text{otherwise}),
\end{cases}
\]
respectively.
We write
\[
F_{v, U} = I_{U}\left(
f^{G}_{v}
\right) \in I^{G}_{P} \left( \ind_{K_{M}}^{M(F)} (\rho_{M}) \right) 
\]
and
\[
F'_{v, U} = \left(
T_{\rho_{M}} \circ I_{U}
\right)
\left(
f^{G}_{v}
\right) \in I_{P}^{G}\left(\ind_{M^1}^{M(F)}(\sigma_{1})\right).
\]
Substituting $F'_{v, U}$ to equation~\eqref{notsubstitutelemma}, and comparing the values at $s$, we obtain that
\begin{align}
\label{substitutefvands}
\left(
\left(
\left(
T_{\rho_{M}} \circ I_{U}
\right)
(\Phi_{s})
\right)\left(
F'_{v, U}
\right)
\right)(s) = b_{0} \cdot 
\left(
\left(
T'_{s}\left(
F'_{v, U}
\right)
\right)(s)
\right) + b' \cdot \left(F'_{v, U}(s)\right).
\end{align}
Here, we regard $b_{0}, b$ as elements of $\End_{M(F)}\left(\ind_{M^1}^{M(F)} (\sigma_{1})\right)$ via isomorphism~\eqref{isomgroupalgebraheckealgebra}.

First, we calculate the left hand side of equation~\eqref{substitutefvands}.
Let $\phi_{s}$ denote the element of $\mathcal{H}(G(F), \rho)$ that corresponds to $\Phi_{s}$ via isomorphism~\eqref{heckevsend}.
We note that $\phi_{s}$ is supported on $K s K$.
Moreover, the definition of isomorphism~\eqref{heckevsend} implies that
\begin{align}
\label{lemmasupportofphisf}
\left(
\Phi_{s}\left(
f^{G}_{v}
\right)
\right)(x) = \phi_{s}(x) \cdot v
\end{align}
for all $x \in G(F)$.
We write $\phi_{s}(s) \cdot v = v_{s}$, and define $f^{M}_{v_{s}} \in \ind_{K_{M}}^{M(F)} (\rho_{M})$ as
\[
f^{M}_{v_{s}}(m) =
\begin{cases}
\rho_{M}(m) \cdot v_{s} & (m \in K_{M}), \\
0 & (\text{otherwise}).
\end{cases}
\]
Then, the left hand side of equation~\eqref{substitutefvands} is calculated as follows:
\begin{lemma}
\label{lemmalefthandsidetrhoiuphis}
There exists $c_{1} \in \mathbb{C}^{\times}$ such that
\[
\left(
 \left(
  \left(
  T_{\rho_{M}} \circ I_{U}
  \right)
  (\Phi_{s})
 \right)
 (
 F'_{v, U}
 )
\right)(s)
= c_{1} \cdot T_{\rho_{M}}\left(
f^{M}_{v_{s}}
\right).
\]
\end{lemma}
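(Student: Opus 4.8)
\textbf{Proof proposal for Lemma~\ref{lemmalefthandsidetrhoiuphis}.}

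The plan is to compute $\left(\left(\left(T_{\rho_M}\circ I_U\right)(\Phi_s)\right)(F'_{v,U})\right)(s)$ by unwinding all the definitions in the right order: first push $T_{\rho_M}$ and $I_U$ through to re-express $\left(T_{\rho_M}\circ I_U\right)(\Phi_s)$ in terms of the Hecke operator $\phi_s\in\mathcal H(G(F),\rho)$, then evaluate the resulting function at $s$, and finally recognize the answer as a scalar multiple of $T_{\rho_M}(f^M_{v_s})$. The key reduction is that, by the diagram preceding this section and the definitions of $I_{U,2}$ and $I_{U,1}$, evaluating $\left(I_U(\Phi_s)(F_{v,U})\right)$ at a point of $G(F)$ amounts to integrating $\left(\Phi_s(f^G_v)\right)$ over a left $U(F)$-coset, and by \eqref{lemmasupportofphisf} the integrand is $\phi_s(ug)\cdot v$. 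So the first step is to write out, for $g\in G(F)$,
\[
\left(I_U(\Phi_s)(F_{v,U})\right)(g) = [m\mapsto \delta_P(m)^{1/2}\cdot (I_{U,2}(\Phi_s f^G_v))(mg)] = \Bigl[m\mapsto \delta_P(m)^{1/2}\int_{U(F)}\phi_s(u\,mg)\cdot v\,du\Bigr].
\]
Then applying $T_{\rho_M}$ turns this into the element of $I_P^G(\ind_{M^1}^{M(F)}(\sigma_1))$ whose value at $g$, evaluated further at $m\in M(F)$ and $m'\in M^1$, is $\delta_P(m)^{1/2}\int_{U(F)}\phi_s(u\,m'mg)\cdot v\,du$.

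The second step is to specialize $g=s$ and use the support analysis from Lemma~\ref{lemmaminkm}. Since $\phi_s$ is supported on $KsK$, the integrand $\phi_s(u\,m'ms)\cdot v$ is nonzero only when $u\,m'ms\in KsK$, i.e.\ when $m'm\in K_M$ (applying Lemma~\ref{lemmaminkm} with the element $m'm\in M(F)$), which in particular forces the $M^1$-argument $m'm$ to lie in $K_M$ and pins down the coset of $u$. Using $K_U$ has volume $1$ and the decomposition $k=k_{\overline U}k_Mk_U$ together with the identity $k_M(m'm)=sk'_Ms^{-1}$ from Lemma~\ref{lemmaminkm}, the integral collapses: for $m'm\in K_M$ the value becomes (a fixed nonzero constant times) $\rho_M(m'm)\cdot v_s$, where $v_s=\phi_s(s)\cdot v$, because $\phi_s(s)\in\Hom_{K\cap{}^sK}({}^s\rho,\rho)$ and the $\rho$-equivariance on both sides produces exactly $\rho_M$ acting on $v_s$. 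The constant $c_1\in\mathbb C^\times$ absorbs $\delta_P(s)^{1/2}$, the index $\abs{K_U/sK_Us^{-1}}$ type factors coming from reorganizing the $U(F)$-integral (exactly as in the computation in the proof of Proposition~\ref{compatibility}), and any normalization from $T_{\rho_M}$; crucially it does not depend on $v$.

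The third step is bookkeeping: the element of $I_P^G(\ind_{M^1}^{M(F)}(\sigma_1))$ produced above is, by construction, supported (in the $m$-variable, modulo $P(F)$) at the identity coset and its value at $m=1$, $m'\in M^1$, is $c_1\cdot\rho_M(m')\cdot v_s$ when $m'\in K_M$ and $0$ otherwise — which is precisely $c_1\cdot f^M_{v_s}(m')$ viewed inside $\sigma_1=\ind_{K_M}^{M^1}(\rho_M)$, hence equals $c_1\cdot T_{\rho_M}(f^M_{v_s})$ evaluated at $s$ under the identification $\left(T_{\rho_M}(f^M_{v_s})\right)(1)=f^M_{v_s,1}$ and left $M^1$-equivariance. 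Assembling the three steps gives the claimed formula with a single scalar $c_1$.

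The main obstacle I expect is the second step: correctly tracking the interplay between the left $U(F)$-integration in the definition of $I_{U,2}$ and the twofold Iwahori-type decomposition $K=K_UK_MK_{\overline U}$ (for the left factor $k$) versus $K=K_{U_s}K_MK_{\overline{U_s}}$ (for $k'$), so that the support condition $u\,m'ms\in KsK$ is translated faithfully into $m'm\in K_M$ plus a determined coset of $u$, and the Haar-measure factors $\delta_P(s)^{\pm1/2}$ and $\abs{K_U/sK_Us^{-1}}$ combine into a finite nonzero constant rather than something divergent. This is essentially the same delicate computation carried out in Proposition~\ref{compatibility} and Lemma~\ref{lemmacalculationofdelta}, so I would mirror that argument closely; once the coset combinatorics is pinned down, identifying the outcome with $c_1\cdot T_{\rho_M}(f^M_{v_s})$ is routine from the formula $\left(T_{\rho_M}(f_v)\right)(1)=f_{v,1}$.
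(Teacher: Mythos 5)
Your approach matches the paper's proof essentially step for step: reduce to evaluating $\left(I_U(\Phi_s)(F_{v,U})\right)(s)$ as a function on $M(F)$, use equation~\eqref{lemmasupportofphisf} to replace $\Phi_s(f^G_v)$ by $\phi_s$, invoke Lemma~\ref{lemmaminkm} for the support condition $ums\in KsK\Rightarrow m\in K_M$, then run the Iwahori decompositions to collapse the integrand to $\rho_M(m)\cdot v_s$, and finally identify the result with $c_1\cdot f^M_{v_s}$ and apply $T_{\rho_M}$. Two small points worth flagging. First, the factor you call $\delta_P(s)^{1/2}$ is not meaningful since $s\notin M(F)$; what actually happens in the computation is that the surviving factor $\delta_P(m)^{1/2}$ for $m\in K_M$ is identically $1$ because $K_M$ is compact, so it disappears rather than getting absorbed. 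Second, you assert that the constant produced by the $U(F)$-integral is "fixed", i.e.\ independent of $m\in K_M$, but this needs an argument: the volume is that of $U(F)\cap KsKs^{-1}m^{-1}$, and its independence of $m$ uses Lemma~\ref{snormalizesKM} (that $s$ normalizes $K_M$) to show $KsKs^{-1}m^{-1}=KsKs^{-1}$. With those two repairs, your proof is the paper's proof.
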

\begin{proof}
Since
\[
\left(
 \left(
  \left(
  T_{\rho_{M}} \circ I_{U}
  \right)
  (\Phi_{s})
 \right)
 (
 F'_{v, U}
 )
\right)(s) = 
T_{\rho_{M}}
\left(
\left(
I_{U}\left(
\Phi_{s}(f^{G}_{v})
\right)
\right)(s)
\right),
\]
it suffices to show that
\[
\left(
\left(
I_{U}
\left(
\Phi_{s}
\right)
\right)(F_{v, U})
\right)
(s) = c_{1} \cdot f^{M}_{v_{s}}.
\]
for some $c_{1} \in \mathbb{C}^{\times}$.

For $m \in M(F)$, we have
\begin{align*}
\left(
\left(
\left(
I_{U}
\left(
\Phi_{s}
\right)
\right)(F_{v, U})
\right)
(s)
\right)(m) &=
\left(
I_{U}
\left(
\Phi_{s}
\left(
f^{G}_{v}
\right)
\right)(s)
\right)(m)\\
&= 
\delta_{P}(m)^{1/2}
\int_{U(F)} 
\left(
\Phi_{s}
\left(
f^{G}_{v}
\right)
\right)
(ums) du.
\end{align*}
According to equation~\eqref{lemmasupportofphisf}, the integrand vanishes unless
\[
U(F)ms \cap KsK \neq \emptyset.
\]
Hence, Lemma~\ref{lemmaminkm} implies that
\[
\left(
\left(
\left(
I_{U}
\left(
\Phi_{s}
\right)
\right)(F_{v, U})
\right)
(s)
\right)(m) = 0
\]
unless $m \in K_{M}$.
Let $m \in K_{M}$ and $u \in U(F)$ such that
\[
ums \in KsK.
\]
We write
\[
ums = k^{-1} s k'
\]
for some $k, k' \in K$ with factorizations
\[
k = k_{\overline{U}} \cdot k_{M} \cdot k_{U} \ (k_{U} \in K_{U}, k_{M} \in K_{M}, k_{\overline{U}} \in K_{\overline{U}})
\]
and
\[
k' = k'_{\overline{U_{s}}} \cdot k'_{M} \cdot k'_{U_{s}} \ (k'_{U_{s}} \in K_{U_{s}}, k'_{M} \in K_{M}, k'_{\overline{U_{s}}} \in K_{\overline{U_{s}}}).
\]
Then, Lemma~\ref{lemmaminkm} also implies that
\[
k_{M}m = s k'_{M} s^{-1}.
\]
Since $K_{U}$, $K_{\overline{U}}$, $K_{U_{s}}$, and $K_{\overline{U_{s}}}$ are contained in the kernel of $\rho$, we have
\begin{align*}
\left(
\Phi_{s}
\left(
f^{G}_{v}
\right)
\right)
(ums) &=
\left(
 \Phi_{s}
\left(
f^{G}_{v}
\right)
\right)
(k^{-1} s k') \\
&= \phi_{s}(k^{-1} s k') \cdot v\\
&= \left(
\rho(k^{-1}) \circ \phi_{s}(s) \circ \rho(k')
\right) \cdot v\\
&= \left(
\rho(k_{M}^{-1}) \circ \phi_{s}(s) \circ \rho(k'_{M})
\right) \cdot v\\
&= \phi_{s}(k_{M}^{-1} s k'_{M}) \cdot v\\
&= \phi_{s}(ms) \cdot v\\
&= \rho_{M}(m) \cdot \left(
\phi_{s}(s) \cdot v
\right)\\
&= \rho_{M}(m) \cdot v_{s}\\
&= f^{M}_{v_{s}}(m).
\end{align*}
Thus, we obtain that
\begin{align*}
\left(
\left(
\left(
I_{U}
\left(
\Phi_{s}
\right)
\right)(F_{v, U})
\right)
(s)
\right)(m)
&= 
\delta_{P}(m)^{1/2}
\int_{U(F)} \Phi_{s}
\left(
f^{G}_{v}
\right)
(ums) du \\
&= \delta_{P}(m)^{1/2} \cdot c(m) \cdot f^{M}_{v_{s}}(m) \\
&= c(m) \cdot f^{M}_{v_{s}}(m),
\end{align*}
where $c(m)$ denotes the volume of the set
\[
U(F) \cap K s K s^{-1} m^{-1}.
\]
According to Lemma~\ref{snormalizesKM}, for $m \in K_{M}$, we have
\[
Ks K s^{-1} m^{-1} = K s K (s^{-1} m^{-1} s) s^{-1} = K s K s^{-1},
\]
hence $c(m)$ does not depend on $m$.
We write $c_{1} = c(m)$.
Then, we obtain that
\[
\left(
\left(
I_{U}
\left(
\Phi_{s}
\right)
\right)(F_{v, U})
\right)
(s) = c_{1} \cdot f^{M}_{v_{s}}.
\]
\end{proof}

Next, we calculate the right hand side of equation~\eqref{substitutefvands}.
The definition of $I_{U}$ implies that for $g \in G(F)$ and $m \in M(F)$, we have
\begin{align*}
 \left(
F_{v, U}(g)
 \right)(m) &=
\delta_{P}(m)^{1/2}
\int_{U(F)}
f^{G}_{v}
(umg) du.
\end{align*}
Since $f^{G}_{v}$ is supported on $K$, the integrand vanishes unless
\[
g \in P(F) \cdot K.
\]
Hence, we have
\begin{align}
\label{supportofFvsandFvs'}
\supp\left(
F_{v, U}
\right), \supp\left(
F'_{v, U}
\right) \subset P(F) \cdot K.
\end{align}
In particular, Lemma~\ref{snotinpk} implies that 
\begin{align}
\label{vanishingoff'vus}
F'_{v, U}(s) = 0.
\end{align}
Thus, the second term of equation~\eqref{substitutefvands} vanishes.
We calculate the first term of equation~\eqref{substitutefvands}.
\begin{lemma}
\label{lemmarighthandsidet'f'}
There exists $c_{2} \in \mathbb{C}^{\times}$ such that
\[
\left(
T'_{s}\left(
F'_{v, U}
\right)
 \right)(s) = c_{2} \cdot (\theta_{h_{\alpha}^{\vee}})^{\epsilon} \cdot T_{\rho_{M}}\left(
f^{M}_{v_{s}}
\right).
\]
\end{lemma}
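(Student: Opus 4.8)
The plan is to expand $T'_{s}$ via its definition
\[
T'_{s} = \frac{(q_{\alpha}-1)(q_{\alpha*}+1)}{2}(\theta_{h_{\alpha}^{\vee}})^{\epsilon}\circ J_{s_{\alpha}} + f_{\alpha},
\]
apply it to $F'_{v, U}$, and evaluate at $s$. Both $f_{\alpha}$ and $(\theta_{h_{\alpha}^{\vee}})^{\epsilon}$ act on $I_{P}^{G}\left(\ind_{M^1}^{M(F)}(\sigma_{1})\otimes_{\mathbb{C}[M_{\sigma}/M^1]}\mathbb{C}(M_{\sigma}/M^1)\right)$ through the functoriality of $I_{P}^{G}$, hence pointwise on the values of functions; since $F'_{v, U}(s) = 0$ by \eqref{vanishingoff'vus}, the summand $f_{\alpha}\cdot F'_{v, U}$ contributes nothing at $s$, and $(\theta_{h_{\alpha}^{\vee}})^{\epsilon}$ can be pulled outside the evaluation at $s$. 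Thus it suffices to compute $\left(J_{s_{\alpha}}(F'_{v, U})\right)(s)$ and to show that it equals $c\cdot T_{\rho_{M}}(f^{M}_{v_{s}})$ for some $c\in\mathbb{C}^{\times}$; then one may take $c_{2} = \frac{(q_{\alpha}-1)(q_{\alpha*}+1)}{2}c$, which is nonzero because $q_{\alpha}>1$ and $q_{\alpha*}\ge 1$.

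Next I would unwind the four factors of $J_{s_{\alpha}} = I_{P}^{G}(\rho_{\sigma, s_{\alpha}}\otimes\id)\circ I_{P}^{G}(\tau_{s_{\alpha}})\circ\lambda(s_{\alpha})\circ J_{s_{\alpha}^{-1}(P)\mid P}(\sigma\otimes\cdot)$. The last three carry no integration: $I_{P}^{G}(\rho_{\sigma, s_{\alpha}}\otimes\id)$ and $I_{P}^{G}(\tau_{s_{\alpha}})$ act pointwise on values, while $\lambda(s_{\alpha})$ is the translation $f\mapsto[g\mapsto f(\widetilde{s_{\alpha}}^{-1}g)]$ with $\widetilde{s_{\alpha}} = s$, so evaluating the composite at $s$ reduces to evaluating $h := J_{s_{\alpha}^{-1}(P)\mid P}(\sigma\otimes\cdot)(F'_{v, U})$ at $\widetilde{s_{\alpha}}^{-1}s = 1$ and then applying $\tau_{s_{\alpha}}$ and $\rho_{\sigma, s_{\alpha}}\otimes\id$. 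The Harish-Chandra intertwining integral is taken over $(U_{s}\cap\overline{U})(F)$, where $U_{s} = s^{-1}Us$, which is a subgroup of $\overline{U}(F)$; since $\supp(F'_{v, U})\subset P(F)\cdot K$ by \eqref{supportofFvsandFvs'}, and since $\overline{U}(F)\cap P(F)K = K_{\overline{U}}$ (if $\bar u = pk$ with $p\in P(F)$, $k\in K$, write $k = k_{U}k_{M}k_{\overline{U}}$ using $K = K_{U}K_{M}K_{\overline{U}}$; then $\bar u k_{\overline{U}}^{-1} = pk_{U}k_{M}$ lies in $\overline{U}(F)\cap P(F) = \{1\}$), the integrand $\bar u\mapsto F'_{v, U}(\bar u)$ is supported on the compact set $(U_{s}\cap\overline{U})(F)\cap K_{\overline{U}}$. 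Hence the integral converges absolutely and equals $\vol\bigl((U_{s}\cap\overline{U})(F)\cap K_{\overline{U}}\bigr)\cdot F'_{v, U}(1)$ once one checks, as in the proof of Lemma~\ref{lemmalefthandsidetrhoiuphis} and using that $K_{\overline{U}}$ lies in the kernel of $\rho$, that $F'_{v, U}$ is constant equal to $F'_{v, U}(1)$ on $K_{\overline{U}}$. A direct computation of $I_{U}$ at $1$ (using $\vol(K_{U}) = 1$, $K_{U}\subset\ker\rho$, and $\delta_{P}\restriction_{K_{M}} = 1$) gives $F_{v, U}(1) = f^{M}_{v}$, hence $F'_{v, U}(1) = T_{\rho_{M}}(f^{M}_{v})$, so $h(1) = c_{3}\cdot T_{\rho_{M}}(f^{M}_{v})$ with $c_{3} = \vol\bigl((U_{s}\cap\overline{U})(F)\cap K_{\overline{U}}\bigr) > 0$.

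It remains to apply $\tau_{s_{\alpha}}$ and $\rho_{\sigma, s_{\alpha}}\otimes\id$ to $T_{\rho_{M}}(f^{M}_{v})$, viewed inside $\sigma\otimes\mathbb{C}[M(F)/M^1]$ via \eqref{groupalgebraisomind}. As $T_{\rho_{M}}(f^{M}_{v})$ is supported on $M^1$ with value $f_{v, 1}$ (identified with $\widetilde{f_{v}}$) at $1$, it corresponds to $\widetilde{f_{v}}\otimes\theta_{1}$, which $\tau_{s_{\alpha}}$ fixes since $s\cdot 1\cdot s^{-1} = 1$; so the only nontrivial step is to compute $\rho_{\sigma, s_{\alpha}}(\widetilde{f_{v}})$. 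Here I would use that $\widetilde{f_{v}}$ lies in the $\rho_{M}$-isotypic part of $\sigma\restriction_{K_{M}}$, in which $\rho_{M}$ occurs with multiplicity one (as noted after Lemma~\ref{rhovssigmamultiplicity}), that $s$ normalizes $K_{M}$ and $\rho_{M}$ (Lemma~\ref{snormalizesKM}), and that $\rho_{\sigma, s_{\alpha}}\colon {}^{s}\!\sigma\simeq\sigma$ therefore induces on this multiplicity-one component a nonzero element of the one-dimensional space $\Hom_{K_{M}}({}^{s}\!\rho_{M}, \rho_{M})$; consequently it agrees, up to a nonzero scalar $c_{4}$, with the restriction of $\phi_{s}(s)$ — the intertwiner from Morris's construction that defines $v_{s} = \phi_{s}(s)\cdot v$. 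Thus $\rho_{\sigma, s_{\alpha}}(\widetilde{f_{v}}) = c_{4}\cdot\widetilde{v_{s}}$ and $(\rho_{\sigma, s_{\alpha}}\otimes\id)\bigl(\tau_{s_{\alpha}}(T_{\rho_{M}}(f^{M}_{v}))\bigr) = c_{4}\cdot T_{\rho_{M}}(f^{M}_{v_{s}})$, so $\left(J_{s_{\alpha}}(F'_{v, U})\right)(s) = c_{3}c_{4}\cdot T_{\rho_{M}}(f^{M}_{v_{s}})$ and the lemma follows with $c_{2} = \frac{(q_{\alpha}-1)(q_{\alpha*}+1)}{2}c_{3}c_{4}\neq 0$.

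The step I expect to require the most care is the treatment of the Harish-Chandra operator $J_{s_{\alpha}^{-1}(P)\mid P}(\sigma\otimes\cdot)$: a priori it is only defined by meromorphic continuation of an integral convergent for $\sigma$ twisted by unramified characters in a subcone, so one must justify that on the particular function $F'_{v, U}$ — whose support meets $\overline{U}(F)$ in the compact set $K_{\overline{U}}$ — the meromorphically continued operator is genuinely computed by the convergent integral above, and that forming the composite $J_{s_{\alpha}}$ and evaluating at $s$ introduces no normalization factor beyond the harmless nonzero constant absorbed into $c_{2}$. The remaining points — the factorization identity $\overline{U}(F)\cap P(F)K = K_{\overline{U}}$, the invariance of $F'_{v, U}$ on $K_{\overline{U}}$, and the multiplicity-one identification of $\rho_{\sigma, s_{\alpha}}$ with $\phi_{s}(s)$ — are routine given the hypotheses in force.
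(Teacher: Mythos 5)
Your argument reproduces the paper's own proof step for step: both reduce via $F'_{v,U}(s)=0$, express $\lambda(s)\circ J_{s^{-1}(P)\mid P}(\sigma\otimes\cdot)$ as an integral over $\overline{U}(F)\cap U_{s}(F)$ whose integrand is compactly supported because $\overline{U}(F)\cap P(F)K=K_{\overline{U}}$, compute $F_{v,U}\equiv f^{M}_{v}$ on $K_{\overline{U}}$, and identify $\rho_{\sigma,s_{\alpha}}\restriction_{{}^{s}\!\rho_{M}}$ with a scalar multiple of $\phi_{s}(s)$ by multiplicity one, exactly as in the paper's use of \cite[(4.3)]{MR4432237}. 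One small slip: ``$\rho_{\sigma,s_{\alpha}}(\widetilde{f_{v}})=c_{4}\cdot\widetilde{v_{s}}$'' should read $\rho_{\sigma,s_{\alpha}}(\widetilde{f_{v}})=c_{4}\cdot\widetilde{f_{v_{s}}}$ (the associated element of $\sigma$, not the vector itself); otherwise the constants $c_{3},c_{4}$ and the assembly into $c_{2}$ match the paper's.
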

\begin{proof}
Recall that $T'_{s}$ is defined as
\[
T'_{s} = \frac{
(q_{\alpha}-1)(q_{\alpha*}+1)
}{
2
}
(\theta_{h_{\alpha}^{\vee}})^{\epsilon} \circ J_{s}
+ f_{\alpha},
\]
for some $f_{\alpha} \in \mathbb{C}(M_{\sigma}/M^1)$.
According to \eqref{vanishingoff'vus}, we have
\begin{align}
\label{relationoftandj}
 \left(
T'_{s}\left(
F'_{v, U}
\right)
 \right)(s) = \frac{
(q_{\alpha}-1)(q_{\alpha*}+1)
}{
2
}
(\theta_{h_{\alpha}^{\vee}})^{\epsilon} \cdot 
  \left(
 \left( 
J_{s}
\left(
F'_{v, U}
\right)
 \right)(s)
   \right).
\end{align}
We also recall that $J_{s}$ is defined as the composition
\[
J_{s} = I_{P}^{G} \left(
\rho_{\sigma, s} \otimes \id
\right) \circ I_{P}^{G} (\tau_{s}) \circ \lambda(s) \circ J_{s^{-1}(P) \mid P}(\sigma \otimes \cdot).
\]
According to \cite[(4.3)]{MR4432237}, we have
\begin{align}
\label{lambdajoff'}
 \left(
\left(
\lambda(s) \circ J_{s^{-1}(P) \mid P}(\sigma \otimes \cdot)
\right)
\left(
F'_{v, U}
\right)
 \right)(s) &=
\int_{\left(U(F) \cap U_{s}(F)\right) \backslash U_{s}(F)} F'_{v, U}(u') du'\\
&= \int_{\overline{U}(F) \cap U_{s}(F)} F'_{v, U}(u') du'.\notag
\end{align}
According to equation~\eqref{supportofFvsandFvs'}, the integrand vanishes unless
\begin{align*}
u' &\in P(F) \cdot K \cap \overline{U}(F) \cap U_{s}(F)\\
&= P(F) \cdot K_{\overline{U}} \cap \overline{U}(F) \cap U_{s}(F)\\ 
&= K_{\overline{U}} \cap U_{s}(F).
\end{align*}
We calculate $F'_{v, U}(u')$ for $u' \in K_{\overline{U}} \cap U_{s}(F)$.
Let $u' \in K_{\overline{U}} \cap U_{s}(F)$ and $m \in M(F)$.
Then, we have
\[
 \left(
F_{v, U}\left(u'\right)
 \right)(m) =
\delta_{P}(m)^{1/2}
\int_{U(F)}
f^{G}_{v}
\left(umu'\right) du.
\]
The integrand vanishes unless
\[
umu' \in K = K_{U} \cdot K_{M} \cdot K_{\overline{U}},
\]
that is equivalent to $u \in K_{U}$ and $m \in K_{M}$.
Then, the definition of $f^{G}_{v}$ implies that
\[
 \left(
F_{v, U}\left(u'\right)
 \right)(m) =
\begin{cases}
\rho_{M}(m) \cdot v & (m \in K_{M}), \\
0 & (\text{otherwise}).
\end{cases}
\]
Thus, we obtain that $F_{v, U}\left(u'\right) = f^{M}_{v}$ and 
$
F'_{v, U}\left(u'\right)
= T_{\rho_{M}} \left(
f^{M}_{v}
\right)
$
for any $u' \in K_{\overline{U}} \cap U_{s}(F)$.
Then, equation~\eqref{lambdajoff'} implies that
\[
 \left(
\left(
\lambda(s) \circ J_{s^{-1}(P) \mid P}(\sigma \otimes \cdot)
\right)
\left(
F'_{v, U}
\right)
 \right)(s) =
c_{3} \cdot T_{\rho_{M}} \left(
f^{M}_{v}
\right),
\]
where $c_{3}$ denotes the volume of $K_{\overline{U}} \cap U_{s}(F)$.
Now, we have
\begin{align*}
 \left(
J_{s}\left(
F'_{v, U}
\right)
 \right)(s) &= 
 \left(
\left(
I_{P}^{G} \left(
\rho_{\sigma, s} \otimes \id
\right) \circ I_{P}^{G} (\tau_{s}) \circ \lambda(s) \circ J_{s^{-1}(P) \mid P}(\sigma \otimes \cdot)
\right)\left(
F'_{v, U}
\right)
 \right)(s) \\
&= 
 \left(
(\rho_{\sigma, s} \otimes \id) \circ \tau_{s}
 \right)
  \left(
 \left(
\left(
\lambda(s) \circ J_{s^{-1}(P) \mid P}(\sigma \otimes \cdot)
\right)
\left(
F'_{v, U}
\right)
 \right)(s)
  \right) \\
&= c_{3} \left(
(\rho_{\sigma, s} \otimes \id) \circ \tau_{s}
 \right)
\left(
T_{\rho_{M}} \left(
f^{M}_{v}
\right)
\right).
\end{align*}
To calculate this, we have to recall the way to regard $T_{\rho_{M}} \left(
f^{M}_{v}
\right)$ as an element of $\sigma \otimes \mathbb{C}[M(F)/M^1]$.
The definition of $T_{\rho_{M}}$ implies that $T_{\rho_{M}}\left(f^{M}_{v}\right)$ is supported on $M^1$, and satisfies
\[
\left(
T_{\rho_{M}}\left(f^{M}_{v}\right)
\right)(1) = f^{M}_{v,1},
\]
where $f^{M}_{v, 1}$ is the element of 
\[
\sigma_{1} = \ind_{K_{M}}^{M^1} (\rho_{M})
\]
defined as
\[
f^{M}_{v, 1}(m) =
\begin{cases}
\rho_{M}(m) \cdot v & (m \in K_{M}), \\
0 & (\text{otherwise}).
\end{cases}
\]
Recall that we are regarding $\sigma_{1}$ as an irreducible $M^{1}$-subrepresentation of 
\[
\sigma = \ind_{\widetilde{K_{M}}}^{M(F)} (\widetilde{\rho_{M}}),
\]
and the element $f^{M}_{v, 1} \in \sigma_{1}$ is identified with the element $\widetilde{f^{M}_{v}} \in \sigma$
defined as
\[
\widetilde{f^{M}_{v}}(m) =
\begin{cases}
\widetilde{\rho_{M}}(m) \cdot v & (m \in \widetilde{K_{M}}), \\
0 & (\text{otherwise}).
\end{cases}
\]
Then, we may regard $T_{\rho_{M}}\left(f^{M}_{v}\right)$ as the element of
$
\ind_{M^1}^{M(F)} (\sigma)
$
supported on $M^1$ and satisfies
\[
\left(
T_{\rho_{M}}\left(f^{M}_{v}\right)
\right)(1) = \widetilde{f^{M}_{v}}.
\]
Moreover, to define $J_{s}$, we identified $\ind_{M^1}^{M(F)} (\sigma)$ with $\sigma \otimes \mathbb{C}[M(F)/M^1]$ via isomorphism~\eqref{groupalgebraisomind}.
According to \cite[(2.3)]{MR4432237}, $T_{\rho_{M}}\left(f^{M}_{v}\right)$ is identified with the element 
\[
\widetilde{f^{M}_{v}} \otimes \theta_{1} \in \sigma \otimes \mathbb{C}[M(F)/M^1].
\]
Thus, we obtain that 
\[
\left(
(\rho_{\sigma, s} \otimes \id) \circ \tau_{s}
 \right)
\left(
T_{\rho_{M}} \left(
f^{M}_{v}
\right)
\right) = \rho_{\sigma, s}\left(
\widetilde{f^{M}_{v}}
\right) \otimes \theta_{1}.
\]
Recall that $\rho_{\sigma, s}$ is an element of the one-dimensional vector space
\[
\Hom_{M(F)}(^s\!\sigma, \sigma).
\]
Since $s$ normalizes $\rho_{M}$ and the multiplicity of $\rho_{M}$ in $\sigma\restriction_{K_{M}}$ is equal to $1$, the restriction of $\rho_{\sigma, s}$ to $^s\!\rho_{M}$ is contained in the one-dimensional space
\[
\Hom_{K_{M}}(^s\!\rho_{M}, \rho_{M}).
\]
Here, we identify $^s\!\rho_{M}$ and $\rho_{M}$ as $K_{M}$-subrepresentations of $^s\!\sigma$ and $\sigma$ via the map
\[
v' \mapsto \widetilde{f^{M}_{v'}}
\]
for $v' \in V_{\rho_{M}} = V_{^s\!\rho_{M}}$, respectively.
Hence, there exists $c_{4} \in \mathbb{C}^{\times}$ such that 
\[
\rho_{\sigma, s}\restriction_{^s\!\rho_{M}} = c_{4} \cdot \phi_{s}(s).
\]
In particular, we obtain that
\[
\rho_{\sigma, s}\left(
\widetilde{f^{M}_{v}}
\right) = c_{4} \cdot \widetilde{f^{M}_{v_{s}}}.
\]
Then, our way of identification~\eqref{groupalgebraisomind} implies that the element
\[
\rho_{\sigma, s}\left(
\widetilde{f^{M}_{v}}
\right) \otimes \theta_{1} = c_{4} \cdot \left(
\widetilde{f^{M}_{v_{s}}} \otimes \theta_{1}
\right) \in \sigma \otimes \mathbb{C}[M(F)/M^1]
\]
is identified with the element
\[
c_{4} \cdot T_{\rho_{M}}\left(
f^{M}_{v_{s}}
\right) \in \ind_{M^1}^{M(F)} (\sigma_{1}) \subset \ind_{M^1}^{M(F)} \left(\sigma\right).
\]
Thus, we obtain that
\begin{align*}
 \left(
J_{s}\left(
F'_{v, U}
\right)
 \right)(s) &= c_{3} \left(
(\rho_{\sigma, s} \otimes \id) \circ \tau_{s}
 \right)
\left(
T_{\rho_{M}} \left(
f^{M}_{v}
\right)
\right)\\
&= c_{3} \cdot \left(
\rho_{\sigma, s}\left(
\widetilde{f^{M}_{v}}
\right) \otimes \theta_{1}
\right)\\
&= c_{3} c_{4} \cdot T_{\rho_{M}}\left(
f^{M}_{v_{s}}
\right).
\end{align*}
Substituting it to equation~\eqref{relationoftandj}, we obtain
\begin{align*}
\left(
T'_{s}\left(
F'_{v, U}
\right)
 \right)(s) = c_{2} \cdot (\theta_{h_{\alpha}^{\vee}})^{\epsilon} \cdot T_{\rho_{M}}\left(
f^{M}_{v_{s}}
\right),
\end{align*}
where
\[
c_{2} = c_{3} c_{4} \cdot \frac{
(q_{\alpha}-1)(q_{\alpha*}+1)
}{
2
}.
\]
\end{proof}
Substituting equation~\eqref{vanishingoff'vus}, Lemma~\ref{lemmalefthandsidetrhoiuphis}, and Lemma~\ref{lemmarighthandsidet'f'} to equation~\eqref{substitutefvands}, we obtain
\[
c_{1} \cdot T_{\rho_{M}}\left(
f^{M}_{v_{s}}
\right) = b_{0} \cdot c_{2} \cdot (\theta_{h_{\alpha}^{\vee}})^{\epsilon} \cdot T_{\rho_{M}}\left(
f^{M}_{v_{s}}
\right).
\]
We note that $c_{1}$ and $c_{2}$ are independent of $v \in V_{\rho_{M}}$.
Since 
\[
\left\{
T_{\rho_{M}}\left(
f^{M}_{v_{s}}
\right) \mid v \in V_{\rho_{M}}
\right\}
\]
generates $\ind_{M^1}^{M(F)} (\sigma_{1})$ as an $M(F)$-representation, we have
\[
c_{1} = b_{0} \cdot c_{2} \cdot (\theta_{h_{\alpha}^{\vee}})^{\epsilon} \in \mathbb{C}[M_{\sigma}/M^1] \simeq \End_{M(F)}\left(
\ind_{M^1}^{M(F)} (\sigma_{1})
\right),
\]
hence 
\[
b_{0} = c' \cdot (\theta_{h_{\alpha}^{\vee}})^{-\epsilon}
\]
for
\[
c' = c_{1} \cdot c_{2}^{-1}.
\]

\section{Comparison of Morris and Solleveld's endomorphism algebras: maximal case}
\label{Comparison of Morris and Solleveld's endomorphism algebras : maximal case}
In this section, we prove Theorem~\ref{maintheoremrootsystem} and Theorem~\ref{maintheoremisomofaffinehecke} when $M$ is a maximal proper Levi subgroup of $G$.
Suppose that $M$ is a maximal proper Levi subgroup of $G$, that is, we suppose $\abs{B \backslash J} = 2$.
\begin{proposition}
\label{comparisonaffineweylandfiniteweyl}
The group $R(J, \rho)$ is trivial if and only if the group $W(\Sigma_{\mathfrak{s}_{M}, \mu})$ is trivial.
\end{proposition}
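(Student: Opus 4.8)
The plan is to show that both groups being trivial is equivalent to a common third condition, namely that there is \emph{no} affine root $a \in \Phi_{\aff} \setminus A'_J$ with $v[a,J] \in W(J,\rho)$ and $p_a > 1$ \emph{and} at the same time no $\alpha \in \Sigma_{\red}(A_M)$ with $\mu^{M_\alpha}$ having a zero on $\mathfrak{s}_M$; and then to argue that these two non-existence statements are in fact the same statement, using the dictionary between the depth-zero picture of \cite{MR1235019} and the supercuspidal-support picture of \cite{MR4432237}. Since $M$ is maximal (i.e.\ $\abs{B \setminus J} = 2$), the relative semisimple rank of $G$ over $M$ is one, so $\Sigma_{\red}(A_M)$ consists of a single line; hence $W(\Sigma_{\mathfrak{s}_M,\mu})$ is either trivial or of order $2$, and it is nontrivial precisely when that one reduced root $\alpha$ lies in $\Sigma_{\mathfrak{s}_M,\mu}$, i.e.\ when $\mu^{M_\alpha}$ has a zero on $\mathfrak{s}_M$. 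Likewise, the affine roots $a$ with $J \cup \{a\}$ contained in a basis of $\Phi_{\aff}$ are, after projecting to $A'/A'_J$, supported on a one-dimensional space, so $R(J,\rho) = W_{\aff}(\Gamma'(J,\rho))$ is either trivial or an infinite dihedral group, and it is nontrivial precisely when $\Gamma(J,\rho) \neq \emptyset$.

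The first step is therefore to pin down, in the maximal case, the elementary structural facts: $\Gamma'(J,\rho)$ is an affine root system on the one-dimensional Euclidean space $\mathcal{A}^J_\Gamma$ (by Proposition~\ref{proposition7.3ofmorris} and Lemma~\ref{reducibiityofgamma} it is reduced), so $R(J,\rho) \cong W_{\aff}(\Gamma'(J,\rho))$ is trivial exactly when $\Gamma(J,\rho) = \emptyset$; and $\Sigma_{\mathfrak{s}_M,\mu}$ is a reduced root system inside the rank-one $\Sigma_{\red}(A_M)$, so $W(\Sigma_{\mathfrak{s}_M,\mu})$ is trivial exactly when $\Sigma_{\mathfrak{s}_M,\mu} = \emptyset$. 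Next I would produce, for the single reduced root $\alpha$ spanning $\Sigma_{\red}(A_M)$, a candidate affine root $a \in \Phi_{\aff}$ with $D a$ a multiple of $\alpha$ and with $J \cup \{a\}$ extendable to a basis of $\Phi_{\aff}^{M_\alpha} = \Phi_{\aff}$ (this is exactly the kind of extension result invoked via Corollary~\ref{corollarySubsets of a set of simple affine roots} in the proof of Lemma~\ref{independencyofP}); conversely any $a \in \Phi_{\aff} \setminus A'_J$ with $J \cup \{a\}$ in a basis of $\Phi_{\aff}$ has $Da \restriction_{A_M}$ a multiple of $\alpha$ since $M$ is maximal.

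The heart of the argument is then the equivalence: $v[a,J] \in W(J,\rho)$ with $p_a > 1$ if and only if $\mu^{M_\alpha}$ vanishes on $\mathfrak{s}_M$. The forward direction: $v[a,J] \in W(J,\rho)$ means its lift intertwines $\rho$ while fixing $J$, hence (after passing through $K_M \subset \widetilde{K_M}$ and $\sigma = \ind_{\widetilde{K_M}}^{M(F)}(\widetilde{\rho_M})$) its image $s_\alpha$ in $W(M_\alpha,M)$ normalizes $\sigma$ up to an unramified twist, which by \cite[5.4.2]{MR544991} (cf.\ \cite[(3.4)]{MR4432237}) forces $\mu^{M_\alpha}$ to have a zero on $\mathfrak{s}_M$; here one must also check that $p_a > 1$ translates into the relevant reducibility/pole condition that distinguishes a genuine zero of $\mu^{M_\alpha}$ from the trivial case (this is where the depth-zero reducibility computation of $\ind_{P_{J,B'}/U_{J\cup\{a\},B'}}^{\mathbf{M}_{J\cup\{a\}}(k_F)}(\rho)$ into $\rho_1 \oplus \rho_2$ with $\dim\rho_1 < \dim\rho_2$ matches the statement that the parabolically induced representation of $\sigma$ to $M_\alpha(F)$ is reducible). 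The reverse direction runs the same dictionary backwards: a zero of $\mu^{M_\alpha}$ on $\mathfrak{s}_M$ gives an element $s_\alpha$ normalizing some $\sigma \otimes \chi$, which one lifts into $M_\alpha^1$ and then into a parahoric of $M_\alpha(F)$ (as in the lemma preceding Theorem~\ref{theorem10.9ofsolleveld} and in the choice of $\widetilde{s_\alpha}$ in Section~\ref{Statements of main results}), whose associated Weyl-group element is of the form $v[a,J]$ for a suitable $a$, and reducibility of the induced representation forces $p_a > 1$.

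The main obstacle I expect is precisely the quantitative matching in the last step: knowing that $v[a,J]$ lies in $W(J,\rho)$ only gives that the \emph{reduced}-mod-center induced representation is reducible, and one has to be careful that $p_a > 1$ (as opposed to $p_a = 1$, where $\Gamma(J,\rho)$ would exclude $a$) corresponds exactly to the condition defining $\Sigma_{\mathfrak{s}_M,\mu}$, i.e.\ that $\mu^{M_\alpha}$ genuinely has a zero rather than being regular at $\sigma$. I would handle this by citing the analysis of $p_a$ in \cite[Section~8]{MR742472} together with the depth-zero Hecke-algebra description, and by noting that in the degenerate rank-one situation the only alternative to reducibility is irreducibility of the full induced representation, which is exactly $p_a = 1$; this makes the two "non-existence" statements literally coincide and yields the proposition. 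A secondary, more bookkeeping-type difficulty is making sure that the passage between $\rho$, $\rho_M$, $\widetilde{\rho_M}$ and $\sigma$, $\sigma_1$ does not introduce spurious elements, but Remark~\ref{remarkmp2prop6.6}, Lemma~\ref{rhovssigmamultiplicity} and Lemma~\ref{WcapMsimeqImrho} already package everything needed for that.
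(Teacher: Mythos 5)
Your plan is genuinely different from the paper's, and it has a gap at exactly the point you flag as ``the heart of the argument.'' The paper does \emph{not} establish a root-by-root dictionary between the affine roots $a$ with $p_a>1$ and the roots $\alpha$ with $\mu^{M_\alpha}$ vanishing on $\mathfrak{s}_M$; that matching is effectively the content of Theorem~\ref{maintheoremrootsystem} (and its quantitative refinement via $\lambda^{\Mor}$, $(\lambda^{*})^{\Mor}$ vs. $q_\alpha$, $q_{\alpha*}$), which is \emph{proved later in the section, using this proposition as an input}. Attempting to prove Proposition~\ref{comparisonaffineweylandfiniteweyl} by first establishing that dictionary is therefore circular, unless you have an independent argument for the equivalence — and you do not supply one. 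The appeal to \cite[Section~8]{MR742472} gives information about the value of $p_a$, not about the location of zeros of $\mu^{M_\alpha}$; bridging the two requires a comparison of the finite-group Hecke algebra with the Harish-Chandra $\mu$-function that is precisely the kind of structural statement the paper is building up to.

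There is also a directional misuse of \cite[5.4.2]{MR544991}. That result says: if $\mu^{M_\alpha}$ has a zero at $\sigma\otimes\chi_0$, then $s_\alpha$ fixes $\sigma\otimes\chi_0$. Your forward step uses the converse — ``$s_\alpha$ normalizes $\sigma$ up to unramified twist, \emph{hence} $\mu^{M_\alpha}$ has a zero'' — which is false in general: the elements of $R(\mathfrak{s}_M)$ normalize the inertial class but do \emph{not} correspond to zeros of $\mu$. So $v[a,J]\in W(J,\rho)$ only gives you membership in $W(G,M,\mathfrak{s}_M)$, not in $W(\Sigma_{\mathfrak{s}_M,\mu})$; the condition $p_a>1$ must carry all the remaining weight, and that is exactly the unproved step.

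For comparison, the paper's proof sidesteps the dictionary entirely. It uses the already-constructed isomorphism $T_{\rho_M}\circ I_U$ and its compatibility with $t_P$ and $I_P^G$ (Proposition~\ref{compatibility}); the observation (Lemma~\ref{rtrivimpliessubalgebra}, Corollary~\ref{rtrivimpliessupportpreserving}) that when $R(J,\rho)$ is trivial the $M$-supported Hecke functions form a subalgebra; a free-module rank comparison; and, decisively, a pole analysis of the Bernstein relation \eqref{bernsteinrel}: if one of the two groups were trivial and the other not, the image of the relation forces an element with a pole at $\phi_{-}=1$ to lie inside a Laurent polynomial ring, which is impossible because $\lambda^{\Sol}(\alpha'),(\lambda^*)^{\Sol}(\alpha')>0$. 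This is a purely algebraic/structural contradiction that never needs to identify individual roots or parameters across the two pictures. Your structural observations in the first paragraph (rank one, dihedral vs.\ trivial, $\Gamma'(J,\rho)$ reduced) are correct and align with what the paper uses, but they are the easy part; the pivot to the pole argument is what actually closes the proof, and your proposal does not have a substitute for it.
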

First, we prove the following:
\begin{lemma}
\label{rtrivimpliessubalgebra}
Suppose that $R(J, \rho)$ is trivial.
Then, the subspace
\[
\mathcal{H}(G(F), \rho)_{M} = \left\{
\phi \in \mathcal{H}(G(F), \rho) \mid \supp(\phi) \subset K \cdot M(F) \cdot K
\right\}
\]
is a subalgebra of $\mathcal{H}(G(F), \rho)$.
\end{lemma}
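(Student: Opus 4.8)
The plan is to prove that $\mathcal{H}(G(F), \rho)_{M}$ is closed under convolution by analysing the supports of products. Recall that every $\phi \in \mathcal{H}(G(F), \rho)$ is supported on $I_{G(F)}(\rho)$, so it suffices to understand the double cosets $K\dot{w}K$ with $\dot{w} \in I_{G(F)}(\rho)$ that can occur, and to show that the ``$M$-part'' (those with $\dot{w} \in M(F)$) is multiplicatively closed. The natural device is the decomposition of $I_{G(F)}(\rho)$ into double cosets indexed by $W(J, \rho)$: by Morris's theory (Theorem~\ref{theorem7.12ofmorris}), $\mathcal{H}(G(F), \rho)$ has basis $\{\Phi_w \mid w \in W(J, \rho)\}$ with $\phi_w$ supported on $P_J \dot{w} P_J = K\dot{w}K$, and the multiplication rules (1)--(4) of that theorem control how supports compose. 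So $\mathcal{H}(G(F), \rho)_{M}$ is spanned by those $\Phi_w$ with $\dot w \in M(F)$, i.e. with $w \in W(J, \rho) \cap W_{M(F)}$.

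First I would identify $W(J,\rho) \cap W_{M(F)}$ precisely. Using the hypothesis that $R(J, \rho)$ is trivial together with the decomposition $W(J, \rho) = R(J, \rho) \rtimes C(J, \rho)$ from \cite[Proposition~7.3~(b)]{MR1235019}, we get $W(J, \rho) = C(J, \rho)$. By the definition of $M$ (the centralizer of $(\bigcap_{\alpha \in DJ}\ker\alpha)^\circ$) one has $T(J,\rho) = R(J,\rho) \cap W_{M(F)}$, which here is trivial; more generally I would argue that under triviality of $R(J,\rho)$ the subgroup $W(J,\rho) \cap W_{M(F)}$ coincides with $W(J,\rho) = C(J, \rho)$ itself, since $C(J,\rho)$ consists of ``rotational'' elements fixing $J$ and hence, by the description of $M$, lands in $W_{M(F)}$. (If this last identification needs care, the fallback is that $C(J,\rho)$ stabilizes $J$ and permutes $\Gamma(J,\rho)^+$, so its lifts normalize $K_M$ by Remark~\ref{remarkmp2prop6.6}-type arguments and thus lie in $M(F)$-cosets.) Consequently $\mathcal{H}(G(F), \rho)_{M}$ is spanned by $\{\Phi_t \mid t \in C(J,\rho)\}$, which is all of $\mathcal{H}(G(F),\rho)$ when $R(J,\rho)$ is trivial — or more usefully, the subset $\{\Phi_w \mid w \in W(J,\rho)\cap W_{M(F)}\}$ is closed under the multiplication because $W(J,\rho)\cap W_{M(F)}$ is a subgroup and relations (1)--(2) of Theorem~\ref{theorem7.12ofmorris} give $\Phi_w \Phi_t = \chi(w,t)\Phi_{wt}$ with $wt$ again in that subgroup and $\chi(w,t) \neq 0$.

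The cleanest route, which I expect to be the actual argument, is thus: the set $W_M := W(J,\rho) \cap W_{M(F)}$ is a subgroup of $W(J,\rho)$; for $w_1, w_2 \in W_M$, express $\Phi_{w_1}\Phi_{w_2}$ via Theorem~\ref{theorem7.12ofmorris} by writing $w_1 = t_1 v_1$, $w_2 = t_2 v_2$ in the semidirect product $R(J,\rho)\rtimes C(J,\rho)$ — but since $R(J,\rho)$ is trivial, $w_1, w_2 \in C(J,\rho)$, and relation~(1)/(2) gives $\Phi_{w_1}\Phi_{w_2} = \chi(w_1, w_2)\Phi_{w_1 w_2}$ with $w_1 w_2 \in C(J,\rho) = W(J,\rho) = W_M$ and $\chi(w_1,w_2) \in \mathbb{C}^\times$. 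Hence every product of basis elements of $\mathcal{H}(G(F),\rho)_M$ is a scalar times a basis element of $\mathcal{H}(G(F),\rho)_M$, so the subspace is a subalgebra. The main obstacle I anticipate is the bookkeeping step of verifying that $W(J,\rho)\cap W_{M(F)} = W(J,\rho)$ (equivalently that every element of $C(J,\rho)$ has a lift in $M(F)$) — this requires unwinding the definitions of $M$, of $C(J,\rho)$, and the relation $\supp(\phi_w) \subset K\dot{w}K$ with $\dot w \in M(F) \iff w \in W_{M(F)}$, and possibly invoking Lemma~\ref{WcapMsimeqImrho} or Remark~\ref{remarkmp2prop6.6}. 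If that identification is subtle, I would instead prove the weaker but sufficient claim that $\{w \in W(J,\rho) \mid \dot w \in M(F)\}$ is a subgroup and that relations~(1)--(4) never take a product of two such basis elements outside this span, handling the $\Phi_v$-type relations~(3)--(4) by noting they do not arise when $R(J,\rho)$ is trivial.
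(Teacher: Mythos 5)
Your high-level strategy matches the paper's: use $R(J,\rho)=\{1\}$ to get $W(J,\rho)=C(J,\rho)$, so only relations~(1)--(2) of Theorem~\ref{theorem7.12ofmorris} arise and $\phi_{w_1}*\phi_{w_2}$ is a scalar multiple of $\phi_{w_1w_2}$; then show that the set of $w$ indexing $\mathcal{H}(G(F),\rho)_M$ is a subgroup. However, there are two problems in how you carry this out.

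First, your claim that $W(J,\rho)\cap W_{M(F)}=W(J,\rho)$ (equivalently that $\mathcal{H}(G(F),\rho)_M=\mathcal{H}(G(F),\rho)$) is false and is not needed. Elements of $C(J,\rho)$ need not lie in $W_{M(F)}$: indeed in the maximal-Levi setting of Section~\ref{Comparison of Morris and Solleveld's endomorphism algebras : maximal case} the paper explicitly takes $\widetilde{s}\in W(J,\rho)\setminus W_{M(F)}$ and obtains $\mathcal{H}(G(F),\rho)=\mathcal{H}(M(F),\rho_M)\oplus\mathcal{H}(M(F),\rho_M)*\phi_{\widetilde{s}}$, so $\mathcal{H}(G(F),\rho)_M\subsetneq\mathcal{H}(G(F),\rho)$ generically. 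The correct and sufficient statement is that $W(J,\rho)\cap W_{M(F)}$ is a subgroup of $W(J,\rho)$, which is automatic.

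Second, and more seriously, you assert without argument that $\mathcal{H}(G(F),\rho)_M$ is spanned by $\{\Phi_w\mid w\in W(J,\rho)\cap W_{M(F)}\}$. The nontrivial direction here is: if $\supp(\phi_w)=K\dot{w}K\subset K\cdot M(F)\cdot K$ then $w\in W_{M(F)}$. This is exactly where the paper does its real work, and it is not ``bookkeeping.'' The paper uses three inputs you have not invoked: (i) the Iwahori decomposition of $M(F)$, giving $K\cdot M(F)\cdot K = K\cdot(N_G(S)(F)\cap M(F))\cdot K$; (ii) the bijection $W_J\backslash W/W_J \to K\backslash G(F)/K$ from \cite[3.11]{MR1235019}, which lets one transfer the double-coset condition from $G(F)$ to $W$; and (iii) the containment $W_J\subset W_{M(F)}$, which follows from the definition of $M$. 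Chaining these gives $w\in W_J\cdot w^M\cdot W_J\subset W_{M(F)}$ for some $w^M\in W_{M(F)}$. Without these facts your identification of $\mathcal{H}(G(F),\rho)_M$ with the span of a subgroup's basis elements is unjustified, and the closure-under-convolution argument does not go through. Your ``fallback'' formulation involving $\dot{w}\in M(F)$ suffers from the same issue, since the support condition concerns the double coset $K\dot{w}K$, not $\dot{w}$ itself.
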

\begin{proof}
Since $R(J, \rho)$ is trivial,
\[
W(J, \rho) = C(J, \rho).
\]
Then, Theorem~\ref{theorem7.12ofmorris} implies that $\End_{G(F)}\left(\ind_{K}^{G(F)} (\rho)\right)$ is isomorphic to the twisted group algebra $\mathbb{C}[W(J, \rho), \chi]$.
We identify $\End_{G(F)}\left(\ind_{K}^{G(F)} (\rho)\right)$ with $\mathcal{H}(G(F), \rho)$ via isomorphism~\eqref{heckevsend}, and for $w \in W(J, \rho)$, let $\phi_{w}$ denote the element of $\mathcal{H}(G(F), \rho)$ corresponding to $\Phi_{w}$ appearing in Theorem~\ref{theorem7.12ofmorris}.
Hence, $\phi_{w}$ is supported on $K \dot{w} K$.
Let $w_{1}, w_{2} \in W(J, \rho)$ such that
\[
\dot{w_{i}} \in K \cdot M(F) \cdot K \ (i=1,2).
\]
It suffices to show that 
\[
\supp(\phi_{w_1} * \phi_{w_2}) \subset K \cdot M(F) \cdot K.
\]
The Iwahori decomposition for $M(F)$ implies that
\[
K \cdot M(F) \cdot K = K \cdot (N_{G}(S)(F) \cap M(F)) \cdot K.
\]
Hence, there exists 
\[
w_{i}^{M} \in W_{M(F)} \, (i=1,2)
\]
such that 
\[
\dot{w_{i}} \in K \cdot \dot{(w_{i}^{M})} \cdot K.
\]
Recall that $W_{J}$ denotes the subgroup of $W$ generated by $s_{b} \ (b \in J)$. 
The definition of $M$ implies that $W_{J}$ is contained in $W_{M(F)}$.
According to \cite[3.11]{MR1235019} (see also \cite[3.22]{MR1235019}), the canonical inclusion 
\[
N_{G}(S)(F) \rightarrow G(F)
\]
induces a bijection
\[
W_{J} \backslash W / W_{J} \rightarrow K \backslash G(F) / K .
\]
Hence, we obtain
\begin{align*}
w_{i} & \in W_{J} \cdot w_{i}^{M} \cdot W_{J} \\
& \subset 
W_{J} \cdot W_{M(F)}
 \cdot W_{J} \\
& = W_{M(F)}.
\end{align*}
Thus, 
\begin{align*}
\supp(\phi_{w_1} * \phi_{w_2}) &= \supp (\chi(w_1, w_2) \phi_{w_1 w_2})\\
&\subset K \cdot \dot{w_1} \dot{w_2} \cdot K \\
&\subset K \cdot M(F) \cdot K.
\end{align*}
\end{proof}
\begin{corollary}
\label{rtrivimpliessupportpreserving}
Suppose that $R(J, \rho)$ is trivial.
Let $\phi$ be an element of $\mathcal{H}(M(F), \rho_{M})$ whose support is contained in $K_{M} z K_{M}$ for some $z \in I_{M(F)} (\rho_{M})$.
Then, we obtain
\[
t_{P}(\phi) = \frac{
\abs{K_{M} / \left(
K_{M} \cap zK_{M}z^{-1}
\right)}^{1/2}
}{
\abs{K / \left(
K \cap zKz^{-1}
\right)}^{1/2}
}T(\phi).
\]
In particular, $t_{P}$ does not depend on the choice of $P$.
\end{corollary}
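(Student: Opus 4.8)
The plan is to deduce Corollary~\ref{rtrivimpliessupportpreserving} from the already-established Lemma~\ref{tPwhenmissubalgebra} together with the new Lemma~\ref{rtrivimpliessubalgebra}. Recall that Lemma~\ref{tPwhenmissubalgebra} asserts precisely the desired formula
\[
t_{P}(\phi) = \frac{
\abs{K_{M} / \left(
K_{M} \cap zK_{M}z^{-1}
\right)}^{1/2}
}{
\abs{K / \left(
K \cap zKz^{-1}
\right)}^{1/2}
}T(\phi)
\]
under the single hypothesis that the subspace $\mathcal{H}(G(F), \rho)_{M}$ of functions supported on $K \cdot M(F) \cdot K$ is a subalgebra of $\mathcal{H}(G(F), \rho)$, and it also records that $t_{P}$ is then independent of the choice of $P$. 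So the entire content of the corollary is to verify that hypothesis when $R(J, \rho)$ is trivial, and that is exactly what Lemma~\ref{rtrivimpliessubalgebra} provides.

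Concretely, the proof will be just a few lines: assume $R(J, \rho)$ is trivial; invoke Lemma~\ref{rtrivimpliessubalgebra} to conclude that $\mathcal{H}(G(F), \rho)_{M}$ is a subalgebra of $\mathcal{H}(G(F), \rho)$; then apply Lemma~\ref{tPwhenmissubalgebra} to the given $\phi$ (whose support lies in $K_{M} z K_{M}$ for some $z \in I_{M(F)}(\rho_{M})$) to obtain both the displayed identity and the independence of $t_{P}$ from $P$.

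There is no real obstacle here; the work was done in establishing Lemma~\ref{rtrivimpliessubalgebra}, whose proof uses Theorem~\ref{theorem7.12ofmorris} (triviality of $R(J, \rho)$ forces $W(J, \rho) = C(J, \rho)$, so $\End_{G(F)}(\ind_{K}^{G(F)}(\rho))$ is a twisted group algebra on $W(J, \rho)$), the Iwahori decomposition of $M(F)$, and the Bruhat-type bijection $W_{J} \backslash W / W_{J} \to K \backslash G(F) / K$ from \cite{MR1235019} to show that any $w \in W(J, \rho)$ with $\dot{w} \in K \cdot M(F) \cdot K$ already lies in $W_{M(F)}$, whence $\supp(\phi_{w_1} * \phi_{w_2}) = \supp(\chi(w_1, w_2)\phi_{w_1 w_2}) \subset K \cdot M(F) \cdot K$. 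Once that is in hand, the corollary is a formal consequence, so I would simply write: ``The corollary follows immediately from Lemma~\ref{rtrivimpliessubalgebra} and Lemma~\ref{tPwhenmissubalgebra}.''
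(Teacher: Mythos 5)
Your proposal is correct and follows exactly the paper's own proof: invoke Lemma~\ref{rtrivimpliessubalgebra} to see that $\mathcal{H}(G(F), \rho)_{M}$ is a subalgebra, then apply Lemma~\ref{tPwhenmissubalgebra}. No gap.
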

\begin{proof}
It follows from Lemma~\ref{tPwhenmissubalgebra} and Lemma~\ref{rtrivimpliessubalgebra}.
\end{proof}
\begin{proof}[Proof of Proposition~\ref{comparisonaffineweylandfiniteweyl}]
Suppose that $R(J, \rho)$ is trivial and $W(\Sigma_{\mathfrak{s}_{M}, \mu})$ is non-trivial.
We identify $\End_{G(F)}\left(\ind_{K}^{G(F)} (\rho)\right)$ with $\mathcal{H}(G(F), \rho)$ via isomorphism~\eqref{heckevsend} and use the same notation as in the proof of Lemma~\ref{rtrivimpliessubalgebra}.

Since $M$ is a maximal proper Levi subgroup of $G$, the order of $W(G, M, \mathfrak{s}_{M})$ is at most $2$.
Hence, the assumption $W(\Sigma_{\mathfrak{s}_{M}, \mu})$ is non-trivial implies that the order of $W(\Sigma_{\mathfrak{s}_{M}, \mu})$ is $2$, and $R(\mathfrak{s}_{M})$ is trivial.
We write 
\[
W(\Sigma_{\mathfrak{s}_{M}, \mu}) = \{
1, s
\}.
\]
Hence, $s$ is the reflection associated with the unique root $\alpha$ in $\Delta_{\mathfrak{s}_{M}, \mu}(P)$.
We write 
\[
\alpha' = (\alpha^{\#})' \in \Delta^{\Sol}
\]
and
\[
(\alpha')^{\vee} = (h_{\alpha}^{\vee})' \in (\Delta^{\Sol})^{\vee}.
\]
According to Theorem~\ref{modificationoftheorem10.9ofsolleveld}, there exists an isomorphism
\[
I^{\Sol} \colon \End_{G(F)}\left(I_{P}^{G}\left(\ind_{M^1}^{M(F)}(\sigma_{1})\right)\right) = \mathcal{H}\left(W(\Sigma_{\mathfrak{s}_{M}, \mu})\right) \rightarrow \mathcal{H}^{\Sol}.
\]
Now we have the following commutative diagram:
\[
\xymatrix@R+1pc@C+1pc{
\mathcal{H}(M(F), \rho_{M}) \ar[d]_-{t_{P}} \ar[r]^-{\eqref{compositionofMverofheckevsendandisomgroupalgebraheckealgebra}} \ar@{}[dr]|\circlearrowleft & \mathbb{C}[M_{\sigma}/M^{1}] \ar[d] \\
\mathcal{H}(G(F), \rho) \ar[r]^-{I^{\Sol} \circ T_{\rho_{M}}\circ I_{U}} & \mathcal{H}^{\Sol}.
}
\]
We identify $\mathcal{H}(M(F), \rho_{M})$ with its image via $t_{P}$.
Then, Corollary~\ref{rtrivimpliessupportpreserving} implies that
\[
\mathcal{H}(M(F), \rho_{M}) = \bigoplus_{w \in W(J, \rho) \cap W_{M(F)}} \mathbb{C} \cdot \phi_{w}.
\]
We note that $\mathcal{H}(M(F), \rho_{M})$ is commutative since it is isomorphic to $\mathbb{C}[M_{\sigma}/M^{1}]$. 
Since $\mathcal{H}^{\Sol}$ is free of rank $2$ as a $\mathbb{C}[M_{\sigma}/M^{1}]$-module in this case, $\mathcal{H}(G(F), \rho)$ is also free of rank $2$ as an $\mathcal{H}(M(F), \rho_{M})$-module.
In particular, 
\[
\mathcal{H}(M(F), \rho_{M}) \subsetneq \mathcal{H}(G(F), \rho).
\]
Take an element 
\[
\widetilde{s} \in W(J, \rho) \backslash W_{M(F)}.
\] 
Then, the description of $\mathcal{H}(G(F), \rho)$ in Theorem~\ref{theorem7.12ofmorris} implies that
\[
\mathcal{H}(M(F), \rho_{M}) * \phi_{\widetilde{s}} = \phi_{\widetilde{s}} * \mathcal{H}(M(F), \rho_{M}),
\]
and
\begin{align}
\label{directdecompositionofhecke}
\mathcal{H}(G(F), \rho) = \mathcal{H}(M(F), \rho_{M}) \oplus \mathcal{H}(M(F), \rho_{M}) * \phi_{\widetilde{s}}.
\end{align}
We write 
\[
\left(
I^{\Sol} \circ T_{\rho_{M}} \circ I_{U}
\right)^{-1}
\left(
T^{\Sol}_{s}
\right)
= \phi^{M}_{0} + \phi^{M}_{1} * \phi_{\widetilde{s}}
\]
for some
\[
\phi^{M}_{0}, \phi^{M}_{1} \in \mathcal{H}(M(F), \rho_{M}).
\]
We also write 
\[
\left(
I^{\Sol} \circ T_{\rho_{M}} \circ I_{U}
\right)^{-1}
(\theta_{(\alpha')^{\vee}}) 
= \phi_{+}
\in \mathcal{H}(M(F), \rho_{M})
\]
and
\[
\left(
I^{\Sol} \circ T_{\rho_{M}} \circ I_{U}
\right)^{-1}
(\theta_{-(\alpha')^{\vee}}) 
= \phi_{-}
\in \mathcal{H}(M(F), \rho_{M}),
\]
respectively.
Relation~\eqref{bernsteinrel} of Definition~\ref{affinehecke} for $\mathcal{H}^{\Sol}$ implies
\begin{multline}
\label{bernsteincomparisonweylgroup}
\theta_{(\alpha')^{\vee}}T^{\Sol}_{s} - T^{\Sol}_{s}\theta_{-(\alpha')^{\vee}} \\
=
\left(
(q_{F}^{\lambda^{\Sol}(\alpha')} -1) + \theta_{-(\alpha')^{\vee}}(
q_{F}^{(\lambda^{\Sol}(\alpha') + (\lambda^{*})^{\Sol}(\alpha'))/2} - q_{F}^{(\lambda^{\Sol}(\alpha') - (\lambda^{*})^{\Sol}(\alpha'))/2}
)
\right)
\frac{
\theta_{(\alpha')^{\vee}} - \theta_{-(\alpha')^{\vee}}
}{
\theta_{0} - \theta_{-2(\alpha')^{\vee}}
}.
\end{multline}
The left hand side maps to 
\[
\phi_{+} * (\phi^{M}_{0} + \phi^{M}_{1} * \phi_{\widetilde{s}}) - (\phi^{M}_{0} + \phi^{M}_{1} * \phi_{\widetilde{s}}) * \phi_{-} 
=
\phi^{M}_{0} * (\phi_{+} - \phi_{-}) + \phi^{M}_{1} * (\phi_{+}*\phi_{\widetilde{s}} - \phi_{\widetilde{s}} * \phi_{-})
\]
via $\left(
I^{\Sol} \circ T_{\rho_{M}} \circ I_{U}
\right)^{-1}$.
On the other hand, the right hand side of \eqref{bernsteincomparisonweylgroup} maps to
\[
\left(
(q_{F}^{\lambda^{\Sol}(\alpha')} -1) + \phi_{-}(
q_{F}^{(\lambda^{\Sol}(\alpha') + (\lambda^{*})^{\Sol}(\alpha'))/2} - q_{F}^{(\lambda^{\Sol}(\alpha') - (\lambda^{*})^{\Sol}(\alpha'))/2}
)
\right)
\frac{
\phi_{+} - \phi_{-}
}{
1 - (\phi_{-})^{2}
} \in \mathcal{H}(M(F), \rho_{M})
\]
via $\left(
I^{\Sol} \circ T_{\rho_{M}} \circ I_{U}
\right)^{-1}$.
Comparing the $\mathcal{H}(M(F), \rho_{M})$-factor of the decomposition~\eqref{directdecompositionofhecke}, we obtain
\begin{multline*}
\phi^{M}_{0} * (\phi_{+} - \phi_{-}) \\
=
\left(
(q_{F}^{\lambda^{\Sol}(\alpha')} -1) + \phi_{-}(
q_{F}^{(\lambda^{\Sol}(\alpha') + (\lambda^{*})^{\Sol}(\alpha'))/2} - q_{F}^{(\lambda^{\Sol}(\alpha') - (\lambda^{*})^{\Sol}(\alpha'))/2}
)
\right)
\frac{
\phi_{+} - \phi_{-}
}{
1 - (\phi_{-})^{2}
},
\end{multline*}
hence
\begin{align}
\label{regeqpole}
\phi^{M}_{0}
 =
\left(
(q_{F}^{\lambda^{\Sol}(\alpha')} -1) + \phi_{-}(
q_{F}^{(\lambda^{\Sol}(\alpha') + (\lambda^{*})^{\Sol}(\alpha'))/2} - q_{F}^{(\lambda^{\Sol}(\alpha') - (\lambda^{*})^{\Sol}(\alpha'))/2}
)
\right)
\frac{
1
}{
1 - (\phi_{-})^{2}
}
\end{align}
However, since $\lambda^{\Sol}(\alpha') > 0$ and $(\lambda^{*})^{\Sol}(\alpha') > 0$, regarding 
\[
\mathcal{H}(M(F), \rho_{M}) \simeq \mathbb{C}[M_{\sigma}/M^{1}]
\]
as the ring of regular functions on an algebraic torus over $\mathbb{C}$, the right hand side of \eqref{regeqpole} has a pole at $\phi_{-} = 1$,
hence it is not contained in $\mathcal{H}(M(F), \rho_{M})$, a contradiction.

Conversely, assume that $R(J, \rho)$ is non-trivial and $W(\Sigma_{\mathfrak{s}_{M}, \mu})$ is trivial.
According to Corollary~\ref{corollaryvectorpart}, the image of $\mathbb{C}[\mathbb{Z} (R^{\Mor})^{\vee}]$ via the composition 
\[
T_{\rho_{M}} \circ I_{U} \circ (I^{\Mor})^{-1} \colon \mathcal{H}^{\Mor} \rightarrow \mathcal{H}(R(J, \rho)) \subset \End_{G(F)}\left(\ind_{K}^{G(F)} (\rho)\right)
\rightarrow \End_{G(F)}\left(I_{P}^{G}\left(\ind_{M^1}^{M(F)}(\sigma_{1})\right)\right)
\]
is contained in
\[
\mathbb{C}[M_{\sigma}/M^{1}] \subset \End_{G(F)}\left(I_{P}^{G}\left(\ind_{M^1}^{M(F)}(\sigma_{1})\right)\right).
\]
Hence, we obtain a commutative diagram
\[
\xymatrix{
\mathbb{C}[\mathbb{Z} (R^{\Mor})^{\vee}] \ar[d] \ar[r] \ar@{}[dr]|\circlearrowleft & \mathbb{C}[M_{\sigma}/M^{1}] \ar[d] \\
\mathcal{H}^{\Mor} \ar[r] & \End_{G(F)}\left(I_{P}^{G}\left(\ind_{M^1}^{M(F)}(\sigma_{1})\right)\right).
}
\]
Since $R(J, \rho)$ is non-trivial, $\mathcal{H}^{\Mor}$ is non-commutative.
Hence, $\End_{G(F)}\left(I_{P}^{G}\left(\ind_{M^1}^{M(F)}(\sigma_{1})\right)\right)$ is also non-commutative.
In particular, we have
\[
\mathbb{C}[M_{\sigma}/M^{1}] \subsetneq \End_{G(F)}\left(I_{P}^{G}\left(\ind_{M^1}^{M(F)}(\sigma_{1})\right)\right).
\]
Thus, $R(\mathfrak{s}_{M})$ is non-trivial.
We write
\[
R(\mathfrak{s}_{M}) = \{1, r\}.
\]
According to Theorem~\ref{modificationoftheorem10.9ofsolleveld},  
\begin{align*}
\End_{G(F)}\left(I_{P}^{G}\left(\ind_{M^1}^{M(F)}(\sigma_{1})\right)\right) = \mathbb{C}[M_{\sigma}/M^1] \oplus \mathbb{C}[M_{\sigma}/M^1] J_{r}.
\end{align*}
Moreover, according to \cite[Theorem~10.6 (a)]{MR4432237}, we obtain
\[
\mathbb{C}[M_{\sigma}/M^1] J_{r}
= J_{r} \mathbb{C}[M_{\sigma}/M^1].
\]
Then, replacing $\mathcal{H}^{\Sol}$ with $\mathcal{H}^{\Mor}$ and $\mathcal{H}(G(F), \rho)$ with $\End_{G(F)}\left(I_{P}^{G}\left(\ind_{M^1}^{M(F)}(\sigma_{1})\right)\right)$ in the proof of the case $R(J, \rho)$ is trivial and $W(\Sigma_{\mathfrak{s}_{M}, \mu})$ is non-trivial, we obtain a contradiction.
\end{proof}
If 
\[
R(J, \rho) = W(\Sigma_{\mathfrak{s}_{M}, \mu}) = \{1\},
\]
Theorem~\ref{maintheoremrootsystem} and Theorem~\ref{maintheoremisomofaffinehecke} are trivial.
Suppose that both of $R(J, \rho)$ and $W(\Sigma_{\mathfrak{s}_{M}, \mu})$ are non-trivial.
Let $\alpha$ denote the unique element of $\Delta_{\mathfrak{s}_{M}, \mu}(P)$, and let $s_{\alpha}$ denote the simple reflection associated with $\alpha$.
Then, we have 
\[
N_{G}(M)(F) / M(F) = W(G, M, \mathfrak{s}_{M}) = W(\Sigma_{\mathfrak{s}_{M}, \mu}) = \{
1, s_{\alpha}
\}.
\]
We also write $a$ for the unique element of $\Gamma(J, \rho)^{+}$ such that 
\[
B(J, \rho)_{e} = \{
a + A'_{J}
\}.
\]
Then, 
\[
D_{J}(a + A'_{J}) = Da\restriction_{A_{M}}
\]
is a scalar multiple of $\alpha$.
We fix a lift $s$ of $v[a, J]$ in $N_{G}(S)(F)$.
According to Lemma~\ref{snormalizesKM}, $s$ normalizes $M$.
Since $s \not \in M(F)$, the projection of $s$ on $N_{G}(M)(F)/M(F)$ is the element $s_{\alpha}$.
Hence, the element $s$ is also a lift of $s_{\alpha}$.
We identify $s_{\alpha}$ and $v[a, J]$ with $s$.
Let $\epsilon = \epsilon_{\alpha} \in \{0, 1\}$ denote the number defined in \cite[Lemma~10.7 (b)]{MR4432237}.
Let $\Phi_{s}$ denote the element of $\End_{G(F)}\left(\ind_{K}^{G(F)} (\rho)\right)$ appearing in Theorem~\ref{theorem7.12ofmorris} and $T'_{s}$ denote the element of $\End_{G(F)}\left(I_{P}^{G}\left(\ind_{M^1}^{M(F)}(\sigma_{1})\right)\right)$ appearing in Lemma~\ref{lemmasolleveld10.8}.
According to Theorem~\ref{modificationoftheorem10.9ofsolleveld},
we can write
\begin{align}
\label{notsubstitute}
\left(
T_{\rho_{M}} \circ I_{U}
\right)
(\Phi_{s}) =
b_{0} \cdot T'_{s} + b'
\end{align}
for some $b_{0}, b' \in \mathbb{C}[M_{\sigma}/M^1]$.
Then, Lemma~\ref{lemmaforcomparisonofmorrisandsolleveldkeypropositionrank1} implies the following:
\begin{proposition}
\label{comparisonofmorrisandsolleveldkeypropositionrank1}
There exists $c' \in \mathbb{C}^{\times}$ and $b' \in \mathbb{C}[M_{\sigma}/M^1]$ such that
\[
\left(
T_{\rho_{M}} \circ I_{U}
\right)
(\Phi_{s}) =
c' \cdot (\theta_{h_{\alpha}^{\vee}})^{-\epsilon} \cdot T'_{s} + b'.
\]
\end{proposition}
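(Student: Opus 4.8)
The plan is to verify the statement in two stages, first establishing the shape $\left(T_{\rho_{M}} \circ I_{U}\right)(\Phi_{s}) = b_{0} \cdot T'_{s} + b'$ with $b_{0}, b' \in \mathbb{C}[M_{\sigma}/M^{1}]$ as in \eqref{notsubstitute}, and then pinning down $b_{0}$ via Lemma~\ref{lemmaforcomparisonofmorrisandsolleveldkeypropositionrank1}. For the first stage, note that in the maximal case $\abs{B \backslash J} = 2$, both $R(J, \rho)$ and $W(\Sigma_{\mathfrak{s}_{M}, \mu})$ are non-trivial by hypothesis, and Proposition~\ref{comparisonaffineweylandfiniteweyl} tells us both are groups of order (generated by the relevant involutions). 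The isomorphism $T_{\rho_{M}} \circ I_{U}$ carries $\End_{G(F)}\left(\ind_{K}^{G(F)} (\rho)\right)$ onto $\End_{G(F)}\left(I_{P}^{G}\left(\ind_{M^1}^{M(F)}(\sigma_{1})\right)\right)$, and by Theorem~\ref{modificationoftheorem10.9ofsolleveld} the latter has $\mathbb{C}[M_{\sigma}/M^1]$-basis $\{J_{r} T'_{w}\}$. Since $R(\mathfrak{s}_{M})$ is trivial here (the order of $W(G, M, \mathfrak{s}_{M})$ is at most $2$ and $W(\Sigma_{\mathfrak{s}_{M}, \mu})$ already exhausts it), the endomorphism algebra is $\mathbb{C}[M_{\sigma}/M^1] \oplus \mathbb{C}[M_{\sigma}/M^1] T'_{s}$, so any element — in particular the image of $\Phi_{s}$ — can be written uniquely as $b_{0} \cdot T'_{s} + b'$. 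This gives \eqref{notsubstitute}.

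The second stage is a direct invocation of Lemma~\ref{lemmaforcomparisonofmorrisandsolleveldkeypropositionrank1}. That lemma, under the hypothesis that there exist $b_{0}, b' \in \mathbb{C}[M_{\sigma}/M^1]$ with $\left(T_{\rho_{M}} \circ I_{U}\right)(\Phi_{s}) = b_{0} \cdot T'_{s} + b'$, concludes that $b_{0} = c' \cdot (\theta_{h_{\alpha}^{\vee}})^{-\epsilon}$ for some $c' \in \mathbb{C}^{\times}$. The hypotheses of that lemma — namely that $s$ is simultaneously a lift of $v[a, J]$ in $N_{G}(S)(F)$ and a lift of the reflection $s_{\alpha} \in N_{G}(M)(F)/M(F)$ in $I_{M_{\alpha}^{1}}(\sigma_{1})$ — are exactly what has been arranged in the paragraphs preceding the statement: $s$ normalizes $M$ by Lemma~\ref{snormalizesKM}, lies outside $M(F)$, and therefore projects to the unique non-trivial element $s_{\alpha}$; moreover $W_{J \cup \{a\}} \subset W_{M_{\alpha}(F)}$ forces $s \in M_{\alpha}(F)$, and the definition of $v[a,J]$ places $s$ in a parahoric subgroup of $M_{\alpha}(F)$, hence in $M_{\alpha}^{1}$ and in $I_{M_{\alpha}^{1}}(\sigma_{1})$. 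So Lemma~\ref{lemmaforcomparisonofmorrisandsolleveldkeypropositionrank1} applies verbatim, and combining it with \eqref{notsubstitute} yields $\left(T_{\rho_{M}} \circ I_{U}\right)(\Phi_{s}) = c' \cdot (\theta_{h_{\alpha}^{\vee}})^{-\epsilon} \cdot T'_{s} + b'$, which is the assertion.

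The main subtlety — already absorbed into Lemma~\ref{lemmaforcomparisonofmorrisandsolleveldkeypropositionrank1} rather than this proposition — is the computation comparing the two sides of the defining relation after evaluating at $s$: the left side produces $c_{1} \cdot T_{\rho_{M}}(f^{M}_{v_{s}})$ via the support analysis of Lemma~\ref{lemmaminkm}, while the right side produces $c_{2} \cdot (\theta_{h_{\alpha}^{\vee}})^{\epsilon} \cdot T_{\rho_{M}}(f^{M}_{v_{s}})$ after unwinding $T'_{s} = \tfrac{(q_{\alpha}-1)(q_{\alpha*}+1)}{2} (\theta_{h_{\alpha}^{\vee}})^{\epsilon} \circ J_{s} + f_{\alpha}$ together with the vanishing $F'_{v,U}(s) = 0$ coming from Lemma~\ref{snotinpk}. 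Since that lemma is already established in the excerpt, the present proof is essentially bookkeeping: assemble \eqref{notsubstitute} from Theorem~\ref{modificationoftheorem10.9ofsolleveld} and apply the cited lemma. I would state it in exactly that order and keep it to a few lines.
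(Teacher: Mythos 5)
Your proof is correct and follows exactly the paper's approach: decompose $(T_{\rho_M}\circ I_U)(\Phi_s)$ using Theorem~\ref{modificationoftheorem10.9ofsolleveld} (using $R(\mathfrak{s}_M)=\{1\}$ in the maximal case to get the two-term $\mathbb{C}[M_\sigma/M^1]$-basis $\{1, T'_s\}$), then apply Lemma~\ref{lemmaforcomparisonofmorrisandsolleveldkeypropositionrank1} to pin down $b_0$. You add some helpful recapping of why $R(\mathfrak{s}_M)$ is trivial and why $s$ is a valid lift of $s_\alpha$ in $I_{M_\alpha^1}(\sigma_1)$, but the structure and ingredients are the same as the paper's.
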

Now, we prove Theorem~\ref{maintheoremrootsystem} and Theorem~\ref{maintheoremisomofaffinehecke} when $M$ is maximal and both of $R(J, \rho)$ and $W(\Sigma_{\mathfrak{s}_{M}, \mu})$ are non-trivial.
We consider the map
\[
\iota \circ I^{\Sol} \circ T_{\rho_{M}} \circ I_{U} \circ (I^{\Mor})^{-1} \colon \mathcal{H}^{\Mor} \rightarrow \mathcal{H}^{\Sol}.
\]
According to Corollary~\ref{corollaryvectorpart}, for $t \in T(J, \rho)$, there exists $c(t) \in \mathbb{C}^{\times}$ such that
\[
\left(
\iota \circ I^{\Sol} \circ T_{\rho_{M}} \circ I_{U} \circ (I^{\Mor})^{-1} 
\right)
\left(
\theta_{v(t)}
\right) = c(t)^{-1} \cdot \theta_{t}.
\]
In particular, we take $t = t_{0}$ such that 
\[
v(t_{0}) = k_{a + A'_{J}} \left(D_{J}(a + A'_{J})\right)^{\vee}.
\]
We identify $v(t_{0})$ with $\widetilde{v(t_{0})}$ via isomorphism~\eqref{orthogonalcomplementmorris}.
According to Lemma~\ref{vvshm}, we have 
\[
v(t_{0}) = -H_{M}(t_{0}).
\]
Since $a + A'_{J} \in B(J, \rho)_{e} \subset \Gamma'(J, \rho)^{+}_{e}$, the element $\alpha$ is contained in $\Delta_{\mathfrak{s}_{M}, \mu}(P) \subset \Sigma(P, A_{M})$, and the parabolic subgroup $P$ satisfies
\[
D_{J}\left(
\Gamma'(J, \rho)^{+}_{e} 
\right) = 
D_{J} \left(
\Gamma'(J, \rho)_{e}
\right) \cap \left(-\Sigma(P, A_{M})\right),
\]
we have
\begin{align*}
H_{M}(t_{0}) 
& = - v(t_{0}) \\
&= - k_{a + A'_{J}} \left(D_{J}(a + A'_{J})\right)^{\vee} \\
& \in \mathbb{R}_{>0} \cdot \alpha^{\vee}.
\end{align*}
Since $h_{\alpha}^{\vee}$ is the unique generator of $(M_{\sigma} \cap G^{1} )/M^1$ such that $H_{M}(h_{\alpha}^{\vee}) \in \mathbb{R}_{>0} \cdot \alpha^{\vee}$, there exists a positive integer $m$ such that $t_{0} = m \cdot h_{\alpha}^{\vee}$.
Then, there exists $n \in (1/2) \cdot \mathbb{Z}$ such that $t_{0} = n \cdot (h_{\alpha}^{\vee})'$.
We note that Theorem~\ref{maintheoremrootsystem} is equivalent to the claim $n=1$.

First, we assume that $\epsilon = 0$. Then, according to Corollary~\ref{corollaryvectorpart} and Proposition~\ref{comparisonofmorrisandsolleveldkeypropositionrank1} together with Corollary~\ref{corollaryoftheorem7.12ofmorrisaffinever} and Theorem~\ref{modificationoftheorem10.9ofsolleveld}, we obtain that the map
\[
\iota \circ I^{\Sol} \circ T_{\rho_{M}} \circ I_{U} \circ (I^{\Mor})^{-1} \colon \mathcal{H}^{\Mor} \rightarrow \mathcal{H}^{\Sol}
\]
satisfies the conditions of Corollary~\ref{generalizationkeypropositionappendixinteger} for $k=0$.
Therefore, Theorem~\ref{maintheoremrootsystem} and Theorem~\ref{maintheoremisomofaffinehecke} follow from Corollary~\ref{generalizationkeypropositionappendixinteger} in this case.

Next, we assume that $\epsilon =1$.
We note that
\[
\begin{cases}
(h_{\alpha}^{\vee})' = h_{\alpha}^{\vee} & (q_{\alpha} > q_{\alpha*}), \\
(h_{\alpha}^{\vee})' = 2h_{\alpha}^{\vee} & (q_{\alpha} = q_{\alpha*}).
\end{cases}
\]
Then, according to Corollary~\ref{corollaryvectorpart} and Proposition~\ref{comparisonofmorrisandsolleveldkeypropositionrank1} together with Corollary~\ref{corollaryoftheorem7.12ofmorrisaffinever} and Theorem~\ref{modificationoftheorem10.9ofsolleveld}, we obtain that the map
\[
\iota \circ I^{\Sol} \circ T_{\rho_{M}} \circ I_{U} \circ (I^{\Mor})^{-1} \colon \mathcal{H}^{\Mor} \rightarrow \mathcal{H}^{\Sol}
\]
satisfies the condition of Corollary~\ref{generalizationkeypropositionappendixinteger} for $k=1$ or $k= 1/2$.
According to Corollary~\ref{generalizationkeypropositionappendixinteger}, we have $k= 1$, hence $(h_{\alpha}^{\vee})' = h_{\alpha}^{\vee}$ and $q_{\alpha} > q_{\alpha*}$.
Then, \eqref{lambdaofsolleveld} and \eqref{lambdaofsolleveldrefinedpositive} imply that
\begin{align*}
q_{\alpha*} = q_{F}^{\left(\lambda^{\Sol}\left((h_{\alpha}^{\vee})'\right) - (\lambda^{*})^{\Sol}\left((h_{\alpha}^{\vee})'\right) \right)/2}.
\end{align*}
Since $\epsilon = 0$ unless $q_{\alpha*} > 1$, we have 
\[
\lambda^{\Sol}\left((h_{\alpha}^{\vee})'\right) > (\lambda^{*})^{\Sol}\left((h_{\alpha}^{\vee})'\right).
\]
Now, the rest claims of Theorem~\ref{maintheoremrootsystem} and Theorem~\ref{maintheoremisomofaffinehecke} follow from Corollary~\ref{generalizationkeypropositionappendixinteger} too. 
\section{Comparison of Morris and Solleveld's endomorphism algebras: general case}
\label{Comparison of Morris and Solleveld's endomorphism algebras: general case}
In this section, we prove  Theorem~\ref{maintheoremrootsystem} and Theorem~\ref{maintheoremisomofaffinehecke} for general case.
First, we prove Theorem~\ref{maintheoremrootsystem}.
Recall that $R^{\Mor}$ and $R^{\Sol}$ are root systems in $a_{M}^{*}$ defined as
\[
R^{\Mor}
= \{
D_{J}(a')/k_{a'} \mid a' \in \Gamma'(J, \rho)_{e}
\}
\]
and
\[
R^{\Sol} = \{
(\alpha^{\#})' \mid \alpha \in \Delta_{\mathfrak{s}_{M}, \mu}(P)
\}.
\]
We also recall that for $\alpha \in \Sigma_{\red}(A_{M})$, $M_{\alpha}$ denotes the Levi subgroup of $G$ that contains $M$ and the root subgroup $U_{\alpha}$ associated with $\alpha$, and whose semisimple rank is one greater than that of $M$.
We write $K_{\alpha} = K \cap M_{\alpha}(F)$ and $\rho_{\alpha} = \rho\restriction_{K_{\alpha}}$.
Let $\Phi^{M_{\alpha}}$ denote the set of relative roots with respect to $S$ in $M_{\alpha}$, and let $\Phi_{\aff}^{M_{\alpha}}$ denote the affine root system associated with $(M_{\alpha}, S)$ by the work of \cite{MR327923}.
According to Corollary~\ref{corollarySubsets of a set of simple affine roots}, we can take a basis $B^{M_{\alpha}}$ of $\Phi_{\aff}^{M_{\alpha}}$ containing $J$,
and we can define $W^{M_{\alpha}}(J, \rho_{\alpha})$ and $\Gamma^{M_{\alpha}}(J, \rho_{\alpha})$ by replacing $G$ with $M_{\alpha}$ and $\rho$ with $\rho_{\alpha}$ in the definition of $W(J, \rho)$ and $\Gamma(J, \rho)$, respectively (see the proof of Lemma~\ref{independencyofP}).
Since $J \subset \Phi_{\aff}^{M_{\alpha}}$, we have $\mathcal{M}_{J} \subset M_{\alpha}(F)$.
Hence, the definition of $W(J, \rho)$ implies that
\[
W^{M_{\alpha}}(J, \rho_{\alpha}) = W(J, \rho) \cap W_{M_{\alpha}(F)}.
\]
We also have the following:
\begin{lemma}
\label{Gammareduction}
We have
\[
\Gamma^{M_{\alpha}}(J, \rho_{\alpha}) = \Gamma(J, \rho) \cap \Phi_{\aff}^{M_{\alpha}}.
\]
\end{lemma}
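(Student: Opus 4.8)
The statement to prove is that $\Gamma^{M_{\alpha}}(J, \rho_{\alpha}) = \Gamma(J, \rho) \cap \Phi_{\aff}^{M_{\alpha}}$. The strategy is to unwind the definition of $\Gamma(J,\rho)$ — an affine root $a \in \Phi_{\aff} \setminus A'_{J}$ lies in $\Gamma(J,\rho)$ iff (i) $J \cup \{a\}$ extends to a basis of $\Phi_{\aff}$, (ii) $v[a,J] \in W(J,\rho)$, and (iii) the associated parameter $p_a$ is strictly greater than $1$ — and to check that each of these three conditions, when posed for $(M_{\alpha}, \rho_{\alpha})$ and an $a \in \Phi_{\aff}^{M_{\alpha}}$, is equivalent to the corresponding condition for $(G, \rho)$. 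Throughout I will use the identity $W^{M_{\alpha}}(J, \rho_{\alpha}) = W(J, \rho) \cap W_{M_{\alpha}(F)}$ established just above, together with the fact that $\mathcal{M}_{J} \subset M_{\alpha}(F)$ since $J \subset \Phi_{\aff}^{M_{\alpha}}$.

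For the first condition I would invoke Corollary~\ref{corollarySubsets of a set of simple affine roots} (applied to $J \cup \{a\} \subset \Phi_{\aff}^{M_{\alpha}}$, which contains $(\Phi_{\aff})_J$): a basis of $\Phi_{\aff}^{M_{\alpha}}$ containing $J \cup \{a\}$ extends to a basis of $\Phi_{\aff}$ containing $J \cup \{a\}$; conversely a basis of $\Phi_{\aff}$ containing $J \cup \{a\}$ restricts to one of $\Phi_{\aff}^{M_{\alpha}}$ since $Da \in \Phi^{M_{\alpha}}$. Thus condition (i) matches. For the second condition, note that $v[a,J]$ is by construction an element of $W_{J \cup \{a\}}$, and since $Da \in \Phi^{M_{\alpha}}$ and $DJ \subset \Phi^{M_{\alpha}}$, all of $W_{J \cup \{a\}}$ lies in $W_{M_{\alpha}(F)}$; hence $v[a,J] \in W(J,\rho)$ iff $v[a,J] \in W(J,\rho) \cap W_{M_{\alpha}(F)} = W^{M_{\alpha}}(J, \rho_{\alpha})$, using that the definition of $v[a,J]$ is intrinsic and does not change when we pass between $G$ and $M_{\alpha}$. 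For the third condition, the parameter $p_a$ is defined via the decomposition of a parabolically induced representation $\ind_{P_{J,B'}/U_{J \cup \{a\},B'}}^{\mathbf{M}_{J \cup \{a\}}(k_F)}(\rho)$ into two irreducibles; the key point is that the finite reductive group $\mathbf{M}_{J \cup \{a\}}$ depends only on $J \cup \{a\}$ and its ambient affine root system through the local structure near the facet $\mathcal{F}_{J \cup \{a\}}$, and since $J \cup \{a\} \subset \Phi_{\aff}^{M_{\alpha}}$, this group is the same whether computed inside $G$ or inside $M_{\alpha}$ (one should check, via \cite[Th\'eor\`eme~4.6.33]{MR756316} or \cite[3.15]{MR1235019}, that the parahoric and its reductive quotient are insensitive to the ambient group as long as the relevant affine roots are retained). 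Consequently $p_a$ is the same number computed in $G$ or in $M_{\alpha}$, so condition (iii) also matches.

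Assembling these, for $a \in \Phi_{\aff}^{M_{\alpha}}$ we have $a \in \Gamma^{M_{\alpha}}(J, \rho_{\alpha})$ iff all three conditions hold in $M_{\alpha}$ iff all three hold in $G$ iff $a \in \Gamma(J, \rho)$; since we have restricted attention to $a \in \Phi_{\aff}^{M_{\alpha}}$ on both sides, this gives the claimed equality. I expect the main obstacle to be the careful justification that $p_a$ is genuinely the same in both settings: this requires verifying that the parahoric subgroups $P_{J,B'}$, $P_{J \cup \{a\},B'}$ and their unipotent radicals — and hence the finite reductive quotient $\mathbf{M}_{J \cup \{a\}}(k_F)$ and the parabolic with Levi $\mathbf{M}_J$ — are the same objects whether one works inside $G(F)$ or inside $M_{\alpha}(F)$, which amounts to a compatibility statement for Bruhat–Tits theory under passage to a Levi subgroup whose affine root system contains $J \cup \{a\}$. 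The remaining verifications (conditions (i) and (ii)) are more routine given Corollary~\ref{corollarySubsets of a set of simple affine roots} and the already-stated identity for the groups $W^{M_{\alpha}}(J, \rho_{\alpha})$.
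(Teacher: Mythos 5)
Your proposal is correct and follows essentially the same route as the paper: you unwind the three defining conditions for membership in $\Gamma(J,\rho)$ (basis extension, $v[a,J]\in W(J,\rho)$, and $p_a>1$), and verify each is insensitive to passing between $G$ and $M_\alpha$ using Corollary~\ref{corollarySubsets of a set of simple affine roots} (equivalently Lemmas~\ref{lemmabasisofsimplerootsoflevi}--\ref{converselemmabasisofsimplerootsoflevi}), the identity $W^{M_\alpha}(J,\rho_\alpha)=W(J,\rho)\cap W_{M_\alpha(F)}$, and the fact from \cite[3.15]{MR1235019} that the reductive quotient $\mathbf{M}_{J\cup\{a\}}(k_F)$ can be computed inside $M_\alpha(F)$. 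The "main obstacle" you flag (that $p_a$ is the same in $G$ and $M_\alpha$) is precisely what the paper resolves by citing the $\mathcal{M}_{J\cup\{a\}}\subset M_\alpha(F)$ construction, so your concern is well-placed and correctly addressed.
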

\begin{proof}
Let $a \in \Phi_{\aff}^{M_{\alpha}}$ such that $Da\restriction_{A_{M}}$ is non-trivial.
Then, $Da\restriction_{A_{M}}$
is a scalar multiple of $\alpha$, and the definition of $M_{\alpha}$ implies that
\[
\Phi^{M_{\alpha}} = \Phi \cap \mathbb{R} \cdot D(J \cup \{a\}),
\]
hence
\[
\Phi_{\aff}^{M_{\alpha}} = \{
b \in \Phi_{\aff} \mid
Db \in \mathbb{R} \cdot D(J \cup \{a\})
\},
\]
where $\mathbb{R} \cdot D(J \cup \{a\})$ denotes the $\mathbb{R}$-span of $D(J \cup \{a\})$.
According to Lemma~\ref{lemmabasisofsimplerootsoflevi} and Lemma~\ref{converselemmabasisofsimplerootsoflevi}, there exists a basis $B'$ of $\Phi_{\aff}$ containing $J \cup \{a\}$ if and only if there exists a basis $B^{M_{\alpha}}$ of $\Phi_{\aff}^{M_{\alpha}}$ containing $J \cup \{a\}$.
We assume that $a$ satisfies these conditions.
Then, we can define the element 
\[
v[a, J] \in W_{M_{\alpha}(F)}.
\]
Since 
\[
W^{M_{\alpha}}(J, \rho_{\alpha}) = W(J, \rho) \cap W_{M_{\alpha}(F)},
\]
$v[a, J] \in W(J, \rho)$ if and only if $v[a, J] \in W^{M_{\alpha}}(J, \rho_{\alpha})$.
Moreover, replacing $J$ with $J \cup \{a\}$ in \cite[3.15]{MR1235019}, we obtain a parahoric subgroup $\mathcal{M}_{J \cup \{a\}}$ of a reductive subgroup of $M_{\alpha}$ with radical $\mathcal{U}_{J \cup \{a\}}$ such that the canonical inclusion 
\[
\mathcal{M}_{J \cup \{a\}} \rightarrow P_{J \cup \{a\}, B'}
\]
induces an isomorphism
\[
\mathcal{M}_{J \cup \{a\}}/\mathcal{U}_{J \cup \{a\}} \rightarrow P_{J \cup \{a\}, B'}/U_{J \cup \{a\}, B'} \simeq \mathbf{M}_{J \cup \{a\}}(k_{F}).
\]
Hence, we can calculate $p_{a}$ in $\mathcal{M}_{J \cup \{a\}} \subset M_{\alpha}(F)$.
Thus, the definition of $\Gamma(J, \rho)$ implies that
\[
\Gamma^{M_{\alpha}}(J, \rho_{\alpha}) = \Gamma(J, \rho) \cap \Phi_{\aff}^{M_{\alpha}}.
\]
\end{proof}

Let $\alpha \in \Sigma_{\mathfrak{s}_{M}, \mu}$.
We will prove that $(\alpha^{\#})' \in R^{\Mor}$.
Let $(\Gamma^{M_{\alpha}})'(J, \rho_{\alpha})$ denote the image of $\Gamma^{M_{\alpha}}(J, \rho_{\alpha})$ on $A'/A'_{J}$.
We define
\[
\begin{cases}
V^{\Gamma^{M_{\alpha}}} &= \{
y \in V \mid \alpha(y) = 0 \ (\alpha \in D\Gamma^{M_{\alpha}}(J, \rho))
\}, \\
V^{J, \Gamma^{M_{\alpha}}} &= V^{J} \cap V^ {\Gamma^{M_{\alpha}}}, \\
\mathcal{A}^{J}_{\Gamma^{M_{\alpha}}} &= \mathcal{A}^{J}/ V^{J, \Gamma^{M_{\alpha}}}.
\end{cases}
\]
Replacing $G$ with $M_{\alpha}$ in Proposition~\ref{proposition7.3ofmorris}, we obtain that $(\Gamma^{M_{\alpha}})'(J, \rho_{\alpha})$ is an affine root system on $\mathcal{A}^{J}_{\Gamma^{M_{\alpha}}} $.
Let $[e]$ denote the image of $e$ on $\mathcal{A}^{J}_{\Gamma^{M_{\alpha}}} $ via the natural projection
\[
\mathcal{A}^{J}_{\Gamma} \rightarrow \mathcal{A}^{J}_{\Gamma^{M_{\alpha}}}.
\]
Since $e$ is a special point for $\Gamma'(J, \rho)$, $[e]$ is a special point for $(\Gamma^{M_{\alpha}})'(J, \rho_{\alpha})$.
Let $(\Gamma^{M_{\alpha}})'(J, \rho_{\alpha})_{[e]}$ denote the set of affine roots in $(\Gamma^{M_{\alpha}})'(J, \rho_{\alpha})$ that vanish at $[e]$.
Then, we have
\[
(\Gamma^{M_{\alpha}})'(J, \rho_{\alpha})_{[e]} = \Gamma'(J, \rho)_{e} \cap (\Gamma^{M_{\alpha}})'(J, \rho_{\alpha}).
\]
Since $M$ is a maximal Levi subgroup of $M_{\alpha}$, Theorem~\ref{maintheoremrootsystem} holds if we replace $G$ with $M_{\alpha}$.
We define 
\[
(R^{\Mor})^{M_{\alpha}} = \{
D_{J}(a')/k_{a'} \mid a' \in (\Gamma')^{M_{\alpha}}(J, \rho)_{[e]}
\}
\]
and
\[
(R^{\Sol})^{M_{\alpha}} = \{
(\alpha^{\#})' \mid \alpha \in \Sigma_{\mathfrak{s}_{M}, \mu} \cap \Sigma(M_{\alpha}, A_{M})
\},
\]
where $\Sigma(M_{\alpha}, A_{M})$ denotes the set of nonzero weights occurring in the adjoint representation of $A_{M}$ on the Lie algebra of $M_{\alpha}$. 
According to Theorem~\ref{maintheoremrootsystem} for $M_{\alpha}$, we have
\[
(\alpha^{\#})' \in (R^{\Sol})^{M_{\alpha}} = (R^{\Mor})^{M_{\alpha}} \subset R^{\Mor}.
\]

On the other hand, let $a \in \Gamma(J, \rho)$ such that $a' = a+A'_{J} \in \Gamma'(J, \rho)_{e}$.
We will prove that 
\[
D_{J}(a')/k_{a'} \in R^{\Sol}.
\]
We write 
\[
\alpha = 
\begin{cases}
D_{J}(a') & (D_{J}(a') \in \Sigma_{\red}(A_{M})), \\
D_{J}(a')/2 & (D_{J}(a') \not \in \Sigma_{\red}(A_{M})).
\end{cases}
\]
We use the same notation as above.
Then, we have 
\[
a \in \Gamma(J, \rho) \cap \Phi_{\aff}^{M_{\alpha}}.
\]
According to Lemma~\ref{Gammareduction}, we have $a \in \Gamma^{M_{\alpha}}(J, \rho_{\alpha})$, hence
\[
a' \in (\Gamma')^{M_{\alpha}}(J, \rho)_{[e]}.
\]
Then, according to Theorem~\ref{maintheoremrootsystem} for $M_{\alpha}$, we have
\[
D_{J}(a')/k_{a'} \in (R^{\Mor})^{M_{\alpha}} = (R^{\Sol})^{M_{\alpha}} \subset R^{\Sol}.
\]
Thus, we obtain that
$R^{\Mor} = R^{\Sol}$.
Let $(R^{\Mor})^{+}$ denote the set of positive roots of $R^{\Mor}$ with respect to the basis $\Delta^{\Mor}$ and $(R^{\Sol})^{+}$ denote the set of positive roots of $R^{\Sol}$ with respect to the basis $\Delta^{\Sol}$.
Hence, we have
\[
(R^{\Mor})^{+} = \{
D_{J}(a')/k_{a'} \mid a' \in \Gamma'(J, \rho)^{+}_{e}
\}
\]
and
\[
(R^{\Sol})^{+} = \{
(\alpha^{\#})' \mid \alpha \in \Sigma_{\mathfrak{s}_{M}, \mu}(P)
\}.
\]
Our choice of the parabolic subgroup $P$ implies that if $a' \in \Gamma'(J, \rho)^{+}_{e}$, we have
\[
D_{J}(a')/k_{a'} \in - (R^{\Sol})^{+}.
\]
Thus, we obtain that $(R^{\Mor})^{+} = - (R^{\Sol})^{+}$, hence $\Delta^{\Mor} = - \Delta^{\Sol}$.

Next, we prove Theorem~\ref{maintheoremisomofaffinehecke}.
Let $\alpha \in \Delta_{\mathfrak{s}_{M}, \mu}(P)$.
We write $\alpha' = (\alpha^{\#})'$ and $(\alpha')^{\vee} = (h_{\alpha}^{\vee})'$, and let $s_{\alpha}$ denote the corresponding reflection.
We also regard $- \alpha' \in \Delta^{\Mor}$ and $s_{\alpha} \in W_{0}\left(R^{\Mor}\right)$.
Let $a \in \Gamma(J, \rho)^{+}$ such that $r(a) = -\alpha'$.
We fix a lift $s$ of $v[a, J]$ in $N_{G}(S)(F)$, that is also a lift of $s_{\alpha}$ in $I_{M_{\alpha}^{1}}(\sigma_{1})$.
We identify $s_{\alpha}$ and $v[a, J]$ with $s$.
First, we assume that $a \in B$ and $M_{\alpha}$ is a standard Levi subgroup with respect to $P$.
Then, $P M_{\alpha}$ is a parabolic subgroup of $G$ with Levi factor $M_{\alpha}$.
Let $U^{M_{\alpha}}\index{$U^{M_{\alpha}}$}$ denote the unipotent radical of $P M_{\alpha}$ and $\overline{U^{M_{\alpha}}}\index{$\overline{U^{M_{\alpha}}}$}$ denote the unipotent radical of the opposite parabolic subgroup $\overline{P} M_{\alpha}$ of $P M_{\alpha}$.
Let $P_{J \cup \{a\}} = P_{J \cup \{a\}, B}$ denote the parahoric subgroup of $G(F)$ associated with $J \cup \{a\} \subset B$, and let $U_{J \cup \{a\}}$ denote its radical. 
\begin{lemma}
\label{pJtopJcupa}
We have
\[
K \cap U^{M_{\alpha}}(F) = P_{J \cup \{a\}} \cap U^{M_{\alpha}}(F)
\]
and
\[
K \cap \overline{U^{M_{\alpha}}}(F) = P_{J \cup \{a\}} \cap \overline{U^{M_{\alpha}}}(F).
\]
\end{lemma}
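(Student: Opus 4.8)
The statement asserts that the parahoric subgroup $K = P_J$ and the larger parahoric $P_{J\cup\{a\}}$ have the same intersection with $U^{M_\alpha}(F)$ (and with $\overline{U^{M_\alpha}}(F)$). The plan is to exploit the root-group decompositions of parahoric subgroups provided by Bruhat--Tits theory, as recorded in \cite{MR1371680} and \cite{MR756316}, together with the fact that $J \cup \{a\} \subset B$ determines a parahoric that contains $P_J$.

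First I would recall, from \cite[3.1, 3.2]{MR1371680}, that there is a point $y$ in the closure of the chamber $C$ with $P_J = G(F)_{y,0}$, and since $J \cup \{a\} \subset B$ determines a facet $\mathcal{F}_{J\cup\{a\}}$ that is a face of $\mathcal{F}_J$, we may choose $y' \in \mathcal{F}_{J\cup\{a\}}$ with $P_{J\cup\{a\}} = G(F)_{y',0}$ and moreover $y$ lies in the closure of the facet of $y'$, so that $U_{J\cup\{a\}} \subset U_J \subset P_J \subset P_{J\cup\{a\}}$ (this is exactly the inclusion already recorded in Section~\ref{The case of depth-zero types} in the discussion of $p_a$). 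The point is that the unipotent radical $U^{M_\alpha}$ is ``transverse'' to $M_\alpha$, so its root subgroups $U_\beta$ are those $\beta \in \Sigma(P M_\alpha, A_M)$ that do not lie in $\Phi^{M_\alpha}$; equivalently, in terms of affine roots, $U^{M_\alpha}(F)$ is generated by the $U_c$ for $c \in \Phi_{\aff}$ with $Dc \notin \mathbb{R}\cdot D(J\cup\{a\})$ and $Dc \in \Sigma(P M_\alpha, A_M)$.

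The key step is then the following: for each such affine root direction $\beta = Dc$, the filtration subgroup $U_c = U_{\beta, r}$ that appears in $G(F)_{y,0}$ depends only on the value $\beta(y)$, and since $y$ and $y'$ both lie in $\overline{\mathcal{F}_J}$ while $\beta$ is \emph{not} a linear combination of $DJ$ (as $\beta \notin \Phi^{M_\alpha} \supset \mathbb{R}\cdot DJ$), $\beta$ takes the same value, up to the same jump, at $y$ and at $y'$ — more precisely, the relevant rounding-up of $\beta(\cdot)$ to the next jump in the filtration of $U_\beta$ is constant on $\overline{\mathcal{F}_J}$ when restricted to root directions outside $\Phi^{M_\alpha}$, because moving from $y$ to $y'$ only crosses walls whose gradients lie in $DJ$. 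Hence the root subgroup contributions to $G(F)_{y,0} \cap U^{M_\alpha}(F)$ and to $G(F)_{y',0} \cap U^{M_\alpha}(F)$ coincide; invoking the Iwahori-type factorization of $P_J \cap U^{M_\alpha}(F)$ and of $P_{J\cup\{a\}}\cap U^{M_\alpha}(F)$ as (ordered) products of these root subgroups (\cite[Th\'eor\`eme~4.6.33]{MR756316} or the Iwahori decomposition in \cite[3.1]{MR546588}) gives the equality. The argument for $\overline{U^{M_\alpha}}$ is identical with $P M_\alpha$ replaced by $\overline{P}M_\alpha$.

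I expect the main obstacle to be bookkeeping with the Bruhat--Tits filtrations: one must be careful that the jumps in the filtration of each affine root subgroup $U_\beta$ are the same at $y$ and $y'$, which uses precisely that $\beta$ is not spanned by $DJ$, and one must also handle divisible roots (the case $D_J(a') \notin \Sigma_{\red}(A_M)$ distinguished elsewhere in this section) where $U_\beta$ and $U_{2\beta}$ interact. A clean way to avoid fine filtration computations is to argue by containment in both directions: the inclusion $\subseteq$ is immediate from $P_J \subseteq P_{J\cup\{a\}}$, and for $\supseteq$ one notes that $P_{J\cup\{a\}}\cap U^{M_\alpha}(F)$ is normalized by $P_J$ (since $U_{J\cup\{a\}}\subseteq U_J \subseteq P_J$ and $P_J$ normalizes $U_J$) and then checks it is already contained in $P_J$ by evaluating on the facet $\mathcal{F}_J$; I will use whichever of these is shorter once the filtration conventions of \cite{MR1371680} are pinned down.
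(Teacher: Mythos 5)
Your main strategy differs from the paper's, and it is the cleaner of your two suggestions. The paper argues algebraically: since $r(a) = -\alpha'$, the Levi subgroup attached to $P_{J\cup\{a\}}$ in the sense of \cite[6.3]{MR1371680} is exactly $M_\alpha$ (by \cite[3.5.1]{MR546588}), so \cite[Proposition~6.4]{MR1371680} gives $P_{J\cup\{a\}} = \left(P_{J\cup\{a\}} \cap M_\alpha(F)\right)\cdot U_{J\cup\{a\}}$; combined with the Iwahori factorisation of $U_{J\cup\{a\}}$ from \cite[6.4.48]{MR327923}, this yields $P_{J\cup\{a\}}\cap U^{M_\alpha}(F) = U_{J\cup\{a\}}\cap U^{M_\alpha}(F)$, and then the chain $U_{J\cup\{a\}} \subset U_J \subset K \subset P_{J\cup\{a\}}$ finishes the proof. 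Your filtration-by-root-direction comparison is a genuinely different route, more geometric and closer to the Moy--Prasad definitions; carried out carefully it works and avoids the explicit invocation of Proposition~6.4.

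Two concrete repairs are needed. First, the sentence ``moving from $y$ to $y'$ only crosses walls whose gradients lie in $DJ$'' is not what you want: $y' \in \mathcal{F}_{J\cup\{a\}}$ lies in $\overline{\mathcal{F}_J}$, so the segment $[y,y']$ is contained in $\overline{\mathcal{F}_J}$ and crosses no walls; the relevant fact is rather that the affine roots vanishing at $y'$ span $A'_{J\cup\{a\}}$, whose gradients lie in $\mathbb{R}\cdot D(J\cup\{a\}) \subset \Phi^{M_\alpha}$, so for every $\beta \in \Sigma(G,S)$ with $\beta,2\beta \notin \Phi^{M_\alpha}$ the sets $\{c \in \Phi_{\aff} : Dc \in \{\beta,2\beta\}, c(y) \geq 0\}$ and $\{c : Dc \in \{\beta,2\beta\}, c(y') \geq 0\}$ coincide, and hence $U_\beta(F)\cap G(F)_{y,0} = U_\beta(F)\cap G(F)_{y',0}$. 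Once stated this way, the divisible-root caveat you flag is harmless, since it is absorbed into the union over $Dc \in \{\beta,2\beta\}$. Second, your ``clean alternative'' does not go through: that $P_J$ normalises $U_J$ and $U_{J\cup\{a\}} \subset U_J$ does not imply $P_J$ normalises $P_{J\cup\{a\}}\cap U^{M_\alpha}(F)$ (these are different subgroups, and $P_{J\cup\{a\}}\cap U^{M_\alpha}(F)$ is generally strictly larger than $U_{J\cup\{a\}}\cap U^{M_\alpha}(F)$ before the lemma is proved), nor is it clear why the claimed normalisation would by itself force $P_{J\cup\{a\}}\cap U^{M_\alpha}(F) \subset P_J$. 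Stick with the filtration argument, properly phrased, or follow the paper's route through \cite[Proposition~6.4]{MR1371680}.
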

\begin{proof}
Since $r(a) = -\alpha'$, the Levi subgroup $M_{\alpha}$ coincides with the centralizer of the subtorus
\[
\left(
\bigcap_{\beta \in D(J \cup \{a\})} \ker(\beta)
\right)^{\circ}
\]
of $S$. 
Moreover, according to \cite[3.5.1]{MR546588}, the Levi subgroup $M_{\alpha}$ is same as the Levi subgroup attached with the parahoric subgroup $P_{J \cup \{a\}}$ as in \cite[6.3]{MR1371680}.
Hence, \cite[Proposition~6.4]{MR1371680} implies that the canonical inclusion
\[
P_{J \cup \{a\}} \cap M_{\alpha}(F) \rightarrow P_{J \cup \{a\}}
\]
induces an isomorphism
\[
(P_{J \cup \{a\}} \cap M_{\alpha}(F))/(U_{J \cup \{a\}} \cap M_{\alpha}(F)) \rightarrow P_{J \cup \{a\}}/U_{J \cup \{a\}}.
\]
Thus, we obtain
\begin{align}
\label{moyprasad6.4implies}
P_{J \cup \{a\}} = (P_{J \cup \{a\}} \cap M_{\alpha}(F)) \cdot U_{J \cup \{a\}}.
\end{align}
Moreover, according to \cite[6.4.48]{MR327923}, we have
\[
U_{J \cup \{a\}} = \left(
U_{J \cup \{a\}} \cap U^{M_{\alpha}}(F)
\right) \cdot \left(
U_{J \cup \{a\}} \cap M_{\alpha}(F)
\right) \cdot \left(
U_{J \cup \{a\}} \cap \overline{U^{M_{\alpha}}}(F)
\right).
\]
Combining it with \eqref{moyprasad6.4implies}, we obtain
\[
P_{J \cup \{a\}} = \left(
U_{J \cup \{a\}} \cap U^{M_{\alpha}}(F)
\right) \cdot \left(
P_{J \cup \{a\}} \cap M_{\alpha}(F)
\right) \cdot \left(
U_{J \cup \{a\}} \cap \overline{U^{M_{\alpha}}}(F)
\right).
\]
Thus, we have
\[
P_{J \cup \{a\}} \cap U^{M_{\alpha}}(F) = U_{J \cup \{a\}} \cap U^{M_{\alpha}}(F)
\]
and
\[
P_{J \cup \{a\}} \cap \overline{U^{M_{\alpha}}}(F) = U_{J \cup \{a\}} \cap \overline{U^{M_{\alpha}}}(F).
\]
Since
\[
U_{J \cup \{a\}} \subset U_{J} \subset P_{J} = K \subset P_{J \cup \{a\}},
\]
we obtain the claim.
\end{proof}
\begin{corollary}
\label{corollarysnormalizeskcapmalpha}
The element $s$ normalizes the groups $K \cap U^{M_{\alpha}}(F)$ and $K \cap \overline{U^{M_{\alpha}}}(F)$.
\end{corollary}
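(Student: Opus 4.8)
The plan is to reduce the statement to Lemma~\ref{pJtopJcupa}, which was proven just above. The element $s$ is a lift of $v[a,J]$ in $N_{G}(S)(F)$, and by Lemma~\ref{snormalizesKM} (applied with $J \cup \{a\}$ in place of $J$, using that $v[a,J]$ fixes $J$ as an element of $W(J,\rho)$), the element $s$ normalizes $M$ and, being contained in the parahoric subgroup $P_{J\cup\{a\}}$ determined by $v[a,J]$, it normalizes $P_{J\cup\{a\}}$ as well. More precisely, the element $s$ normalizes $M_{\alpha}$ since $Da\restriction_{A_M}$ is a scalar multiple of $\alpha$ and $v[a,J]$ fixes the set $\{x \in \mathcal{A}(G,S) \mid b(x)=0 \ (b \in J \cup \{a\})\}$, so $s$ lies in the stabilizer of a point whose associated Levi is $M_{\alpha}$; in particular $s \in N_G(M_\alpha)(F)$. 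Consequently $s$ normalizes the parabolic subgroup $PM_{\alpha}$ (whose Levi factor is $M_\alpha$ and which is the unique parabolic with that Levi contained in the relevant half-space), hence it normalizes its unipotent radical $U^{M_\alpha}$ and the unipotent radical $\overline{U^{M_\alpha}}$ of the opposite parabolic $\overline{P}M_\alpha$.

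First I would record that $s$ normalizes $P_{J\cup\{a\}}$: since any lift of $v[a,J]$ is contained in a parahoric subgroup of $G(F)$, and $v[a,J] \in W(J,\rho)$ fixes $J$ while the defining data of $v[a,J]$ (living in $W_{J\cup\{a\}}$) stabilizes the facet $\mathcal{F}_{J\cup\{a\}}$, the element $s$ stabilizes that facet and hence normalizes $P_{J\cup\{a\}}$ as its stabilizer in the relevant group. Here one uses that $s$ intertwines $\rho$ so the argument of Lemma~\ref{snormalizesKM} applies verbatim with $J\cup\{a\}$. Then, since $s$ normalizes both $P_{J\cup\{a\}}$ and $U^{M_\alpha}(F)$ (the latter because $s$ normalizes $M_\alpha$ and $PM_\alpha$), it normalizes the intersection
\[
P_{J\cup\{a\}} \cap U^{M_\alpha}(F),
\]
and similarly it normalizes $P_{J\cup\{a\}} \cap \overline{U^{M_\alpha}}(F)$.

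Finally I would invoke Lemma~\ref{pJtopJcupa}, which identifies
\[
K \cap U^{M_\alpha}(F) = P_{J\cup\{a\}} \cap U^{M_\alpha}(F), \qquad
K \cap \overline{U^{M_\alpha}}(F) = P_{J\cup\{a\}} \cap \overline{U^{M_\alpha}}(F).
\]
Since $s$ normalizes the right-hand sides, it normalizes the left-hand sides, which is exactly the assertion. The main obstacle, and the only genuinely non-formal point, is establishing that $s$ normalizes $P_{J\cup\{a\}}$ and $M_\alpha$ simultaneously from the hypotheses in force — that $s$ is a fixed lift of $v[a,J]$ which is also a lift of $s_\alpha$ in $I_{M_\alpha^1}(\sigma_1)$, together with $a \in B$ and $M_\alpha$ standard with respect to $P$. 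Once that is in place, the rest is a one-line intersection argument using the previous lemma, so I would keep the proof short and route everything through Lemma~\ref{snormalizesKM} and Lemma~\ref{pJtopJcupa}.
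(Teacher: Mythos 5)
Your overall strategy is the same as the paper's: route the statement through Lemma~\ref{pJtopJcupa}, so that it suffices to show $s$ normalizes $P_{J\cup\{a\}}$, $U^{M_\alpha}(F)$, and $\overline{U^{M_\alpha}}(F)$, and then take intersections. The gap is in your justification that $s$ normalizes $U^{M_\alpha}(F)$ and $\overline{U^{M_\alpha}}(F)$. You only establish $s \in N_G(M_\alpha)(F)$, and from there you claim $s$ normalizes $PM_\alpha$ because it is ``the unique parabolic with that Levi contained in the relevant half-space.'' This does not follow: an element of $N_G(M_\alpha)(F)$ can perfectly well conjugate $PM_\alpha$ to the opposite parabolic $\overline{P}M_\alpha$ (precisely when it acts by $-1$ on the relevant part of $a_{M_\alpha}^*$), and nothing in the information you have invoked so far rules this out. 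The phrase ``relevant half-space'' has no defined meaning here, so the uniqueness argument is not doing any work.

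The fact that actually closes the gap, and the one the paper uses, is stronger than $s \in N_G(M_\alpha)(F)$: since $v[a,J] \in W_{J\cup\{a\}}$ and $W_{J\cup\{a\}} \subset W_{M_\alpha(F)}$ (because $Da\restriction_{A_M}$ is a scalar multiple of $\alpha$, so $D(J\cup\{a\})$ spans the root system of $M_\alpha$), the chosen lift $s$ lies in $M_\alpha(F)$ itself, indeed in $P_{J\cup\{a\}} \cap M_\alpha(F)$. An element of $M_\alpha(F)$ automatically normalizes both $U^{M_\alpha}(F)$ and $\overline{U^{M_\alpha}}(F)$, since $PM_\alpha = U^{M_\alpha}\rtimes M_\alpha$ and $\overline{P}M_\alpha = \overline{U^{M_\alpha}}\rtimes M_\alpha$; and being contained in $P_{J\cup\{a\}}$ it trivially normalizes that group. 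With this replacement your proof goes through, and in fact reduces to a one-line invocation of Lemma~\ref{pJtopJcupa} exactly as in the paper. You also did not need the detour through Lemma~\ref{snormalizesKM} applied to $J\cup\{a\}$, nor the claim that $s$ stabilizes the facet $\mathcal{F}_{J\cup\{a\}}$: membership $s \in P_{J\cup\{a\}}$ is already enough to get the normalization of $P_{J\cup\{a\}}$.
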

\begin{proof}
Since $s$ is a lift of $v[a, J] \in W_{J \cup \{a\}}$, it is contained in $P_{J \cup \{a\}} \cap M_{\alpha}(F)$.
Hence, the claim follows from Lemma~\ref{pJtopJcupa}.
\end{proof}

According to \cite[(8.7)]{MR1643417}, we have
\[
t_{P} = t_{PM_{\alpha}} \circ t_{P \cap M_{\alpha}},
\]
where
\[
t_{P \cap M_{\alpha}} \colon \mathcal{H}(M(F), \rho_{M}) \rightarrow \mathcal{H}(M_{\alpha}(F), \rho_{\alpha}).
\]
denotes the injection obtained by replacing $G$ with $M_{\alpha}$ in the construction of $t_{P}$, and
\[
t_{PM_{\alpha}} \colon \mathcal{H}(M_{\alpha}(F), \rho_{\alpha}) \rightarrow \mathcal{H}(G(F), \rho).
\]
denotes the injection obtained by replacing $M$ with $M_{\alpha}$ in the construction of $t_{P}$.
Moreover, replacing $G$ with $M_{\alpha}$ in the construction of $I_{U}$, we have the isomorphism
\[
I_{U \cap M_{\alpha}} \colon \ind_{K_{\alpha}}^{M_{\alpha}(F)} (\rho_{\alpha}) \rightarrow I_{P \cap M_{\alpha}}^{M_{\alpha}}\left(\ind_{K_M}^{M(F)}(\rho_{M})\right),
\]
and
replacing $M$ with $M_{\alpha}$ in the construction of $I_{U}$, we have the isomorphism
\[
I_{U^{M_{\alpha}}} \colon \ind_{K}^{G(F)}(\rho) \rightarrow I^{G}_{P M_{\alpha}}\left(\ind_{K_{\alpha}}^{M_{\alpha}(F)} (\rho_{\alpha})\right).
\]
According to Proposition~\ref{compatibility} replacing $G$ or $M$ with $M_{\alpha}$, we have the following commutative diagrams:
\begin{align}
\label{commutativeUcapM}
\xymatrix{
\End_{M(F)}\left( \ind_{K_{M}}^{M(F)} (\rho_{M}) \right) \ar[d]_-{t_{P \cap M_{\alpha}}} \ar[r]^-{\id} \ar@{}[dr]|\circlearrowleft & \End_{M(F)}\left( \ind_{K_{M}}^{M(F)} (\rho_{M}) \right) \ar[d]^-{I_{P \cap M_{\alpha}}^{M_{\alpha}}} \\
\End_{M_{\alpha}(F)}\left(\ind_{K_{\alpha}}^{M_{\alpha}(F)} (\rho_{\alpha})\right) \ar[r]^-{I_{U \cap M_{\alpha}}} & \End_{M_{\alpha}(F)}\left(I^{M_{\alpha}}_{P \cap M_{\alpha}} \left( \ind_{K_{M}}^{M(F)} (\rho_{M}) \right)\right),
}
\end{align}
\begin{align}
\label{commutative/UcapM}
\xymatrix{
\End_{M_{\alpha}(F)}\left( \ind_{K_{\alpha}}^{M_{\alpha}(F)} (\rho_{\alpha}) \right) \ar[d]_-{t_{P M_{\alpha}}} \ar[r]^-{\id} \ar@{}[dr]|\circlearrowleft & \End_{M_{\alpha}(F)}\left( \ind_{K_{\alpha}}^{M_{\alpha}(F)} (\rho_{\alpha}) \right) \ar[d]^-{I_{P M_{\alpha}}^{G}} \\
\End_{G(F)}\left(\ind_{K}^{G(F)} (\rho)\right) \ar[r]^-{I_{U^{M_{\alpha}}}} & \End_{G(F)}\left(I^{G}_{P M_{\alpha}} \left( \ind_{K_{\alpha}}^{M_{\alpha}(F)} (\rho_{\alpha}) \right)\right). 
}
\end{align}
Moreover, it follows easily from the definition of $I_{U}$ that the composition
\[
I^{G}_{P M_{\alpha}}\left(
I_{U \cap M_{\alpha}} 
\right)
\circ
I_{U^{M_{\alpha}}} \colon \ind_{K}^{G(F)}(\rho) \rightarrow I^{G}_{P} \left(\ind_{K_M}^{M(F)}(\rho_{M})\right)
\]
coincides with $I_{U}$.
Here, we use the canonical isomorphism
\[
I^{G}_{P} \left(\ind_{K_M}^{M(F)}(\rho_{M})\right) \simeq I_{P M_{\alpha}}^{G} \left(
I_{P \cap M_{\alpha}}^{M_{\alpha}}\left(\ind_{K_M}^{M(F)}(\rho_{M})\right)
\right)
\]
defined as
\[
f \mapsto [
g \mapsto [
m \mapsto \delta_{P M_{\alpha}}(m)^{1/2} \cdot f(mg)
]
]
\]
to identify them.
Hence, combining \eqref{commutativeUcapM} and \eqref{commutative/UcapM}, with the trivial diagram
\[
\xymatrix@R+1pc@C+1pc{
\End_{M_{\alpha}(F)}\left(\ind_{K_{\alpha}}^{M_{\alpha}(F)} (\rho_{\alpha})\right) \ar[d]_-{I^{G}_{P M_{\alpha}}} \ar[r]^-{I_{U \cap M_{\alpha}}} \ar@{}[dr]|\circlearrowleft & \End_{M_{\alpha}(F)}\left(I_{P \cap M_{\alpha}}^{M_{\alpha}}\left(\ind_{M^1}^{M(F)}(\sigma_{1})\right)\right) \ar[d]^{I^{G}_{P M_{\alpha}}}
\\
\End_{G(F)}\left(
I^{G}_{P M_{\alpha}} \left(
\ind_{K_{\alpha}}^{M_{\alpha}(F)} (\rho_{\alpha})
\right)
\right) \ar[r]^-{I^{G}_{P M_{\alpha}}\left(
I_{U \cap M_{\alpha}} 
\right)}& \End_{G(F)}\left(
I^{G}_{P}
\left(\ind_{K_M}^{M(F)}(\rho_{M})\right)
\right),
}
\]
we obtain the following commutative diagram:
\begin{align}
\label{transitivitycommutativediagram}
\xymatrix{
\End_{M(F)}\left( \ind_{K_{M}}^{M(F)} (\rho_{M}) \right) \ar[d]_-{t_{P \cap M_{\alpha}}} \ar[r]^-{\id} \ar@{}[dr]|\circlearrowleft & \End_{M(F)}\left( \ind_{K_{M}}^{M(F)} (\rho_{M}) \right) \ar[d]^-{I_{P \cap M_{\alpha}}^{M_{\alpha}}} \ar[r]^-{T_{\rho_{M}}} \ar@{}[dr]|\circlearrowleft&
\End_{M(F)}\left(\ind_{M^1}^{M(F)} (\sigma_{1})\right) \ar[d]^-{I_{P \cap M_{\alpha}}^{M_{\alpha}}}
\\
\End_{M_{\alpha}(F)}\left(\ind_{K_{\alpha}}^{M_{\alpha}(F)} (\rho_{\alpha})\right) \ar[d]_-{t_{PM_{\alpha}}} \ar[r]^-{I_{U \cap M_{\alpha}}} \ar@{}[dr]|\circlearrowleft & \End_{M_{\alpha}(F)}\left(I^{M_{\alpha}}_{P \cap M_{\alpha}} \left( \ind_{K_{M}}^{M(F)} (\rho_{M}) \right)\right) \ar[r]^-{T_{\rho_{M}}} \ar[d]^-{I_{PM_{\alpha}}^{G}} \ar@{}[dr]|\circlearrowleft & \End_{M_{\alpha}(F)}\left(I_{P \cap M_{\alpha}}^{M_{\alpha}}\left(\ind_{M^1}^{M(F)}(\sigma_{1})\right)\right) \ar[d]^-{I_{PM_{\alpha}}^{G}}
\\
\End_{G(F)}\left(\ind_{K}^{G(F)} (\rho)\right) \ar[r]^-{I_{U}} & \End_{G(F)}\left(I^{G}_{P} \left( \ind_{K_{M}}^{M(F)} (\rho_{M}) \right)\right) \ar[r]^-{T_{\rho_{M}}} & \End_{G(F)}\left(I_{P}^{G}\left(\ind_{M^1}^{M(F)}(\sigma_{1})\right)\right). 
}
\end{align}

Let $\Phi_{s}$ denote the element of $\End_{G(F)}\left(\ind_{K}^{G(F)} (\rho)\right)$ appearing in Theorem~\ref{theorem7.12ofmorris}.
We also have the similar description of $\End_{M_{\alpha}(F)}\left(\ind_{K_{\alpha}}^{M_{\alpha}(F)} (\rho_{\alpha})\right)$.
In particular, we have the element $\Phi_{s}^{M_{\alpha}}$ of $\End_{M_{\alpha}(F)}\left(\ind_{K_{\alpha}}^{M_{\alpha}(F)} (\rho_{\alpha})\right)$ such that the element $\phi_{s}^{M_{\alpha}} \in \mathcal{H}(M_{\alpha}(F), \rho_{\alpha})$ corresponding to $\Phi_{s}^{M_{\alpha}}$ via the $M_{\alpha}$-version of \eqref{heckevsend} is supported on $K_{\alpha} s K_{\alpha}$, and satisfies
\[
(\Phi_{s}^{M_{\alpha}})^2 = (p_{a} - 1)\Phi_{s}^{M_{\alpha}} + p_{a}.
\]
\begin{proposition}
\label{reductionmorrisphis}
We have
\[
t_{P M_{\alpha}}\left(
\Phi_{s}^{M_{\alpha}}
\right) = \Phi_{s}.
\]
\end{proposition}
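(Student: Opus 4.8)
\textbf{Proof proposal for Proposition~\ref{reductionmorrisphis}.}

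The plan is to use the characterization of $t_{PM_\alpha}$ coming from \cite[Theorem~7.2 (i)]{MR1643417} as reformulated in Lemma~\ref{characterizationoftp}: the map $t_{PM_\alpha}$ is the unique algebra homomorphism $\mathcal{H}(M_\alpha(F),\rho_\alpha)\to\mathcal{H}(G(F),\rho)$ that agrees with $\phi\mapsto T(\phi\cdot\delta_{PM_\alpha}^{-1/2})$ on the positive subalgebra $\mathcal{H}^+(M_\alpha(F),\rho_\alpha)$. So it suffices to identify $\phi_s^{M_\alpha}$ up to a positive-element twist with something whose image under $T(\cdot\,\delta_{PM_\alpha}^{-1/2})$ we can compute, and then compare supports and values at a representative. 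Concretely, I would first recall that $s=\dot{v}[a,J]$ lies in $P_{J\cup\{a\}}\cap M_\alpha(F)$ (it is a lift of an element of $W_{J\cup\{a\}}$), so $s$ is contained in a parahoric subgroup of $M_\alpha(F)$; in particular $s$ is \emph{not} necessarily positive relative to $K$ and $U^{M_\alpha}$. To get into the positive range I would multiply by a strongly $(U^{M_\alpha},K)$-positive element $\zeta$ in the center of $M_\alpha$, exactly as in the proof of Lemma~\ref{tPwhenmissubalgebra}, so that $\zeta s$ is positive relative to $K$ and $U^{M_\alpha}$, and work with $\phi_\zeta * \phi_s^{M_\alpha}$, whose support is $K_\alpha\zeta s K_\alpha$.

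The key computational step is to show that $t_{PM_\alpha}(\phi_s^{M_\alpha})$ is supported on $KsK$ and that $\left(t_{PM_\alpha}(\phi_s^{M_\alpha})\right)(s)=\phi_s^{M_\alpha}(s)$, i.e.\ that the constant relating $t_{PM_\alpha}(\phi_s^{M_\alpha})$ to $\Phi_s$ (via isomorphism~\eqref{heckevsend}) is $1$. For the support statement, Corollary~\ref{corollarysnormalizeskcapmalpha} gives that $s$ normalizes $K\cap U^{M_\alpha}(F)$ and $K\cap\overline{U^{M_\alpha}}(F)$; combined with Lemma~\ref{pJtopJcupa} and the fact that $K$ decomposes with respect to $U^{M_\alpha},M_\alpha,\overline{U^{M_\alpha}}$, this controls the double coset $K_\alpha s K_\alpha$ inside $KsK$ and shows no $U^{M_\alpha}$-spreading occurs, i.e.\ $KsK = K (K_\alpha s K_\alpha) K$ with the expected index behaviour. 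For the constant, I would invoke Lemma~\ref{tPwhenmissubalgebra} in the $M_\alpha$-relative setting (the subalgebra hypothesis there is exactly the kind of statement provided by Lemma~\ref{rtrivimpliessubalgebra}/Corollary~\ref{rtrivimpliessupportpreserving}, applied to $M_\alpha$ where $M$ is maximal): the normalization factor is
\[
\frac{\abs{K_\alpha/(K_\alpha\cap sK_\alpha s^{-1})}^{1/2}}{\abs{K/(K\cap sKs^{-1})}^{1/2}},
\]
and I would show this equals $1$ because $s$ normalizes $K\cap U^{M_\alpha}(F)$, $K\cap\overline{U^{M_\alpha}}(F)$ (so those factors contribute trivially to the index ratio, just as the $K_U$, $K_{\overline U}$ factors did in the positive case of Lemma~\ref{tPwhenmissubalgebra}), leaving only the $M_\alpha$-part which cancels. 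Then, since both $t_{PM_\alpha}(\phi_s^{M_\alpha})$ and $\phi_s$ are supported on $KsK$ with the same value $\phi_s^{M_\alpha}(s)=\phi_s(s)$ at $s$ (the latter because $\Phi_s^{M_\alpha}$ and $\Phi_s$ both come from the Morris standard basis element attached to $v[a,J]$, whose value at the fixed lift is normalized the same way), they coincide, hence $t_{PM_\alpha}(\Phi_s^{M_\alpha})=\Phi_s$.

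The main obstacle I anticipate is the support and index bookkeeping: verifying that $s$ being positive-after-$\zeta$-twist does not cause the support of $t_{PM_\alpha}(\phi_\zeta*\phi_s^{M_\alpha})$ to be larger than $K\zeta s K$, and precisely matching the normalization constants so that the factor is exactly $1$ rather than some power of $q_F$. This is where Lemma~\ref{pJtopJcupa} and Corollary~\ref{corollarysnormalizeskcapmalpha} do the real work, by pinning down $K\cap U^{M_\alpha}(F)=P_{J\cup\{a\}}\cap U^{M_\alpha}(F)$ and its $s$-invariance; without that, the Iwahori-type factorization of $KsK$ relative to $U^{M_\alpha},M_\alpha,\overline{U^{M_\alpha}}$ would not be under control. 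Once the support and the single value at $s$ are both matched, the equality of operators is immediate from isomorphism~\eqref{heckevsend}.
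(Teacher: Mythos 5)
Your proposal circles the right ideas but goes off track on the central point. You write that $s$ is ``\emph{not} necessarily positive relative to $K$ and $U^{M_\alpha}$'' and therefore introduce a twist by a strongly positive central element $\zeta$. This is the key thing you miss: Corollary~\ref{corollarysnormalizeskcapmalpha} shows that $s$ \emph{normalizes} both $K\cap U^{M_\alpha}(F)$ and $K\cap\overline{U^{M_\alpha}}(F)$, which is stronger than positivity. So $s$ \emph{is} positive relative to $K$ and $U^{M_\alpha}$, and the entire $\zeta$-detour is unnecessary. Once this is recognised, the fact that $t_{PM_\alpha}(\phi_s^{M_\alpha})$ is supported on $KsK$, hence equal to $c\cdot\phi_s$ for some $c\in\mathbb{C}^\times$, follows directly from the positive-range definition $t_{PM_\alpha}(\phi)=T(\phi\cdot\delta_{PM_\alpha}^{-1/2})$ and the uniqueness in \cite[Proposition~6.3(iii)]{MR1643417}; no subalgebra hypothesis is needed.

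Your appeal to Lemma~\ref{tPwhenmissubalgebra} via Lemma~\ref{rtrivimpliessubalgebra} ``applied to $M_\alpha$ where $M$ is maximal'' is not valid in the present situation: Lemma~\ref{rtrivimpliessubalgebra} requires $R(J,\rho)$ to be trivial, but here $R(J,\rho)$ contains $v[a,J]$ and is thus non-trivial. Fortunately you don't need a subalgebra statement at all, precisely because of positivity. Finally, your step asserting $\phi_s^{M_\alpha}(s)=\phi_s(s)$ because both are ``normalized the same way'' is not justified as stated: the Morris normalizations are tied to the ambient group through the quadratic relation, so this is exactly what needs an argument. The paper's proof replaces this by a cleaner observation: $\Phi_s^{M_\alpha}$ and $\Phi_s$ satisfy the \emph{same} quadratic relation
\[
X^2 = (p_a-1)X + p_a
\]
(the parameter $p_a$ does not change under restriction to $M_\alpha$, cf.\ Lemma~\ref{Gammareduction}), and since $t_{PM_\alpha}$ is an algebra homomorphism, applying it to this relation forces $c=1$ directly. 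So the correct structure is: positivity of $s$ gives $t_{PM_\alpha}(\Phi_s^{M_\alpha})=c\cdot\Phi_s$, and the shared quadratic relation pins down $c=1$.
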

\begin{proof}
According to Corollary~\ref{corollarysnormalizeskcapmalpha}, $s$ normalizes $K \cap U^{M_{\alpha}}(F)$ and $K \cap \overline{U^{M_{\alpha}}}(F)$.
In particular, $s$ is positive relative to $K$ and $U^{M_{\alpha}}$.
Hence, the definition of $t_{P M_{\alpha}}$ implies that there exists $c \in \mathbb{C}^{\times}$ such that
\[
t_{P M_{\alpha}}\left(
\Phi_{s}^{M_{\alpha}}
\right) = c \cdot \Phi_{s}.
\]
Since $\Phi_{s}^{M_{\alpha}}$ and $\Phi_{s}$ satisfy the same quadratic relation
\[
(\Phi_{s}^{M_{\alpha}})^2 = (p_{a} - 1)\Phi_{s}^{M_{\alpha}} + p_{a}
\]
and
\[
\Phi_{s}^{2} = (p_{a} - 1)\Phi_{s} + p_{a},
\]
we have $c=1$.
\end{proof}
Now, we prove Theorem~\ref{maintheoremisomofaffinehecke} in case that $a \in B$ and $M_{\alpha}$ is a standard Levi subgroup with respect to $P$.
Since the label functions can be calculated in $M_{\alpha}$, the latter claim follows from the results of Section~\ref{Comparison of Morris and Solleveld's endomorphism algebras : maximal case}. 
It suffices to show the former claim.
We rewrite it by using Corollary~\ref{corollaryoftheorem7.12ofmorrisaffinever} and Theorem~\ref{modificationoftheorem10.9ofsolleveld}:
\begin{theorem}
\label{maintheoremisomofaffineheckerewrite}
Let $\alpha \in \Delta_{\mathfrak{s}_{M}, \mu}(P)$ such that $M_{\alpha}$ is a standard Levi subgroup with respect to $P$.
We also suppose that the element $a \in \Gamma(J, \rho)^{+}$ such that $r(a) = - \alpha'$ is contained in $B$.
Let
\[
s = s_{\alpha} \in W_{0}(R^{\Mor}) = W_{0}(R^{\Sol})
\]
denote the simple reflection associated with the element $\alpha$.
Then, we have
\begin{align*}
\left(
T_{\rho_{M}} \circ I_{U}
\right)(\Phi_{s}) =
\begin{cases}
q_{F}^{\lambda^{\Sol}(\alpha')} -1 - T'_{s} & (\epsilon_{\alpha} = 0), \\
- q_{F}^{\left(-\lambda^{\Sol}(\alpha') + (\lambda^{*})^{\Sol}(\alpha')\right)/2} \cdot \theta_{- (\alpha')^{\vee}} T'_{s} & (\epsilon_{\alpha} =1).
\end{cases}
\end{align*}
\end{theorem}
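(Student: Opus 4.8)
\textbf{Proof plan for Theorem~\ref{maintheoremisomofaffineheckerewrite}.}
The plan is to reduce the computation of $\left(T_{\rho_{M}} \circ I_{U}\right)(\Phi_{s})$ to the maximal case already treated in Section~\ref{Comparison of Morris and Solleveld's endomorphism algebras : maximal case}, and then pin down the remaining scalar by an explicit evaluation. First I would invoke Proposition~\ref{reductionmorrisphis}, which says $\Phi_{s} = t_{PM_{\alpha}}\left(\Phi_{s}^{M_{\alpha}}\right)$, together with the big commutative diagram~\eqref{transitivitycommutativediagram}. Chasing $\Phi_{s}^{M_{\alpha}}$ around that diagram, the left column sends it down to $\Phi_{s}$ via $t_{PM_{\alpha}}$, and then across via $T_{\rho_{M}} \circ I_{U}$; by commutativity this equals the image of $\Phi_{s}^{M_{\alpha}}$ under the path that first applies $T_{\rho_{M}} \circ I_{U \cap M_{\alpha}}$ (the $M_{\alpha}$-analogue of $T_{\rho_{M}} \circ I_{U}$, for which $M$ is a \emph{maximal} Levi of $M_{\alpha}$) and then the parabolic induction $I_{PM_{\alpha}}^{G}$. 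Applying the maximal case of Theorem~\ref{maintheoremisomofaffinehecke} (equivalently, Proposition~\ref{comparisonofmorrisandsolleveldkeypropositionrank1} refined by the argument of Section~\ref{Comparison of Morris and Solleveld's endomorphism algebras : maximal case}) to $M_{\alpha}$ in place of $G$, we get that $\left(T_{\rho_{M}} \circ I_{U \cap M_{\alpha}}\right)(\Phi_{s}^{M_{\alpha}})$ equals $q_{F}^{\lambda^{\Sol}(\alpha')} - 1 - (T'_{s})^{M_{\alpha}}$ if $\epsilon_{\alpha} = 0$ and $-q_{F}^{\left(-\lambda^{\Sol}(\alpha') + (\lambda^{*})^{\Sol}(\alpha')\right)/2} \cdot \theta_{-(\alpha')^{\vee}} (T'_{s})^{M_{\alpha}}$ if $\epsilon_{\alpha} = 1$, where $(T'_{s})^{M_{\alpha}}$ is the element of Lemma~\ref{transitivityofts}.

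The second step is to transport this identity through $I_{PM_{\alpha}}^{G}$. By Lemma~\ref{transitivityofts}, $I_{PM_{\alpha}}^{G}\left((T'_{s})^{M_{\alpha}}\right) = T'_{s}$, and the scalars $q_{F}^{\lambda^{\Sol}(\alpha')} - 1$, $q_{F}^{\left(-\lambda^{\Sol}(\alpha') + (\lambda^{*})^{\Sol}(\alpha')\right)/2}$ as well as the element $\theta_{-(\alpha')^{\vee}} \in \mathbb{C}[M_{\sigma}/M^{1}]$ are preserved by $I_{PM_{\alpha}}^{G}$ because the injection \eqref{isomgroupalgebraheckealgebraparabolicinduction} of $\mathbb{C}[M_{\sigma}/M^{1}]$ factors through $I_{PM_{\alpha}}^{G}$ (this is the same compatibility used in the proof of Lemma~\ref{transitivityofts}). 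One must also check that the constant $q_{F}^{\lambda^{\Sol}(\alpha')} - 1$ — which is a scalar, i.e. a multiple of $\theta_{0}$ — maps to the same scalar; this is immediate since $I_{PM_{\alpha}}^{G}$ is a unital algebra homomorphism. Combining, $\left(T_{\rho_{M}} \circ I_{U}\right)(\Phi_{s})$ acquires exactly the claimed form. The parameter identities $\lambda^{\Sol}(\alpha') = \lambda^{\Mor}(-\alpha')$ etc. needed to make the two descriptions of the labels agree are precisely the ``latter claim'' of the theorem, which, as the excerpt notes, follows from the maximal case since labels are computed inside $M_{\alpha}$.

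The main obstacle I anticipate is bookkeeping around the various identifications rather than any deep point: one must be careful that the $M_{\alpha}$-analogue $I_{U \cap M_{\alpha}}$ composed with $I_{PM_{\alpha}}^{G}(I_{U \cap M_{\alpha}})$ genuinely equals $I_{U}$ (this is asserted just before~\eqref{transitivitycommutativediagram} via the explicit isomorphism $f \mapsto [g \mapsto [m \mapsto \delta_{PM_{\alpha}}(m)^{1/2} f(mg)]]$, so it can be quoted), and that $T_{\rho_{M}}$ is compatible with the two-step parabolic induction in the way displayed in~\eqref{transitivitycommutativediagram}. A subtler point is verifying that the element $\Phi_{s}^{M_{\alpha}}$ arising from Morris's theory for $M_{\alpha}$ indeed satisfies the quadratic relation $(\Phi_{s}^{M_{\alpha}})^{2} = (p_{a}-1)\Phi_{s}^{M_{\alpha}} + p_{a}$ and is the one matched to $s_{\alpha}$ in the maximal-case statement — but this is recorded in the discussion preceding Proposition~\ref{reductionmorrisphis}. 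Once these compatibilities are in place, the theorem follows by direct substitution with no further computation.
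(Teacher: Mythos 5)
Your proposal matches the paper's proof exactly: both reduce to the maximal case for $M_{\alpha}$ via Proposition~\ref{reductionmorrisphis} (which gives $\Phi_{s} = t_{PM_{\alpha}}(\Phi_{s}^{M_{\alpha}})$), then use commutative diagram~\eqref{transitivitycommutativediagram} and Lemma~\ref{transitivityofts} to transport the $M_{\alpha}$-identity through $I_{PM_{\alpha}}^{G}$. The extra compatibility checks you flag (preservation of scalars and of the $\mathbb{C}[M_{\sigma}/M^{1}]$-action under $I_{PM_{\alpha}}^{G}$, and the factorization of $I_{U}$ through $I_{U^{M_{\alpha}}}$ and $I_{U \cap M_{\alpha}}$) are indeed what the paper implicitly relies on, so the argument is correct.
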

\begin{proof}
We have already proved in Section~\ref{Comparison of Morris and Solleveld's endomorphism algebras : maximal case} the $M_{\alpha}$-version of Theorem~\ref{maintheoremisomofaffineheckerewrite}:
\begin{align}
\label{maintheoremisomofaffineheckerewritemalphaver}
\left(
T_{\rho_{M}} \circ I_{U \cap M_{\alpha}}
\right)(\Phi^{M_{\alpha}}_{s}) =
\begin{cases}
q_{F}^{\lambda^{\Sol}(\alpha')} -1 - (T'_{s})^{M_{\alpha}} & (\epsilon_{\alpha} = 0), \\
- q_{F}^{\left(-\lambda^{\Sol}(\alpha') + (\lambda^{*})^{\Sol}(\alpha')\right)/2} \cdot \theta_{- (\alpha')^{\vee}} (T'_{s})^{M_{\alpha}} & (\epsilon_{\alpha} =1).
\end{cases}
\end{align}
Then, according to commutative diagram~\eqref{transitivitycommutativediagram} combining with Lemma~\ref{transitivityofts} and Proposition~\ref{reductionmorrisphis}, we obtain Theorem~\ref{maintheoremisomofaffineheckerewrite} from \eqref{maintheoremisomofaffineheckerewritemalphaver}.
\end{proof}

To drop the conditions that $a \in B$ and $M_{\alpha}$ is a standard Levi subgroup with respect to $P$, we use intertwining operators.
Recall that $\alpha \in \Delta_{\mathfrak{s}_{M}, \mu}(P)$.
\begin{lemma}
\label{reductionfiniterootpositivesimple}
There exists a parabolic subgroup $P'$ with Levi factor $M$ such that $M_{\alpha}$ is a standard Levi subgroup with respect to $P'$, and
\[
\Sigma_{\mathfrak{s}_{M}, \mu}(P) = \Sigma_{\mathfrak{s}_{M}, \mu}(P').
\]
\end{lemma}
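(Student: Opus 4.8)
The goal is to produce, starting from the given parabolic $P$ (which is adapted to $D_J\bigl(\Gamma'(J,\rho)^+_e\bigr)$ and hence to $\Sigma_{\mathfrak{s}_M,\mu}(P)$), a new parabolic $P'$ with the same Levi factor $M$, the same set of positive roots $\Sigma_{\mathfrak{s}_M,\mu}(P')=\Sigma_{\mathfrak{s}_M,\mu}(P)$, and such that the rank-one Levi $M_\alpha$ is \emph{standard} with respect to $P'$, i.e.\ $P'\subset P'M_\alpha$. The point is that the condition ``$M_\alpha$ is standard with respect to a parabolic with Levi factor $M$'' depends only on which chamber, among the chambers of the reduced root system $\Sigma_{\mathrm{red}}(A_M)$ adjacent to the wall fixed by $s_\alpha$, we choose; and the constraint that we keep $\Sigma_{\mathfrak{s}_M,\mu}(P')=\Sigma_{\mathfrak{s}_M,\mu}(P)$ only pins down the sign of the roots of $\Sigma_{\mathfrak{s}_M,\mu}$, leaving room to adjust the other roots of $\Sigma_{\mathrm{red}}(A_M)$.

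\textbf{Key steps.} First I would recall that parabolic subgroups of $G$ with Levi factor $M$ are in bijection with the connected components of $a_M\setminus\bigcup_{\beta\in\Sigma(G,A_M)}\ker\beta$ (equivalently, with chambers for $\Sigma_{\mathrm{red}}(A_M)$), via $P\leftrightarrow\{\lambda : \langle\beta,\lambda\rangle<0\ (\beta\in\Sigma(P,A_M))\}$, and that $M_\alpha$ is standard with respect to $P'$ precisely when $\Sigma(P'M_\alpha,A_M)$, equivalently when $\alpha$ is (up to sign) the unique simple root of $\Sigma_{\mathrm{red}}(P',A_M)$ whose wall is crossed when passing from the chamber of $P'$ to the chamber of $\overline{P'}$; concretely, this holds iff the chamber of $P'$ has a facet of codimension one in $\ker\alpha$, i.e.\ iff $-\alpha\in\Delta_{\mathrm{red}}(P',A_M)$ can be arranged --- but what we actually need is just that $\alpha$ lie in a wall of the chamber of $P'$, so that $s_\alpha$ permutes $\Sigma(P'M_\alpha,A_M)$ and $P'\cup\overline{P'}\cup P's_\alpha\overline{P'}$ generates a parabolic $P'M_\alpha$ with Levi $M_\alpha$. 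Second, since $\alpha\in\Delta_{\mathfrak{s}_M,\mu}(P)\subset\Sigma(P,A_M)$ and $\Sigma_{\mathfrak{s}_M,\mu}$ is a root subsystem of $\Sigma_{\mathrm{red}}(A_M)$, I would pick a linear functional $\lambda'\in a_M$ in general position with respect to $\Sigma(G,A_M)$ satisfying $\langle\beta,\lambda'\rangle<0$ for all $\beta\in\Sigma_{\mathfrak{s}_M,\mu}(P)$ and additionally $\langle\alpha,\lambda'\rangle$ of smallest possible absolute value among $\{\langle\beta,\lambda'\rangle : \beta\in\Sigma_{\mathrm{red}}(A_M)\}$ that are negative on $\Delta_{\mathfrak{s}_M,\mu}(P)$-adjacent roots; more precisely, I would choose $\lambda'$ arbitrarily close to a generic point of the wall $\ker\alpha$ inside the closed cone cut out by $\Delta_{\mathfrak{s}_M,\mu}(P)$. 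Then $\lambda'$ defines $P'$, and by the same ``sufficiently close'' argument used in the proof of Lemma~\ref{positivitycompativility} and Lemma~\ref{independencyofP}, $P'$ has $\Sigma_{\mathfrak{s}_M,\mu}(P')=\Sigma_{\mathfrak{s}_M,\mu}(P)$ while $\alpha$ lies on a wall of the chamber of $P'$, so that $M_\alpha$ is standard with respect to $P'$.

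\textbf{Main obstacle.} The only delicate point is to verify simultaneously that (i) the chosen $\lambda'$ is in general position (so that $P'$ is genuinely a parabolic with Levi exactly $M$ rather than something bigger) and (ii) that after the perturbation $\alpha$ still lies on a wall of the chamber of $P'$ and that $M_\alpha$, not some larger Levi, is the one becoming standard. This amounts to the combinatorial fact that a generic point of the codimension-one face $\ker\alpha\cap\overline{C}$ of the Weyl chamber $C$ of $\Sigma_{\mathfrak{s}_M,\mu}(P)$ can be perturbed inside $a_M$ into a generic point whose chamber for $\Sigma_{\mathrm{red}}(A_M)$ still has $\ker\alpha$ as one of its walls and still lies on the correct side of every wall $\ker\beta$ with $\beta\in\Sigma_{\mathfrak{s}_M,\mu}(P)$. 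I expect this to follow cleanly by the same continuity/``$\lambda'$ close to $\lambda$'' device already deployed twice in Section~\ref{Statements of main results} (choosing first $\lambda$ generic on the wall within the $\Delta_{\mathfrak{s}_M,\mu}(P)$-cone, then $\lambda'$ generic for all of $\Sigma(G,A_M)$ with $|\langle\beta,\lambda'-\lambda\rangle|$ dominated by $|\langle\beta,\lambda\rangle|$ whenever the latter is nonzero), so the proof should be short; the verification that $M_\alpha$ rather than a larger Levi is standard uses that $\alpha$ is \emph{indivisible} in $\Sigma_{\mathrm{red}}(A_M)$ and that the face $\ker\alpha\cap\overline{C}$ has codimension one, hence meets only the wall $\ker\alpha$ among the walls of $C$ through that face.
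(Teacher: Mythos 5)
Your proposal is correct, but it takes a genuinely different route from the paper's proof. You use a geometric perturbation argument: pick a point $\lambda_0$ in the relative interior of the facet $\ker\alpha\cap\overline{C}$ of the $\Sigma_{\mathfrak{s}_M,\mu}(P)$-chamber that is generic for the hyperplanes $\ker\gamma$, $\gamma\in\Sigma(G,A_M)\setminus\mathbb{R}\alpha$, then perturb slightly in the direction $\langle\alpha,\cdot\rangle<0$ to get a $\Sigma(G,A_M)$-generic $\lambda'$; the resulting $P'$ has $\ker\alpha$ as a wall (so $\alpha\in\Delta(P')$, whence $M_\alpha$ is standard) while staying inside $C$ (so $\Sigma_{\mathfrak{s}_M,\mu}(P')=\Sigma_{\mathfrak{s}_M,\mu}(P)$). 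This is the same ``sufficiently close'' device the paper deploys in the proof of Lemma~\ref{positivitycompativility}. The paper, by contrast, argues combinatorially: writing $N(s_\alpha,P')=\{\beta\in\Sigma_{\red}(P',A_M):s_\alpha(\beta)\in-\Sigma_{\red}(P',A_M)\}$, it shows by induction that whenever $\alpha\notin\Delta(P)$ one can pick a simple root $\beta\in\Delta(P)\cap N(s_\alpha,P)$ with $\beta\neq\alpha$, check $\beta\notin\Sigma_{\mathfrak{s}_M,\mu}$ (using $N(s_\alpha,P)\cap\Sigma_{\mathfrak{s}_M,\mu}=\{\alpha\}$ since $\alpha$ is a simple root of $\Sigma_{\mathfrak{s}_M,\mu}$), and flip $\beta\to-\beta$ to obtain a parabolic $P'$ with $\Sigma_{\mathfrak{s}_M,\mu}(P')=\Sigma_{\mathfrak{s}_M,\mu}(P)$ and $N(s_\alpha,P')=N(s_\alpha,P)\setminus\{\beta\}$, iterating until $N(s_\alpha,P')=\{\alpha\}$, i.e.\ $\alpha\in\Delta(P')$. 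The combinatorial argument is self-contained and avoids any measure-theoretic genericity or limiting step; your argument is perhaps more geometrically transparent but requires you to spell out (as you do, with a bit of effort) why a relatively interior $\Sigma(G,A_M)$-generic point of the facet exists and why the perturbation simultaneously preserves $\Sigma_{\mathfrak{s}_M,\mu}(P)$ and produces $\alpha$ as a simple root. Both proofs are valid; neither is clearly shorter, though the paper's discrete flipping argument sidesteps the bookkeeping that occupies your ``main obstacle'' paragraph.
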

\begin{proof}
For a parabolic subgroup $P'$ with Levi factor $M$, let $\Delta(P')$ denote the basis of $\Sigma(G, A_M)$ with respect to $P'$.
Hence, any element of $\Sigma(P', A_{M})$ can be written as a linear combination of elements of $\Delta(P')$ with rational integer coefficients that are all non-negative.
We also note that if $\alpha \in \Delta(P')$, $M_{\alpha}$ is standard with respect to $P'$.
For $w \in W(G, M, \mathfrak{s}_{M})$, we define
\[
N(w, P') = \left\{
\beta \in \Sigma_{\red}(P', A_{M}) \mid w(\beta) \in - \Sigma_{\red}(P', A_{M})
\right\}.\index{$N(w, P')$}
\]
To prove Lemma~\ref{reductionfiniterootpositivesimple}, it suffices to show the following claim:
\begin{claim}
If $\alpha \not \in \Delta(P)$, there exists a parabolic subgroup $P'$ with Levi factor $M$ such that
\[
\Sigma_{\mathfrak{s}_{M}, \mu}(P) = \Sigma_{\mathfrak{s}_{M}, \mu}(P'),
\]
and
\[
N(s_{\alpha}, P') \subsetneq N(s_{\alpha}, P).
\]
\end{claim}
We prove the claim.
Suppose that $\alpha \not \in \Delta(P)$.
Then, there exists 
\[
\alpha \neq \beta \in \Delta(P) \cap N(s_{\alpha}, P).
\]
Since $\alpha \in \Delta_{\mathfrak{s}_{M}, \mu}(P)$,
\[
N(s_{\alpha}, P) \cap \Sigma_{\mathfrak{s}_{M}, \mu} = \{\alpha\}.
\]
In particular, we have $\beta \not \in \Sigma_{\mathfrak{s}_{M}, \mu}$.
We take the parabolic subgroup $P'$ with Levi factor $M$ such that
\[
\Sigma_{\red}(P', A_{M}) = \left(\Sigma_{\red}(P, A_{M}) \backslash \{ \beta \}\right) \cup \{-\beta\}.
\]
Since $\beta \not \in \Sigma_{\mathfrak{s}_{M}, \mu}$, we obtain that
\[
\Sigma_{\mathfrak{s}_{M}, \mu}(P) = \Sigma_{\mathfrak{s}_{M}, \mu}(P').
\]
Moreover, the definition of $P'$ implies that
\[
N(s_{\alpha}, P') = N(s_{\alpha}, P) \backslash \{\beta\}.
\]
Thus, we obtain the claim.
\end{proof}
We fix such a $P'$.
Then, according to \cite[Proposition~4.2 (a)]{MR4432237}, the Harish-Chandra's intertwining operator $J_{P' \mid P}$ has no poles.
Hence, it restricts to a $G(F)$-equivariant isomorphism
\[
J_{P' \mid P}(\sigma \otimes \cdot) \colon I_{P}^{G}\left(\ind_{M^1}^{M(F)}(\sigma_{1})\right) \rightarrow I_{P'}^{G}\left(\ind_{M^1}^{M(F)}(\sigma_{1})\right).
\] 
The definition of $J_{P' \mid P}(\sigma \otimes \cdot)$ \cite[Subsection~4.1]{MR4432237} implies that
\begin{align}
\label{harishchadravselementsofB}
J_{P' \mid P}(\sigma \otimes \cdot) \circ I_{P}^{G}(b) = I_{P'}^{G}(b) \circ J_{P' \mid P}(\sigma \otimes \cdot)
\end{align}
for all $b \in \End_{M(F)}\left(\ind_{M^1}^{M(F)}(\sigma_{1}) \right)$.

We define
\[
T'_{s, P'} \in \End_{G(F)}\left(I_{P'}^{G}\left(\ind_{M^1}^{M(F)}(\sigma_{1})\right)\right)
\]
by replacing $P$ with $P'$ in the definition of
\[
T'_{s} = T'_{s, P} \in \End_{G(F)}\left(I_{P}^{G}\left(\ind_{M^1}^{M(F)}(\sigma_{1})\right)\right).
\]
appearing in Lemma~\ref{lemmasolleveld10.8}.
Then, we have the following:
\begin{lemma}
\label{lemmareductionintertwiningsolleveldside}
We have
\[
T'_{s, P'} = J_{P' \mid P}(\sigma \otimes \cdot) \circ T'_{s, P} \circ (J_{P' \mid P}(\sigma \otimes \cdot))^{-1}.
\]
\end{lemma}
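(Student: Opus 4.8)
The plan is to unwind the definition of $T'_{s,P'}$ and $T'_{s,P}$ in terms of the operators $J_{s}$, $f_\alpha$, and the group-algebra elements $\theta_{h_\alpha^\vee}$, and to track how conjugation by the Harish-Chandra intertwining operator $J_{P'\mid P}(\sigma\otimes\cdot)$ acts on each of these ingredients. Since $P$ and $P'$ have the same Levi factor $M$ and the same set $\Sigma_{\mathfrak{s}_M,\mu}(P)=\Sigma_{\mathfrak{s}_M,\mu}(P')$ by Lemma~\ref{reductionfiniterootpositivesimple}, the root datum $\mathcal{R}'(G,\mathfrak{s}_M)$, the label functions, the elements $h_\alpha^\vee$, $\epsilon_\alpha$, $q_\alpha$, $q_{\alpha*}$, and hence $f_\alpha$ are literally the same for $P$ and $P'$; the only thing that changes in the definition of $T'_{s,P'}$ is the operator $J_{s,P'}$ built from $J_{s_\alpha^{-1}(P')\mid P'}(\sigma\otimes\cdot)$, $\lambda(s_\alpha)$, $\tau_{s_\alpha}$, and $\rho_{\sigma,s_\alpha}$ in place of their $P$-counterparts. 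So the first step is to show
\[
J_{s,P'} = J_{P' \mid P}(\sigma \otimes \cdot) \circ J_{s,P} \circ (J_{P' \mid P}(\sigma \otimes \cdot))^{-1},
\]
after which, because $\theta_{h_\alpha^\vee}$ and $f_\alpha$ lie in $\mathbb{C}(M_\sigma/M^1)$ and therefore come from $\End_{M(F)}$ via injection~\eqref{quotofisomgroupalgebraheckealgebraparabolicinduction}, equation~\eqref{harishchadravselementsofB} (suitably extended to $\mathbb{C}(M_\sigma/M^1)$-coefficients, which follows from its statement for $\End_{M(F)}$) lets us conjugate each summand of $T'_{s,P}$ through and reassemble $T'_{s,P'}$.

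To establish the displayed identity for $J_s$, I would use the transitivity and functoriality properties of Harish-Chandra's intertwining operators. The key relation is the cocycle/multiplicativity property $J_{s_\alpha^{-1}(P')\mid P'}\circ J_{P'\mid P} = J_{s_\alpha^{-1}(P')\mid P}$ together with the fact that $s_\alpha^{-1}(P') = s_\alpha^{-1} P' s_\alpha$ and $J_{s_\alpha^{-1}(P)\mid s_\alpha^{-1}(P')}$ intertwines $\lambda(s_\alpha)$ appropriately: concretely, $\lambda(s_\alpha)\circ J_{s_\alpha^{-1}(P')\mid s_\alpha^{-1}(P)}$ should equal $J_{P'\mid P}(\widetilde{s_\alpha}(\sigma\otimes\cdot))\circ\lambda(s_\alpha)$ since $\lambda(s_\alpha)$ is just left-translation by $\widetilde{s_\alpha}^{-1}$ and left-translation carries the intertwining operator for $(s_\alpha^{-1}(P),s_\alpha^{-1}(P'))$ to the one for $(P,P')$. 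The operators $I_P^G(\tau_{s_\alpha})$ and $I_P^G(\rho_{\sigma,s_\alpha}\otimes\id)$ are parabolically induced from morphisms on $M(F)$-representations, so they commute with $J_{P'\mid P}$ in the same way as in~\eqref{harishchadravselementsofB}. Chaining these four compatibilities through the definition $J_s = I_P^G(\rho_{\sigma,s_\alpha}\otimes\id)\circ I_P^G(\tau_{s_\alpha})\circ\lambda(s_\alpha)\circ J_{s_\alpha^{-1}(P)\mid P}(\sigma\otimes\cdot)$ gives the conjugation formula.

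The main obstacle I anticipate is the bookkeeping around which parabolic appears in each intertwining operator and verifying the compatibility of $\lambda(s_\alpha)$ with the change of parabolic — in particular, confirming that $s_\alpha$ (equivalently the fixed lift $\widetilde{s_\alpha}\in M_\alpha$) conjugates $P$ to $P'$ in the relevant range, i.e. that $s_\alpha^{-1}(P')$ and $s_\alpha^{-1}(P)$ differ by exactly the root hyperplanes swapped between $P$ and $P'$, so that no spurious poles are introduced and the composed operator is genuinely an isomorphism of $I_P^G(\ind_{M^1}^{M(F)}(\sigma_1))$ with $I_{P'}^G(\ind_{M^1}^{M(F)}(\sigma_1))$ rather than landing in the larger space with $\mathbb{C}(M_\sigma/M^1)$-coefficients. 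Since Lemma~\ref{reductionfiniterootpositivesimple} was arranged precisely so that $J_{P'\mid P}$ has no poles and $\Sigma_{\mathfrak{s}_M,\mu}(P)=\Sigma_{\mathfrak{s}_M,\mu}(P')$, and since $\widetilde{s_\alpha}$ acts on everything inside $M_\alpha$ where the relevant $\mu$-function vanishes, I expect the pole-cancellation to go through, but this is the step that requires care with \cite[IV.1]{MR1989693} and \cite[Subsection~4.1]{MR4432237}. Once the conjugation formula for $J_s$ is in hand, the rest is a one-line reassembly.
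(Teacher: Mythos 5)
Your high-level plan matches the paper's: both reduce the claim to the conjugation identity $J_{s,P'} = J_{P'\mid P}(\sigma\otimes\cdot) \circ J_{s,P} \circ J_{P'\mid P}(\sigma\otimes\cdot)^{-1}$ after observing via~\eqref{harishchadravselementsofB} that the $\mathbb{C}(M_\sigma/M^1)$-coefficients $f_\alpha$ and $(\theta_{h_\alpha^\vee})^{\epsilon_\alpha}$ pass through $J_{P'\mid P}$ unchanged, and both then try to chain the four factors of $J_s$ through $J_{P'\mid P}$. However, there are two interlocking gaps in the chaining step that your proposal does not close.

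First, the ``cocycle'' relation you invoke, $J_{s_\alpha^{-1}(P')\mid P'}\circ J_{P'\mid P} = J_{s_\alpha^{-1}(P')\mid P}$, is false as stated: Waldspurger's transitivity \cite[IV.3 (4)]{MR1989693} gives a product of inverse $\mu$-functions $\prod_\beta \mu^{M_\beta}(\sigma\otimes\cdot)^{-1}$ (over $\beta \in \Sigma_{\red}(P,A_M)\cap(-\Sigma_{\red}(s^{-1}(P),A_M))\cap\Sigma_{\red}(s^{-1}(P'),A_M)$ or its analogue) as a correction factor. The hypothesis $\Sigma_{\mathfrak{s}_M,\mu}(P)=\Sigma_{\mathfrak{s}_M,\mu}(P')$ does not make this factor disappear; it only ensures (via \cite[Proposition~1.6]{MR2827179}) that all $\beta$ occurring lie outside $\Sigma_{\mathfrak{s}_M,\mu}$, so each $\mu^{M_\beta}$ is a nonzero \emph{constant}. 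Second, the isomorphism $\rho_{\sigma,s_\alpha}\colon {}^{s_\alpha}\!\sigma\to\sigma$ is normalized as in \cite[Lemma~4.3]{MR4432237} using the parabolic, so the $\rho_{P,\sigma,s_\alpha}$ appearing in $J_{s,P}$ and the $\rho_{P',\sigma,s_\alpha}$ appearing in $J_{s,P'}$ differ by a nonzero scalar. Your statement that $I_P^G(\rho_{\sigma,s_\alpha}\otimes\id)$ simply ``commutes'' with $J_{P'\mid P}$ overlooks this.

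Together these mean the chaining only yields $J_{s,P'} = c_5\cdot J_{P'\mid P}(\sigma\otimes\cdot)\circ J_{s,P}\circ J_{P'\mid P}(\sigma\otimes\cdot)^{-1}$ for some $c_5\in\mathbb{C}^\times$, and your argument provides no mechanism to identify $c_5$. The missing ingredient is a normalization step: one compares the residues of both sides at the point $\sigma_+$ using \cite[Lemma~10.7 (a)]{MR4432237}, which forces $c_5 = 1$. Without this, the proof is incomplete — you would obtain the desired identity only up to an undetermined nonzero scalar, which is not enough for the downstream application in Theorem~\ref{maintheoremisomofaffineheckerewritewithoutassumption}.
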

\begin{proof}
Recall that $T'_{s}$ is defined as
\[
T'_{s} = \frac{
(q_{\alpha}-1)(q_{\alpha*}+1)
}{
2
}
(\theta_{h_{\alpha}^{\vee}})^{\epsilon_{\alpha}} \circ J_{s}
+ f_{\alpha},
\]
for some $f_{\alpha} \in \mathbb{C}(M_{\sigma}/M^1)$.
We write $J_{s} = J_{s, P}$, and define
\[
J_{s, P'} \in \Hom_{G(F)}\left(
I_{P'}^{G}\left(
\ind_{M^1}^{M(F)} (\sigma_{1})
\right),
I_{P'}^{G}\left(
\ind_{M^1}^{M(F)} (\sigma_{1}) \otimes_{\mathbb{C}[M_{\sigma}/M^1]} \mathbb{C}(M_{\sigma}/M^1)
\right)
\right)
\]
by replacing $P$ with $P'$ in the definition of $J_{s, P}$.
Then, we have
\[
T'_{s, P'} = \frac{
(q_{\alpha}-1)(q_{\alpha*}+1)
}{
2
}
(\theta_{h_{\alpha}^{\vee}})^{\epsilon_{\alpha}} \circ J_{s, P'}
+ f_{\alpha}.
\]
According to \eqref{harishchadravselementsofB}, it suffices to show that 
\[
J_{s, P'} = J_{P' \mid P}(\sigma \otimes \cdot) \circ J_{s, P} \circ (J_{P' \mid P}(\sigma \otimes \cdot))^{-1}.
\]
The definition of $J_{s}$ implies that
\[
J_{s, P} = I_{P}^{G}\left(\rho_{P, \sigma, s} \otimes \id\right) \circ I_{P}^{G} (\tau_{s}) \circ \lambda(s) \circ J_{s^{-1}(P) \mid P}(\sigma \otimes \cdot)
\]
and
\[
J_{s, P'} = I_{P'}^{G}\left(\rho_{P', \sigma, s} \otimes \id\right) \circ I_{P'}^{G} (\tau_{s}) \circ \lambda(s) \circ J_{s^{-1}(P') \mid P'}(\sigma \otimes \cdot).
\]
Since the normalization of
\[
\rho_{\sigma, s} \colon ^s\!\sigma \simeq \sigma,
\]
in \cite[Lemma~4.3]{MR4432237} depends on $P$, we write $\rho_{\sigma, s}$ in $J_{s, P}$ and $J_{s, P'}$ as $\rho_{P, \sigma, s}$ and $\rho_{P', \sigma, s}$, respectively.
Since the space $\Hom_{M(F)}\left(
^s\!\sigma, \sigma
\right)$ is one-dimensional, there exists $c_{1} \in \mathbb{C}^{\times}$ such that
\begin{align}
\label{dependenceonP}
\rho_{P', \sigma, s} = c_{1} \cdot \rho_{P, \sigma, s}.
\end{align}
The definition of $J_{P' \mid P}(\sigma \otimes \cdot)$ \cite[Subsection~4.1]{MR4432237} implies that
\begin{align}
\label{almostharishchandraintertwiningcommuteswithB}
J_{P' \mid P}(\sigma \otimes \cdot) \circ I_{P}^{G}\left(\rho_{P, \sigma, s} \otimes \id\right) \circ I_{P}^{G} (\tau_{s}) \circ \lambda(s) =
I_{P'}^{G}\left(\rho_{P, \sigma, s} \otimes \id\right) \circ I_{P'}^{G} (\tau_{s}) \circ \lambda(s) \circ J_{s^{-1}(P') \mid s^{-1}(P)}(\sigma \otimes \cdot).
\end{align}
Moreover, according to \cite[IV.3 (4)]{MR1989693} and \cite[V.2]{MR1989693}, there exists $c_{2} \in \mathbb{C}^{\times}$ such that
\[
J_{s^{-1}(P') \mid s^{-1}(P)}(\sigma \otimes \cdot) \circ J_{s^{-1}(P) \mid P}(\sigma \otimes \cdot) 
= c_{2} \cdot \left(
\prod_{\beta} \mu^{M_{\beta}}(\sigma \otimes \cdot)^{-1}
\right)
J_{s^{-1}(P') \mid P}(\sigma \otimes \cdot),
\]
where $\mu^{M_{\beta}}$ denotes the Harish-Chandra's $\mu$-function \cite[V.2]{MR1989693}, and $\beta$ runs over
\[
\Sigma_{\red}(P, A_{M}) \cap \left(- \Sigma_{\red}(s^{-1}(P), A_{M})\right) \cap \Sigma_{\red}(s^{-1}(P'), A_{M}).
\]
Since we are assuming
\[
\Sigma_{\mathfrak{s}_{M}, \mu}(P) = \Sigma_{\mathfrak{s}_{M}, \mu}(P'),
\]
we have
\[
\left( - \Sigma_{\mathfrak{s}_{M}, \mu}(s^{-1}(P)) \right) \cap \Sigma_{\mathfrak{s}_{M}, \mu}(s^{-1}(P')) = \emptyset.
\]
Hence, all $\beta$ appearing in the product are contained in $\Sigma_{\red}(A_{M}) \backslash \Sigma_{\mathfrak{s}_{M}, \mu}$.
According to \cite[Proposition~1.6]{MR2827179}, for such $\beta$, $\mu^{M_{\beta}}$ are constant.
Thus, we obtain that there exists $c_{3} \in \mathbb{C}^{\times}$ such that
\begin{align}
\label{transitivityofharishchandraplusmuconst}
J_{s^{-1}(P') \mid s^{-1}(P)}(\sigma \otimes \cdot) \circ J_{s^{-1}(P) \mid P}(\sigma \otimes \cdot) = c_{3} \cdot J_{s^{-1}(P') \mid P}(\sigma \otimes \cdot).
\end{align}
Similarly, we can prove that there exists $c_{4} \in \mathbb{C}^{\times}$ such that
\begin{align}
\label{transitivityofharishchandraplusmuconstsimilarly}
J_{s^{-1}(P') \mid P'}(\sigma \otimes \cdot) \circ J_{P' \mid P}(\sigma \otimes \cdot) = c_{4} \cdot J_{s^{-1}(P') \mid P}(\sigma \otimes \cdot).
\end{align}
Combining \eqref{almostharishchandraintertwiningcommuteswithB} with \eqref{transitivityofharishchandraplusmuconst}, we obtain that
\begin{align*}
J_{P' \mid P}(\sigma \otimes \cdot) \circ J_{s, P}
&= J_{P' \mid P}(\sigma \otimes \cdot) \circ I_{P}^{G}\left(\rho_{P, \sigma, s} \otimes \id\right) \circ I_{P}^{G} (\tau_{s}) \circ \lambda(s) \circ J_{s^{-1}(P) \mid P}(\sigma \otimes \cdot)\\
&= I_{P'}^{G}\left(\rho_{P, \sigma, s} \otimes \id\right) \circ I_{P'}^{G} (\tau_{s}) \circ \lambda(s) \circ  J_{s^{-1}(P') \mid s^{-1}(P)}(\sigma \otimes \cdot) \circ J_{s^{-1}(P) \mid P}(\sigma \otimes \cdot)\\
&= c_{3} \cdot I_{P'}^{G}\left(\rho_{P, \sigma, s} \otimes \id\right) \circ I_{P'}^{G} (\tau_{s}) \circ \lambda(s) \circ J_{s^{-1}(P') \mid P}(\sigma \otimes \cdot).
\end{align*}
On the other hand, equation~\eqref{dependenceonP} and equation~\eqref{transitivityofharishchandraplusmuconstsimilarly} imply that
\begin{align*}
J_{s, P'} \circ J_{P' \mid P}(\sigma \otimes \cdot)
&= I_{P'}^{G}\left(\rho_{P', \sigma, s} \otimes \id\right) \circ I_{P'}^{G} (\tau_{s}) \circ \lambda(s) \circ J_{s^{-1}(P') \mid P'}(\sigma \otimes \cdot) \circ J_{P' \mid P}(\sigma \otimes \cdot)\\
&= c_{4} \cdot I_{P'}^{G}\left(\rho_{P', \sigma, s} \otimes \id\right) \circ I_{P'}^{G} (\tau_{s}) \circ \lambda(s) \circ J_{s^{-1}(P') \mid P}(\sigma \otimes \cdot) \\
&= c_{1} c_{4} \cdot I_{P'}^{G}\left(\rho_{P, \sigma, s} \otimes \id\right) \circ I_{P'}^{G} (\tau_{s}) \circ \lambda(s) \circ J_{s^{-1}(P') \mid P}(\sigma \otimes \cdot).
\end{align*}
Now, we conclude that
\[
J_{s, P'} = c_{5} \cdot J_{P' \mid P}(\sigma \otimes \cdot) \circ J_{s, P} \circ (J_{P' \mid P}(\sigma \otimes \cdot))^{-1},
\]
where 
\[
c_{5} = c_{1} c_{4} c_{3}^{-1}.
\]
According to \cite[Lemma~10.7 (a)]{MR4432237}, comparing the residues of both sides at a point $\sigma_{+} \in \mathfrak{s}_{M}$, we obtain that $c_{5}= 1$. 
\end{proof}

Next, we define an intertwining operator on $\ind_{K}^{G(F)} (\rho)$.
Let $\Phi_{\aff, \red}$ denote the set of indivisible elements in $\Phi_{\aff}$, and we write
\[
\Phi_{\aff, \red}^{+} = \Phi_{\aff, \red} \cap \Phi_{\aff}^{+}.\index{$\Phi_{\aff, \red}^{+} $}
\]
Since any element of $\Gamma(J, \rho)$ is contained in a basis of $\Phi_{\aff}$, we have
\[
\Gamma(J, \rho) \subset \Phi_{\aff, \red}.
\]
For $w \in W$, we write
\[
N(w) = \{
a \in \Phi_{\aff, \red}^{+} \mid wa \in - \Phi_{\aff, \red}^{+}
\}.\index{$N(w)$}
\]
We also define
\[
l(w) = \abs{N(w)}\index{$l(w)$}
\]
for $w \in W$.

Let $B_{1}, B_{2}$ be bases of $\Phi_{\aff}$ containing $J$.
Then, we can define the parahoric subgroup $P_{J, B_{i}}$ with radical $U_{J, B_{i}}$ associated with $J \subset B_{i}$ for $i=1,2$.
We define
\[
\theta_{B_{2} \mid B_{1}} \colon \ind_{P_{J, B_{1}}}^{G(F)} (\rho) \rightarrow \ind_{P_{J, B_{2}}}^{G(F)} (\rho)\index{$\theta_{B_{2} \mid B_{1}}$}
\]
as
\[
\left(
\theta_{B_{2} \mid B_{1}} (f) 
\right)(g) = \int_{U_{J, B_{2}}} f(u' g) du'
\]
for $f \in \ind_{P_{J, B_{1}}}^{G(F)} (\rho)$ and $g \in G(F)$.
Here, we use the Haar measure on $U_{J, B_{2}}$ such that the volume of $U_{J, B_{2}}$ is equal to $1$.
Let $w \in W$ such that $wJ \subset B$.
Then, we have
\[
P_{J, w^{-1}B} = {^{\dot{w}^{-1}}\!{P_{wJ, B}}}
\]
and 
\begin{align}
\label{intertwiningoperatorcomparisonmorris}
\theta_{w^{-1}B \mid B} = \lambda(\dot{w}^{-1}) \circ \theta_{\rho, \dot{w}},
\end{align}
where
\[
\lambda(\dot{w}^{-1}) \colon \ind_{P_{wJ, B}}^{G(F)} (^{\dot{w}}\!\rho) \rightarrow \ind_{P_{J, w^{-1}B}}^{G(F)} (\rho)
\]
is defined as
\[
f \mapsto [g \mapsto f(\dot{w}g)],
\]
and 
\[
\theta_{\rho, \dot{w}} \colon \ind_{P_{J, B}}^{G(F)} (\rho) \rightarrow \ind_{P_{wJ, B}}^{G(F)} (^{\dot{w}}\!\rho)\index{$\theta_{\rho, \dot{w}}$}
\]
denotes the map defined in \cite[Subsection~5.3]{MR1235019}.
\begin{lemma}
\label{essentiallymorrislemma7.5}
Let $w \in W$ such that $wJ \subset B$, and $v \in W(J, \rho)$.
Suppose that
\[
N(v^{-1}) \cap N(w) \cap \Gamma(J, \rho) = \emptyset.
\]
Then, there exists $c(w, v) \in \mathbb{C}^{\times}$ such that
\[
\theta_{v^{-1}w^{-1}B \mid v^{-1}B}
\circ
\theta_{v^{-1}B \mid B}
=
c(w, v) \cdot \theta_{v^{-1}w^{-1}B \mid B}.\index{$c(w, v)$}
\]
\end{lemma}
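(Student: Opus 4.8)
\textbf{Proof proposal for Lemma~\ref{essentiallymorrislemma7.5}.}

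The plan is to reduce the statement to Morris's composition formula for the operators $\theta_{\rho,\dot{w}}$ (essentially \cite[Lemma~7.5]{MR1235019}, used in the commented-out material above), by translating between the two families of intertwining operators $\theta_{B_2\mid B_1}$ and $\theta_{\rho,\dot{w}}$ via \eqref{intertwiningoperatorcomparisonmorris}. First I would rewrite each of the three operators appearing in the statement using \eqref{intertwiningoperatorcomparisonmorris}: writing $v^{-1}w^{-1}B = (wv)^{-1}B$, we have $\theta_{v^{-1}w^{-1}B\mid v^{-1}B}$ corresponding to the action of $w$ starting from the alcove $v^{-1}C$, $\theta_{v^{-1}B\mid B}$ corresponding to the action of $v$, and $\theta_{v^{-1}w^{-1}B\mid B}$ corresponding to the action of $wv$; here one uses $P_{J,v^{-1}B} = {}^{\dot v^{-1}}P_{vJ,B}$ and $P_{J,(wv)^{-1}B} = {}^{(\dot w\dot v)^{-1}}P_{wvJ,B}$, which make sense because $v\in W(J,\rho)$ fixes $J$ (so $vJ=J$) and $wvJ = wJ\subset B$. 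After conjugating by the appropriate $\lambda(\dot{\cdot}^{-1})$ maps, the desired identity becomes equivalent to an identity of the form $\theta_{\dot v\rho,\dot w}\circ\theta_{\rho,\dot v} = c(w,v)\cdot\theta_{\rho,\dot w\dot v}$ up to the discrepancy between the lifts $\dot w\dot v$ and $(\dot w\dot v)\dot{}$ and an application of \cite[Lemma~6.3~(a)]{MR1235019}, which only contributes a nonzero scalar (an intertwining constant of $\rho$) — this is harmless since we only claim existence of $c(w,v)\in\mathbb{C}^\times$.

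The key input is the hypothesis $N(v^{-1})\cap N(w)\cap\Gamma(J,\rho)=\emptyset$, which is exactly the combinatorial condition guaranteeing that the composition of the two intertwining operators is, up to a nonzero scalar, the single intertwining operator attached to the product. Concretely, I would use the length/decomposition additivity: one writes $v$ as a standard expression in terms of the generators $v[a,J]$ (as in \cite[2.5]{MR1235019}), and pushes $w$ past them one at a time, invoking \cite[Lemma~7.4]{MR1235019} (the rank-one case: $\theta_{\dot w\rho,\dot v_0}\circ\theta_{\rho,\dot w}$ is a nonzero multiple of $\theta_{\rho,\dot v_0\dot w}$ provided the relevant affine root avoiding $N(w^{-1})\cap w\Gamma$), together with \cite[Proposition~5.10]{MR1235019} for the decomposition of $\theta_{\rho,\dot v}$ into the $\theta$'s of the $v[a,J]$. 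This is precisely the inductive argument carried out in the commented-out ``Lemma~\ref{lemmavariantofmorris}'' in the excerpt, whose conclusion
\[
\theta_{\dot w\rho,\dot v}\circ\theta_{\rho,\dot w} = \{(\ind v)^{-1}(\ind w)^{-1}(\ind vw)\}^{1/2}\,\theta_{\rho,\dot v\dot w}
\]
gives exactly the scalar $c(w,v)$ after translating back through \eqref{intertwiningoperatorcomparisonmorris} and accounting for the normalization constants hidden in $\ind$.

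The main obstacle I anticipate is bookkeeping the lifts and the normalization constants: the operators $\theta_{B_2\mid B_1}$ are defined intrinsically in terms of alcoves, whereas $\theta_{\rho,\dot w}$ depends on the chosen lift $\dot w\in N_G(S)(F)$, and composing them produces correction factors coming from $\dot w\dot v \ne (\dot w\dot v)\dot{}$ and from the volume normalizations of the various $U_{J,B}$'s. Since the statement only asserts existence of $c(w,v)\in\mathbb{C}^\times$ (not an explicit value), all of these corrections can be absorbed, so the obstacle is one of careful organization rather than a genuine difficulty; the substantive content is entirely the Morris-type composition formula, whose hypothesis is the stated intersection condition. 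I would also need to check that the alcoves $v^{-1}C$ and $v^{-1}w^{-1}C$ genuinely contain $J$ in their closures so that the operators $\theta_{\cdot\mid\cdot}$ are defined, but this is immediate since $v$ and $wv$ fix $J$ and send $J$ into $B$.
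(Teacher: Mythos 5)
Your high-level reduction is correct and matches the paper: using \eqref{intertwiningoperatorcomparisonmorris}, one rewrites all three operators so that the left-hand side becomes $\lambda(\dot v^{-1}\dot w^{-1}) \circ \theta_{\dot v \rho, \dot w} \circ \lambda(\dot v) \circ \lambda(\dot v^{-1}) \circ \theta_{\rho, \dot v}$; the middle $\lambda$'s cancel, so the claim reduces to showing that $\theta_{\dot v \rho, \dot w} \circ \theta_{\rho, \dot v}$ is a nonzero scalar multiple of $\theta_{\rho, \dot w \dot v}$. That is exactly how the paper's proof begins.

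However, the concrete argument you describe for that last step is for the wrong composition. The inductive scheme you sketch (decompose $v$ into generators $v[a,J]$ via a standard expression, push $\dot w$ through each factor using \cite[Lemma~7.4]{MR1235019}) is the proof of the \emph{next} lemma, Lemma~\ref{lemmavariantofmorris}, whose conclusion is about $\theta_{\dot w \rho, \dot v} \circ \theta_{\rho, \dot w}$ — i.e.\ with $w$ acting first, then $v$ — under the hypothesis $N(v) \cap N(w^{-1}) \cap w\Gamma(J,\rho) = \emptyset$. You even write out that conclusion as the formula you intend to invoke. But after your own reduction, what is needed is the opposite order, $\theta_{\dot v \rho, \dot w} \circ \theta_{\rho, \dot v}$, and the stated hypothesis $N(v^{-1}) \cap N(w) \cap \Gamma(J,\rho) = \emptyset$ is precisely the hypothesis of Morris's \cite[Lemma~7.5]{MR1235019} for \emph{that} order. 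No induction is needed here; the paper simply cites Lemma~7.5 directly after the translation. The inductive argument is required only for Lemma~\ref{lemmavariantofmorris}, precisely because Morris does not supply the composition formula in that order, and it could not be substituted here since the two hypotheses are different. (Your concern about lift discrepancies and \cite[Lemma~6.3~(a)]{MR1235019} is also unnecessary: the identity $\theta_{w^{-1}B \mid B} = \lambda(\dot w^{-1}) \circ \theta_{\rho, \dot w}$ is independent of the chosen lift, so one may take $\dot w\dot v$ as a lift of $wv$ with no correction.)
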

\begin{proof}
According to \eqref{intertwiningoperatorcomparisonmorris}, we have
\[
\begin{cases}
\theta_{v^{-1}B \mid B} &= \lambda(\dot{v}^{-1}) \circ \theta_{\rho, \dot{v}}, \\
\theta_{v^{-1}w^{-1}B \mid B} &= \lambda(\dot{v}^{-1}\dot{w}^{-1}) \circ \theta_{\rho, \dot{w}\dot{v}}.
\end{cases}
\]
Moreover, the definition of $\theta_{v^{-1}w^{-1}B \mid v^{-1}B}$ implies that
\[
\theta_{v^{-1}w^{-1}B \mid v^{-1}B} = \lambda(\dot{v}^{-1}\dot{w}^{-1}) \circ \theta_{\dot{v}\rho, \dot{w}} \circ \lambda(\dot{v}),
\]
where 
\[
\lambda(\dot{v}) \colon \ind_{P_{J, v^{-1}B}}^{G(F)} (\rho) \rightarrow \ind_{P_{vJ, B}}^{G(F)} (^{\dot{v}}\!\rho) 
\]
denotes the map defined as 
$
f \mapsto [g \mapsto f(\dot{v}^{-1}g)]
$,
\[
\theta_{\dot{v}\rho, \dot{w}} \colon \ind_{P_{vJ, B}}^{G(F)} (^{\dot{v}}\!\rho) \rightarrow \ind_{P_{wvJ, B}}^{G(F)} (^{\dot{w} \dot{v}}\!\rho) 
\]
denotes the map defined in \cite[Subsection~5.3]{MR1235019}, and
\[
\lambda(\dot{v}^{-1}\dot{w}^{-1}) \colon \ind_{P_{wvJ, B}}^{G(F)} (^{\dot{w} \dot{v}}\!\rho) \rightarrow \ind_{P_{J, v^{-1} w^{-1}B}}^{G(F)} (\rho) 
\]
denotes the map defined as
\[
f \mapsto [g \mapsto f(\dot{w}\dot{v} g)].
\]
Then, the claim follows from \cite[Lemma~7.5]{MR1235019}.
\end{proof}
We also have a variant of Lemma~\ref{essentiallymorrislemma7.5}.
\begin{lemma}
\label{lemmavariantofmorris}
Let $w \in W$ such that $wJ \subset B$, and $v \in W$ such that $vwJ \subset B$.
Suppose that
\[
N(v) \cap N(w^{-1}) \cap w \Gamma(J, \rho) = \emptyset.
\]
Then, there exists $c'(v, w) \in \mathbb{C}^{\times}$ such that
\[
\theta_{w^{-1}v^{-1}B \mid w^{-1}B} 
\circ
\theta_{w^{-1}B \mid B}
=
c'(v, w) \cdot \theta_{w^{-1}v^{-1}B \mid B}.\index{$c'(v, w)$}
\]
\end{lemma}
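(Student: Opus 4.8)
\textbf{Proof proposal for Lemma~\ref{lemmavariantofmorris}.}

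The plan is to run an induction on $l(v)$, mirroring the structure of the commented-out proof of the analogous statement about the $\theta_{\rho,\dot w}$-operators earlier in Section~\ref{The case of depth-zero types}, but phrased throughout in terms of the intertwining operators $\theta_{B_2\mid B_1}$ rather than the $\theta_{\rho,\dot w}$. First I would record the base case $l(v)=0$: here $v=1$, so $vwJ=wJ\subset B$ forces $l(vw)=l(w)+l(v)$ trivially, and the composition $\theta_{w^{-1}v^{-1}B\mid w^{-1}B}\circ\theta_{w^{-1}B\mid B}$ collapses to $\theta_{w^{-1}B\mid B}$ up to a positive volume constant coming from the normalization of the Haar measures on the various $U_{J,B'}$; this constant is the $c'(v,w)$ we want. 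More generally, whenever $l(vw)=l(v)+l(w)$ the two radicals $U_{J,w^{-1}v^{-1}B}$ and $U_{J,w^{-1}B}$ are nested correctly and transitivity of integration over unipotent radicals gives the identity directly with $c'(v,w)$ a measure constant; this is essentially the content of \cite[Lemma~7.5]{MR1235019} translated through \eqref{intertwiningoperatorcomparisonmorris}.

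For the inductive step $l(v)>0$ I would, following \cite[1.6~(b)]{MR1235019}, pick $a\in N(v)\cap B$, set $v_0=v[a,wJ]$ and $v'=vv_0^{-1}$, so that $l(v)=l(v')+l(v_0)$, equivalently $N(v)=N(v_0)\cup v_0^{-1}N(v')$. Then $\theta_{w^{-1}v^{-1}B\mid w^{-1}B}$ factors as $\theta_{w^{-1}v^{-1}B\mid w^{-1}v_0^{-1}B}\circ\theta_{w^{-1}v_0^{-1}B\mid w^{-1}B}$ up to a measure constant (this factorization needs $l(v)=l(v')+l(v_0)$, which holds). The hypothesis $N(v)\cap N(w^{-1})\cap w\Gamma(J,\rho)=\emptyset$ restricts, since $N(v_0)\subset N(v)$, to $N(v_0)\cap N(w^{-1})\cap w\Gamma(J,\rho)=\emptyset$; because $v_0$ has length one (it is a single reflection $v[a,wJ]$), this case is handled by a direct computation analogous to \cite[Lemma~7.4]{MR1235019}, giving $\theta_{w^{-1}v_0^{-1}B\mid w^{-1}B}\circ\theta_{w^{-1}B\mid B}=c_0\cdot\theta_{w^{-1}v_0^{-1}B\mid B}$. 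To apply the induction hypothesis to $v'$ and the new ``$w$'' equal to $w'=v_0 w$, I must verify $N(v')\cap N((w')^{-1})\cap w'\Gamma(J,\rho)=\emptyset$; using $N((w')^{-1})\subset N(v_0^{-1})\cup v_0 N(w^{-1})$ this splits into $N(v')\cap N(v_0^{-1})=\emptyset$ (immediate from $N(v)=N(v_0)\cup v_0^{-1}N(v')$ by the same contradiction argument as in the commented-out proof) and $N(v')\cap v_0 N(w^{-1})\cap v_0 w\Gamma(J,\rho)=\emptyset$ (obtained by applying $v_0$ to $v_0^{-1}N(v')\cap N(w^{-1})\cap w\Gamma(J,\rho)=\emptyset$, which follows from the original hypothesis and $N(v)=N(v_0)\cup v_0^{-1}N(v')$). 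Then the induction hypothesis gives $\theta_{w'^{-1}v'^{-1}B\mid w'^{-1}B}\circ\theta_{w'^{-1}B\mid B}=c'(v',w')\cdot\theta_{w'^{-1}v'^{-1}B\mid B}$; composing everything and using $w'^{-1}v'^{-1}B=w^{-1}v_0^{-1}v'^{-1}B=w^{-1}v^{-1}B$ together with the length additivities to collect the measure constants yields the claim, with $c'(v,w)$ the product of the constants produced along the way.

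The main obstacle I anticipate is bookkeeping rather than conceptual: carefully tracking how each factorization $\theta_{B_3\mid B_1}=\text{const}\cdot\theta_{B_3\mid B_2}\circ\theta_{B_2\mid B_1}$ requires a length-additivity condition (so that the intermediate radicals sit inside one another in the correct direction), and making sure the various $N(\cdot)$-intersection hypotheses propagate correctly to the reduced data $(v',w')$. The translation dictionary \eqref{intertwiningoperatorcomparisonmorris} between $\theta_{B_2\mid B_1}$ and $\lambda(\dot w^{-1})\circ\theta_{\rho,\dot w}$, combined with \cite[Lemma~6.3]{MR1235019} to absorb discrepancies between chosen lifts $\dot v,\dot w$ and $\dot v\dot w$, will be the technical heart; once that is in place the argument is a faithful rerun of the known proof for the $\theta_{\rho,\dot w}$-operators. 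Since we only claim the constant $c'(v,w)$ exists in $\mathbb{C}^\times$ (we do not need its value), no delicate residue or normalization computation is required here, unlike in Lemma~\ref{lemmareductionintertwiningsolleveldside}.
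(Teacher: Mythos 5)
Your proposal follows essentially the same route as the paper: first translating $\theta_{B_2\mid B_1}$ into $\theta_{\rho,\dot w}$ via \eqref{intertwiningoperatorcomparisonmorris}, then inducting on $l(v)$ with the factor $v_0=v[a,wJ]$ and $v'=vv_0^{-1}$, invoking \cite[Proposition~5.10]{MR1235019} and \cite[Lemma~7.4]{MR1235019} for the one-block step, propagating the $N(\cdot)$-intersection hypothesis to $(v',v_0w)$ exactly as in the paper, and using \cite[Lemma~6.3]{MR1235019} to absorb lift discrepancies. One minor imprecision: $v_0=v[a,wJ]$ is a single block-reflection but generally has $l(v_0)>1$ in $W_{\aff}$, so the justification for that step really is \cite[Lemma~7.4]{MR1235019} (which you do cite), not a ``length-one'' computation; similarly the factorizations do not generate extra Haar-measure constants once the $\theta_{B_2\mid B_1}$ are normalized, so $c'(v,w)$ is just the product $c'(v_0,w)\cdot c'(v',v_0w)$.
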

\begin{proof}
The same argument as the proof of Lemma~\ref{essentiallymorrislemma7.5} implies that
\[
\theta_{w^{-1}B \mid B} = \lambda(\dot{w}^{-1}) \circ \theta_{\rho, \dot{w}},
\]
\[
\theta_{w^{-1}v^{-1}B \mid w^{-1}B} = \lambda(\dot{w}^{-1}\dot{v}^{-1}) \circ \theta_{\dot{w}\rho, \dot{v}} \circ \lambda(\dot{w}),
\]
and 
\[
\theta_{w^{-1}v^{-1}B \mid B} = \lambda(\dot{w}^{-1}\dot{v}^{-1}) \circ \theta_{\rho, \dot{v}\dot{w}}.
\]
Hence, we can rewrite the claim as
\[
\theta_{\dot{w}\rho, \dot{v}} \circ \theta_{\rho, \dot{w}} = c'(v, w) \cdot \theta_{\rho, \dot{v}\dot{w}}.
\]
We use the induction on $l(v)$ to prove this.
If $l(v)=0$, then
\[
l(vw) = l(v) + l(w),
\]
and the claim follows from \cite[Proposition~5.10]{MR1235019}.

Suppose that $l(v)>0$.
According to \cite[1.6 (b)]{MR1235019}, we can take an element $a \in N(v) \cap B$.
We write $v_{0} = v[a, wJ]$ and $v' = v v_{0}^{-1}$.
Then, according to \cite[Lemma~2.5 (a)]{MR1235019}, 
\[
l(v) = l(v') + l(v_{0}),
\]
that is equivalent to
\[
N(v) = N(v_{0}) \cup v_{0}^{-1} N(v').
\]
According to \cite[Proposition~5.10]{MR1235019}, we have
\begin{align}
\label{decompositionofv}
\theta_{\dot{v_{0}} \dot{w} \rho, \dot{v'}} \circ \theta_{\dot{w} \rho, \dot{v_{0}}} = \theta_{\dot{w}\rho, \dot{v}}.
\end{align}
Since
\[
N(v_{0}) \subset N(v),
\]
the assumption implies
\[
N(v_{0}) \cap N(w^{-1}) \cap w \Gamma(J, \rho) = \emptyset.
\]
According to \cite[Lemma~2.4]{MR1235019}, $a \in N(v_{0})$, hence we have
\[
a \not \in N(w^{-1}) \cap w \Gamma(J, \rho).
\]
Then, according to \cite[Lemma~7.4]{MR1235019}, there exists $c'(v_{0}, w) \in \mathbb{C}^{\times}$ such that
\begin{align}
\label{lemma7.4ofmorris}
\theta_{\dot{w}\rho, \dot{v_{0}}} \circ \theta_{\rho, \dot{w}} = c'(v_{0}, w) \cdot \theta_{\rho, \dot{v_{0}} \dot{w}}.
\end{align}
Combining \eqref{decompositionofv} with \eqref{lemma7.4ofmorris}, we obtain 
\begin{align}
\label{decompositionofv+lemma7.4ofmorris}
\theta_{\dot{w} \rho,\dot{v}} \circ \theta_{\rho, \dot{w}} &=
\theta_{\dot{v_{0}} \dot{w} \rho, \dot{v'}} \circ \theta_{\dot{w}\rho, \dot{v_{0}}} \circ \theta_{\rho, \dot{w}} \\
&= c'(v_{0}, w) \cdot 
\theta_{\dot{v_{0}} \dot{w} \rho, \dot{v'}} \circ \theta_{\rho, \dot{v_{0}} \dot{w}}. \notag
\end{align}
To use the induction hypothesis, we will prove that
\[
N(v') \cap N(w^{-1}v_{0}^{-1}) \cap v_{0} w \Gamma(J, \rho) = \emptyset.
\]
Since, 
\[
N(w^{-1}v_{0}^{-1}) \subset N(v_{0}^{-1}) \cup v_{0}N(w^{-1}), 
\]
it suffices to prove 
\[
N(v') \cap N(v_{0}^{-1}) \cap v_{0} w \Gamma(J, \rho) = \emptyset
\]
and
\[
N(v') \cap v_{0}N(w^{-1}) \cap v_{0} w \Gamma(J, \rho) = \emptyset.
\]
Let $b \in N(v') \cap N(v_{0}^{-1})$.
Then, 
\[
-v_{0}^{-1}(b) > 0, \ v_{0}(-v_{0}^{-1}b) = -b < 0, \ , v(-v_{0}^{-1}(b)) = -v'(b) >0,
\]
hence
\[
-v_{0}^{-1}(b) \in N(v_{0}) \backslash N(v).
\]
However, since 
\[
N(v) = N(v_{0}) \cup v_{0}^{-1} N(v'),
\]
$N(v_{0})$ is contained in $N(v)$, a contradiction.
Thus, we conclude that 
\[
N(v') \cap N(v_{0}^{-1}) \cap v_{0} w \Gamma(J, \rho) \subset N(v') \cap N(v_{0}^{-1}) = \emptyset.
\]
Next, we will prove 
\[
N(v') \cap v_{0}N(w^{-1}) \cap v_{0} w \Gamma(J, \rho) = \emptyset.
\]
Since 
\[
v_{0}^{-1} N(v') \subset N(v_{0}) \cup v_{0}^{-1} N(v') = N(v),
\]
and we are assuming that
\[
N(v) \cap N(w^{-1}) \cap w \Gamma(J, \rho) = \emptyset,
\]
we obtain
\[
v_{0}^{-1} N(v') \cap N(w^{-1}) \cap w \Gamma(J, \rho) = \emptyset.
\]
Hence, we obtain
\[
N(v') \cap v_{0}N(w^{-1}) \cap v_{0} w \Gamma(J, \rho)
= v_{0}\left(
v_{0}^{-1} N(v') \cap N(w^{-1}) \cap w \Gamma(J, \rho) 
\right) = \emptyset.
\]

We write $w' = v_{0} w$.
Then, the induction hypothesis implies that there exists $c'(v', w') \in \mathbb{C}^{\times}$ such that
\begin{align}
\label{calculationthetainductionhypothesis}
\theta_{\dot{w'} \rho, \dot{v'}} \circ \theta_{\rho, \dot{w'}} =
c'(v', w') \cdot
\theta_{\rho, \dot{v'} \dot{w'}}.
\end{align}
Here, we note that the lift $\dot{w}'$ of $w'$ is not necessarily equal to the product of the lift $\dot{v_{0}}$ of $v_{0}$ and the lift $\dot{w}$ of $w$.
We write
\[
t= (\dot{w}')^{-1} \dot{v_{0}}\dot{w}.
\]
According to \cite[Lemma~6.3 (a)]{MR1235019}, we have
\[
\theta_{\rho, \dot{v_{0}}\dot{w}} = \theta_{\rho, \dot{w}' t} = \rho(t^{-1}) \circ \theta_{\rho, \dot{w}'},
\]
and according to \cite[Lemma~6.3 (c)]{MR1235019}, we have
\begin{align*}
\theta_{\dot{v_0} \dot{w}\rho, \dot{v}'} &= \theta_{\dot{w}' t \rho, \dot{v}'}\\ 
&= \theta_{(\dot{w}' t (\dot{w}')^{-1}) \dot{w}' \rho, \dot{v}'}\\
&= (\dot{w}' \rho)(\dot{w}' t^{-1} (\dot{w}')^{-1}) \circ \theta_{\dot{w}' \rho, \dot{v}'} \circ (\dot{w}' \rho)(\dot{w}' t (\dot{w}')^{-1})\\
&= \rho(t^{-1}) \circ \theta_{\dot{w}' \rho, \dot{v}'} \circ \rho(t).
\end{align*}
Hence, 
\begin{align*}
\theta_{\dot{v_0} \dot{w}\rho, \dot{v}'} \circ \theta_{\rho, \dot{v_{0}}\dot{w}} &= \theta_{\dot{v_0} \dot{w}\rho, \dot{v}'} \circ \rho(t^{-1}) \circ \theta_{\rho, \dot{w}'}\\
&= \rho(t^{-1}) \circ \theta_{\dot{w}' \rho, \dot{v}'} \circ \theta_{\rho, \dot{w}'}.
\end{align*}
Combining it with \eqref{calculationthetainductionhypothesis} and using \cite[Lemma~6.3 (a)]{MR1235019} again, we obtain
\begin{align}
\label{inductionhypothesis+Lemma6.3Morris}
\theta_{\dot{v_0} \dot{w}\rho, \dot{v}'} \circ \theta_{\rho, \dot{v_{0}}\dot{w}}
&= 
\rho(t^{-1}) \circ \theta_{\dot{w}' \rho, \dot{v}'} \circ \theta_{\rho, \dot{w}'} \\
&= c'(v', w') \cdot
\rho(t^{-1}) \circ \theta_{\rho, \dot{v}'\dot{w}'} \notag \\
&= c'(v', w') \cdot
\rho(t^{-1}) \circ \theta_{\rho, \dot{v}' \dot{v_{0}}\dot{w} t^{-1}} \notag \\
&= c'(v', w') \cdot
\theta_{\rho, \dot{v}' \dot{v_{0}}\dot{w}} \notag \\
&= c'(v', w') \cdot
\theta_{\rho, \dot{v} \dot{w}}. \notag
\end{align}
For the last eauality, we used the fact
\[
\dot{v} = \dot{v}' \dot{v_{0}},
\]
that follows from
\[
l(v) = l(v') + l(v_{0})
\]
and our choices of lifts (see \cite[Proposition~5.2]{MR1235019}).
Now, combining \eqref{decompositionofv+lemma7.4ofmorris} with \eqref{inductionhypothesis+Lemma6.3Morris}, we obtain
\[
\theta_{\dot{w} \rho, \dot{v}} \circ \theta_{\rho, \dot{w}} = c'(v_{0}, w) \cdot
 c'(v', w') \cdot \theta_{\rho, \dot{v}\dot{w}}.
\]
\end{proof}
\begin{corollary}
\label{corollarythetaw-1BBisom}
Let $w \in W$ such that $wJ \subset B$ and
\[
N(w) \cap \Gamma(J, \rho) = N(w) \cap -\Gamma(J, \rho)= \emptyset.
\]
Then, the map
\[
\theta_{w^{-1}B \mid B} \colon \ind_{P_{J, B}}^{G(F)}(\rho) \rightarrow \ind_{P_{J, w^{-1}B}}^{G(F)}(\rho).
\]
is an isomorphism.
\end{corollary}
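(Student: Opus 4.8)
The plan is to deduce the isomorphism claim from the two "locality" lemmas just proved, namely Lemma~\ref{essentiallymorrislemma7.5} and Lemma~\ref{lemmavariantofmorris}. First I would reduce to showing that $\theta_{w^{-1}B \mid B}$ has a two-sided inverse, and the natural candidate is $\theta_{B \mid w^{-1}B}$ (the map going back, defined by integrating against $U_{J, B}$). So the task becomes: under the hypothesis $N(w) \cap \pm\Gamma(J, \rho) = \emptyset$, show that $\theta_{B \mid w^{-1}B} \circ \theta_{w^{-1}B \mid B}$ and $\theta_{w^{-1}B \mid B} \circ \theta_{B \mid w^{-1}B}$ are both nonzero scalar multiples of the identity (hence, after composing, genuine isomorphisms). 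Since a nonzero $G(F)$-equivariant endomorphism of $\ind_{P_{J,B}}^{G(F)}(\rho)$ need not be scalar in general, I would instead argue via the Hecke-algebra picture: each $\theta$ is, up to the $\lambda(\dot w)$ bookkeeping of \eqref{intertwiningoperatorcomparisonmorris}, built from the $\theta_{\rho,\dot w}$ of Morris, and compositions of these are controlled exactly by the two lemmas.

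The key steps, in order: (1) Write $w$ as a product $w = v[a_r, J_r]\cdots v[a_1, J_1]$ of "elementary" elements, chosen so that $N(w)$ is the disjoint union described in the excerpt; since $N(w)\cap \Gamma(J,\rho)=\emptyset$, none of the $a_i$ lies in $\Gamma(J,\rho)$, and similarly on the negative side. (2) Apply Lemma~\ref{lemmavariantofmorris} repeatedly to factor $\theta_{w^{-1}B\mid B}$ as a composition of the elementary maps $\theta_{\ldots \mid \ldots}$ attached to the $v[a_i, J_i]$, picking up only nonzero scalars $c'(\cdot,\cdot)$; the hypothesis on $N(w)$ guarantees the emptiness condition $N(v)\cap N(w^{-1})\cap w\Gamma(J,\rho)=\emptyset$ needed at each stage. (3) Do the same for the reverse map $\theta_{B\mid w^{-1}B}$, using $w^{-1}$ in place of $w$; note $N(w^{-1}) = -w N(w)$, so $N(w^{-1})\cap\pm\Gamma(J,\rho)=\emptyset$ follows from $N(w)\cap\pm\Gamma(J,\rho)=\emptyset$ together with $-\Gamma(J,\rho)\subset \Gamma(J,\rho)+A'_J$ (the affine-root-system symmetry, exactly as in the proof of \eqref{intersectionwithmgamma}). (4) For each elementary piece $\theta_{v[a,J]^{-1}B'\mid B'}$ with $a\notin\Gamma(J,\rho)$, the corresponding $p_a = 1$, so by Morris's relations $\theta_{\rho,\dot{v[a,J]}}$ and its reverse compose to the identity (up to a nonzero scalar); this is where $a\notin\Gamma(J,\rho)$ is essential. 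Assembling (2)--(4), both composites $\theta_{B\mid w^{-1}B}\circ\theta_{w^{-1}B\mid B}$ and $\theta_{w^{-1}B\mid B}\circ\theta_{B\mid w^{-1}B}$ are nonzero scalars on the respective spaces, which forces $\theta_{w^{-1}B\mid B}$ to be an isomorphism.

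I expect the main obstacle to be the bookkeeping in step (2)--(3): matching the emptiness hypothesis $N(w)\cap\pm\Gamma(J,\rho)=\emptyset$ with the precise hypothesis $N(v)\cap N(w^{-1})\cap w\Gamma(J,\rho)=\emptyset$ demanded by Lemma~\ref{lemmavariantofmorris} at each inductive step, and ensuring that the intermediate subsets $J_i$ (which change along the factorization) are the right ones so that the lemmas apply verbatim. A secondary subtlety is verifying that the scalars are genuinely nonzero rather than possibly $0$ — but this is immediate since each $\theta_{\rho,\dot w}$ and each $c'(\cdot,\cdot)$, $c(\cdot,\cdot)$ coming from \cite[Lemma~7.4, Lemma~7.5, Proposition~5.10]{MR1235019} is invertible by construction. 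Once the factorization is in place the argument is essentially formal, so the real content is organizing the reduction to elementary steps cleanly; I would model the write-up closely on the proof of Corollary~\ref{corollaryoflemmavariantofmorris} and the verification of \eqref{intersectionwithmgamma} already appearing in the excerpt.
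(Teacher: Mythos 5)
Your high-level strategy — produce a two-sided inverse by showing both composites $\theta_{B\mid w^{-1}B}\circ\theta_{w^{-1}B\mid B}$ and $\theta_{w^{-1}B\mid B}\circ\theta_{B\mid w^{-1}B}$ are nonzero scalar multiples of the respective identities — is exactly the paper's, and your preliminary reduction (step (3), using $N(w^{-1})\cap w\Gamma(J,\rho) = -w\left(N(w)\cap -\Gamma(J,\rho)\right)$) is precisely the first displayed manipulation in the paper's proof. Where you diverge is in \emph{implementation}: you propose to re-factor $w$ into elementary moves $v[a_i, J_i]$ and apply the lemmas one step at a time. The paper avoids this entirely by invoking Lemma~\ref{lemmavariantofmorris} once, with the whole element $v = w^{-1}$: the hypothesis $N(w)\cap -\Gamma(J,\rho)=\emptyset$ is exactly what makes $N(w^{-1})\cap w\Gamma(J,\rho)=\emptyset$, which is the emptiness condition Lemma~\ref{lemmavariantofmorris} needs, yielding $\theta_{B\mid w^{-1}B}\circ\theta_{w^{-1}B\mid B}=c'(w^{-1},w)\cdot\id$ in one line; the other composite follows symmetrically by applying the lemma with $(B,w)$ replaced by $(w^{-1}B, w^{-1})$ and using $N(w)\cap\Gamma(J,\rho)=\emptyset$. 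The elementary decomposition you describe is already the internal mechanism of the proof of Lemma~\ref{lemmavariantofmorris} (its induction on $l(v)$), so redoing it at the corollary level is duplicated work, though not wrong.

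One substantive inaccuracy: step (4) asserts that for each elementary piece with $a\notin\Gamma(J,\rho)$ ``the corresponding $p_a = 1$.'' This is not the right reason, and in general not even a well-posed statement: $p_a$ is defined only when $v[a,J]\in W(J,\rho)$, and for the $a_i$ appearing in a factorization of an arbitrary $w$ there is no reason for that to hold. The actual mechanism is \cite[Lemma~7.4]{MR1235019} (invoked inside the proof of Lemma~\ref{lemmavariantofmorris}): when $a\notin N(w^{-1})\cap w\Gamma(J,\rho)$, the composition $\theta_{\dot w\rho,\dot{v_0}}\circ\theta_{\rho,\dot w}$ is a nonzero scalar multiple of $\theta_{\rho,\dot{v_0}\dot w}$, with no second term from the quadratic relation. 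So the role of the hypothesis $N(w)\cap\pm\Gamma(J,\rho)=\emptyset$ is to guarantee this emptiness at each step, not to force $p_a=1$. If you carry out your factorized argument you should phrase step (4) in those terms; as written it would not survive scrutiny. With that correction, and with care about the intermediate $J_i$'s and the change of positivity when passing to $w^{-1}B$ (the set $N(\cdot)$ is defined relative to the chosen basis), your route does reach the same conclusion, just less economically than the paper's.
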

\begin{proof}
Since
\[
N(w) \cap - \Gamma(J, \rho) = \emptyset,
\]
we obtain
\[
N(w^{-1}) \cap w \Gamma(J, \rho) =
-w \left(
N(w) \cap -\Gamma(J, \rho)
\right)
= \emptyset.
\]
Then, substituting $v = w^{-1}$ in Lemma~\ref{lemmavariantofmorris}, we obtain
\begin{align}
\label{leftinverseexists}
\theta_{B \mid w^{-1}B} \circ \theta_{w^{-1}B \mid B} = c'(w^{-1}, w) \cdot \theta_{B \mid B}
= c'(w^{-1}, w) \cdot \id_{\ind_{P_{J, B}}^{G(F)}(\rho) }
\end{align}
On the other hand, replacing $B$ with $w^{-1}B$ and $w$ with $w^{-1}$ in Lemma~\ref{lemmavariantofmorris}, we obtain
\[
\theta_{wv^{-1}w^{-1} B \mid B} \circ \theta_{B \mid w^{-1} B} \in \mathbb{C}^{\times} \cdot \theta_{wv^{-1} w^{-1} B \mid B}
\]
for all $v \in W$ such that $vw^{-1}J \subset w^{-1}B$ and
\[
w^{-1}N(v) \cap w^{-1}N(w) \cap w^{-1} \Gamma(J, \rho).
\]
Since 
\[
N(w) \cap \Gamma(J, \rho) = \emptyset,
\]
we can take $v = w$.
Then, we obtain
\begin{align}
\label{rightinverseexists}
\theta_{w^{-1} B \mid B} \circ \theta_{B \mid w^{-1} B} = c \cdot \theta_{w^{-1}B \mid w^{-1}B} =c \cdot \id_{\ind_{P_{J, w^{-1}B}}^{G(F)}(\rho) }
\end{align}
for some $c \in \mathbb{C}^{\times}$.
Combining \eqref{leftinverseexists} with \eqref{rightinverseexists}, we obtain the corollary.
\end{proof}
\begin{corollary}
\label{corollaryoflemmavariantofmorris}
Let $w \in W$ such that $wJ \subset B$ and
\[
N(w) \cap \Gamma(J, \rho) = N(w) \cap -\Gamma(J, \rho)= \emptyset.
\]
Then, for any $v \in W(J, \rho)$, there exists $c''(w, v) \in \mathbb{C}^{\times}$ such that 
The following diagram commutes:
\[
\xymatrix@R+2pc@C+3pc{
\ind_{P_{J, B}}^{G(F)}(\rho) \ar[d]_-{\theta_{v^{-1}B \mid B}} \ar[r]^-{\theta_{w^{-1}B \mid B}} \ar@{}[dr]|\circlearrowleft & \ind_{P_{J, w^{-1}B}}^{G(F)}(\rho) \ar[d]^-{c''(w, v) \cdot \theta_{v^{-1}w^{-1} B \mid w^{-1}B}} \\
\ind_{P_{J, v^{-1}B}}^{G(F)} (\rho) \ar[r]^-{\theta_{v^{-1}w^{-1} B \mid v^{-1}B}} & \ind_{P_{J, v^{-1} w^{-1}B }}^{G(F)} (\rho).
}\index{$c''(w, v)$}
\]
\end{corollary}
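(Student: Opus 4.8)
The plan is to deduce the commutativity of the square from the two composition formulas already established, namely Lemma~\ref{essentiallymorrislemma7.5} and Lemma~\ref{lemmavariantofmorris}, each of which rewrites a two-step operator $\theta_{\,\cdot\,\mid\,\cdot\,}\circ\theta_{\,\cdot\,\mid\,\cdot\,}$ as a nonzero scalar times a single operator of the form $\theta_{\,\cdot\,\mid B}$. First I would record that every object in the square is well defined: since $v\in W(J,\rho)$ we have $vJ=J$, hence $J\subset v^{-1}B$; since $wJ\subset B$ we have $J\subset w^{-1}B$; and for $x\in J$ we get $vx\in J$ and therefore $wvx\in wJ\subset B$, so $J\subset v^{-1}w^{-1}B$ as well. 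Thus all four operators $\theta_{v^{-1}B\mid B}$, $\theta_{w^{-1}B\mid B}$, $\theta_{v^{-1}w^{-1}B\mid v^{-1}B}$, $\theta_{v^{-1}w^{-1}B\mid w^{-1}B}$ make sense.

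Next I would apply Lemma~\ref{essentiallymorrislemma7.5} to the pair $(w,v)$. Its hypothesis $N(v^{-1})\cap N(w)\cap\Gamma(J,\rho)=\emptyset$ is a consequence of the stronger hypothesis $N(w)\cap\Gamma(J,\rho)=\emptyset$ of the corollary, so there is $c(w,v)\in\mathbb{C}^{\times}$ with
\[
\theta_{v^{-1}w^{-1}B\mid v^{-1}B}\circ\theta_{v^{-1}B\mid B}=c(w,v)\cdot\theta_{v^{-1}w^{-1}B\mid B}.
\]
Then I would put $v'=wvw^{-1}$ and apply Lemma~\ref{lemmavariantofmorris} with the roles of ``$w$'' and ``$v$'' played by $w$ and $v'$. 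One checks $v'wJ=wvw^{-1}wJ=wvJ=wJ\subset B$ and $w^{-1}(v')^{-1}B=v^{-1}w^{-1}B$, so the lemma yields $c'(v',w)\in\mathbb{C}^{\times}$ with
\[
\theta_{v^{-1}w^{-1}B\mid w^{-1}B}\circ\theta_{w^{-1}B\mid B}=c'(v',w)\cdot\theta_{v^{-1}w^{-1}B\mid B}.
\]
Comparing the two displays and setting $c''(w,v)=c(w,v)\,c'(v',w)^{-1}\in\mathbb{C}^{\times}$ gives exactly $c''(w,v)\cdot\theta_{v^{-1}w^{-1}B\mid w^{-1}B}\circ\theta_{w^{-1}B\mid B}=\theta_{v^{-1}w^{-1}B\mid v^{-1}B}\circ\theta_{v^{-1}B\mid B}$, which is the asserted commutativity.

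The only delicate point, and the step I expect to need the most care, is verifying the hypothesis of Lemma~\ref{lemmavariantofmorris} for $v'$, i.e.\ that $N(v')\cap N(w^{-1})\cap w\Gamma(J,\rho)=\emptyset$; for this I would use the remaining half of the corollary's hypothesis, $N(w)\cap(-\Gamma(J,\rho))=\emptyset$. Concretely, if $a\in N(w^{-1})\cap w\Gamma(J,\rho)$, then $-w^{-1}a\in\Phi_{\aff, \red}^{+}$, while $w(-w^{-1}a)=-a\in-\Phi_{\aff, \red}^{+}$, and $-w^{-1}a\in-\Gamma(J,\rho)$; hence $-w^{-1}a\in N(w)\cap(-\Gamma(J,\rho))$, a contradiction. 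Therefore $N(w^{-1})\cap w\Gamma(J,\rho)=\emptyset$ already, and the triple intersection with $N(v')$ is a fortiori empty. This is the analogue, in the language of the operators $\theta_{B_2\mid B_1}$, of the identity $N(w^{-1})\cap w\Gamma=-w\bigl(N(w)\cap(-\Gamma)\bigr)$ used in the proof of the representation-theoretic form of this corollary; everything else is routine bookkeeping with bases and the two cited lemmas.
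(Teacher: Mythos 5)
Your proof is correct and follows essentially the same route as the paper: apply Lemma~\ref{essentiallymorrislemma7.5} to $(w,v)$, apply Lemma~\ref{lemmavariantofmorris} to $(w,v')$ with $v'=wvw^{-1}$ after checking that $N(w)\cap-\Gamma(J,\rho)=\emptyset$ forces $N(w^{-1})\cap w\Gamma(J,\rho)=\emptyset$, and then set $c''(w,v)=c(w,v)\,c'(v',w)^{-1}$. The only additions are routine well-definedness checks; the substance is identical.
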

\begin{proof}
We write $v' = wvw^{-1}$.
Since 
\[
N(w) \cap \Gamma(J, \rho) = \emptyset,
\]
Lemma~\ref{essentiallymorrislemma7.5} implies that
\[
\theta_{v^{-1}w^{-1}B \mid v^{-1}B}
\circ
\theta_{v^{-1}B \mid B}
=
c(w, v) \cdot \theta_{v^{-1}w^{-1}B \mid B}.
\]
On the other hand, the assumption
\[
N(w) \cap - \Gamma(J, \rho) = \emptyset
\]
implies
\[
N(w^{-1}) \cap w \Gamma(J, \rho) =
-w \left(
N(w) \cap -\Gamma(J, \rho)
\right)
= \emptyset,
\]
hence Lemma~\ref{lemmavariantofmorris} implies that
\[
\theta_{w^{-1}(v')^{-1}B \mid w^{-1}B} 
\circ
\theta_{w^{-1}B \mid B}
=
c'(v', w) \cdot \theta_{w^{-1}(v')^{-1}B \mid B}.
\]
Since
\[
v' w = w v w^{-1} w = wv,
\]
we obtain the corollary for
\[
c''(w, v) = c(w, v) \cdot c'(v', w)^{-1}.
\]
\end{proof}
The following lemma claims that we can take $w \in W$ such that $a \in w^{-1}B$ and $w$ satisfies the assumption above: 
\begin{lemma}
\label{existenceofgoodw}
There exists $w \in W$ such that $w(J \cup \{a\}) \subset B$ and
\[
N(w) \cap \Gamma(J, \rho) = N(w) \cap - \Gamma(J, \rho) = \emptyset.
\]
\end{lemma}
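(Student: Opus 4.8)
The statement asserts the existence of $w \in W$ with $w(J \cup \{a\}) \subset B$ and $N(w) \cap \Gamma(J, \rho) = N(w) \cap -\Gamma(J, \rho) = \emptyset$. The plan is to combine the construction of $w$ from the proof of \cite[Proposition~7.6]{MR1235019} (which is sketched in the commented-out portion of Section~\ref{The case of depth-zero types}) with the invariance property $(N(w) + A'_{J}) \cap \Sigma = N(w)$ for $\Sigma := \Gamma(J, \rho) + A'_{J}$, where I write $\Sigma$ for the affine root system $\Gamma'(J, \rho)$ lifted back to $A'$. First I would recall that since $a \in \Gamma(J, \rho)$, the set $J \cup \{a\}$ is contained in a basis $B'$ of $\Phi_{\aff}$; by \cite[Theorem~2.11]{MR1235019}, there exists $w \in W$ with $w(J \cup \{a\}) \subset B$ and $l(v[a,J]) = l(wv[a,J]w^{-1}) + 2l(w)$, which forces $N(w) \subset N(v[a,J])$. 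Explicitly $w$ is a product $w = v[a_r, J_r] \cdots v[a_1, J_1]$ with $a_i \in B$, $J_1 = J \cup \{a\}$ — wait, here one should be slightly careful: I want $w$ to move $J \cup \{a\}$ into $B$, so I would apply \cite[Theorem~2.11]{MR1235019} to the subset $J \cup \{a\}$ rather than to $J$, obtaining $w$ with $w(J \cup \{a\}) \subset B$ and $N(w)$ contained in the set of positive affine roots sent negative by the longest element relative to $J \cup \{a\}$.

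The key steps would be: (1) Since $a + A'_{J} \in B(J, \rho)_e$, we have $N(v[a,J]) \cap \Gamma(J, \rho) = \{a\}$ (the element $v[a,J]$ is a simple reflection for $\Gamma'(J, \rho)$, so it sends exactly one positive $\Gamma$-root negative, namely $a$ itself). (2) From $wa \in B \subset \Phi_{\aff}^{+}$ we deduce $a \notin N(w)$, hence $N(w) \cap \Gamma(J, \rho) \subset N(w) \cap N(v[a,J]) \cap \Gamma(J, \rho) \subset \{a\} \setminus \{a\} = \emptyset$; here I use $N(w) \subset N(v[a,J])$, and the fact that $N(w) \cap \Gamma(J, \rho)$ is contained in $N(v[a,J]) \cap \Gamma(J, \rho)$ because $\Gamma(J,\rho)$-roots sent negative by $w$ are also sent negative by $v[a,J]$ once we know $N(w) \subset N(v[a,J])$ — actually this needs the relation more carefully, and I would reuse precisely the argument around equation \eqref{intersectionwithmgamma} in Section~\ref{The case of depth-zero types} (with $J$ there replaced by $J \cup \{a\}$, noting $A'_J \subset A'_{J \cup \{a\}}$). (3) For the other equality $N(w) \cap -\Gamma(J, \rho) = \emptyset$: since $\Gamma'(J, \rho)$ is an affine root system, $\Gamma'(J, \rho) = -\Gamma'(J, \rho)$, so $-\Gamma(J, \rho) \subset \Gamma(J, \rho) + A'_{J}$, and it suffices to prove the invariance $(N(w) + A'_{J}) \cap \Sigma = N(w)$, which follows by induction on the length of the standard expression of $w$ exactly as in the displayed induction $(N(w) + A'_J) \cap \Sigma = N(w)$ carried out after \eqref{intersectionwithmgamma} — using \cite[Lemma~2.4]{MR1235019} for the base case $v[a_1, J_1]$ and the compatibility $J_{i+1} = v[a_i, J_i] J_i$ to propagate. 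Combining (2) and (3) gives $N(w) \cap (\Gamma(J,\rho) \cup -\Gamma(J,\rho)) = \emptyset$.

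The main obstacle I anticipate is bookkeeping about which ambient subset one projects modulo: the argument in Section~\ref{The case of depth-zero types} is written for a $w$ with $wJ \subset B$ and uses $N(w) \subset N(v[a,J])$ together with $N(v) \cap \Gamma(J,\rho) = \{a\}$, but here I need $w(J \cup \{a\}) \subset B$, so I must re-run the length-additivity input \cite[Theorem~2.11]{MR1235019} (or \cite[Lemma~2.5]{MR1235019} plus \cite[Lemma~2.4]{MR1235019}) with $J \cup \{a\}$ in place of $J$ and check that the standard-expression factors $v[a_i, J_i]$ still have gradients avoiding the span of $D(J \cup \{a\})$ in the appropriate sense, so that the inductive step $(v[a_1,J_1]^{-1} N(w') + A'_J) \cap \Sigma = v[a_1,J_1]^{-1}N(w')$ still goes through. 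Once the indices are aligned correctly, each individual step is a verbatim adaptation of the arguments already present in Section~\ref{The case of depth-zero types}, so no genuinely new idea is needed; the work is entirely in the careful substitution $J \rightsquigarrow J \cup \{a\}$ and in verifying that $a$ itself, being simple for $\Gamma'(J,\rho)$ with $a + A'_J \in B(J,\rho)_e$, satisfies $N(v[a,J]) \cap \Gamma(J, \rho)^{+} = \{a\}$.
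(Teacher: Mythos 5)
Your strategy matches the paper's own proof exactly: construct $w$ via Morris's Theorem~2.11(b) applied to the pair $(J, a)$ so that $w(J \cup \{a\}) \subset B$ and $N(w) \subset N(v[a,J])$; deduce $N(w) \cap \Gamma(J,\rho) = \emptyset$ from $N(v[a,J]) \cap \Gamma(J,\rho) = \{a\}$ together with $wa \in B$; and dispose of $N(w) \cap (-\Gamma(J,\rho))$ by combining $-\Gamma(J,\rho) \subset \Gamma(J,\rho) + A'_J$ with the invariance $(N(w) + A'_J) \cap \Phi_{\aff,\red} = N(w)$, which the paper isolates as Lemma~\ref{lemmainvariancebyJ}.

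Two small things to flag so the bookkeeping does not send you down a rabbit hole. First, there is no need to ``re-run'' Theorem~2.11 or the invariance induction with $J \cup \{a\}$ in place of $J$: Theorem~2.11(b) already hands you a $w$ with $w(J \cup \{a\}) \subset B$ whose standard expression $w = v[a_r, J_r] \cdots v[a_1, J_1]$ has $J_1 = J$ (not $J \cup \{a\}$), and Lemma~\ref{lemmainvariancebyJ} applied with $J_1 = J$ gives precisely $(N(w) + A'_J) \cap \Phi_{\aff,\red} = N(w)$, which is what the last step needs. Second, your ambient set $\Sigma$ should be $\Phi_{\aff,\red}$ (or $\Phi_{\aff}$), not $\Gamma(J,\rho) + A'_J$: the latter is a subset of $A'$ that is not contained in the root system, and $N(w) \not\subset \Gamma(J,\rho) + A'_J$ in general, so the displayed equality $(N(w) + A'_J) \cap \Sigma = N(w)$ does not even parse with your definition. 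With $\Sigma = \Phi_{\aff,\red}$ the induction you describe is exactly the paper's Lemma~\ref{lemmainvariancebyJ} and goes through verbatim.
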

\begin{proof}
According to \cite[Theorem~2.11 (b)]{MR1235019}, there exists $w \in W_{\aff}$ such that
$
w(J \cup \{a\}) \subset B,
$
and 
\[
l(v[a, J]) = l(wv[a, J]w^{-1}) + 2l(w)
\]
(see also the proof of \cite[Proposition~7.6]{MR1235019}).
Explicitly, we can take $w$ as
\[
w = v[a_{r}, J_{r}] \cdots v[a_{1}, J_{1}],
\]
where $a_{i} \in B$, $J=J_{1}$, and
\[
J_{i+1} = v[a_{i}, J_{i}] J_{i} \subset B
\]
for $1 \le i \le r-1$, and we have
\[
l(w) = \sum_{i=1}^{r} l(v[a_{i}, J_{i}]).
\]
Since $a+A'_{J} \in B(J, \rho)$, the proof of \cite[Proposition~7.6]{MR1235019} implies that
\[
N(w) \cap \Gamma(J, \rho) = \emptyset.
\]
We will prove
\begin{align*}
N(w) \cap - \Gamma(J, \rho) = \emptyset.
\end{align*}
Since $\Gamma'(J, \rho)$ is an affine root system, $\Gamma'(J, \rho) = -\Gamma'(J, \rho)$, hence
\[
- \Gamma(J, \rho) \subset \Gamma(J, \rho) + A'_{J}.
\]
Thus, if 
\[
N(w) \cap - \Gamma(J, \rho) \neq \emptyset,
\]
we have
\[
(N(w) + A'_{J}) \cap \Gamma(J, \rho) \neq \emptyset.
\]
On the other hand, according to Lemma~\ref{lemmainvariancebyJ} below, we have
\[
(N(w) + A'_{J}) \cap \Phi_{\aff, \red} = N(w).
\]
Since $\Gamma(J, \rho) \subset \Phi_{\aff, \red}$, we have
\begin{align*}
N(w) \cap \Gamma(J, \rho) &= (N(w) + A'_{J}) \cap \Phi_{\aff, \red} \cap \Gamma(J, \rho) \\
&= (N(w) + A'_{J}) \cap \Gamma(J, \rho) \\
& \neq \emptyset,
\end{align*}
a contradiction.
Thus, we conclude that
\[
N(w) \cap - \Gamma(J, \rho) = \emptyset.
\]
\end{proof}
\begin{lemma}
\label{lemmainvariancebyJ}
Suppose that an element $w \in W$ is written as
\[
w = v[a_{r}, J_{r}] \cdots v[a_{1}, J_{1}],
\]
where $J_{i} \cup \{a_{i}\} \subset B$ for all $1 \le i \le r$, and
\[
J_{i+1} = v[a_{i}, J_{i}] J_{i}
\]
for $1 \le i \le r-1$.
We also suppose that
\[
l(w) = \sum_{i=1}^{r} l(v[a_{i}, J_{i}]).
\]
Then, we have
\[
(N(w) + A'_{J_{1}}) \cap \Phi_{\aff, \red} = N(w),
\]
where $A'_{J_{1}}$ denotes the subspace of $A'$ spanned by $J_{1}$.
\end{lemma}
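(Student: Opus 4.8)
The plan is to argue by induction on $r$. When $r = 0$ we have $w = 1$ and $N(w) = \emptyset$, so both sides of the asserted identity are empty. For the inductive step I would set $v_{0} = v[a_{1}, J_{1}]$ and $w' = v[a_{r}, J_{r}] \cdots v[a_{2}, J_{2}]$, so that $w = w' v_{0}$. The first task is to transfer the hypotheses to $w'$: from subadditivity of $l$ and the assumption $l(w) = \sum_{i=1}^{r} l(v[a_{i}, J_{i}])$ one obtains $l(w') = \sum_{i=2}^{r} l(v[a_{i}, J_{i}])$ and $l(w) = l(w') + l(v_{0})$, so that $w'$ together with the chain $J_{2}, a_{2}, \dots, J_{r}, a_{r}$ meets the hypotheses of the lemma with $J_{1}$ replaced by $J_{2}$. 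Since the lengths add, the standard product formula for $N$ applies and gives $N(w) = N(v_{0}) \cup v_{0}^{-1} N(w')$ (cf.\ the length-additive case used in the proof of Lemma~\ref{lemmavariantofmorris} and \cite[2.5]{MR1235019}).

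Next I would treat the two pieces of this decomposition separately. For $N(v_{0})$, \cite[Lemma~2.4]{MR1235019} identifies it with the set of elements of $\Phi_{\aff, \red}^{+}$ lying in $A'_{J_{1} \cup \{a_{1}\}}$ but not in $A'_{J_{1}}$; written in the basis $J_{1} \cup \{a_{1}\}$ of $A'_{J_{1} \cup \{a_{1}\}}$, such a root has $a_{1}$-coefficient $\ge 1$, and adding any element of $A'_{J_{1}}$ leaves this coefficient unchanged. Because $J_{1} \cup \{a_{1}\} \subset B$, an affine root supported on $J_{1} \cup \{a_{1}\}$ with positive $a_{1}$-coefficient is necessarily positive, so $\bigl( N(v_{0}) + A'_{J_{1}} \bigr) \cap \Phi_{\aff, \red} = N(v_{0})$. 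For $v_{0}^{-1} N(w')$, the inductive hypothesis applied to $w'$ reads $\bigl( N(w') + A'_{J_{2}} \bigr) \cap \Phi_{\aff, \red} = N(w')$; now $v_{0}^{-1}$ acts linearly on $A'$, permutes $\Phi_{\aff, \red}$, and (using $J_{2} = v_{0} J_{1}$) carries $A'_{J_{2}}$ back to $A'_{J_{1}}$, so applying it to the previous identity yields $\bigl( v_{0}^{-1} N(w') + A'_{J_{1}} \bigr) \cap \Phi_{\aff, \red} = v_{0}^{-1} N(w')$.

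Finally, since translation by $A'_{J_{1}}$ and intersection with $\Phi_{\aff, \red}$ both distribute over unions, combining the decomposition of $N(w)$ with the two identities above gives $\bigl( N(w) + A'_{J_{1}} \bigr) \cap \Phi_{\aff, \red} = N(v_{0}) \cup v_{0}^{-1} N(w') = N(w)$, completing the induction. I do not anticipate a genuine difficulty here: the only place requiring a little care is the bookkeeping in the first step — checking that $w'$ with the shifted chain satisfies the hypotheses and that the length-additive product formula for $N$ is available — and both follow at once from the length computation.
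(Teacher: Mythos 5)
Your proof is correct and follows essentially the same strategy as the paper: induction on $r$, decomposing $N(w) = N(v_0) \cup v_0^{-1} N(w')$ via the length-additivity hypothesis, invoking Morris's Lemma~2.4 for the $N(v_0)$ piece, and transporting the inductive hypothesis for $w'$ along $v_0^{-1}$ using $J_2 = v_0 J_1$. The only cosmetic difference is that you spell out the $a_1$-coefficient argument for the $N(v_0)$ piece, whereas the paper records the same conclusion directly from Lemma~2.4.
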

\begin{proof}
We prove the lemma by using the induction on $r$.
When $r=0$, $N(w) = N(1) = \emptyset$, hence the equation is trivial.
Suppose that $r \ge 1$.
Then, 
\[
w = w' v[a_{1}, J_{1}],
\]
where
\[
w' = v[a_{r}, J_{r}] \cdots v[a_{2}, J_{2}].
\]
Since we suppose
\[
l(w) = \sum_{i=1}^{r} l(v[a_{i}, J_{i}]),
\]
we have
\[
l(w) = l(w') + l(v[a_{1}, J_{1}]),
\]
hence
\begin{align}
\label{inductionw=vw'}
N(w) = N(v[a_{1}, J_{1}]) \cup v[a_{1}, J_{1}]^{-1}N(w').
\end{align}
According to \cite[Lemma~2.4]{MR1235019}, 
\[
N\left(
v[a_{1}, J_{1}]
\right) = (A'_{J_{1} \cup \{a_{1}\}} \cap \Phi_{\aff, \red}) \backslash A'_{J_{1}},
\]
where $A'_{J_{1} \cup \{a_{1}\}}$ denotes the subspace of $A'$ spanned by $J_{1} \cup \{a_{1}\}$.
Hence, we have
\begin{align}
\label{inductionmorrislemma2.4}
\left(
N\left(
v[a_{1}, J_{1}]
\right) + A'_{J_{1}}
\right) \cap \Phi_{\aff, \red} = N(v[a_{1}, J_{1}]) .
\end{align}
Moreover, the induction hypothesis implies
\[
(N(w') + A'_{J_{2}}) \cap \Phi_{\aff, \red} = N(w').
\]
Since 
\[
J_{2} = v[a_{1}, J_{1}] J_{1},
\]
we obtain
\begin{align}
\label{inductionw'capJ}
(v[a_{1}, J_{1}]^{-1}N(w') + A'_{J_{1}}) \cap \Phi_{\aff, \red}
&= v[a_{1}, J_{1}]^{-1} \left(
(N(w') + A'_{J_{2}}) \cap \Phi_{\aff, \red}
\right)\\
&= v[a_{1}, J_{1}]^{-1}N(w').\notag
\end{align}
Now, \eqref{inductionw=vw'}, \eqref{inductionmorrislemma2.4}, and \eqref{inductionw'capJ} imply
\begin{align*}
(N(w) + A'_{J_{1}}) \cap \Phi_{\aff, \red} &= \left(
(N(v[a_{1}, J_{1}]) \cup v[a_{1}, J_{1}]^{-1}N(w')) + A'_{J_{1}}
\right) \cap \Phi_{\aff, \red} \\
&= 
\left(
\left(
N(v[a_{1}, J_{1}]) + A'_{J_{1}}
\right) \cup \left(
v[a_{1}, J_{1}]^{-1}N(w') + A'_{J_{1}}
\right)
\right) \cap \Phi_{\aff, \red}\\
&=
\left(
\left(
N(v[a_{1}, J_{1}]) + A'_{J_{1}}
\right) \cap \Phi_{\aff, \red}
\right) \cup
\left(
\left(
v[a_{1}, J_{1}]^{-1}N(w') + A'_{J_{1}}
\right) \cap \Phi_{\aff, \red}
\right)\\
&= N(v[a_{1}, J_{1}]) \cup v[a_{1}, J_{1}]^{-1}N(w') = N(w).
\end{align*}
\end{proof}
We fix $w \in W$ such that $w(J \cup \{a\}) \subset B$ and
\[
N(w) \cap \Gamma(J, \rho) = N(w) \cap - \Gamma(J, \rho) = \emptyset.
\]
For $v \in W(J, \rho)$, let
\[
\Phi_{v} = \Phi_{v, B} \in \End_{G(F)}\left(
\ind_{P_{J, B}}^{G(F)} (\rho)
\right)
\]
denote the element appearing in Theorem~\ref{theorem7.12ofmorris}.
According to \cite[Subsection~5.4, Subsection~7.7]{MR1235019} and equation~\eqref{intertwiningoperatorcomparisonmorris}, there exists
\[
T(v) \in \Hom_{\mathcal{M}_{J}}(^{\dot{v}}\!\rho, \rho)
\]
such that 
\[
\Phi_{v, B} = T(v) \circ \lambda(\dot{v}) \circ \theta_{v^{-1}B \mid B}.
\]
Here, we identify $T(v)$ with the element of
\[
\Hom_{G(F)}\left(\ind_{P_{J, B}}^{G(F)} (^{\dot{v}}\!\rho), \ind_{P_{J, B}}^{G(F)} (\rho)\right)
\]
defined as
\[
(T(v)f)(g) = T(v)\left(
f(g)
\right)
\]
for $f \in \ind_{P_{J, B}}^{G(F)} (^{\dot{v}}\!\rho)$ and $g \in G(F)$.
Replacing $B$ with $w^{-1} B$, we also have the similar description of $\End_{G(F)}\left(
\ind_{P_{J, w^{-1} B}}^{G(F)} (\rho)
\right)$  and the element 
\[
\Phi_{v, w^{-1} B} \in \End_{G(F)}\left(
\ind_{P_{J, w^{-1} B}}^{G(F)} (\rho)
\right)
\]
corresponding to $\Phi_{v, B}$.
\begin{remark}
\label{remarkgamma+doesnotchange}
Since
\[
N(w) \cap \Gamma(J, \rho) = N(w) \cap - \Gamma(J, \rho) =\emptyset,
\]
we have
\[
\Gamma(J, \rho) \cap w^{-1}\left(\Phi_{\aff}^{+}\right) = \Gamma(J, \rho) \cap \Phi_{\aff}^{+}.
\]
Hence, $\Gamma(J, \rho)^{+}$ and $\Gamma'(J, \rho)^{+}$ do not change if we replace $B$ with $w^{-1} B$.
\end{remark}
We also have
\[
\Phi_{v, w^{-1} B} = T'(v) \circ \lambda(\dot{v}) \circ \theta_{v^{-1}w^{-1}B \mid w^{-1}B}
\]
for some
\[
T'(v) \in \Hom_{\mathcal{M}_{J}}(^{\dot{v}}\!\rho, \rho).
\] 
Since $T(v)$ and $T'(v)$ are elements of the vector space $\Hom_{\mathcal{M}_{J}}(^{\dot{v}}\!\rho, \rho)$
of dimension $1$, there exists $c(v) \in \mathbb{C}^{\times}$ such that
\[
T(v) = c(v) \cdot T'(v).
\]
The definition of $\theta_{w^{-1}B \mid B}$ implies that
\[
\theta_{w^{-1}B \mid B} \circ T(v) \circ \lambda(\dot{v}) = T(v) \circ \lambda(\dot{v}) \circ \theta_{v^{-1} w^{-1}B \mid v^{-1} B}
\in \Hom_{G(F)}\left(
\ind_{P_{J, v^{-1}B}}^{G(F)}(\rho), \ind_{P_{J, w^{-1}B}}^{G(F)} (\rho)
\right).
\]
Then, according to Corollary~\ref{corollaryoflemmavariantofmorris}, we have
\begin{align*}
\theta_{w^{-1}B \mid B} \circ \Phi_{v, B}
&= \theta_{w^{-1}B \mid B} \circ T(v) \circ \lambda(\dot{v}) \circ \theta_{v^{-1}B \mid B}\\
&= T(v) \circ \lambda(\dot{v}) \circ \theta_{v^{-1} w^{-1}B \mid v^{-1} B} \circ \theta_{v^{-1}B \mid B}\\
&= c''(w, v) \cdot T(v) \circ \lambda(\dot{v}) \circ \theta_{v^{-1} w^{-1}B \mid w^{-1} B} \circ \theta_{w^{-1}B \mid B}\\
&= c(v) \cdot c''(w, v) \cdot T'(v) \circ \lambda(\dot{v}) \circ \theta_{v^{-1} w^{-1}B \mid w^{-1} B} \circ \theta_{w^{-1}B \mid B}\\
&= c(v) \cdot c''(w, v) \cdot \Phi_{v, w^{-1}B} \circ \theta_{w^{-1}B \mid B}.
\end{align*}
We use the same symbol $\theta_{w^{-1}B \mid B}$ for the map
\[
\End_{G(F)}\left(
\ind_{P_{J, B}}^{G(F)} (\rho)
\right) \rightarrow \End_{G(F)}\left(
\ind_{P_{J, w^{-1} B}}^{G(F)} (\rho)
\right)
\]
induced by the isomorphism
\[
\theta_{w^{-1}B \mid B} \colon \ind_{P_{J, B}}^{G(F)}(\rho) \rightarrow \ind_{P_{J, w^{-1}B}}^{G(F)}(\rho)
\]
(see Corollary~\ref{corollarythetaw-1BBisom}).
Then, the calculation above implies the following: 
\begin{proposition}
\label{propositionintertwiningmorriswjrho}
For any $v \in W(J, \rho)$, there exists $c'''(w, v) \in \mathbb{C}^{\times}$ such that 
\[
\theta_{w^{-1}B, B}\left(
\Phi_{v, B}
\right) = c'''(w, v) \cdot \Phi_{v, w^{-1}B}.\index{$c'''(w, v)$}
\]
\end{proposition}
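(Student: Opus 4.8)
The plan is to deduce the claim directly from the displayed computation that precedes the statement, whose ingredients are the explicit factorization $\Phi_{v,B}=T(v)\circ\lambda(\dot v)\circ\theta_{v^{-1}B\mid B}$, the analogous factorization $\Phi_{v,w^{-1}B}=T'(v)\circ\lambda(\dot v)\circ\theta_{v^{-1}w^{-1}B\mid w^{-1}B}$, and the braiding identities for the intertwining operators $\theta_{?\mid ?}$ assembled above. First I would recall that the symbol $\theta_{w^{-1}B\mid B}$ on endomorphism algebras denotes conjugation by the isomorphism $\theta_{w^{-1}B\mid B}\colon\ind_{P_{J,B}}^{G(F)}(\rho)\to\ind_{P_{J,w^{-1}B}}^{G(F)}(\rho)$, which is invertible by Corollary~\ref{corollarythetaw-1BBisom} thanks to $N(w)\cap\Gamma(J,\rho)=N(w)\cap-\Gamma(J,\rho)=\emptyset$. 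It therefore suffices to produce $c'''(w,v)\in\mathbb{C}^{\times}$ with $\theta_{w^{-1}B\mid B}\circ\Phi_{v,B}=c'''(w,v)\cdot\Phi_{v,w^{-1}B}\circ\theta_{w^{-1}B\mid B}$.

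Next I would carry out the commutation in three moves. Inserting the factorization of $\Phi_{v,B}$ and using the definition of $\theta_{w^{-1}B\mid B}$, I can slide it past $T(v)\circ\lambda(\dot v)$, rewriting $\theta_{w^{-1}B\mid B}\circ T(v)\circ\lambda(\dot v)$ as $T(v)\circ\lambda(\dot v)\circ\theta_{v^{-1}w^{-1}B\mid v^{-1}B}$. Then I would invoke Corollary~\ref{corollaryoflemmavariantofmorris} to replace $\theta_{v^{-1}w^{-1}B\mid v^{-1}B}\circ\theta_{v^{-1}B\mid B}$ by $c''(w,v)\cdot\theta_{v^{-1}w^{-1}B\mid w^{-1}B}\circ\theta_{w^{-1}B\mid B}$; its hypotheses are exactly the emptiness conditions on $N(w)$ guaranteed by the choice of $w$ from Lemma~\ref{existenceofgoodw}. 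Finally, since $\Hom_{\mathcal{M}_{J}}(^{\dot{v}}\!\rho,\rho)$ is one-dimensional, $T(v)$ and $T'(v)$ differ by a scalar $c(v)\in\mathbb{C}^{\times}$, and recognizing $\Phi_{v,w^{-1}B}=T'(v)\circ\lambda(\dot v)\circ\theta_{v^{-1}w^{-1}B\mid w^{-1}B}$ on the right yields the desired identity with $c'''(w,v)=c(v)\,c''(w,v)$.

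The only point demanding genuine care — a bookkeeping check rather than an obstacle — is that all hypotheses of the auxiliary results are in force for the fixed $w$: that $w(J\cup\{a\})\subset B$ with $N(w)\cap\pm\Gamma(J,\rho)=\emptyset$, so that $\theta_{w^{-1}B\mid B}$ is an isomorphism and Corollary~\ref{corollaryoflemmavariantofmorris} applies, and that $\Gamma(J,\rho)^{+}$, hence the descriptions of $\Phi_{v,w^{-1}B}$ and of $T'(v)$, is unchanged when $B$ is replaced by $w^{-1}B$, which is Remark~\ref{remarkgamma+doesnotchange}. With these in hand the argument is purely formal, and the verification has in effect already been performed in the computation above.
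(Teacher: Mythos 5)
Your proposal is correct and follows essentially the same route as the paper: you factor $\Phi_{v,B}=T(v)\circ\lambda(\dot v)\circ\theta_{v^{-1}B\mid B}$, slide $\theta_{w^{-1}B\mid B}$ past $T(v)\circ\lambda(\dot v)$, apply Corollary~\ref{corollaryoflemmavariantofmorris} (with the hypotheses on $N(w)$ supplied by Lemma~\ref{existenceofgoodw}), and use one-dimensionality of $\Hom_{\mathcal{M}_{J}}({}^{\dot v}\!\rho,\rho)$ to relate $T(v)$ and $T'(v)$, landing on $c'''(w,v)=c(v)\,c''(w,v)$ exactly as the paper does.
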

Moreover, for an element $v \in R(J, \rho)$, we have:
\begin{corollary}
\label{corollaryforrgrouppropositionintertwiningmorriswjrho}
For any $v \in R(J, \rho)$, we have
\[
\theta_{w^{-1}B, B}\left(
\Phi_{v, B}
\right) = \Phi_{v, w^{-1}B}.
\]
\end{corollary}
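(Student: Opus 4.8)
The plan is to upgrade Proposition~\ref{propositionintertwiningmorriswjrho} from ``up to a scalar'' to ``with scalar $1$'' when the element $v$ lies in $R(J,\rho)$. Since Proposition~\ref{propositionintertwiningmorriswjrho} already gives $\theta_{w^{-1}B \mid B}(\Phi_{v,B}) = c'''(w,v)\cdot\Phi_{v,w^{-1}B}$ for some $c'''(w,v)\in\mathbb{C}^{\times}$, the only task is to show $c'''(w,v)=1$ for $v\in R(J,\rho)$. The natural strategy is to compare quadratic relations: for a generator $v = v[b,J]$ with $b\in\Gamma(J,\rho)^{+}$ and $b+A'_{J}\in B(J,\rho)$, Theorem~\ref{theorem7.12ofmorris}(3),(4) gives $\Phi_{v,B}^{2} = p_{b}\Phi_{1,B} + (p_{b}-1)\Phi_{v,B}$ inside $\End_{G(F)}(\ind_{P_{J,B}}^{G(F)}(\rho))$, and by Remark~\ref{remarkgamma+doesnotchange} the sets $\Gamma(J,\rho)^{+}$, $\Gamma'(J,\rho)^{+}$, hence $B(J,\rho)$ and the parameters $p_{b}$, are unchanged when $B$ is replaced by $w^{-1}B$; so $\Phi_{v,w^{-1}B}$ satisfies the same quadratic relation. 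Applying the algebra isomorphism $\theta_{w^{-1}B\mid B}$ (which is an isomorphism by Corollary~\ref{corollarythetaw-1BBisom}) to the relation for $\Phi_{v,B}$ and using $\theta_{w^{-1}B\mid B}(\Phi_{1,B}) = \Phi_{1,w^{-1}B}$ (the identity map goes to the identity map, consistent with $c'''(w,1)=1$), I would get $c'''(w,v)^{2}\Phi_{v,w^{-1}B}^{2} = p_{b}\Phi_{1,w^{-1}B} + (p_{b}-1)c'''(w,v)\Phi_{v,w^{-1}B}$. Substituting the quadratic relation for $\Phi_{v,w^{-1}B}$ and comparing coefficients of the basis elements $\{\Phi_{x,w^{-1}B}\}$ forces $c'''(w,v)^{2}p_{b} = p_{b}$ and $c'''(w,v)^{2}(p_{b}-1) = (p_{b}-1)c'''(w,v)$; since $p_{b}>1$, the first gives $c'''(w,v)^{2}=1$ and then the second gives $c'''(w,v)=1$.

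Next I would extend this to all of $R(J,\rho)$, not just the generators $v[b,J]$. The point is that $\theta_{w^{-1}B\mid B}$ is a $\mathbb{C}$-algebra isomorphism carrying the subalgebra $\mathcal{H}(R(J,\rho))\subset\End_{G(F)}(\ind_{P_{J,B}}^{G(F)}(\rho))$ (Corollary~\ref{corollaryoftheorem7.12ofmorris}) to the corresponding subalgebra for $w^{-1}B$, and by Corollary~\ref{corollaryoftheorem7.12ofmorrisaffinever} this subalgebra is identified with the affine Hecke algebra $\mathcal{H}^{\Mor}$, whose standard basis elements $T^{\Mor}_{s_{b+A'_{J}}}$ correspond to $\Phi_{v[b,J]}$ for $b+A'_{J}\in B(J,\rho)_{e}$, together with the $\theta_{v(t)}$ for $t\in T(J,\rho)$. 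Once $c'''(w,v)=1$ is known for $v=v[b,J]$ with $b+A'_{J}\in B(J,\rho)_{e}$, I would handle the translation part $T(J,\rho)$ separately by the same quadratic-relation or commutation argument (or directly: a generator $t\in T(J,\rho)$ is positive relative to $K$ and some $U'$ by Lemma~\ref{positivitycompativility}, and $\Phi_{t}$ is characterized by its support $P_{J}\dot t P_{J}$ and value $\Phi_{t}(\dot t)=\mathrm{id}$, a normalization that $\theta_{w^{-1}B\mid B}$ preserves because it is built from integration over unipotent subgroups with volume-$1$ Haar measure). Then multiplicativity of $c'''(w,-)$ — which follows from $\theta_{w^{-1}B\mid B}$ being an algebra homomorphism and from Proposition~\ref{propositionintertwiningmorriswjrho} applied to products — propagates $c'''(w,v)=1$ to all words in these generators, i.e.\ to all of $R(J,\rho)$.

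Concretely the write-up would run: (i) record that $\theta_{w^{-1}B\mid B}$ is an algebra isomorphism $\End_{G(F)}(\ind_{P_{J,B}}^{G(F)}(\rho))\to\End_{G(F)}(\ind_{P_{J,w^{-1}B}}^{G(F)}(\rho))$ sending $\Phi_{1,B}\mapsto\Phi_{1,w^{-1}B}$; (ii) invoke Remark~\ref{remarkgamma+doesnotchange} to see the combinatorial data defining the $\Phi_{v,w^{-1}B}$ and their parameters agree with those for $B$; (iii) run the quadratic-relation comparison above to get $c'''(w,v[b,J])=1$ for $b+A'_{J}\in B(J,\rho)$; (iv) treat $t\in T(J,\rho)$ via the support/normalization characterization; (v) conclude by multiplicativity since $R(J,\rho)$ is generated by these elements. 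The main obstacle I anticipate is step (iv): matching $\Phi_{t,B}$ with $\Phi_{t,w^{-1}B}$ requires being careful that the $2$-cocycle $\chi$ of Theorem~\ref{theorem7.12ofmorris} and the implicit choices of lifts $\dot w$, $\dot t$, $\dot v$ are compatible on both sides, so that the scalar really is $1$ and not merely a root of unity coming from the cocycle; I would resolve this by noting that $\theta_{w^{-1}B\mid B}$ is defined purely by integration against Haar-measure-$1$ unipotent subgroups, hence respects the normalizations $\Phi_{v}(\dot v)\in\Hom_{\mathcal{M}_{J}}(^{\dot v}\!\rho,\rho)$ used to pin down both the $\Phi_{v,B}$ and the cocycle $\chi$, exactly as in the computation preceding Proposition~\ref{propositionintertwiningmorriswjrho}.
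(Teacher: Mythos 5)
Your proposal matches the paper's proof, which simply says ``compare the multiplication rules'': the quadratic relation $\Phi_{v[b,J]}^{2} = p_{b}\Phi_{1} + (p_{b}-1)\Phi_{v[b,J]}$ from Theorem~\ref{theorem7.12ofmorris} (together with Remark~\ref{remarkgamma+doesnotchange}, which guarantees the same relation with the same $p_{b}$ on the $w^{-1}B$ side) pins down $c'''(w,v[b,J])=1$ for each simple reflection, and multiplicativity along reduced expressions then propagates this to all of $R(J,\rho)$. Two of your anticipated complications are actually vacuous: step (iv) is redundant, because $R(J,\rho)\simeq W_{\aff}\left(\Gamma'(J,\rho)\right)$ is generated by the affine simple reflections $v[b,J]$ for $b+A'_{J}$ in the \emph{full} affine basis $B(J,\rho)$ (not merely $B(J,\rho)_{e}$), so the translations $T(J,\rho)$ are already products of the generators handled in step (iii) and step (v) covers them automatically; and the cocycle $\chi$ never enters, since it appears only in the products of Theorem~\ref{theorem7.12ofmorris}(1),(2) involving $C(J,\rho)$, and $C(J,\rho)\cap R(J,\rho)=\{1\}$ by the semidirect decomposition $W(J,\rho)=R(J,\rho)\rtimes C(J,\rho)$.
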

\begin{proof}
Comparing the multiplication rules of $\End_{G(F)}\left(
\ind_{P_{J, B}}^{G(F)} (\rho)
\right)$ with those of $\End_{G(F)}\left(
\ind_{P_{J, w^{-1} B}}^{G(F)} (\rho)
\right)$ in Theorem~\ref{theorem7.12ofmorris}, we conclude that $c'''(w, v) = 1$ for all $v \in R(J, \rho)$.
\end{proof}
Since $K_{M}$ is the maximal parahoric subgroup associated with the vertex $x_{J}$, that does not depend on the choice of $B$, we have
\[
K_{M} = P_{J, B} \cap M(F) = P_{J, w^{-1}B} \cap M(F).
\]
Hence, we obtain the injection
\[
t_{P, w^{-1}B} \colon \End_{M(F)}\left(
\ind_{K_{M}}^{M(F)} (\rho_{M})
\right) \rightarrow \End_{G(F)}\left(
\ind_{P_{J, w^{-1} B}}^{G(F)} (\rho)
\right)
\]
by replacing $B$ with $w^{-1} B$ in the construction of
\[
t_{P} = t_{P, B} \colon \End_{M(F)}\left(
\ind_{K_{M}}^{M(F)} (\rho_{M})
\right) \rightarrow \End_{G(F)}\left(
\ind_{P_{J, B}}^{G(F)} (\rho)
\right).
\]
\begin{corollary}
\label{corollarycompativilityofintertwiningmorrisviatp}
Let $m \in M_{\sigma}/M^1$, and
let $\Phi_{m}^{M} \in \End_{M(F)}\left(
\ind_{K_{M}}^{M(F)} (\rho_{M})
\right)$ denote the element corresponding to $\theta_{m^{-1}} \in \mathbb{C}[M_{\sigma}/M^1]$ via \eqref{isomgroupalgebraheckealgebra} and $T_{\rho_{M}}$.
Then, there exists $c(w, \Phi^{M}_{m}) \in \mathbb{C}^{\times}$ such that
\[
\theta_{w^{-1}B \mid B}\left( t_{P, B}(\Phi^{M}_{m})\right)
=c(w, \Phi^{M}_{m}) \cdot t_{P, w^{-1}B}(\Phi^{M}_{m}).\index{$c(w, \Phi^{M}_{m})$}
\]
\end{corollary}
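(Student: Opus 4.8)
The plan is to reduce the statement about $t_{P,B}$ and $t_{P,w^{-1}B}$ to the statement about the Morris elements $\Phi_{v}$ that we have just proved in Proposition~\ref{propositionintertwiningmorriswjrho} and Corollary~\ref{corollaryforrgrouppropositionintertwiningmorriswjrho}. Recall that $\Phi^{M}_{m}$ corresponds to $\theta_{m^{-1}}\in\mathbb{C}[M_{\sigma}/M^{1}]$, and after identifying $\mathbb{C}[M_{\sigma}/M^{1}]$ with $\mathbb{C}[I_{M(F)}(\rho_{M})/K_{M}]$ (which is legitimate by Remark~\ref{remarkmp2prop6.6} and the discussion following Lemma~\ref{rhovssigmamultiplicity}), we may pick a lift $\dot{m}\in I_{M(F)}(\rho_{M})$; by Lemma~\ref{lemmasupportofimageofm} the Hecke function $\phi^{M}_{m^{-1}}$ is supported on $\dot{m}K_{M}$. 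First I would reduce to the case where $m$ lies in $W(J,\rho)\cap W_{M(F)}$, i.e.\ is of the form $v\in R(J,\rho)\cap W_{M(F)}=T(J,\rho)$ up to positivity: since any element of $T(J,\rho)$ is a difference of positive ones and $t_{P,B}$, $t_{P,w^{-1}B}$, $\theta_{w^{-1}B\mid B}$ are all algebra homomorphisms, and since $c(w,\Phi^{M}_{m})$ is only required to be a nonzero scalar, it suffices to treat a generating set; more precisely it suffices to check the identity on those $m=t\in T(J,\rho)$ with $\bigl(D_{J}(a')\bigr)(v(t))\ge 0$ for all $a'\in B(J,\rho)_{e}$, which by Lemma~\ref{positivitycompativility} means $\dot{t}$ is positive relative to $K$ and $U'$ for a suitable $P'$, and by Lemma~\ref{independencyofP} we may as well take $P'=P$.

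The key computation is then: for $t\in T(J,\rho)$ positive relative to $K$ and $U$ (for the chamber $B$), the definition of $t_{P,B}$ shows $t_{P,B}(\Phi^{M}_{t})$ is supported on $K\dot{t}K$ (as in the proof of Corollary~\ref{corollaryimageofphiM}), hence $t_{P,B}(\Phi^{M}_{t})=c'(t)\cdot\Phi_{t,B}$ for some $c'(t)\in\mathbb{C}^{\times}$, where $\Phi_{t,B}$ is the Morris basis element. The subtle point is that the derivative condition $\bigl(D_{J}(a')\bigr)(v(t))\ge 0$ refers to the positive system $\Gamma'(J,\rho)^{+}_{e}$, and by Remark~\ref{remarkgamma+doesnotchange} this positive system is unchanged when we replace $B$ by $w^{-1}B$; hence $\dot{t}$ is also positive relative to $K$ and (the image of) $U$ for the chamber $w^{-1}B$, so likewise $t_{P,w^{-1}B}(\Phi^{M}_{t})=c''(t)\cdot\Phi_{t,w^{-1}B}$. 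Applying $\theta_{w^{-1}B\mid B}$ (an algebra isomorphism by Corollary~\ref{corollarythetaw-1BBisom}) and invoking Corollary~\ref{corollaryforrgrouppropositionintertwiningmorriswjrho}, which says $\theta_{w^{-1}B\mid B}(\Phi_{t,B})=\Phi_{t,w^{-1}B}$ exactly (no scalar), we get
\[
\theta_{w^{-1}B\mid B}\bigl(t_{P,B}(\Phi^{M}_{t})\bigr)=c'(t)\cdot\Phi_{t,w^{-1}B}=\frac{c'(t)}{c''(t)}\cdot t_{P,w^{-1}B}(\Phi^{M}_{t}),
\]
so $c(w,\Phi^{M}_{t})=c'(t)/c''(t)\in\mathbb{C}^{\times}$, as required.

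Finally I would extend from the positive generators to all of $M_{\sigma}/M^{1}$. Write an arbitrary $m$ as $m=t_{1}t_{2}^{-1}$ with $t_{1},t_{2}\in T(J,\rho)$ positive in the above sense; since $\Phi^{M}_{m}$, under the identification with $\mathbb{C}[M_{\sigma}/M^{1}]$, factors as $\Phi^{M}_{t_{1}}\cdot(\Phi^{M}_{t_{2}})^{-1}$, and since each of $t_{P,B}$, $t_{P,w^{-1}B}$, $\theta_{w^{-1}B\mid B}$ is an algebra homomorphism (so commutes with products and inverses of invertible elements), we get $c(w,\Phi^{M}_{m})=c(w,\Phi^{M}_{t_{1}})\cdot c(w,\Phi^{M}_{t_{2}})^{-1}\in\mathbb{C}^{\times}$. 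The main obstacle I anticipate is making the reduction to the positive case fully rigorous: one must verify that the elements $\Phi^{M}_{m}$ for $m\in I_{M(F)}(\rho_{M})/K_{M}$ really do form a multiplicatively closed (indeed group-like, up to scalars) family inside $\End_{M(F)}(\ind_{K_{M}}^{M(F)}(\rho_{M}))$ so that the factorization $m=t_{1}t_{2}^{-1}$ can be pushed through the homomorphisms; this is exactly what isomorphism~\eqref{isomgroupalgebraheckealgebra} and Lemma~\ref{WcapMsimeqImrho} give us, but it should be spelled out. A secondary subtlety is confirming that "positive relative to $K$ and $U$ for the chamber $w^{-1}B$'' makes literal sense — i.e.\ identifying the unipotent radical attached to $P$ with the correct subgroup after the chamber change — which follows since the Levi $M$ and the vertex $x_{J}$ (hence $K_{M}$) do not depend on $B$, only $K_{U},K_{\overline{U}}$ relative to the chamber do, and the positivity of $\dot{t}$ is governed by $\langle\alpha,\widetilde{v(t)}\rangle$ for $\alpha$ in the unchanged root system $D_{J}(\Gamma'(J,\rho)_{e})$.
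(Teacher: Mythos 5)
Your reduction step is too coarse: you correctly identify $m\in M_{\sigma}/M^{1}$ with an element of $W(J,\rho)\cap W_{M(F)}$ via Lemma~\ref{WcapMsimeqImrho}, but you then write ``i.e.\ is of the form $v\in R(J,\rho)\cap W_{M(F)}=T(J,\rho)$'', which silently replaces $W(J,\rho)\cap W_{M(F)}$ by the subgroup $T(J,\rho)$. These are in general not equal: under the identifications in the paper, $W(J,\rho)\cap W_{M(F)}\simeq M_{\sigma}/M^{1}$ is the lattice $Y$ of the based root datum $\mathcal{R}^{\Sol}$, while $T(J,\rho)\simeq\mathbb{Z}(R^{\Mor})^{\vee}$ is only the coroot sublattice; the remaining elements arise from $C(J,\rho)\cap W_{M(F)}$ and in particular need not lie in $R(J,\rho)$. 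Your factorization $m=t_{1}t_{2}^{-1}$ with $t_{i}\in T(J,\rho)$ therefore does not reach a general $m\in M_{\sigma}/M^{1}$, and the corollary is left unproved whenever $m\notin T(J,\rho)$. This also explains why you end up invoking Corollary~\ref{corollaryforrgrouppropositionintertwiningmorriswjrho}, which requires $v\in R(J,\rho)$; the paper instead invokes the more general Proposition~\ref{propositionintertwiningmorriswjrho}, valid for all $v\in W(J,\rho)$, and is content with the nonzero scalar $c'''(w,v)$ that proposition delivers (the corollary only claims existence of a scalar, so exactness of $\theta_{w^{-1}B\mid B}(\Phi_{v,B})=\Phi_{v,w^{-1}B}$ is not needed here).

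The correct version of your reduction is the one in the paper: work with the set
$W(J,\rho)^{M,+}=\{m\in W(J,\rho)\cap W_{M(F)}\mid \langle\alpha,H_{M}(m)\rangle\ge 0 \text{ for all }\alpha\in\Sigma(P,A_{M})\}$,
observe that $W(J,\rho)\cap W_{M(F)}$ is generated by the inverses of such $m$, and use Remark~\ref{remarkoflemmapositivitycompativility} (the $W(J,\rho)\cap W_{M(F)}$-analogue of Lemma~\ref{positivitycompativility}, which you cite but which only treats $T(J,\rho)$) to conclude that $\dot{m}$ is positive simultaneously relative to $(P_{J,B},U)$ and $(P_{J,w^{-1}B},U)$. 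Once one has this, the core of your argument is correct and matches the paper's: for positive $m$, $t_{P,B}(\Phi^{M}_{m})$ and $t_{P,w^{-1}B}(\Phi^{M}_{m})$ are nonzero scalar multiples of the Morris basis elements $\Phi_{m,B}$ and $\Phi_{m,w^{-1}B}$ respectively, and then Proposition~\ref{propositionintertwiningmorriswjrho} finishes. Your observation that positivity and the positive system $\Gamma'(J,\rho)^{+}_{e}$ are insensitive to the chamber change $B\mapsto w^{-1}B$ is also correct and is indeed the point of Remark~\ref{remarkgamma+doesnotchange}.
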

\begin{proof}
Recall that the canonical inclusion
\[
I_{M(F)}(\rho_{M}) \rightarrow M_{\sigma}
\]
induces an isomorphism
\[
I_{M(F)}(\rho_{M})/K_{M} \rightarrow M_{\sigma}/M^1.
\]
Moreover, according to Lemma~\ref{WcapMsimeqImrho}, the canonical quotient map
\[
W(J, \rho) \cap W_{M(F)} \rightarrow I_{M(F)}(\rho_{M})/K_{M}
\]
is a bijection.
We identify them.
We define
\[
W(J, \rho)^{M, +} =
\{
m \in W(J, \rho) \cap W_{M(F)} \mid \langle \alpha, H_{M}(m) \rangle \ge 0 \ (\alpha \in \Sigma(P, A_{M}))
\}.
\]
According to Remark~\ref{remarkoflemmapositivitycompativility},
for any $m \in W(J, \rho)^{M, +} $, the lift $\dot{m}$ is positive relative to $(P_{J, B}, U)$ and $(P_{J, w^{-1}B}, U)$.
We identify 
\[
\mathbb{C}[W(J, \rho) \cap W_{M(F)}] =  \mathbb{C}[I_{M(F)}(\rho_{M})/K_{M}] = \mathbb{C}[M_{\sigma}/M^1]
\]
with $\mathcal{H}(M(F), \rho_{M})$ and $\End_{M(F)}\left(
\ind_{K_{M}}^{M(F)} (\rho_{M})
\right)$ via \eqref{Mverofheckevsend} and \eqref{compositionofMverofheckevsendandisomgroupalgebraheckealgebra}.
For $m \in W(J, \rho) \cap W_{M(F)}$, let $\phi^{M}_{m} \in \mathcal{H}(M(F), \rho_{M})$ and
$\Phi^{M}_{m} \in \End_{M(F)}\left(
\ind_{K_{M}}^{M(F)} (\rho_{M})
\right)$
denote the elements corresponding to $\theta_{m^{-1}} \in \mathbb{C}[W(J, \rho) \cap W_{M(F)}]$.
According to Lemma~\ref{lemmasupportofimageofm}, $\phi^{M}_{m}$ is supported on $\dot{m} K_{M}$.
Since the group $W(J, \rho) \cap W_{M(F)}$ is generated by
\[
\{
m^{-1} \mid m \in W(J, \rho)^{M, +} 
\},
\]
it suffices to show that
\[
\theta_{w^{-1}B \mid B}\left( t_{P, B}(\Phi^{M}_{m})\right)
\in \mathbb{C}^{\times} \cdot t_{P, w^{-1}B}(\Phi^{M}_{m})
\]
for all $m \in W(J, \rho)^{M, +}$.
Since $\dot{m}$ is positive relative to $(P_{J, B}, U)$ and $(P_{J, w^{-1}B}, U)$, there exists $c(m, B), c(m, w^{-1}B) \in \mathbb{C}^{\times}$ such that
\[
t_{P, B}(\Phi^{M}_{m}) = c(m, B) \cdot \Phi_{m, B}
\]
and
\[
t_{P, w^{-1}B}(\Phi^{M}_{m}) = c(m, w^{-1}B) \cdot \Phi_{m, w^{-1}B}.
\]
Then, the claim follows from Proposition~\ref{propositionintertwiningmorriswjrho}.
\end{proof}

Recall that we fixed a parabolic subgroup $P'$ with Levi factor $M$ such that $M_{\alpha}$ is standard with respect to $P'$, and
\[
\Sigma_{\mathfrak{s}_{M}, \mu}(P) = \Sigma_{\mathfrak{s}_{M}, \mu}(P').
\]
Combining $J_{P' \mid P}(\sigma \otimes \cdot)$ with $\theta_{w^{-1}B \mid B}$, we obtain the following diagram:
\begin{proposition}
There exists $b(w, P') \in \mathbb{C}[M_{\sigma}/M^1]^{\times}$ such that the following diagram commutes:
\[
\xymatrix@R+1pc@C+1pc{
\ind_{P_{J, B}}^{G(F)} (\rho)
\ar[d]_-{\theta_{w^{-1}B \mid B}} \ar[r]^-{T_{\rho_{M}} \circ I_{U}} \ar@{}[dr]|\circlearrowleft & 
I_{P}^{G} \left(
\ind_{M^{1}}^{M(F)} (\sigma_{1})
\right)
\ar[d]^-{b(w, P') \circ J_{P' \mid P}(\sigma \otimes \cdot)}
\\
\ind_{P_{J, w^{-1}B}}^{G(F)} (\rho)
\ar[r]^-{T_{\rho_{M}} \circ I_{U'}} & 
I_{P'}^{G} \left(
\ind_{M^{1}}^{M(F)} (\sigma_{1})
\right).
}\index{$b(w, P')$}
\]
Here, we regard $b(w, P')$ as an element of $\End_{G(F)}\left(
I_{P'}^{G} \left(
\ind_{M^{1}}^{M(F)} (\sigma_{1})
\right)
\right)^{\times}$ via \eqref{isomgroupalgebraheckealgebraparabolicinduction}.
\end{proposition}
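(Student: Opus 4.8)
The goal is to produce an element $b(w,P')\in\mathbb{C}[M_\sigma/M^1]^\times$ making the square commute, where the two vertical maps are $\theta_{w^{-1}B\mid B}$ on the source side and $b(w,P')\circ J_{P'\mid P}(\sigma\otimes\cdot)$ on the target side, and the two horizontal maps are the isomorphisms $T_{\rho_M}\circ I_U$ and $T_{\rho_M}\circ I_{U'}$ coming from the two parabolic subgroups $P$ and $P'$. The plan is to first reduce everything to the level of Hecke algebras via isomorphism~\eqref{heckevsend}, where all four arrows become explicit maps of $\mathcal{H}(G(F),\rho)$- or $\End_{G(F)}$-spaces, and then to check commutativity on a spanning set. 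Concretely, I would first observe that the composite $\left(T_{\rho_M}\circ I_{U'}\right)\circ\theta_{w^{-1}B\mid B}\circ\left(T_{\rho_M}\circ I_U\right)^{-1}$ is an automorphism of $\End_{G(F)}\left(I_P^G\left(\ind_{M^1}^{M(F)}(\sigma_1)\right)\right)$ after transporting along the canonical iso $I_P^G\simeq I_{P'}^G$ on parabolically induced modules (this uses that $\theta_{w^{-1}B\mid B}$ is an isomorphism, which is Corollary~\ref{corollarythetaw-1BBisom}, together with our standing hypothesis $N(w)\cap\pm\Gamma(J,\rho)=\emptyset$ and $w(J\cup\{a\})\subset B$ from Lemma~\ref{existenceofgoodw}). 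The claim is then that this automorphism agrees, up to the correcting factor $b(w,P')$, with the automorphism induced by $J_{P'\mid P}(\sigma\otimes\cdot)$.

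The key computational input is a comparison of how the two automorphisms act on the two natural families of generators of $\End_{G(F)}\left(I_P^G\left(\ind_{M^1}^{M(F)}(\sigma_1)\right)\right)$: the subalgebra $\mathbb{C}[M_\sigma/M^1]$ coming from injection~\eqref{isomgroupalgebraheckealgebraparabolicinduction}, and the elements $\Phi_{v,B}$ for $v\in W(J,\rho)$ (equivalently, after applying $T_{\rho_M}\circ I_U$, the $T'_w$ and $J_r$). On the source side the action of $\theta_{w^{-1}B\mid B}$ on the $\Phi_{v,B}$ is controlled by Proposition~\ref{propositionintertwiningmorriswjrho} (it sends $\Phi_{v,B}$ to $c'''(w,v)\cdot\Phi_{v,w^{-1}B}$, with $c'''(w,v)=1$ for $v\in R(J,\rho)$ by Corollary~\ref{corollaryforrgrouppropositionintertwiningmorriswjrho}), and on the $\mathbb{C}[M_\sigma/M^1]$-part it is controlled by Corollary~\ref{corollarycompativilityofintertwiningmorrisviatp}, giving a scalar $c(w,\Phi^M_m)$. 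On the target side, the action of $J_{P'\mid P}(\sigma\otimes\cdot)$ on the $\mathbb{C}[M_\sigma/M^1]$-part is trivial by~\eqref{harishchadravselementsofB} (it commutes with all of $\End_{M(F)}\left(\ind_{M^1}^{M(F)}(\sigma_1)\right)$), and its effect on the $T'_w$ is controlled by Lemma~\ref{lemmareductionintertwiningsolleveldside}, namely $J_{P'\mid P}(\sigma\otimes\cdot)\circ T'_{s,P}\circ(J_{P'\mid P}(\sigma\otimes\cdot))^{-1}=T'_{s,P'}$. Matching these two descriptions, one sees the discrepancy between the two automorphisms is precisely multiplication by a unit of $\mathbb{C}[M_\sigma/M^1]$; taking $b(w,P')$ to be this unit (reading off its value by comparing the image of, say, a fixed $\Phi_{v,B}$ under both routes, and using that $\mathbb{C}[M_\sigma/M^1]$ acts freely enough) forces the square to commute.

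A cleaner way to organize the verification, which I would actually carry out, is to check commutativity directly on vectors: fix $v\in V_\rho$, form $f^G_v\in\ind_K^{G(F)}(\rho)=\ind_{P_{J,B}}^{G(F)}(\rho)$, chase it around the diagram both ways, and evaluate the resulting functions at a well-chosen point of $G(F)$ (the identity, or a lift $\dot v$ with $v\in R(J,\rho)$). Going right-then-down gives, after using the explicit formulas for $I_U$ (the integral $I_{U,2}$ followed by $I_{U,1}$ from Section~\ref{An explicit isomorphism}), for $T_{\rho_M}$ from Section~\ref{Statements of main results}, and for $J_{P'\mid P}(\sigma\otimes\cdot)$ from \cite[Subsection~4.1]{MR4432237} (an integral over $\left(U(F)\cap U'(F)\right)\backslash U'(F)$), a function supported on $P'(F)\cdot K$ whose value at the relevant point is a concrete multiple of $f^M_v$ as in the computations of Lemma~\ref{lemmalefthandsidetrhoiuphis} and Lemma~\ref{lemmarighthandsidet'f'}. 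Going down-then-right gives the same shape of function via $\theta_{w^{-1}B\mid B}$ (another unipotent integral, over $U_{J,w^{-1}B}$) followed by $T_{\rho_M}\circ I_{U'}$. The two integrals differ only by a change of the unipotent group being integrated over and a normalization of the modular character, so the two sides agree up to a nowhere-vanishing element of $\mathbb{C}[M_\sigma/M^1]$; that element is $b(w,P')$, and since $\mathbb{C}[M_\sigma/M^1]$ is a commutative domain (Assumption~\ref{multiplicity1} and Remark~\ref{remarkaboutassumptionmultiplicity1}) the factor is uniquely determined and independent of $v$.

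The main obstacle I anticipate is bookkeeping rather than conceptual: one must be careful that all the intermediate isomorphisms $I_P^G\simeq I_{P'}^G$, $I_{P M_\alpha}^G\circ I_{P\cap M_\alpha}^{M_\alpha}\simeq I_P^G$, and the identification of $\ind_{K}^{G(F)}(\rho)$ with $\ind_{P_{J,w^{-1}B}}^{G(F)}(\rho)$ are normalized consistently with the modular characters $\delta_P$, $\delta_{P'}$, $\delta_{PM_\alpha}$, so that the discrepancy really does land inside $\mathbb{C}[M_\sigma/M^1]$ and not merely in $\mathbb{C}(M_\sigma/M^1)$. This is exactly the same kind of subtlety already handled in Proposition~\ref{compatibility} and in Lemma~\ref{tPwhenmissubalgebra}, so the resolution is to appeal to those normalization conventions verbatim; the fact that $J_{P'\mid P}$ has no poles here (Lemma~\ref{reductionfiniterootpositivesimple} gives $\Sigma_{\mathfrak{s}_M,\mu}(P)=\Sigma_{\mathfrak{s}_M,\mu}(P')$, hence \cite[Proposition~4.2 (a)]{MR4432237} applies) is what guarantees $b(w,P')$ is a genuine unit of $\mathbb{C}[M_\sigma/M^1]$ and not just an element of the fraction field.
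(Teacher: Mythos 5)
Your proposal identifies many of the right ingredients (Corollary~\ref{corollarycompativilityofintertwiningmorrisviatp}, equation~\eqref{harishchadravselementsofB}, Proposition~\ref{propositionintertwiningmorriswjrho}), but it has a genuine gap at the decisive step: showing that the ``discrepancy'' element actually lands in $\mathbb{C}[M_{\sigma}/M^1]^{\times}$. You repeatedly assert this (``the discrepancy ... is precisely multiplication by a unit of $\mathbb{C}[M_\sigma/M^1]$''; ``the two sides agree up to a nowhere-vanishing element of $\mathbb{C}[M_\sigma/M^1]$'') but never argue it. The a priori home for the composite
\[
T_{\rho_{M}} \circ I_{U'} \circ \theta_{w^{-1}B \mid B} \circ
\left(
T_{\rho_{M}} \circ I_{U}
\right)^{-1}
 \circ J_{P' \mid P}(\sigma \otimes \cdot)^{-1}
\]
is the full endomorphism algebra $\End_{G(F)}\left(I_{P'}^{G}\left(\ind_{M^{1}}^{M(F)} (\sigma_{1})\right)\right)^{\times}$, which by Theorem~\ref{modificationoftheorem10.9ofsolleveld} is strictly larger than $\mathbb{C}[M_{\sigma}/M^1]$ (it contains all the $T'_w$ and $J_r$). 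Integrality of $\mathbb{C}[M_\sigma/M^1]$ and absence of poles in $J_{P'\mid P}$ do not by themselves push the element into that commutative subalgebra. Your framing of the worry as ``normalizations must be tracked so the factor is in $\mathbb{C}[M_\sigma/M^1]$ rather than in the fraction field'' thus misses where the real difficulty lies: the problem is not $\mathbb{C}[M_{\sigma}/M^1]$ versus $\mathbb{C}(M_{\sigma}/M^1)$, it is $\mathbb{C}(M_{\sigma}/M^1)$ versus the whole rational endomorphism algebra.

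The paper's proof closes this gap with a structural input your proposal never invokes. Using exactly the chain of ingredients (Proposition~\ref{compatibility}, \eqref{harishchadravselementsofB}, Corollary~\ref{corollarycompativilityofintertwiningmorrisviatp}, and Lemma~\ref{independencyofP} --- note that you do not use the last two in the precise way required) one shows, for every $\theta_{m} \in \mathbb{C}[M_{\sigma}/M^1]$, that the composite commutes with $I_{P'}^{G}(\theta_m)$ up to the scalar $c(w,\theta_m) \in \mathbb{C}^{\times}$. The crucial step is then \cite[Theorem~10.6~(a)]{MR4432237}: any element of $\End_{G(F)}\left(I_{P'}^{G}\left(\ind_{M^{1}}^{M(F)} (\sigma_{1})\right)\right) \otimes_{\mathbb{C}[M_{\sigma}/M^{1}]} \mathbb{C}(M_{\sigma}/M^{1})$ that commutes with each $\theta_m$ up to a scalar lies in $\mathbb{C}(M_{\sigma}/M^{1})$. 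Intersecting with the honest endomorphism algebra then gives $\mathbb{C}[M_{\sigma}/M^{1}]^{\times}$. Without this (or an equivalent structural fact about $\mathbb{C}(M_{\sigma}/M^{1})$ being a ``maximal commutative subalgebra'' in the relevant sense), your reduction to a comparison on generators --- or the element-chasing variant with unipotent integrals --- does not establish the claimed membership. Note also that your invocation of Lemma~\ref{lemmareductionintertwiningsolleveldside} and of the action on the $T'_s$ is unnecessary: the paper only needs the commutation-up-to-scalar with the group algebra part to run Solleveld's characterization, and it deliberately sidesteps any direct comparison of the $T'_s$-action, which would be much harder to control.
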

\begin{proof}
We identify $\mathbb{C}[M_{\sigma}/M^1]$ with $\End_{M(F)}\left(
\ind_{M^{1}}^{M(F)} (\sigma_{1})
\right)$ and $\End_{M(F)}\left(
\ind_{K_{M}}^{M(F)} (\rho_{M})
\right)$ via \eqref{isomgroupalgebraheckealgebra} and $T_{\rho_{M}}$.
According to equation~\eqref{harishchadravselementsofB}, Proposition~\ref{compatibility}, Corollary~\ref{corollarycompativilityofintertwiningmorrisviatp}, and Lemma~\ref{independencyofP},
for any $\theta_{m} \in \mathbb{C}[M_{\sigma}/M^1]$, we have
\begin{align*}
&T_{\rho_{M}} \circ I_{U'} \circ \theta_{w^{-1}B \mid B} \circ 
\left(
T_{\rho_{M}} \circ I_{U}
\right)^{-1}
 \circ J_{P' \mid P}(\sigma \otimes \cdot)^{-1}
\circ I_{P'}^{G}(\theta_{m})\\
&= T_{\rho_{M}} \circ I_{U'} \circ \theta_{w^{-1}B \mid B} \circ 
\left(
T_{\rho_{M}} \circ I_{U}
\right)^{-1}
 \circ I_{P}^{G}(\theta_{m}) \circ J_{P' \mid P}(\sigma \otimes \cdot)^{-1}\\
&=
T_{\rho_{M}} \circ I_{U'} \circ \theta_{w^{-1}B \mid B} \circ t_{P, B}(\theta_{m}) \circ \left(
T_{\rho_{M}} \circ I_{U}
\right)^{-1}
 \circ J_{P' \mid P}(\sigma \otimes \cdot)^{-1}\\
&=
c(w, \theta_{m}) \cdot T_{\rho_{M}} \circ I_{U'} \circ t_{P, w^{-1}B}(\theta_{m}) \circ \theta_{w^{-1}B \mid B} \circ 
\left(
T_{\rho_{M}} \circ I_{U}
\right)^{-1}
 \circ J_{P' \mid P}(\sigma \otimes \cdot)^{-1}\\
&= 
c(w, \theta_{m}) \cdot T_{\rho_{M}} \circ I_{U'} \circ t_{P', w^{-1}B}(\theta_{m}) \circ \theta_{w^{-1}B \mid B} \circ 
\left(
T_{\rho_{M}} \circ I_{U}
\right)^{-1}
 \circ J_{P' \mid P}(\sigma \otimes \cdot)^{-1}\\
&=
c(w, \theta_{m}) \cdot  I_{P'}^{G}(\theta_{m}) \circ T_{\rho_{M}} \circ I_{U'} \circ \theta_{w^{-1}B \mid B} \circ 
\left(
T_{\rho_{M}} \circ I_{U}
\right)^{-1}
 \circ J_{P' \mid P}(\sigma \otimes \cdot)^{-1}.
\end{align*}
Hence, the element
\[
T_{\rho_{M}} \circ I_{U'} \circ \theta_{w^{-1}B \mid B} \circ 
\left(
T_{\rho_{M}} \circ I_{U}
\right)^{-1}
 \circ J_{P' \mid P}(\sigma \otimes \cdot)^{-1} \in \End_{G(F)}\left(
I_{P'}^{G} \left(
\ind_{M^{1}}^{M(F)} (\sigma_{1})
\right)
\right)^{\times}
\]
commutes with any element $\theta_{m} \in \mathbb{C}[M_{\sigma}/M^1]$ up to a constant.
According to \cite[Theorem~10.6 (a)]{MR4432237}, any element of
\[
\End_{G(F)}\left(
I_{P'}^{G} \left(
\ind_{M^{1}}^{M(F)} (\sigma_{1})
\right)
\right) \otimes_{\mathbb{C}[M_{\sigma}/M^1]} \mathbb{C}(M_{\sigma}/M^1)
\]
that commutes with any element $\theta_{m} \in \mathbb{C}[M_{\sigma}/M^1]$ up to a constant is contained in $\mathbb{C}(M_{\sigma}/M^1)$.
Thus, we obtain that
\begin{multline*}
T_{\rho_{M}} \circ I_{U'} \circ \theta_{w^{-1}B \mid B} \circ 
\left(
T_{\rho_{M}} \circ I_{U}
\right)^{-1}
 \circ J_{P' \mid P}(\sigma \otimes \cdot)^{-1}\\
 \in
\End_{G(F)}\left(
I_{P'}^{G} \left(
\ind_{M^{1}}^{M(F)} (\sigma_{1})
\right)
\right)^{\times} \cap \mathbb{C}(M_{\sigma}/M^1) =
 \mathbb{C}[M_{\sigma}/M^1]^{\times}.
\end{multline*}
\end{proof}
We use the same symbol $J_{P' \mid P}(\sigma \otimes \cdot)$ for the map
\[
\End_{G(F)}\left(
I_{P}^{G} \left(
\ind_{M^{1}}^{M(F)} (\sigma_{1})
\right)
\right) \rightarrow \End_{G(F)}\left(
I_{P'}^{G} \left(
\ind_{M^{1}}^{M(F)} (\sigma_{1})
\right)
\right)
\]
induced by the isomorphism
\[
J_{P' \mid P}(\sigma \otimes \cdot) \colon I_{P}^{G} \left(
\ind_{M^{1}}^{M(F)} (\sigma_{1})
\right) \rightarrow I_{P'}^{G} \left(
\ind_{M^{1}}^{M(F)} (\sigma_{1})
\right).
\]
\begin{corollary}
\label{commutativediagramintertwiningreduction}
We have the following commutative diagram:
\[
\xymatrix@R+1pc@C+2pc{
\End_{G(F)}\left(
\ind_{P_{J, B}}^{G(F)} (\rho)
\right) \ar[d]_-{\theta_{w^{-1}B \mid B}} \ar[r]^-{T_{\rho_{M}} \circ I_{U}} \ar@{}[dr]|\circlearrowleft & \End_{G(F)}\left(
I_{P}^{G} \left(
\ind_{M^{1}}^{M(F)} (\sigma_{1})
\right)
\right) \ar[d]^-{\Ad\left(b(w, P')\right) \circ J_{P' \mid P}(\sigma \otimes \cdot)}
\\
\End_{G(F)}\left(
\ind_{P_{J, w^{-1}B}}^{G(F)} (\rho)
\right) \ar[r]^-{T_{\rho_{M}} \circ I_{U'}} & \End_{G(F)}\left(
I_{P'}^{G} \left(
\ind_{M^{1}}^{M(F)} (\sigma_{1})
\right)
\right),
}
\]
where $\Ad\left(b(w, P')\right)$ denotes the conjugation by $b(w, P')$ on $\End_{G(F)}\left(
I_{P'}^{G} \left(
\ind_{M^{1}}^{M(F)} (\sigma_{1})
\right)
\right)$.
\end{corollary}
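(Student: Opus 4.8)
The plan is to deduce the square of endomorphism algebras purely formally from the square of $G(F)$-representations established in the previous Proposition, using only the functoriality of the ``conjugation'' construction by which an isomorphism of $G(F)$-representations induces an isomorphism of the associated endomorphism algebras. First I would fix notation for the four isomorphisms of $G(F)$-representations occurring in that Proposition: write $\Psi = T_{\rho_{M}} \circ I_{U}$ and $\Psi' = T_{\rho_{M}} \circ I_{U'}$ for the horizontal arrows, $\Theta = \theta_{w^{-1}B \mid B}$ for the left vertical arrow, and $\Xi = b(w, P') \circ J_{P' \mid P}(\sigma \otimes \cdot)$ for the right vertical arrow. Each is an isomorphism of $G(F)$-representations: $\Psi$ and $\Psi'$ because $I_{U}$, $I_{U'}$ and $T_{\rho_{M}}$ are; $\Theta$ by Corollary~\ref{corollarythetaw-1BBisom}; and $\Xi$ because $J_{P' \mid P}(\sigma \otimes \cdot)$ is an isomorphism (having no poles, by the discussion preceding Lemma~\ref{lemmareductionintertwiningsolleveldside}) and $b(w, P') \in \mathbb{C}[M_{\sigma}/M^{1}]^{\times}$ is a unit of $\End_{G(F)}(I_{P'}^{G}(\ind_{M^{1}}^{M(F)}(\sigma_{1})))$. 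The content of the Proposition is precisely the identity $\Psi' \circ \Theta = \Xi \circ \Psi$ of maps $\ind_{P_{J, B}}^{G(F)}(\rho) \to I_{P'}^{G}(\ind_{M^{1}}^{M(F)}(\sigma_{1}))$.

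Next I would unwind how each arrow of the corollary's square acts on endomorphisms. By the conventions fixed earlier in the paper, the induced map $\theta_{w^{-1}B \mid B}$ on endomorphism algebras sends $\phi$ to $\Theta \circ \phi \circ \Theta^{-1}$; the induced maps $T_{\rho_{M}} \circ I_{U}$ and $T_{\rho_{M}} \circ I_{U'}$ send $\phi$ to $\Psi \circ \phi \circ \Psi^{-1}$ and $\Psi' \circ \phi \circ (\Psi')^{-1}$ respectively; and $\Ad(b(w, P')) \circ J_{P' \mid P}(\sigma \otimes \cdot)$ sends $\psi$ to $b(w, P') \circ J_{P' \mid P}(\sigma \otimes \cdot) \circ \psi \circ J_{P' \mid P}(\sigma \otimes \cdot)^{-1} \circ b(w, P')^{-1} = \Xi \circ \psi \circ \Xi^{-1}$, i.e. it is conjugation by the composite isomorphism $\Xi$. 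Tracing an arbitrary $\phi \in \End_{G(F)}(\ind_{P_{J, B}}^{G(F)}(\rho))$ around the two paths of the square then gives $\Xi \circ \Psi \circ \phi \circ \Psi^{-1} \circ \Xi^{-1}$ along the right-then-down route and $\Psi' \circ \Theta \circ \phi \circ \Theta^{-1} \circ (\Psi')^{-1}$ along the down-then-right route. Since $\Psi' \circ \Theta = \Xi \circ \Psi$ and hence also $(\Psi' \circ \Theta)^{-1} = \Psi^{-1} \circ \Xi^{-1}$, the two expressions agree, which is the claimed commutativity.

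This argument is entirely formal once the Proposition is granted, so there is no substantial obstacle. The one point deserving care is the verification that the composite labelled $\Ad(b(w, P')) \circ J_{P' \mid P}(\sigma \otimes \cdot)$ really is conjugation by $\Xi = b(w, P') \circ J_{P' \mid P}(\sigma \otimes \cdot)$; this holds because $\Ad(b(w,P'))$ is conjugation by $b(w, P')$ and conjugation is a homomorphism from the group of invertible intertwiners to the group of algebra automorphisms, so conjugating first by $J_{P' \mid P}(\sigma \otimes \cdot)$ and then by $b(w, P')$ is the same as conjugating by their product. Granting this, the corollary follows immediately.
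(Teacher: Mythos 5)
Your argument is correct and is precisely the (implicit) reasoning behind the corollary: the paper gives no explicit proof because, once the preceding Proposition establishes the representation-level identity $\Psi' \circ \Theta = \Xi \circ \Psi$, the diagram of endomorphism algebras follows formally by observing that each induced map is conjugation by the corresponding intertwiner and that conjugation respects composition. You have correctly identified all the needed invertibility facts (Corollary~\ref{corollarythetaw-1BBisom} for $\Theta$, pole-freeness of $J_{P' \mid P}(\sigma\otimes\cdot)$ by the choice of $P'$, and $b(w,P')\in\mathbb{C}[M_{\sigma}/M^1]^{\times}$), and the final step tracing an arbitrary endomorphism around both paths of the square is exactly what is needed.
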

Now, we drop the conditions that $a \in B$ and $M_{\alpha}$ is a standard Levi subgroup with respect to $P$ in Theorem~\ref{maintheoremisomofaffineheckerewrite}:
\begin{theorem}
\label{maintheoremisomofaffineheckerewritewithoutassumption}
Let
\[
s = s_{\alpha} \in W_{0}(R^{\Mor}) = W_{0}(R^{\Sol})
\]
be the simple reflection associated with an element $\alpha \in \in \Delta_{\mathfrak{s}_{M}, \mu}(P)$.
Then, we have
\begin{align*}
\left(
T_{\rho_{M}} \circ I_{U}
\right)(\Phi_{s}) =
\begin{cases}
q_{F}^{\lambda^{\Sol}(\alpha')} -1 - T'_{s} & (\epsilon_{\alpha} = 0), \\
- q_{F}^{\left(-\lambda^{\Sol}(\alpha') + (\lambda^{*})^{\Sol}(\alpha')\right)/2} \cdot \theta_{- (\alpha')^{\vee}} T'_{s} & (\epsilon_{\alpha} =1).
\end{cases}
\end{align*}
\end{theorem}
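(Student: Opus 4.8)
The plan is to reduce the general statement to the special case already proved in Theorem~\ref{maintheoremisomofaffineheckerewrite}, where we additionally assumed that the affine root $a \in \Gamma(J, \rho)^{+}$ with $r(a) = -\alpha'$ lies in the fixed basis $B$ and that $M_{\alpha}$ is a standard Levi subgroup with respect to $P$. The idea is that both extra hypotheses can be arranged after suitable ``moves'': changing the basis $B$ to $w^{-1}B$ (which transports $\Phi_{s}$ by $\theta_{w^{-1}B \mid B}$ and replaces $P$ by a still-compatible parabolic), and changing $P$ to a parabolic $P'$ for which $M_{\alpha}$ becomes standard (which transports the Solleveld-side objects by the Harish-Chandra intertwining operator $J_{P' \mid P}(\sigma \otimes \cdot)$). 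The key point is that Corollary~\ref{commutativediagramintertwiningreduction} packages exactly this compatibility: the square built from $\theta_{w^{-1}B \mid B}$ on the left, $\Ad(b(w,P')) \circ J_{P' \mid P}(\sigma \otimes \cdot)$ on the right, and the two realizations of $T_{\rho_{M}} \circ I_{U}$ and $T_{\rho_{M}} \circ I_{U'}$ on top and bottom, commutes.

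Concretely, first I would invoke Lemma~\ref{reductionfiniterootpositivesimple} (together with Lemma~\ref{existenceofgoodw}) to choose $w \in W$ with $w(J \cup \{a\}) \subset B$ and $N(w) \cap \pm\Gamma(J,\rho) = \emptyset$, so that after replacing $B$ by $w^{-1}B$ the relevant affine root $a$ becomes simple; by Remark~\ref{remarkgamma+doesnotchange} the sets $\Gamma(J,\rho)^{+}$, $\Gamma'(J,\rho)^{+}$, hence $\Delta^{\Mor}$, and the parabolic-compatibility condition on $P$ are unaffected. Next I would apply Lemma~\ref{reductionfiniterootpositivesimple} to produce the parabolic $P'$ with $M_{\alpha}$ standard with respect to $P'$ and $\Sigma_{\mathfrak{s}_{M}, \mu}(P) = \Sigma_{\mathfrak{s}_{M}, \mu}(P')$; since the finite positive system is preserved, $\Delta^{\Sol}$, the labels $\lambda^{\Sol}, (\lambda^{*})^{\Sol}$, and the value $\epsilon_{\alpha}$ are the same for $P$ and $P'$. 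The version of Theorem~\ref{maintheoremisomofaffineheckerewrite} for this pair $(w^{-1}B, P')$ gives the desired formula for $(T_{\rho_{M}} \circ I_{U'})(\Phi_{s, w^{-1}B})$ in terms of $T'_{s, P'}$.

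The final step is to chase Corollary~\ref{commutativediagramintertwiningreduction}. By Corollary~\ref{corollaryforrgrouppropositionintertwiningmorriswjrho}, $\theta_{w^{-1}B \mid B}$ sends $\Phi_{s, B}$ to $\Phi_{s, w^{-1}B}$ on the nose (since $v[a,J] \in R(J,\rho)$), so the left edge is harmless. By Lemma~\ref{lemmareductionintertwiningsolleveldside}, $J_{P' \mid P}(\sigma \otimes \cdot)$ conjugates $T'_{s, P}$ to $T'_{s, P'}$; the extra twist $\Ad(b(w,P'))$ with $b(w,P') \in \mathbb{C}[M_{\sigma}/M^{1}]^{\times}$ fixes $T'_{s, P'}$ up to an element of $\mathbb{C}[M_{\sigma}/M^{1}]$ only if $b(w,P')$ commutes with $T'_{s,P'}$, which in general it need not; but on the relevant combinations (namely $q_{F}^{\lambda^{\Sol}(\alpha')} - 1 - T'_{s,P'}$ in the $\epsilon_{\alpha}=0$ case, and $-q_{F}^{(-\lambda^{\Sol}(\alpha')+(\lambda^{*})^{\Sol}(\alpha'))/2}\,\theta_{-(\alpha')^{\vee}} T'_{s,P'}$ in the $\epsilon_{\alpha}=1$ case) I expect the conjugation to act trivially, because the left-hand side $\theta_{w^{-1}B \mid B}(\Phi_{s,B}) = \Phi_{s,w^{-1}B}$ is canonically defined and already lies in $\mathcal{H}(R(J,\rho))$; transporting across the commutative square forces the right-hand side to be $\Ad(b(w,P'))$-invariant. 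Thus $(T_{\rho_{M}} \circ I_{U})(\Phi_{s})$ equals the pull-back under $J_{P' \mid P}(\sigma \otimes \cdot)^{-1}$ of the $P'$-version of the formula, and since that formula is expressed in terms of $T'_{s,P'}$ and central elements that are intrinsic to $\mathfrak{s}_{M}$, the same identity holds back on the $P$-side. The main obstacle I anticipate is exactly controlling the conjugation $\Ad(b(w,P'))$: one must check that the ambiguity introduced by $b(w,P')$ is killed by comparing with the unambiguous Morris-side element and, where necessary, invoking \cite[Lemma~10.7~(a)]{MR4432237} on residues (as in the proof of Lemma~\ref{lemmareductionintertwiningsolleveldside}) to pin down the scalar to $1$.
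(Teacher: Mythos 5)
Your overall reduction strategy is exactly the one the paper uses: choose $w$ via Lemma~\ref{existenceofgoodw} so that $a \in w^{-1}B$, choose $P'$ via Lemma~\ref{reductionfiniterootpositivesimple} so that $M_{\alpha}$ is standard, apply the special case Theorem~\ref{maintheoremisomofaffineheckerewrite} for $(w^{-1}B, P')$, and chase the commutative square of Corollary~\ref{commutativediagramintertwiningreduction} using Corollary~\ref{corollaryforrgrouppropositionintertwiningmorriswjrho} and Lemma~\ref{lemmareductionintertwiningsolleveldside}. (A small slip: it is Lemma~\ref{existenceofgoodw}, not Lemma~\ref{reductionfiniterootpositivesimple}, that produces $w$.)

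However, your treatment of the conjugation $\Ad(b(w,P'))$ has a genuine gap. You assert that ``transporting across the commutative square forces the right-hand side to be $\Ad(b(w,P'))$-invariant'' and that the conjugation should act trivially on the relevant combinations; this is not an argument — the commutative square merely says that $(T_{\rho_M}\circ I_U)(\Phi_s)$ equals $\Ad(b(w,P'))^{-1}$ applied to the $P'$-side formula, and that equality alone does not make the latter $\Ad(b(w,P'))$-fixed. The invocation of \cite[Lemma~10.7~(a)]{MR4432237} on residues also does not apply here (that was used inside Lemma~\ref{lemmareductionintertwiningsolleveldside} to normalize a different scalar). What the paper actually does at this point is: write $b(w,P')^{-1} = c\cdot\theta_m$ with $c\in\mathbb{C}^{\times}$ and $m\in M_{\sigma}/M^1$, compute using the Bernstein relation that
\[
\Ad\bigl(b(w,P')\bigr)^{-1}(T'_{s,P}) \in \theta_{m - s(m)}\, T'_{s,P} + \mathbb{C}[M_{\sigma}/M^1],
\]
and then invoke Lemma~\ref{lemmaforcomparisonofmorrisandsolleveldkeypropositionrank1} from Section~\ref{Some lemmas about elements of} — which pins the $T'_s$-coefficient of $(T_{\rho_M}\circ I_U)(\Phi_s)$ to be a unit multiple of $(\theta_{h_{\alpha}^{\vee}})^{-\epsilon}$ — to conclude that $\theta_{m-s(m)}$ must be a scalar, hence $s(m) = m$, hence $\Ad(b(w,P'))^{-1}$ fixes $T'_{s,P}$. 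This is the step you are missing; without it the scalar/monomial ambiguity introduced by $b(w,P')$ is not controlled, and the theorem does not follow.
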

\begin{proof}
Since $a \in w^{-1}B$, and $M_{\alpha}$ is standard with respect to $P'$, Theorem~\ref{maintheoremisomofaffineheckerewrite} implies that
\begin{align}
\label{maintheoremisomofaffineheckerewritestandardcase}
\left(
T_{\rho_{M}} \circ I_{U'}
\right)(\Phi_{s, w^{-1}B}) =
\begin{cases}
q_{F}^{\lambda^{\Sol}(\alpha')} -1 - T'_{s, P'} & (\epsilon_{\alpha} = 0), \\
- q_{F}^{\left(-\lambda^{\Sol}(\alpha') + (\lambda^{*})^{\Sol}(\alpha')\right)/2} \cdot \theta_{- (\alpha')^{\vee}} T'_{s, P'} & (\epsilon_{\alpha} =1).
\end{cases}
\end{align}
According to Corollary~\ref{commutativediagramintertwiningreduction}, Corollary~\ref{corollaryforrgrouppropositionintertwiningmorriswjrho}, equation~\eqref{maintheoremisomofaffineheckerewritestandardcase}, 
 equation~\eqref{harishchadravselementsofB}, and Lemma~\ref{lemmareductionintertwiningsolleveldside}, we have
\begin{align*}
\left(
T_{\rho_{M}} \circ I_{U}
\right)(\Phi_{s, B}) 
&= 
\left(
\left(
\Ad\left(b(w, P')\right) \circ J_{P' \mid P}(\sigma \otimes \cdot)
\right)^{-1} \circ T_{\rho_{M}} \circ I_{U'} \circ \theta_{w^{-1}B \mid B}
\right)(\Phi_{s, B}) \\
&=
\left(
\left(
\Ad\left(b(w, P')\right) \circ J_{P' \mid P}(\sigma \otimes \cdot)
\right)^{-1} \circ T_{\rho_{M}} \circ I_{U'}
\right)(\Phi_{s, w^{-1}B})\\
&=
\begin{cases}
\left(
\Ad\left(b(w, P')\right) \circ J_{P' \mid P}(\sigma \otimes \cdot)
\right)^{-1} \left(
q_{F}^{\lambda^{\Sol}(\alpha')} -1 - T'_{s, P'}
\right) & (\epsilon_{\alpha} = 0), \\
\left(
\Ad\left(b(w, P')\right) \circ J_{P' \mid P}(\sigma \otimes \cdot)
\right)^{-1} \left(
- q_{F}^{\left(-\lambda^{\Sol}(\alpha') + (\lambda^{*})^{\Sol}(\alpha')\right)/2} \cdot \theta_{- (\alpha')^{\vee}} T'_{s, P'}
\right) & (\epsilon_{\alpha} =1)
\end{cases}\\
&=
\begin{cases}
\left(
J_{P' \mid P}(\sigma \otimes \cdot) \circ \Ad\left(b(w, P')\right)
\right)^{-1} \left(
q_{F}^{\lambda^{\Sol}(\alpha')} -1 - T'_{s, P'}
\right) & (\epsilon_{\alpha} = 0), \\
\left(
J_{P' \mid P}(\sigma \otimes \cdot) \circ \Ad\left(b(w, P')\right)
\right)^{-1} \left(
- q_{F}^{\left(-\lambda^{\Sol}(\alpha') + (\lambda^{*})^{\Sol}(\alpha')\right)/2} \cdot \theta_{- (\alpha')^{\vee}} T'_{s, P'}
\right) & (\epsilon_{\alpha} =1)
\end{cases}\\
&=
\begin{cases}
\Ad\left(b(w, P')\right)^{-1} \left(
q_{F}^{\lambda^{\Sol}(\alpha')} -1 - T'_{s, P}
\right) & (\epsilon_{\alpha} = 0), \\
\Ad\left(b(w, P')\right)^{-1} \left(
- q_{F}^{\left(-\lambda^{\Sol}(\alpha') + (\lambda^{*})^{\Sol}(\alpha')\right)/2} \cdot \theta_{- (\alpha')^{\vee}} T'_{s, P} 
\right) & (\epsilon_{\alpha} =1).
\end{cases}
\end{align*}
We regard $b(w, P')$ as an element of $\End_{G(F)}\left(
I_{P}^{G} \left(
\ind_{M^{1}}^{M(F)} (\sigma_{1})
\right)
\right)^{\times}$ in the last two terms.
Since $b(w, P') \in \mathbb{C}[M_{\sigma}/M^1]^{\times}$, and $M_{\sigma}/M^1$ is a free $\mathbb{Z}$-module of finite rank, we can write 
\[
b(w, P')^{-1} = c \cdot \theta_{m}
\]
for some $c \in \mathbb{C}^{\times}$ and $m \in M_{\sigma}/M^1$.
Then, we have
\begin{align*}
\Ad\left(b(w, P')\right)^{-1} \left(T'_{s, P}\right)
&=\theta_{m} \cdot T'_{s, P} \cdot \theta_{-m}\\
&= \theta_{m}
\left(
\theta_{- s(m)} T'_{s, P} - \left(
\theta_{- s(m)} T'_{s, P} - T'_{s, P} \theta_{-m}
\right)
\right)\\
&= \theta_{m - s(m)} T'_{s, P} - \theta_{m} \left(
\theta_{-s(m)} T'_{s, P} - T'_{s, P} \theta_{- m}
\right)\\
& \in \theta_{m - s(m)} T'_{s, P} + \mathbb{C}[M_{\sigma}/M^1].
\end{align*}
Thus, we have
\[
\left(
T_{\rho_{M}} \circ I_{U}
\right)(\Phi_{s, B}) 
\in
\begin{cases}
q_{F}^{\lambda^{\Sol}(\alpha')} -1 - \theta_{m - s(m)} T'_{s, P} + \mathbb{C}[M_{\sigma}/M^1]
& (\epsilon_{\alpha} = 0), \\
- q_{F}^{\left(-\lambda^{\Sol}(\alpha') + (\lambda^{*})^{\Sol}(\alpha')\right)/2} \cdot \theta_{- (\alpha')^{\vee}}\theta_{m - s(m)} T'_{s, P} + \mathbb{C}[M_{\sigma}/M^1]
& (\epsilon_{\alpha} =1).
\end{cases}
\]
We note that when $\epsilon_{\alpha} = 1$, we have $(\alpha')^{\vee} = h_{\alpha}^{\vee}$ (see the last paragraph of Section~\ref{Comparison of Morris and Solleveld's endomorphism algebras : maximal case}).
Then, according to Lemma~\ref{lemmaforcomparisonofmorrisandsolleveldkeypropositionrank1}, we obtain that $\theta_{m - s(m)} \in \mathbb{C}^{\times}$, hence $m = s(m)$.
Therefore, we obtain
\begin{align*}
\Ad\left(b(w, P')\right)^{-1} \left(T'_{s, P}\right)
&= \theta_{m - s(m)} T'_{s, P} - \theta_{m} \left(
\theta_{-s(m)} T'_{s, P} - T'_{s, P} \theta_{- m}
\right)\\
&= T'_{s, P},
\end{align*}
and
\begin{align*}
\left(
T_{\rho_{M}} \circ I_{U}
\right)(\Phi_{s}) =
\begin{cases}
q_{F}^{\lambda^{\Sol}(\alpha')} -1 - T'_{s} & (\epsilon_{\alpha} = 0), \\
- q_{F}^{\left(-\lambda^{\Sol}(\alpha') + (\lambda^{*})^{\Sol}(\alpha')\right)/2} \cdot \theta_{- (\alpha')^{\vee}} T'_{s} & (\epsilon_{\alpha} =1).
\end{cases}
\end{align*}
\end{proof}
\appendix
\section{Subsets of a set of simple affine roots}
\label{Subsets of a set of simple affine roots}
Let $E$ be a real Euclidean space of finite dimension.
Let $V$ denote its vector space of translations and $A'$ denote the vector space of affine-linear functions on $E$.
Let $\Phi_{\aff} \subset A'$ be an affine root system on $E$ \cite[Section~2]{MR357528}.
We fix a chamber $C$ of $\Phi_{\aff}$, and let $B$ denote the corresponding basis of $\Phi_{\aff}$.
For an affine root $a$, let $Da$ denote its gradient, that is, 
\[
Da \colon V \rightarrow \mathbb{R}
\]
is a linear function such that
\[
a(x+v) = a(x) + (Da)(v)
\]
for all $x \in E$ and $v \in V$.
For a subset $\Psi \subset \Phi_{\aff}$, we write
\[
D\Psi = \{
Da \mid a \in \Psi
\}.
\]
We define
\[
H_{a} = \{
x \in E \mid a(x) = 0
\}
\]
and
\[
H_{a}^{+} = \{
x \in E \mid a(x) >0
\}.
\]
We also use similar notation as above for other affine root systems below.

Let $J \subset B$ such that $DJ$ is linearly independent.
We define a subspace $V^{J}$ of $V$ as
\[
V^{J} = \{
v \in V \mid \alpha(v) = 0 \ (\alpha \in DJ)
\}.
\]
We write 
\[
E_{J} = E/V^{J},
\]
that is an affine space with the vector space of translations $V/V^{J}$.
Since $DJ$ is linearly independent, we have
\[
\dim(E_{J}) = \abs{DJ}.
\]
Let $(V^{J})^{\perp}$ denote the orthogonal complement of $V^{J}$ in $V$.
Then, the natural projection $V \to V/V^{J}$ restricts to an isomorphism
\begin{align}
\label{orthogonalcomplement}
(V^{J})^{\perp} \rightarrow V/V^{J}.
\end{align}
We define an inner product on $(V^{J})^{\perp}$ as the restriction of the inner product on $V$.
We also define an inner product on $V/V^{J}$ by transporting the inner product on $(V_{J})^{\perp}$ via \eqref{orthogonalcomplement}.
Then, $E_{J}$ is a real Euclidean space.
Let $\mathbb{R} \cdot(DJ)$ denote the $\mathbb{R}$-span of $DJ$ in $V^{*}$.
We define
\[
(\Phi_{\aff})_{J} = \{
a \in \Phi_{\aff} \mid Da \in \mathbb{R} \cdot(DJ)
\}.
\]
Then, $(\Phi_{\aff})_{J}$ is an affine root system on $E_{J}$.
\begin{lemma}
\label{lemmabasisofsimplerootsoflevi}
There exists a basis $B_{J}$ of $(\Phi_{\aff})_{J}$ such that $J \subset B_{J}$.
\end{lemma}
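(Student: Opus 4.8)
The plan is to build the basis $B_{J}$ by restricting to $(\Phi_{\aff})_{J}$ the positive system of $\Phi_{\aff}$ attached to the chosen chamber $C$. Write $\Phi_{\aff}^{+}=\{a\in\Phi_{\aff}\mid a(x)>0\ (x\in C)\}$ and set $(\Phi_{\aff})_{J}^{+}:=(\Phi_{\aff})_{J}\cap\Phi_{\aff}^{+}$. First I would note that $J\subset(\Phi_{\aff})_{J}^{+}$: for $a\in J$ we have $a\in B\subset\Phi_{\aff}$ and $Da\in DJ\subset\mathbb{R}\cdot(DJ)$, so $a\in(\Phi_{\aff})_{J}$ by definition, and $a>0$ on $C$. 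Also $(\Phi_{\aff})_{J}$ is symmetric, since $D(-a)=-Da$.

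Next I would check that $(\Phi_{\aff})_{J}^{+}$ is a genuine positive system of $(\Phi_{\aff})_{J}$, i.e. is cut out by an alcove of $(\Phi_{\aff})_{J}$ on $E_{J}$. Every $a\in(\Phi_{\aff})_{J}$ has $Da\in\mathbb{R}\cdot(DJ)$, which vanishes on $V^{J}=\bigcap_{\alpha\in DJ}\ker\alpha$; hence $a$ is constant on cosets of $V^{J}$ and descends to an affine-linear function $\bar a$ on $E_{J}=E/V^{J}$, whose zero set is a wall of $(\Phi_{\aff})_{J}$. Pick a point $x_{0}$ in the open alcove $C$ of $\Phi_{\aff}$, with image $\bar x_{0}\in E_{J}$. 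Since $x_{0}$ lies on no hyperplane $H_{a}$ with $a\in\Phi_{\aff}$, the point $\bar x_{0}$ lies on no wall of $(\Phi_{\aff})_{J}$, so it lies in some alcove $C'$ of $(\Phi_{\aff})_{J}$; as $C'$ is connected and meets no wall, each $\bar a$ has constant sign on $C'$, equal to the sign of $a(x_{0})$. Therefore $\bar a>0$ on $C'$ exactly when $a\in\Phi_{\aff}^{+}$. Comparing with $(\Phi_{\aff})_{J}^{+}$ — both sets contain exactly one of $\pm a$ for each $a\in(\Phi_{\aff})_{J}$ — shows they coincide, so $(\Phi_{\aff})_{J}^{+}$ is the positive system of $(\Phi_{\aff})_{J}$ determined by $C'$.

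Finally I would invoke that an alcove of an affine root system determines a basis, namely the positive roots not expressible as a sum of two positive roots (see \cite[Section~2]{MR357528}); let $B_{J}$ be the basis of $(\Phi_{\aff})_{J}$ attached to $C'$. For $a\in J$ we have $a\in(\Phi_{\aff})_{J}^{+}$, and if $a=b+c$ with $b,c\in(\Phi_{\aff})_{J}^{+}$ then $b,c\in\Phi_{\aff}^{+}$, contradicting the fact that $a$ is simple in $\Phi_{\aff}$ (as $a\in B$). Hence $a\in B_{J}$, so $J\subset B_{J}$, which proves the lemma.

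The proof is essentially formal; the only point needing care — and the step I would present most carefully — is the passage to $E_{J}$ in the second paragraph: one must make sure that "positive system of $(\Phi_{\aff})_{J}$" is read via alcoves of $(\Phi_{\aff})_{J}$ as an affine root system \emph{on $E_{J}$}, and that quotienting by $V^{J}$ is harmless because every wall of $(\Phi_{\aff})_{J}$ in $E$ is a union of cosets of $V^{J}$. This is exactly where the standing hypothesis that $DJ$ is linearly independent (already used to make $(\Phi_{\aff})_{J}$ an affine root system on $E_{J}$) and the definition of $V^{J}$ are used. Everything else is the standard fact that intersecting a positive system with a subsystem yields a positive system whose simple roots contain any original simple roots lying in the subsystem.
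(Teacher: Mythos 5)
Your setup in the first two paragraphs is fine and in fact gives a cleaner route to the chamber than the paper does: you observe that for $x_{0}\in C$, the image $\bar{x}_{0}\in E_{J}$ lies in a chamber $C'$ of $(\Phi_{\aff})_{J}$ and that the corresponding positive system is exactly $(\Phi_{\aff})_{J}\cap\Phi_{\aff}^{+}$. That much is correct. (The paper instead works with a point $y$ near the vertex $x_{J}$ and proves a preparatory claim about affine roots vanishing on $E^{J}$, but the two chambers produced turn out to be the same.)

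The gap is in the final step. You assert that the basis of an affine root system attached to an alcove is ``the positive roots not expressible as a sum of two positive roots'' and cite Macdonald for this. That characterization is a theorem for \emph{finite} root systems but is \emph{false} for affine root systems, and Macdonald does not claim it; he defines the basis through the walls of the chamber. A concrete counterexample is $\widetilde{A}_{1}$: with positive affine roots $\{\alpha+n\mid n\ge 0\}\cup\{-\alpha+n\mid n\ge 1\}$, the root $\alpha+1$ cannot be written as a sum of two positive affine roots (any such sum has gradient in $\{2\alpha,0,-2\alpha\}$, never $\alpha$), yet it is not simple --- the basis is $\{\alpha,\;-\alpha+1\}$. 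Since $(\Phi_{\aff})_{J}$ can perfectly well be of type $\widetilde{A}_{1}$ (take $J$ a single simple root), indecomposability of $a\in J$ in $(\Phi_{\aff})_{J}^{+}$ does \emph{not} give $a\in B_{J}$, and the last paragraph does not close the argument.

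The approach is salvageable: instead of invoking indecomposability, argue geometrically that $H_{\bar{a}}$ is a wall of $C'$ for each $a\in J$. For $a\in J$ the facet $F_{a}=\overline{C}\cap H_{a}$ of $C$ is open in $H_{a}$, and because $Da$ kills $V^{J}$, the projection $H_{a}\to H_{\bar{a}}$ is an open surjection; so the image of $F_{a}$ is an open subset of $H_{\bar{a}}$. Moreover any $z\in F_{a}$ satisfies $b(z)>0$ for every positive affine root $b$ not proportional to $a$, hence $\bar{b}(\bar z)>0$ for every $b\in(\Phi_{\aff})_{J}^{+}$ not proportional to $a$. Thus $\bar z$ lies on $H_{\bar a}$ and on no other wall of $(\Phi_{\aff})_{J}$, giving a relatively open codimension-one piece of $\partial C'$ inside $H_{\bar a}$; so $H_{\bar a}$ is a wall of $C'$ and $a\in B_{J}$. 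This replaces the false lemma by an argument that is, in substance, the wall verification that occupies the latter half of the paper's proof.
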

\begin{proof}
It suffices to show that there exists a chamber $C_{J}$ of $E_{J}$ such that $H_{a}$ is a wall of $C_{J}$ for any $a \in J$.
Let 
\[
E^{J} = \{
x \in E \mid a(x) = 0 \ (a \in J)
\}.
\]
\begin{claim}
\label{claimbasisofkernel}
Let $a \in \Phi_{\aff}$ be an affine root such that $a(x) = 0$ for any $x \in E^{J}$.
Then, we can write
\[
a = \sum_{b_i \in J} c_{i} b_{i}
\]
with rational integer coefficients $c_{i}$ which are all non-negative or non-positive.
\end{claim}
\begin{proof}
Since $B$ is a basis of $\Phi_{\aff}$, we can write
\[
a = \sum_{b_{i} \in B} c_{i}b_{i}
\]
with rational integer coefficients $c_{i}$ which are all non-negative or non-positive.
We will prove that $c_{i}=0$ unless $b_{i} \in J$.
Assume that $c_{i} \neq 0$ for some $b_{i} \in B \backslash J$.
Since $B$ is a basis of $\Phi_{\aff}$ corresponding to the chamber $C$, there exists a vertex $x_{i}$ of $C$ such that 
\[
b_{i}(x_{i}) > 0
\]
and 
\[
b_{j}(x_{i}) = 0
\]
for any $j \neq i$.
Since $b_{i} \not\in J$, the second equation implies $x_{i} \in E^{J}$, hence $a(x_{i}) = 0$.
However, we can calculate as
\[
a(x_{i}) = \sum_{b_{j} \in B} c_{j}b_{j}(x_{i}) = c_{i}b_{i}(x_{i}) \neq 0,
\]
a contradiction.
\end{proof}
Let $E^{J}_{J}$ denote the image of $E^{J}$ on $E_{J}$, that is,
\[
E^{J}_{J} = \{
x \in E_{J} \mid a(x) = 0 \ (a \in J)
\}.
\]
Since $DJ$ is linearly independent and $\abs{DJ}$ is equal to the dimension of $E_{J}$, $E^{J}_{J}$ is a singleton.
We write
\[
E^{J}_{J} = \{
x_{J}
\}.
\]
We also write 
\[
E_{J}^{+} = \{
x \in E_{J} \mid a(x) >0 \ (a \in J)
\}.
\]
We take an open ball $U$ in $E_{J}$ whose center is $x_{J}$ such that 
\[
U \cap H_{a} = \emptyset
\]
for any $a \in (\Phi_{\aff})_{J}$ satisfying $a(x_{J}) \neq 0$.
Since $x_{J} \in \overline{E_{J}^{+}}$, we can take an element $y \in U \cap E_{J}^{+}$.
Since $y \in U$, $a(y) \neq 0$ for any $a \in (\Phi_{\aff})_{J}$ satisfying $a(x_{J}) \neq 0$.
On the other hand, we can prove that $a(y) \neq 0$ for any $a \in (\Phi_{\aff})_{J}$ satisfying $a(x_{J}) = 0$ as follows.
Let $a \in (\Phi_{\aff})_{J} \subset \Phi_{\aff}$ be an affine root such that $a(x_{J}) = 0$.
As an affine function on $E$, $a(x)=0$ for any $x \in E^{J}$.
Then, Claim~\ref{claimbasisofkernel} implies that we can write
\[
a = \sum_{b_i \in J} c_{i} b_{i}
\]
with rational integer coefficients $c_{i}$ which are all non-negative or non-positive.
Since $y \in E_{J}^{+}$, 
\[
b_{i}(y) > 0
\]
for any $b_{i} \in J$.
Hence, 
\[
a(y) = \sum_{b_i \in J} c_{i} b_{i}(y) 
\begin{cases}
>0 \ &(\text{$c_{i} \ge 0$ for all $i$}), \\
<0  \ &(\text{$c_{i} \le 0$ for all $i$}).
\end{cases}
\]
Here, we note that at least one $c_{i}$ is nonzero since $a$ is an affine root.
Thus, we conclude that $a(y) \neq 0$ for any $a \in (\Phi_{\aff})_{J}$, hence $y$ is in a chamber $C_{J}$ of $(\Phi_{\aff})_{J}$.
Since $C_{J}$ is a chamber of $(\Phi_{\aff})_{J}$ containing $y$, and $y \in E_{J}^{+}$, $C_{J}$ is contained in $E_{J}^{+}$.
On the other hand, since $U \cap E_{J}^{+}$ is a convex subset of 
\[
E_{J} \backslash \bigcup_{a \in (\Phi_{\aff})_{J}} H_{a}
\]
containing $y$,
$U \cap E_{J}^{+}$ is contained in $C_{J}$.

We will prove that $H_{a}$ is a wall of $C_{J}$ for any $a \in J$.
Let $a \in J$.
It suffices to show that there exists $z \in H_{a}$ and an open neighborhood $W$ of $z$ in $E_{J}$ such that
\[
W \cap C_{J} = W \cap H_{a}^{+}.
\]
We write
\[
H_{J, a}^{+} =
\{
x \in E_{J} \mid a(x) = 0, \ b(x) > 0 \ (b \in J \backslash\{a\})
\}.
\]
Since $x_{J} \in \overline{H_{J, a}^{+}}$, we can take an element $z \in U \cap H_{J, a}^{+}$.
We take an open neighborhood $W$ of $z$ in $E_{J}$ as
\[
W = U \cap \left(\bigcap_{b \in J \backslash\{a\}} H_{b}^{+}\right).
\]
Then,
\[
W \cap E_{J}^{+} = W \cap H_{a}^{+}. 
\]
Since 
\[
U \cap E_{J}^{+} \subset C_{J} \subset E_{J}^{+},
\]
we conclude
\[
W \cap C_{J} = W \cap U \cap C_{J} = W \cap U \cap E_{J}^{+} = W \cap E_{J}^{+} = W \cap H_{a}^{+}.
\]
\end{proof}

We will prove a ``converse'' of Lemma~\ref{lemmabasisofsimplerootsoflevi}.
Let $J' \subset \Phi_{\aff}$ such that $DJ'$ is linearly independent.
Here, we do not assume that $J'$ is a subset of a basis of $\Phi_{\aff}$.
We define $V^{J'}, E_{J'}$ and $(\Phi_{\aff})_{J'}$ as above.
\begin{lemma}
\label{converselemmabasisofsimplerootsoflevi}
Suppose that there exists a basis $B_{J'}$ of $(\Phi_{\aff})_{J'}$ containing $J'$.
Then, there exists a basis $B'$ of $\Phi_{\aff}$ containing $J'$.
\end{lemma}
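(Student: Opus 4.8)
The plan is to rephrase the statement in terms of chambers and reduce it to a purely local analysis near one point, using the projection $\pi\colon E\to E_{J'}=E/V^{J'}$. A basis of an affine root system is the same datum as a chamber, and $J'\subset B_{J'}$ says precisely that every $H_{a}$ with $a\in J'$ is a wall of the chamber $C_{J'}$ corresponding to $B_{J'}$; so I must produce a chamber $C'$ of $\Phi_{\aff}$ each of whose walls contains every $H_{a}$, $a\in J'$, and then take $B'$ to be the basis attached to $C'$. Since each $a\in(\Phi_{\aff})_{J'}$ has $Da\in\mathbb{R}\cdot(DJ')$, the gradient $Da$ vanishes on $V^{J'}$, so $a$ is constant on the fibres of $\pi$ and descends to an affine-linear function $\bar a$ on $E_{J'}$; this is the realization of $(\Phi_{\aff})_{J'}$ as an affine root system on $E_{J'}$. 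Because $DJ'$ is linearly independent and $\abs{J'}=\dim E_{J'}$, the hyperplanes $H_{\bar a}$, $a\in J'$, meet in a single point $x_{J'}$. The one ingredient I will import is the conclusion of Claim~\ref{claimbasisofkernel}, applied inside $(\Phi_{\aff})_{J'}$ with its chamber $C_{J'}$, basis $B_{J'}\supset J'$ and the point $x_{J'}$: every $a\in(\Phi_{\aff})_{J'}$ with $\bar a(x_{J'})=0$ can be written $\bar a=\sum_{b\in J'}c_{b}\bar b$ with rational integer coefficients $c_{b}$ that are all nonnegative or all nonpositive; since the pullback $\pi^{*}$ on affine-linear functions is injective and $a=\pi^{*}\bar a$, the same identity $a=\sum_{b\in J'}c_{b}b$ holds in $A'$.

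Next I would choose a generic lift $\tilde x$ of $x_{J'}$ to $E$. The fibre $E^{J'}:=\{x\in E: a(x)=0\ (a\in J')\}=\pi^{-1}(x_{J'})$ is a translate of $V^{J'}$. For $a\in\Phi_{\aff}$: if $a\in(\Phi_{\aff})_{J'}$ then $a$ is constant equal to $\bar a(x_{J'})$ on $E^{J'}$, so $E^{J'}\subset H_{a}$ when $\bar a(x_{J'})=0$ and $E^{J'}\cap H_{a}=\emptyset$ otherwise; if $a\notin(\Phi_{\aff})_{J'}$ then $Da$ does not vanish on $V^{J'}$, so $H_{a}\cap E^{J'}$ is a proper affine subspace of $E^{J'}$. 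Since $\Phi_{\aff}$ is an affine root system, the hyperplanes $\{H_{a}\}$ form a locally finite arrangement in $E$, hence $\{H_{a}\cap E^{J'}: a\in\Phi_{\aff}\setminus(\Phi_{\aff})_{J'}\}$ is a locally finite family of proper affine subspaces of $E^{J'}$ and cannot cover it; I pick $\tilde x\in E^{J'}$ off all of them. Then the hyperplanes of $\Phi_{\aff}$ through $\tilde x$ are exactly the $H_{a}$ with $a\in(\Phi_{\aff})_{J'}$ and $\bar a(x_{J'})=0$, and each such $H_{a}$ contains all of $E^{J'}$.

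Finally I would carry out the local analysis near $\tilde x$, mirroring the end of the proof of Lemma~\ref{lemmabasisofsimplerootsoflevi}. Only finitely many hyperplanes of $\Phi_{\aff}$ meet a small enough ball $W$ around $\tilde x$, and the only ones through $\tilde x$ are the $H_{a}$ with $a\in(\Phi_{\aff})_{J'}$, $\bar a(x_{J'})=0$. For such an $a$ one has $a=\sum_{b\in J'}c_{b}b$ with the $c_{b}$ of one sign, so on $\{x: b(x)>0\ (b\in J')\}$ one gets $a(x)=\sum_{b\in J'}c_{b}b(x)\neq0$; hence the set $\{x\in W: b(x)>0\ (b\in J')\}$, which is convex and nonempty (as $\tilde x$ lies in its closure), avoids every hyperplane of $\Phi_{\aff}$ — those not through $\tilde x$ by shrinking $W$ — so it lies in a single chamber $C'$ of $\Phi_{\aff}$, and $C'\subset H_{a}^{+}$ for all $a\in J'$. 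To see each $H_{a}$ ($a\in J'$) is a wall of $C'$ and not merely a boundary hyperplane, I would, exactly as in Lemma~\ref{lemmabasisofsimplerootsoflevi}, produce for each $a\in J'$ a point $z\in H_{a}$ near $\tilde x$ with $b(z)>0$ for $b\in J'\setminus\{a\}$ and a neighborhood $W'$ of $z$ on which $W'\cap C'=W'\cap H_{a}^{+}$, the point $z$ being obtained by the same fibre-wise genericity argument applied to the facet of the $J'$-positive cone where $a$ vanishes. Then $B'$, the basis of $\Phi_{\aff}$ attached to $C'$, contains $J'$.

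The main obstacle I expect is the identification, in the second step, of the hyperplanes of $\Phi_{\aff}$ passing through $\tilde x$: one must know that no $H_{a}$ with $a\notin(\Phi_{\aff})_{J'}$ passes through $\tilde x$ (this is where fibre-wise genericity together with local finiteness of the affine arrangement is used) and that the ones that do pass through $\tilde x$ contain the entire fibre $E^{J'}$, so that near $\tilde x$ the chamber structure of $\Phi_{\aff}$ is pulled back along $\pi$ from the central arrangement $\{H_{\bar a}:\bar a(x_{J'})=0\}$ in $E_{J'}$. Granting this, everything else is a transcription of the computations in the proof of Lemma~\ref{lemmabasisofsimplerootsoflevi}, and the remaining bookkeeping — choosing $W$ and $W'$ small enough and tracking signs — is routine.
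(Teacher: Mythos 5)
Your proposal is correct and follows the same approach as the paper: you establish (via Claim~\ref{claimbasisofkernel} applied inside $(\Phi_{\aff})_{J'}$ with the given basis $B_{J'}\supset J'$, which is precisely the content of the paper's Claim~\ref{claimbasisconverse}) that any affine root vanishing identically on $E^{J'}$ is a one-sign integral combination of the elements of $J'$, then pick a point of $E^{J'}$ generic with respect to the remaining hyperplanes, deduce that a nearby point of the $J'$-positive cone lies in a chamber $C'$ of $\Phi_{\aff}$, and finish with the wall argument from Lemma~\ref{lemmabasisofsimplerootsoflevi}. Your write-up is slightly more explicit about the projection $\pi\colon E\to E_{J'}$ and the trichotomy of possibilities for $H_a\cap E^{J'}$, but the underlying argument coincides with the paper's.
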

\begin{proof}
We will prove that there exists a chamber $C'$ of $\Phi_{\aff}$ such that $H_{b}$ is a wall of $C'$ for any $b \in J'$.
We write
\[
E^{J'} =
\{
x \in E \mid b(x) = 0 \ (b \in J')
\}.
\]
\begin{claim}
\label{claimbasisconverse}
Let $a \in \Phi_{\aff}$ be an affine root such that $a(x) = 0$ for any $x \in E^{J'}$.
Then, we can write
\[
a = \sum_{b_{i} \in J'} c_{i} b_{i}
\]
with rational integer coefficients $c_{i}$ which are all non-negative or non-positive.
\end{claim}
\begin{proof}
Since $E^{J'}$ is stable under the translation by $V^{J'}$, $Da$ vanishes on $V^{J'}$.
Hence, we obtain  $Da \in \mathbb{R} \cdot (DJ')$, equivalently, we have $a \in (\Phi_{\aff})_{J'}$.
Since $B_{J'}$ is a basis of $(\Phi_{\aff})_{J'}$, we can write
\[
a = \sum_{b_{i} \in B_{J'}} c_{i} b_{i}
\]
with rational integer coefficients $c_{i}$ which are all non-negative or non-positive.
We will prove that $c_{i}=0$ unless $b_{i} \in J'$.
Assume that $c_{i} \neq 0$ for some $b_{i} \in B_{J'} \backslash J'$.
We take a vertex $x_{i}$ in the chamber in $E_{J'}$ corresponding to the basis $B_{J'}$
such that 
\[
b_{i}(x_{i}) > 0
\]
and 
\[
b_{j}(x_{i}) = 0
\]
for any $j \neq i$.
We identify $x_{i}$ with its lift in $E$.
Since $b_{i} \not\in B_{J'}$, the second equation implies $x_{i} \in E^{J'}$, hence $a(x_{i}) = 0$.
However, we can calculate as
\[
a(x_{i}) = \sum_{b_{j} \in B_{J'}} c_{j} b_{j}(x_{i}) = c_{i}b_{i}(x_{i}) \neq 0,
\]
a contradiction.
\end{proof}
We also write
\[
E^{J', +} = \{
x \in E \mid b(x) >0 \ (b \in J')
\}.
\]
Let $x_{J'} \in E^{J'}$ be a point such that
\[
a(x_{J'}) \neq 0
\]
for any $a \in \Phi_{\aff}$ with
\[
E^{J'} \not\subset H_{a}.
\]
We take an open ball $U$ in $E$ whose center is $x_{J'}$ such that 
\[
U \cap H_{a} = \emptyset
\]
for any $a \in \Phi_{\aff}$ with
\[
E^{J'} \not\subset H_{a}.
\]
Since $x_{J'} \in \overline{E^{J', +}}$, we can take an element $y \in U \cap E^{J', +}$.
Since $y \in U$, $a(y) \neq 0$ for any $a \in \Phi_{\aff}$ with
\[
E^{J'} \not\subset H_{a}.
\]
Moreover, we can prove that $a(y) \neq 0$ for any $a \in \Phi_{\aff}$ with 
\[
E^{J'} \subset H_{a}
\]
as follows.
Let $a \in \Phi_{\aff}$ be an affine root such that $a(x) = 0$ for any $x \in E^{J'}$.
Then, Claim~\ref{claimbasisconverse} implies that we can write
\[
a = \sum_{b_{i} \in J'} c_{i} b_{i}
\]
with rational integer coefficients $c_{i}$ which are all non-negative or non-positive.
Since $y \in E^{J', +}$, 
\[
b_{i}(y) > 0
\]
for any $b_{i} \in J'$.
Hence, 
\[
a(y) = \sum_{b_{i} \in J'} c_{i} b_{i}(y) 
\begin{cases}
>0 \ &(\text{$c_{i} \ge 0$ for all $i$}), \\
<0  \ &(\text{$c_{i} \le 0$ for all $i$}).
\end{cases}
\]
Here, we note that at least one $c_{i}$ is nonzero since $a$ is an affine root.
Thus, we conclude that $a(y) \neq 0$ for any $a \in \Phi_{\aff}$, hence $y$ is contained in a chamber $C'$ of $\Phi_{\aff}$.

Then, the same argument as the proof of Lemma~\ref{lemmabasisofsimplerootsoflevi} implies that $H_{b}$ is a wall of $C'$ for any $b \in J'$.
\end{proof}
\begin{corollary}
\label{corollarySubsets of a set of simple affine roots}
Let $J \subset B$ such that $DJ$ is linearly independent, and
$\Phi'_{\aff}$ be a subsystem of $\Phi_{\aff}$ containing $(\Phi_{\aff})_{J}$.
Then, there exists a basis of $\Phi'_{\aff}$ containing $J$.
\end{corollary}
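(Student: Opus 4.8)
The plan is to deduce the Corollary directly from Lemma~\ref{lemmabasisofsimplerootsoflevi} and Lemma~\ref{converselemmabasisofsimplerootsoflevi}, the only genuine point being the elementary identification of the ``$J$-part'' of $\Phi'_{\aff}$ with that of $\Phi_{\aff}$. First I would record that since $J \subset B \subset \Phi_{\aff}$ and $Db \in DJ \subset \mathbb{R}\cdot(DJ)$ for every $b \in J$, we have $J \subset (\Phi_{\aff})_{J}$; by hypothesis $(\Phi_{\aff})_{J} \subset \Phi'_{\aff}$, so in particular $J \subset \Phi'_{\aff}$, and $DJ$ is of course still linearly independent. Hence it makes sense to form
\[
(\Phi'_{\aff})_{J} = \{ a \in \Phi'_{\aff} \mid Da \in \mathbb{R}\cdot(DJ) \},
\]
which is an affine root system on $E_{J}$ since $\Phi'_{\aff}$ is an affine root system on $E$.

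Next I would check the identity $(\Phi'_{\aff})_{J} = (\Phi_{\aff})_{J}$. The inclusion $\subseteq$ is immediate from $\Phi'_{\aff} \subseteq \Phi_{\aff}$. For $\supseteq$, any $a \in (\Phi_{\aff})_{J}$ satisfies $Da \in \mathbb{R}\cdot(DJ)$ by definition, and lies in $\Phi'_{\aff}$ because $(\Phi_{\aff})_{J} \subseteq \Phi'_{\aff}$ by hypothesis; hence $a \in (\Phi'_{\aff})_{J}$. With this in hand, I would apply Lemma~\ref{lemmabasisofsimplerootsoflevi} (to $\Phi_{\aff}$ and $J$) to obtain a basis $B_{J}$ of $(\Phi_{\aff})_{J} = (\Phi'_{\aff})_{J}$ with $J \subset B_{J}$, and then invoke Lemma~\ref{converselemmabasisofsimplerootsoflevi} with the ambient system $\Phi_{\aff}$ replaced by the subsystem $\Phi'_{\aff}$ and $J'$ replaced by $J$: its hypotheses --- that $DJ$ is linearly independent and that $(\Phi'_{\aff})_{J}$ admits a basis containing $J$ --- are precisely what has just been verified, so it produces a basis $B'$ of $\Phi'_{\aff}$ containing $J$, which is the assertion.

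The only thing requiring a little care --- and the ``main obstacle'' insofar as there is one in such a short argument --- is noting that Lemma~\ref{converselemmabasisofsimplerootsoflevi}, although stated for the ambient system $\Phi_{\aff}$ on $E$, applies verbatim with $\Phi_{\aff}$ replaced by any subsystem $\Phi'_{\aff}$ on the same Euclidean space $E$: its proof uses only that $\Phi'_{\aff}$ is an affine root system on $E$ together with the stated hypotheses on $J$, not any special relation between $J$ and a chamber of the larger system. Once one is comfortable with this re-reading of Lemma~\ref{converselemmabasisofsimplerootsoflevi} and with the set-theoretic identity $(\Phi'_{\aff})_{J} = (\Phi_{\aff})_{J}$, the proof is complete with no further computation.
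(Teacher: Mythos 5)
Your proof is correct and follows the same route as the paper: Lemma~\ref{lemmabasisofsimplerootsoflevi} to get a basis of $(\Phi_{\aff})_J$ containing $J$, then Lemma~\ref{converselemmabasisofsimplerootsoflevi} with $\Phi_{\aff}$ replaced by $\Phi'_{\aff}$. The one thing you do that the paper leaves implicit is spelling out the identity $(\Phi'_{\aff})_J = (\Phi_{\aff})_J$, which is precisely what is needed for the hypothesis of Lemma~\ref{converselemmabasisofsimplerootsoflevi} (applied to $\Phi'_{\aff}$) to be satisfied by the basis produced by Lemma~\ref{lemmabasisofsimplerootsoflevi}; this is a worthwhile clarification, not a departure.
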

\begin{proof}
According to Lemma~\ref{lemmabasisofsimplerootsoflevi}, there exists a basis $B_{J}$ of $(\Phi_{\aff})_{J}$ containing $J$.
On the other hand, replacing $\Phi_{\aff}$ with $\Phi'_{\aff}$ and taking $J' = J$ in Lemma~\ref{converselemmabasisofsimplerootsoflevi}, we obtain that there exists a basis of $\Phi'_{\aff}$ containing $J$.
\end{proof}
\section{Iwahori-Hecke algebras and affine Hecke algebras}
\label{Iwahori-Hecke algebras and affine Hecke algebras}
In this appendix, we explain the definitions of Iwahori-Hecke algebras and affine Hecke algebras following \cite{MR4310011}.

First, we recall the definition of Iwahori-Hecke algebras of affine type \cite[Section~1.2]{MR4310011}.
Let $E$ be a real Euclidean space of finite dimension, and let $V$ denote its vector space of translations.
Let $A'$ denote the vector space of affine-linear functions on $E$.
Let $\Phi_{\aff} \subset A'$ be an affine root system on $E$.
For simplicity, we assume that $\Phi_{\aff}$ is irreducible and reduced.
We use the same notation as Appendix~\ref{Subsets of a set of simple affine roots}.
For $a \in \Phi_{\aff}$, let $s_{a}$ denote the corresponding reflection on $E$, and let $W_{\aff} = W_{\aff}(\Phi_{\aff})$ denote the affine Weyl group of $\Phi_{\aff}$.
Hence, $W_{\aff}$ is generated by $s_{a} \ (a \in \Phi_{\aff})$.
The group $W_{\aff}$ also acts on $A'$ as
\[
(w(f))(x) = f(w^{-1}(x))
\]
for $w \in W_{\aff}$, $f \in A'$, and $x \in E$, and the action stabilizes $\Phi_{\aff}$.
We define the derivative $Dw$ of an element $w \in W_{\aff}$ as the linear map
\[
Dw \colon V \rightarrow V
\]
such that
\[
w(x+v) = w(x) + (Dw)(v)
\]
for all $x \in E$ and $v \in V$.
According to \cite[(1.5)]{MR357528}, for $a \in \Phi_{\aff}$, we have 
\[
D s_{a} = s_{Da},
\]
where $s_{Da}$ denotes the reflection on $V$ with respect to $Da$.

We fix a chamber $C$ of $\Phi_{\aff}$, and let $B$ denote the corresponding basis of $\Phi_{\aff}$.
We define a subset $S_{\aff} = S(\Phi_{\aff}, B)$ of $W_{\aff}$ as 
\[
S_{\aff} = \{
s_{b} \mid b \in B
\}.
\]
Then, $\left(W_{\aff}, S_{\aff}\right)$ is a Coxeter system of affine type.
Let $l$ denote the length function on $W_{\aff}$ with respect to $S_{\aff}$.
 
Let
\[
q \colon S_{\aff} \rightarrow \mathbb{C}
\]
be a function $s \mapsto q_{s}$ such that
\begin{align}
\label{winvarianceofqparameter}
\text{$q_{s_{1}} = q_{s_{2}}$ if $s_1, s_2 \in S_{\aff}$ are conjugate in $W_{\aff}$}.
\end{align}
For $w \in W_{\aff}$ with a reduced expression
\[
w = s_1 s_2 \cdots s_r \ (s_i \in S_{\aff}),
\]
we put
\[
q_{w} = q_{s_1} q_{s_2} \cdots q_{s_r}.
\]
Condition~\eqref{winvarianceofqparameter} implies that $q_{w}$ is well-defined.

The Iwahori-Hecke algebra $\mathcal{H}(W_{\aff}, q)$ associated with the Coxeter system $\left(W_{\aff}, S_{\aff} \right)$ and the parameter function $q$ is the unique $\mathbb{C}$-algebra with generators
\[
\left\{
T_{s} \mid s \in S_{\aff}
\right\}
\]
and relations:
\begin{description}
\item[Quadratic relations]
For all $s \in S_{\aff}$, we have
\[
(T_{s} + 1)(T_{s} - q_{s}) = 0.
\]
\item[Braid relations]
\label{Iwahoriheckebraidrelation}
For all $s, t \in S_{\aff}$ such that the order of $st$ in $W_{\aff}$ is $m < \infty$, we have
\[
\underbrace{
T_{s} T_{t} T_{s} \cdots
}_{m \terms}
= \underbrace{
T_{t} T_{s} T_{t} \cdots
}_{m \terms}.
\]
\end{description}
For $w \in W_{\aff}$ with a reduced expression
\[
w = s_1 s_2 \cdots s_r \ (s_i \in S_{\aff}),
\]
we put
\[
T_{w} = T_{s_{1}} T_{s_{2}} \cdots T_{s_{r}}.
\]
Relation~\eqref{Iwahoriheckebraidrelation} above implies that $T_{w}$ is well-defined.
Moreover, the set
\[
\{
T_{w} \mid w \in W_{\aff}
\}
\] 
is a vector space basis of $\mathcal{H}(W_{\aff}, q)$.

Next, we recall the definition of affine Hecke algebras \cite[Section~1.3]{MR4310011}.
Let
\[
\mathcal{R} =
(X, R, Y, R^{\vee}, \Delta)
\]
be a based root datum, that is,
\begin{itemize}
\item $X$ and $Y$ are free $\mathbb{Z}$-module of finite rank, with a perfect pairing 
\[
\langle , \rangle \colon X \times Y \rightarrow \mathbb{Z},
\]
\item $R$ is a reduced root system in $X$,
\item $R^{\vee}$ is the dual root system of $R$ in $Y$, with a bijection
\[
R \rightarrow R^{\vee}, \ \alpha \mapsto \alpha^{\vee}
\]
such that
\[
\langle \alpha, \alpha^{\vee} \rangle = 2,
\]
\item $\Delta$ is a basis of $R$.
\end{itemize}
For $\alpha \in R$, let
\[
s_{\alpha} \colon Y \rightarrow Y
\]
denote the reflection
\[
y \mapsto y - \langle \alpha, y \rangle \alpha^{\vee},
\]
that stabilizes $R^{\vee}$.
Let $W_0 = W_{0}(R)$ denote the Weyl group of $R$, that is generated by $s_{\alpha} \ (\alpha \in R)$.
The group $W_{0}$ also acts on $X$ as
\[
\langle w(x), y \rangle = \langle x, w^{-1}y \rangle
\]
for $w \in W_{0}$, $x \in X$, and $y \in Y$, and the action stabilizes $R$.
For $\alpha \in R$ and $x \in X$, we have
\[
s_{\alpha}(x) = x - \langle x, \alpha^{\vee} \rangle \alpha.
\]
The basis $\Delta$ determines a set of simple reflections
\[
S_0 = \{
s_{\alpha} \mid \alpha \in \Delta
\}
\]
in $W_{0}$.
Then, $(W_{0}, S_0)$ is a finite Coxeter system.
We fix a real number $\mathbf{q} >1$, and let
\[
\lambda, \lambda^{*} \colon \Delta \rightarrow \mathbb{C}
\]
be functions such that
\begin{align}
\label{w-equiv}
\text{
if $\alpha, \beta \in \Delta$ are $W_0$-associate, $\lambda(\alpha) = \lambda(\beta)$, and $\lambda^{*}(\alpha) = \lambda^{*}(\beta)$,
}
\end{align}
and 
\begin{align}
\label{lambda=lambda*}
\text{
if $\alpha \not \in 2X$, $\lambda(\alpha) = \lambda^{*}(\alpha)$.
}
\end{align}
For $\alpha \in \Delta$, we define
\[
q_{s_{\alpha}} = \mathbf{q}^{\lambda(\alpha)}.
\]
Then, this parameter function satisfies:
\begin{align*}
\text{$q_{s_{1}} = q_{s_{2}}$ if $s_1, s_2 \in S_{0}$ are conjugate in $W_{0}$}.
\end{align*}
We can define the Iwahori-Hecke algebra $\mathcal{H}(W_0, q)$ associated with the finite Coxeter system $(W_{0}, S_0)$ and the parameter function $q$ exactly as the affine case. 
\begin{definition}[{\cite[Definition~1.6]{MR4310011}}]
\label{affinehecke}
The affine Hecke algebra $\mathcal{H}(\mathcal{R}, \lambda, \lambda^{*}, \mathbf{q})$ associated with $\mathcal{R}, \lambda, \lambda^{*}, \mathbf{q}$ is the vector space
\[
\mathbb{C}[Y] \otimes \mathcal{H}(W_0, q)
\]
with the multiplication rules:
\begin{enumerate}
\item $\mathbb{C}[Y]$ and $\mathcal{H}(W_0, q)$ are embedded as subalgebras,
\item 
\label{bernsteinrel}
for $\alpha \in \Delta$ and $y \in Y$,
\[
\theta_{y}T_{s_{\alpha}} - T_{s_{\alpha}}\theta_{s_{\alpha}(y)} =
\left(
(\mathbf{q}^{\lambda(\alpha)} -1) + \theta_{-\alpha^{\vee}}(
\mathbf{q}^{(\lambda(\alpha) + \lambda^{*}(\alpha))/2} - \mathbf{q}^{(\lambda(\alpha) - \lambda^{*}(\alpha))/2}
)
\right)
\frac{
\theta_{y} - \theta_{s_{\alpha}(y)}
}{
\theta_{0} - \theta_{-2\alpha^{\vee}}
}.
\]
\end{enumerate}
\end{definition}
\begin{remark}
The definition of the affine Hecke algebra $\mathcal{H}(\mathcal{R}, \lambda, \lambda^{*}, \mathbf{q})$ above is different from that of \cite[Definition~1.6]{MR4310011}. Our definition of $\mathcal{H}(\mathcal{R}, \lambda, \lambda^{*}, \mathbf{q})$ denotes the affine Hecke algebra of \cite[Definition~1.6]{MR4310011} associated with the dual root datum
\[
\mathcal{R}^{\vee} =
(Y, R^{\vee}, X, R, \Delta^{\vee}),
\]
where $\Delta^{\vee}$ denotes the dual basis
\[
\Delta^{\vee} = \{
\alpha^{\vee} \mid \alpha \in \Delta
\}
\]
of $R^{\vee}$.
\end{remark}

In the end of this appendix, we explain the Bernstein presentation of an Iwahori-Hecke algebra of affine type.
Let $\Phi_{\aff}$ be an irreducible and reduced affine root system on a real Euclidean space $E$ of finite dimension.
We use the same notation as the first part of this appendix.
We defined the Iwahori-Hecke algebra $\mathcal{H}(W_{\aff}, q)$ associated with an affine Coxeter system $\left(W_{\aff}, S_{\aff} \right)$ and a parameter function $q$ there.
We will give a description of $\mathcal{H}(W_{\aff}, q)$ as an affine Hecke algebra.
From now on, we assume that the parameter function $q$ is $\mathbb{R}_{> 0}$-valued.

According to \cite[Proposition~6.1.(1)]{MR357528}, 
$D \Phi_{\aff}$ is a finite root system in $V^{*}$.
Let $(D \Phi_{\aff})^{\vee}$ denote the dual root system of $D \Phi_{\aff}$ in $V$, and let $W_0 = W_{0}(D\Phi_{\aff})$ denote the Weyl group of $D\Phi_{\aff}$.
Then, \cite[Proposition~6.1.(3)]{MR357528} implies that the map
\[
D \colon w \mapsto Dw
\]
defines a homomorphism 
\[
D \colon W_{\aff} \rightarrow W_{0},
\]
and the kernel of $D$ is the subgroup $T$ of translations in $W_{\aff}$.

Let $e$ be a special point for the affine root system $\Phi_{\aff}$ in the sense of  \cite[Section~6]{MR357528} contained in the closure of the chamber $C$.
Let $(\Phi_{\aff})_{e}$ denote the set of affine roots in $\Phi_{\aff}$ that vanish at $e$, and let $(W_{\aff})_{e}$ denote the stabilizer of $e$ in $W_{\aff}$.
We also define
\[
B_{e} = (\Phi_{\aff})_{e} \cap B.
\]
We note that 
\[
B_{e} = B \backslash \{b\},
\]
where $b = b_{e}$ is the unique element of $B$ such that
\[
b (e) > 0
\]
(see \cite[Section~4]{MR357528}).
According to \cite[Proposition~5.1]{MR357528}, $(\Phi_{\aff})_{e}$ is a finite root system with basis $B_{e}$, and $(W_{\aff})_{e}$ is the Weyl group of $(\Phi_{\aff})_{e}$.
According to \cite[Proposition~6.4]{MR357528}, $D (\Phi_{\aff})_{e}$ is the set of indivisible roots of $D \Phi_{\aff}$, and we have an isomorphism of root systems
\[
D \colon (\Phi_{\aff})_{e} \rightarrow D (\Phi_{\aff})_{e}.
\]
Moreover, \cite[Proposition~6.2 (2)]{MR357528} implies that the homomorphism
\[
D \colon W_{\aff} \rightarrow W_{0}
\]
restricts to an isomorphism
\[
D \colon (W_{\aff})_{e} \rightarrow W_{0}.
\]
Hence, we obtain 
\begin{align}
\label{splittingofaffineqeyl}
W_{\aff} = T \rtimes (W_{\aff})_{e}.
\end{align}

For $a \in (\Phi_{\aff})_{e}$, let $k_{a}$ denote the smallest positive real number such that $a + k_{a} \in \Phi_{\aff}$.
According to \cite[1.9]{MR1235019}, such $k_a$ exists, and we have
\[
\left\{
l \in \mathbb{R} \mid a+l \in \Phi_{\aff}
\right\}
= \left\{
k_{a} n \mid n \in \mathbb{Z}
\right\}.
\]
We define
\[
R = \{
Da/k_{a} \mid a \in (\Phi_{\aff})_{e}
\}.
\]
The proof of \cite[Chapter~VI, Section~2.5, Proposition~8]{MR0240238} implies that $R$ is a reduced root system in $V^{*}$.
Since each element of $R$ is a scalar multiple of a root in $D \Phi_{\aff}$, and $D (\Phi_{\aff})_{e}$ contains a basis of $D\Phi_{\aff}$, the Weyl group of the root system $R$ is equal to $W_0$.
We define the dual root system $R^{\vee}$ of $R$ in $V$ as
\[
R^{\vee} = \{
k_{a} (Da)^{\vee} \mid a \in (\Phi_{\aff})_{e}
\},
\]
where $(Da)^{\vee}$ denotes the coroot in $(D \Phi_{\aff})^{\vee}$ corresponding to the root $Da \in D\Phi_{\aff}$.
Let $\mathbb{Z}R^{\vee}$ denote the $\mathbb{Z}$-span of $R^{\vee}$ in $V$.
We will prove that $T = \mathbb{Z}R^{\vee}$.
For $a \in (\Phi_{\aff})_{e}$, $a + k_{a}$ is also contained in $\Phi_{\aff}$, hence we have
\[
k_{a} (Da)^{\vee} = s_{a} \circ s_{a+k_a} \in W_{\aff},
\]
where $k_{a} (Da)^{\vee}$ denotes the translation by $k_{a} (Da)^{\vee} \subset V$.
Thus, we obtain that $R^{\vee} \subset T$.
Moreover, the last claim of \cite[Chapter~VI, Section~2.5, Proposition~8]{MR0240238} implies that 
\[
W_{\aff} = \mathbb{Z}R^{\vee} \rtimes (W_{\aff})_{e}.
\]
Comparing it with \eqref{splittingofaffineqeyl}, we obtain $T = \mathbb{Z}R^{\vee}$.

For $\alpha \in R$ and $k \in \mathbb{Z}$, we define a reflection $s_{\alpha + k}$ on $V$ as
\[
s_{\alpha + k}(x) = x - \alpha(x)\alpha^{\vee} -k \alpha^{\vee}
\]
for $x \in V$. 
Here, $\alpha^{\vee} \in R^{\vee}$ denotes the coroot corresponding to $\alpha \in R$.
We define the affine Weyl group $W_{\aff}(R)$ of $R$ as the group of affine transformations on $V$ generated by $s_{\alpha + k}$ for $\alpha \in R$ and $k \in \mathbb{Z}$ \cite[Chapter~VI, Section~2.1, D{\'e}finition~1]{MR0240238}.
The froup $W_{\aff}(R)$ also acts on the space $A'(V)$ of affine-linear functions on $V$ as
\[
(w(f))(v) = f(w^{-1}(v))
\]
for $w \in W_{\aff}(R)$, $f \in A'(V)$, and $v \in V$.
According to \cite[Chapter~VI, Section~2.1, Proposition~1]{MR0240238}, we obtain
\[
W_{\aff}(R) = \mathbb{Z}R^{\vee} \rtimes W_{0}.
\]
As affine spaces, we have an isomorphism
\begin{align}
\label{affinespaceisom}
E \simeq V
\end{align}
defined as
\[
e+v \mapsto v.
\]
We identify an affine transformation on $E$ with an affine transformation on $V$ via \eqref{affinespaceisom}.
Then, we obtain the isomorphism
\begin{align}
\label{affinesplitting}
W_{\aff} \simeq W_{\aff}(R).
\end{align}
More explicitly, isomorphism~\eqref{affinesplitting} is described as
\[
W_{\aff} = \mathbb{Z}R^{\vee} \rtimes (W_{\aff})_{e} \xrightarrow{(\id, D)} \mathbb{Z}R^{\vee} \rtimes W_{0} = W_{\aff}(R).
\]
We identify $W_{\aff}$ with $W_{\aff}(R)$ via isomorphism~\eqref{affinesplitting}.
In particular, we regard $W_{0}$ as a subgroup of $W_{\aff}$.
We describe the images of simple reflections $s \in S_{\aff}$ via isomorphism~\eqref{affinesplitting}.
For $a \in B_{e}$, the simple reflection $s_{a} \in S_{\aff}$ corresponds to the reflection $s_{Da/k_{a}} \in W_{0}$.
On the other hand, since $s_{b}$ is the reflection via the unique wall of the chamber $C$ that does not contain $e$, the simple reflection $s_{b}$ corresponds to the reflection $s_{1 - \phi}$, where $\phi$ is the highest root of the root system $R$ with respect to the basis
\[
\Delta =
\{
Da/k_{a} \mid a \in B_{e}
\}
\]
(see \cite[Example~4.7]{MR357528}).

We consider a based root datum 
\[
\mathcal{R} = \mathcal{R}(\Phi_{\aff}, e) 
= \left\{
\Hom_{\mathbb{Z}}(\mathbb{Z}R^{\vee}, \mathbb{Z}), R, \mathbb{Z}R^{\vee}, R^{\vee}, \Delta
\right\}.
\]
We also fix a real number $\mathbf{q} >1$.
We define label functions
\[
\lambda, \lambda^{*} \colon \Delta \rightarrow \mathbb{C}
\]
as
\[
\lambda \left(Da/k_{a}\right) = \log(q_{s_{a}})/\log(\mathbf{q}), 
\]
and
\[
\lambda^{*} \left(
Da/k_{a}
\right) =
\begin{cases}
\log(q_{s_{a}})/\log(\mathbf{q}) & \left( Da/k_{a} \not \in 2 \Hom_{\mathbb{Z}}(\mathbb{Z}R^{\vee}, \mathbb{Z}) \right), \\
\log(q_{s_{b}})/\log(\mathbf{q}) & \left( Da/k_{a} \in 2 \Hom_{\mathbb{Z}}(\mathbb{Z}R^{\vee}, \mathbb{Z}) \right)
\end{cases}
\]
for $a \in B_{e}$. 
We note that the condition
\[
Da/k_{a} \in 2 \Hom_{\mathbb{Z}}(\mathbb{Z}R^{\vee}, \mathbb{Z})
\]
holds only when $R$ is of type $A_{1}$, or $R$ is of type $C_{n} \ (n \ge 2)$, and $Da/k_{a}$ is a long root.
The label functions $\lambda, \lambda^{*}$ satisfy condition~\eqref{w-equiv} and condition~\eqref{lambda=lambda*}.
We define the affine Hecke algebra $\mathcal{H}(\mathcal{R}, \lambda, \lambda^{*}, \mathbf{q})$ associated with $\mathcal{R}, \lambda, \lambda^{*}, \mathbf{q}$.
\begin{theorem}[{\cite[Theorem~1.8]{MR4310011}}]
\label{theorem1.8ofsol21}
There exists a unique isomorphism
\[
\mathcal{H}(W_{\aff}, q) \rightarrow \mathcal{H}(\mathcal{R}, \lambda, \lambda^{*}, \mathbf{q})
\]
such that:
\begin{itemize}
\item
that is identity on $\mathcal{H}(W_{0}, q)$,
\item
for $y \in T = \mathbb{Z}R^{\vee} \subset V$ with $(Da)(y) \ge 0$ for all $a \in B_{e}$, it sends $T_{y}$ to $q_{y}^{1/2} \cdot \theta_{y}$.
\end{itemize}
\end{theorem}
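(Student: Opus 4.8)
The plan is to realize the Bernstein--Lusztig presentation inside $\mathcal{H}(W_{\aff}, q)$ and then invert it. Concretely, I would construct pairwise commuting elements $\theta_{y} \in \mathcal{H}(W_{\aff}, q)$ for $y \in T = \mathbb{Z}R^{\vee}$, prove that $\{\theta_{y}T_{w} \mid y \in \mathbb{Z}R^{\vee},\ w \in W_{0}\}$ is a $\mathbb{C}$-basis of $\mathcal{H}(W_{\aff}, q)$, and verify that these $\theta_{y}$ together with the $T_{s_{\alpha}}$ ($\alpha \in \Delta$, viewing $s_{\alpha} \in W_{0} = (W_{\aff})_{e} \subseteq W_{\aff}$) satisfy the defining relations of $\mathcal{H}(\mathcal{R}, \lambda, \lambda^{*}, \mathbf{q})$, in particular the Bernstein relation~\eqref{bernsteinrel} of Definition~\ref{affinehecke}. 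This produces an algebra homomorphism $\Psi \colon \mathcal{H}(\mathcal{R}, \lambda, \lambda^{*}, \mathbf{q}) \to \mathcal{H}(W_{\aff}, q)$, which is bijective because it carries a basis to a basis; the isomorphism of the theorem is $\Psi^{-1}$.

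\emph{Construction of the $\theta_{y}$.} Using $W_{\aff} = T \rtimes (W_{\aff})_{e}$ (isomorphism~\eqref{splittingofaffineqeyl}) together with $T = \mathbb{Z}R^{\vee}$, each $y \in T$ acts on $E$ by translation, and the positive cone $T^{+} = \{z \in \mathbb{Z}R^{\vee} \mid (Da)(z) \ge 0\ (a \in B_{e})\}$ satisfies $l(z_{1} + z_{2}) = l(z_{1}) + l(z_{2})$ for $z_{1}, z_{2} \in T^{+}$; hence $T_{z_{1}}T_{z_{2}} = T_{z_{1}+z_{2}}$ there, and every $y \in T$ can be written $y = y_{1} - y_{2}$ with $y_{i} \in T^{+}$. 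Setting $\theta_{y} = q_{y_{2}}^{1/2}q_{y_{1}}^{-1/2}\, T_{y_{1}}T_{y_{2}}^{-1}$ (with $q_{w}^{1/2}$ the positive square root), a short length computation shows $\theta_{y}$ is independent of the chosen decomposition and that $\theta_{y}\theta_{y'} = \theta_{y+y'} = \theta_{y'}\theta_{y}$; for $y \in T^{+}$ the trivial decomposition gives $\theta_{y} = q_{y}^{-1/2}T_{y}$. The basis statement then follows from a unitriangularity argument: for $z \in T^{+}$ one has $\theta_{z}T_{w} = q_{z}^{-1/2}T_{zw} + (\text{terms }T_{v}\text{ with }l(v) < l(zw))$, and writing a general $y$ as a difference of elements of $T^{+}$ shows the transition matrix between $\{T_{u} \mid u \in W_{\aff}\}$ and $\{\theta_{y}T_{w}\}$ is invertible.

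\emph{The Bernstein relation and uniqueness.} For~\eqref{bernsteinrel} it suffices, by the commutativity of the $\theta$'s and the $W_{0}$-equivariance $s_{\alpha}(y+y') = s_{\alpha}(y)+s_{\alpha}(y')$, to treat $\alpha \in \Delta$ and $y$ running over a $\mathbb{Z}$-basis of $\mathbb{Z}R^{\vee}$; the case $s_{\alpha}(y) = y$ reduces to $\theta_{y}T_{s_{\alpha}} = T_{s_{\alpha}}\theta_{y}$, and the case $y = \alpha^{\vee}$ reduces to a computation inside the rank-one affine subsystem spanned by $\alpha$, where $s_{\alpha}$ and the affine reflection $s_{k_{a}-a}$ generate a dihedral group and one evaluates $T_{s_{\alpha}}\theta_{-\alpha^{\vee}}$ directly from the quadratic and braid relations. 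This splits into the case $\alpha \notin 2X$, where only $q_{s_{a}} = \mathbf{q}^{\lambda(\alpha)}$ occurs and $\lambda(\alpha) = \lambda^{*}(\alpha)$, and the case $\alpha \in 2X$ (a long simple root in a type $C_{n}$ component, or a type $A_{1}$ component), where the affine node $s_{b}$ enters with parameter $q_{s_{b}} = \mathbf{q}^{\lambda^{*}(\alpha)}$, matching the definitions of $\lambda, \lambda^{*}$ preceding Theorem~\ref{theorem1.8ofsol21}. Granting $\Psi$, its bijectivity is immediate from the basis statement; $\Psi^{-1}$ is the identity on $\mathcal{H}(W_{0}, q)$ (since $\Psi$ is, by construction), and $\Psi(\theta_{y}) = q_{y}^{-1/2}T_{y}$ for $y \in T^{+}$ gives $\Psi^{-1}(T_{y}) = q_{y}^{1/2}\theta_{y}$, the stated normalization. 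Uniqueness holds because $\mathcal{H}(W_{\aff}, q)$ is generated by $\{T_{s_{a}} \mid a \in B_{e}\} \cup \{T_{s_{b}}\}$: the first group is pinned down, and $T_{s_{b}}$ is determined since, under~\eqref{affinesplitting}, $s_{b}$ corresponds to $s_{1-\phi} = t_{\phi^{\vee}}s_{\phi}$ with $l(t_{\phi^{\vee}}) = l(s_{b}) + l(s_{\phi})$, so $T_{s_{b}} = T_{t_{\phi^{\vee}}}T_{s_{\phi}}^{-1}$, and $T_{t_{\phi^{\vee}}}$ --- like every $T_{y}$, expressed via the normalization on a difference of $T^{+}$-elements --- is determined. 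The main obstacle is the rank-one Bernstein computation in the doubled-root case, where one must track the two label functions and the half-integral powers $q_{w}^{1/2}$ simultaneously; the remaining steps are bookkeeping with length additivity on $T^{+}$.
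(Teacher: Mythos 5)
The paper does not give its own proof of this theorem; it is cited verbatim from \cite[Theorem~1.8]{MR4310011}, which is a modern account of the classical Bernstein--Lusztig presentation of an affine Iwahori--Hecke algebra, so there is no in-paper argument to compare against. Your sketch reproduces that standard route correctly in outline---constructing the commuting elements $\theta_{y}$ from differences of dominant translations in $T = \mathbb{Z}R^{\vee}$ (using length additivity on the dominant cone), establishing the basis $\{\theta_{y}T_{w}\}$ by a unitriangularity argument, verifying relation~\eqref{bernsteinrel} of Definition~\ref{affinehecke} by reduction to the rank-one case (where the doubled situation $\alpha \in 2X$, which you correctly flag as the technical heart, forces one to track both $q_{s_{a}}$ and $q_{s_{b}}$ through $\lambda$ and $\lambda^{*}$), and then inverting the resulting homomorphism and pinning down uniqueness via the generator $T_{s_{b}} = T_{t_{\phi^{\vee}}}T_{s_{\phi}}^{-1}$---so the only remark worth making is that this agrees with the argument in the cited source rather than with anything internal to the present paper.
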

\section{An involution of an affine Hecke algebra}
\label{An involution of an affine Hecke algebra}
We use the same notation as Appendix~\ref{Iwahori-Hecke algebras and affine Hecke algebras}.
Let $\mathcal{H} = \mathcal{H}(\mathcal{R}, \lambda, \lambda^{*}, \mathbf{q})$ be the affine Hecke algebra associated with a based root datum 
\[
\mathcal{R} =
(X, R, Y, R^{\vee}, \Delta),
\]
label functions $\lambda, \lambda^{*}$, and a parameter $\mathbf{q}$.
In this appendix, we define a $\mathbb{C}$-algebra automorphism $\iota$ of $\mathcal{H}$.

We define
\[
\iota_{Y} \colon \mathbb{C}[Y] \rightarrow \mathbb{C}[Y]
\]
as
\[
\theta_{y} \mapsto \theta_{-y}.
\]
Since $Y$ is abelian, $\iota_{Y}$ is an algebra automorphism of $\mathbb{C}[Y]$.
We also define
\[
\iota_{0} \colon \mathcal{H}(W_0, q) \rightarrow \mathcal{H}(W_0, q)
\]
as
\[
T_{w} \mapsto (-1)^{l(w)} q_{w} T_{w^{-1}}^{-1}.
\]
The quadratic relation
\[
(T_{s} + 1)(T_{s} - q_{s}) = 0
\]
implies 
\[
T_{s}^{-1} = \frac{
T_{s} - (q_{s} - 1)
}{
q_{s}
},
\]
hence
\[
(-1)^{l(s)} q_{s} T_{s^{-1}}^{-1} = q_{s} -1 - T_{s}
\]
for $s \in S_0$.
The element $q_{s} -1 - T_{s}$ satisfies the quadratic relation
\[
\left((q_{s} -1 - T_{s}) + 1 \right)\left((q_{s} -1 - T_{s}) - q_{s}\right) = (T_{s} -q_{s})(T_{s} + 1) = 0.
\]
Moreover, for $w_1, w_2 \in W_0$ with $l(w_1 w_2) = l(w_1) + l(w_2)$, we have $l(w_{2}^{-1} w_{1}^{-1}) = l(w_{2}^{-1}) + l(w_{1}^{-1})$, and 
\begin{align*}
\left(
(-1)^{l(w_1)} q_{w_1} T_{w_{1}^{-1}}^{-1}
\right)
\left(
(-1)^{l(w_2)} q_{w_2} T_{w_{2}^{-1}}^{-1}
\right)
&= (-1)^{l(w_1) + l(w_2)} q_{w_1} q_{w_2} T_{w_{1}^{-1}}^{-1} T_{w_{2}^{-1}}^{-1}\\
&= (-1)^{l(w_1 w_2)} q_{w_1 w_2} (T_{w_{2}^{-1}} T_{w_{1}^{-1}})^{-1}\\
&= (-1)^{l(w_1 w_2)} q_{w_1 w_2} T_{w_{2}^{-1}w_{1}^{-1}}^{-1}\\
&= (-1)^{l(w_1 w_2)} q_{w_1 w_2} T_{(w_1 w_2)^{-1}}^{-1}.
\end{align*}
In particular, the map $\iota_{0}$ is compatible with the braid relations of $\mathcal{H}(W_{0}, q)$.
Hence, we obtain that the map $\iota_{0}$ is an algebra automorphism of $\mathcal{H}(W_{0}, q)$.

We define
\[
\iota := \iota_{Y} \otimes \iota_{0} \colon \mathcal{H} \rightarrow \mathcal{H}. 
\]
\begin{lemma}
The map $\iota$ is an algebra isomorphism of $\mathcal{H}$.
\end{lemma}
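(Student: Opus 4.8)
The plan is to exhibit $\iota$ as a bijective $\mathbb{C}$-linear map that respects the defining relations of $\mathcal{H}$, hence is an algebra automorphism. Since $\iota_Y$ is an algebra automorphism of $\mathbb{C}[Y]$ with $\iota_Y^2=\mathrm{id}$ ($Y$ being abelian) and $\iota_0$ is an algebra automorphism of $\mathcal{H}(W_0,q)$ (established just above, with $\iota_0(T_{s_\alpha})=q_{s_\alpha}-1-T_{s_\alpha}$ for $\alpha\in\Delta$, and $\iota_0^2=\mathrm{id}$ because $\iota_0^2$ is an algebra endomorphism fixing every generator $T_{s_\alpha}$), the linear map $\iota=\iota_Y\otimes\iota_0$ on $\mathbb{C}[Y]\otimes\mathcal{H}(W_0,q)=\mathcal{H}$ is bijective with $\iota^2=\mathrm{id}$. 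It remains to prove multiplicativity. For this I would invoke the Bernstein presentation of $\mathcal{H}$: it is the $\mathbb{C}$-algebra with generators $\theta_y$ ($y\in Y$) and $T_{s_\alpha}$ ($\alpha\in\Delta$) subject to (i) the group-algebra relations of $\mathbb{C}[Y]$, (ii) the quadratic and braid relations of $\mathcal{H}(W_0,q)$ among the $T_{s_\alpha}$, and (iii) the cross-relations \eqref{bernsteinrel}. Thus it suffices to check that $\theta_{-y}=\iota(\theta_y)$ and $q_{s_\alpha}-1-T_{s_\alpha}=\iota(T_{s_\alpha})$ satisfy (i)--(iii); this produces an algebra endomorphism $\Phi$ of $\mathcal{H}$ with $\Phi(\theta_y)=\theta_{-y}$ and $\Phi(T_{s_\alpha})=q_{s_\alpha}-1-T_{s_\alpha}$, and $\Phi$ coincides with $\iota$ on the basis $\{\theta_y T_w\}$ because $\iota_0$ is multiplicative (so $\Phi(\theta_y T_w)=\theta_{-y}\,\iota_0(T_w)=\iota(\theta_y T_w)$), forcing $\Phi=\iota$.

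Relations (i) are preserved since $\iota$ restricts to the algebra homomorphism $\iota_Y$ on $\mathbb{C}[Y]$, and relations (ii) are preserved since the images $q_{s_\alpha}-1-T_{s_\alpha}$ are exactly $\iota_0(T_{s_\alpha})$ and $\iota_0$ is an algebra automorphism of $\mathcal{H}(W_0,q)$. The heart of the argument is relation (iii). Fix $\alpha\in\Delta$, $y\in Y$, and write $R_\alpha=(\mathbf{q}^{\lambda(\alpha)}-1)+\theta_{-\alpha^\vee}(\mathbf{q}^{(\lambda(\alpha)+\lambda^{*}(\alpha))/2}-\mathbf{q}^{(\lambda(\alpha)-\lambda^{*}(\alpha))/2})$, so that the right side of \eqref{bernsteinrel} is $R_\alpha\cdot\dfrac{\theta_y-\theta_{s_\alpha(y)}}{\theta_0-\theta_{-2\alpha^\vee}}$. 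Substituting the images into \eqref{bernsteinrel}, using $s_\alpha(-y)=-s_\alpha(y)$ and centrality of the scalar $\mathbf{q}^{\lambda(\alpha)}-1$, the left side becomes
\[
(\mathbf{q}^{\lambda(\alpha)}-1)(\theta_{-y}-\theta_{-s_\alpha(y)})-\bigl(\theta_{-y}T_{s_\alpha}-T_{s_\alpha}\theta_{s_\alpha(-y)}\bigr),
\]
and applying the original relation \eqref{bernsteinrel} to the lattice element $-y$ rewrites the parenthesised term as $R_\alpha\cdot\dfrac{\theta_{-y}-\theta_{-s_\alpha(y)}}{\theta_0-\theta_{-2\alpha^\vee}}$, while the substituted right side is $\iota_Y(R_\alpha)\cdot\dfrac{\theta_{-y}-\theta_{-s_\alpha(y)}}{\theta_0-\theta_{2\alpha^\vee}}$. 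Since $\mathbb{C}[Y]$ is an integral domain, cancelling the common factor $\theta_{-y}-\theta_{-s_\alpha(y)}$ (both sides vanish if it is zero) and, if needed, passing to the fraction field $\mathbb{C}(Y)$, the required equality reduces to the scalar-coefficient identity
\[
\mathbf{q}^{\lambda(\alpha)}-1=\frac{R_\alpha}{\theta_0-\theta_{-2\alpha^\vee}}+\frac{\iota_Y(R_\alpha)}{\theta_0-\theta_{2\alpha^\vee}},
\]
which I would verify by direct expansion, writing $t=\theta_{\alpha^\vee}$, using $\mathbf{q}^{(\lambda(\alpha)+\lambda^{*}(\alpha))/2}\cdot\mathbf{q}^{(\lambda(\alpha)-\lambda^{*}(\alpha))/2}=\mathbf{q}^{\lambda(\alpha)}$ and $\dfrac{\theta_0-\theta_{-2\alpha^\vee}}{\theta_0-\theta_{2\alpha^\vee}}=-\theta_{-2\alpha^\vee}$; the cross-term involving $\mathbf{q}^{(\lambda(\alpha)\pm\lambda^{*}(\alpha))/2}$ cancels and both sides collapse to $\mathbf{q}^{\lambda(\alpha)}-1$.

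I expect the final computation to be a two-line check; the points that need care are the bookkeeping rather than the algebra — tracking signs of the coroots, confirming that $\dfrac{\theta_{-y}-\theta_{-s_\alpha(y)}}{\theta_0-\theta_{2\alpha^\vee}}$ is again a genuine element of $\mathcal{H}$ so that the substituted form of \eqref{bernsteinrel} makes sense (this follows from conditions \eqref{w-equiv} and \eqref{lambda=lambda*} exactly as for the original relation, or one simply works throughout in $\mathbb{C}(Y)\otimes_{\mathbb{C}[Y]}\mathcal{H}$), and the step from ``preserves the defining relations'' to ``is an algebra homomorphism'' via the Bernstein presentation. Bijectivity is free, and the same computation already gives $\iota^2=\mathrm{id}$, so $\iota$ is in fact an involution, in accordance with the title of this appendix.
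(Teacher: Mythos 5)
Your proposal is correct and takes essentially the same approach as the paper: reduce to checking compatibility with the cross-relation \eqref{bernsteinrel}, and use the original Bernstein relation applied to $-y$ in the verification. You are somewhat more explicit about why checking \eqref{bernsteinrel} suffices (appealing to the Bernstein presentation and the fact that $\iota_Y$, $\iota_0$ are already algebra automorphisms), and you reorganize the final computation as the cancellation of the common factor $\theta_{-y}-\theta_{-s_\alpha(y)}$ followed by a scalar identity in $\mathbb{C}(Y)$ rather than a direct chain of manipulations, but these are presentational differences rather than a different route.
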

\begin{proof}
It suffices to show that $\iota$ is compatible with the relation~\eqref{bernsteinrel} of Definition~\ref{affinehecke}.
Let $\alpha \in \Delta$ and $y \in Y$.
We write $s = s_{\alpha}$ for simplicity.
The equation that we have to prove is
\[
\theta_{-y}(q_{s} -1 - T_{s}) - (q_{s} -1 - T_{s})\theta_{-s(y)} =
\left(
(\mathbf{q}^{\lambda(\alpha)} -1) + \theta_{\alpha^{\vee}}(
\mathbf{q}^{(\lambda(\alpha) + \lambda^{*}(\alpha))/2} - \mathbf{q}^{(\lambda(\alpha) - \lambda^{*}(\alpha))/2}
)
\right)
\frac{
\theta_{-y} - \theta_{-s(y)}
}{
\theta_{0} - \theta_{2\alpha^{\vee}}
},
\]
that is proved by the calculations below:
\begin{align*}
\text{(LHS)}
&= (q_{s}-1)(\theta_{-y} - \theta_{-s(y)}) - (\theta_{-y}T_{s} - T_{s}\theta_{-s(y)})\\
&= (q_{s}-1)(\theta_{-y} - \theta_{-s(y)}) - 
\left(
(\mathbf{q}^{\lambda(\alpha)} -1) + \theta_{-\alpha^{\vee}}(
\mathbf{q}^{(\lambda(\alpha) + \lambda^{*}(\alpha))/2} - \mathbf{q}^{(\lambda(\alpha) - \lambda^{*}(\alpha))/2}
)
\right)
\frac{
\theta_{-y} - \theta_{-s(y)}
}{
\theta_{0} - \theta_{-2\alpha^{\vee}}
}\\
&=\left(
(\mathbf{q}^{\lambda(\alpha)} - 1)(\theta_{0} - \theta_{-2\alpha^{\vee}}) - (\mathbf{q}^{\lambda(\alpha)} -1) - \theta_{-\alpha^{\vee}}(
\mathbf{q}^{(\lambda(\alpha) + \lambda^{*}(\alpha))/2} - \mathbf{q}^{(\lambda(\alpha) - \lambda^{*}(\alpha))/2}
)
\right)
\frac{
\theta_{-y} - \theta_{-s(y)}
}{
\theta_{0} - \theta_{-2\alpha^{\vee}}
}\\
&= 
-\left(
\theta_{-2 \alpha^{\vee}}(\mathbf{q}^{\lambda(\alpha)} - 1) +  \theta_{-\alpha^{\vee}}(
\mathbf{q}^{(\lambda(\alpha) + \lambda^{*}(\alpha))/2} - \mathbf{q}^{(\lambda(\alpha) - \lambda^{*}(\alpha))/2}
)
\right)
\frac{
\theta_{-y} - \theta_{-s(y)}
}{
\theta_{0} - \theta_{-2\alpha^{\vee}}
}\\
&=
-\left(
(\mathbf{q}^{\lambda(\alpha)} -1) + \theta_{\alpha^{\vee}}(
\mathbf{q}^{(\lambda(\alpha) + \lambda^{*}(\alpha))/2} - \mathbf{q}^{(\lambda(\alpha) - \lambda^{*}(\alpha))/2}
)
\right)
\theta_{-2\alpha^{\vee}}
\frac{
\theta_{-y} - \theta_{-s(y)}
}{
\theta_{0} - \theta_{-2\alpha^{\vee}}
}\\
&=
\left(
(\mathbf{q}^{\lambda(\alpha)} -1) + \theta_{\alpha^{\vee}}(
\mathbf{q}^{(\lambda(\alpha) + \lambda^{*}(\alpha))/2} - \mathbf{q}^{(\lambda(\alpha) - \lambda^{*}(\alpha))/2}
)
\right)
\frac{
\theta_{-y} - \theta_{-s(y)}
}{
\theta_{0} - \theta_{2\alpha^{\vee}}
}\\
&= \text{(RHS)}.
\end{align*}
\end{proof}
\section{Homomorphism between Affine Hecke algebras of type $\widetilde{A_1}$}
\label{Homomorphism between Affine Hecke algebras of type}
Let 
\[
\mathcal{R} =
(X, R=\{\pm \alpha\}, Y, R^{\vee} = \{\pm \alpha^{\vee}\}, \Delta=\{\alpha\})
\]
and 
\[
\mathcal{R}' =
(X', R'=\{\pm \alpha'\}, Y', (R')^{\vee} = \{\pm (\alpha')^{\vee}\}, \Delta'=\{\alpha'\})
\]
be based root data.
Here, we do not assume that $X$ is of rank $1$.
Let $\lambda(\alpha), \lambda^{*}(\alpha), \lambda'(\alpha'), (\lambda^{*})'(\alpha')$ be positive real numbers.
We assume that $\lambda(\alpha) = \lambda^{*}(\alpha)$ unless $\alpha \in 2X$, and $\lambda'(\alpha') = (\lambda^{*})'(\alpha')$ unless $\alpha' \in 2X'$.
We fix $\mathbf{q} > 1$, and write
\[
\begin{cases}
q_{1}&= \mathbf{q}^{\lambda(\alpha)}, \\
q_{0} &= \mathbf{q}^{\lambda^{*}(\alpha)}, \\
q'_{1} &= \mathbf{q}^{\lambda'(\alpha')}, \\
q'_{0} &= \mathbf{q}^{(\lambda^{*})'(\alpha')}.
\end{cases}
\]
We note that $q_{1}, q_{0}, q'_{1}, q'_{0} >1$.
We define the affine Hecke algebra $\mathcal{H} = \mathcal{H}(\mathcal{R}, \lambda, \lambda^{*}, \mathbf{q})$ associated with $\mathcal{R}, \lambda, \lambda^{*}, \mathbf{q}$ and the affine Hecke algebra $\mathcal{H}' = \mathcal{H}(\mathcal{R}', \lambda', (\lambda^{*})', \mathbf{q})$ associated with $\mathcal{R}', \lambda', (\lambda^{*})', \mathbf{q}$.
We write $s = s_{\alpha}$ and $s' = s_{\alpha'}$.
\begin{proposition}
\label{keypropositionappendixinteger}
Let 
\[
I \colon \mathcal{H} \rightarrow \mathcal{H}'
\]
be an algebra homomorphism such that
\[
I(T_{s}) = c' \cdot \theta_{k (\alpha')^{\vee}} T_{s'} + b'
\]
for some $c' \in \mathbb{C}^{\times}$, $k \in \mathbb{Z}$, and $b' \in \mathbb{C}[Y']$.
We also assume that 
\[
I(\theta_{\alpha^{\vee}}) = c \cdot \theta_{n (\alpha')^{\vee}}
\]
for some $c \in \mathbb{C}^{\times}$ and positive integer $n$.
Then, we obtain that
\[
\begin{cases}
I(T_{s}) &= \theta_{k (\alpha')^{\vee}/2} \cdot T_{s'} \cdot \theta_{- k (\alpha')^{\vee}/2}, \\
I(\theta_{\alpha^{\vee}}) &= \theta_{(\alpha')^{\vee}}, \\
q_{1} &= q_{1}', \\
q_{0} &= q_{0}'
\end{cases} 
\]
if $k$ is even, and
\[
\begin{cases}
I(T_{s}) &= \theta_{(k-1) (\alpha')^{\vee}/2} \cdot T_{s', 0} \cdot \theta_{- (k-1) (\alpha')^{\vee}/2}, \\
I(\theta_{\alpha^{\vee}}) &= \theta_{(\alpha')^{\vee}}, \\
q_{1} &= q'_{0}, \\
q_{0} &= q'_{1}
\end{cases} 
\]
if $k$ is odd, where $T_{s', 0} \in \mathcal{H}'$ is defined as
\[
T_{s', 0} = (q'_{1})^{-1/2} \cdot (q'_{0})^{1/2} \cdot \left(
\theta_{(\alpha')^{\vee}} T_{s'} - (q'_{1} - 1) \theta_{(\alpha')^{\vee}}
\right).
\]
\end{proposition}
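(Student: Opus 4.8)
The plan is to verify the claimed formulas directly from the defining relations of the rank-one affine Hecke algebras $\mathcal{H}$ and $\mathcal{H}'$, exploiting that $I$ is an algebra homomorphism and hence must respect (i) the quadratic relation for $T_s$ and (ii) the Bernstein relation \eqref{bernsteinrel} relating $\theta_{\alpha^\vee}$ and $T_s$. First I would record the two constraints these impose on the unknowns $c', c, k, n, b'$. Applying $I$ to the quadratic relation $(T_s+1)(T_s-q_1)=0$ and using $I(T_s) = c'\theta_{k(\alpha')^\vee}T_{s'} + b'$ together with the quadratic relation for $T_{s'}$ and the Bernstein relation in $\mathcal{H}'$, one obtains an identity in $\mathcal{H}'$; since $\mathcal{H}'$ is free as a $\mathbb{C}[Y']$-module with basis $\{1, T_{s'}\}$, comparing the $T_{s'}$-component and the $\mathbb{C}[Y']$-component yields explicit equations. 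The $T_{s'}$-component will force $b'$ to be a very specific element of $\mathbb{C}[Y']$ (a scalar plus a multiple of $\theta_{k(\alpha')^\vee}$, reminiscent of the definition of $T_{s',0}$), and the $\mathbb{C}[Y']$-component, after substituting this $b'$, will relate $\{q_1,q_0,q_1',q_0'\}$ and pin down $c'$ in terms of the parameters.

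Next I would feed these into the image of the Bernstein relation. Applying $I$ to
\[
\theta_{\alpha^\vee}T_s - T_s\theta_{s(\alpha^\vee)} = (q_1 - 1)\theta_{\alpha^\vee} + \theta_{0}(q_{1}^{1/2}q_{0}^{1/2} - q_{1}^{1/2}q_{0}^{-1/2})
\]
(using $s(\alpha^\vee) = -\alpha^\vee$, so the denominator $\theta_0 - \theta_{-2\alpha^\vee}$ cancels the numerator $\theta_{\alpha^\vee}-\theta_{-\alpha^\vee}$ up to the factor $\theta_{-\alpha^\vee}$; I'd be careful to reproduce exactly the form used in Definition~\ref{affinehecke}), and using $I(\theta_{\alpha^\vee}) = c\,\theta_{n(\alpha')^\vee}$ and the formula just derived for $I(T_s)$, I get another identity in $\mathcal{H}'$. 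Comparing $T_{s'}$- and $\mathbb{C}[Y']$-components once more: the $T_{s'}$-coefficient comparison should force $n = 1$ and relate $c$ to the parameters (in fact $c=1$), because the Bernstein commutator in $\mathcal{H}'$ between $\theta_{n(\alpha')^\vee}$ and $\theta_{k(\alpha')^\vee}T_{s'}$ has a predictable leading term; the constant-term comparison then fixes the remaining relations among $q_1,q_0,q_1',q_0'$. At this stage the parity of $k$ enters: conjugating $T_{s'}$ (resp.\ $T_{s',0}$) by $\theta_{k(\alpha')^\vee/2}$ (resp.\ $\theta_{(k-1)(\alpha')^\vee/2}$) is well-defined in $\mathcal{H}'$ only when the translation lies in $Y'$, and it is precisely the parity of $k$ that decides whether $I(T_s)$ is a conjugate of $T_{s'}$ or of $T_{s',0}$; I would check that $T_{s',0}$ satisfies the quadratic relation $(T_{s',0}+1)(T_{s',0}-q_0')=0$ (a short computation from its definition and the Bernstein/quadratic relations in $\mathcal{H}'$) so that the two cases are genuinely distinguished by swapping the roles of $q_1,q_0$.

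I expect the main obstacle to be the bookkeeping of the two component comparisons while simultaneously tracking the translation lattice: one must show that $b'$, which a priori is an arbitrary element of $\mathbb{C}[Y']$, is actually supported on $\{0, k(\alpha')^\vee\}$, and this requires using that $I$ is a homomorphism (not merely linear) in an essential way — the quadratic relation alone does not suffice; one also needs that the subalgebra $\mathbb{C}[Y]$ maps into $\mathbb{C}[Y']$ in the prescribed monomial form and that $I$ restricted to the rank-one sublattice $\mathbb{Z}\alpha^\vee$ is determined by $I(\theta_{\alpha^\vee})$. A secondary subtlety is that in the odd-$k$ case the element $\theta_{k(\alpha')^\vee}$ is not in the image of $\mathbb{C}[\mathbb{Z}(\alpha')^\vee]$ unless we allow half-integral exponents, so I would phrase the conjugation inside $\mathcal{H}' \otimes_{\mathbb{C}[Y']} \mathbb{C}[Y'']$ for the lattice $Y'' = Y' + \mathbb{Z}(\alpha')^\vee/2$ and then check that the final element $\theta_{(k-1)(\alpha')^\vee/2}T_{s',0}\theta_{-(k-1)(\alpha')^\vee/2}$ lands back in $\mathcal{H}'$, using the definition of $T_{s',0}$ and the congruence $k \equiv 1 \pmod 2$. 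Once these technical points are handled, matching the quadratic and Bernstein relations gives both the parameter identities and the explicit formula for $I(T_s)$, completing the proof.
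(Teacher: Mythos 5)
Your overall strategy (apply the algebra homomorphism $I$ to the two defining relations of $\mathcal{H}$ and compare the $T_{s'}$-component and $\mathbb{C}[Y']$-component in the free $\mathbb{C}[Y']$-module decomposition of $\mathcal{H}'$) is the same as the paper's, but the specific claims about what each comparison yields are incorrect, and the proposed order of attack leads to a much harder computation.

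First, you propose applying $I$ to the quadratic relation to pin down $b'$. But the quadratic relation involves $(b')^2$ nonlinearly, and its $T_{s'}$-coefficient constrains only the $s'$-invariant combination $s'(b')+b'$, in terms of the still-unknown ratios among $q_1,q_1',c'$; the $s'$-antiinvariant part of $b'$ is unconstrained by it. Moreover, the claimed form of $b'$ (\emph{a scalar plus a multiple of $\theta_{k(\alpha')^\vee}$}) is simply wrong: the correct $b'$ (even in the $k$-even case) is
\[
b' \;=\; -c'\,\theta_{k(\alpha')^\vee/2}\Bigl((q_1'-1)+\theta_{-(\alpha')^\vee}(q_1')^{1/2}\bigl((q_0')^{1/2}-(q_0')^{-1/2}\bigr)\Bigr)\frac{\theta_{k(\alpha')^\vee/2}-\theta_{-k(\alpha')^\vee/2}}{\theta_0-\theta_{-2(\alpha')^\vee}},
\]
and the last factor is a geometric sum with $|k/2|$ terms, so already for $k=4$ the support of $b'$ has four exponents. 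The quadratic relation cannot reveal this; the paper in fact determines $b'$ \emph{last}, only after $n=1$, $c=1$, and the $q$-parameter relations have been established from the Bernstein identity.

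Second, you assert that the $T_{s'}$-coefficient comparison of the Bernstein relation forces $n=1$ (and $c=1$). That is not so: the $T_{s'}$-coefficient of $I(\theta_{\alpha^\vee})I(T_s)-I(T_s)I(\theta_{-\alpha^\vee})$ is
\[
(cc'-c^{-1}c')\,\theta_{(n+k)(\alpha')^\vee}\,T_{s'},
\]
whose vanishing gives only $c^2=1$, i.e.\ $c=\pm1$; it carries no information about $n$. The determination $n=1$ comes from the $\mathbb{C}[Y']$-component, which is a Laurent polynomial identity in $T=\theta_{(\alpha')^\vee}$; one rules out $n\ge 2$ by substituting carefully chosen $2n$-th roots of unity $\zeta$ with $\zeta^2\ne 1$ (so the $b'$-term and the rational factor both vanish) and then invoking strict positivity $q_1,q_0,q_1',q_0'>1$ to get a contradiction. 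This root-of-unity substitution is the technical core of the proof and is entirely absent from your proposal; without it neither $n=1$ nor the parity-of-$k$ dichotomy (which is extracted by substituting $T=\pm 1$) can be obtained. Your side-remark that $T_{s',0}$ satisfies $T_{s',0}^2=(q_0'-1)T_{s',0}+q_0'$ is correct and is indeed used at the end, but it is the easy part.
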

\begin{remark}
Before proving Proposition~\ref{keypropositionappendixinteger}, we explain about $T_{s', 0}$.
We define an affine root system $R'_{\aff}$ on
$
\mathbb{R} \cdot (\alpha')^{\vee}
$
as
\[
R'_{\aff} = \{
\pm \alpha' + k \mid k \in \mathbb{Z}
\}.
\]
Then, $R'_{\aff}$ has a basis
\[
B'_{\aff} = \{
\alpha', 1 - \alpha'
\}.
\]
We write $s' = s_{\alpha'}$ and $s'_{0} = s_{1-\alpha'}$.
Let $W_{\aff}\left(R'_{\aff}\right)$ denote the affine Weyl group of $R'_{\aff}$ and we define
\[
S'_{\aff} = \{
s', s'_{0}
\}.
\]
We also define a parameter function 
\[
q' \colon S'_{\aff} \rightarrow \mathbb{R}_{>1}
\]
as
\[
q'_{s'} = q'_{1} 
\]
and 
\[
q'_{s'_{0}} = q'_{0}.
\]
We define the Iwahori-Hecke algebra $\mathcal{H}\left(W_{\aff}\left(R'_{\aff}\right), q'\right)$ associated with the affine Coxeter system $\left(W_{\aff}\left(R'_{\aff}\right), S'_{\aff} \right)$ and the parameter function $q'$.
Since
\[
s'_{0} \circ s' = (\alpha')^{\vee},
\]
we have
\[
T_{s'_{0}} T_{s'} = T_{(\alpha')^{\vee}}.
\]
According to Theorem~\ref{theorem1.8ofsol21}, $\mathcal{H}\left(W_{\aff}\left(R'_{\aff}\right), q'\right)$ can be regarded as an affine Hecke algebra, and the element $T_{(\alpha')^{\vee}}$ corresponds to the element
\[
q_{(\alpha')^{\vee}}^{1/2} \cdot \theta_{(\alpha')^{\vee}} = (q'_{s'_{0}})^{1/2} \cdot (q'_{s'})^{1/2} \cdot \theta_{(\alpha')^{\vee}} = (q'_{1})^{1/2} \cdot (q'_{0})^{1/2} \cdot \theta_{(\alpha')^{\vee}}.
\]
Hence, the element 
\[
T_{s'_{0}} = T_{(\alpha')^{\vee}} (T_{s'})^{-1} \in \mathcal{H}\left(W_{\aff}\left(R'_{\aff}\right), q'\right)
\] corresponds to
\[
(q'_{1})^{1/2} \cdot (q'_{0})^{1/2} \cdot \theta_{(\alpha')^{\vee}} (T_{s'})^{-1} = (q'_{1})^{-1/2} \cdot (q'_{0})^{1/2} \cdot \left(
\theta_{(\alpha')^{\vee}} T_{s'} - (q'_{1} - 1) \theta_{(\alpha')^{\vee}}
\right),
\]
that is the element $T_{s', 0}$ we defined above.
\end{remark}
\begin{proof}[Proof of Proposition~\ref{keypropositionappendixinteger}]
We compare the images of the both sides of the equation
\begin{align}
\label{bernstein}
\theta_{\alpha^{\vee}} T_{s} - T_{s} \theta_{-\alpha^{\vee}} = \left(
(q_{1}-1) + \theta_{-\alpha^{\vee}}q_{1}^{1/2}(q_{0}^{1/2} - q_{0}^{-1/2})
 \right)\frac{\theta_{\alpha^{\vee}} - \theta_{-\alpha^{\vee}}}{\theta_{0} - \theta_{-2\alpha^{\vee}}} 
\end{align}
via $I$.
The image of the left hand side of \eqref{bernstein} via $I$ is equal to
\begin{align*}
& \quad c \cdot \theta_{n (\alpha')^{\vee}} (c' \cdot \theta_{k (\alpha')^{\vee}} T_{s'} + b') -  c^{-1} (c' \cdot \theta_{k (\alpha')^{\vee}} T_{s'} + b') \theta_{-n (\alpha')^{\vee}}\\
&=
c' \cdot \theta_{k (\alpha')^{\vee}} (c \cdot \theta_{n (\alpha')^{\vee}} T_{s'} - c^{-1} \cdot T_{s'} \theta_{-n (\alpha')^{\vee}}) + \left(\text{element of $\mathbb{C}[Y']$}\right) \\
&=
c' (c-c^{-1}) \cdot T_{s'} \theta_{-(n+k) (\alpha')^{\vee}} + \left(\text{element of $\mathbb{C}[Y']$}\right).
\end{align*}
On the other hand, the image of the right hand side of \eqref{bernstein} via $I$ is contained in $\mathbb{C}[Y']$.
Hence, we obtain that 
\[
c-c^{-1} = 0,
\]
that is,
\begin{align}
\label{c=pm1}
c= \pm 1.
\end{align}
Then, the image of the left hand side of \eqref{bernstein} via $I$ is equal to
\begin{multline}
\label{lhs}
cc' \cdot \theta_{k (\alpha')^{\vee}} \left(
\theta_{n (\alpha')^{\vee}}T_{s'} - T_{s'} \theta_{- n (\alpha')^{\vee}}
\right)
+ cb' (\theta_{n (\alpha')^{\vee}} - \theta_{-n (\alpha')^{\vee}})\\
=
cc' \cdot \theta_{k (\alpha')^{\vee}} \left(
(q_{1}'-1) + \theta_{-(\alpha')^{\vee}}(q_{1}')^{1/2}((q_{0}')^{1/2} - (q_{0}')^{-1/2})
\right)\frac{\theta_{n(\alpha')^{\vee}} - \theta_{-n(\alpha')^{\vee}}}{\theta_{0} - \theta_{-2(\alpha')^{\vee}}} 
+ cb' (\theta_{n (\alpha')^{\vee}} - \theta_{-n (\alpha')^{\vee}}).
\end{multline}
On the other hand, the right hand side of \eqref{bernstein} is equal to
\[
(q_{1} - 1)\theta_{\alpha^{\vee}} + q_{1}^{1/2}(q_{0}^{1/2} - q_{0}^{-1/2}),
\]
and the image of this term via $I$ is equal to
\begin{align}
\label{rhs}
c (q_{1} - 1)\theta_{n (\alpha')^{\vee}} + q_{1}^{1/2}(q_{0}^{1/2} - q_{0}^{-1/2}).
\end{align}
Comparing \eqref{lhs} with \eqref{rhs}, we obtain that
\begin{multline}
\label{comparison}
cc' \cdot \theta_{k (\alpha')^{\vee}} \left(
(q_{1}'-1) + \theta_{-(\alpha')^{\vee}}(q_{1}')^{1/2}((q_{0}')^{1/2} - (q_{0}')^{-1/2})
\right)\frac{\theta_{n (\alpha')^{\vee}} - \theta_{-n (\alpha')^{\vee}}}{\theta_{0} - \theta_{-2 (\alpha')^{\vee}}} 
+ cb' (\theta_{n (\alpha')^{\vee}} - \theta_{-n (\alpha')^{\vee}})\\
=
c (q_{1} - 1)\theta_{n (\alpha')^{\vee}} + q_{1}^{1/2}(q_{0}^{1/2} - q_{0}^{-1/2}).
\end{multline}
Let $\mathbb{C}(\mathbb{Z}(R')^{\vee})$ denote the quotient field of $\mathbb{C}[\mathbb{Z}(R')^{\vee}]$.
Then, according to equation~\eqref{comparison}, we obtain that
\[
b' \in \mathbb{C}[Y'] \cap \mathbb{C}(\mathbb{Z}(R')^{\vee}) = \mathbb{C}[\mathbb{Z} (R')^{\vee}].
\]
We regard equation~\eqref{comparison} as the equation
\begin{multline}
\label{comparison2}
cc' \cdot T^{k} \left(
(q_{1}'-1) + (q_{1}')^{1/2}((q_{0}')^{1/2} - (q_{0}')^{-1/2})T^{-1}
\right)\frac{T^{n} - T^{-n}}{1 - T^{-2}} 
+ cb' (T^{n} - T^{-n})\\
=
c (q_{1} - 1)T^{n} + q_{1}^{1/2}(q_{0}^{1/2} - q_{0}^{-1/2})
\end{multline}
in the ring of Laurent polynomials $\mathbb{C}[T, T^{-1}]$ via the isomorphism
\[
\mathbb{C}[\mathbb{Z} (R')^{\vee}] \rightarrow \mathbb{C}[T, T^{-1}]
\]
defined as
\[
\theta_{(\alpha')^{\vee}} \mapsto T.
\]
If $n \ge 3$, we can take $\zeta \in \mathbb{C}^{\times}$ such that
\[
\zeta^{n} = c = 1/c, \, \zeta^{2} \neq 1.
\]
Substituting $T = \zeta$ to \eqref{comparison2}, we obtain
\[
0 = (q_{1} - 1) + q_{1}^{1/2}(q_{0}^{1/2} - q_{0}^{-1/2}).
\]
However, since we are assuming that $q_1, q_0 >1$, the right hand side of the equation above is positive. Thus, it cannot happen.

Next, we consider the case $n=2$.
Substituting $T = \sqrt{-1}$ to \eqref{comparison2}, we obtain
\[
0 = -c(q_{1} - 1) + q_{1}^{1/2}(q_{0}^{1/2} - q_{0}^{-1/2}).
\]
If $c=-1$, the argument above implies a contradiction. 
Hence, we obtain $c=1$ and
\[
0 = -(q_{1} - 1) + q_{1}^{1/2}(q_{0}^{1/2} - q_{0}^{-1/2}) = -(q_{1}^{1/2} - q_{0}^{1/2})(q_{1}^{1/2} + q_{0}^{-1/2}).
\]
Thus, we have $q_1 = q_0$.
Then, equation~\eqref{comparison2} becomes
\[
c' \cdot T^{k} \left(
(q_{1}'-1) + (q_{1}')^{1/2}((q_{0}')^{1/2} - (q_{0}')^{-1/2})T^{-1}
\right)\frac{T^{2} - T^{-2}}{1 - T^{-2}} 
+ b' (T^{2} - T^{-2})
=
(q_{1} - 1)(T^{2}+1). 
\]
Dividing both sides by $T^2+1$, we obtain
\[
c' \cdot T^{k} \left(
(q_{1}'-1) + (q_{1}')^{1/2}((q_{0}')^{1/2} - (q_{0}')^{-1/2})T^{-1}
\right)
+ b'(1-T^{-2})
= q_{1} + 1.
\]
Substituting $T=1$ and $T= -1$ to both sides, we obtain
\[
\begin{cases}
c' \left(
(q_{1}'-1) + (q_{1}')^{1/2}((q_{0}')^{1/2} - (q_{0}')^{-1/2})
\right) &= q_{1} +1, \\
c' \cdot (-1)^{k} \left(
(q_{1}'-1) - (q_{1}')^{1/2}((q_{0}')^{1/2} - (q_{0}')^{-1/2})
\right) &= q_{1} +1,
\end{cases}
\]
that imply
\[
\begin{cases}
c'(q_{1}'-1) &= q_{1} +1, \\
(q_{0}')^{1/2} - (q_{0}')^{-1/2} &= 0
\end{cases}
\]
if $k$ is even, and
\[
\begin{cases}
q'_{1} - 1 &= 0, \\
c' \cdot (q_{1}')^{1/2}((q_{0}')^{1/2} - (q_{0}')^{-1/2}) &= q_{1} +1
\end{cases}
\]
if $k$ is odd.
However, since we are assuming that $q_{1}' , q'_{0}> 1$, 
\[
(q_{0}')^{1/2} - (q_{0}')^{-1/2} > 0
\]
and 
\[
q'_{1} -1>0.
\]
Thus, both cannot happen either.

Now, we conclude that $n=1$. Then, the equation~\eqref{comparison2} becomes
\begin{align}
\label{comparisonn=1}
cc' \cdot T^{k} \left(
(q_{1}'-1)T + (q_{1}')^{1/2}((q_{0}')^{1/2} - (q_{0}')^{-1/2})
\right)
+ cb' (T - T^{-1})
=
c (q_{1} - 1)T + q_{1}^{1/2}(q_{0}^{1/2} - q_{0}^{-1/2}).
\end{align}
Substituting $T=1$ and $T= -1$ to both sides, we obtain
\[
\begin{cases}
cc' \left(
(q_{1}'-1) + (q_{1}')^{1/2}((q_{0}')^{1/2} - (q_{0}')^{-1/2})
\right)
&=
c (q_{1} - 1) + q_{1}^{1/2}(q_{0}^{1/2} - q_{0}^{-1/2}),\\
cc' \cdot (-1)^{k} \left(
-(q_{1}'-1) + (q_{1}')^{1/2}((q_{0}')^{1/2} - (q_{0}')^{-1/2})
\right)
&=
-c (q_{1} - 1) + q_{1}^{1/2}(q_{0}^{1/2} - q_{0}^{-1/2}),
\end{cases}
\]
that imply
\begin{align}
\label{qcomparison}
\begin{cases}
cc' (q_{1}' - 1) &= c(q_{1} - 1), \\
cc' \cdot (q_{1}')^{1/2}((q_{0}')^{1/2} - (q_{0}')^{-1/2}) &= q_{1}^{1/2}(q_{0}^{1/2} - q_{0}^{-1/2})
\end{cases}
\end{align}
if $k$ is even,
and
\begin{align}
\label{qcomparison2}
\begin{cases}
cc' (q'_{1} - 1) &= q_{1}^{1/2}(q_{0}^{1/2} - q_{0}^{-1/2}), \\
cc' \cdot (q_{1}')^{1/2}((q_{0}')^{1/2} - (q_{0}')^{-1/2}) &= c(q_{1} - 1)
\end{cases}
\end{align}
if $k$ is odd.

First, we assume that $k$ is even.
Substituting equation~\eqref{qcomparison} to \eqref{comparisonn=1}, we obtain that
\begin{multline}
c c' \cdot (q_{1}'-1)T^{k+1} + cc' \cdot (q_{1}')^{1/2}((q_{0}')^{1/2} - (q_{0}')^{-1/2}) T^{k} + + cb' (T - T^{-1})\\
= c c' (q_{1}'-1)T + c c' \cdot (q_{1}')^{1/2}((q_{0}')^{1/2} - (q_{0}')^{-1/2}),
\end{multline}
hence 
\[
-c c' \cdot \left(
 (q_{1}'-1)T + (q_{1}')^{1/2}((q_{0}')^{1/2} - (q_{0}')^{-1/2})
\right)\left(
T^{k} - 1
\right) = cb' (T - T^{-1}). 
\]
Since $k$ is even, $T - T^{-1}$ divides $T^{k} - 1$, and we
have
\begin{align*}
b' &= -c' \cdot \left(
 (q_{1}'-1)T + (q_{1}')^{1/2}((q_{0}')^{1/2} - (q_{0}')^{-1/2})
\right)\frac{T^{k} - 1}{T - T^{-1}}\\
&= -c' \cdot T^{k/2} \cdot \left(
 (q_{1}'-1) + T^{-1} (q_{1}')^{1/2}((q_{0}')^{1/2} - (q_{0}')^{-1/2})
\right)\frac{T^{k/2} - T^{-k/2}}{1 - T^{-2}}\\
&=  -c' \cdot \theta_{k (\alpha')^{\vee}/2} \cdot \left(
(q'_{1}-1) + \theta_{- (\alpha')^{\vee}}(q'_{1})^{1/2}((q'_{0})^{1/2} - (q'_{0})^{-1/2})
 \right)\frac{\theta_{k (\alpha')^{\vee}/2} - \theta_{-k (\alpha')^{\vee}/2}}{\theta_{0} - \theta_{-2 (\alpha')^{\vee}}}\\
&= -c' \cdot \theta_{k (\alpha')^{\vee}/2} \cdot \left(
\theta_{k (\alpha')^{\vee}/2} T_{s'} - T_{s'} \theta_{- k (\alpha')^{\vee}/2}
\right)\\
&= -c' \cdot \theta_{k (\alpha')^{\vee}} T_{s'} + c' \cdot \left(\theta_{k (\alpha')^{\vee}/2} \cdot T_{s'} \cdot \theta_{-k (\alpha')^{\vee}/2}\right).
\end{align*}
Hence, we have
\[
I(T_{s}) = c' \cdot \theta_{k (\alpha')^{\vee}} T_{s'} + b' = c' \cdot \left(\theta_{k (\alpha')^{\vee}/2} \cdot T_{s'} \cdot \theta_{-k (\alpha')^{\vee}/2}\right).
\]
Since $I$ is an algebra homomorphism, and $T_s$ satisfies the quadratic relation
\[
T_{s}^{2} = (q_{1} - 1)T_{s} + q_{1},
\]
we obtain
\begin{align}
\label{quadrel}
\left(
c' \cdot \left(\theta_{k (\alpha')^{\vee}/2} \cdot T_{s'} \cdot \theta_{-k (\alpha')^{\vee}/2}\right)
\right)^2 = (q_{1} - 1) \left(
c' \cdot \left(\theta_{k (\alpha')^{\vee}/2} \cdot T_{s'} \cdot \theta_{-k (\alpha')^{\vee}/2}\right)
\right) + q_{1}.
\end{align}
On the other hand, the quadratic relation
\[
T_{s'}^{2} = (q_{1}' - 1)T_{s'} + q_{1}'
\]
of $T_{s'}$ implies that
\[
\left(
\theta_{k (\alpha')^{\vee}/2} \cdot T_{s'} \cdot \theta_{-k (\alpha')^{\vee}/2}
\right)^{2} = (q'_{1} - 1)\left(
\theta_{k (\alpha')^{\vee}/2} \cdot T_{s'} \cdot \theta_{-k (\alpha')^{\vee}/2}
\right) + q'_{1}.
\]
Substituting it to \eqref{quadrel}, we obtain
\[
(c')^{2} (q_{1}' - 1)\left(
\theta_{k (\alpha')^{\vee}/2} \cdot T_{s'} \cdot \theta_{-k (\alpha')^{\vee}/2}
\right) + (c')^{2} q_{1}' = c' (q_{1} - 1) \left(
\theta_{k (\alpha')^{\vee}/2} \cdot T_{s'} \cdot \theta_{-k (\alpha')^{\vee}/2}
\right) + q_{1}.
\]
Hence, we obtain that
\[
\begin{cases}
c' (q'_{1} -1) &= q_{1} - 1, \\
(c')^2 q'_{1} &= q_{1}
\end{cases}
\]
Combining them, we obtain
\[
0 = q'_{1} \cdot (c')^2 - (q'_{1} -1) c' -1 = (c' -1)(q'_{1} \cdot c' +1).
\]
If $q_{1}' \cdot c' +1 = 0$, we have
\[
q_{1} = (c')^2 \cdot q'_{1} = \frac{1}{q_{1}'}.
\]
However, since we are assuming that $q_{1}, q_{1}' >1$, it cannot happen.
Thus, we obtain that $c'=1$ and $q_{1} = q_{1}'$.
Substituting these equations to the second equation of \eqref{qcomparison}, we obtain
\[
c \cdot ((q_{0}')^{1/2} - (q_{0}')^{-1/2}) = q_{0}^{1/2} - q_{0}^{-1/2}.
\]
Since $c = \pm 1$ and $q_{0}, q_{0}' > 1$, we obtain $c = 1$ and $q_{0} = q_{0}'$.

We consider the case that $k$ is odd.
Substituting equation~\eqref{qcomparison2} to \eqref{comparisonn=1}, we obtain that
\begin{multline}
c c' \cdot (q_{1}'-1)T^{k+1} + cc' \cdot (q_{1}')^{1/2}((q_{0}')^{1/2} - (q_{0}')^{-1/2}) T^{k} + cb' (T - T^{-1})\\
= c c' (q_{1}'-1) + c c' \cdot (q_{1}')^{1/2}((q_{0}')^{1/2} - (q_{0}')^{-1/2})T,
\end{multline}
hence 
\[
-c c' \cdot \left(
q'_{1} -1
\right)(T^{k+1} - 1) - cc' \cdot (q_{1}')^{1/2} \left(
(q_{0}')^{1/2} - (q_{0}')^{-1/2}
\right)\left(
T^{k} - T
\right) = cb' (T - T^{-1}). 
\]
Since $k$ is odd, $T - T^{-1}$ divides $T^{k+1} - 1$ and $T^{k} - T$, and we
have
\begin{align*}
b'
&= -c' \cdot \left(
(q'_{1} - 1)\frac{T^{k+1} - 1}{T - T^{-1}} + (q_{1}')^{1/2} \left(
(q_{0}')^{1/2} - (q_{0}')^{-1/2}
\right)\frac{T^{k} - T}{T - T^{-1}}
\right)\\
&= -c' \cdot
\left(
 (q'_{1}-1)\frac{T - T^{-1}}{1 - T^{-2}} + 
T^{k+1/2} \cdot \left(
 (q_{1}'-1) + T^{-1} (q_{1}')^{1/2}((q_{0}')^{1/2} - (q_{0}')^{-1/2})
\right)\frac{T^{(k-1)/2} - T^{-(k-1)/2}}{1 - T^{-2}}
\right)\\
&= -c' \cdot
(q'_{1}-1)\frac{\theta_{(\alpha')^{\vee}} - \theta_{-(\alpha')^{\vee}}}{1 - \theta_{-2 (\alpha')^{\vee}}}\\
& \quad - c' \cdot 
\theta_{(k+1) (\alpha')^{\vee}/2} \cdot \left(
 (q_{1}'-1) + \theta_{- (\alpha')^{\vee}} (q_{1}')^{1/2}((q_{0}')^{1/2} - (q_{0}')^{-1/2})
\right)
\frac{\theta_{(k-1) (\alpha')^{\vee}/2}- \theta_{-(k-1) (\alpha')^{\vee}/2}}{1 - \theta_{-2 (\alpha')^{\vee}}}\\
&= -c' \cdot
(q'_{1} -1)\theta_{(\alpha')^{\vee}} - c' \cdot \theta_{(k+1) (\alpha')^{\vee}/2} \cdot \left(
\theta_{(k-1) (\alpha')^{\vee}/2} T_{s'} - T_{s'} \theta_{- (k-1) (\alpha')^{\vee}/2}
\right)\\
&= - c' \cdot (q'_{1} - 1) \theta_{(\alpha')^{\vee}} -c' \cdot \theta_{k (\alpha')^{\vee}} T_{s'} + c' \cdot \theta_{(k+1) (\alpha')^{\vee}/2} \cdot T_{s'} \cdot \theta_{- (k-1) (\alpha')^{\vee}/2}.
\end{align*}
Hence, we have
\begin{align*}
I(T_{s}) &= c' \cdot \theta_{k (\alpha')^{\vee}} T_{s'} + b' \\
&= c' \cdot
\left(
\theta_{(k+1) (\alpha')^{\vee}/2} \cdot T_{s'} \cdot \theta_{- (k-1) (\alpha')^{\vee}/2} - (q'_{1} - 1) \theta_{(\alpha')^{\vee}}
\right)\\
&= c' \cdot \theta_{(k-1) (\alpha')^{\vee}/2} \cdot \left(
\theta_{(\alpha')^{\vee}} T_{s'} -(q'_{1} -1) \theta_{(\alpha')^{\vee}}
\right) \cdot \theta_{-(k-1) (\alpha')^{\vee}/2}.
\end{align*}
Recall that we defined
\[
T_{s', 0} = (q'_{1})^{-1/2} \cdot (q'_{0})^{1/2} \cdot \left(
\theta_{(\alpha')^{\vee}} T_{s'} - (q'_{1} - 1) \theta_{(\alpha')^{\vee}}
\right).
\]
Thus, we have
\[
T_{s', 0} = - (q'_{1})^{-1/2} \cdot (q'_{0})^{1/2} \cdot \iota\left(
\theta_{- (\alpha')^{\vee}} T_{s'}
\right),
\]
where 
\[
\iota \colon \mathcal{H}' \rightarrow \mathcal{H}'
\]
denotes the involution defined in Appendix~\ref{An involution of an affine Hecke algebra}.
The quadratic relation of $T_{s'}$ implies the quadratic relation of $\theta_{- (\alpha')^{\vee}} T_{s'}$ as follows:
\begin{align*}
\left(
\theta_{- (\alpha')^{\vee}} T_{s'}
\right)^2 
&= \theta_{- (\alpha')^{\vee}} \cdot \left(
T_{s'} \theta_{- (\alpha')^{\vee}}
\right) \cdot T_{s'}\\
&= 
\theta_{- (\alpha')^{\vee}} \cdot \left(
\theta_{(\alpha')^{\vee}} T_{s'} - (q'_{1} - 1)\theta_{(\alpha')^{\vee}} - (q'_{1})^{1/2}((q'_{0})^{1/2} - (q'_{0})^{-1/2})
\right) \cdot T_{s'}\\
&= T_{s'}^{2} -(q'_{1} - 1)T_{s'} - (q'_{1})^{1/2}((q'_{0})^{1/2} - (q'_{0})^{-1/2}) \cdot \theta_{- (\alpha')^{\vee}} T_{s'}\\
&= - (q'_{1})^{1/2}((q'_{0})^{1/2} - (q'_{0})^{-1/2}) \cdot \theta_{- (\alpha')^{\vee}} T_{s'} + q'_{1}.
\end{align*}
Then, we obtain that $T_{s', 0}$ has the quadratic relation
\begin{align*}
T_{s', 0}^{2} &= (q'_{1})^{-1} \cdot q'_{0} \cdot \iota\left(
\theta_{- (\alpha')^{\vee}} T_{s'}
\right)^{2}\\
&=  (q'_{1})^{-1} \cdot q'_{0} \cdot \iota\left(
- (q'_{1})^{1/2}((q'_{0})^{1/2} - (q'_{0})^{-1/2}) \cdot \theta_{- (\alpha')^{\vee}} T_{s'} + q'_{1}
\right)\\
&= (q'_{0} -1) \left(
- (q'_{1})^{-1/2} \cdot (q'_{0})^{1/2} \cdot \iota\left(
\theta_{- (\alpha')^{\vee}} T_{s'}
\right)
\right) + q'_{0}\\
&= (q'_{0} -1)T_{s', 0} + q'_{0}.
\end{align*}
Let $c'' = (q'_{1})^{1/2} \cdot (q'_{0})^{-1/2} \cdot c'$.
Then, we have
\[
I(T_{s}) = c'' \cdot \theta_{(k-1) (\alpha')^{\vee}/2} \cdot T_{s', 0} \cdot \theta_{-(k-1) (\alpha')^{\vee}/2}.
\]
Since $\theta_{(k-1) (\alpha')^{\vee}/2} \cdot T_{s', 0} \cdot \theta_{-(k-1) (\alpha')^{\vee}/2}$ satisfies the same quadratic relation as $T_{s', 0}$, we have
\begin{align*}
&\quad c''(q_{1} - 1) \left(
\theta_{(k-1) (\alpha')^{\vee}/2} \cdot T_{s', 0} \cdot \theta_{-(k-1) (\alpha')^{\vee}/2}
\right) + q_{1} \\
&= I \left(
(q_{1} - 1)T_{s} + q_{1}
\right)\\
&= I(T_{s}^{2})\\
&= I(T_{s})^{2}\\
&= \left(
c'' \cdot \theta_{(k-1) (\alpha')^{\vee}/2} \cdot T_{s', 0} \cdot \theta_{-(k-1) (\alpha')^{\vee}/2}
\right)^2\\
&= (c'')^2 (q'_{0} -1) \left(
\theta_{(k-1) (\alpha')^{\vee}/2} \cdot T_{s', 0} \cdot \theta_{-(k-1) (\alpha')^{\vee}/2}
\right) + (c'')^2 q'_{0}.
\end{align*}
Hence, we obtain that
\[
\begin{cases}
q_{1} - 1 &= c'' (q'_{0} -1), \\
q_{1} &= (c'')^2 q'_{0}
\end{cases}
\]
Combining them, we obtain
\[
0 = q'_{0} \cdot (c'')^2 - (q'_{0} -1) c'' -1 = (c'' -1)(q'_{0} \cdot c'' +1).
\]
If $q_{0}' \cdot c'' +1 = 0$, we have
\[
q_{1} = (c'')^2 \cdot q'_{0} = \frac{1}{q_{0}'}.
\]
However, since we are assuming that $q_{1}, q'_{0} >1$, it cannot happen.
Thus, we obtain that $c''=1$ and $q_{1} = q'_{0}$.
We also have $c' = (q'_{1})^{-1/2} \cdot (q'_{0})^{1/2}$.
Substituting them to the first equation of \eqref{qcomparison2}, we obtain
\[
c \cdot ((q_{1}')^{1/2} - (q_{1}')^{-1/2}) = (q_{0}^{1/2} - q_{0}^{-1/2}).
\]
Since $c = \pm 1$ and $q_{0}, q_{1}' > 1$, we obtain $c = 1$ and $q_{0} = q'_{1}$.
\end{proof}

We will generalize Proposition~\ref{keypropositionappendixinteger} a bit.
\begin{lemma}
\label{keypropositionappendixhalfinteger}
There is no algebra homomorphism
\[
I \colon \mathcal{H} \rightarrow \mathcal{H}'
\]
such that
\[
I(T_{s}) = c' \cdot \theta_{k (\alpha')^{\vee}} T_{s'} + b'
\]
for some $c' \in \mathbb{C}^{\times}$, $k \in (1/2) \cdot \mathbb{Z}$, and $b' \in \mathbb{C}[Y']$, and
\[
I(\theta_{\alpha^{\vee}}) = c \cdot \theta_{n (\alpha')^{\vee}}
\]
for some $c \in \mathbb{C}^{\times}$ and positive half-integer $n$.
\end{lemma}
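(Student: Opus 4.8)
The plan is to argue by contradiction, reusing the mechanism of the proof of Proposition~\ref{keypropositionappendixinteger} but exploiting the parity forced by a genuinely half-integral $n$. So suppose such a homomorphism $I$ exists, and write $2n=m$, an odd positive integer. The first and decisive step is to observe that the hypotheses already force $q_{1}'=q_{0}'$. Since $I(\theta_{\alpha^{\vee}})=c\,\theta_{n(\alpha')^{\vee}}$ must be an element of $\mathcal{H}'$, we need $n(\alpha')^{\vee}\in Y'$; then $m\,(\alpha')^{\vee}\in 2Y'$, and since $m$ is odd and $(\alpha')^{\vee}\in Y'$, this forces $(\alpha')^{\vee}\in 2Y'$, i.e.\ $(\alpha')^{\vee}/2\in Y'$. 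In particular $\langle\alpha',(\alpha')^{\vee}/2\rangle=1$, so $\langle\alpha',Y'\rangle=\mathbb{Z}$, i.e.\ $\alpha'\notin 2X'$; by the standing assumption on $\mathcal{R}'$ this gives $\lambda'(\alpha')=(\lambda^{*})'(\alpha')$. Put $q':=q_{1}'=q_{0}'>1$. (Recording $(\alpha')^{\vee}/2\in Y'$ also matters: it makes $\theta_{(\alpha')^{\vee}/2}$ a genuine element of $\mathbb{C}[Y']$, which is what will put a certain quotient into $\mathbb{C}[Y']$ below.)

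Next I would apply $I$ to the identity $\theta_{\alpha^{\vee}}T_{s}-T_{s}\theta_{-\alpha^{\vee}}=(q_{1}-1)\theta_{\alpha^{\vee}}+q_{1}^{1/2}(q_{0}^{1/2}-q_{0}^{-1/2})$, which is the instance $y=\alpha^{\vee}$ of relation~\eqref{bernsteinrel} in $\mathcal{H}$ (using $s_{\alpha}(\alpha^{\vee})=-\alpha^{\vee}$ and $\frac{\theta_{\alpha^{\vee}}-\theta_{-\alpha^{\vee}}}{\theta_{0}-\theta_{-2\alpha^{\vee}}}=\theta_{\alpha^{\vee}}$). Plugging in $I(\theta_{\alpha^{\vee}})=c\,\theta_{n(\alpha')^{\vee}}$ and $I(T_{s})=c'\theta_{k(\alpha')^{\vee}}T_{s'}+b'$, and using relation~\eqref{bernsteinrel} in $\mathcal{H}'$ (with $s_{\alpha'}(n(\alpha')^{\vee})=-n(\alpha')^{\vee}$) to replace $T_{s'}\theta_{-n(\alpha')^{\vee}}$ by $\theta_{n(\alpha')^{\vee}}T_{s'}$ plus an element of $\mathbb{C}[Y']$, one sees that the $T_{s'}$-coefficient of the image of the left-hand side is $c'(c-c^{-1})\theta_{(n+k)(\alpha')^{\vee}}$, whereas the image of the right-hand side lies in $\mathbb{C}[Y']$. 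Hence $c'(c-c^{-1})=0$, so $c=\pm 1$, exactly as in Proposition~\ref{keypropositionappendixinteger}.

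With $c=\pm 1$ and $q_{1}'=q_{0}'=q'$, relation~\eqref{bernsteinrel} in $\mathcal{H}'$ collapses to $\theta_{n(\alpha')^{\vee}}T_{s'}-T_{s'}\theta_{-n(\alpha')^{\vee}}=(q'-1)\,\frac{\theta_{n(\alpha')^{\vee}}-\theta_{-n(\alpha')^{\vee}}}{\theta_{0}-\theta_{-(\alpha')^{\vee}}}$, and the quotient here equals the Laurent polynomial $\theta_{(2-m)(\alpha')^{\vee}/2}+\theta_{(4-m)(\alpha')^{\vee}/2}+\cdots+\theta_{m(\alpha')^{\vee}/2}$, a sum of $m$ monomials in $\theta_{(\alpha')^{\vee}/2}$ of odd exponent, hence an element of $\mathbb{C}[Y']$. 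Substituting this, $I(\theta_{\alpha^{\vee}})$ and $I(T_{s})$ into the image of the displayed identity yields an identity in $\mathbb{C}[Y']$, to which I would apply the two $\mathbb{C}$-algebra characters $\mathrm{ev}_{\varepsilon}\colon\mathbb{C}[Y']\to\mathbb{C}$, $\theta_{y}\mapsto\varepsilon^{\langle\alpha',y\rangle}$, for $\varepsilon=\pm 1$. Because $m$ is odd, $\mathrm{ev}_{\varepsilon}(\theta_{n(\alpha')^{\vee}}-\theta_{-n(\alpha')^{\vee}})=0$, so all $b'$-terms vanish; the Laurent polynomial above evaluates to $m$ under $\mathrm{ev}_{1}$ and to $-m$ under $\mathrm{ev}_{-1}$, while $\mathrm{ev}_{\varepsilon}(\theta_{k(\alpha')^{\vee}})=\varepsilon^{2k}$ and $\mathrm{ev}_{\varepsilon}(\theta_{n(\alpha')^{\vee}})=\varepsilon$. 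The two scalar equations obtained are
\[
cc'(q'-1)m=c(q_{1}-1)+q_{1}^{1/2}(q_{0}^{1/2}-q_{0}^{-1/2}),\qquad -(-1)^{2k}cc'(q'-1)m=-c(q_{1}-1)+q_{1}^{1/2}(q_{0}^{1/2}-q_{0}^{-1/2}).
\]
If $2k$ is even, adding them gives $q_{1}^{1/2}(q_{0}^{1/2}-q_{0}^{-1/2})=0$, i.e.\ $q_{0}=1$; if $2k$ is odd, subtracting them gives $c(q_{1}-1)=0$, i.e.\ $q_{1}=1$. Either possibility contradicts $q_{0},q_{1}>1$, so no such $I$ exists.

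The only genuinely load-bearing step is the reduction $q_{1}'=q_{0}'$ in the first paragraph: it is precisely what makes relation~\eqref{bernsteinrel} for $y=n(\alpha')^{\vee}$ have a polynomial right-hand side, and hence legitimizes the entire computation (if $\lambda'(\alpha')\neq(\lambda^{*})'(\alpha')$ one never gets past it, and if $(\alpha')^{\vee}/2\notin Y'$ the hypothesis is vacuous). Once past it, the passage to the two evaluation characters and the parity split on $2k$ are mechanical, and the positivity $q_{0},q_{1}>1$ delivers the contradiction in both cases.
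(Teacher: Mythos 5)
Your proof is correct and follows essentially the same route as the paper: the reduction $q'_{1}=q'_{0}$ via $(\alpha')^{\vee}/2\in Y'$, the deduction $c=\pm 1$ from comparing $T_{s'}$-coefficients, and evaluation of the resulting identity in $\mathbb{C}[Y']$ at $\theta_{(\alpha')^{\vee}/2}\mapsto\pm 1$. The one streamlining is that you exploit the odd parity of $m=2n$ to handle all half-integral $n$ uniformly in one evaluation step, whereas the paper first disposes of $n>1$ by substituting a nontrivial root of unity $\zeta$ with $\zeta^{2n}=c$ and only then evaluates at $\pm 1$ in the residual case $n=1/2$; both variants are valid.
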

\begin{proof}
Since $n$ is a half-integer, we have $(\alpha')^{\vee}/2 \in Y'$.
Hence, we obtain that $\alpha' \not \in 2X'$ that implies $\lambda'(\alpha') = (\lambda^*)'(\alpha')$ and $q'_{1} = q'_{0}$.
Thus, equation~\eqref{comparison} in the proof of Proposition~\ref{keypropositionappendixinteger} becomes
\begin{align*}
cc' \cdot \theta_{k (\alpha')^{\vee}}
(q_{1}'-1) \frac{\theta_{n (\alpha')^{\vee}} - \theta_{-n (\alpha')^{\vee}}}{\theta_{0} - \theta_{-(\alpha')^{\vee}}} 
+ cb' (\theta_{n (\alpha')^{\vee}} - \theta_{-n (\alpha')^{\vee}})
=
c (q_{1} - 1)\theta_{n (\alpha')^{\vee}} + q_{1}^{1/2}(q_{0}^{1/2} - q_{0}^{-1/2})
\end{align*}
in this case.
We also note that equation~\eqref{c=pm1} holds in this case too.
We regard it as an equation in the ring of Laurent polynomials $\mathbb{C}[S, S^{-1}]$ via the isomorphism
\[
\mathbb{C}[\mathbb{Z} (R')^{\vee}/2] \rightarrow \mathbb{C}[S, S^{-1}]
\]
defined as
\[
\theta_{(\alpha')^{\vee}/2} \mapsto S,
\]
and obtain
\begin{align}
\label{modifiedhalfintegercomparison}
c c' \cdot S^{2k} (q'_{1} -1)\frac{S^{2n} - S^{-2n}}{1 - S^{-2}} +  cb' (S^{2n} - S^{-2n})
=
c (q_{1} - 1) S^{2n} + q_{1}^{1/2}(q_{0}^{1/2} - q_{0}^{-1/2}).
\end{align}
If $n >1$, we can take $\zeta \in \mathbb{C}^{\times}$ such that $\zeta^{2n} = c = 1/c$ and $\zeta^{2} \neq 1$.
Then, substituting $S = \zeta$ to equation~\eqref{modifiedhalfintegercomparison}, we have
\[
0 = q_{1} -1 + q_{1}^{1/2}(q_{0}^{1/2} - q_{0}^{-1/2}) >0,
\]
a contradiction.
Hence, $n = 1/2$, and we obtain
\[
c c' (q'_{1} - 1)S^{1+2k} + cb'(S - S^{-1}) = c(q_{1} - 1)S + q_{1}^{1/2}(q_{0}^{1/2} - q_{0}^{-1/2}).
\]
Substituting $S = 1$ and $S= -1$ to it, we obtain 
\[
\begin{cases}
c c' (q'_{1} -1) &= c (q_{1} -1), \\
q_{0}^{1/2} - q_{0}^{-1/2} &= 0
\end{cases}
\]
if $k$ is an integer, and
\[
\begin{cases}
c c' (q'_{1} -1) &= q_{1}^{1/2}(q_{0}^{1/2} - q_{0}^{-1/2}), \\
q_{1} - 1 &= 0
\end{cases}
\]
if $k$ is a half-integer.
However, since we are assuming that $q_{1} , q_{0}> 1$, 
\[
q_{0}^{1/2} - q_{0}^{-1/2} > 0
\]
and 
\[
q_{1} -1>0.
\]
Thus, both cannot happen either.
\end{proof}
Similarly, we can prove the following:
\begin{lemma}
\label{keypropositionappendixhalfintegerkver}
There is no algebra homomorphism
\[
I \colon \mathcal{H} \rightarrow \mathcal{H}'
\]
such that
\[
I(T_{s}) = c' \cdot \theta_{k (\alpha')^{\vee}} T_{s'} + b'
\]
for some $c' \in \mathbb{C}^{\times}$, $b' \in \mathbb{C}[Y']$, and half-integer $k$, and
\[
I(\theta_{\alpha^{\vee}}) = c \cdot \theta_{n (\alpha')^{\vee}}
\]
for some $c \in \mathbb{C}^{\times}$ and $n \in (1/2) \cdot \mathbb{Z}_{>0}$.
\end{lemma}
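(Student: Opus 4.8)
The plan is to run the same reduction as in the proof of Lemma~\ref{keypropositionappendixhalfinteger}, with the roles of the two half-integrality hypotheses interchanged, and to finish by a parity count in a ring of Laurent polynomials. First I would exploit the hypothesis that $k$ is a half-integer: since $\theta_{k(\alpha')^{\vee}} \in \mathbb{C}[Y']$ we have $k(\alpha')^{\vee} \in Y'$, and writing $k = m/2$ with $m$ odd, an integer combination of $\frac{m}{2}(\alpha')^{\vee}$ and $(\alpha')^{\vee}$ produces $\frac{1}{2}(\alpha')^{\vee}$, so $(\alpha')^{\vee}/2 \in Y'$. It follows that $\alpha' \notin 2X'$, for otherwise $\langle \alpha'/2, (\alpha')^{\vee}/2 \rangle = \frac{1}{2} \notin \mathbb{Z}$ would contradict the perfect pairing, and hence, by the standing assumption on the label functions, $\lambda'(\alpha') = (\lambda^{*})'(\alpha')$, that is $q'_{1} = q'_{0}$. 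A similar pairing argument shows that $\mathbb{Z}(\alpha')^{\vee}/2$ is saturated in $Y'$.

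Next I would apply $I$ to the Bernstein relation $\theta_{\alpha^{\vee}}T_{s} - T_{s}\theta_{-\alpha^{\vee}} = (q_{1}-1)\theta_{\alpha^{\vee}} + q_{1}^{1/2}(q_{0}^{1/2} - q_{0}^{-1/2})$ in $\mathcal{H}$. Exactly as in the argument establishing \eqref{c=pm1}, the coefficient of $T_{s'}$ on the left is $c'(c - c^{-1})\theta_{(n+k)(\alpha')^{\vee}}$ with $c' \neq 0$, while the right-hand side lies in $\mathbb{C}[Y']$, forcing $c = c^{-1}$, i.e.\ $c = \pm 1$; substituting this together with $q'_{1} = q'_{0}$ collapses the Bernstein expansion to the identity
\[
cc'\,\theta_{k(\alpha')^{\vee}}(q'_{1}-1)\frac{\theta_{n(\alpha')^{\vee}} - \theta_{-n(\alpha')^{\vee}}}{\theta_{0} - \theta_{-(\alpha')^{\vee}}} + cb'\bigl(\theta_{n(\alpha')^{\vee}} - \theta_{-n(\alpha')^{\vee}}\bigr) = c(q_{1}-1)\theta_{n(\alpha')^{\vee}} + q_{1}^{1/2}(q_{0}^{1/2} - q_{0}^{-1/2})
\]
that already appears in the proof of Lemma~\ref{keypropositionappendixhalfinteger}. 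Rearranging this puts $b'$ into $\mathbb{C}[Y'] \cap \mathbb{C}(\mathbb{Z}(\alpha')^{\vee}/2) = \mathbb{C}[\mathbb{Z}(\alpha')^{\vee}/2]$ (using that $\mathbb{Z}(\alpha')^{\vee}/2$ is saturated), which I identify with the Laurent polynomial ring $\mathbb{C}[S, S^{-1}]$ via $\theta_{(\alpha')^{\vee}/2} \mapsto S$; then $\theta_{n(\alpha')^{\vee}} \mapsto S^{2n}$ and $\theta_{k(\alpha')^{\vee}} \mapsto S^{2k}$ with $2k$ odd.

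I would finish by a parity count. In $\mathbb{C}[S, S^{-1}]$ the term $cc'(q'_{1}-1)S^{2k}\frac{S^{2n} - S^{-2n}}{1 - S^{-2}}$ is supported in degrees $\equiv 2k + 2n \pmod 2$, the right-hand side $c(q_{1}-1)S^{2n} + q_{1}^{1/2}(q_{0}^{1/2} - q_{0}^{-1/2})$ in degrees $\equiv 2n$ and $\equiv 0$, and multiplication by $S^{2n} - S^{-2n}$ changes the parity of a term by $2n$. Extracting the odd-degree component of the identity: when $n \in \mathbb{Z}$ it becomes $cc'(q'_{1}-1)S^{2k}\frac{S^{2n}-S^{-2n}}{1-S^{-2}} = -cb'_{\mathrm{odd}}(S^{2n} - S^{-2n})$, which after dividing by $S^{2n}-S^{-2n}$ forces $b'_{\mathrm{odd}} = -c'(q'_{1}-1)\frac{S^{2k+2}}{S^{2}-1}$; when $n \notin \mathbb{Z}$ it becomes $cb'_{\mathrm{even}}(S^{2n} - S^{-2n}) = c(q_{1}-1)S^{2n}$, forcing $b'_{\mathrm{even}} = (q_{1}-1)\frac{S^{4n}}{S^{4n}-1}$. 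In both cases the displayed rational function is not a Laurent polynomial, because $q'_{1} > 1$ (resp.\ $q_{1} > 1$) keeps the numerator from vanishing at the roots of the denominator, contradicting $b' \in \mathbb{C}[S, S^{-1}]$. Hence no homomorphism $I$ of the stated form exists.

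The argument is essentially bookkeeping; the one delicate point is keeping track of which exponents occur with which parity after the substitution $\theta_{(\alpha')^{\vee}/2} \mapsto S$, and verifying that $b'$ really lands in $\mathbb{C}[S, S^{-1}]$, which is where saturatedness of $\mathbb{Z}(\alpha')^{\vee}/2$ is used. This parity count replaces, and simultaneously subsumes, the root-of-unity substitutions used for $n > 1$ in the proof of Lemma~\ref{keypropositionappendixhalfinteger}; in particular it also handles the case $n \in \mathbb{Z}_{>0}$, which does not occur there.
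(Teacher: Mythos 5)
Your proof is correct, and it takes a genuinely different route from the paper's. The paper disposes of this lemma in a few lines by splitting on the parity of $2n$: when $n$ is a half-integer it simply invokes Lemma~\ref{keypropositionappendixhalfinteger} (which applies since the hypotheses there allow any $k \in (1/2)\cdot\mathbb{Z}$), and when $n$ is an integer it substitutes $S = \pm 1$ into equation~\eqref{modifiedhalfintegercomparison} and subtracts, which yields $q'_{1} - 1 = 0$. Your argument makes the same preliminary reductions ($q'_{1} = q'_{0}$ from $(\alpha')^{\vee}/2 \in Y'$, $c = \pm 1$, and $b'$ landing in $\mathbb{C}[S, S^{-1}]$), but then replaces the case-by-case substitutions with a single structural observation: the $\mathbb{Z}/2$-grading of $\mathbb{C}[S, S^{-1}]$ by parity of degree separates the terms of the identity, and extracting the odd-degree component forces one graded piece of $b'$ to equal an explicit rational function with a pole, contradicting $b' \in \mathbb{C}[S, S^{-1}]$. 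This is cleaner and self-contained, since it avoids the appeal to Lemma~\ref{keypropositionappendixhalfinteger} (and hence to the root-of-unity substitutions inside its proof). One caveat on your closing side-remark: the parity count suffices to replace the root-of-unity substitutions \emph{here} only because $2k$ is odd, which pushes the troublesome first term of \eqref{modifiedhalfintegercomparison} entirely into one parity class; for $k$ an integer and $n$ a large half-integer the parity count does not by itself reproduce Lemma~\ref{keypropositionappendixhalfinteger}, since the two rational contributions to $b'_{\mathrm{even}}$ can cancel their poles at $\pm 1$ while still having poles at higher-order roots of unity. This does not affect the validity of your proof of the present statement.
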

\begin{proof}
Since $k$ is a half-integer, we have $(\alpha')^{\vee}/2 \in Y'$, hence $q'_{1} = q'_{0}$ in this case too.
Then, we obtain equation~\eqref{modifiedhalfintegercomparison} in the proof of Lemma~\ref{keypropositionappendixhalfinteger}.
If $n$ is a half-integer, the claim follows from Proposition~\ref{keypropositionappendixhalfinteger}.
Hence, we may assume that $n$ is an integer.
Then, substituting $S =1$ and $S= -1$ to equation~\eqref{modifiedhalfintegercomparison}, we have
\[
q_{1}'-1 = 0,
\]
a contradiction.
\end{proof}
Now, we obtain a generalization of Proposition~\ref{keypropositionappendixinteger}.
\begin{corollary}
\label{generalizationkeypropositionappendixinteger}
Let 
\[
I \colon \mathcal{H} \rightarrow \mathcal{H}'
\]
be an algebra homomorphism such that
\[
I(T_{s}) = c' \cdot \theta_{k (\alpha')^{\vee}} T_{s'} + b'
\]
for some $c' \in \mathbb{C}^{\times}$, $k \in (1/2) \cdot \mathbb{Z}$, and $b' \in \mathbb{C}[Y']$.
We also assume that 
\[
I(\theta_{\alpha^{\vee}}) = c \cdot \theta_{n (\alpha')^{\vee}}
\]
for some $c \in \mathbb{C}^{\times}$ and $n \in (1/2) \cdot \mathbb{Z}_{>0}$.
Then, we obtain that $k$ is an integer, and
\[
\begin{cases}
I(T_{s}) &= \theta_{k (\alpha')^{\vee}/2} \cdot T_{s'} \cdot \theta_{- k (\alpha')^{\vee}/2}, \\
I(\theta_{\alpha^{\vee}}) &= \theta_{(\alpha')^{\vee}}, \\
q_{1} &= q_{1}', \\
q_{0} &= q_{0}'
\end{cases} 
\]
if $k$ is even, and
\[
\begin{cases}
I(T_{s}) &= \theta_{(k-1) (\alpha')^{\vee}/2} \cdot T_{s', 0} \cdot \theta_{- (k-1) (\alpha')^{\vee}/2}, \\
I(\theta_{\alpha^{\vee}}) &= \theta_{(\alpha')^{\vee}}, \\
q_{1} &= q'_{0}, \\
q_{0} &= q'_{1}
\end{cases} 
\]
if $k$ is odd
\end{corollary}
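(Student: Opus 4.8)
The plan is to obtain Corollary~\ref{generalizationkeypropositionappendixinteger} as a formal consequence of Proposition~\ref{keypropositionappendixinteger}, Lemma~\ref{keypropositionappendixhalfinteger}, and Lemma~\ref{keypropositionappendixhalfintegerkver}: Proposition~\ref{keypropositionappendixinteger} handles the fully integral case and pins down all of the unknowns $c$, $c'$, $b'$ and the parameters, while the two lemmas rule out the remaining, genuinely half-integral, possibilities. So the only work is a short case distinction on the parities of $k$ and $n$, and no new computation is required.

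The first step I would carry out is to show $k \in \mathbb{Z}$. If instead $k$ were a genuine half-integer, i.e. $k \in (1/2)\mathbb{Z} \setminus \mathbb{Z}$, then together with the standing hypothesis $n \in (1/2)\mathbb{Z}_{>0}$ and the given formulas $I(T_{s}) = c' \cdot \theta_{k(\alpha')^{\vee}} T_{s'} + b'$ and $I(\theta_{\alpha^{\vee}}) = c \cdot \theta_{n(\alpha')^{\vee}}$, the homomorphism $I$ would satisfy exactly the hypotheses of Lemma~\ref{keypropositionappendixhalfintegerkver}, which asserts that no such $I$ exists — a contradiction. The second step is to show $n \in \mathbb{Z}_{>0}$: if $n$ were a genuine positive half-integer, then (using the now-established $k \in \mathbb{Z} \subset (1/2)\mathbb{Z}$ together with) the very same formulas would place $I$ under the hypotheses of Lemma~\ref{keypropositionappendixhalfinteger}, again contradicting its non-existence conclusion. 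Hence $k$ is an integer and $n$ is a positive integer.

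With $k$ an integer and $n$ a positive integer in hand, the hypotheses of Proposition~\ref{keypropositionappendixinteger} hold verbatim, so its conclusion — the two displayed systems of identities for $I(T_{s})$, $I(\theta_{\alpha^{\vee}})$, and the parameters $q_{1}, q_{0}$, distinguished by the parity of $k$ — is precisely the assertion of the corollary. I expect the only point requiring a moment of care, rather than any real obstacle, to be verifying that this trichotomy is exhaustive: Lemma~\ref{keypropositionappendixhalfintegerkver} covers every non-integral $k$, Lemma~\ref{keypropositionappendixhalfinteger} covers every non-integral $n$, and Proposition~\ref{keypropositionappendixinteger} covers the remaining integral case, so there is genuinely no gap and no substantive calculation left to perform here.
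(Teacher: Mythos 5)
Your proposal is correct and follows exactly the paper's argument: Lemma~\ref{keypropositionappendixhalfintegerkver} rules out non-integral $k$, Lemma~\ref{keypropositionappendixhalfinteger} rules out non-integral $n$, and then Proposition~\ref{keypropositionappendixinteger} applies verbatim. The only minor stylistic point is that the two elimination steps are actually independent of each other (Lemma~\ref{keypropositionappendixhalfinteger} only requires $k \in (1/2)\mathbb{Z}$, which is already in the corollary's hypotheses), so the parenthetical about ``using the now-established $k \in \mathbb{Z}$'' is harmless but unnecessary.
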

\begin{proof}
According to Lemma~\ref{keypropositionappendixhalfinteger} and Lemma~\ref{keypropositionappendixhalfintegerkver}, $k$ and $n$ cannot be half-integers.
Then, the claim follows from Proposition~\eqref{keypropositionappendixinteger}.
\end{proof}
\printindex
\providecommand{\bysame}{\leavevmode\hbox to3em{\hrulefill}\thinspace}
\providecommand{\MR}{\relax\ifhmode\unskip\space\fi MR }
\providecommand{\MRhref}[2]{%
  \href{http://www.ams.org/mathscinet-getitem?mr=#1}{#2}
}
\providecommand{\href}[2]{#2}


\begin{thebibliography}{{Sol}21b}

\bibitem[Ber84]{MR771671}
J.~N. Bernstein, \emph{Le ``centre'' de {B}ernstein}, Representations of
  reductive groups over a local field, Travaux en Cours, Hermann, Paris, 1984,
  Edited by P. Deligne, pp.~1--32. \MR{771671}

\bibitem[BK93]{MR1204652}
Colin~J. Bushnell and Philip~C. Kutzko, \emph{The admissible dual of {${\rm
  GL}(N)$} via compact open subgroups}, Annals of Mathematics Studies, vol.
  129, Princeton University Press, Princeton, NJ, 1993. \MR{1204652}

\bibitem[BK98]{MR1643417}
\bysame, \emph{Smooth representations of reductive {$p$}-adic groups: structure
  theory via types}, Proc. London Math. Soc. (3) \textbf{77} (1998), no.~3,
  582--634. \MR{1643417}

\bibitem[Blo97]{MR1490115}
Corinne Blondel, \emph{Crit\`ere d'injectivit\'{e} pour l'application de
  {J}acquet}, C. R. Acad. Sci. Paris S\'{e}r. I Math. \textbf{325} (1997),
  no.~11, 1149--1152. \MR{1490115}

\bibitem[Bou68]{MR0240238}
N.~Bourbaki, \emph{\'{E}l\'{e}ments de math\'{e}matique. {F}asc. {XXXIV}.
  {G}roupes et alg\`ebres de {L}ie. {C}hapitre {IV}: {G}roupes de {C}oxeter et
  syst\`emes de {T}its. {C}hapitre {V}: {G}roupes engendr\'{e}s par des
  r\'{e}flexions. {C}hapitre {VI}: syst\`emes de racines}, Actualit\'{e}s
  Scientifiques et Industrielles [Current Scientific and Industrial Topics],
  No. 1337, Hermann, Paris, 1968. \MR{0240238}

\bibitem[BS20]{2020arXiv201102456B}
Petar {Bakic} and Gordan {Savin}, \emph{{The Gelfand--Graev representation of
  SO$(2n+1)$ in terms of Hecke algebras}}, arXiv e-prints (2020),
  arXiv:2011.02456.

\bibitem[BT72]{MR327923}
F.~Bruhat and J.~Tits, \emph{Groupes r\'{e}ductifs sur un corps local}, Inst.
  Hautes \'{E}tudes Sci. Publ. Math. (1972), no.~41, 5--251. \MR{327923}

\bibitem[BT84]{MR756316}
\bysame, \emph{Groupes r\'{e}ductifs sur un corps local. {II}. {S}ch\'{e}mas en
  groupes. {E}xistence d'une donn\'{e}e radicielle valu\'{e}e}, Inst. Hautes
  \'{E}tudes Sci. Publ. Math. (1984), no.~60, 197--376. \MR{756316}

\bibitem[Dat09]{MR2485794}
Jean-Francois Dat, \emph{Finitude pour les repr\'{e}sentations lisses de
  groupes {$p$}-adiques}, J. Inst. Math. Jussieu \textbf{8} (2009), no.~2,
  261--333. \MR{2485794}

\bibitem[GR02]{MR1901371}
David Goldberg and Alan Roche, \emph{Types in {${\rm SL}_n$}}, Proc. London
  Math. Soc. (3) \textbf{85} (2002), no.~1, 119--138. \MR{1901371}

\bibitem[Hei11]{MR2827179}
Volker Heiermann, \emph{Op\'{e}rateurs d'entrelacement et alg\`ebres de {H}ecke
  avec param\`etres d'un groupe r\'{e}ductif {$p$}-adique: le cas des groupes
  classiques}, Selecta Math. (N.S.) \textbf{17} (2011), no.~3, 713--756.
  \MR{2827179}

\bibitem[How80]{MR576184}
Robert~B. Howlett, \emph{Normalizers of parabolic subgroups of reflection
  groups}, J. London Math. Soc. (2) \textbf{21} (1980), no.~1, 62--80.
  \MR{576184}

\bibitem[Kal19]{MR4013740}
Tasho Kaletha, \emph{Regular supercuspidal representations}, J. Amer. Math.
  Soc. \textbf{32} (2019), no.~4, 1071--1170. \MR{4013740}

\bibitem[KY17]{MR3753917}
Ju-Lee Kim and Jiu-Kang Yu, \emph{Construction of tame types}, Representation
  theory, number theory, and invariant theory, Progr. Math., vol. 323,
  Birkh\"{a}user/Springer, Cham, 2017, pp.~337--357. \MR{3753917}

\bibitem[Lus84]{MR742472}
George Lusztig, \emph{Characters of reductive groups over a finite field},
  Annals of Mathematics Studies, vol. 107, Princeton University Press,
  Princeton, NJ, 1984. \MR{742472}

\bibitem[{Lus}20]{2020arXiv200608535L}
G.~{Lusztig}, \emph{{Open problems on Iwahori-Hecke algebras}}, arXiv e-prints
  (2020), arXiv:2006.08535.

\bibitem[Mac72]{MR357528}
I.~G. Macdonald, \emph{Affine root systems and {D}edekind's {$\eta
  $}-function}, Invent. Math. \textbf{15} (1972), 91--143. \MR{357528}

\bibitem[Mor93]{MR1235019}
Lawrence Morris, \emph{Tamely ramified intertwining algebras}, Invent. Math.
  \textbf{114} (1993), no.~1, 1--54. \MR{1235019}

\bibitem[Mor99]{MR1713308}
\bysame, \emph{Level zero {$\bf G$}-types}, Compositio Math. \textbf{118}
  (1999), no.~2, 135--157. \MR{1713308}

\bibitem[MP94]{MR1253198}
Allen Moy and Gopal Prasad, \emph{Unrefined minimal {$K$}-types for {$p$}-adic
  groups}, Invent. Math. \textbf{116} (1994), no.~1-3, 393--408. \MR{1253198}

\bibitem[MP96]{MR1371680}
\bysame, \emph{Jacquet functors and unrefined minimal {$K$}-types}, Comment.
  Math. Helv. \textbf{71} (1996), no.~1, 98--121. \MR{1371680}

\bibitem[MS14]{MR3157998}
Michitaka Miyauchi and Shaun Stevens, \emph{Semisimple types for {$p$}-adic
  classical groups}, Math. Ann. \textbf{358} (2014), no.~1-2, 257--288.
  \MR{3157998}

\bibitem[{Oha}21]{2021arXiv210101873O}
Kazuma {Ohara}, \emph{{Hecke algebras for tame supercuspidal types}}, arXiv
  e-prints (2021), arXiv:2101.01873.

\bibitem[Ren10]{MR2567785}
David Renard, \emph{Repr\'{e}sentations des groupes r\'{e}ductifs
  {$p$}-adiques}, Cours Sp\'{e}cialis\'{e}s [Specialized Courses], vol.~17,
  Soci\'{e}t\'{e} Math\'{e}matique de France, Paris, 2010. \MR{2567785}

\bibitem[Roc98]{MR1621409}
Alan Roche, \emph{Types and {H}ecke algebras for principal series
  representations of split reductive {$p$}-adic groups}, Ann. Sci. \'{E}cole
  Norm. Sup. (4) \textbf{31} (1998), no.~3, 361--413. \MR{1621409}

\bibitem[Roc09]{MR2508719}
\bysame, \emph{The {B}ernstein decomposition and the {B}ernstein centre},
  Ottawa lectures on admissible representations of reductive {$p$}-adic groups,
  Fields Inst. Monogr., vol.~26, Amer. Math. Soc., Providence, RI, 2009,
  pp.~3--52. \MR{2508719}

\bibitem[Sil79]{MR544991}
Allan~J. Silberger, \emph{Introduction to harmonic analysis on reductive
  {$p$}-adic groups}, Mathematical Notes, vol.~23, Princeton University Press,
  Princeton, N.J.; University of Tokyo Press, Tokyo, 1979, Based on lectures by
  Harish-Chandra at the Institute for Advanced Study, 1971--1973. \MR{544991}

\bibitem[Sol21a]{MR4310011}
Maarten Solleveld, \emph{Affine {H}ecke algebras and their representations},
  Indag. Math. (N.S.) \textbf{32} (2021), no.~5, 1005--1082. \MR{4310011}

\bibitem[{Sol}21b]{2021arXiv210313113S}
Maarten {Solleveld}, \emph{{Parameters of Hecke algebras for Bernstein
  components of p-adic groups}}, arXiv e-prints (2021), arXiv:2103.13113.

\bibitem[Sol22]{MR4432237}
Maarten Solleveld, \emph{Endomorphism algebras and {H}ecke algebras for
  reductive {$p$}-adic groups}, J. Algebra \textbf{606} (2022), 371--470.
  \MR{4432237}

\bibitem[Ste08]{MR2390287}
Shaun Stevens, \emph{The supercuspidal representations of {$p$}-adic classical
  groups}, Invent. Math. \textbf{172} (2008), no.~2, 289--352. \MR{2390287}

\bibitem[Tit79]{MR546588}
J.~Tits, \emph{Reductive groups over local fields}, Automorphic forms,
  representations and {$L$}-functions ({P}roc. {S}ympos. {P}ure {M}ath.,
  {O}regon {S}tate {U}niv., {C}orvallis, {O}re., 1977), {P}art 1, Proc. Sympos.
  Pure Math., XXXIII, Amer. Math. Soc., Providence, R.I., 1979, pp.~29--69.
  \MR{546588}

\bibitem[Wal03]{MR1989693}
J.-L. Waldspurger, \emph{La formule de {P}lancherel pour les groupes
  {$p$}-adiques (d'apr\`es {H}arish-{C}handra)}, J. Inst. Math. Jussieu
  \textbf{2} (2003), no.~2, 235--333. \MR{1989693}

\end{thebibliography}
\end{document}